\newcommand{\cmark}{\ding{51}}%
\newcommand{\xmark}{\ding{55}}%
\renewcommand*{\eqref}[1]{%
  \hyperref[{#1}]{\textup{\tagform@{\ref*{#1}}}}%
}
\newcommand{\one}{\textbf{1}}
\newcommand{\E}{\mathbb{E}}
\newcommand{\event}{\mathcal{V}}
\newcommand{\R}{\mathbb{R}}
\newcommand{\A}{\mathcal{A}}
\newcommand{\Ap}{\td{A}}
\newcommand{\Ep}{\td{E}}
\newcommand{\bp}{\td{b}}
\newcommand{\etap}{\td{\eta}}
\newcommand{\sigmap}{\td{\sigma}}
\newcommand{\D}{\mathcal{D}}
\newcommand{\ER}{Erd\"{o}s-R\'{e}nyi }
\renewcommand{\O}{\mathcal{O}}
\renewcommand{\P}{\mathbb{P}}
\newcommand{\M}{\mathcal{M}}
\newcommand{\C}{\mathcal{C}}
\newcommand{\N}{\mathcal{N}}
\newcommand{\V}{V}
\renewcommand{\i}{\sqrt{-1}}
\renewcommand{\L}{\mathcal{L}}
\newcommand{\Lp}{\td{\mathcal{L}}}
\newcommand{\eig}{\mathrm{eig}}
\renewcommand{\S}{\mathcal{S}}
\newcommand{\lb}{\left(}
\newcommand{\rb}{\right)}
\newcommand{\td}{\tilde}
\newcommand{\rank}{\mathrm{rank}}
\newcommand{\sign}{\mathrm{sign}}
\newcommand{\diag}{\mathrm{diag}}
\newcommand{\eps}{\epsilon}
\newcommand{\mnorm}[1]{\|#1\|_{2\rightarrow\infty}}
\newcommand{\op}{\mathrm{op}}
\newcommand{\sep}{\mathrm{sep}}
\newcommand{\gap}{\Delta}
\newcommand{\tdgap}{\Gamma}
\newcommand{\ttinf}{2\rightarrow\infty}
\newcommand{\maxnorm}[1]{\|#1\|_{\max}}
\newtheorem{theorem}{Theorem}[section]
\newtheorem{lemma}[theorem]{Lemma}
\newtheorem{proposition}[theorem]{Proposition}
\newtheorem{corollary}[theorem]{Corollary}
\newtheorem{definition}{Definition}[section]
\newtheorem{remark}{Remark}[section]
\newtheorem{example}{Example}[section]
\DeclareMathOperator*{\argmin}{arg\,min}
\DeclareMathOperator*{\Var}{Var}
\DeclareMathOperator*{\tr}{tr}
\title{Unified $\ell_{\ttinf}$ Eigenspace Perturbation Theory for Symmetric Random Matrices}
\author{Lihua Lei \thanks{lihualei@stanford.edu}\\
Department of Statistics, Stanford University}
\begin{document}
\maketitle

\begin{abstract}
Modern applications in statistics, computer science and network science have seen tremendous values of finer matrix spectral perturbation theory. In this paper, we derive a generic $\ell_{\ttinf}$ eigenspace perturbation bound for symmetric random matrices, with independent or dependent entries and fairly flexible entry distributions. In particular, we apply our generic bound to binary random matrices with independent entries or with certain dependency structures, including the unnormalized Laplacian of inhomogenous random graphs and $m$-dependent matrices. Through a detailed comparison, we found that for binary random matrices with independent entries, our $\ell_{\ttinf}$ bound is tighter than all existing bounds that we are aware of, while our eigen-gap condition is weaker than all but one of them in a less common regime. We employ our perturbation bounds in three problems and improve the state of the art: concentration of the spectral norm of sparse random graphs, exact recovery of communities in stochastic block models and partial consistency of divisive hierarchical clustering. Finally we discuss the extensions of our theory to random matrices with more complex dependency structures and non-binary entries, asymmetric rectangular matrices and induced perturbation theory in other metrics.
\end{abstract}

\setcounter{tocdepth}{2}
\tableofcontents
\section{Introduction}
Matrix spectral perturbation theory is one of the most fundamental and powerful tool in various areas including statistics, network sciences and computer sciences. In many problems, it is important to understand how eigenvalues and eigenvectors change when the underlying matrix $A^{*}$ is perturbed into $A$. Focusing on symmetric matrices, Weyl's inequality provides a simple bound for eigenvalues \citep{weyl1912asymptotische} and Davis-Kahan Theorem provides a surprisingly clean bound for eigenspaces in terms of any unitarily invariant norm \citep{davis1970rotation}. We refer to interested readers to \cite{stewart1990matrix}, \cite{kato2013perturbation} and the appendix of \cite{bai2010spectral} for fruitful results over the past century. 

Modern applications are posing new challenges for this long-standing area. In these problems, the unitarily invariant norm, such as Frobenius norm or operator norm, in the classical eigenvector perturbation theory may be too coarse to achieve the goal. It is then crucial to derive eigenvector/eigenspace perturbation bounds in terms of finer norms that are not unitarily invariant. Among others, one important norm is the $\ell_{\ttinf}$ norm, which yields the row-wise perturbation bound on the eigenvector matrix. To be specific, let $A$ and $A^{*}$ be two symmetric matrices with
\begin{equation}\label{eq:E}
  E = A - A^{*}.
\end{equation}
 Let $\lambda_{1}\ge \lambda_{2}\ge \ldots \ge \lambda_{n}$ and $\lambda_{1}^{*}\ge \lambda_{2}^{*}\ge \ldots \ge \lambda_{n}^{*}$ be the eigenvalues of $A$ and $A^{*}$, respectively. Given positive integers $s$ and $r$, let
\begin{equation}\label{eq:LambdaLambda*}
\Lambda = \diag(\lambda_{s + 1}, \lambda_{s + 2}, \ldots, \lambda_{s + r}), \quad \Lambda^{*} = \diag(\lambda_{s + 1}^{*}, \lambda_{s + 2}^{*}, \ldots, \lambda_{s + r}^{*}).
\end{equation}
Let $U, U^{*}\in \R^{n\times r}$ be a matrix of eigenvectors such that 
\begin{equation}\label{eq:UU*}
  AU = U\Lambda, \quad A^{*}U^{*} = U^{*}\Lambda^{*}.
\end{equation}
The $\ell_{\ttinf}$ perturbation theory is seeking for bounds on the $\ell_{\ttinf}$ distance between $U$ and $U^{*}$, defined as 
\begin{equation*}
  d_{2\rightarrow \infty}(U, U^{*})\triangleq \inf_{O\in \R^{r\times r}, O^{T}O = I}\mnorm{UO - U^{*}}.
\end{equation*}
where $\mnorm{V} = \max_{i\in [n]}\|V_{i}\|_{2}$ and $V_{i}$ is the $i$-th row of $V$. The $\ell_{\ttinf}$ norm is not invariant to left unitary transformation and thus not supported by classical Davis-Kahan Theorem. When $r = 1$, $d_{2\rightarrow \infty}(U, U^{*})$ reduces to the entrywise perturbation of the eigenvector. Compared to perturbation bounds in operator norm, it provides much finer information. 

\subsection{Existing works}
Early efforts on $\ell_{\ttinf}$ perturbation was motivated by the stability of Markov chain \citep[e.g.][]{o1993entrywise, ipsen1994uniform}. The focus was on the stationary distribution, which is the first eigenvector of the transition matrix of a finite state Markov chain. The investigation in random graph theory can be dated back to \cite{mitra2009entrywise} which studied the entrywise perturbation for the leading eigenvector of the adjacency matrix of an \ER graph $\mathbb{G}(n, p)$. In particular, \cite{mitra2009entrywise} considered the sparse graph regime allowing the parameter $p$ to decay with $n$ as long as $p \ge (\log n)^{6} / n$. He further studied the entrywise perturbation of the second eigenvector for dense planted partition models where the within-block average degree is $\sqrt{n}$. This is the first work proving that spectral clustering can achieve exact recovery, namely zero mis-classification error, with high probability. These two lines of works typify two perspectives: the former assumes a non-random $A$ and studies the deterministic perturbation bound, as in Davis-Kahan Theorem, which yields the worst-case guarantee; the latter assumes a stochastic model for $A$ and studies the high probability perturbation bound that is average-case in nature. The deterministic perturbation theory can be applied to more settings, including the stochastic settings, though it is typically much weaker than those stochastic bounds which are tailored to particular random structure. 

Recent years has seen a surge of interest in $\ell_{\ttinf}$ perturbation theory especially for random matrices. This includes robust covariance estimation and robust principal component analysis for heavy-tailed data \citep{fan2018eigenvector}, community detection \citep{balakrishnan2011noise, eldridge2017unperturbed, mao2017estimating, abbe2017entrywise, cape2019signal}, multiple graph inference \citep{cape2019two}, phase synchronization \citep{zhong2018near, abbe2017entrywise}, matrix completion \citep{abbe2017entrywise}. It also serves as a powerful tool to push forward other theoretical works such as random graph theory \citep[e.g.][]{lugosi2018concentration}.

\begin{table}
  \centering
  \begin{tabular}{llllllll}
    \toprule
    & full & top-$r$ & partial & $np^{*}$ & $r$ & $\kappa^{*}$ & $\mnorm{U^{*}}$ \\
    \midrule
    \cite{abbe2017entrywise} & \cmark & \cmark & \cmark & $\succeq \log n / \log \log n$ & No & $\preceq \log (np^{*})$ & No \\
    \cite{eldridge2017unperturbed} & \cmark & \cmark & \cmark & $\succeq (\log n)^{2 + \eps}$ & $ = 1$ & $ = 1$ & $\preceq 1 / \sqrt{n}$ \\
    \cite{mao2017estimating} & \cmark & \xmark & \xmark & $\succeq (\log n)^{2 + \eps}$ & No & No & $\preceq \sqrt{rp^{*}}$ \\
    \cite{fan2018eigenvector} & \cmark & \cmark* & \xmark & $\succeq \log n $ & $\preceq 1$ & No & $\preceq 1/ \sqrt{n}$ \\
    \cite{cape2019signal} & \cmark & \xmark & \xmark & $\succeq (\log n)^{2 + \eps}$ & $\preceq (\log n)^{2 + \eps}$ & $\preceq 1$ & No \\
    \cite{cape2019two} & \cmark & \cmark* & \xmark & $\succeq \log n$ & No & No & No\\
    This paper & \cmark & \cmark & \cmark & $\succeq \log n / \log \log n$ & No & No & No\\
    \bottomrule
  \end{tabular}
  \caption{The regime that each method works on for binary random matrices with independent entries. The \cmark* symbol in the third column means that the bound is derived for both full and top-$r$ eigenspace recovery but only the former is available for binary random matrices. $p^{*}$ denotes the maximum entry of $A^{*}$ and $\kappa^{*}$ denotes the condition number of $\Lambda^{*}$. The constraints may not be explicitly mentioned in those papers but can be derived from their conditions. They are only necessary conditions and the real constraint may be more stringent. See Appendix \ref{app:comparison} for details.}\label{tab:comparison}
\end{table}

Despite the great success in different applications, the existing $\ell_{\ttinf}$ perturbation theory is not satisfactory in that the bounds work under rather different regimes. To better describe each work, we classify the applicability into three categories:
\begin{itemize}
\item Full eigenspace recovery: $s = 0$ and $\lambda_{r + 1}^{*} = \ldots = \lambda_{n}^{*} = 0$. In this case, $A^{*}$ has to be low rank.
\item Top-$r$ eigenspace recovery: $s = 0$ and there is no restriction on other eigenvalues. 
\item Partial eigenspace recovery: There is no restriction on $s$ or other eigenvalues. 
\end{itemize}
We consider binary random matrices with independent entries for illustration. Table \ref{tab:comparison} summarizes the necessary conditions for each bound to work, where $p^{*}$ denotes the maximum entry of $A^{*}$, $\kappa^{*} = \lambda_{s+1}^{*} / \lambda_{s+r}^{*}$ denotes the condition number of $\Lambda^{*}$, and $\preceq (\succeq)$ denotes smaller (larger) in order. See Appendix \ref{app:comparison} for the derivation of these claims. As with Davis-Kahan theorem, $\ell_{\ttinf}$ perturbation theory also requires sufficient eigen-gap. Since these bounds work under different regimes, we consider the intersection of them for comparison, namely the regime $np^{*}\succeq (\log n)^{2 + \eps}, r\preceq 1, \kappa^{*}\preceq 1, \mnorm{U^{*}}\preceq 1 / \sqrt{n}$. To simplify we also assume that all entries of $A^{*}$ are in the same order of $p^{*}$. Table \ref{tab:condition_bound} summarizes the conditions on eigen-gaps as well as the bounds for each work in this special case. 

From Table \ref{tab:comparison}, we can see that the bounds of \cite{eldridge2017unperturbed, mao2017estimating, cape2019signal} require the maximum degree $np^{*}$ to grow faster than $(\log n)^{2 + \eps}$. However, the critical regime of $np^{*}$ in random graph theory is typically $\log n$, under which phase transition occurs. Therefore, although $(\log n)^{2 + \eps}$ appears to be close to the critical regime, the extra $(\log n)^{1 + \eps}$ term is too artificial to explain interesting phenomena. \cite{abbe2017entrywise}'s work is the first to remove this extra logarithmic terms using an ingenious leave-one-out argument. Nonetheless, it imposes a stringent assumption on the condition number: in the critical regime it only allows the condition number to grow as $\log \log n$. The bound of \cite{fan2018eigenvector} also removes the artificial log-factors but it requires the number of eigenvectors to be bounded and the eigenvectors have low coherence. From Table \ref{tab:condition_bound}, we can see that all works require a stringent assumption on the eigen-gap except \cite{mao2017estimating}. Indeed, it is easy to show that the eigen-gap is always upper bounded by $np^{*}$. As a consequence, the four works except \cite{mao2017estimating} require the eigen-gap to be nearly the largest achievable value even in this special regime. By contrast, in Davis-Kahan Theorem, the lower bound on the eigen-gap is simply the operator norm of the perturbation, which is of order $\sqrt{np^{*}}$ in this case. Although \cite{mao2017estimating} gets rid of the $np^{*}$ lower bound, it still involves extra log-factors. Finally, in terms of the perturbation bound, we see that the bounds of \cite{abbe2017entrywise}, \cite{fan2018eigenvector} and \cite{cape2019two} stay the same as $np^{*}$ increases. They are clearly inferior to the others that are decaying with $np^{*}$. 

The real problems may not lie in the nice intersection regime as above. The existing bounds work in different regimes and there is no one dominating all others in terms of either applicability or tightness. This creates hurdles to choose which bound to use. Furthermore, it provides evidence that none of the existing bounds is tight and there is still much room for improvement and unification.

\begin{table}
  \centering
  \begin{tabular}{lll}
    \toprule
    & condition on the eigen-gap & $\sqrt{n}d_{\ttinf}(U, U^{*})$\\
    \midrule
    \cite{abbe2017entrywise} & $\succeq np^{*} / \log (np^{*})$ & $\preceq 1$\\
    \cite{eldridge2017unperturbed} & $\succeq np^{*}$ & $\preceq \sqrt{(\log n)^{2 + \eps} / np^{*}}$\\
    \cite{mao2017estimating} & $\succeq \sqrt{np^{*}}(\log n)^{1 + \eps / 2}$ & $\preceq \sqrt{(\log n)^{2 + \eps} / np^{*}}$\\
    \cite{fan2018eigenvector} & $\succeq np^{*}$ & $\preceq 1$\\
    \cite{cape2019signal} & $\succeq np^{*}$ & $\preceq \sqrt{(\log n)^{2 + \eps} / np^{*}}$\\
    \cite{cape2019two} & $\succeq np^{*}$ & $\preceq 1$ \\
    This paper & $\succeq \sqrt{np^{*}}$ & $\preceq \sqrt{\log n / np^{*}}$\\
    \bottomrule
  \end{tabular}
  \caption{Comparison of the conditions on the eigen-gap and the bounds in each work under the regime $np^{*}\succeq (\log n)^{2 + \eps}, r\preceq 1, \kappa^{*}\preceq 1, \mnorm{U^{*}}\preceq 1 / \sqrt{n}, A_{ij}^{*}\sim p^{*}$.}\label{tab:condition_bound}
\end{table}

\subsection{This paper}
In this paper, we derive generic $d_{\ttinf}$ bounds that work for random matrices with independent entries or with dependent entries with certain dependency structures (Theorem \ref{thm:generic_bound} - Theorem \ref{thm:generic_bound2_trick}). As with \cite{abbe2017entrywise} and \cite{cape2019signal}, our theory not only covers $d_{\ttinf}(U, U^{*})$ but also covers $d_{\ttinf}(U, U^{*} + V)$ for some $V\in \R^{n\times r}$ that yields a better approximation of $U$. In our theory, the entry distribution can be arbitrary provided that some characteristics of the perturbation $E$ can be controlled, e.g. operator norm and linear contrasts of rows. This includes but not limited to Bernoulli, Gaussian, sub-Gaussian and sub-exponential distributions. For the sake of length, we only discuss binary random matrices, because it is arguably the most challenging yet most common case calling for $\ell_{\ttinf}$ perturbation theory, and provide a brief discussion in Section \ref{subsec:otherdist} for other entry distributions.

For binary random matrices, we derive the $d_{\ttinf}$ bounds for the case with independent entries, or equivalently the adjacency matrix of random graphs (Theorem \ref{thm:generic_binary}). In this case, our bound works in a broader regime than all of existing bounds as shown in the last row of Table \ref{tab:comparison}. Under the special regime for Table \ref{tab:condition_bound}, our result has the least stringent condition on the eigen-gap that matches Davis-Kahan Theorem while our bound is strictly sharper than all others. Through a more detailed comparison in Appendix \ref{app:comparison}, we found that our bound is tighter than all existing bounds that we are aware of in all regimes, except in some corner cases where our bound is equivalent to some of the others. In addition, our eigen-gap condition is weaker than all others except in the case of full eigenspace recovery with $np^{*}\succeq (\log n)^{2 + \eps}$, $\min\{\kappa^{*}, r\}\succeq (\log n)^{1 + \eps / 2}$ for which \cite{mao2017estimating}'s condition is weaker than ours. 

In addition, we derive the inequality for unnormalized Laplacian of random graphs (Theorem \ref{thm:generic_binary_laplacian}) and discuss the case for binary matrices with certain $m$-dependence structure (Section \ref{subsec:mdependence}). The former is particularly useful for community detection problems in network science. Both cases are straightforward applications of our generic bounds. 

Our bounds can be applied to the problems considered in the works listed in Table \ref{tab:comparison}. In this paper, we apply our bounds to three other problems. The first one is to bound the variance and derive the concentration of the spectral norm of sparse random graphs. This is a fundamental and long-standing problem in random graph theory. Classical theory shows that, for \ER graphs, the variance is bounded by a universal constant regardless of the graph sparsity and the spectral norm is sub-gaussian with an $O(1)$ parameter. The recent work by \cite{lugosi2018concentration} drastically improves the bound of variance and sub-gaussian parameters to $O(p^{*})$ for \ER graphs using the $\ell_{\ttinf}$ perturbation theory. However, they only prove the result for $p^{*}\succeq (\log n)^{3} / n$ and conjecture that it carries over to the critical regime $p^{*}\succeq \log n / n$. We prove this conjecture using our new $\ell_{\ttinf}$ perturbation theory. Moreover, we extend the result to general inhomogeneous random graphs. 

The second problem is the strong consistency of spectral clustering for community detection in sparse stochastic block models (SBM). Despite the substantial literature on this topic, existing algorithms are rarely as easy-to-implement and computationally efficient as the standard spectral clustering algorithm \citep[e.g.][]{von2007tutorial}, which is simply a singular value decomposition plus a $K$-means algorithm. However, perhaps surprisingly, the strong consistency of spectral clustering in sparse graphs is not established until recently \citep{abbe2017entrywise, su2019strong}.
In this paper, we apply our $\ell_{\ttinf}$ bounds to prove the strong consistency of spectral clustering algorithms, using the adjacency matrix or the unnormalized Laplacian, for general SBMs in the critical regime with average degree $O(\log n)$. We also study the SBMs with growing number of communities and obtain the best available dependence on it for spectral algorithms.

The third problem is the partial consistency of divisive hierarchical clustering on binary tree stochastic block models (BTSBM), proposed by \cite{li2018hierarchical}. BTSBMs are special SBMs that embed communities into a tree-like hierarchy in order to bring interpretability. It is also a useful framework to analyze divisive hierarchical clustering algorithms such as iterative spectral bi-partitioning \citep[e.g.][]{spielman1996spectral, balakrishnan2011noise}. \cite{li2018hierarchical} derived sufficient conditions that certain divisive hierarchical clustering algorithms are able to exactly recover the whole or a part of the hierarchy in the regime with the average degree $O((\log n)^{2 + \eps})$. In this paper, we extend the result to the critical regime where each connection can be written as $a_{j}\log n / n$ and establish the sufficient condition to recover any part of the hierarchy in terms of the coefficients $a_{j}$'s. We found an unexpected connection between BTSBMs and mis-specified SBMs. In addition we accurately quantify an observation in \cite{li2018hierarchical} that certain partial structure may still be recovered even if the communities are information theoretically unrecoverable. 

The rest of the article is organized as follows: Section \ref{sec:main} presents the generic bounds and Section \ref{sec:binarybound} presents the bounds for binary random matrices. Three examples are collected in Section \ref{sec:spectral_norm} - Section \ref{sec:hierarchical}. Section \ref{sec:extension} discusses several extensions, including binary random matrices with $m$-dependence structure, random matrices with non-binary entry distribution, singular space perturbation for asymmetric matrices and perturbation bounds in other metrics. Most technical proofs are relegated into Appendix. Appendix \ref{app:generic_bound} establishes the proof of our main generic bound. The proof is quite involved so we parse it into six steps. All other proofs related to the generic bounds are presented in Appendix \ref{app:main}. Appendix \ref{app:binarybound} contains all technical proofs for binary random matrices. The miscellaneous proofs in Section \ref{sec:spectral_norm} - Section \ref{sec:hierarchical} are collected in Appendix \ref{app:other}. Appendix \ref{app:comparison} provides a detailed comparison between our bounds and all existing ones that we are aware of. This includes the justification of Table \ref{tab:comparison} and Table \ref{tab:condition_bound}. Finally, Appendix \ref{app:concentration} presents useful concentration inequalities for binary random variables which are useful in Section \ref{sec:binarybound}.

\section{An Generic $\ell_{\ttinf}$ Bound for Symmetric Random Matrices}\label{sec:main}
\subsection{Notations and assumptions}
Throughout the paper we consider the setup \eqref{eq:E} - \eqref{eq:UU*}. We denote by $[n]$ the set $\{1, \ldots, n\}$ and by $\one_{n}$ the $n$-dimensional vector with all entries $1$. For any vector $x$, let $\|x\|_{p}$ denotes its $p$-norm. For any matrix $M$, let $M_{k}^{T}$ denote the $m$-th row of $M$, $\|M\|_{\op}$ denote its operator norm and $\|M\|_{\mathrm{F}}$ denote its Frobenius norm. When $M$ is a square matrix, we define the matrix sign as
\[\sign(M) = UV^{T}.\]
By definition, $\sign(M)$ is orthogonal. When $n = 1$, $M$ is a scalar and $\sign(M)$ reduces to the classical sign of scalars. Further we denote by $\lambda_{\max}(M)$ (resp. $\lambda_{\min}(M)$) the largest (resp. the smallest) eigenvalue of $M$ in absolute values, by $\kappa(M)$ the condition number $\lambda_{\max}(M) / \lambda_{\min}(M)$. In particular we write $\lambda_{\min}^{*}(\Lambda^{*})$ as $\lambda_{\min}^{*}$ for short.

To state our generic bound, we need to define the following quantities. 
\begin{itemize}
\item \emph{Effective eigen-gap} $\gap^{*}$: 
  \begin{equation}
    \label{eq:gap}
    \gap^{*} \triangleq \min\{\sep_{s+1, s+r}(A^{*}), \lambda_{\min}^{*}\},
  \end{equation}
where $\sep_{s+1, s+r}(A^{*}) = \min\{\lambda_{s}^{*} - \lambda_{s + 1}^{*}, \lambda_{s + r}^{*} - \lambda_{s + r + 1}^{*}\}$ with $\lambda_{0}^{*} = \infty$ and $\lambda_{n + 1}^{*} = -\infty$. Note that $\gap^{*} = \sep_{s+1, s+r}(A^{*})$ except when $\Lambda^{*}$ includes both positive and negative eigenvalues but $0$ is not an eigenvalue of $\Lambda^{*}$. Therefore $\gap^{*}$ is essentially the eigen-gap in the conventional sense.
\item \emph{Effective condition number} $\bar{\kappa}^{*}$:
  \begin{equation}
    \label{eq:barkappa}
    \bar{\kappa}^{*}\triangleq \min\{\kappa(\Lambda^{*}), 2r\},
  \end{equation}
Note that the effective condition number is never larger $2r$ however ill-conditioned the problem is. On the other hand, if $\Lambda^{*}$ is well-conditioned but $r$ is large, $\bar{\kappa}^{*}$ can also be small.
\item full eigenspace $\bar{U}^{*}$ of $A^{*}$, i.e. 
  \begin{equation}
    \label{eq:barU}
    A^{*}\bar{U}^{*} = \bar{U}^{*}\bar{\Lambda^{*}},
  \end{equation}
where $\bar{\Lambda}^{*}$ includes all non-zero eigenvalues of $A^{*}$. Note that the number of columns $\bar{U}^{*}$ may significantly differ from that of $U^{*}$;
\end{itemize}

Our generic $\ell_{\ttinf}$ bound requires the following four assumptions.
\begin{enumerate}[\textbf{A}1]\label{enumi:assumptions}
\item For any $\delta \in (0, 1)$, there exists a random matrix $A^{(k)}\in \R^{n\times n}$ such that
  \begin{equation}
    \label{eq:TV_cond}
    d_{TV}(\P_{(A_{k}, A^{(k)})}, \P_{A_{k}}\times \P_{A^{(k)}})\le \delta / n,
  \end{equation}
where $d_{TV}$ denotes the total variation distance and it holds simultaneously for all $k$ and all contiguous subsets $S\subset [r]$ that
\[\|A^{(k)} - A\|_{\op}\le L_{1}(\delta), \quad \frac{\|(A^{(k)} - A)U_{S}\|_{\op}}{\lambda_{\min}(\Lambda_{S}^{*})}\le \lb \kappa(\Lambda_{S}^{*})L_{2}(\delta) + L_{3}(\delta)\rb \mnorm{U_{S}},\]
with probability at least $1 - \delta$ for some deterministic functions $L_{1}(\delta), L_{2}(\delta), L_{3}(\delta)$, where $U_{S}\in \R^{n\times |S|}$ denotes the matrix formed by columns of $U$ with indices in $S$.
\item There exists deterministic functions $\lambda_{-}(\delta), E_{+}(\delta), \bar{E}_{+}(\delta), E_{\infty}(\delta)$, such that for any $\delta \in (0, 1)$, the following event occurs with probability at least $1 - \delta$:
\[\maxnorm{\Lambda - \Lambda^{*}}\le \lambda_{-}(\delta), \quad \|EU^{*}\|_{\op}\le E_{+}(\delta), \quad \|E\bar{U}^{*}\|_{\op}\le \bar{E}_{+}(\delta), \quad \mnorm{E}\le E_{\infty}(\delta).\]
\item There exist deterministic functions $b_{\infty}(\delta), b_{2}(\delta) > 0$, such that for any $\delta \in (0, 1)$, $k\in [n]$, $r'\le r$ and fixed matrix $W\in \R^{n\times r'}$,
  \begin{align*}
    \|E_{k}^{T}W\|_{2} &\le b_{\infty}(\delta)\mnorm{W} + b_{2}(\delta)\|W\|_{\op}, \,\, \mbox{with probability at least } 1 - \delta / n,
  \end{align*}
\item $\gap^{*}\ge 4\lb\sigma(\delta) + L_{1}(\delta) + \lambda_{-}(\delta)\rb$ where
  \begin{equation}
    \label{eq:eta}
\eta(\delta)  = E_{\infty}(\delta) + b_{\infty}(\delta) + b_{2}(\delta), \quad \sigma(\delta) = \{\bar{\kappa}^{*}L_{2}(\delta) + L_{3}(\delta) + 1\}\eta(\delta) + E_{+}(\delta),
  \end{equation}
\end{enumerate}
Assumption \textbf{A}1 is worth some discussion. Roughly speaking, \textbf{A}1 controls the amount and the structure of entry dependence. The following proposition gives three cases where $L_{1}(\delta), L_{2}(\delta)$ and $L_{3}(\delta)$ can be exactly characterized. The proof is relegated to Appendix \ref{app:main}.
\begin{proposition}\label{prop:A1}
  Assume that 
  \begin{enumerate}[(a)]
  \item If $A_{ij}$'s are independent random variables, then there exists $A^{(k)}$ such that \textbf{A}1 is satisfied with
\[L_{1}(\delta) = \sqrt{2}(\mnorm{A^{*}} + E_{\infty}(\delta)), \quad L_{2}(\delta) = 1, \quad L_{3}(\delta) = \frac{E_{\infty}(\delta) + \lambda_{-}(\delta) + \mnorm{A^{*}}}{\lambda_{\min}^{*}}.\]
  \item Assume that for any $k$, there exists a subset $\N_{k}\subset [n]$, such that $A_{k}$ is independent of $\{A_{i}: i\not\in \N_{k}\}$. Let $m = \max_{k}|\N_{k}|$, then there exists $A^{(k)}$ such that \textbf{A}1 is satisfied with
\[L_{1}(\delta) = \sqrt{2m}(\mnorm{A^{*}} + E_{\infty}(\delta)), \quad L_{2}(\delta) = m, \quad L_{3}(\delta) = \frac{m(E_{\infty}(\delta) + \lambda_{-}(\delta) + \mnorm{A^{*}})}{\lambda_{\min}^{*}}.\]
  \end{enumerate}
\end{proposition}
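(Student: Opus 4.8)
The plan is to exhibit an explicit ``neighborhood leave‑out'' surrogate for $A^{(k)}$ and then check the four parts of Assumption~\textbf{A}1 directly. For case~(b) I would take $A^{(k)}$ to be the matrix obtained from $A$ by zeroing out every row and every column whose index lies in $\N_k$; case~(a) is the instance $\N_k=\{k\}$ (and indeed $k\in\N_k$ always, since $A_k$ cannot be independent of itself), so it suffices to treat (b) and read off (a) with $m=1$. With this choice $A^{(k)}$ is a measurable function of $\{A_{ij}:i,j\notin\N_k\}$, hence of the rows $\{A_i:i\notin\N_k\}$ alone, and those are independent of $A_k$ by hypothesis; therefore the left‑hand side of \eqref{eq:TV_cond} is in fact $0\le\delta/n$.

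For the two norm bounds I would condition once and for all on the probability‑$(1-\delta)$ event of Assumption~\textbf{A}2, on which $\|A_j\|_2\le\|A_j^*\|_2+\|E_j\|_2\le\mnorm{A^*}+E_\infty(\delta)$ for every $j$ and, since $\Lambda,\Lambda^*$ are diagonal, $\lambda_{\max}(\Lambda_S)\le\lambda_{\max}(\Lambda_S^*)+\lambda_-(\delta)$ for every contiguous $S$; because $\mnorm{E}$ and $\maxnorm{\Lambda-\Lambda^*}$ control all rows and all eigenvalues simultaneously, every estimate below is valid for all $k$ and all $S$ on this single event. Writing $B:=A-A^{(k)}$ and splitting coordinates into $\N_k$ and $\N_k^c$, the $(\N_k^c,\N_k^c)$‑block of $B$ vanishes, so $B=\diag(A_{\N_k\N_k},0)$ plus the matrix whose only nonzero blocks are the $(\N_k,\N_k^c)$ and $(\N_k^c,\N_k)$ blocks of $A$; the operator norms of these two summands are $\|A_{\N_k\N_k}\|_{\op}$ and $\|A_{\N_k\N_k^c}\|_{\op}$, so using $a+b\le\sqrt2\,(a^2+b^2)^{1/2}$ and $\|M\|_{\op}\le\|M\|_{\mathrm{F}}$,
\[
  \|B\|_{\op}\ \le\ \sqrt2\,\bigl(\|A_{\N_k\N_k}\|_{\mathrm{F}}^2+\|A_{\N_k\N_k^c}\|_{\mathrm{F}}^2\bigr)^{1/2}\ =\ \sqrt2\,\Bigl(\sum_{i\in\N_k}\|A_i\|_2^2\Bigr)^{1/2}\ \le\ \sqrt{2m}\,(\mnorm{A^*}+E_\infty(\delta)),
\]
which is exactly $L_1(\delta)$; squeezing the leading constant down to $\sqrt2$ (rather than a cruder $2$ or $3$) is the only place the block structure is genuinely used.

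For the weighted bound I would compute $BU_S$ row by row from $AU_S=U_S\Lambda_S$: for $i\in\N_k$ the $i$‑th row of $B$ equals that of $A$, so the corresponding row of $BU_S$ is that of $U_S\Lambda_S$, while for $i\notin\N_k$ it equals $\sum_{j\in\N_k}A_{ij}(U_S)_j^T$; hence $BU_S=P_{\N_k}U_S\Lambda_S+(I-P_{\N_k})\sum_{j\in\N_k}A_{\cdot j}(U_S)_j^T$ with $P_{\N_k}=\sum_{i\in\N_k}e_ie_i^T$. Bounding each of the at most $m$ rank‑one pieces gives
\[
  \|BU_S\|_{\op}\ \le\ m\,\mnorm{U_S}\bigl(\lambda_{\max}(\Lambda_S)+\mnorm{A^*}+E_\infty(\delta)\bigr)\ \le\ m\,\mnorm{U_S}\bigl(\lambda_{\max}(\Lambda_S^*)+\lambda_-(\delta)+\mnorm{A^*}+E_\infty(\delta)\bigr),
\]
and dividing by $\lambda_{\min}(\Lambda_S^*)$, writing $\kappa(\Lambda_S^*)=\lambda_{\max}(\Lambda_S^*)/\lambda_{\min}(\Lambda_S^*)$, and using $\lambda_{\min}(\Lambda_S^*)\ge\lambda_{\min}^*$ (the absolute eigenvalues of $\Lambda_S^*$ form a sub‑collection of those of $\Lambda^*$) reproduces $(\kappa(\Lambda_S^*)L_2(\delta)+L_3(\delta))\mnorm{U_S}$ with $L_2(\delta)=m$ and $L_3(\delta)=m(E_\infty(\delta)+\lambda_-(\delta)+\mnorm{A^*})/\lambda_{\min}^*$; part (a) is the $m=1$ case. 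I do not anticipate a genuine obstacle: the argument is the choice of surrogate plus careful accounting, and the only delicate points are keeping all row‑ and eigenvalue‑estimates uniform over $k$ and over contiguous $S$ on the single \textbf{A}2 event, and using the block split to keep the leading constant at $\sqrt{2}$.
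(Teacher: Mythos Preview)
Your proposal is correct and follows essentially the same approach as the paper: the same leave-out surrogate $[A^{(k)}]_{ij}=A_{ij}I(i\notin\N_k,\,j\notin\N_k)$, the same independence argument giving total-variation zero, and the same rank-one decomposition of $(A-A^{(k)})U_S$ via $AU_S=U_S\Lambda_S$ to read off $L_2,L_3$. The only cosmetic difference is in the $L_1$ bound: the paper simply uses $\|A-A^{(k)}\|_{\op}\le\|A-A^{(k)}\|_{\mathrm F}\le\sqrt{2\sum_{i\in\N_k}\|A_i\|_2^2}$ directly (the factor $2$ coming from the symmetric row/column count), arriving at the same $\sqrt{2m}$ constant without your block split and Cauchy--Schwarz step---so the block decomposition is not actually needed to secure the $\sqrt{2}$.
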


In literature \citep[e.g.][]{abbe2017entrywise}, it is typically assumed that 
\begin{equation}
  \label{eq:E2delta}
  \|E\|_{\op}\le E_{2}(\delta) \quad \mbox{with probability at least }1 - \delta.
\end{equation}
Assumption \textbf{A}2 is satisfied under \eqref{eq:E2delta} if 
\begin{equation}\label{eq:cond_E2}
  \lambda_{-}(\delta) = E_{+}(\delta) = \bar{E}_{+}(\delta) = E_{\infty}(\delta) = E_{2}(\delta).
\end{equation}
This is because $\maxnorm{\Lambda - \Lambda^{*}}\le \|E\|_{\op}$ by Weyl's inequality, $\|EU^{*}\|_{\op}\le \|E\|_{\op}, \|E\bar{U}^{*}\|_{\op}\le \|E\|_{\op}$ and $\mnorm{E}\le \|E\|_{\op}$ by definition. In general, \textbf{A}2 can be strictly weaker than \eqref{eq:E2delta}. 

Assumption \textbf{A}3 requires a concentration inequality on linear transforms of rows of $E$. A similar version is consisdered in \cite{abbe2017entrywise} except that the operator norm is replaced by the Frobenius norm. We emphasize that our assumption \textbf{A}3 can yield tighter result when $r > 1$. Using a standard $\eps$-net argument, \textbf{A}3 can be verified by considering vectors $W$ only. The following proposition summarizes the result with the proof relegated to Appendix \ref{app:binarybound}.
\begin{proposition}\label{prop:vector2matrix}
  Suppose that for any $\delta \in (0, 1)$ and vector $w\in \R^{n}$, there exists $a_{\infty}(\delta), a_{2}(\delta) > 0$ such that for each $k$
  \begin{align*}
    E_{k}^{T}w &\le a_{\infty}(\delta)\|w\|_{\infty} + a_{2}(\delta)\|w\|_{2},
  \end{align*}
with probability at least $1 - \delta$. Then assumption \textbf{A}3 holds with 
\[b_{\infty}(\delta) = 2a_{\infty}\lb\frac{\delta}{5^{r}n}\rb, \quad b_{2}(\delta) = 2a_{2}\lb\frac{\delta}{5^{r}n}\rb.\]
\end{proposition}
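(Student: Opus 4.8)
The plan is to reduce the matrix statement in \textbf{A}3 to the vector statement via a discretization ($\epsilon$-net) argument on the image of the unit ball under $W$. Fix $k\in[n]$, $r'\le r$, and a deterministic matrix $W\in\R^{n\times r'}$. The quantity to control is $\|E_k^T W\|_2 = \sup_{\|v\|_2=1, v\in\R^{r'}} \langle E_k^T W, v\rangle = \sup_{\|v\|_2=1} E_k^T (Wv)$. For a fixed unit vector $v$, the vector $w = Wv\in\R^n$ satisfies $\|w\|_2\le \|W\|_{\op}$ and $\|w\|_\infty = \|Wv\|_\infty \le \mnorm{W}$ (since $(Wv)_i = W_i^T v$ and $|W_i^T v|\le\|W_i\|_2\|v\|_2 = \|W_i\|_2$). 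So the vector hypothesis gives $E_k^T(Wv)\le a_\infty(\delta)\mnorm{W} + a_2(\delta)\|W\|_{\op}$ with probability at least $1-\delta$, for each \emph{fixed} $v$.

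The issue is that the supremum over $v$ is over an infinite set, so I would take a $1/2$-net $\mathcal{N}$ of the unit sphere $S^{r'-1}\subset\R^{r'}$; a standard volumetric bound gives $|\mathcal{N}|\le 5^{r'}\le 5^{r}$. The standard net argument for operator-norm-type quantities shows that $\sup_{\|v\|_2=1} E_k^T(Wv)\le 2\max_{v\in\mathcal{N}} E_k^T(Wv)$: indeed, writing the maximizing $v$ as $v = v_0 + (v-v_0)$ with $v_0\in\mathcal{N}$, $\|v-v_0\|_2\le 1/2$, one gets $E_k^T(Wv) = E_k^T(Wv_0) + E_k^T(W(v-v_0)) \le \max_{\mathcal{N}} E_k^T(Wv') + \tfrac12\sup_{\|u\|_2=1}E_k^T(Wu)$, and rearranging yields the factor $2$. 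Then I apply the vector bound to each of the at most $5^r$ points $Wv'$ with $v'\in\mathcal{N}$, each with failure probability at most $\delta/(5^r n)$ after rescaling the confidence parameter; a union bound over $\mathcal{N}$ gives overall failure probability at most $5^r\cdot \delta/(5^r n) = \delta/n$. On the good event, $\|E_k^TW\|_2\le 2\max_{v'\in\mathcal{N}}E_k^T(Wv') \le 2a_\infty(\delta/(5^r n))\mnorm{W} + 2a_2(\delta/(5^r n))\|W\|_{\op}$, which is exactly \textbf{A}3 with $b_\infty(\delta) = 2a_\infty(\delta/(5^r n))$ and $b_2(\delta) = 2a_2(\delta/(5^r n))$.

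The only mild subtlety — and the step I would be most careful about — is making sure the $\mnorm{W}$ term survives the net argument cleanly: we need $\|Wv\|_\infty\le\mnorm{W}$ for \emph{all} unit $v$ (not just net points), which holds by Cauchy-Schwarz row-wise as noted above, so applying the vector hypothesis at the net point $v'$ with $w = Wv'$ is legitimate since $\|Wv'\|_\infty\le\mnorm{W}$ and $\|Wv'\|_2\le\|W\|_{\op}$. One should also note $a_\infty, a_2$ are presumably non-increasing in $\delta$ (smaller confidence parameter, larger constant), so shrinking $\delta$ to $\delta/(5^r n)$ only inflates the constants, consistent with the stated $b_\infty, b_2$. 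Everything else is routine; there is no probabilistic independence needed beyond what the vector hypothesis already supplies, since the union bound is over the deterministic net.
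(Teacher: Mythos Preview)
Your proposal is correct and follows essentially the same argument as the paper: a $1/2$-net on $S^{r'-1}$ of cardinality at most $5^r$, the standard $\|x\|_2\le \frac{1}{1-\eps}\max_{\zeta\in\mathcal{N}}\zeta^T x$ bound giving the factor $2$, application of the vector hypothesis at each net point with $\delta' = \delta/(5^r n)$, and a union bound. The only cosmetic difference is that the paper phrases the net inequality for a generic vector $x$ before specializing to $x = E_k^T W$, whereas you work directly with $E_k^T(Wv)$; your remark about monotonicity of $a_\infty, a_2$ in $\delta$ is unnecessary since the hypothesis is stated for every $\delta$, but this does not affect the argument.
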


Assumption \textbf{A}4 guarantees sufficient eigen-gap. In classical perturbation theory \citep[e.g.][]{davis1970rotation} based on operator norm or Frobenius norm, it is necessary to assume $\gap^{*}\succeq \lambda_{-}(\delta)$. Nonetheless, we will show that \textbf{A}4 is equivalent to $\gap^{*}\succeq \lambda_{-}(\delta)$ in many applications. 

\subsection{Main results}\label{subsec:main}
Based on our assumptions, we first derive an bound for $d_{\ttinf}(U, AU^{*}(\Lambda^{*})^{-1})$. As shown in \cite{abbe2017entrywise} and \cite{cape2019two}, $AU^{*}(\Lambda^{*})^{-1}$ is a better approximation of $U$ than $U^{*}$. The proof is quite involved and thus presented step by step in Appendix \ref{app:generic_bound}.
\begin{theorem}\label{thm:generic_bound}
 Given any $\delta\in (0, 1)$. Let $\gap^{*}, \bar{\kappa}^{*}$ and $\bar{U}^{*}$ be defined in \eqref{eq:gap} - \eqref{eq:barU}. Then under assumptions \textbf{A}1-\textbf{A}4, 
  \begin{align}
    d_{\ttinf}(U, AU^{*}(\Lambda^{*})^{-1})&\le \frac{C}{\gap^{*}}\bigg\{\sigma(\delta)\lb\mnorm{U^{*}} + \frac{\mnorm{EU^{*}}}{\lambda_{\min}^{*}}\rb  + \frac{E_{+}(\delta)b_{2}(\delta)}{\lambda_{\min}^{*}}\nonumber\\
& \qquad\quad + \min\left\{E_{+}(\delta)\xi_{1}, \bar{E}_{+}(\delta)\sqrt{\bar{\kappa}^{*}}\xi_{2}, \bar{E}_{+}(\delta)\bar{\kappa}^{*}\xi_{3}\right\}\bigg\},\nonumber
  \end{align}
with probability at least $1 - B(r)\delta$, where $C$ is a universal constant (that can be chosen as $72$), 
\begin{equation}
  \label{eq:Br}
  B(r) = 10\min\{r, 1 + \log_{2}\kappa^{*}\},
\end{equation}
and
\begin{align}
\xi_{1}  = \frac{\mnorm{A^{*}}}{\lambda_{\min}^{*}}, \quad \xi_{2} = \frac{\sqrt{\maxnorm{A^{*}}}}{\sqrt{\lambda_{\min}^{*}} I(A^{*}\mbox{ is psd})}, \quad \xi_{3} = \mnorm{\bar{U}^{*}}\label{eq:xi}.
\end{align}
\end{theorem}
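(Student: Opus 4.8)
The plan is to bound $d_{\ttinf}(U, AU^{*}(\Lambda^{*})^{-1})$ by setting up a fixed-point / self-bounding argument on the perturbation of $U$, following the spirit of \cite{abbe2017entrywise} but carefully tracking the role of the operator-norm and linear-contrast controls provided by \textbf{A}1--\textbf{A}3. First I would fix a suitable orthogonal alignment matrix $O$ (the natural candidate being $\sign(U^{T}U^{*})$ or $\sign(U^{T}AU^{*}(\Lambda^{*})^{-1})$) so that $UO$ is as close to $AU^{*}(\Lambda^{*})^{-1}$ as possible, and then decompose the row-wise error. The starting identity is the eigen-equation $AU = U\Lambda$, which can be rearranged into $U = AU^{*}(\Lambda^{*})^{-1} + (\text{terms involving } U - U^{*} \text{ and } \Lambda - \Lambda^{*})$; more precisely one writes $UO - AU^{*}(\Lambda^{*})^{-1} = A(U O (\Lambda^{*})^{-1}O^{T} \cdot O - U^{*}(\Lambda^{*})^{-1}) + A U (\Lambda^{-1}O - O(\Lambda^{*})^{-1})\Lambda \cdots$, and the key is that the right-hand side involves $A$ applied to a ``smaller'' object plus lower-order remainders. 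Taking the $k$-th row and using $\|(AU^{*}(\Lambda^*)^{-1})_k\|_2 \le \text{stuff}$ forces an inequality of the shape $d \le (\text{const}/\gap^{*})\bigl(\text{signal terms} + (\text{something} < 1/2)\cdot d\bigr)$, which can then be solved for $d$ via \textbf{A}4.

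The crucial device for removing spurious log-factors is the leave-one-out construction: for each row index $k$, \textbf{A}1 supplies an auxiliary matrix $A^{(k)}$ that is (nearly) independent of the $k$-th row $A_k$ yet close to $A$ in operator norm and in the weighted sense $\|(A^{(k)}-A)U_S\|_{\op}$. I would let $U^{(k)}$ denote the corresponding eigenvector matrix of $A^{(k)}$; then $A_k^{T} U^{(k)}$ is a linear contrast of the independent row $A_k$ with a \emph{fixed} (conditionally on $A^{(k)}$) coefficient matrix, so \textbf{A}3 applies and gives $\|E_k^{T}U^{(k)}\|_2 \le b_\infty(\delta)\mnorm{U^{(k)}} + b_2(\delta)\|U^{(k)}\|_{\op}$. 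One then must bound $\mnorm{U - U^{(k)}}$ and $\|U^{(k)} - U^{*}\|$ by $d$ itself plus the operator-norm distance between $A$ and $A^{(k)}$ (here \textbf{A}1's second bullet, with the $\kappa(\Lambda_S^{*})$ and $\mnorm{U_S}$ factors, is exactly what propagates the coherence of $U$ through the auxiliary problem), and feed these back into the row-wise recursion. A union bound over $k$ and over the $B(r)$ choices of contiguous subset $S$ — which is where the $\log_2 \kappa^{*}$ and $r$ in \eqref{eq:Br} enter — controls the failure probability. The three-way $\min$ over $\xi_1,\xi_2,\xi_3$ arises from three different ways of estimating $\mnorm{E U^{*}}$-type quantities: bounding $\mnorm{A^{*}U^{*}} = \mnorm{U^{*}\Lambda^{*}}$ crudely by $\mnorm{A^{*}}$, or by passing through $\bar U^{*}$ with either an $\ell_2$-type estimate (giving $\sqrt{\bar\kappa^{*}}\xi_2$, valid when $A^{*}$ is psd) or a direct coherence estimate $\mnorm{\bar U^{*}}$ (giving $\bar\kappa^{*}\xi_3$).

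The main obstacle, I expect, is managing the interplay between the top-$r$ eigenspace $U^{*}$ and the \emph{full} eigenspace $\bar U^{*}$: the recursion most naturally closes in terms of $\bar U^{*}$ (because $A^{*}$ restricted to its range is invertible, so one can divide by eigenvalues), but the target bound is stated for the $r$-dimensional $U$, so one needs a clean way to project out the ``middle'' eigenvalues $\lambda_{s+r+1}^{*},\dots$ without picking up $1/\gap^{*}$ blow-ups beyond the single factor allowed. This is presumably handled by splitting $A^{*}$'s spectrum into the block inside $\Lambda^{*}$ and its orthogonal complement within $\bar\Lambda^{*}$, using $\sep_{s+1,s+r}(A^{*})$ for the gap on one side and $\lambda_{\min}^{*}$ on the other — exactly the two quantities combined in the effective gap $\gap^{*}$ — and applying a Davis--Kahan-type $\ell_2$ bound (via \textbf{A}2's $E_{+},\bar E_{+}$) to control the cross terms. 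A secondary technical nuisance is that the alignment matrix $O$ and the auxiliary-problem alignments $O^{(k)}$ are all different orthogonal matrices; keeping track that their pairwise discrepancies are themselves $O(d)$ (again via Davis--Kahan plus $\bar\kappa^{*}$) so that everything can be consolidated into a single self-bounding inequality is where most of the bookkeeping, and the appearance of $\bar\kappa^{*}$ in $\sigma(\delta)$, will live. This is why the authors say the proof is parsed into six steps.
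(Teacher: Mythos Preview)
Your overall architecture is right: the paper does set $H = U^{T}U^{*}$, align via $\sign(H)$, decompose $(UH - AU^{*}(\Lambda^{*})^{-1})_{k}$ row by row, and close a self-bounding inequality using the leave-one-out matrices $A^{(k)}$ from \textbf{A}1 together with \textbf{A}3 applied to $E_{k}^{T}(U^{(k)}H^{(k)} - U^{*})$. That part of the proposal matches Steps I, III, IV of the paper's six-step proof.

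There are, however, two genuine gaps in your plan. First, you misidentify the origin of the three-way $\min\{\xi_{1},\xi_{2},\xi_{3}\}$. It does \emph{not} come from different ways of estimating $\mnorm{EU^{*}}$; that quantity already sits separately in the bound. The $\xi$-terms arise from the deterministic problem of bounding $\mnorm{A^{*}(UH - U^{*})} = \mnorm{A^{*}(UU^{T} - U^{*}(U^{*})^{T})U^{*}}$, which appears in the row-wise decomposition because $A_{k} = E_{k} + A_{k}^{*}$. The $\xi_{1}$ bound is the crude one you describe (Davis--Kahan plus $\mnorm{A^{*}}$), but the sharper $\xi_{2}$ and $\xi_{3}$ bounds require \emph{Kato's contour integral}: one writes $UU^{T} - U^{*}(U^{*})^{T} = -\frac{1}{2\pi i}\oint_{\mathcal{C}}(A^{*}-zI)^{-1}E(A-zI)^{-1}\,dz$ over a rectangular contour separating $\Lambda^{*}$ from the rest of the spectrum, premultiplies by $(\bar{\Lambda}^{*})^{1/2}(\bar{U}^{*})^{T}$ (for $\xi_{2}$, using $A^{*}$ psd so that $\sqrt{\maxnorm{A^{*}}} = \mnorm{\bar{U}^{*}(\bar{\Lambda}^{*})^{1/2}}$) or by $\bar{\Lambda}^{*}(\bar{U}^{*})^{T}$ (for $\xi_{3}$), and then carefully estimates the contour integral edge by edge rather than via the naive perimeter-times-max bound. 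This is the paper's Step II and is the technically heaviest part; your proposal does not account for it.

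Second, your treatment of $\bar{\kappa}^{*}$ and $B(r)$ is too vague. The $\bar{\kappa}^{*} = \min\{\kappa^{*},2r\}$ does not come merely from a union bound over contiguous $S$; it requires a specific \emph{pre-conditioned eigen-partition} (the paper's Step V): one recursively partitions $\{s+1,\dots,s+r\}$ into blocks $S_{1},\dots,S_{B}$ so that each block has $\kappa_{j}^{*} \le 2|S_{j}|$ while $\gap_{j}^{*} \ge \gap^{*}$ and $\lambda_{\min,j}^{*}$ grows geometrically across blocks. The self-bounding argument is then applied block by block (this is where \textbf{A}1's ``for all contiguous $S$'' clause is actually used), and the block contributions are summed via geometric-series estimates $\sum_{j}(\gap_{j}^{*})^{-\gamma}(\lambda_{\min,j}^{*})^{-\gamma'} \le C(\gamma,\gamma')/(\gap^{*})^{\gamma}(\lambda_{\min}^{*})^{\gamma'}$. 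The bound $B \le \min\{r, 1 + \log_{2}\kappa^{*}\}$ on the number of blocks is what produces the $B(r)$ in the failure probability. Without this construction, the self-bounding inequality only closes under the stronger assumption $\gap^{*} \succeq \kappa^{*}\eta(\delta)$ rather than $\gap^{*} \succeq \bar{\kappa}^{*}\eta(\delta)$.
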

\begin{remark}
The last term is the minimum of three fundamentally different bounds, which are obtained from Kato's integral \citep[e.g.][]{kato1949convergence}; see Appendix \ref{subapp:step2} for details. The second term kicks in only when $A^{*}$ is postive semi-definite. This is true in many cases such as spiked wigner ensemble and phase synchronization \citep[e.g.][]{abbe2017entrywise}. The third term becomes useful for  full eigenspace recovery. This is common in the cases of community detection (Section \ref{sec:exact_recovery}).
\end{remark}

\begin{remark}\label{rem:EUA3}
All terms in the bound are deterministic except for $\mnorm{EU^{*}}$. Although assumption \textbf{A}3 directly yields a bound as follows:
\[\mnorm{EU^{*}}\le b_{\infty}(\delta) \mnorm{U^{*}} + b_{2}(\delta)\|U^{*}\|_{\op} = b_{\infty}(\delta) \mnorm{U^{*}} + b_{2}(\delta),\]
we found it can be sharpened in some applications; see Section \ref{sec:binarybound} for instance. For this reason we keep this term in the bound and derive its upper tail case by case. 
\end{remark}

\begin{remark}\label{rem:orthogonal}
  Recall that $d_{\ttinf}(U, AU^{*}(\Lambda^{*})^{-1}) = \inf_{O\in \R^{r\times r}, O^{T}O = I}\mnorm{UO - AU^{*}(\Lambda^{*})^{-1}}$. In our proof, we find a particular orthogonal matrix $\td{O}$ such that the proved bound holds for $\mnorm{U\td{O} - AU^{*}(\Lambda^{*})^{-1}}$. Indeed, this orthogonal matrix is in the form of
  \[\td{O} = \diag(\sign(U_{1}^{T}U_{1}^{*}), \ldots, \sign(U_{J}^{T}U_{J}^{*}))\]
  where $U_{1}, \ldots, U_{J}$ (resp. $U_{1}^{*}, \ldots, U_{J}^{*}$) form a partition of $U$ (resp. $U^{*}$) and the eigenvalues corresponding to each $U_{j}$ (resp. $U_{j}^{*}$) consist of consecutive eigenvalues in $\Lambda$ (resp. $\Lambda^{*}$). Furthermore, as shown in Step I-IV in Appendix \ref{app:generic_bound}, Theorem \ref{thm:generic_bound} holds for $\mnorm{U\sign(U^{T}U^{*}) - AU^{*}(\Lambda^{*})^{-1}}$ if $\bar{\kappa}^{*}$ is replaced by $\kappa^{*}$, in both the assumptions and the bound. This also applies to Theorem \ref{thm:generic_bound2} - \ref{thm:generic_bound2_trick}.
\end{remark}

By the triangle inequality, we can directly obtain the following perturbation bound for $d_{\ttinf}(U, U^{*})$ by $d_{\ttinf}(U, AU^{*}(\Lambda^{*})^{-1}) + d_{\ttinf}(AU^{*}(\Lambda^{*})^{-1}, U^{*})$. We leave the proof in Appendix \ref{app:main}. In many cases, the first term dominates the second term, in which cases Theorem \ref{thm:generic_bound2} essentially implies that $d_{\ttinf}(U, U^{*}) \asymp \mnorm{EU^{*}} / \lambda_{\min}^{*}$.
\begin{theorem}\label{thm:generic_bound2}
Under the same setting of Theorem \ref{thm:generic_bound},
  \begin{align}
    \lefteqn{d_{\ttinf}(U, U^{*})\le \frac{C\mnorm{EU^{*}}}{\lambda_{\min}^{*}}}\nonumber\\
& + \frac{C}{\gap^{*}}\bigg\{\sigma(\delta)\mnorm{U^{*}} + \frac{E_{+}(\delta)b_{2}(\delta)}{\lambda_{\min}^{*}} + \min\left\{E_{+}(\delta)\xi_{1}, \bar{E}_{+}(\delta)\sqrt{\bar{\kappa}^{*}}\xi_{2}, \bar{E}_{+}(\delta)\bar{\kappa}^{*}\xi_{3}\right\}\bigg\},\nonumber
  \end{align}
with probability at least $1 - B(r)\delta$, where $C$ is a universal constant (that can be chosen as $72$).
\end{theorem}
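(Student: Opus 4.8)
The plan is to deduce Theorem \ref{thm:generic_bound2} from Theorem \ref{thm:generic_bound} by a one-line triangle inequality routed through the matrix $AU^{*}(\Lambda^{*})^{-1}$, followed by absorbing a cross term using the eigen-gap condition \textbf{A}4.

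First I would use the triangle inequality for the genuine norm $\mnorm{\cdot}$. Let $\hat{O}\in\R^{r\times r}$ be an orthogonal matrix (nearly) attaining the infimum defining $d_{\ttinf}(U, AU^{*}(\Lambda^{*})^{-1})$. Then, since $\hat O$ is orthogonal,
\[
d_{\ttinf}(U, U^{*})\le \mnorm{U\hat{O} - U^{*}}\le \mnorm{U\hat{O} - AU^{*}(\Lambda^{*})^{-1}} + \mnorm{AU^{*}(\Lambda^{*})^{-1} - U^{*}} = d_{\ttinf}(U, AU^{*}(\Lambda^{*})^{-1}) + \mnorm{AU^{*}(\Lambda^{*})^{-1} - U^{*}}.
\]
Second, the last term simplifies exactly. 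Because $A^{*}U^{*} = U^{*}\Lambda^{*}$ by \eqref{eq:UU*}, we have $AU^{*}(\Lambda^{*})^{-1} - U^{*} = (A - A^{*})U^{*}(\Lambda^{*})^{-1} = EU^{*}(\Lambda^{*})^{-1}$; hence, by submultiplicativity of $\mnorm{\cdot}$ against the operator norm together with $\|(\Lambda^{*})^{-1}\|_{\op} = 1/\lambda_{\min}^{*}$,
\[
\mnorm{AU^{*}(\Lambda^{*})^{-1} - U^{*}}\le \frac{\mnorm{EU^{*}}}{\lambda_{\min}^{*}}.
\]

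Third, I would substitute the bound on $d_{\ttinf}(U, AU^{*}(\Lambda^{*})^{-1})$ from Theorem \ref{thm:generic_bound}. This reproduces all the desired terms plus an extra contribution $\tfrac{C}{\gap^{*}}\sigma(\delta)\cdot\tfrac{\mnorm{EU^{*}}}{\lambda_{\min}^{*}}$. To merge it into the leading term I invoke assumption \textbf{A}4, which yields $\gap^{*}\ge 4\sigma(\delta)$, so $\tfrac{C}{\gap^{*}}\sigma(\delta)\le C/4$. The total coefficient in front of $\mnorm{EU^{*}}/\lambda_{\min}^{*}$ is therefore at most $1 + C/4\le C$ (valid since $C = 72$), which gives precisely the stated inequality. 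The only probabilistic event involved is the one from Theorem \ref{thm:generic_bound}, so the probability bound $1 - B(r)\delta$ transfers verbatim.

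The ``hard part'' here is really just bookkeeping: correctly spotting the identity $AU^{*}(\Lambda^{*})^{-1} - U^{*} = EU^{*}(\Lambda^{*})^{-1}$, and recognizing that the cross term $\sigma(\delta)\mnorm{EU^{*}}/(\gap^{*}\lambda_{\min}^{*})$ can be absorbed only because the eigen-gap assumption makes $\sigma(\delta)/\gap^{*}$ a small universal constant. No new analytic input beyond Theorem \ref{thm:generic_bound} is needed.
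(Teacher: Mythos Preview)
Your proposal is correct and follows essentially the same route as the paper's own proof: triangle inequality through $AU^{*}(\Lambda^{*})^{-1}$, the identity $AU^{*}(\Lambda^{*})^{-1}-U^{*}=EU^{*}(\Lambda^{*})^{-1}$, substitution of Theorem~\ref{thm:generic_bound}, and absorption of the cross term $\tfrac{C\sigma(\delta)}{\gap^{*}}\cdot\tfrac{\mnorm{EU^{*}}}{\lambda_{\min}^{*}}$ via assumption \textbf{A}4 (giving coefficient $1+C/4\le C$). The only cosmetic difference is that you fix a near-minimizing $\hat O$ explicitly whereas the paper writes the triangle inequality directly for $d_{\ttinf}$; both are equivalent.
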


\subsection{Sharpening the bound via diagonal surgery}\label{subsec:generic_laplacian}
In this subsection we discuss a trick, referred to as ``diagonal surgery", to further improve the $\ell_{\ttinf}$ bound. The motivation is from matrices with large diagonal elements. If $A^{*}$ has high rank or even full rank, the last term in Theorem \ref{thm:generic_bound} and \ref{thm:generic_bound2} reduce to $\min\{\xi_{1}, \sqrt{\bar{\kappa}^{*}}\xi_{2}\}$. However, both $\xi_{1}$ and $\xi_{2}$ will be large when $A^{*}$ has large diagonal elements. For instance, when $A$ is the unnormalized graph Laplacian, defined in Section \ref{subsec:binary_laplacian} later, of an Erd\"{o}s-Renyi graph with edge connection probability $p$, then $\mnorm{A^{*}} \ge \maxnorm{A^{*}} = (n - 1)p$. By contrast, when $A$ is the adjacency matrix, $\mnorm{A^{*}} = p\sqrt{n - 1}, \maxnorm{A^{*}} = p$. If we had used Theorem \ref{thm:generic_bound} or \ref{thm:generic_bound2}, the bound applied to graph Laplacian would be significantly worse than that applied to the adjacency matrix. To overcome this shortcoming, we found a modification of the proof that allows us to replace $A$ by $A - \Sigma$ where $\Sigma$ is a possibly random diagonal matrix, provided that all diagonal elements of $\Sigma$ are well separated from $\Lambda^{*}$. This trick was implicitly used in \cite{balakrishnan2011noise}. Specifically, we need the following assumption:
\begin{enumerate}[\textbf{A}'0]
\item For any $\delta \in (0, 1)$, 
\[\frac{\min_{j\in [s + 1, s + r]}|\Lambda_{jj}^{*}|}{\min_{j\in [s + 1, s + r], k\in [n]}|\Lambda_{jj}^{*} - \Sigma_{kk}|}\le \Theta(\delta),\]
with probability at least $1 - \delta$ for some deterministic function $\Theta(\delta) > 0$.
\end{enumerate}
Let 
\begin{equation}
  \label{eq:EpAp}
  \Ap = A - \Sigma, \quad \Ap^{*} = \E \Ap, \quad \Ep = \Ap - \Ap^{*}.
\end{equation}
All other assumptions need to be slightly modified.
\begin{enumerate}[\textbf{A}'1]\label{enumi:assumptions}
\item The same as \textbf{A}1 except that \eqref{eq:TV_cond} is replaced by 
\[d_{TV}\lb\P_{(\Ep_{k}, A^{(k)})}, \P_{\Ep_{k}}\times \P_{A^{(k)}}\rb\le \delta / n.\]
\item There exists deterministic functions $\lambda_{-}(\delta), E_{+}(\delta), \Ep_{\infty}(\delta)$, such that for any $\delta \in (0, 1)$, the following event holds with probability at least $1 - \delta$:
\[\maxnorm{\Lambda - \Lambda^{*}}\le \lambda_{-}(\delta), \quad \|E U^{*}\|_{\op}\le E_{+}(\delta), \quad \mnorm{\Ep}\le \Ep_{\infty}(\delta).\]
\item There exists deterministic functions $\bp_{\infty}(\delta), \bp_{2}(\delta) > 0$, such that for any $\delta \in (0, 1)$, $k\in [n]$, $r'\le r$ and fixed matrix $W\in \R^{n\times r'}$,
  \begin{align*}
    \|\Ep_{k}^{T}W\|_{2} &\le \bp_{\infty}(\delta)\mnorm{W} + \bp_{2}(\delta)\|W\|_{\op}, \,\, \mbox{with probability at least } 1 - \delta / n.
  \end{align*}
\item $\gap^{*}\ge 4\lb\Theta(\delta)\sigmap(\delta) + L_{1}(\delta) + \lambda_{-}(\delta) + E_{+}(\delta)\rb$ where
  \begin{equation}
    \label{eq:etap}
\etap(\delta)  = \Ep_{\infty}(\delta) + \bp_{\infty}(\delta) + \bp_{2}(\delta), \quad \sigmap(\delta) = \{\bar{\kappa}^{*}L_{2}(\delta) + L_{3}(\delta) + 1\}\etap(\delta) + E_{+}(\delta).
  \end{equation}
\end{enumerate}

Unlike Theorem \ref{thm:generic_bound}, $AU^{*}(\Lambda^{*})^{-1}$ is no longer an approximation of $U$ after ``diagonal surgery''. In fact, the approximation becomes $U^{*} + \V$ where 
\begin{equation}
  \V_{k}^{T} = E_{k}^{T}U^{*}(\Lambda^{*} - \Sigma_{kk}I)^{-1}. 
\end{equation}
Note that when $\Sigma_{kk}\equiv 0$, 
\[U^{*} + \V = U^{*} + E U^{*}(\Lambda^{*})^{-1} = AU^{*}(\Lambda^{*})^{-1},\]
which recovers the case in Section \ref{subsec:main}.

\begin{theorem}\label{thm:generic_bound_trick}
Given any $\delta\in (0, 1)$. Let $\gap^{*}$ be defined in \eqref{eq:gap} and $\bar{\kappa}^{*}$ be defined in \eqref{eq:barkappa}. Then under assumptions \textbf{A}'0 - \textbf{A}'4,
\begin{align}
d_{\ttinf}(U, U^{*} + \V)&\le C\bigg\{\lb \frac{E_{+}^{2}(\delta)}{(\gap^{*})^{2}} + \frac{\Theta(\delta)\sigmap(\delta)}{\gap^{*}}\rb\lb\mnorm{U^{*}} + \frac{\Theta(\delta)\mnorm{E U^{*}}}{\lambda_{\min}^{*}}\rb\nonumber\\
& \qquad \quad  + \frac{\Theta(\delta)(\bp_{2}(\delta) + \mnorm{\Ap^{*}})E_{+}(\delta)}{\lambda_{\min}^{*}\gap^{*}}\bigg\},\nonumber
\end{align}
with probability at least $1 - B(r)\delta$, where $B(r)$ is defined in \eqref{eq:Br} and  $C$ is a universal constant (that can be chosen as $136$).
\end{theorem}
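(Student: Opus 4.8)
The plan is to reduce this statement to Theorem \ref{thm:generic_bound} by applying the latter to the shifted matrix $\Ap = A - \Sigma$ rather than to $A$ itself, while carefully tracking how the shift modifies the relevant quantities. The key observation, already recorded in the excerpt, is that $U$ is still an eigenspace of $A$ but no longer of $\Ap$; however, the first-order Kato-type expansion that underlies Theorem \ref{thm:generic_bound} can be rerun with $\Ap$ in place of $A$, and the approximant $AU^{*}(\Lambda^{*})^{-1}$ gets replaced by $U^{*}+\V$ with $\V_{k}^{T}=E_{k}^{T}U^{*}(\Lambda^{*}-\Sigma_{kk}I)^{-1}$, because the resolvent $(\Lambda^{*}-\Sigma_{kk}I)^{-1}$ is exactly what appears when one inverts the shifted block-diagonal operator row by row. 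So the first step is to re-derive the Step I--VI decomposition of Appendix \ref{app:generic_bound} verbatim but with $A\mapsto\Ap$, $A^{*}\mapsto\Ap^{*}$, $E\mapsto\Ep$ everywhere that the perturbation-of-eigenspace machinery is invoked, keeping $U^{*},\Lambda^{*}$ (the \emph{unshifted} population eigenpair) in the role of the target since those are what $\V$ is built from.

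Next I would translate each of the assumptions \textbf{A}'0--\textbf{A}'4 into the hypotheses \textbf{A}1--\textbf{A}4 needed to run Theorem \ref{thm:generic_bound} on $\Ap$. Assumption \textbf{A}'1 is literally \textbf{A}1 for $\Ep$; \textbf{A}'2 supplies $\lambda_{-},E_{+},\Ep_{\infty}$ and one notes $\maxnorm{\Lambdap-\Lambdap^{*}}\le\maxnorm{\Lambda-\Lambda^{*}}+\maxnorm{\Sigma\text{-shift effect}}$, which is why the $E_{+}(\delta)$ term is added inside the eigen-gap requirement of \textbf{A}'4; \textbf{A}'3 is \textbf{A}3 for $\Ep$. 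The role of $\Theta(\delta)$ from \textbf{A}'0 is the crucial new ingredient: wherever the original proof divides by $\lambda_{\min}^{*}$ to pass through the resolvent of $\Lambda^{*}$, the shifted proof divides by $\min_{j,k}|\Lambda_{jj}^{*}-\Sigma_{kk}|$ instead, and \textbf{A}'0 says this is at most $\Theta(\delta)$ times worse than $\lambda_{\min}^{*}$; this is exactly the mechanism that replaces bare factors of $1/\lambda_{\min}^{*}$ by $\Theta(\delta)/\lambda_{\min}^{*}$ in the statement and inflates $\sigmap$ to $\Theta(\delta)\sigmap$ in the gap condition. The term $E_{+}^{2}(\delta)/(\gap^{*})^{2}$ appears because $U$ is no longer exactly the $\Ap$-eigenspace, so a second-order correction term of the form $\|EU^{*}\|_{\op}^{2}/(\gap^{*})^{2}$ survives; this is bounded using $\|EU^{*}\|_{\op}\le E_{+}(\delta)$ from \textbf{A}'2. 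The $\mnorm{\Ap^{*}}$ inside the last term is what $\mnorm{A^{*}}$ becomes after the shift (so it can be made small for the Laplacian, the whole point of the surgery), and it enters through the Kato-integral bound $\xi_{1}$-type term, now with the resolvent gap controlled by $\Theta(\delta)$.

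The main obstacle I expect is the bookkeeping in re-deriving the six-step decomposition with the shift in place: in the original proof $U$ satisfies $AU=U\Lambda$ cleanly, and several identities (e.g. $U=AU\Lambda^{-1}$, used to bootstrap the $\ell_{\ttinf}$ control) break when $A$ is replaced by $\Ap$. One must instead write $\Ap U = U\Lambda - \Sigma U$ and carry the extra $\Sigma U$ term through the fixed-point argument, showing it contributes only lower-order terms or gets absorbed into the $\V$-correction. Concretely, the self-consistent equation for $U\td O - (U^{*}+\V)$ will pick up an inhomogeneous term involving $\Sigma(U\td O - U^{*})$ times a resolvent, and I would need to check that the operator norm of that resolvent is still $O(1/\gap^{*})$ after the shift — which is precisely where \textbf{A}'0 and the strengthened \textbf{A}'4 gap condition are used to close the contraction. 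Once the contraction closes, collecting the error terms and invoking the probabilistic bounds from \textbf{A}'1--\textbf{A}'3 on the union of the relevant events (costing the same $B(r)\delta$ as before, since the number of diagonal blocks is unchanged) yields the stated inequality; the universal constant degrades from $72$ to $136$ to absorb the extra $\Sigma U$ terms and the second-order $E_{+}^{2}/(\gap^{*})^{2}$ contribution.
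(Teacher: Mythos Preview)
Your high-level plan---re-derive the six-step decomposition with the diagonal shift in place, use \textbf{A}'0 to control the shifted resolvent $(\Lambda^{*}-\Sigma_{kk}I)^{-1}$ via $\Theta(\delta)$, and close the contraction under the strengthened gap condition \textbf{A}'4---matches the paper's route. However, three of your specific attributions are off, and the paper's mechanism is cleaner than the ``wholesale substitution then carry $\Sigma U$ as an error'' you describe.

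First, the paper does \emph{not} substitute $A\mapsto\Ap$ throughout the decomposition. It keeps the original Step~I identity based on $AU=U\Lambda$, and splits $A_k=\Ap_k+\Sigma_{kk}e_k$ at exactly one place: the term $A_k^{T}(UH-U^{*})(\Lambda^{*})^{-1}$. The resulting $\Sigma_{kk}(UH-U^{*})_k^{T}(\Lambda^{*})^{-1}$ is then moved to the \emph{left} side and combined with $(UH-U^{*})_k^{T}$; after multiplying by $\Lambda^{*}$ one obtains $(UH-U^{*}-\V)_k^{T}(\Lambda^{*}-\Sigma_{kk}I)$ exactly. So the $\Sigma$-contribution is absorbed \emph{algebraically} into the shifted resolvent, not treated as a residual to be bounded. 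Consequently the $J_1$ and $J_2$ terms still involve $E$ (not $\Ep$), which is why \textbf{A}'2 asks for $\|EU^{*}\|_{\op}\le E_{+}(\delta)$ rather than a bound on $\|\Ep U^{*}\|_{\op}$.

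Second, the $E_{+}^{2}(\delta)/(\gap^{*})^{2}$ term is not a new second-order correction caused by ``$U$ failing to be an eigenspace of $\Ap$''. It is the same $J_1$ contribution $\|EU^{*}\|_{\op}^{2}/\tdgap^{2}$ (the $\|H-\sign(H)\|$ bound) that was already present in the proof of Theorem~\ref{thm:generic_bound}; there it was absorbed into $\sigma(\delta)/\gap^{*}$ via $E_{+}\le\tdgap/2$, whereas here it is kept separate because it is the one term that does \emph{not} pick up the $\Theta(\delta)$ factor.

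Third, the $\mnorm{\Ap^{*}}$ term does not enter through Kato's integral. The paper explicitly notes that Kato's integral cannot be invoked for $\Ap^{*}$ (its eigenstructure differs from that of $A^{*}$), and uses only the elementary Davis--Kahan bound, i.e.\ the analogue of case~1 in Lemma~\ref{lem:step2}. This is why the final statement has only the single $\mnorm{\Ap^{*}}$ term and no minimum over $\xi_1,\xi_2,\xi_3$.
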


Similar to Theorem \ref{thm:generic_bound2}, we can derive a bound for $d_{\ttinf}(U, U^{*})$ using the triangle inequality. 
\begin{theorem}\label{thm:generic_bound2_trick}
Under the same setting of Theorem \ref{thm:generic_bound_trick}, 
\begin{align}
 d_{\ttinf}(U, U^{*})&\le C\bigg\{\frac{\Theta(\delta)}{\lambda_{\min}^{*}}\mnorm{E U^{*}} + \lb \frac{E_{+}^{2}(\delta)}{(\gap^{*})^{2}} + \frac{\Theta(\delta)\sigmap(\delta)}{\gap^{*}}\rb \mnorm{U^{*}}\nonumber\\
& \qquad\quad + \frac{\Theta(\delta)(\bp_{2}(\delta) + \mnorm{\Ap^{*}})E_{+}(\delta)}{\lambda_{\min}^{*}\gap^{*}}\bigg\},\nonumber
\end{align}
with probability at least $1 - B(r)\delta$, where $C$ is a universal constant (that can be chosen as $136$).
\end{theorem}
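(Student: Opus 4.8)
The plan is to obtain Theorem~\ref{thm:generic_bound2_trick} as a direct consequence of Theorem~\ref{thm:generic_bound_trick}, via an $\ell_{\ttinf}$ triangle inequality together with an explicit estimate of $\mnorm{\V}$ coming from assumption \textbf{A}'0. There is essentially no new probabilistic content; all the work has been done in Theorem~\ref{thm:generic_bound_trick}.

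First I would establish the elementary inequality $d_{\ttinf}(U, U^{*}) \le d_{\ttinf}(U, U^{*} + \V) + \mnorm{\V}$. Since the orthogonal group $\{O\in\R^{r\times r}: O^{T}O = I\}$ is compact and $\mnorm{\cdot}$ is continuous, the infimum defining $d_{\ttinf}(U, U^{*} + \V)$ is attained at some orthogonal $\td{O}$; then $d_{\ttinf}(U, U^{*}) \le \mnorm{U\td{O} - U^{*}} \le \mnorm{U\td{O} - (U^{*} + \V)} + \mnorm{\V} = d_{\ttinf}(U, U^{*} + \V) + \mnorm{\V}$. (I deliberately invoke compactness here rather than treating $d_{\ttinf}$ as a genuine metric.)

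Next I would bound $\mnorm{\V}$. From the definition $\V_{k}^{T} = E_{k}^{T}U^{*}(\Lambda^{*} - \Sigma_{kk}I)^{-1}$ one gets $\|\V_{k}\|_{2} \le \|(EU^{*})_{k}\|_{2}\,\|(\Lambda^{*} - \Sigma_{kk}I)^{-1}\|_{\op} = \|(EU^{*})_{k}\|_{2}\big/\min_{j\in[s+1,s+r]}|\Lambda_{jj}^{*} - \Sigma_{kk}|$. On the event of assumption \textbf{A}'0 the denominator is at least $\lambda_{\min}^{*}/\Theta(\delta)$ uniformly in $k$ (since $\lambda_{\min}^{*} = \min_{j}|\Lambda_{jj}^{*}|$), so $\mnorm{\V} \le \Theta(\delta)\,\mnorm{EU^{*}}/\lambda_{\min}^{*}$. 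As this \textbf{A}'0 event is already invoked in the proof of Theorem~\ref{thm:generic_bound_trick}, the bound on $\mnorm{\V}$ holds on the same high-probability event and no extra probability is lost.

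Finally I would substitute the bound of Theorem~\ref{thm:generic_bound_trick} and simplify. Expanding the product $\big(E_{+}^{2}(\delta)/(\gap^{*})^{2} + \Theta(\delta)\sigmap(\delta)/\gap^{*}\big)\big(\mnorm{U^{*}} + \Theta(\delta)\mnorm{EU^{*}}/\lambda_{\min}^{*}\big)$, the only piece not already in the claimed form is the cross term $\big(E_{+}^{2}(\delta)/(\gap^{*})^{2} + \Theta(\delta)\sigmap(\delta)/\gap^{*}\big)\cdot \Theta(\delta)\mnorm{EU^{*}}/\lambda_{\min}^{*}$. Here assumption \textbf{A}'4 gives $\gap^{*} \ge 4E_{+}(\delta)$ and $\gap^{*} \ge 4\Theta(\delta)\sigmap(\delta)$, so the prefactor is at most $\tfrac{1}{16} + \tfrac{1}{4} < 1$; hence this cross term, together with $\mnorm{\V}$, is absorbed into the leading $\Theta(\delta)\mnorm{EU^{*}}/\lambda_{\min}^{*}$ term at the cost of enlarging the universal constant, and one checks that $136$ still suffices. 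The statement is a corollary, so there is no serious obstacle; the only two points requiring care are (i) handling the alignment infimum correctly via compactness in the triangle inequality, and (ii) verifying through \textbf{A}'4 that the cross term is absorbable with only universal-constant loss, which is what keeps the advertised constant at $136$.
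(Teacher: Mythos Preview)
Your proposal is correct and follows essentially the same route as the paper: the paper's proof simply cites the bound $\mnorm{\V}\le \Theta\,\mnorm{EU^{*}}/\lambda_{\min}^{*}$ (established inside the proof of Theorem~\ref{thm:generic_bound_trick}), invokes the triangle inequality as in Theorem~\ref{thm:generic_bound2}, and says ``the proof is then completed by assumption \textbf{A}'4''. Your write-up is in fact more careful than the paper's, making explicit both the compactness argument for the triangle inequality and the $1/16+1/4$ bound that lets the cross term be absorbed without inflating the constant $136$.
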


\section{$\ell_{\ttinf}$ Perturbation Theory for Binary Random Matrices}\label{sec:binarybound}

Throughout this section we will ignore the universal constant terms for notational convenience. In particular, we use the symbol $\preceq$ (resp. $\succeq$) to hide universal constants. We say a constant is \emph{universal} if it does not depend on any quantity in the problem (e.g. $A^{*}$, $n$, $\delta$, etc.). Specifically, we say $A \preceq B$ (resp. $A \succeq B$) with probability $1 - \delta$ for two variables $A$ and $B$, stochastic or deterministic, if and only if $A \le CB$ (resp. $A\ge CB$) with probability $1 - \delta$ for some universal constant $C$. All proofs in this section are relegated to Appendix \ref{app:binarybound}.

\subsection{Binary random matrices with independent entries}\label{subsec:binary_wigner}
In this subsection, we consider $A$ as a binary random matrix with independent entries with $A^{*}$ being its expectation, i.e.
\begin{equation}
  \label{eq:binaryA}
  A_{ij}^{*} = A_{ji}^{*} = p_{ij}, \quad A_{ij} = A_{ji}\sim \mathrm{Ber}(p_{ij}), \quad (A_{ij})_{1\le i \le j\le n}\mbox{ are independent}.
\end{equation}
Note that we allow $p_{ii} > 0$. Let
\begin{equation}
  \label{eq:pstar_Rdelta}
  p^{*} = \max_{ij}p_{ij}, \quad \bar{p}^{*} = \max_{i}\frac{1}{n}\sum_{j=1}^{n}p_{ij}, \quad R(\delta) = \log (n / \delta) + r.
\end{equation}

\begin{lemma}\label{lem:A3_binary}
  Under the setting \eqref{eq:binaryA}, given any $\alpha > 0$, assumption \textbf{A}3 is satisfied with
\[b_{\infty}(\delta) \preceq \frac{R(\delta)}{\alpha \log R(\delta)}, \quad b_{2}(\delta) \preceq \frac{\sqrt{p^{*}}R(\delta)^{(1 + \alpha) / 2}}{\alpha \log R(\delta)}.\]
\end{lemma}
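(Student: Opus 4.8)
The plan is to verify the vector-valued hypothesis of Proposition~\ref{prop:vector2matrix} and then invoke that proposition to pass to matrices $W$. So fix a deterministic vector $w \in \R^n$ and a row index $k \in [n]$, and study the scalar random variable $E_k^T w = \sum_{j=1}^n (A_{kj} - p_{kj}) w_j$. This is a sum of independent, mean-zero, bounded random variables: the $j$-th summand lies in $[-p_{kj}, 1]$ after centering, has variance $p_{kj}(1-p_{kj}) \le p_{kj} \le p^*$, and is controlled in the sense needed for a Bernstein-type bound. The total variance proxy is $\sum_j p_{kj}(1-p_{kj}) w_j^2 \le p^* \|w\|_2^2$, while each term has magnitude at most $\max_j |w_j| = \|w\|_\infty$. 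First I would apply a Bernstein inequality (or the sharper Bennett-type bounds catalogued in Appendix~\ref{app:concentration}, which is presumably why that appendix exists) to get, with probability at least $1 - \delta'$,
\[
  E_k^T w \preceq \sqrt{p^* \|w\|_2^2 \log(1/\delta')} + \|w\|_\infty \log(1/\delta').
\]
Here the key subtlety — and the reason the stated bound has the unusual shape with the $\alpha$ and $\log R(\delta)$ factors — is that a crude Bernstein bound is not tight enough in the sparse regime; one must exploit that the summands are not just bounded but genuinely Bernoulli, so the moment generating function behaves better than the generic $[-1,1]$ worst case. The Bennett/Poisson-type tail gives a bound of the form $\|w\|_\infty \cdot t / \log t$ (rather than $\|w\|_\infty \cdot t$) for the deviation at probability $e^{-t}$, which is exactly the source of the $R(\delta)/(\alpha \log R(\delta))$ term.

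Next I would set $\delta' = \delta/(5^r n)$ so that, via Proposition~\ref{prop:vector2matrix}, we will land on the right-hand side with $b_\infty(\delta) = 2a_\infty(\delta/(5^r n))$ and $b_2(\delta) = 2a_2(\delta/(5^r n))$. With this choice, $\log(1/\delta') = \log(5^r n/\delta) \preceq r + \log(n/\delta) = R(\delta)$, so the two pieces become $a_2 \preceq \sqrt{p^* R(\delta)}$ and $a_\infty \preceq R(\delta)/\log R(\delta)$ before the $\alpha$-refinement. The role of the free parameter $\alpha > 0$ is to trade off between the two terms: splitting the deviation $t \approx R(\delta)$ into a part handled by the variance term and a part handled by the bounded-difference term, one can push extra $\log$-powers from the $b_\infty$ term into the $b_2$ term, yielding $b_\infty(\delta) \preceq R(\delta)/(\alpha \log R(\delta))$ at the cost of $b_2(\delta) \preceq \sqrt{p^*}\, R(\delta)^{(1+\alpha)/2}/(\alpha \log R(\delta))$. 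Concretely I expect this comes from bounding the MGF of $E_k^T w$ and optimizing over the Chernoff parameter subject to a constraint that keeps $b_\infty$ small, with $\alpha$ parametrizing where that constraint is placed.

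I expect the main obstacle to be the tail inequality itself at the level of precision required, i.e. establishing the Bennett-type bound with the $t/\log t$ scaling in the bounded term and simultaneously the correct $\sqrt{p^* \log(1/\delta)}$ scaling in the sub-Gaussian term, uniformly over all vectors $w$ and robustly to the interplay between $\|w\|_\infty$ and $\|w\|_2$. This is precisely the content I would offload to the concentration inequalities in Appendix~\ref{app:concentration}; granting those, the proof is a matter of (i) reducing matrix $W$ to vector $w$ by Proposition~\ref{prop:vector2matrix}, (ii) plugging $w$ into the appendix's Bernoulli concentration bound, (iii) substituting $\delta' = \delta/(5^r n)$ and simplifying $\log(1/\delta')$ to $R(\delta)$, and (iv) performing the $\alpha$-dependent rebalancing of the two terms. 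A minor bookkeeping point is to confirm that $p_{kk}$ may be positive (the setup allows it), which changes nothing since the $k$-th summand is still a centered Bernoulli with variance at most $p^*$.
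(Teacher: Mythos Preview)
Your proposal is correct and matches the paper's approach. The paper invokes Lemma~\ref{lem:bernoulli2} from Appendix~\ref{app:concentration} with the specific choice $2\gamma = (\log(1/\delta))^{-(1-\alpha)}$ (this is the precise ``$\alpha$-rebalancing'' mechanism you anticipate), uses the estimate $F^{-1}(x) \ge (\log x)/2$ from Lemma~\ref{lem:Finv} to produce the $\alpha\log\log(1/\delta)$ denominator, and then applies Proposition~\ref{prop:vector2matrix} with $\delta' = \delta/(5^r n)$ exactly as you describe.
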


\begin{lemma}\label{lem:Eop_binary}
  Under the setting \eqref{eq:binaryA}, assumption \textbf{A}2 is satisfied with
\[\lambda_{-}(\delta), E_{+}(\delta), \bar{E}_{+}(\delta), E_{\infty}(\delta)\preceq  \sqrt{n\bar{p}^{*}} + \sqrt{\log (n / \delta)},\]
\end{lemma}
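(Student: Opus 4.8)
The plan is to bound each of the four quantities $\maxnorm{\Lambda - \Lambda^{*}}$, $\|EU^{*}\|_{\op}$, $\|E\bar{U}^{*}\|_{\op}$ and $\mnorm{E}$ by $\|E\|_{\op}$ — the first three by the elementary inequalities already noted after \eqref{eq:cond_E2} (Weyl's inequality gives $\maxnorm{\Lambda-\Lambda^{*}}\le\|E\|_{\op}$; since $U^{*},\bar U^{*}$ have orthonormal columns, $\|EU^{*}\|_{\op}\le\|E\|_{\op}$ and $\|E\bar U^{*}\|_{\op}\le\|E\|_{\op}$; and $\mnorm{E}\le\|E\|_{\op}$ because the $\ell_2$ norm of a row is at most the operator norm). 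So it suffices to establish the single high-probability bound $\|E\|_{\op}\preceq\sqrt{n\bar p^{*}}+\sqrt{\log(n/\delta)}$, i.e. to exhibit a constant $C$ with $\P(\|E\|_{\op}\ge C(\sqrt{n\bar p^{*}}+\sqrt{\log(n/\delta)}))\le\delta$, and then set all four functions equal to that bound.

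For the operator-norm bound itself, $E=A-A^{*}$ is a symmetric random matrix with independent entries on and above the diagonal, each entry centered and bounded in $[-1,1]$, with $\Var(E_{ij})=p_{ij}(1-p_{ij})\le p_{ij}$, so that the maximal expected row variance is $\max_i\sum_j\Var(E_{ij})\le\max_i\sum_j p_{ij}=n\bar p^{*}$. I would invoke a standard concentration inequality for the spectral norm of such matrices — for instance the Bandeira–van Handel bound, or the truncation-plus-matrix-Bernstein / combinatorial moment argument used for sparse random graphs — which gives $\|E\|_{\op}\preceq\sqrt{n\bar p^{*}}+\sqrt{\log n}$ in expectation together with a sub-Gaussian-type tail of width $O(\sqrt{\log(1/\delta)})$ around it; adjusting constants, $\|E\|_{\op}\preceq\sqrt{n\bar p^{*}}+\sqrt{\log(n/\delta)}$ with probability at least $1-\delta$. (If the paper has already stated such an inequality in Appendix \ref{app:concentration}, I would just cite it; the hypothesis $np^{*}\succeq\log n$ typical elsewhere in the paper is exactly the regime in which the $\sqrt{n\bar p^{*}}$ term dominates, but the bound as stated does not need it.)

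The main obstacle is entirely in that last step: getting the \emph{right} dependence on sparsity, namely $\sqrt{n\bar p^{*}}$ rather than the naive $\sqrt{np^{*}}$ or worse. A crude bound via $\|E\|_{\op}\le\|E\|_{\mathrm F}$ is hopeless, and a plain matrix Bernstein inequality applied to $E=\sum_{i\le j}(E_{ij})(e_ie_j^{T}+e_je_i^{T})$ loses logarithmic factors in the sparse regime; the extremal-degree fluctuations must be handled by truncation or by the refined moment method. Everything else — the reduction of all four norms to $\|E\|_{\op}$ and the union bound over the single event — is routine. I would therefore structure the write-up as: (i) state the reduction in one line, (ii) quote the operator-norm concentration result (with a pointer to its proof in the appendix), (iii) conclude.
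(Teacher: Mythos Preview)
Your proposal is correct and matches the paper's own proof essentially line for line: the paper reduces all four quantities to $\|E\|_{\op}$ via \eqref{eq:cond_E2}, bounds the row variances by $\max_i\sum_j p_{ij}(1-p_{ij})\le n\bar p^{*}$, and then invokes a sharp operator-norm concentration inequality for symmetric matrices with independent bounded entries (specifically Remark 4.12 of \cite{latala2018dimension}, stated as Proposition \ref{prop:Eop}) with $\eps=1$ and $t=\sqrt{C\log(n/\delta)}$. Your instinct that the key technical input is a Bandeira--van Handel--type bound giving the correct $\sqrt{n\bar p^{*}}$ dependence, rather than a naive matrix Bernstein argument, is exactly right.
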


\begin{lemma}\label{lem:mnormEU_binary}
Under the setting \eqref{eq:binaryA}, with probability $1 - \delta$,
\[\mnorm{EU^{*}}\preceq R(\delta)\mnorm{U^{*}} + \sqrt{R(\delta)p^{*}}.\]
\end{lemma}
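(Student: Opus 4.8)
The plan is to bound $\|(EU^{*})_{k}\|_{2} = \|E_{k}^{T}U^{*}\|_{2}$ uniformly over $k\in[n]$ by a more careful argument than the crude application of \textbf{A}3. Fix a row index $k$. Since $E_{kj} = A_{kj} - p_{kj}$ are independent, centered, and bounded in $[-1,1]$ with $\Var(E_{kj}) = p_{kj}(1-p_{kj})\le p^{*}$, the quantity $E_{k}^{T}U^{*} = \sum_{j} E_{kj} U^{*}_{j}$ (where $U^{*}_{j}$ is the $j$-th row of $U^{*}$) is a sum of independent mean-zero vectors in $\R^{r}$. First I would control its Euclidean norm via a Bernstein-type inequality for vector-valued sums (equivalently, via a chaining or a direct moment argument as in Appendix \ref{app:concentration}): with probability $1-\delta/n$,
\[
\|E_{k}^{T}U^{*}\|_{2}\;\preceq\; \sigma_{v}\sqrt{\log(n/\delta)+r}\;+\;M_{v}\bigl(\log(n/\delta)+r\bigr),
\]
where $\sigma_{v}^{2} = \bigl\|\sum_{j}\Var(E_{kj})\,U^{*}_{j}(U^{*}_{j})^{T}\bigr\|_{\op}$ is the effective variance proxy and $M_{v} = \max_{j}\|U^{*}_{j}\|_{2}\le \mnorm{U^{*}}$ is the a.s. bound on each summand. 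The extra $+r$ inside the logarithmic factor is exactly the dimensional cost of going from a scalar Bernstein bound to a vector one (this is where $R(\delta)=\log(n/\delta)+r$ enters), and it parallels the $\eps$-net reduction recorded in Proposition \ref{prop:vector2matrix}.

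Next I would estimate the two parameters $\sigma_{v}$ and $M_{v}$ in terms of $p^{*}$ and $\mnorm{U^{*}}$. For the variance proxy, since $\Var(E_{kj})\le p^{*}$,
\[
\sigma_{v}^{2}\;\le\; p^{*}\Bigl\|\sum_{j} U^{*}_{j}(U^{*}_{j})^{T}\Bigr\|_{\op}\;=\;p^{*}\,\|U^{*}\|_{\op}^{2}\;=\;p^{*},
\]
using that $U^{*}$ has orthonormal columns so $(U^{*})^{T}U^{*}=I_{r}$ and $\|U^{*}\|_{\op}=1$. For the a.s. bound, $M_{v}\le \mnorm{U^{*}}$ as noted. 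Plugging these in gives, with probability $1-\delta/n$,
\[
\|E_{k}^{T}U^{*}\|_{2}\;\preceq\; \sqrt{p^{*}\,R(\delta)}\;+\;R(\delta)\,\mnorm{U^{*}},
\]
which is exactly the claimed bound for a single row. A union bound over $k\in[n]$ costs a factor $n$ in the failure probability, i.e. replaces $\delta/n$ by $\delta$, and the maximum over rows preserves the bound since the right-hand side does not depend on $k$. This yields the stated inequality $\mnorm{EU^{*}}\preceq R(\delta)\mnorm{U^{*}} + \sqrt{R(\delta)p^{*}}$.

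The main obstacle is establishing the vector Bernstein inequality with the correct split — getting the variance term to scale like $\sqrt{p^{*}R(\delta)}$ rather than $\sqrt{p^{*}}R(\delta)$, which is precisely the improvement over the naive bound $\mnorm{EU^{*}}\le b_{\infty}(\delta)\mnorm{U^{*}} + b_{2}(\delta)$ noted in Remark \ref{rem:EUA3} (where $b_{2}(\delta)$ carries an extra $R(\delta)^{(1+\alpha)/2}$-type factor from Lemma \ref{lem:A3_binary}). I expect to handle this either by a moment-generating-function computation for $\|\sum_{j}E_{kj}U^{*}_{j}\|_{2}$ exploiting boundedness of the increments and the operator-norm control of the covariance, or by the standard symmetrization-plus-contraction route combined with a net over the unit sphere in $\R^{r}$ (the net having size $5^{r}$, matching the $r$ term in $R(\delta)$). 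A secondary technical point is ensuring the bound is genuinely uniform in $k$ before the union bound; since the two parameters $\sigma_{v}\le\sqrt{p^{*}}$ and $M_{v}\le\mnorm{U^{*}}$ admit $k$-free upper bounds, this is immediate once the per-row estimate is in place.
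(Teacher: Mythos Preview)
Your proposal is correct and is essentially the same approach as the paper's. The paper carries out your $\eps$-net route concretely: it applies the scalar Bernoulli inequality (Lemma \ref{lem:bernoulli2}) with the specific choice $2\gamma = e/\log(1/\delta)$, which makes $F^{-1}(2\gamma\log(1/\delta))=F^{-1}(e)=1$ and hence yields the Bernstein-type split $a_{\infty}(\delta)=2\log(1/\delta)$, $a_{2}(\delta)=\sqrt{2ep^{*}\log(1/\delta)}$ (rather than the $\gamma$ used in Lemma \ref{lem:A3_binary}), and then passes to the vector statement via Proposition \ref{prop:vector2matrix}, which introduces the $5^{r}n$ factor and hence $R(\delta)$.
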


It is then easy to derive the $\ell_{\ttinf}$ perturbation theory for binary random matrices with independent entries from Theorem \ref{thm:generic_bound} and \ref{thm:generic_bound2} based on Lemma \ref{lem:A3_binary} - \ref{lem:mnormEU_binary} and Proposition \ref{prop:A1} on $L_{1}(\delta), L_{2}(\delta), L_{3}(\delta)$. 

\begin{theorem}\label{thm:generic_binary}
Fix any $\delta \in (0, 1), \alpha \in (0, 1)$. Let 
\begin{equation}
  \label{eq:gdelta}
  g(\delta) = \sqrt{n\bar{p}^{*}} + \frac{R(\delta)}{\alpha \log R(\delta)},
\end{equation}
and assume that
\begin{equation}
  \label{eq:A4_binary}
  \gap^{*}\ge C\bar{\kappa}^{*}g(\delta),
\end{equation}
for some universal constant $C$ that is large enough. Then with probability at least $1 - (B(r) + 1)\delta$, 
  \begin{align*}
    &d_{\ttinf}(U, AU^{*}(\Lambda^{*})^{-1})\preceq \frac{1}{\gap^{*}}\left\{\bar{\kappa}^{*}g(\delta)\lb 1 + \frac{R(\delta)}{\lambda_{\min}^{*}}\rb\mnorm{U^{*}} + \frac{\sqrt{R(\delta)p^{*}}}{\lambda_{\min}^{*}}\lb \bar{\kappa}^{*}g(\delta) + \frac{\sqrt{n\bar{p}^{*}R(\delta)^{\alpha}}}{\alpha \log R(\delta)}\rb\right\}\\
& \,\, + \frac{\sqrt{n\bar{p}^{*}} + \sqrt{\log (n / \delta)}}{\gap^{*}}\min\left\{\frac{\mnorm{A^{*}}}{\lambda_{\min}^{*}}, \frac{\sqrt{\bar{\kappa}^{*}p^{*}}}{\sqrt{\lambda_{\min}^{*}}I(A^{*}\mbox{ is psd})}, \bar{\kappa}^{*}\mnorm{\bar{U}^{*}}\right\},
  \end{align*}
and 
  \begin{align*}
    &d_{\ttinf}(U, U^{*})\preceq \lb\frac{\bar{\kappa}^{*}g(\delta)}{\gap^{*}} + \frac{R(\delta)}{\lambda_{\min}^{*}}\rb\mnorm{U^{*}} + \frac{\sqrt{R(\delta)p^{*}}}{\lambda_{\min}^{*}}\lb 1 + \frac{\sqrt{n\bar{p}^{*}R(\delta)^{\alpha}}}{\alpha \gap^{*}\log R(\delta)}\rb\\
& \,\, + \frac{\sqrt{n\bar{p}^{*}} + \sqrt{\log (n / \delta)}}{\gap^{*}}\min\left\{\frac{\mnorm{A^{*}}}{\lambda_{\min}^{*}}, \frac{\sqrt{\bar{\kappa}^{*}p^{*}}}{\sqrt{\lambda_{\min}^{*}}I(A^{*}\mbox{ is psd})}, \bar{\kappa}^{*}\mnorm{\bar{U}^{*}}\right\},
  \end{align*}
\end{theorem}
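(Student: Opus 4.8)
The plan is to obtain Theorem \ref{thm:generic_binary} as a direct corollary of the generic bounds Theorem \ref{thm:generic_bound} and Theorem \ref{thm:generic_bound2}, by substituting into them the explicit forms of all the auxiliary functions $L_1,L_2,L_3,\lambda_-,E_+,\bar E_+,E_\infty,b_\infty,b_2$ that are available for the binary model \eqref{eq:binaryA}. First I would assemble the ingredients: Proposition \ref{prop:A1}(a) gives $L_1(\delta)\preceq \mnorm{A^*}+E_\infty(\delta)$, $L_2(\delta)=1$, and $L_3(\delta)\preceq (E_\infty(\delta)+\lambda_-(\delta)+\mnorm{A^*})/\lambda_{\min}^*$; Lemma \ref{lem:Eop_binary} gives $\lambda_-(\delta),E_+(\delta),\bar E_+(\delta),E_\infty(\delta)\preceq \sqrt{n\bar p^*}+\sqrt{\log(n/\delta)}$; Lemma \ref{lem:A3_binary} gives $b_\infty(\delta)\preceq R(\delta)/(\alpha\log R(\delta))$ and $b_2(\delta)\preceq \sqrt{p^*}R(\delta)^{(1+\alpha)/2}/(\alpha\log R(\delta))$; and Lemma \ref{lem:mnormEU_binary} supplies the sharpened tail bound $\mnorm{EU^*}\preceq R(\delta)\mnorm{U^*}+\sqrt{R(\delta)p^*}$, to be used in place of the crude bound from Remark \ref{rem:EUA3}. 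Since each lemma holds with probability $1-\delta$ (or $1-\delta/n$ in the per-row statements, which have already been union-bounded inside Assumption \textbf{A}3), a final union bound over the $O(1)$ extra events costs one more $\delta$, explaining the $1-(B(r)+1)\delta$ in the statement.

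Next I would verify that Assumption \textbf{A}4 is implied by the hypothesis \eqref{eq:A4_binary}. The key simplification is that $\eta(\delta)=E_\infty(\delta)+b_\infty(\delta)+b_2(\delta)\preceq g(\delta)$ up to the lower-order $b_2$ term (which is dominated since $\sqrt{p^*}R(\delta)^{(1+\alpha)/2}\lesssim \sqrt{n\bar p^*}\cdot(\text{polylog})$ in the relevant regime, or can simply be absorbed); using $L_2=1$, one gets $\sigma(\delta)=(\bar\kappa^* L_2+L_3+1)\eta(\delta)+E_+(\delta)\preceq \bar\kappa^* g(\delta)(1+R(\delta)/\lambda_{\min}^*)$ after noting $L_3(\delta)\preceq g(\delta)/\lambda_{\min}^* \preceq R(\delta)/\lambda_{\min}^*$ up to the split into $\sqrt{n\bar p^*}$ and $R(\delta)$ pieces. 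Then $4(\sigma(\delta)+L_1(\delta)+\lambda_-(\delta))\preceq \bar\kappa^* g(\delta)$, so a sufficiently large constant $C$ in \eqref{eq:A4_binary} makes \textbf{A}4 hold; the same computation shows the $\gap^*$-denominator terms in Theorem \ref{thm:generic_bound} take the claimed form.

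The remaining work is bookkeeping: substitute $\sigma(\delta)$, $E_+(\delta)b_2(\delta)/\lambda_{\min}^*$, and the $\min\{E_+(\delta)\xi_1,\bar E_+(\delta)\sqrt{\bar\kappa^*}\xi_2,\bar E_+(\delta)\bar\kappa^*\xi_3\}$ term from \eqref{eq:xi} into Theorem \ref{thm:generic_bound}, replacing $\mnorm{EU^*}$ by the Lemma \ref{lem:mnormEU_binary} bound wherever it appears (this is what converts $\sigma(\delta)\mnorm{EU^*}/\lambda_{\min}^*$ into the $\sqrt{R(\delta)p^*}/\lambda_{\min}^*$ contributions), and collect terms of the same type. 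For the $d_{\ttinf}(U,U^*)$ bound, plug the same substitutions into Theorem \ref{thm:generic_bound2}, where the leading term $C\mnorm{EU^*}/\lambda_{\min}^*$ produces the $R(\delta)/\lambda_{\min}^*$ coefficient on $\mnorm{U^*}$ and the $\sqrt{R(\delta)p^*}/\lambda_{\min}^*$ standalone term. I expect the only mildly delicate point to be the careful tracking of the two-scale structure $g(\delta)=\sqrt{n\bar p^*}+R(\delta)/(\alpha\log R(\delta))$ through the products $E_+(\delta)b_2(\delta)$ and $L_3(\delta)\eta(\delta)$, so that the factor $\sqrt{n\bar p^* R(\delta)^\alpha}/(\alpha\log R(\delta))$ appears in exactly the right place; this is a matter of not over-simplifying $g(\delta)$ prematurely. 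Everything else is a routine $\preceq$-chase, so there is no substantive obstacle — the content of the theorem lives entirely in the generic bounds and the four binary-specific lemmas.
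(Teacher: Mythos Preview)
Your proposal is correct and follows essentially the same route as the paper: assemble the binary-specific estimates from Proposition~\ref{prop:A1}(a) and Lemmas~\ref{lem:A3_binary}--\ref{lem:mnormEU_binary}, verify \textbf{A}4 from \eqref{eq:A4_binary}, and plug into Theorems~\ref{thm:generic_bound}--\ref{thm:generic_bound2}. One small bookkeeping slip: you write $\sigma(\delta)\preceq \bar\kappa^* g(\delta)(1+R(\delta)/\lambda_{\min}^*)$, but the paper (and the cleaner argument) first uses the hypothesis $\gap^*\ge C\bar\kappa^* g(\delta)$ together with $\lambda_{\min}^*\ge\gap^*$ to get $L_3(\delta)\preceq g(\delta)/\lambda_{\min}^*\preceq 1$, whence $\bar\kappa^* L_2+L_3+1\preceq\bar\kappa^*$ and $\sigma(\delta)\preceq\bar\kappa^* g(\delta)$ with no extra factor; the $(1+R(\delta)/\lambda_{\min}^*)$ in the final bound arises instead from expanding $\mnorm{EU^*}/\lambda_{\min}^*$ via Lemma~\ref{lem:mnormEU_binary}. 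This does not affect validity, only the tidiness of the $\preceq$-chase.
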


We consider two special but realistic cases where the bounds in Theorem \ref{thm:generic_binary} can be significantly simplified. The first case corresponds to the full eigenspace recovery. This is a widely studied problem in various areas such as community detection. We will discuss a few applications in Section \ref{sec:exact_recovery}. The following Corollary considers a slightly more general case where $\bar{U}^{*}$ can differ from $U^{*}$. In this case the last terms in both inequalities can be removed.

\begin{corollary}\label{cor:full_recovery_binary}
Under the settings of Theorem \ref{thm:generic_binary}, if 
\[\mnorm{\bar{U}^{*}}\preceq \mnorm{U^{*}},\]
then with probability at least $1 - (B(r) + 1)\delta$, 
  \begin{align*}
    d_{\ttinf}(U, AU^{*}(\Lambda^{*})^{-1})&\preceq \frac{1}{\gap^{*}}\bigg\{\bar{\kappa}^{*}g(\delta)\lb 1 + \frac{R(\delta)}{\lambda_{\min}^{*}}\rb\mnorm{U^{*}} + \frac{\sqrt{R(\delta)p^{*}}}{\lambda_{\min}^{*}}\lb \bar{\kappa}^{*}g(\delta) + \frac{\sqrt{n\bar{p}^{*}R(\delta)^{\alpha}}}{\alpha \log R(\delta)}\rb\bigg\},
  \end{align*}
  \begin{align*}
    d_{\ttinf}(U, U^{*})&\preceq \lb\frac{\bar{\kappa}^{*}g(\delta)}{\gap^{*}} + \frac{R(\delta)}{\lambda_{\min}^{*}}\rb\mnorm{U^{*}} + \frac{\sqrt{R(\delta)p^{*}}}{\lambda_{\min}^{*}}\lb 1 + \frac{\sqrt{n\bar{p}^{*}R(\delta)^{\alpha}}}{\alpha \gap^{*}\log R(\delta)}\rb.
  \end{align*}
\end{corollary}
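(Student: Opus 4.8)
The plan is to read off the corollary directly from Theorem~\ref{thm:generic_binary}, with no new probabilistic input: under the extra hypothesis $\mnorm{\bar U^{*}}\preceq\mnorm{U^{*}}$ I will show that the trailing minimum term in each of the two bounds of Theorem~\ref{thm:generic_binary} is dominated, up to a universal constant, by the first term already present, so it may simply be discarded. The high-probability event and its probability $1-(B(r)+1)\delta$ are then inherited verbatim.

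The first step is to bound the minimum by its third argument and apply the hypothesis:
\[
\min\left\{\frac{\mnorm{A^{*}}}{\lambda_{\min}^{*}},\ \frac{\sqrt{\bar\kappa^{*}p^{*}}}{\sqrt{\lambda_{\min}^{*}}\,I(A^{*}\mbox{ is psd})},\ \bar\kappa^{*}\mnorm{\bar U^{*}}\right\}\le \bar\kappa^{*}\mnorm{\bar U^{*}}\preceq \bar\kappa^{*}\mnorm{U^{*}},
\]
so the trailing term of each display is $\preceq \dfrac{\sqrt{n\bar p^{*}}+\sqrt{\log(n/\delta)}}{\gap^{*}}\,\bar\kappa^{*}\mnorm{U^{*}}$. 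The second step is to check that $\sqrt{n\bar p^{*}}+\sqrt{\log(n/\delta)}\preceq g(\delta)$, where $g(\delta)=\sqrt{n\bar p^{*}}+\frac{R(\delta)}{\alpha\log R(\delta)}$ and $R(\delta)=\log(n/\delta)+r>1$. The summand $\sqrt{n\bar p^{*}}$ already appears in $g(\delta)$; for the logarithmic part, since $\alpha\in(0,1)$ and the elementary inequality $\log x\le\sqrt x$ holds for all $x\ge 1$, we get $\alpha\log R(\delta)\le\log R(\delta)\le\sqrt{R(\delta)}$, hence $\frac{R(\delta)}{\alpha\log R(\delta)}\ge\sqrt{R(\delta)}\ge\sqrt{\log(n/\delta)}$, and adding the two estimates gives $\sqrt{n\bar p^{*}}+\sqrt{\log(n/\delta)}\le 2g(\delta)$.

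Combining, the trailing term of each display is $\preceq\frac{\bar\kappa^{*}g(\delta)}{\gap^{*}}\mnorm{U^{*}}$, which is precisely the leading part of the first term of the corresponding bound in Theorem~\ref{thm:generic_binary} --- the ``$1$'' inside $1+R(\delta)/\lambda_{\min}^{*}$ in the bound for $d_{\ttinf}(U,AU^{*}(\Lambda^{*})^{-1})$, and the summand $\bar\kappa^{*}g(\delta)/\gap^{*}$ in the bound for $d_{\ttinf}(U,U^{*})$. Absorbing it there and deleting the now-redundant minimum yields the two stated inequalities. Since this is essentially a specialization of Theorem~\ref{thm:generic_binary}, there is no genuine obstacle; the only points calling for a little care are keeping every suppressed constant universal and verifying the elementary estimate $\log R(\delta)\le\sqrt{R(\delta)}$ that lets $g(\delta)$ swallow $\sqrt{\log(n/\delta)}$.
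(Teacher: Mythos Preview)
Your argument is correct and mirrors the paper's proof exactly: both keep only the third entry $\bar\kappa^{*}\mnorm{\bar U^{*}}$ of the minimum, use $\mnorm{\bar U^{*}}\preceq\mnorm{U^{*}}$, and then absorb $(\sqrt{n\bar p^{*}}+\sqrt{\log(n/\delta)})\preceq g(\delta)$ into the leading term. Your justification of the last step via $\sqrt{\log(n/\delta)}\le\sqrt{R(\delta)}\le R(\delta)/(\alpha\log R(\delta))$ is a slightly more explicit version of what the paper writes as a single $\preceq$.
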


The second case we consider imposes a lower bound for $\lambda_{\min}^{*}$. 

\begin{corollary}\label{cor:typical_binary}
  Under the settings of Theorem \ref{thm:generic_binary}, if 
  \begin{equation}
    \label{eq:typical_minLambda_binary}
  \lambda_{\min}^{*}\succeq \frac{np^{*}}{\sqrt{n}\mnorm{U^{*}}},
  \end{equation}
then with probability at least $1 - (B(r) + 1)\delta$, 
  \begin{align*}
    d_{\ttinf}(U, AU^{*}(\Lambda^{*})^{-1})&\preceq \frac{\bar{\kappa}^{*}g(\delta)}{\gap^{*}}\lb 1 + \frac{R(\delta)}{\lambda_{\min}^{*}}\rb\mnorm{U^{*}},\\
    d_{\ttinf}(U, U^{*})&\preceq \lb\frac{\bar{\kappa}^{*}g(\delta)}{\gap^{*}} + \frac{R(\delta)}{\lambda_{\min}^{*}}\rb\mnorm{U^{*}} + \frac{\sqrt{R(\delta)p^{*}}}{\lambda_{\min}^{*}}.
  \end{align*}
\end{corollary}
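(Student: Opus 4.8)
The plan is to start from the bounds in Theorem \ref{thm:generic_binary} and show that, under the additional hypothesis \eqref{eq:typical_minLambda_binary}, several of the terms are subsumed by the ones that remain. First I would look at the common factor $\sqrt{R(\delta)p^{*}} / \lambda_{\min}^{*}$ appearing in the second group of terms. The hypothesis $\lambda_{\min}^{*}\succeq np^{*} / (\sqrt{n}\mnorm{U^{*}})$ can be rewritten as $\sqrt{n}\,\mnorm{U^{*}}\,\lambda_{\min}^{*} \succeq np^{*}$, i.e. $p^{*} \preceq \mnorm{U^{*}}\lambda_{\min}^{*}/\sqrt{n}$. Plugging this into $\sqrt{R(\delta)p^{*}}/\lambda_{\min}^{*}$ gives something like $\sqrt{R(\delta)\mnorm{U^{*}}/(\sqrt{n}\lambda_{\min}^{*})}$; the cleaner route, which I expect to work, is instead to bound $\sqrt{R(\delta)p^{*}}/\lambda_{\min}^{*}$ directly against $R(\delta)\mnorm{U^{*}}/\lambda_{\min}^{*}$ and against $(g(\delta)/\gap^{*})\mnorm{U^{*}}$ — note $g(\delta)\succeq \sqrt{n\bar p^{*}}\succeq \sqrt{np^{*}}$ up to the homogeneity assumption $\bar p^{*}\sim p^{*}$, and that $\gap^{*}\le\lambda_{\min}^{*}$ by definition of $\gap^{*}$ in \eqref{eq:gap}. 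So I would show $\sqrt{R(\delta)p^{*}}\preceq \sqrt{R(\delta)}\cdot\sqrt{\mnorm{U^{*}}\lambda_{\min}^{*}/\sqrt n}$, and then split depending on whether $R(\delta)\le \sqrt n\,\mnorm{U^{*}}\lambda_{\min}^{*}/p^{*}\cdot(\ldots)$ to absorb it into either the $R(\delta)\mnorm{U^{*}}/\lambda_{\min}^{*}$ term or the $\bar\kappa^{*}g(\delta)\mnorm{U^{*}}/\gap^{*}$ term.

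Next I would handle the last (minimum-of-three) term. The key claim is that under \eqref{eq:typical_minLambda_binary}, $\bigl(\sqrt{n\bar p^{*}}+\sqrt{\log(n/\delta)}\bigr)\xi_1/\gap^{*}$ — the first branch of the minimum, with $\xi_1=\mnorm{A^{*}}/\lambda_{\min}^{*}$ — is already dominated by $(\bar\kappa^{*}g(\delta)/\gap^{*})\mnorm{U^{*}}$. Since $\mnorm{A^{*}}\le \maxnorm{A^{*}}\sqrt n \le p^{*}\sqrt n$ (all entries at most $p^{*}$), the first branch is at most $\bigl(\sqrt{n\bar p^{*}}+\sqrt{\log(n/\delta)}\bigr)\cdot p^{*}\sqrt n/(\gap^{*}\lambda_{\min}^{*})$, and $\sqrt{n\bar p^{*}}+\sqrt{\log(n/\delta)}\preceq g(\delta)$, so this is $\preceq g(\delta)\cdot p^{*}\sqrt n/(\gap^{*}\lambda_{\min}^{*})$. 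Hypothesis \eqref{eq:typical_minLambda_binary} says exactly that $p^{*}\sqrt n/\lambda_{\min}^{*}\preceq \mnorm{U^{*}}$, so the whole thing is $\preceq g(\delta)\mnorm{U^{*}}/\gap^{*}\preceq \bar\kappa^{*}g(\delta)\mnorm{U^{*}}/\gap^{*}$, which is one of the retained terms. Taking the minimum over the three branches only makes it smaller, so the last term drops. For the $d_{\ttinf}(U,U^{*})$ bound I would keep the $\sqrt{R(\delta)p^{*}}/\lambda_{\min}^{*}$ term (it does not come multiplied by $g(\delta)/\gap^{*}$ there, so it cannot in general be absorbed) but delete the $\sqrt{n\bar p^{*}R(\delta)^{\alpha}}/(\alpha\gap^{*}\log R(\delta))$ correction inside the parenthesis: that factor times $\sqrt{R(\delta)p^{*}}/\lambda_{\min}^{*}$ is $\preceq g(\delta)\sqrt{R(\delta)^{1+\alpha}p^{*}}/(\gap^{*}\lambda_{\min}^{*}\log R(\delta))$, which by the same substitution $\sqrt{p^{*}}\preceq\sqrt{\mnorm{U^{*}}\lambda_{\min}^{*}/\sqrt n}$ and $R(\delta)^{1+\alpha}\preceq R(\delta)^2$ collapses into a combination of the $R(\delta)\mnorm{U^{*}}/\lambda_{\min}^{*}$ and $g(\delta)\mnorm{U^{*}}/\gap^{*}$ terms after a short case split on $R(\delta)$ versus $g(\delta)\lambda_{\min}^{*}/\gap^{*}$.

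Finally, for the first bound I also need to simplify the inner factor $\bigl(\bar\kappa^{*}g(\delta) + \sqrt{n\bar p^{*}R(\delta)^{\alpha}}/(\alpha\log R(\delta))\bigr)$ multiplying $\sqrt{R(\delta)p^{*}}/\lambda_{\min}^{*}$: since $\sqrt{n\bar p^{*}}\preceq g(\delta)$ and $R(\delta)^{\alpha}/\log R(\delta)\preceq R(\delta)\preceq$ something already controlled, this factor is $\preceq \bar\kappa^{*}g(\delta)R(\delta)$ up to the absorption already performed, and $\bar\kappa^{*}g(\delta)R(\delta)\sqrt{R(\delta)p^{*}}/(\gap^{*}\lambda_{\min}^{*})$ then folds into $\bar\kappa^{*}g(\delta)R(\delta)\mnorm{U^{*}}/(\gap^{*}\lambda_{\min}^{*})$ via \eqref{eq:typical_minLambda_binary}, matching the stated $d_{\ttinf}(U,AU^{*}(\Lambda^{*})^{-1})\preceq (\bar\kappa^{*}g(\delta)/\gap^{*})(1+R(\delta)/\lambda_{\min}^{*})\mnorm{U^{*}}$. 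The main obstacle I anticipate is bookkeeping: there are many terms, each with its own $\alpha$- and $\log R(\delta)$-dependent constants, and I have to make sure that each absorption is valid in both the $np^{*}\asymp\log n$ (critical) regime and the dense regime without secretly using $\gap^{*}=\lambda_{\min}^{*}$ (they can differ) or the homogeneity $\bar p^{*}\asymp p^{*}$ beyond where it is actually assumed. Carefully tracking which inequalities are unconditional ($\gap^{*}\le\lambda_{\min}^{*}$, $\mnorm{A^{*}}\le p^{*}\sqrt n$, $\sqrt{n\bar p^{*}}\le g(\delta)$) versus which invoke \eqref{eq:typical_minLambda_binary} will be the delicate part; everything else is routine algebra of the $\preceq$ calculus.
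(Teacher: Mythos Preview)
Your proposal is correct and follows essentially the same route as the paper: keep the $\xi_{1}=\mnorm{A^{*}}/\lambda_{\min}^{*}$ branch of the minimum, use $\mnorm{A^{*}}\le p^{*}\sqrt{n}$ together with \eqref{eq:typical_minLambda_binary} to absorb it into $\bar{\kappa}^{*}g(\delta)\mnorm{U^{*}}/\gap^{*}$, and then fold the $\sqrt{R(\delta)p^{*}}/\lambda_{\min}^{*}$ terms into the remaining pieces. Two small remarks: your worry about needing $\bar{p}^{*}\asymp p^{*}$ is unfounded (only $\bar{p}^{*}\le p^{*}$ is used), and your case splits are avoidable---the paper's cleaner move is to factor $\sqrt{Rp^{*}}/\lambda_{\min}^{*}=n^{-1/2}\sqrt{R/\lambda_{\min}^{*}}\sqrt{np^{*}/\lambda_{\min}^{*}}$, use \eqref{eq:typical_minLambda_binary} together with $\sqrt{n}\,\mnorm{U^{*}}\ge\sqrt{r}\ge 1$ to bound the second square root by $\sqrt{n}\,\mnorm{U^{*}}$, and then apply $\sqrt{R/\lambda_{\min}^{*}}\le 1+R/\lambda_{\min}^{*}$.
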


\subsection{Unnormalized graph laplacian}\label{subsec:binary_laplacian}
Let $\A$ be a binary matrix with independent entries as defined in \eqref{eq:binaryA} and $\L$ be the unnormalized graph Laplacian of $\A$,
\begin{equation}
  \label{eq:laplacian}
  \L = \D - \A, \quad \mbox{where }\D = \diag(D_{11}, \ldots, D_{nn}), \quad \D_{ii} = \sum_{j=1}^{n}\A_{ij}.
\end{equation}
We use the notation $\L, \D, \A$ by convention. We assume $\A_{ii} = 0$ without loss of generality (because $\L$ does not depend on $\A_{ii}$). Throughout this section we will treat $\L$ as $A$ and $\L^{*}$ as $A^{*}$. Similar to binary matrices with independent entries, we derive the bound for quantities involved in Theorem \ref{thm:generic_bound}. In particular, we apply the diagonal surgery technique with $\Sigma = \diag(\L_{11}, \L_{22}, \ldots, \L_{nn})$. Let 
\[\Lp = \L - \Sigma, \quad E = \L - \E \L, \quad \Ep = \Lp - \E \Lp.\]
Apart from the notation in \eqref{eq:pstar_Rdelta}, we also define the following quantity
\begin{equation}
  \label{eq:Mdelta}
  M(\delta) = \sqrt{n\bar{p}^{*}\log (n / \delta)} + \log (n / \delta).
\end{equation}

\begin{lemma}\label{lem:A3_laplacian}
  Under the setting \eqref{eq:laplacian}, given any $\alpha > 0$, assumption \textbf{A}'3 is satisfied with
\[\bp_{\infty}(\delta) \preceq \frac{R(\delta)}{\alpha \log R(\delta)}, \quad \bp_{2}(\delta) \preceq \frac{\sqrt{p^{*}}R(\delta)^{(1 + \alpha) / 2}}{\alpha \log R(\delta)}.\]
\end{lemma}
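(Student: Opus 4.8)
The plan is to observe that the ``diagonal surgery'' chosen here collapses the Laplacian perturbation problem onto the adjacency-matrix problem already settled in Lemma~\ref{lem:A3_binary}. The starting point is an algebraic identity for $\Ep$. Since we assumed $\A_{ii}=0$, the diagonal of $\L=\D-\A$ coincides with $\D$, so $\Sigma=\diag(\L_{11},\ldots,\L_{nn})=\D$ and hence
\[
\Lp = \L-\Sigma = (\D-\A)-\D = -\A,
\qquad
\E\Lp = -\E\A,
\qquad
\Ep = \Lp-\E\Lp = -(\A-\E\A).
\]
Thus for every $k$ the $k$-th row of $\Ep$ is minus the $k$-th row of $\A-\E\A$, so that $\|\Ep_{k}^{T}W\|_{2}=\|(\A-\E\A)_{k}^{T}W\|_{2}$ for every fixed $W\in\R^{n\times r'}$.

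The second step is to invoke Lemma~\ref{lem:A3_binary} for the matrix $\A$. The matrix $\A$ is a binary matrix with independent entries in the sense of \eqref{eq:binaryA}, with the (permitted) degenerate choice $p_{ii}=0$, and this choice leaves the scale parameter $p^{*}=\max_{ij}p_{ij}$ unchanged. Hence the centered matrix $\A-\E\A$ is exactly the perturbation $E$ to which assumption \textbf{A}3 refers, and Lemma~\ref{lem:A3_binary} applies verbatim: for any $\alpha>0$ and any $\delta\in(0,1)$, $k\in[n]$, $r'\le r$ and fixed $W$,
\[
\|(\A-\E\A)_{k}^{T}W\|_{2}\le b_{\infty}(\delta)\mnorm{W}+b_{2}(\delta)\|W\|_{\op}
\]
with probability at least $1-\delta/n$, where $b_{\infty}(\delta)\preceq R(\delta)/(\alpha\log R(\delta))$ and $b_{2}(\delta)\preceq\sqrt{p^{*}}R(\delta)^{(1+\alpha)/2}/(\alpha\log R(\delta))$. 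Combined with the identity above this gives assumption \textbf{A}'3 with $\bp_{\infty}(\delta)=b_{\infty}(\delta)$ and $\bp_{2}(\delta)=b_{2}(\delta)$, which is exactly the asserted bound.

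There is no genuine obstacle internal to this lemma: the only points needing care are the bookkeeping that $\Sigma=\D$ (which rests on the assumption $\A_{ii}=0$, made without loss of generality in the setup of \eqref{eq:laplacian}) and the routine check that $\A$ meets the hypotheses of Lemma~\ref{lem:A3_binary}. All the genuine probabilistic content --- the truncated-moment / Bernstein-type estimates for linear combinations of the rows of a Bernoulli matrix, together with the $\eps$-net reduction from vectors to matrices in Proposition~\ref{prop:vector2matrix} --- is already packaged inside Lemma~\ref{lem:A3_binary}, so nothing further must be established here.
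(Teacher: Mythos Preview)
Your proof is correct and follows essentially the same approach as the paper: both observe that the diagonal surgery with $\Sigma=\D$ yields $\Lp=-\A$, so that $\Ep=-(\A-\E\A)$ is (up to sign) the centered adjacency matrix, whence Lemma~\ref{lem:A3_binary} applies directly. Your write-up is simply a more explicit unpacking of what the paper states in one sentence.
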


\begin{lemma}\label{lem:Eop_laplacian}
  Under the setting \eqref{eq:laplacian}, assumption \textbf{A}'2 is satisfied with
\[\Ep_{\infty}(\delta)\preceq  \sqrt{n\bar{p}^{*}} + \sqrt{\log (n / \delta)},\quad E_{+}(\delta), \lambda_{-}(\delta)\preceq M(\delta).\]
\end{lemma}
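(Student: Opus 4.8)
The plan is to decompose the perturbation $\Ep = \Lp - \E\Lp$ into an off-diagonal part and a diagonal part, bound each separately, and then collect the bounds into the three quantities $\Ep_{\infty}(\delta)$, $E_{+}(\delta)$, $\lambda_{-}(\delta)$. First I would note that $\Lp = \L - \Sigma = -\A + (\D - \Sigma)$, and since $\Sigma = \diag(\L_{11},\ldots,\L_{nn}) = \diag(\D_{11},\ldots,\D_{nn}) = \D$ (recall $\A_{ii}=0$), we actually get $\Lp = -\A$ exactly, so $\Ep = -\A + \E\A = -E^{\A}$ where $E^{\A} = \A - \E\A$ is the centered adjacency matrix. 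Hence $\mnorm{\Ep} = \mnorm{E^{\A}}$, and this is exactly the quantity already controlled in Lemma~\ref{lem:Eop_binary} (applied to the adjacency matrix with $\A_{ii}=0$): a row of $E^{\A}$ has $n$ independent centered Bernoulli entries with variances at most $p^{*}$ and sum of variances at most $n\bar p^{*}$, so by Bernstein's inequality and a union bound over the $n$ rows, $\mnorm{\Ep}\preceq \sqrt{n\bar p^{*}\log(n/\delta)/n}\cdot\sqrt n + \ldots$; more precisely the standard row-norm bound gives $\mnorm{\Ep}\preceq \sqrt{n\bar p^{*}} + \sqrt{\log(n/\delta)}$, which is the first claimed bound.

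Next, for $E_{+}(\delta) = \|EU^{*}\|_{\op}$ and $\lambda_{-}(\delta) = \maxnorm{\Lambda - \Lambda^{*}}$, the key point is that these refer to the \emph{original} perturbation $E = \L - \E\L$ of the Laplacian, not to $\Ep$. Writing $E = (\D - \E\D) - (\A - \E\A) = \diag(E^{\A}\one_{n}) - E^{\A}$, I would bound $\|E\|_{\op}\le \|\diag(E^{\A}\one_{n})\|_{\op} + \|E^{\A}\|_{\op}$. The second term is the operator norm of a centered sparse random matrix, of order $\sqrt{n\bar p^{*}} + \sqrt{\log(n/\delta)}$ by the standard spectral-norm concentration for inhomogeneous \ER graphs (e.g. the bounds invoked for Lemma~\ref{lem:Eop_binary} or the results in Section~\ref{sec:spectral_norm}). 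The first term is $\max_i |E^{\A}_i{}^T\one_{n}| = \max_i |\D_{ii} - \E\D_{ii}|$, the maximal fluctuation of a degree, which by Bernstein and a union bound is $\preceq \sqrt{n\bar p^{*}\log(n/\delta)} + \log(n/\delta) = M(\delta)$. Since $M(\delta)$ dominates $\sqrt{n\bar p^{*}} + \sqrt{\log(n/\delta)}$, we get $\|E\|_{\op}\preceq M(\delta)$, and then $E_{+}(\delta) = \|EU^{*}\|_{\op}\le \|E\|_{\op}\preceq M(\delta)$ and $\lambda_{-}(\delta) = \maxnorm{\Lambda-\Lambda^{*}}\le \|E\|_{\op}\preceq M(\delta)$ by Weyl's inequality. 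Finally a union bound over the (constantly many) events combines everything at the cost of a universal constant in $\delta$.

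I expect the main obstacle to be the degree-fluctuation term controlling $\|E\|_{\op}$: one must be careful that for the Laplacian the relevant perturbation $E$ genuinely carries the extra diagonal piece $\diag(E^{\A}\one_n)$, which is \emph{larger} in order ($M(\delta)$) than the adjacency-matrix perturbation, and that this is precisely why the diagonal-surgery quantity $\Ep$ (which equals $-E^{\A}$ and hence enjoys the smaller bound $\sqrt{n\bar p^{*}}+\sqrt{\log(n/\delta)}$) is needed to make Theorem~\ref{thm:generic_bound_trick} sharp. The bookkeeping of which bound attaches to $\Ep_{\infty}$ versus $E_{+}, \lambda_{-}$ is the crux; the individual tail bounds (Bernstein for degrees, spectral-norm concentration for $E^{\A}$) are by now routine and can be cited.
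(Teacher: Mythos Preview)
Your proposal is correct and follows essentially the same route as the paper: you identify $\Ep = -(\A - \E\A)$ so that $\Ep_{\infty}(\delta)$ inherits the adjacency-matrix bound of Lemma~\ref{lem:Eop_binary}, and you bound $\|E\|_{\op}$ by splitting $E = \diag(E^{\A}\one_n) - E^{\A}$ into the degree-fluctuation diagonal (controlled by Bernstein, giving $M(\delta)$) and the off-diagonal adjacency piece, then deduce $E_{+}(\delta),\lambda_{-}(\delta)\preceq M(\delta)$ via Weyl. The paper packages the intermediate step as a separate lemma (Lemma~\ref{lem:E2Ep2}) but the mathematical content is identical; one small note is that your Bernstein aside for $\mnorm{\Ep}$ would naturally yield $\sqrt{n\bar p^{*}\log(n/\delta)}$, so the clean $\sqrt{n\bar p^{*}}+\sqrt{\log(n/\delta)}$ really comes from the operator-norm bound $\mnorm{\Ep}\le\|\Ep\|_{\op}$ as in Lemma~\ref{lem:Eop_binary}.
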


\begin{lemma}\label{lem:mnormEU_laplacian}
Under the setting \eqref{eq:laplacian}, with probability $1 - \delta$,
\[\mnorm{E U^{*}}\preceq (M(\delta) + r)\mnorm{U^{*}} + \sqrt{R(\delta)p^{*}}.\]
\end{lemma}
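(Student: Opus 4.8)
The plan is to decompose $E=\L-\E\L$ into its diagonal and off-diagonal parts and bound their contributions to $\mnorm{EU^{*}}$ separately. Writing $\cE=\A-\E\A$ for the centered adjacency matrix (so that $\cE_{ii}=0$ and $\Var(\cE_{ij})=p_{ij}(1-p_{ij})\le p^{*}$), one has $E=(\D-\E\D)-\cE$, where $\D-\E\D=\diag\big((\cE\one_{n})_{1},\ldots,(\cE\one_{n})_{n}\big)$ is the diagonal matrix of centered degrees. Hence
\[
\mnorm{EU^{*}}\le \mnorm{(\D-\E\D)U^{*}}+\mnorm{\cE U^{*}},
\]
and it suffices to prove $\mnorm{(\D-\E\D)U^{*}}\preceq M(\delta)\mnorm{U^{*}}$ and $\mnorm{\cE U^{*}}\preceq R(\delta)\mnorm{U^{*}}+\sqrt{R(\delta)p^{*}}$, each with probability $1-\delta/2$. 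These combine into the claimed bound because $R(\delta)=\log(n/\delta)+r\le M(\delta)+r$, using $\log(n/\delta)\le M(\delta)$ and absorbing the factor into $\preceq$ (after replacing $\delta$ by $\delta/2$, which changes $R,M$ only by universal constants).

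For the diagonal part, the $k$-th row of $(\D-\E\D)U^{*}$ has Euclidean norm $|D_{kk}-\E D_{kk}|\,\|U^{*}_{k}\|_{2}\le |D_{kk}-\E D_{kk}|\,\mnorm{U^{*}}$, so it is enough to control $\max_{k}|D_{kk}-\E D_{kk}|$. Each $D_{kk}-\E D_{kk}=\sum_{j}(\A_{kj}-p_{kj})$ is a sum of independent centered Bernoulli variables with total variance at most $\sum_{j}p_{kj}\le n\bar p^{*}$, so Bernstein's inequality (Appendix \ref{app:concentration}) gives $|D_{kk}-\E D_{kk}|\preceq \sqrt{n\bar p^{*}\log(n/\delta)}+\log(n/\delta)=M(\delta)$ with probability $1-\delta/(2n)$; a union bound over $k\in[n]$ finishes this part.

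For the off-diagonal part I would reuse the argument behind Lemma \ref{lem:mnormEU_binary}, which only uses that $U^{*}$ has orthonormal columns and not that it diagonalizes any particular matrix -- exactly the situation here, where $U^{*}$ consists of eigenvectors of $\L^{*}$. Concretely, fix $k$ and write $(\cE U^{*})_{k}^{T}=\sum_{j}\cE_{kj}(U^{*})_{j}^{T}=:X$, a sum of independent mean-zero vectors in $\R^{r}$ with $\|\cE_{kj}(U^{*})_{j}\|_{2}\le\mnorm{U^{*}}$ almost surely and matrix variance proxy $\big\|\sum_{j}\Var(\cE_{kj})(U^{*})_{j}(U^{*})_{j}^{T}\big\|_{\op}\le p^{*}\|U^{*T}U^{*}\|_{\op}=p^{*}$. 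Matrix Bernstein (equivalently, Bernstein for sums of independent random vectors; Appendix \ref{app:concentration}) then yields $\|X\|_{2}\preceq \sqrt{p^{*}\log(2rn/\delta)}+\mnorm{U^{*}}\log(2rn/\delta)$ with probability $1-\delta/(2n)$, and since $\log(2rn/\delta)\preceq R(\delta)$ this is $\preceq \sqrt{R(\delta)p^{*}}+R(\delta)\mnorm{U^{*}}$; a union bound over $k$ gives $\mnorm{\cE U^{*}}\preceq R(\delta)\mnorm{U^{*}}+\sqrt{R(\delta)p^{*}}$.

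The only real obstacle is the off-diagonal term: the reason $r$ enters only through $\sqrt{R(\delta)p^{*}}$, rather than as a multiplicative $\sqrt{r}$ in front of $\mnorm{U^{*}}$, is that orthonormality of $U^{*}$ bounds the variance proxy by $p^{*}$ instead of $p^{*}r$ -- the same refinement that underlies the proof of Lemma \ref{lem:mnormEU_binary}, and the dimensional $\log(2r)$ factor from matrix Bernstein is then harmless since it is dominated by $R(\delta)$. The diagonal term and the arithmetic $R(\delta)\le M(\delta)+r$ are routine applications of the Bernstein-type inequalities collected in Appendix \ref{app:concentration}.
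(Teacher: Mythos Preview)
Your proposal is correct and follows essentially the same route as the paper: split $E$ into its diagonal part $\D-\E\D$ and its off-diagonal part $-\cE=\Ep$, bound the diagonal contribution by $M(\delta)\mnorm{U^{*}}$ via Bernstein on the centered degrees (the paper packages this as Lemma~\ref{lem:E2Ep2}), bound the off-diagonal contribution by $R(\delta)\mnorm{U^{*}}+\sqrt{R(\delta)p^{*}}$ via the argument of Lemma~\ref{lem:mnormEU_binary}, and then absorb $\log(n/\delta)\le M(\delta)$ to turn $R(\delta)$ into $M(\delta)+r$. The only cosmetic difference is that for the off-diagonal term the paper invokes Lemma~\ref{lem:mnormEU_binary} directly (its proof uses an $\eps$-net via Proposition~\ref{prop:vector2matrix} together with the Bernoulli tail bound of Lemma~\ref{lem:bernoulli2}), whereas you appeal to matrix/vector Bernstein; both exploit $\sum_{j}(U^{*})_{j}(U^{*})_{j}^{T}=I_{r}$ to get variance proxy $p^{*}$ rather than $rp^{*}$, and the resulting $\log r$ factor is harmlessly absorbed into $R(\delta)$ either way.
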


\begin{lemma}\label{lem:L1L2L3_laplacian}
Let $\L$ be the unnormalized Laplacian of $\A$ where $\A$ is a binary random matrix that satisfies the condition in part (b) of Proposition \ref{prop:A1}. Then there exists $\L^{(1)}, \ldots, \L^{(n)}$ satisfying \textbf{A}'1 with
\[L_{1}(\delta) \preceq \sqrt{m}M(\delta) + m(\sqrt{n\bar{p}^{*}} + \sqrt{\log (n / \delta)}), \quad L_{2}(\delta) = m, \quad L_{3}(\delta) \preceq \frac{m\lb n\bar{p}^{*} + \log (n / \delta)\rb}{\lambda_{\min}^{*}}.\]
\end{lemma}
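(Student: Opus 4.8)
The plan is to adapt the argument behind part (b) of Proposition \ref{prop:A1}, but applied to the diagonal-surgered matrix $\Lp = \L - \Sigma$ rather than to $\A$ or $\L$ directly. Since $\A$ satisfies the $m$-dependence hypothesis of Proposition \ref{prop:A1}(b), for each row $k$ there is a neighborhood $\N_k$ of size at most $m$ so that $\A_k$ is independent of $\{\A_i : i \notin \N_k\}$. The key observation is that row $k$ of $\L$, and hence row $k$ of $\Lp$, is a function only of $\{\A_{ij}\}$ touching index $k$, i.e. only of $\A_k$ together with the diagonal degree $\D_{kk}$; more importantly the off-diagonal structure of $\Lp$ outside row/column $k$ changes by at most a rank-perturbation when we resample $\A_k$. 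So the natural definition is to let $\L^{(k)}$ be the unnormalized Laplacian (minus its own diagonal surgery) built from an independent resampled copy of $\A_k$ inside $\N_k$, coupled as in the proof of Proposition \ref{prop:A1}(b). The total variation bound \eqref{eq:TV_cond}, now in the form required by \textbf{A}'1, follows from the same coupling because $\Ep_k$ is a deterministic function of $\A_k$ and $\D_{kk}$, which is itself a function of $\A_k$.

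Next I would bound the two deterministic quantities that \textbf{A}'1 asks for. For $L_1(\delta)$, we need $\|\Ap^{(k)} - \Ap\|_{\op}$ where $\Ap^{(k)} = \L^{(k)} - \Sigma^{(k)}$. Resampling $\A_k$ affects only rows and columns indexed by $\N_k$ of the Laplacian, plus the corresponding diagonal degrees; write the difference as a sum over the at most $m$ affected rows/columns of rank-one-ish terms. Each such term has operator norm controlled by $\mnorm{\Ep} + $ (a degree fluctuation term) $+ \mnorm{\Ap^{*}}$, and there are $O(m)$ of them, so crudely $\|\Ap^{(k)} - \Ap\|_{\op} \preceq \sqrt{m}\,\mnorm{\Ep^{(k)} - \Ep}$ via the standard bound $\|M\|_{\op} \le \sqrt{\text{(number of nonzero rows)}}\,\mnorm{M}$ applied to a symmetrized version, but one must be careful that the support is $\N_k$-shaped so the $\sqrt{\cdot}$ factor is $\sqrt{m}$ rather than $\sqrt{n}$. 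Feeding in $\mnorm{\Ep} \preceq \sqrt{n\bar p^{*}} + \sqrt{\log(n/\delta)}$ from Lemma \ref{lem:Eop_laplacian} and the degree-fluctuation bound $|\D_{kk} - (n-1)\bar p^{*}| \preceq M(\delta)$ (Bernstein), one lands on $L_1(\delta) \preceq \sqrt m M(\delta) + m(\sqrt{n\bar p^{*}} + \sqrt{\log(n/\delta)})$. For the second inequality in \textbf{A}'1, bound $\|(\Ap^{(k)} - \Ap)U_S\|_{\op}$: since $\Ap^{(k)} - \Ap$ is supported on $\N_k$, the product $(\Ap^{(k)} - \Ap)U_S$ has at most $m$ nonzero rows, each a linear contrast of at most $m$ rows of $U_S$ with coefficients of size $O(\mnorm{\Ep} + M(\delta) + \mnorm{\Ap^{*}})$, giving a factor $m$ (hence $L_2(\delta) = m$) and, dividing through by $\lambda_{\min}(\Lambda_S^{*})$, an additive term $L_3(\delta) \preceq m(n\bar p^{*} + \log(n/\delta))/\lambda_{\min}^{*}$ after absorbing $\kappa(\Lambda_S^{*})$ into the $L_2$ slot and using $M(\delta)^2 \preceq n\bar p^{*}\log(n/\delta) + \log^2(n/\delta) \preceq (n\bar p^{*} + \log(n/\delta))\log(n/\delta)$, etc.

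I expect the main obstacle to be the bookkeeping around the diagonal degree terms and getting the $\sqrt m$ (not $\sqrt n$, not $m$) factor right in $L_1$: unlike the adjacency case, resampling $\A_k$ perturbs every $\D_{ii}$ with $i \in \N_k$, so the diagonal of $\Sigma$ also moves, and the difference $\Ap^{(k)} - \Ap$ is a genuinely $\N_k \times \N_k$-plus-cross-terms object rather than a single row/column; one has to split it into a symmetric off-diagonal part (supported on rows/columns in $\N_k$, handled by $\|M\|_{\op}\le\sqrt{2|\N_k|}\,\mnorm{M}$) and a diagonal part (whose operator norm is just the max degree fluctuation, $\preceq M(\delta)$). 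Once that decomposition is in place, every remaining estimate is a routine application of Lemma \ref{lem:Eop_laplacian}, a Bernstein bound on degrees, and the trivial inequality $\|MU_S\|_{\op} \le \sqrt{\#\{\text{nonzero rows}\}}\cdot\max_k\|(MU_S)_k\|_2$.
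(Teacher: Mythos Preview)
Your sketch has the right shape but contains a genuine gap in the $L_2,L_3$ step. First a minor correction: \textbf{A}'1 asks for bounds on $\|\L^{(k)}-\L\|_{\op}$ and $\|(\L^{(k)}-\L)U_S\|_{\op}$, not on the surgered difference $\Ap^{(k)}-\Ap$; the diagonal surgery only enters through the TV condition on $\Ep_k$. More seriously, your claim that ``$(\Ap^{(k)}-\Ap)U_S$ has at most $m$ nonzero rows'' is false. The matrix $\L^{(k)}-\L$ (and its off-diagonal part) is symmetric and supported on the set $\{(i,j): i\in\N_k \text{ or } j\in\N_k\}$, so \emph{every} row $i\notin\N_k$ still has nonzero entries in columns $j\in\N_k$; moreover the diagonal of $\L^{(k)}-\L$ is nonzero at \emph{every} $i\in[n]$ because removing or resampling edges touching $\N_k$ perturbs $\D_{ii}$ for all $i$. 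Hence $(\L^{(k)}-\L)U_S$ has all $n$ rows potentially nonzero, and the bound ``$\sqrt{\#\text{nonzero rows}}\cdot\max_k\|(\cdot)_k\|_2$'' you invoke at the end gives nothing useful.

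The paper's proof handles this by an explicit three-part decomposition $\L-\L^{(k)}=\td{\L}_1+\td{\L}_2-\td{\L}_3$ and, crucially, by using the eigenvector equation $\L U=U\Lambda$ to convert $\|\L_i^TU\|_2$ into $\|U_i^T\Lambda\|_2\le\lambda_{\max}(\Lambda)\|U_i\|_2$ for the $m$ rows $i\in\N_k$ (this is where $L_2=m$ and the $\kappa^*$ factor come from). The $n\bar{p}^*$ term in $L_3$ does \emph{not} come from $M(\delta)^2$ as you suggest, but from the diagonal piece $\td{\L}_3$: the deterministic correction $\sum_{i\in\N_k}\L_{ii}^*e_ie_i^T$ contributes $m\max_i\L_{ii}^*\le mn\bar{p}^*$, and the full-support diagonal $\sum_{i\in\N_k^c}(\sum_{j\in\N_k}\A_{ij})e_ie_i^T$ is controlled by $m\mnorm{\A}^2\preceq m(n\bar{p}^*+\|\Ep\|_{\op}^2)$. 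Neither of these appears in your sketch; you will need to incorporate the eigenvector relation and a correct treatment of the globally-supported diagonal to recover the stated $L_3$.
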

In particular, the setting considered in this subsection is a special case with $m = 1$. Putting the pieces together we deduce the following theorem.
\begin{theorem}\label{thm:generic_binary_laplacian}
Fix any $\delta \in (0, 1)$ and $\alpha > 0$. Let $R(\delta)$, $g(\delta)$ and $M(\delta)$ be defined as in \eqref{eq:pstar_Rdelta}, \eqref{eq:gdelta} and \eqref{eq:Mdelta}, respectively. Further let 
\[\bar{\kappa}' = \bar{\kappa}^{*} + n\bar{p}^{*} / \lambda_{\min}^{*},\]
and assume that
\begin{equation}
  \label{eq:A4_binary_laplacian}
  \gap^{*}\ge C\{\Theta(\delta)\bar{\kappa}' g(\delta) + (\Theta(\delta) + 1)M(\delta)\},
\end{equation}
for some universal constant $C$ that is large enough. Then with probability at least $1 - (B(r) + 1)\delta$, 
  \begin{align*}
    d_{\ttinf}(U, U^{*} + V)&\preceq \lb \frac{M(\delta)^{2}}{(\gap^{*})^{2}} + \frac{\Theta(\delta)(\bar{\kappa}'g(\delta) + M(\delta))}{\gap^{*}}\rb\left\{\lb 1 + \frac{\Theta(\delta) r}{\lambda_{\min}^{*}}\rb\mnorm{U^{*}} + \frac{\Theta(\delta)\sqrt{R(\delta)p^{*}}}{\lambda_{\min}^{*}}\right\}\\
& \quad + \frac{\Theta(\delta) M(\delta)\sqrt{p^{*}}}{\gap^{*}\lambda_{\min}^{*}}\lb \sqrt{n\bar{p}^{*}} + \frac{\sqrt{R(\delta)^{1 + \alpha}}}{\alpha \log R(\delta)}\rb,
\end{align*}
and 
  \begin{align*}
    d_{\ttinf}(U, U^{*})& \preceq \lb \frac{M(\delta)^{2}}{(\gap^{*})^{2}} + \frac{\Theta(\delta)(\bar{\kappa}'g(\delta) + M(\delta))}{\gap^{*}} + \frac{\Theta(\delta) r}{\lambda_{\min}^{*}}\rb\mnorm{U^{*}} + \frac{\Theta(\delta)\sqrt{Rp^{*}}}{\lambda_{\min}^{*}}\\
& \quad + \frac{\Theta(\delta) M(\delta)\sqrt{p^{*}}}{\gap^{*}\lambda_{\min}^{*}}\lb \sqrt{n\bar{p}^{*}} + \frac{\sqrt{R(\delta)^{1 + \alpha}}}{\alpha \log R(\delta)}\rb.
  \end{align*}
\end{theorem}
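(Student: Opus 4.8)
The plan is to obtain Theorem~\ref{thm:generic_binary_laplacian} as a direct corollary of the generic ``diagonal surgery'' bounds, Theorem~\ref{thm:generic_bound_trick} and Theorem~\ref{thm:generic_bound2_trick}, by plugging in the problem-specific quantities supplied by Lemmas~\ref{lem:A3_laplacian}--\ref{lem:L1L2L3_laplacian}. First I would verify that the choice $\Sigma = \diag(\L_{11},\ldots,\L_{nn})$ together with the hypotheses of the theorem fulfils assumptions \textbf{A}'0--\textbf{A}'4 for the case $m=1$. For \textbf{A}'1, \textbf{A}'2, \textbf{A}'3 this is immediate from Lemmas~\ref{lem:L1L2L3_laplacian}, \ref{lem:Eop_laplacian}, \ref{lem:A3_laplacian} respectively (taking $m=1$ in Lemma~\ref{lem:L1L2L3_laplacian}, so $L_1(\delta)\preceq M(\delta)+\sqrt{n\bar p^*}+\sqrt{\log(n/\delta)}$, $L_2(\delta)=1$, $L_3(\delta)\preceq (n\bar p^* + \log(n/\delta))/\lambda_{\min}^*$). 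Assumption \textbf{A}'0 is left in the form of an abstract $\Theta(\delta)$, so nothing needs to be checked there — it is simply carried through. The one genuine computation is to check that the eigen-gap hypothesis \eqref{eq:A4_binary_laplacian} implies \textbf{A}'4; this amounts to bounding $\etap(\delta)$ and $\sigmap(\delta)$ using the lemma outputs and absorbing constants into $C$.

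The core of the argument is then the substitution into the conclusions of Theorems~\ref{thm:generic_bound_trick} and \ref{thm:generic_bound2_trick}. From \eqref{eq:etap} and the lemmas, $\etap(\delta) = \Ep_\infty(\delta) + \bp_\infty(\delta) + \bp_2(\delta) \preceq \sqrt{n\bar p^*} + \sqrt{\log(n/\delta)} + R(\delta)/(\alpha\log R(\delta)) + \sqrt{p^*}R(\delta)^{(1+\alpha)/2}/(\alpha\log R(\delta))$, which is dominated (up to the last small $\sqrt{p^*}$ term) by $g(\delta)$ as defined in \eqref{eq:gdelta}. Since $\bar\kappa^* L_2(\delta) + L_3(\delta) + 1 \preceq \bar\kappa^* + n\bar p^*/\lambda_{\min}^* + \log(n/\delta)/\lambda_{\min}^* + 1$, and the $\log(n/\delta)/\lambda_{\min}^*$ piece is $\preceq M(\delta)/\lambda_{\min}^*$ which gets folded into the $M(\delta)^2/(\gap^*)^2$ and $\Theta(\delta)M(\delta)/\gap^*$ terms anyway, the cleanest move is to identify the combination $\bar\kappa^* + n\bar p^*/\lambda_{\min}^*$ and christen it $\bar\kappa'$, exactly as the statement does. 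Then $\Theta(\delta)\sigmap(\delta) \preceq \Theta(\delta)(\bar\kappa' g(\delta) + E_+(\delta)) \preceq \Theta(\delta)(\bar\kappa' g(\delta) + M(\delta))$, and $E_+^2(\delta)/(\gap^*)^2 \preceq M(\delta)^2/(\gap^*)^2$, giving the bracketed prefactor $M(\delta)^2/(\gap^*)^2 + \Theta(\delta)(\bar\kappa' g(\delta)+M(\delta))/\gap^*$ that appears in both displays. The remaining terms follow by substituting $\mnorm{\Ep^*}$, which for the Laplacian equals $\mnorm{\L^* - \Sigma^*}$ and is of order $\sqrt{n}\cdot\bar p^* \asymp \sqrt{n\bar p^*}\cdot\sqrt{\bar p^*} \preceq \sqrt{n\bar p^*}$ up to lower order (more carefully, $\mnorm{\Ep^*}\preceq \sqrt{n}\bar p^* \le \sqrt{n\bar p^*}$ when $\bar p^*\le 1$), $\bp_2(\delta)\preceq \sqrt{p^*}R(\delta)^{(1+\alpha)/2}/(\alpha\log R(\delta))$, $E_+(\delta)\preceq M(\delta)$, and $\mnorm{\Ep U^*}$ via Lemma~\ref{lem:mnormEU_laplacian}, namely $\mnorm{E U^*}\preceq (M(\delta)+r)\mnorm{U^*} + \sqrt{R(\delta)p^*}$; note the $M(\delta)+r$ here, as opposed to $r$ alone, is why the first display carries $(1 + \Theta(\delta) r/\lambda_{\min}^*)$ — the $M(\delta)\mnorm{U^*}$ part recombines with the prefactor and is absorbed.

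For the $d_{\ttinf}(U,U^*)$ bound, Theorem~\ref{thm:generic_bound2_trick} gives the same prefactor times $\mnorm{U^*}$ plus $\Theta(\delta)\mnorm{E U^*}/\lambda_{\min}^*$ plus the last $\Theta(\delta)(\bp_2(\delta) + \mnorm{\Ep^*})E_+(\delta)/(\lambda_{\min}^*\gap^*)$ term; substituting the same quantities and using Lemma~\ref{lem:mnormEU_laplacian} once more yields the stated $(\,\cdots\, + \Theta(\delta) r/\lambda_{\min}^*)\mnorm{U^*} + \Theta(\delta)\sqrt{R p^*}/\lambda_{\min}^*$ form, where again the $\Theta(\delta)M(\delta)\mnorm{U^*}/\lambda_{\min}^*$ piece coming from $\mnorm{E U^*}$ is dominated by the $\Theta(\delta)M(\delta)/\gap^*$ or $M(\delta)^2/(\gap^*)^2$ prefactor times $\mnorm{U^*}$ (using $\gap^*\le \lambda_{\min}^*$, so $1/\lambda_{\min}^* \le 1/\gap^*$, combined with $M(\delta)\le \gap^*$ from the eigen-gap assumption — hence $\Theta(\delta)M(\delta)/\lambda_{\min}^* \preceq \Theta(\delta)M(\delta)^2/((\gap^*)^2) \cdot (\gap^*/M(\delta)) \cdot (\gap^*/\lambda_{\min}^*)$, wait, more simply $\Theta(\delta)M(\delta)/\lambda_{\min}^* \le \Theta(\delta)M(\delta)/\gap^*$, which is one of the listed prefactor terms). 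The last term of the theorem, $\Theta(\delta)M(\delta)\sqrt{p^*}(\sqrt{n\bar p^*} + \sqrt{R(\delta)^{1+\alpha}}/(\alpha\log R(\delta)))/(\gap^*\lambda_{\min}^*)$, is exactly $\Theta(\delta)(\bp_2(\delta)+\mnorm{\Ep^*})E_+(\delta)/(\lambda_{\min}^*\gap^*)$ after substituting $\bp_2(\delta)$, $\mnorm{\Ep^*}\preceq\sqrt{n\bar p^*}$ (absorbing $\sqrt{p^*}$), and $E_+(\delta)\preceq M(\delta)$.

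The main obstacle is not conceptual but bookkeeping: tracking which sub-dominant terms (the various appearances of $M(\delta)\mnorm{U^*}$, $\log(n/\delta)/\lambda_{\min}^*$, $\sqrt{p^*}$ factors) get absorbed into which surviving term, and being careful that every absorption uses only legitimate inequalities — chiefly $\gap^* \le \lambda_{\min}^*$ (so $1/\lambda_{\min}^*\le 1/\gap^*$), the eigen-gap hypothesis \eqref{eq:A4_binary_laplacian} (so $M(\delta)\preceq \gap^*$, allowing $M(\delta)^k/(\gap^*)^k$ comparisons), and $\bar p^* \le p^* \le 1$, $r \le n$. A secondary subtlety is the precise order of $\mnorm{\Ep^*}$: since $\Ep^* = \L^* - \Sigma^* - \E[\cdot]$-cancellation leaves $\mnorm{\Ep^*} = \max_i \|(\L^*)_i - \Sigma^*_{ii}e_i\|_2 = \max_i (\sum_{j\neq i} p_{ij}^2)^{1/2} \le \sqrt{n}\,\bar p^* \le \sqrt{n\bar p^*}$, and this should be stated explicitly rather than waved through. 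Once these are pinned down the result follows by collecting terms, and I would present it as: ``By Lemmas~\ref{lem:A3_laplacian}--\ref{lem:L1L2L3_laplacian} with $m=1$, assumptions \textbf{A}'1--\textbf{A}'3 hold with the stated functions; the eigen-gap condition \eqref{eq:A4_binary_laplacian} implies \textbf{A}'4. Applying Theorems~\ref{thm:generic_bound_trick} and \ref{thm:generic_bound2_trick} and simplifying using $\gap^*\le\lambda_{\min}^*$, $M(\delta)\preceq\gap^*$, and $\bar p^*\le p^*\le 1$ yields the claim.''
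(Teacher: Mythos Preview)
Your approach is correct and is essentially the same as the paper's: verify \textbf{A}'1--\textbf{A}'4 via Lemmas~\ref{lem:A3_laplacian}--\ref{lem:L1L2L3_laplacian} with $m=1$, then substitute into Theorems~\ref{thm:generic_bound_trick} and~\ref{thm:generic_bound2_trick} and simplify using $\gap^{*}\le\lambda_{\min}^{*}$ and the eigen-gap hypothesis. Two small corrections: the quantity you write as $\mnorm{\Ep^{*}}$ should be $\mnorm{\Ap^{*}}$ (the deterministic matrix appearing in Theorem~\ref{thm:generic_bound_trick}), and the inequality $\max_i(\sum_{j\ne i}p_{ij}^{2})^{1/2}\le \sqrt{n}\,\bar p^{*}$ is not generally valid --- the correct bound, which the paper uses, is $\mnorm{\Ap^{*}}\le \sqrt{n\bar p^{*} p^{*}}=\sqrt{p^{*}}\sqrt{n\bar p^{*}}$, and this is precisely what yields the clean $\sqrt{p^{*}}$-factorization in the last term.
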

Note that 
\[\frac{|\Lambda^{*}_{jj}|}{|\Lambda^{*}_{jj} - \L_{kk}|}\le \frac{\Lambda^{*}_{jj}}{\max\{0, |\Lambda^{*}_{jj} - \L^{*}_{kk}| - |\L_{kk} - \L^{*}_{kk}|\}}.\]
It is easy to derive a concentration inequality for $\max_{k}|\L_{kk} - \L^{*}_{kk}|$. This suggests the following bound for $\Theta(\delta)$. 
\begin{lemma}\label{lem:Theta1}
  Let 
  \begin{equation}
    \label{eq:Thetastar}
    \Theta^{*} = \frac{\min_{j\in [s+1, s+r]}|\Lambda^{*}_{jj}|}{\min_{j\in [s+1, s+r], k\in [n]}|\Lambda^{*}_{jj} - \L^{*}_{kk}|}.
  \end{equation}
Then $\Theta(\delta)\le 5\Theta^{*}$ if  
\[\min_{j\in [s+1, s+r], k\in [n]}|\Lambda^{*}_{jj} - \L^{*}_{kk}|\ge 5M(\delta).\]
\end{lemma}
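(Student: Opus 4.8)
The plan is to reduce the claim to a concentration bound for the diagonal fluctuation $\max_{k\in[n]}|\L_{kk}-\L^{*}_{kk}|$ and then feed it into the deterministic inequality displayed immediately before the statement. Recall that the diagonal surgery here uses $\Sigma=\diag(\L_{11},\ldots,\L_{nn})$, so in assumption \textbf{A}'0 we have $\Sigma_{kk}=\L_{kk}=\sum_{j=1}^{n}\A_{kj}$ and $\L^{*}_{kk}=\sum_{j=1}^{n}p_{kj}$. Consequently $\L_{kk}-\L^{*}_{kk}=\sum_{j=1}^{n}(\A_{kj}-p_{kj})$ is a sum of independent, centered, $[-1,1]$-bounded random variables with variance $\sum_{j}p_{kj}(1-p_{kj})\le\sum_{j}p_{kj}\le n\bar{p}^{*}$.

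First I would apply Bernstein's inequality (in the form recorded in Appendix \ref{app:concentration}) to $\L_{kk}-\L^{*}_{kk}$ with variance proxy $n\bar{p}^{*}$ and range $1$, at a deviation level chosen so that the failure probability is at most $\delta/n$; this gives $|\L_{kk}-\L^{*}_{kk}|\preceq\sqrt{n\bar{p}^{*}\log(n/\delta)}+\log(n/\delta)=M(\delta)$. A union bound over $k\in[n]$ then yields, on an event $\event$ of probability at least $1-\delta$, the bound $\max_{k\in[n]}|\L_{kk}-\L^{*}_{kk}|\le M(\delta)$ (after absorbing the universal Bernstein constant into the definition of $M(\delta)$; any fixed constant strictly below $4$ suffices for what follows).

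Next, working on $\event$, I would insert $|\L_{kk}-\L^{*}_{kk}|\le M(\delta)$ into the inequality
\[\frac{|\Lambda^{*}_{jj}|}{|\Lambda^{*}_{jj}-\L_{kk}|}\le\frac{|\Lambda^{*}_{jj}|}{\max\{0,\;|\Lambda^{*}_{jj}-\L^{*}_{kk}|-|\L_{kk}-\L^{*}_{kk}|\}}\]
stated just before the lemma. Let $(j^{\star},k^{\star})$ attain $\min_{j,k}|\Lambda^{*}_{jj}-\L_{kk}|$. Since $|\Lambda^{*}_{j^{\star}j^{\star}}-\L^{*}_{k^{\star}k^{\star}}|\ge\min_{j,k}|\Lambda^{*}_{jj}-\L^{*}_{kk}|\ge 5M(\delta)$ by hypothesis, subtracting $M(\delta)$ shrinks it by at most the factor $4/5$, so $|\Lambda^{*}_{j^{\star}j^{\star}}-\L_{k^{\star}k^{\star}}|\ge\tfrac45\min_{j,k}|\Lambda^{*}_{jj}-\L^{*}_{kk}|$. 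Dividing $\min_{j}|\Lambda^{*}_{jj}|$ by this lower bound gives
\[\frac{\min_{j}|\Lambda^{*}_{jj}|}{\min_{j,k}|\Lambda^{*}_{jj}-\L_{kk}|}=\frac{\min_{j}|\Lambda^{*}_{jj}|}{|\Lambda^{*}_{j^{\star}j^{\star}}-\L_{k^{\star}k^{\star}}|}\le\frac54\cdot\frac{\min_{j}|\Lambda^{*}_{jj}|}{\min_{j,k}|\Lambda^{*}_{jj}-\L^{*}_{kk}|}=\frac54\,\Theta^{*}\le5\Theta^{*},\]
which shows that $\Theta(\delta)=5\Theta^{*}$ is an admissible choice in \textbf{A}'0 on the event $\event$, as claimed.

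There is no genuine obstacle: the only probabilistic ingredient is the Bernstein bound for a sum of independent bounded variables, which is standard and already available in Appendix \ref{app:concentration}, and the remaining steps are a union bound and the elementary $\min/\max$ manipulation above. The only point meriting care is constant bookkeeping — ensuring that the universal constant in the high-probability bound for $\max_{k}|\L_{kk}-\L^{*}_{kk}|$ is small enough relative to the slack $5M(\delta)$ assumed in the hypothesis; the factor $4/5$ obtained above leaves comfortable room, so even a crude Bernstein constant is fine.
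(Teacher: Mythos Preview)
Your approach is essentially the same as the paper's: a Bernstein-type concentration bound on $\max_k|\L_{kk}-\L^{*}_{kk}|$ (the paper obtains it from Lemma~\ref{lem:E2Ep2}, giving the explicit constant $4M(\delta)$), followed by the elementary denominator comparison you wrote out. The only quibble is the constant bookkeeping: the paper's concentration bound yields $4M(\delta)$, not $M(\delta)$, so with the hypothesis $\min_{j,k}|\Lambda^{*}_{jj}-\L^{*}_{kk}|\ge 5M(\delta)$ one gets $\Theta(\delta)\le 5\Theta^{*}$ with no slack --- your remark that ``any constant strictly below $4$'' suffices and that there is ``comfortable room'' is slightly optimistic, since $c=4$ is exactly what is needed and what the paper proves.
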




\section{Concentration of The Spectral Norm of Random Graphs}\label{sec:spectral_norm}
\subsection{Background}
\ER graph is the most fundamental object in random graph theory. The probability connection matrix $A^{*}$ of an \ER graph has 
\[A_{ii}^{*} = 0, \quad A_{ij}^{*} = p,\quad \forall i\not = j.\]
Concentration of the spectral norm or the extreme eigenvalues of \ER graphs has received considerable attention. Adapted from the proof of \cite{boucheron2013concentration} (Example 3.14), based on Efron-Stein inequality, we can show that
\begin{equation}\label{eq:var2}
  \Var(\|A\|_{\op})\le 2.
\end{equation}
\cite{alon2002concentration} proved the sub-gaussian behavior of $\|A\|_{\op}$ in the sense that 
\begin{equation}
  \label{eq:subgaussian32}
  \P(|\|A\|_{\op} - \E \|A\|_{\op}| > t)\le 2e^{-t^{2} / 32}, \quad \forall t > 0.
\end{equation}
See also \cite[][Example 8.7]{boucheron2013concentration} for a simplified proof. The key argument underlying the above results is the Efron-Stein-type inequalities that involve the leave-one-out behavior of $\|A\|_{\op}$ as a function of independent random variables $(A_{ij})_{i < j}$. Specifically, write $\|A\|_{\op}$ as $Z$ for convenience and denote by $Z_{ij}$ the operator norm of matrix $A^{(ij)}$, which equals to $A$ except that the $(i,j)$-th entry is replaced by an independent copy $A_{ij}'$ drawn from a Bernoulli distribution with parameter $p$. Then Efron-Stein inequality implies that
\begin{equation}
  \label{eq:EfronStein}
  \Var(Z)\le \E V_{+}, \quad \mbox{where }V_{+} = \E'\sum_{i < j}(Z - Z_{ij})_{+}^{2}.
\end{equation}
Here $(x)_{+}$ denotes $\max\{x, 0\}$ by convention and $\E'$ denotes the expectation over $(A_{ij}')_{i < j}$ (while conditioning on $(A_{ij})_{i < j}$). Using the variational representation of operator norm, we can rewrite $Z - Z_{ij}$ as follows:
\[Z - Z_{ij} = \sup_{u: \|u\| = 1}|u^{T}Au| - \sup_{u: \|u\| = 1}|u^{T}A^{(ij)}u|.\]
Let $u_{1}$ be the eigenvector of $A$ corresponding to its largest eigenvalue in absolute values, then 
\[\sup_{u: \|u\| = 1}|u^{T}Au| = \|A\|_{\op}, \quad \sup_{u: \|u\| = 1}|u^{T}A^{(ij)}u|\ge |u_{1}^{T}A^{(ij)}u_{1}| \ge \pm u_{1}^{T}A^{(ij)}u_{1}.\]
If $u_{1}^{T}Au_{1} \ge 0$, then 
\[Z - Z_{ij}\le u_{1}^{T}(A - A^{(ij)})u_{1}\le |u_{1}^{T}(A - A^{(ij)})u_{1}|;\]
if $u_{1}^{T}Au_{1} < 0$, then 
\[Z - Z_{ij}\le -u_{1}^{T}(A - A^{(ij)})u_{1}\le |u_{1}^{T}(A - A^{(ij)})u_{1}|.\]
Putting two pieces together, we conclude that 
\begin{equation}\label{eq:ZZij}
  (Z - Z_{ij})_{+} \le |u_{1}^{T}(A - A^{(ij)})u_{1}| = 2|u_{1i}||u_{1j}||A_{ij} - A_{ij}'|.
\end{equation}
As a result,
\begin{equation}
  \label{eq:V+bound}
  V_{+}\le 4\sum_{i < j}u_{1i}^{2}u_{1j}^{2}\E'(A_{ij} - A_{ij}')^{2} = 4\sum_{i < j}u_{1i}^{2}u_{1j}^{2}(p + (1 - 2p)A_{ij}).
\end{equation}
To prove \eqref{eq:var2}, one can simply use the naive bound that 
\[p + (1 - 2p)A_{ij}\le p + 1 - 2p \le 1.\]
Therefore, we obtain an almost sure bound for $V_{+}$:
\begin{equation}\label{eq:V+2}
  V_{+} \le 4\sum_{i < j}u_{1i}^{2}u_{1j}^{2} = 2\sum_{i\not = j}u_{1i}^{2}u_{1j}^{2}\le 2\lb\sum_{i=1}^{n}u_{1i}^{2}\rb^{2} = 2.
\end{equation}
Then Efron-Stein inequality implies that
\[\Var(\|A\|_{\op})\le \E V_{+}\le 2.\]

However, this bound is loose for small $p$ in which case the term $(p + (1 - 2p)A_{ij})$ is most likely equal to $p$ instead of the conservative bound $1$. However, the complicated dependence between $u_{1}$ and $A$ makes it hard to operationalize this intuition. Recently, an interesting work by \cite{lugosi2018concentration} showed that 
\begin{equation}
  \label{eq:lugosi_variance}
  \Var(\|A\|_{\op})\le c_{1}p, \quad \mbox{if }p\ge c_{2}(\log n)^{3} / n.
\end{equation}
for some universal constant $c_{1}, c_{2} > 0$. Furthermore, they proved the partial sub-gaussian behavior of $\|A\|_{\op}$ in the sense that
\begin{equation}
  \label{eq:lugosi_subgaussian}
    \P(|\|A\|_{\op} - \E \|A\|_{\op}| > \sqrt{p}t)\le c_{3}e^{-t^{2} / c_{4}}, \quad \forall t \le c_{5}\sqrt{np}\log (np) / (\log n\log (1/p)),
\end{equation}
for some universal constants $c_{3}, c_{4}, c_{5} > 0$ under the condition that $p\ge c_{2}(\log n)^{3} / n$. Under this regime, the upper bound for $t$ in \eqref{eq:lugosi_subgaussian} is diverging, implying that the tail probability holds for values much larger than $\sqrt{p}$. 

The idea is based on the phenomenon called "eigenvector delocalization" that all entries of $u_{1}$ are small \citep[e.g.][]{mitra2009entrywise, erdos2013spectral, lugosi2018concentration}. Intuitively, $u_{1}$ should be close to $u_{1}^{*}$, the eigenvector of $A^{*}$ corresponding to the largest eigenvalue in absolute values. A simple calculation shows that $u_{1}^{*} = \one_{n} / \sqrt{n}$. In particular, \cite{lugosi2018concentration} improved the previous results and proved that with high probability, $\|u_{1}\|_{\infty} \preceq 1 / \sqrt{n}$ if $p \succeq (\log n)^{3} / n$. This motivates the following bound for $V_{+}$ in \eqref{eq:V+bound}
\begin{equation}
  \label{eq:V+W}
  V_{+}\le \|u_{1}\|_{\infty}^{4}W, \quad \mbox{where } W = 4 \sum_{i < j}(p + (1 - 2p)A_{ij}).
\end{equation}
Using the concentration that $\|u\|_{\infty}\approx 1 / \sqrt{n}$ and $W \approx \E W = 4n(n - 1)p(1 - p)$ and by Efron-Stein inequality \eqref{eq:EfronStein}, they proved that
\[\Var(\|A\|_{\op})\le \E V_{+}\preceq \frac{n(n - 1)p(1 - p)}{n^{2}}\preceq p.\]
The bound of tail probability in \eqref{eq:lugosi_subgaussian} can be derived similarly using higher moments of $V_{+}$ and the moment concentration inequalities derived by \cite{boucheron2005moment}.

\cite{lugosi2018concentration} conjectured that the requirement $p\succeq (\log n)^{3} / n$ is artifical and the critical regime should be $p\succeq \log n / n$. In this section, we will close this gap by using our $\ell_{2\rightarrow \infty}$ bound in Section \ref{subsec:binary_wigner}. Furthermore, we will discuss the behavior of $\|A\|_{\op}$ when $p \preceq \log n / n$ but $p \succeq \log / (n\log\log n)$. Throughout the rest of this section we assume that
\begin{equation}
  \label{eq:C0}
\frac{1}{2} \ge  p \ge \frac{C_{0}\log n}{n\log\log n}
\end{equation}
for some universal constant $C_{0} > 0$. Similar to Section \ref{sec:binarybound}, we use the notation $\preceq$ and $\succeq$ to hide universal constants.

\subsection{Improved results for \ER graphs}

To start with, we state the moment concentration inequality by \cite{boucheron2005moment}.

\begin{proposition}[Theorem 15.6 and 15.7 of \cite{boucheron2013concentration}]\label{prop:boucheron}
  Let $Z = f(A)$ and $Z_{ij} = f(A^{(ij)})$. Further let $V_{+}$ be defined in \eqref{eq:EfronStein} and 
\[M = \max_{i < j}(Z - Z_{ij})_{+}.\]
Then for any $k \ge 2$,
\begin{align*}
  &\lb\E (Z - \E Z)_{+}^{k}\rb^{1/k}\le \sqrt{3k}\lb\E [V_{+}]^{k / 2}\rb^{1/k}, \\
  & \lb\E (Z - \E Z)_{-}^{k}\rb^{1/k}\le \sqrt{4.16k}\left\{\lb\E [V_{+}]^{k / 2}\rb^{1/k} + \sqrt{k}(\E [M^{k}])^{1/k}\right\}.
\end{align*}
As a result,
\[(\E |Z - \E Z|^{k})^{1/k} \le 4\sqrt{k}\lb\E [V_{+}]^{k / 2}\rb^{1/k} + 4k(\E [M^{k}])^{1/k}.\]
\end{proposition}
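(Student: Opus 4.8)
The plan is to treat the first two inequalities as imported facts and to supply only the short deduction of the third.

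First I would observe that the first two displays are exactly Theorems~15.6 and~15.7 of \cite{boucheron2013concentration}, applied to the function $f$ and the independent family $(A_{ij})_{i<j}$: with $Z=f(A)$, $Z_{ij}=f(A^{(ij)})$, and $V_{+},M$ as defined in the statement, the matrix $A^{(ij)}$ differs from $A$ in exactly one coordinate $A_{ij}$ that is drawn independently from the rest, so the single-coordinate-resampling hypotheses of those theorems are met verbatim, and their conclusions are precisely the moment bounds for $(Z-\E Z)_{+}$ and $(Z-\E Z)_{-}$. I would not reprove these; they rest on the moment refinement of the Efron--Stein inequality developed in that reference via the entropy method.

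Next, to obtain the ``as a result'' bound, I would use the pointwise identity $|x|^{k}=x_{+}^{k}+x_{-}^{k}$ (only one term is nonzero), so that with $x=Z-\E Z$,
\[
\E|Z-\E Z|^{k}=\E(Z-\E Z)_{+}^{k}+\E(Z-\E Z)_{-}^{k},
\]
and then invoke subadditivity of $t\mapsto t^{1/k}$ on $[0,\infty)$ for $k\ge 1$ to get
\[
\lb\E|Z-\E Z|^{k}\rb^{1/k}\le\lb\E(Z-\E Z)_{+}^{k}\rb^{1/k}+\lb\E(Z-\E Z)_{-}^{k}\rb^{1/k}.
\]
Substituting the two one-sided bounds and collecting terms, the coefficient of $\lb\E[V_{+}]^{k/2}\rb^{1/k}$ becomes $\sqrt{3k}+\sqrt{4.16k}=(\sqrt{3}+\sqrt{4.16})\sqrt{k}$ and the coefficient of $(\E[M^{k}])^{1/k}$ becomes $\sqrt{4.16k}\cdot\sqrt{k}=\sqrt{4.16}\,k$. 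Since $\sqrt{3}+\sqrt{4.16}<1.74+2.04<4$ and $\sqrt{4.16}<4$, both are dominated by $4\sqrt{k}$ and $4k$ respectively, which gives the stated bound.

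There is no real obstacle here: the substance is cited wholesale, and the only work on our side is the elementary split $|x|^{k}=x_{+}^{k}+x_{-}^{k}$, subadditivity of the $k$-th root, and the numerical check $\sqrt{3}+\sqrt{4.16}<4$. The one point I would double-check is that the ``bounded differences by single-coordinate resampling'' framework of \cite{boucheron2013concentration} lines up exactly with the construction of $A^{(ij)}$ here (it does), so that Theorems~15.6--15.7 apply without any modification.
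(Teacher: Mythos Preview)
Your proposal is correct. The paper states this proposition without proof, citing Theorems 15.6 and 15.7 of \cite{boucheron2013concentration} directly; your short deduction of the third display from the first two via $|x|^{k}=x_{+}^{k}+x_{-}^{k}$, subadditivity of $t\mapsto t^{1/k}$, and the numerical check $\sqrt{3}+\sqrt{4.16}<4$ is exactly the routine argument needed and matches what the paper leaves implicit.
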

It is well-known \citep[e.g.][]{vershynin2010introduction} that $Z$ is sub-gaussian with parameter $\sigma^{2}$ iff $(\E |Z - \E Z|^{k})^{1/k}\le c\sqrt{k}\sigma$ for any $k \ge 2$ with some universal constant $c$. Suppose this is true for $k\le k_{0}$, then we can still derive the sub-gaussian behavior of $Z$. 

\begin{lemma}\label{lem:markov}
If for $2\le k\le k_{0}$,
\[(\E |Z - \E Z|^{k})^{1/k}\le \sqrt{k}\sigma,\]
for some $\sigma > 0$, then for $t\le \sqrt{k_{0}e}\sigma$,
\[\P(|Z - \E Z|\ge t)\le \exp\left\{1-\frac{t^{2}}{2e\sigma^{2}}\right\}.\]
\end{lemma}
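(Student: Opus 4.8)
The plan is to derive a tail bound from the moment bounds by a Markov-type argument, optimizing over the choice of moment order $k$. First I would fix $t > 0$ and, for any $2 \le k \le k_0$, apply Markov's inequality to $|Z - \E Z|^k$ together with the hypothesis $(\E |Z - \E Z|^k)^{1/k} \le \sqrt{k}\sigma$, giving
\[
\P(|Z - \E Z| \ge t) \le \frac{\E |Z - \E Z|^k}{t^k} \le \left(\frac{\sqrt{k}\,\sigma}{t}\right)^k = \exp\left\{\frac{k}{2}\log\frac{k\sigma^2}{t^2}\right\}.
\]

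Next I would choose $k$ to make the exponent as negative as possible. The unconstrained minimizer of $k \mapsto \tfrac{k}{2}\log(k\sigma^2/t^2)$ is $k = t^2/(e\sigma^2)$, at which the exponent equals $-t^2/(2e\sigma^2)$. The constraint $t \le \sqrt{k_0 e}\,\sigma$ is exactly what guarantees $k = t^2/(e\sigma^2) \le k_0$, so this choice is admissible; the lower constraint $k \ge 2$ can be handled by noting that for small $t$ the claimed bound is trivial (since its right-hand side exceeds $1$ once $t^2 \le 2e\sigma^2$, i.e. the factor $e^1$ in front absorbs the slack), or alternatively by rounding $k$ to an integer. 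Rounding to an integer $k' \in \{\lfloor k \rfloor, \lceil k \rceil\}$ costs at most a constant factor in the exponent, which is precisely what the leading constant $e^{1}$ in $\exp\{1 - t^2/(2e\sigma^2)\}$ is there to absorb; I would carry out this rounding step carefully, checking that $\tfrac{k'}{2}\log(k'\sigma^2/t^2) \le 1 - t^2/(2e\sigma^2)$ for the relevant range of $t$.

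The only mild obstacle is this integrality/boundary bookkeeping: the hypothesis is stated only for integer $k$ in $[2, k_0]$, whereas the natural optimizer is a real number, so one must verify that rounding and the edge cases $t^2/(e\sigma^2) < 2$ do not break the clean final form. This is routine — the $+1$ in the exponent gives exactly the needed room — but it is where the argument needs a line or two of care rather than a one-line substitution. Everything else is a direct application of Markov's inequality plus elementary calculus to locate the optimal $k$.
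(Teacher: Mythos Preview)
Your proposal is correct and follows essentially the same approach as the paper: Markov's inequality at order $k$, then optimize over $k$ to get $k = t^2/(e\sigma^2)$, check this lies in $[2,k_0]$ via the hypothesis $t\le\sqrt{k_0 e}\,\sigma$, and handle $t/\sigma < \sqrt{2e}$ by observing the right-hand side exceeds $1$. The only difference is that the paper treats $k$ as a real parameter throughout (the moment hypothesis is used for real $k\in[2,k_0]$), so no rounding step is needed and the extra $e^1$ factor arises solely from the small-$t$ case; your integrality bookkeeping is unnecessary here but harmless.
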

Although Lemma \ref{lem:markov} does not hold for all $t$, it is desirable if $k_{0}$ is large because $\sigma$ is the scale of $|Z - \E Z|$. In the following, we will bound the higher order moments of $V_{+}$ and $M$, thereby bounding the tail probability of $\|A\|_{\op}$. Recalling from \eqref{eq:ZZij} and \eqref{eq:V+W} that 
\[M\le 2\|u_{1}\|_{\infty}^{2}, \quad V_{+}\le \|u_{1}\|_{\infty}^{4}W.\]
Both involve the moments of $\|u_{1}\|_{\infty}$. Our theory in Section \ref{sec:binarybound} gives the $(1 - \delta)$ upper tail bound for $\|u_{1} - u_{1}^{*}\|_{\infty}$ in the form of 
\begin{equation}\label{eq:tail_temp}
\P\lb \|u_{1} - u_{1}^{*}\|_{\infty} \ge A_{1} + A_{2}\sqrt{\log \lb\frac{1}{\delta}\rb} + A_{3}\log \frac{1}{\delta}\rb\le \delta.
\end{equation}
If \eqref{eq:tail_temp} holds for all $\delta$ then it directly yields a moment bound using Fubini's formula. However, $\delta$ cannot be arbitrarily small otherwise the condition \eqref{eq:A4_binary} in Theorem \ref{thm:generic_bound} may be violated so that \eqref{eq:tail_temp} may fail. So we first characterize the minimal $\delta$ such that \eqref{eq:tail_temp} remains valid.

\begin{lemma}\label{lem:deltastar}
Under condition \eqref{eq:C0}, \eqref{eq:A4_binary} holds for all $\delta > \delta^{*}$ where
\begin{equation}
  \label{eq:deltastar}
  \delta^{*} = \exp\left\{-\frac{np\log (np)}{2C}\right\},
\end{equation}
and $C$ is the constant in \eqref{eq:A4_binary} in Theorem \ref{thm:generic_binary}, if $C_{0}$ and $n$ are bounded below by a universal constant (which can be chosen as $\max\{64C^{2}, 12C, 16\}$).
\end{lemma}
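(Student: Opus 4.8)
The plan is to unfold the condition \eqref{eq:A4_binary}, namely $\gap^{*}\ge C\bar{\kappa}^{*}g(\delta)$, and show that under \eqref{eq:C0} it holds for every $\delta > \delta^{*}$ with $\delta^{*}$ as in \eqref{eq:deltastar}. The setting here is the \ER graph, so I would first substitute in the exact spectral quantities: $A^{*} = p(\one_{n}\one_{n}^{T} - I)$ has leading eigenvalue $(n-1)p$ in absolute value and all other eigenvalues equal to $-p$, so $r = 1$, $s = 0$, $\lambda_{\min}^{*} = \lambda_{\max}^{*} = (n-1)p$, the effective condition number is $\bar{\kappa}^{*} = 1$ (since $\kappa(\Lambda^{*}) = 1$ and $2r = 2$), and $\gap^{*} = \sep_{1,1}(A^{*}) = (n-1)p - (-p) = np$. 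Hence \eqref{eq:A4_binary} reduces to $np \ge C\, g(\delta)$.

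Next I would recall $g(\delta) = \sqrt{n\bar{p}^{*}} + R(\delta)/(\alpha \log R(\delta))$ from \eqref{eq:gdelta}, where for the \ER graph $\bar{p}^{*} = (n-1)p/n \le p$ and $R(\delta) = \log(n/\delta) + r = \log(n/\delta) + 1$. The first term $\sqrt{n\bar{p}^{*}} \le \sqrt{np}$ is harmless: since $p \le 1/2$ we have $\sqrt{np} \le np/\sqrt{np} $ and one checks $C\sqrt{np} \le np/2$ once $np$ exceeds a universal constant, which follows from \eqref{eq:C0} for $C_0$ large. So the crux is to bound the second term $R(\delta)/(\alpha \log R(\delta))$; I would choose $\alpha$ to be a fixed constant (e.g. $\alpha = 1/2$, or whatever value the surrounding application fixes) so it contributes only a universal factor. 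It then suffices to show $R(\delta)/\log R(\delta) \le (np)/(2C')$ for a suitable universal $C'$ whenever $\delta > \delta^{*}$. Plugging in $\delta = \delta^{*} = \exp\{-np\log(np)/(2C)\}$ gives $\log(n/\delta^{*}) = \log n + np\log(np)/(2C)$, and under \eqref{eq:C0} the term $np \succeq \log n/\log\log n$ makes $np\log(np)$ dominate $\log n$, so $R(\delta^{*}) \asymp np\log(np)/C$ up to constants; then $R(\delta^{*})/\log R(\delta^{*}) \asymp np\log(np)/(C\log(np\log(np))) \lesssim np/C$, using that $\log(np\log(np)) \asymp \log(np)$. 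Since $R(\delta)/\log R(\delta)$ is increasing in $R$ and $R$ is decreasing in $\delta$, the inequality for $\delta = \delta^{*}$ gives it for all $\delta > \delta^{*}$, which is what we want.

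The main obstacle is the bookkeeping around the function $x \mapsto x/\log x$: one has to verify it is increasing on the relevant range (true for $x > e$, and here $R(\delta^{*})$ is large because $np \to \infty$ under \eqref{eq:C0}) and then chase the constants so that the final bound is exactly $np/(2C)$ — matching the constant $C$ appearing in \eqref{eq:A4_binary} — rather than some other universal multiple. This is where the explicit lower bounds on $C_0$ and $n$ (the paper offers $\max\{64C^{2}, 12C, 16\}$) get used: $64C^2$ handles the $\sqrt{np}$ versus $np$ comparison, $12C$ controls the ratio $np\log(np)/\log(np\log(np))$ against $np$, and $16$ ensures $np$ and $R(\delta^{*})$ are large enough for the monotonicity of $x/\log x$ and for crude estimates like $\log(np\log(np)) \le 2\log(np)$ to hold. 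I would organize the write-up as: (i) reduce \eqref{eq:A4_binary} to $np \ge C g(\delta)$; (ii) split off the $\sqrt{n\bar{p}^{*}}$ term; (iii) reduce to $R(\delta)/\log R(\delta) \lesssim np$; (iv) evaluate at $\delta^{*}$ and verify the constant, using monotonicity to pass from $\delta^{*}$ to all larger $\delta$.
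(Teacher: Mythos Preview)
Your proposal is correct and follows essentially the same approach as the paper's proof: reduce \eqref{eq:A4_binary} to $\gap^{*}\ge C g(\delta)$ with $\bar{\kappa}^{*}=1$, dispatch the $\sqrt{np}$ term using $np\ge C_{0}$, reduce the remaining term to a monotonicity argument for $x\mapsto x/\log x$, and evaluate at $\delta=\delta^{*}$. One small slip: you write $\gap^{*}=\sep_{1,1}(A^{*})=np$, but $\gap^{*}=\min\{\sep_{1,1}(A^{*}),\lambda_{\min}^{*}\}=\min\{np,(n-1)p\}=(n-1)p$, which is what the paper uses; this costs only a harmless factor of $2$ and does not affect your argument.
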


Based on Lemma \ref{lem:deltastar} and Corollary \ref{cor:typical_binary}, we can derive the moment bound for $V_{+}$ and $M$.
\begin{lemma}\label{lem:V+moment}
Under the assumptions of Lemma \ref{lem:deltastar}, for any $k > 0$, 
\[ \lb\E V_{+}^{k / 2}\rb^{1 / k}\preceq \sqrt{p}\lb 1 + \frac{(k\vee \log n)^{2}}{(np)^{2}}\rb + \exp\left\{-\frac{np\log (np)}{2Ck}\right\},\]
and 
\[ \lb\E M^{k}\rb^{1 / k}\preceq \frac{1}{n}\lb 1 + \frac{(k\vee \log n)^{2}}{(np)^{2}}\rb + \exp\left\{-\frac{np\log (np)}{2Ck}\right\},\]
where $C$ is the constant in \eqref{eq:A4_binary} in Theorem \ref{thm:generic_binary}. 
\end{lemma}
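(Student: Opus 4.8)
The plan is to bound the moments of $\|u_1\|_\infty$ (the leading eigenvector of $A$) by combining the high-probability $\ell_{2\to\infty}$ bound from Corollary \ref{cor:typical_binary} with the lower cutoff $\delta^*$ from Lemma \ref{lem:deltastar}, and then feed these into the trivial pointwise bounds $M\le 2\|u_1\|_\infty^2$ and $V_+\le \|u_1\|_\infty^4 W$ recorded just before the statement. First I would specialize Corollary \ref{cor:typical_binary} to the \ER setting: here $r=1$, $\Lambda^*=\lambda_1^*=(n-1)p$, $\mnorm{U^*}=1/\sqrt n$, $\bar\kappa^*=1$, $\gap^*\asymp np$ (since $\lambda_2^*=-p$ so $\sep_{1,1}(A^*)=np-(-p)\asymp np$ and $\lambda_{\min}^*=np$), $p^*=\bar p^*\asymp p$, and condition \eqref{eq:typical_minLambda_binary} reads $np\succeq np/(\sqrt n\cdot 1/\sqrt n)=np$, which holds. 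Choosing $\alpha$ a small constant, $g(\delta)\asymp \sqrt{np}+R(\delta)/\log R(\delta)$ with $R(\delta)=\log(n/\delta)+1$, so Corollary \ref{cor:typical_binary} gives, for every $\delta>\delta^*$,
\[
\|u_1-u_1^*\|_\infty \preceq \frac{1}{\sqrt n}\Bigl(\frac{\sqrt{np}+R(\delta)/\log R(\delta)}{np}+\frac{R(\delta)}{np}\Bigr)+\frac{\sqrt{R(\delta)p}}{np}
\preceq \frac{1}{\sqrt n}\Bigl(\frac{1}{\sqrt{np}}+\frac{\log(n/\delta)}{np}\Bigr),
\]
after absorbing lower-order terms; since $\|u_1^*\|_\infty=1/\sqrt n$ this yields $\|u_1\|_\infty\preceq \frac{1}{\sqrt n}(1+\frac{\log(n/\delta)}{np})$ with probability $1-\delta$ for all $\delta>\delta^*$.

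Next I would convert this tail bound into moments via the layer-cake / Fubini formula. For a nonnegative random variable $Y$ with $\P(Y>y_0+y_1\log(1/\delta))\le\delta$ on the range $\delta>\delta^*$, one has $\E[(Y\wedge Y_{\max})^q]\preceq y_0^q + y_1^q\, q! + Y_{\max}^q\,\delta^*$ for the truncated variable, where $Y_{\max}$ is the deterministic a.s.\ bound ($\|u_1\|_\infty\le 1$ always, so $Y_{\max}=1/\sqrt n$ up to constants for the centered part). Applying this with $Y=\sqrt n\|u_1\|_\infty$, $y_0\asymp 1$, $y_1\asymp 1/(np)$, and absorbing the $\log n$ coming from $\log(n/\delta)=\log n+\log(1/\delta)$ into a $(q\vee\log n)$ term, gives $(\E\|u_1\|_\infty^{q})^{1/q}\preceq \frac{1}{\sqrt n}(1+\frac{q\vee\log n}{np})+\frac{1}{\sqrt n}(\delta^*)^{1/q}$; substituting $\delta^*=\exp\{-np\log(np)/(2C)\}$ gives the $\exp\{-np\log(np)/(2Cq)\}$ tail term. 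Taking $q=2k$ and using $M\le 2\|u_1\|_\infty^2$ immediately produces the claimed bound on $(\E M^k)^{1/k}$ after squaring (so $(q\vee\log n)/(np)$ becomes $((k\vee\log n)/(np))^2$ up to constants, using $(1+x)^2\le 2(1+x^2)$). For $V_+$ I would additionally bound the moments of $W=4\sum_{i<j}(p+(1-2p)A_{ij})$: since $\E W\asymp n^2p$ and $W$ is a sum of independent bounded (by $1$) terms, a Bernstein/Bennett moment bound gives $(\E W^{k/2})^{2/k}\preceq n^2p + k$ for $k\preceq n^2 p$ and in general $(\E W^{k/2})^{2/k}\preceq n^2 p + k^2$ trivially, but in the relevant range $k\preceq np\log(np)$ the dominant term is $n^2p$; then by Cauchy--Schwarz (or Hölder), $(\E V_+^{k/2})^{1/k}\le (\E\|u_1\|_\infty^{2k})^{1/k}(\E W^{k})^{1/(2k)}$ — wait, more carefully $(\E(\|u_1\|_\infty^4 W)^{k/2})^{1/k}\le (\E\|u_1\|_\infty^{4k})^{1/(2k)}\cdot(\E W^{k})^{1/(2k)}$ — multiplying the bound $(\E\|u_1\|_\infty^{4k})^{1/(4k)}\preceq \frac{1}{\sqrt n}(1+\frac{k\vee\log n}{np})+\ldots$ squared by $(\E W^k)^{1/(2k)}\preceq \sqrt{n^2p}=n\sqrt p$ cancels the $1/n$ to leave $\sqrt p(1+\frac{(k\vee\log n)^2}{(np)^2})+\exp\{-np\log(np)/(2Ck)\}$.

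The main obstacle I anticipate is bookkeeping the interaction between the truncation level $\delta^*$ and the moment order: one must check that for the range of $k$ that matters (namely $k\lesssim np\log(np)$, which is exactly where Lemma \ref{lem:markov} will later be invoked), the residual term $(\delta^*)^{1/k}\cdot Y_{\max}$ is genuinely $\exp\{-\Theta(np\log(np)/k)\}$ and not swamped by accidental polynomial factors in $n$ coming from the a.s.\ bound; the clean way is to note $\|u_1\|_\infty\le 1$ always, so even the crude a.s.\ bound on $\|u_1\|_\infty^2$ is $1$, contributing $1\cdot\delta^*$, whose $k$-th root is the stated exponential — no polynomial factor survives. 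A secondary subtlety is making sure that the high-probability event in Corollary \ref{cor:typical_binary} holds with probability $1-(B(r)+1)\delta = 1-O(\delta)$ (since $r=1$, $B(r)=10$), which only changes constants in the layer-cake integration and is harmless. Everything else — the Bernstein moment bound for $W$, the Hölder split for $V_+$, and collapsing $(1+x)^2$ into $1+x^2$ — is routine.
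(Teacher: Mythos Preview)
Your outline is essentially the paper's strategy, and for $M$ it goes through exactly as you say: split into the good event $\event_1=\{\sqrt n\|u_1\|_\infty\le B_u\}$ (probability $\ge 1-\delta^*$) and its complement, use the layer-cake/Fubini computation on $\event_1$ to get $(\E[(\sqrt n\|u_1\|_\infty)^{2k}I_{\event_1}])^{1/k}\preceq 1+(k\vee\log n)^2/(np)^2$, and on $\event_1^c$ use $M\le 2$ to contribute $\preceq(\delta^*)^{1/k}$. (Minor correction: your parenthetical ``$Y_{\max}=1/\sqrt n$'' is wrong --- on the bad event you only know $\|u_1\|_\infty\le 1$, so $Y_{\max}=1$; you fix this yourself a few lines later.)

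For $V_+$ there is a genuine gap in the order of operations. If you apply Cauchy--Schwarz \emph{globally},
\[
(\E V_+^{k/2})^{1/k}\le (\E\|u_1\|_\infty^{4k})^{1/(2k)}\,(\E W^k)^{1/(2k)},
\]
then the bad-event residual in the first factor is $(\delta^*)^{1/(2k)}$ (coming from $\|u_1\|_\infty\le 1$ on $\event_1^c$), and after multiplication by $(\E W^k)^{1/(2k)}\asymp n\sqrt p$ you are left with $n\sqrt p\,(\delta^*)^{1/(2k)}$. This is \emph{not} dominated by $\exp\{-np\log(np)/(2Ck)\}+\sqrt p(1+\ldots)$: for $k\asymp np$, say, it is of order $n\sqrt p\,(np)^{-1/(4C)}$, which blows up. The ``$1/n$ cancels'' only on the good-event part of the first factor, not on the residual.

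The paper sidesteps this by never invoking H\"older. It introduces a second high-probability event $\event_2=\{W\le 10n^2p\}$ (via a Bernstein tail bound, $\P(\event_2^c)\le e^{-n^2p/3}$), sets $\event=\event_1\cap\event_2$, and then bounds \emph{pointwise} on $\event$: $V_+\le \|u_1\|_\infty^4 W\le 10p(\sqrt n\|u_1\|_\infty)^4$. On $\event^c$ one uses the universal bound $V_+\le 2$ from \eqref{eq:V+2}. This gives
\[
(\E V_+^{k/2})^{1/k}\preceq \sqrt p\,(\E[(\sqrt n\|u_1\|_\infty)^{2k}I_\event])^{1/k}+\P(\event^c)^{1/k},
\]
with a clean residual $\preceq(\delta^*)^{1/k}$ carrying no $n\sqrt p$ prefactor. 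Your H\"older route can also be repaired by the same reordering: first split $\E V_+^{k/2}=\E V_+^{k/2}I_{\event_1}+\E V_+^{k/2}I_{\event_1^c}$, bound the second piece by $\sqrt 2(\delta^*)^{1/k}$ via $V_+\le 2$, and only then apply Cauchy--Schwarz on $\event_1$, where the $\|u_1\|_\infty$-moment has no bad-event residual.
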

\begin{proof}
In this case, note that $A^{*} = p\one_{n}\one_{n}^{T} - pI_{n\times n}$. Then 
\[p^{*} = p, \quad \lambda_{1}^{*} = (n - 1)p, \quad \lambda_{2}^{*} = \ldots = \lambda_{n}^{*} = -p,\,\, \mbox{ and }u_{1}^{*} = \one_{n} / \sqrt{n}.\] 
Let $\Lambda^{*} = \lambda_{1}^{*}$, then 
\[\gap^{*} = \lambda_{\min}^{*} = (n - 1)p, \quad \mnorm{U^{*}} = \|u_{1}^{*}\|_{\infty} = \frac{1}{\sqrt{n}}, \quad \bar{\kappa}^{*} = 1.\]
Thus, 
\[\lambda_{\min}^{*}\succeq \frac{np^{*}}{\sqrt{n}\mnorm{U^{*}}}.\]
By Lemma \ref{lem:deltastar} below, the condition \eqref{eq:A4_binary} is satisfied for all $\delta\ge \delta^{*}$. Then by Corollary \ref{cor:typical_binary} with $\alpha = 0.5$ and $\delta \ge \delta^{*}$, with probability $1 - \delta$,
\begin{align*}
  d_{\ttinf}(u_{1}, u_{1}^{*})&\preceq \lb\frac{\sqrt{np} + \log (n / \delta) / \log\log (n / \delta)}{np} + \frac{\log (n / \delta)}{np}\rb\frac{1}{\sqrt{n}} + \frac{\sqrt{\log (n / \delta) p}}{np}\\
& \preceq \frac{1}{\sqrt{n}}\lb \frac{\sqrt{np} + \log (n / \delta) / \log\log (n / \delta)}{np} + \frac{\log (n / \delta)}{np} + \sqrt{\frac{\log (n / \delta)}{np}}\rb\\
& \preceq \frac{1}{\sqrt{n}}\lb\frac{\log (n / \delta)}{np} + \sqrt{\frac{\log (n / \delta)}{np}}\rb
\end{align*}
By the triangle inequality, 
\[\|u_{1}\|_{\infty} \le d_{\ttinf}(u_{1}, u_{1}^{*}) + \|u_{1}^{*}\|_{\infty}.\]
Using the fact that $2\sqrt{y}\le y + 1$ and $\sqrt{n}\|u_{1}^{*}\|_{\infty} = 1$, there exists a universal constant $C_{1}$ such that for each $\delta \ge \delta^{*}$, 
\begin{equation}
  \label{eq:u1delta}
  \sqrt{n}\|u_{1}\|_{\infty}\le  C_{1}\lb 1 + \frac{\log (n / \delta)}{np}\rb\le  C_{1}\lb 1 + \frac{\log n}{np} + \frac{\log(1 / \delta)}{np}\rb,
\end{equation}
with probability $1 - \delta$. 
Denote by $B_{u}$ the RHS of \eqref{eq:u1delta} with $\delta = \delta^{*}$ and by $\event_{1}$ the event that $\sqrt{n}\|u_{1}\|_{\infty}\le B_{u}$. Then 
\[\P(\event_{1}) \ge 1- \delta^{*} = 1 - \exp\left\{-\frac{np\log (np)}{2C}\right\}.\]
On the other hand, note that
\[\E W = 4\sum_{i < j}(p + (1 - 2p)p) = 4n(n - 1)p(1 - p)\le 4n^{2}p,\]
and
\[W - \E W = 4(1 - 2p)\sum_{i < j}(A_{ij} - p).\]
By Lemma \ref{lem:bernoulli} with $w = \one_{n(n-1)/2}$ and $\delta = \delta' = \exp\{-e / 2\Omega\} = \exp\{-e n(n-1)p / 4\}$, 
\[W - \E W \le 8\log \lb\frac{1}{\delta}\rb = 2en (n - 1)p\le 6n^{2}p\]
with probability $1 - \delta$. Let $\event_{2}$ denote the event that $W \le 10n^{2}p$, then
\begin{equation}
  \label{eq:event2}
  \P(\event_{2})\ge 1- \exp\{-e n(n-1)p / 4\}\ge 1 - \exp\{-n^{2}p / 3\}.
\end{equation}
Let $\event = \event_{1}\cup \event_{2}$. Then 
\[\P(\event^{c})\le \exp\left\{-\frac{np\log (np)}{2C}\right\} + \exp\left\{-\frac{n^{2}p}{3}\right\}\le C_{2} \exp\left\{-\frac{np\log (np)}{2C}\right\},\]
where $C_{2}$ is a universal constant. By definition, on $\event$,
\[\|u_{1}\|_{\infty}^{4}W \le 10 p (\sqrt{n}\|u_{1}\|_{\infty})^{4}.\]
In addition, by \eqref{eq:V+2} we have
\[V_{+}I_{\event^{c}}\le 2I_{\event^{c}}.\]
Therefore, 
\begin{align}
  \lb\E V_{+}^{k / 2}\rb^{1 / k} &= \lb \E V_{+}^{k / 2}I_{\event} + \E V_{+}^{k / 2}I_{\event^{c}}\rb^{1 / k} \le \lb \E V_{+}^{k / 2}I_{\event}\rb^{1 / k} + \lb \E V_{+}^{k / 2}I_{\event^{c}}\rb^{1 / k}\nonumber\\
& \preceq \sqrt{p}\E \lb\left[(\sqrt{n}\|u_{1}\|_{\infty})^{2k}I_{\event}\right]\rb^{1/k} + \exp\left\{-\frac{np\log (np)}{2Ck}\right\}\label{eq:V+moment1}
\end{align}
By \eqref{eq:u1delta}, 
\[\P\lb Y\le \frac{\log (1 / \delta)}{np} \rb\le \delta,\quad  \forall \delta \ge \delta^{*}, \quad \mbox{where }Y = \left[\frac{\sqrt{n}\|u_{1}\|_{\infty}}{C_{1}} - \lb 1 + \frac{\log n}{np}\rb\right]_{+}.\]
Denote by $B_{Y}$ the upper bound with $\delta = \delta^{*}$. This can be written equivalently as
\[\P(Y \ge y) \le \exp\left\{-npy\right\}, \quad \forall y \le B_{Y}.\]
By definition, $YI_{\event} \le YI(Y \le B_{Y})$. Using Fubini's theorem, for any $k > 0$,
\begin{align*}
  \E Y^{2k}I(Y\le B_{Y}) &= \int_{0}^{B_{Y}}2k y^{2k-1}\P(Y \ge y)dy \le \int_{0}^{B_{Y}}2k y^{2k-1}\exp\{-npy\}dy\\
&  \le \int_{0}^{\infty}2k y^{2k-1}\exp\{-npy\}dy = \frac{\Gamma(2k + 1)}{(np)^{2k}}.
\end{align*}
By Stirling's formula, we have
\[\lb\E Y^{2k}I_{\event}\rb^{1/k}\le \lb\E Y^{2k}I(Y\le B_{Y})\rb^{1/k} \preceq \frac{k^{2}}{(np)^{2}}.\]
As a result,
\begin{align}
  &\lb\E (\sqrt{n}\|u_{1}\|_{\infty})^{2k}I_{\event}\rb^{1/k} \le \lb\E \lb C_{1}Y + 1 + \frac{\log n}{np}\rb^{2k}I_{\event}\rb^{1/k}\nonumber\\
 \le &\lb2^{2k}\E \lb C_{1}Y\rb^{2k}I_{\event} + 2^{2k}\lb 1 + \frac{\log n}{np}\rb^{2k}\rb^{1/k} \le 4C_{1}^{2}\lb\E Y^{2k}I_{\event}\rb^{1/k} + 4\lb 1 + \frac{\log n}{np}\rb^{2}\nonumber\\
\preceq  & \frac{k^{2}}{(np)^{2}} + 1 + \frac{(\log n)^{2}}{(np)^{2}}\preceq 1 + \frac{(k\vee \log n)^{2}}{(np)^{2}}.\label{eq:sqrtnu1infty}
\end{align}
The bound of $\lb\E [V_{+}^{k/2}]\rb^{1/k}$ is then proved by pluggin this into \eqref{eq:V+moment1}. 

~\\
Recalling \eqref{eq:ZZij}, we have
\[M\le 2\|u_{1}\|_{\infty}^{2}.\] 
It is easy to see that $M \le 2$. Similar to \eqref{eq:V+moment1}, 
\begin{align}
  \lb \E M^{k}\rb^{1/k} &\le \lb \E M^{k}I_{\event} + \E M^{k / 2}I_{\event^{c}}\rb^{1 / k} \le \lb \E M^{k}I_{\event}\rb^{1 / k} + \lb \E M^{k}I_{\event^{c}}\rb^{1 / k}  \nonumber\\
& \preceq \frac{1}{n}\E \lb\left[(\sqrt{n}\|u_{1}\|_{\infty})^{2k}I_{\event}\right]\rb^{1/k} + \exp\left\{-\frac{np\log (np)}{2Ck}\right\}\label{eq:V+moment2}.
\end{align}
The bound of $\lb \E M^{k}\rb^{1/k}$ is then completed by \eqref{eq:sqrtnu1infty}.
\end{proof}

\begin{theorem}\label{thm:varop}
Under condition \eqref{eq:C0} with $C_{0}$ sufficiently large, 
\[\Var(\|A\|_{\op})\preceq p\lb 1 + \frac{(\log n)^{4}}{(np)^{4}}\rb.\]  
In particular, if $p\succeq \log n / n$, then 
\[\Var(\|A\|_{\op})\preceq p.\]
\end{theorem}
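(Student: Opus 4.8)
The plan is to read off the variance bound as a near-immediate consequence of the Efron--Stein inequality \eqref{eq:EfronStein} together with the second-moment bound on $V_{+}$ already established in Lemma \ref{lem:V+moment}. Since $\Var(\|A\|_{\op}) = \Var(Z) \le \E V_{+}$, I would set $k = 2$ in the first display of Lemma \ref{lem:V+moment}, which gives
\[\left(\E V_{+}\right)^{1/2} \preceq \sqrt{p}\left(1 + \frac{(\log n)^{2}}{(np)^{2}}\right) + \exp\left\{-\frac{np\log(np)}{4C}\right\},\]
with $C$ the universal constant from \eqref{eq:A4_binary}. Squaring both sides and using $(a+b)^{2} \le 2a^{2} + 2b^{2}$ and $(1+x)^{2} \le 2(1+x^{2})$ yields
\[\E V_{+} \preceq p\left(1 + \frac{(\log n)^{4}}{(np)^{4}}\right) + \exp\left\{-\frac{np\log(np)}{2C}\right\}.\]

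It then remains to absorb the exponential remainder. Under condition \eqref{eq:C0} we have $np \ge C_{0}\log n / \log\log n$, so for $n$ large $\log(np) \ge \tfrac12 \log\log n$ and hence $np\log(np) \ge \tfrac{C_{0}}{2}\log n$, giving $\exp\{-np\log(np)/(2C)\} \le n^{-C_{0}/(4C)}$. Taking $C_{0}$ sufficiently large (in particular $C_{0} \ge 4C$) makes this at most $1/n$, which is $\le p$ by \eqref{eq:C0}; thus the exponential term is $\preceq p \preceq p(1 + (\log n)^{4}/(np)^{4})$ and can be dropped, proving the first claim. The second claim is then immediate: when $p \succeq \log n / n$ one has $(\log n)^{4}/(np)^{4} \preceq 1$, so $\Var(\|A\|_{\op}) \preceq p$.

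Since Lemma \ref{lem:V+moment} already does the heavy lifting — it is the step where Corollary \ref{cor:typical_binary} and the delocalization control on $u_{1}$ enter through \eqref{eq:V+W} and \eqref{eq:ZZij} — there is no serious obstacle here. The only point requiring a little care is the remainder bookkeeping: one must check that the $\log\log n$ factor hidden inside $np$ does not drag $np\log(np)$ below order $\log n$, and that $C_{0}$ is taken large relative to the universal constant $C$ appearing in \eqref{eq:A4_binary}, so that $n^{-C_{0}/(4C)}$ is genuinely negligible compared with $p$.
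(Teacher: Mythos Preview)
Your proposal is correct and follows essentially the same route as the paper: apply the Efron--Stein inequality \eqref{eq:EfronStein}, invoke Lemma \ref{lem:V+moment} with $k=2$, square, and then absorb the exponential remainder into $p$ using condition \eqref{eq:C0} with $C_{0}$ large. The paper additionally notes that for small $n$ the deterministic bound $\Var(\|A\|_{\op})\le 2$ from \eqref{eq:var2} covers the finitely many exceptional cases, which you handle implicitly through the $\preceq$ notation.
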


\begin{remark}
  This closed the gap conjectured by \cite{lugosi2018concentration}. Moreover, our result shows that when $np \sim \log n / \log\log n$, 
\[\Var(\|A\|_{\op})\preceq p (\log\log n)^{4}.\]
\end{remark}
\begin{proof}
  By Efron-Stein inequality \eqref{eq:EfronStein}, 
\[\Var(\|A\|_{\op})\le \E V_{+}.\]
For sufficiently large $n$, by Lemma \ref{lem:V+moment} with $k = 2$, 
\[\E V_{+}\le p\lb 1 + \frac{(\log n)^{4}}{(np)^{4}}\rb + \exp\left\{-\frac{np\log (np)}{4C}\right\}.\]
Since $np\ge C_{0}\log n / \log \log n$, when $C_{0}$ is sufficiently large, 
\[\exp\left\{-\frac{np\log (np)}{4C}\right\}\preceq n^{-2}\preceq p.\]
For small $n$, $\Var(\|A\|)_{\op}\le 2$ as shown in \eqref{eq:var2}. In summary, 
\[\Var(\|A\|_{\op})\preceq p\lb 1 + \frac{(\log n)^{4}}{(np)^{4}}\rb.\]
\end{proof}

\begin{lemma}\label{lem:tailprob}
Under condition \eqref{eq:C0}, 
\[\lb\E |Z - \E Z|^{k}\rb^{1/k}\le C'\sqrt{kp}\lb 1 + \frac{(\log n)^{2}}{(np)^{2}}\rb,\quad \forall k\le k_{0},\]
where $C'$ is a universal constant and 
\begin{equation}
  \label{eq:k0}
  k_{0} = \frac{np}{2C\vee 1}\min\left\{1, \frac{\log (np)}{\log (1/p)}\right\}.
\end{equation}
\end{lemma}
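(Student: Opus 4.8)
The plan is to start from the moment inequality of Proposition~\ref{prop:boucheron},
\[(\E |Z - \E Z|^{k})^{1/k} \le 4\sqrt{k}\lb\E V_{+}^{k / 2}\rb^{1/k} + 4k(\E M^{k})^{1/k}, \qquad k\ge 2,\]
and to feed in the moment bounds of Lemma~\ref{lem:V+moment}. Abbreviating $a = np\log(np)/(2C)$, those bounds read, for every $k\ge 2$,
\[\lb\E V_{+}^{k / 2}\rb^{1/k}\preceq \sqrt{p}\lb 1 + \tfrac{(k\vee\log n)^{2}}{(np)^{2}}\rb + e^{-a/k}, \qquad \lb\E M^{k}\rb^{1/k}\preceq \tfrac{1}{n}\lb 1 + \tfrac{(k\vee\log n)^{2}}{(np)^{2}}\rb + e^{-a/k},\]
so it suffices to check that, for all $2\le k\le k_{0}$, each of the four resulting contributions is $\preceq \sqrt{kp}\lb 1 + (\log n)^{2}/(np)^{2}\rb$.

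First I would dispose of the two polynomial contributions $\sqrt{k}\cdot\sqrt{p}\lb 1 + \tfrac{(k\vee\log n)^{2}}{(np)^{2}}\rb$ and $k\cdot\tfrac{1}{n}\lb 1 + \tfrac{(k\vee\log n)^{2}}{(np)^{2}}\rb$. Since $k\le k_{0}\le np$ we have $k^{2}/(np)^{2}\le 1$, hence $1 + (k\vee\log n)^{2}/(np)^{2}\le 2\lb 1 + (\log n)^{2}/(np)^{2}\rb$; this trades the factor $(k\vee\log n)$ for $\log n$ at the cost of a universal constant, and the first contribution is then already of the claimed form. For the second I additionally use $k\le k_{0}\le np\le n^{2}p$, which gives $k/n\le\sqrt{kp}$, so that $\tfrac{k}{n}\lb 1 + (\log n)^{2}/(np)^{2}\rb\preceq \sqrt{kp}\lb 1 + (\log n)^{2}/(np)^{2}\rb$.

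The heart of the argument — and the main obstacle — is the tail term $e^{-a/k}$, which appears (times $\sqrt k$ or times $k$) in both bounds and must be absorbed uniformly over $2\le k\le k_{0}$; this is precisely the role of the definition of $k_{0}$ in \eqref{eq:k0}. I would prove the single inequality $e^{-a/k}\le \sqrt{p/k}$ for every $k\le k_{0}$, which immediately yields $\sqrt{k}\,e^{-a/k}\le\sqrt{p}\le\sqrt{kp}$ and $k\,e^{-a/k}\le\sqrt{kp}$, finishing the estimate. To establish $e^{-a/k}\le\sqrt{p/k}$ it is equivalent to show $h(k):=\sqrt{k/p}\,e^{-a/k}\le 1$; since $(\log h)'(k)=\tfrac{1}{2k}+\tfrac{a}{k^{2}}>0$, the function $h$ is increasing, so it is enough to check $h(k_{0})\le 1$. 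Here one computes $a/k_{0}=\log(np)\big/\min\{1,\log(np)/\log(1/p)\}=\max\{\log(np),\log(1/p)\}$, hence $e^{-a/k_{0}}=\min\{(np)^{-1},p\}$, and then splits into two cases: if $np^{2}\le 1$ then $k_{0}=np\log(np)/(2C\log(1/p))$ and $e^{-a/k_{0}}=p$, so $h(k_{0})=\sqrt{k_{0}p}=\sqrt{np^{2}\log(np)/(2C\log(1/p))}\le 1/\sqrt{2C}$ because $np^{2}\le 1$ and $\log(np)\le\log(1/p)$; if $np^{2}>1$ then $k_{0}=np/(2C)$ and $e^{-a/k_{0}}=(np)^{-1}$, so $h(k_{0})=\tfrac{1}{\sqrt{2C}}\cdot\tfrac{1}{p\sqrt n}\le 1/\sqrt{2C}$ because $np^{2}>1$. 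In either case $h(k_{0})\le 1/\sqrt{2C}\le 1$ since $C$ is a large universal constant.

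Combining the four pieces gives $(\E|Z-\E Z|^{k})^{1/k}\preceq \sqrt{kp}\lb 1 + (\log n)^{2}/(np)^{2}\rb$ for all $2\le k\le k_{0}$, which is the asserted bound with $C'$ a universal constant. Condition \eqref{eq:C0} (together with the hypotheses of Lemma~\ref{lem:deltastar}) is used only to make Lemma~\ref{lem:V+moment} applicable and to guarantee $np>1$, so that $a>0$ and the monotonicity step above is valid.
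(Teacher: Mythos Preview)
Your proposal is correct and follows essentially the same approach as the paper: both combine Proposition~\ref{prop:boucheron} with Lemma~\ref{lem:V+moment}, dispatch the polynomial pieces via $k\le k_0\le np$ and $k/n\le\sqrt{kp}$, and reduce the exponential tail to showing $\sqrt{k/p}\,e^{-a/k}\le 1$ for $k\le k_0$. The only difference is stylistic: the paper bounds $\log k\le a/k$ and $\log(1/p)\le a/k$ separately using the two parts of the minimum defining $k_0$, whereas you argue by monotonicity of $h(k)=\sqrt{k/p}\,e^{-a/k}$ and compute $h(k_0)$ directly in two cases; both establish the same inequality.
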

\begin{proof}
Write $\|A\|_{\op}$ as $Z$ when no confusion can arise. By Proposition \ref{prop:boucheron} and Lemma \ref{lem:V+moment}, 
\[(\E |Z - \E Z|^{k})^{1/k}\preceq \lb\sqrt{kp} + \frac{k}{n}\rb \lb 1 + \frac{(k\vee \log n)^{2}}{(np)^{2}}\rb + k\exp\left\{-\frac{np\log (np)}{2Ck}\right\}.\]
For $k\le k_{0}$, then the first term has the order of $\sqrt{k p}$ because under \eqref{eq:C0},
\[\frac{k}{n} = \sqrt{kp}\frac{1}{\sqrt{n^{2}p}}\preceq \sqrt{kp}, \quad 1 + \frac{(k\vee \log n)^{2}}{(np)^{2}} \preceq 1 + \frac{(\log n)^{2}}{(np)^{2}}\]
To bound the second term is dominated by $\sqrt{kp}$ in order, it is left to show that
\begin{equation}\label{eq:tailAop1}
  \sqrt{\frac{k}{p}}\exp\left\{-\frac{np\log (np)}{2Ck}\right\} = \exp\left\{-\frac{np\log (np)}{2Ck} + \frac{1}{2}\log k + \frac{1}{2}\log \lb\frac{1}{p}\rb\right\}\preceq 1.
\end{equation}
Since $k\le k_{0}\le np / (2C \vee 1)$, 
\[k\log k\le \frac{np \log (np)}{2C} \Longrightarrow \log k\le \frac{np\log (np)}{2Ck}.\]
Further, since $k\le k_{0}\le np \log (np) / 2C\log (1 / p)$, 
\[\log \lb\frac{1}{p}\rb\le \frac{np\log (np)}{2Ck}.\]
Thus, 
\[-\frac{np\log (np)}{2Ck} + \frac{1}{2}\log k + \frac{1}{2}\log \lb\frac{1}{p}\rb\le 0,\]
and \eqref{eq:tailAop1} is proved. The proof is then completed. 
\end{proof}

Together with Lemma \ref{lem:markov}, Lemma \ref{lem:tailprob} implies that partial sub-gaussian behavior of $\|A\|_{\op}$. 
\begin{theorem}\label{thm:tailprob}
Under condition \eqref{eq:C0}, there exists universal constants $C_{1}, C_{2} > 0$ such that 
\[\P(|Z - \E Z|\ge t)\le \exp\left\{1 - \frac{t^{2}}{C_{1}\sigma^{2}}\right\}, \quad \forall t\le C_{2}\sigma \sqrt{np} \min\left\{1, \sqrt{\frac{\log (np)}{\log (1 / p)}}\right\},\]
where 
\[\sigma = \sqrt{p}\lb 1 + \frac{(\log n)^{2}}{(np)^{2}}\rb.\]
\end{theorem}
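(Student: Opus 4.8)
The plan is to read the theorem off directly from Lemma~\ref{lem:tailprob} and Lemma~\ref{lem:markov}; no new probabilistic input is required. Lemma~\ref{lem:tailprob} gives, for every $2\le k\le k_{0}$,
\[(\E|Z-\E Z|^{k})^{1/k}\le C'\sqrt{kp}\lb 1+\tfrac{(\log n)^{2}}{(np)^{2}}\rb=C'\sqrt{k}\,\sigma,\qquad \sigma=\sqrt{p}\lb 1+\tfrac{(\log n)^{2}}{(np)^{2}}\rb,\]
which is precisely the hypothesis of Lemma~\ref{lem:markov} with its scale ``$\sigma$'' taken to be $\td\sigma\triangleq C'\sigma$. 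So the first step is just this substitution, observing that the factor $1+(\log n)^{2}/(np)^{2}$ is exactly what turns $\sqrt{kp}$ into $\sqrt{k}\sigma$.

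Next I would invoke Lemma~\ref{lem:markov} with $\td\sigma$ in place of $\sigma$ and the same $k_{0}$, which yields
\[\P(|Z-\E Z|\ge t)\le \exp\left\{1-\frac{t^{2}}{2e\td\sigma^{2}}\right\}=\exp\left\{1-\frac{t^{2}}{2eC'^{2}\sigma^{2}}\right\}\quad\text{for all }t\le \sqrt{k_{0}e}\,\td\sigma,\]
so one may take $C_{1}=2eC'^{2}$. It then remains to rewrite the range $t\le\sqrt{k_{0}e}\,\td\sigma=C'\sqrt{e}\,\sqrt{k_{0}}\,\sigma$ in the stated form. Recalling $k_{0}=\tfrac{np}{2C\vee 1}\min\{1,\log(np)/\log(1/p)\}$ from \eqref{eq:k0} and using $\sqrt{\min\{a,b\}}=\min\{\sqrt a,\sqrt b\}$ for $a,b\ge 0$, we get
\[\sqrt{k_{0}}=\frac{1}{\sqrt{2C\vee 1}}\,\sqrt{np}\,\min\left\{1,\sqrt{\tfrac{\log(np)}{\log(1/p)}}\right\},\]
hence $\sqrt{k_{0}e}\,\td\sigma=C_{2}\,\sigma\sqrt{np}\,\min\{1,\sqrt{\log(np)/\log(1/p)}\}$ with $C_{2}=C'\sqrt{e/(2C\vee 1)}$, which is exactly the claimed constraint on $t$.

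A few bookkeeping points I would verify along the way. First, Lemma~\ref{lem:markov} is only meaningful when $k_{0}\ge 2$; under \eqref{eq:C0} we have $np\succeq \log n/\log\log n$, so $k_{0}\to\infty$ and in particular $k_{0}\ge 2$ once $n$ exceeds a universal constant, while the finitely many small-$n$ cases where the hypothesis \eqref{eq:C0} is non-vacuous can be absorbed into $C_{1},C_{2}$ (or dispatched by the trivial bound, since $\exp\{1-t^{2}/(C_{1}\sigma^{2})\}\ge 1$ whenever $t\le\sqrt{C_{1}}\,\sigma$). Second, Lemma~\ref{lem:tailprob} supplies the moment bound precisely on the range $k\le k_{0}$, which is all that Lemma~\ref{lem:markov} consumes, so the two fit together with nothing left over. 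There is essentially no obstacle at the level of this statement: the genuine work — controlling $\E V_{+}^{k/2}$ and $\E M^{k}$ via the $\ell_{\ttinf}$ bound of Corollary~\ref{cor:typical_binary} and Boucheron's moment inequality (Proposition~\ref{prop:boucheron}), and checking that the $\delta^{*}$-type remainder $k\exp\{-np\log(np)/(2Ck)\}$ is dominated by $\sqrt{kp}$ exactly for $k\le k_{0}$ — has already been carried out in Lemma~\ref{lem:V+moment} and Lemma~\ref{lem:tailprob}, so this theorem is the clean repackaging of those estimates into a sub-Gaussian-type tail valid up to the stated deviation scale.
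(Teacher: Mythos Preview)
Your proposal is correct and matches the paper's approach exactly: the paper does not give a written proof of this theorem, instead stating immediately before it that ``Together with Lemma~\ref{lem:markov}, Lemma~\ref{lem:tailprob} implies the partial sub-gaussian behavior of $\|A\|_{\op}$.'' Your write-up fills in precisely this inference, with the constants tracked correctly and the bookkeeping about $k_{0}\ge 2$ handled appropriately.
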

\begin{remark}
  When $p\succeq \log n / n$, $\sigma \preceq \sqrt{p}$. This proves the conjecture of \cite{lugosi2018concentration}. On the other hand, it is worth comparing the range of $t$ with the sub-gaussian behaviors. In \cite{lugosi2018concentration} (equation (2.1)), the multiplicative factor of $\sqrt{p}$ in the upper bound of $t$ is $\sqrt{np}\log (np) / \log n \log (1 / p)$ while that of ours is $np\min\{1, \log (np)/\log(1 / p)\}$, ignoring the constants. When $np = \mathrm{PolyLog}(n)$, ours reduces to $np\log (np) / \log (1/p)$ which is $\sqrt{np}\log n$ larger than the one in \cite{lugosi2018concentration}.
\end{remark}

\subsection{Extension to inhomogeneous graphs}
The results can be directly extended to general inhomogenous graphs because the proof carries over if both $\|u_{1} - u_{1}^{*}\|_{\infty}$ and $\|u_{1}^{*}\|_{\infty}$ have small moments. To be specific we consider a random graph with an adjacency matrix under the setting of Section \ref{subsec:binary_wigner}.  Apart from $p^{*}$ and $\bar{p}^{*}$ defined in \eqref{eq:pstar_Rdelta}, we further defined 
\[\bar{p} = \frac{1}{n(n - 1)}\sum_{i\not =j}p_{ij}.\]
Note that $\bar{p}\le \bar{p}^{*}\le p^{*}$ and in many applications they differ in small multiplicative factors. We distinguish them to cover the cases where many $p_{ij}$'s are tiny. Then we can apply Theorem \ref{thm:generic_binary} to achieve the task. Here for convenience we assume that $|\lambda_{1}^{*}| \succeq np^{*} / \sqrt{n}\|u_{1}^{*}\|_{\infty}$ so that the simplified bound in Corollary \ref{cor:typical_binary} can be applied. Further we consider the case with $p^{*} \le 1 / 2$ for convenience. The proof is more technical but qualitatively the same as the results for \ER graphs so we present it in Appendix \ref{subapp:other_spectral_norm}.
\begin{theorem}\label{thm:inhomogeneous}
 Let $|\lambda_{1}^{*}| \ge |\lambda_{2}^{*}|\ge \ldots \ge |\lambda_{n}^{*}|$ be eigenvalues of $A^{*}$. Suppose $|\lambda_{1}^{*}| > |\lambda_{2}^{*}|$ and let $u_{1}^{*}$ be the eigenvector corresponding to $\lambda_{1}^{*}$. Let 
\[\zeta = \sqrt{n}\|u_{1}^{*}\|_{\infty}.\]
Assume that 
  \begin{equation}
    \label{eq:C0_general}
    |\lambda_{1}^{*}|\ge C_{0}\frac{np^{*}}{\zeta}, \quad \gap^{*} = \min\left\{|\lambda_{1}^{*}|, \min_{j}|\lambda_{1}^{*} - \lambda_{j}^{*}|\right\} \ge C_{0}\lb \sqrt{n\bar{p}^{*}} + \frac{\log n}{\log \log n}\rb, \quad n^{2}\bar{p}\ge C_{0}.
  \end{equation}
Then 
\[\Var(\|A\|_{\op})\preceq \bar{p}\left\{1 + \lb\frac{\sqrt{n\bar{p}^{*}} + \log n}{\gap^{*}}\rb^{4}\right\}\zeta^{4} + \exp\left\{-\frac{\gap^{*}\log \gap^{*}\wedge n^{2}\bar{p}}{2C\vee 3}\right\}\]
where $C$ is the universal constant in \eqref{eq:A4_binary} in Theorem \ref{thm:generic_binary}. In addition, there exists universal constants $C_{1}, C_{2} > 0$ such that 
\[\P(|Z - \E Z|\ge t)\le \exp\left\{1 - \frac{t^{2}}{C_{1}\sigma^{2}}\right\}, \quad \forall t\le C_{2}\sigma \min\left\{\gap^{*}, \frac{\gap^{*}\log \gap^{*}}{\log (1 / \bar{p}\zeta^{2})}, \frac{n^{2}\bar{p}}{\log (n^{2}\bar{p})}, \frac{n^{2}\bar{p}}{\log (1 / \bar{p}\zeta^{2})}\right\}^{1/2},\]
where 
\[\sigma = \sqrt{\bar{p}}\left\{1 + \lb\frac{\sqrt{n\bar{p}^{*}} + \log n}{\gap^{*}}\rb^{2}\right\}\zeta^{2}.\]
\end{theorem}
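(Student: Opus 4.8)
The plan is to mimic the \ER{} argument of Section 5.2, replacing the homogeneous quantities $\|u_1\|_\infty \approx 1/\sqrt{n}$, $\lambda_1^* = (n-1)p$ and $\mathbb{E}W \approx n^2 p$ by their inhomogeneous analogues, and then feed the resulting moment bounds for $V_+$ and $M$ into Proposition \ref{prop:boucheron} and Lemma \ref{lem:markov}. First I would verify that the hypothesis \eqref{eq:C0_general} places us in the regime of Corollary \ref{cor:typical_binary} applied to the top eigenvector: indeed $|\lambda_1^*| \ge C_0 np^*/\zeta$ is exactly $\lambda_{\min}^* \succeq np^*/(\sqrt n\,\mnorm{U^*})$ with $r=1$, $\bar\kappa^* = 1$, $\gap^* = \min\{|\lambda_1^*|, \min_j|\lambda_1^*-\lambda_j^*|\}$, $\mnorm{U^*} = \|u_1^*\|_\infty = \zeta/\sqrt n$. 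I would also need the analogue of Lemma \ref{lem:deltastar}: using Lemma \ref{lem:Eop_binary} and the condition $\gap^* \ge C_0(\sqrt{n\bar p^*} + \log n/\log\log n)$, the eigen-gap condition \eqref{eq:A4_binary} holds for all $\delta \ge \delta^* := \exp\{-(\gap^*\log\gap^*)/(2C)\}$ (after checking the various log-factors in $g(\delta)$ are absorbed, as in the proof of Lemma \ref{lem:deltastar}). From Corollary \ref{cor:typical_binary} with $\alpha=1/2$ this yields, for $\delta \ge \delta^*$,
\[
\sqrt n\, d_{\ttinf}(u_1,u_1^*) \preceq \frac{\sqrt{n\bar p^*}+\log(n/\delta)}{\gap^*}\,\zeta + \frac{\log(n/\delta)}{\lambda_{\min}^*}\,\zeta + \frac{\sqrt{R(\delta)p^*}\sqrt n}{\lambda_{\min}^*},
\]
which after using \eqref{eq:C0_general} and $2\sqrt y \le y+1$ collapses to $\sqrt n\|u_1\|_\infty \le C_1\big(\zeta + \tfrac{(\sqrt{n\bar p^*}+\log n)\zeta}{\gap^*} + \tfrac{\zeta\log(1/\delta)}{\gap^*}\big)$ with probability $1-\delta$; write $B_u$ for the value at $\delta=\delta^*$ and $\event_1 = \{\sqrt n\|u_1\|_\infty \le B_u\}$.

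Next I would handle $W = 4\sum_{i<j}(p_{ij} + (1-2p_{ij})A_{ij})$. Here $\mathbb{E}W = 4\sum_{i<j}p_{ij}(2 - 2p_{ij} - \text{?})$... more carefully $\mathbb{E}W = 4\sum_{i<j}(p_{ij} + (1-2p_{ij})p_{ij}) = 4\sum_{i<j}2p_{ij}(1-p_{ij}) \le 8\sum_{i<j}p_{ij} \le 4n^2\bar p$, and $W - \mathbb{E}W = 4\sum_{i<j}(1-2p_{ij})(A_{ij}-p_{ij})$ is a bounded linear contrast to which Lemma \ref{lem:bernoulli} (from Appendix \ref{app:concentration}) applies with weight vector of entries $(1-2p_{ij})$; choosing $\delta' = \exp\{-c\,n^2\bar p\}$ gives $W \le C n^2\bar p$ on an event $\event_2$ with $\P(\event_2) \ge 1 - \exp\{-cn^2\bar p\}$. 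Setting $\event = \event_1 \cap \event_2$, the union bound gives $\P(\event^c) \preceq \exp\{-(\gap^*\log\gap^*)/(2C)\} + \exp\{-cn^2\bar p\} \preceq \exp\{-(\gap^*\log\gap^* \wedge n^2\bar p)/(2C\vee 3)\}$. On $\event$, $\|u_1\|_\infty^4 W \preceq \bar p\,(\sqrt n\|u_1\|_\infty)^4/n^2 \cdot n^2 = \bar p (\sqrt n\|u_1\|_\infty)^4$, wait—$\|u_1\|_\infty^4 = (\sqrt n\|u_1\|_\infty)^4/n^2$ and $W \preceq n^2\bar p$, so $\|u_1\|_\infty^4 W \preceq \bar p (\sqrt n\|u_1\|_\infty)^4$, hence $V_+ \preceq \bar p (\sqrt n\|u_1\|_\infty)^4$ on $\event$, while $V_+ \le 2$ everywhere and $M \le 2\|u_1\|_\infty^2 \le 2/n \cdot (\sqrt n\|u_1\|_\infty)^2$. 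Then, exactly as in the proof of Lemma \ref{lem:V+moment}, I would split the moment $\mathbb{E}V_+^{k/2}$ over $\event$ and $\event^c$, bound $\mathbb{E}[(\sqrt n\|u_1\|_\infty)^{2k}I_\event]$ via Fubini and the tail $\P(Y \ge y) \le e^{-\gap^* y/\zeta}$ for the centered part $Y = [\,(\sqrt n\|u_1\|_\infty)/C_1 - (1 + (\sqrt{n\bar p^*}+\log n)/\gap^*)\,]_+\cdot(\gap^*/\zeta)$-scaled variable, getting $(\mathbb{E}(\sqrt n\|u_1\|_\infty)^{2k}I_\event)^{1/k} \preceq 1 + ((\sqrt{n\bar p^*}+\log n)/\gap^*)^2(k \vee \log n)^2/(\gap^*/\zeta)^2$ or similar—the point is a polynomial-in-$k$ factor matching the claimed $\zeta^4(1 + ((\sqrt{n\bar p^*}+\log n)/\gap^*)^4)$ shape at $k=2$.

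Finally, the variance bound follows from Efron-Stein \eqref{eq:EfronStein} and the $k=2$ case of the $V_+$ moment bound (plus the naive $\Var \le 2$ for the boundary regime, which is subsumed once $n^2\bar p$ and $\gap^*\log\gap^*$ are large), giving $\Var(\|A\|_\op) \preceq \bar p\,\zeta^4\{1 + ((\sqrt{n\bar p^*}+\log n)/\gap^*)^4\} + \exp\{-(\gap^*\log\gap^* \wedge n^2\bar p)/(2C\vee 3)\}$. For the tail bound I would run the analogue of Lemma \ref{lem:tailprob}: feed the $V_+$ and $M$ moment bounds into Proposition \ref{prop:boucheron} to get $(\mathbb{E}|Z-\mathbb{E}Z|^k)^{1/k} \preceq \sqrt k\,\sigma$ for $k \le k_0$, where $\sigma = \sqrt{\bar p}\,\zeta^2\{1 + ((\sqrt{n\bar p^*}+\log n)/\gap^*)^2\}$ and $k_0$ is the largest $k$ for which the residual terms $k\exp\{-(\gap^*\log\gap^*)/(2Ck)\}$ (from $V_+$) and the $M$-contribution $k \cdot M$-moment stay $\preceq \sqrt k\,\sigma$; solving $k\log k \preceq \gap^*\log\gap^*$ and $\frac12\log(1/\bar p\zeta^2) \preceq (\gap^*\log\gap^*)/(2Ck)$, together with the parallel constraints coming from the $n^2\bar p$ exponent in $\P(\event_2^c)$, yields $k_0 \asymp \min\{\gap^*, \gap^*\log\gap^*/\log(1/\bar p\zeta^2), n^2\bar p/\log(n^2\bar p), n^2\bar p/\log(1/\bar p\zeta^2)\}$, and Lemma \ref{lem:markov} converts this into the stated sub-gaussian tail on $t \le C_2\sigma\sqrt{k_0}$. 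The main obstacle I anticipate is bookkeeping: tracking which of the two bad-event exponents ($\gap^*\log\gap^*$ versus $n^2\bar p$) controls each residual term and confirming that all the stray logarithmic factors in $g(\delta)$, $R(\delta)$ and $M(\delta)$ genuinely get absorbed under \eqref{eq:C0_general}, so that the four terms inside the $\min$ defining $k_0$ are exactly the ones appearing; the probabilistic content is a routine transcription of the \ER{} case.
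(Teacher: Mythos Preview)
Your proposal is correct and follows essentially the same approach as the paper's own proof: set $\delta^* = \exp\{-\gap^*\log\gap^*/(2C)\}$, apply Corollary \ref{cor:typical_binary} to get the tail bound $\sqrt{n}\|u_1\|_\infty \le C_1\big(1 + (\sqrt{n\bar p^*}+\log(n/\delta))/\gap^*\big)\zeta$, define the events $\event_1$, $\event_2$ exactly as you describe, split the moments of $V_+$ and $M$ over $\event$ and $\event^c$, then feed into Efron--Stein and Proposition \ref{prop:boucheron}/Lemma \ref{lem:markov}. The paper's moment bound is $(\E (\sqrt n\|u_1\|_\infty)^{2k}I_\event)^{1/k} \preceq \zeta^2\{1 + ((k\vee(\sqrt{n\bar p^*}+\log n))/\gap^*)^2\}$, which is the clean form of what you were reaching for, and the $k_0$ bookkeeping you anticipate is precisely the content of the final paragraph of the paper's proof.
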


\section{Exact Recovery of Spectral Clustering}\label{sec:exact_recovery}
\subsection{Background}\label{subsec:exact_recovery_background}
Stochastic block model (SBM) is a popular model to analyze community detection algorithms. The probability matrix of an SBM with $K$ blocks is given by $A_{ij}^{*} = B_{c_{i}c_{j}}$ for some matrix $B\in [0, 1]^{K\times K}$ where $c_{i}$ denotes the cluster label of $i$. Equivalently, 
\[A^{*} = \td{A}^{*} - \diag(\td{A}^{*}), \quad \mbox{where }\td{A}^{*} = ZBZ^{T},\]
where $Z\in \R^{n\times K}$ denotes the membership matrix whose $i$-th row $Z_{i} = e_{c_{i}}$, the $c_{i}$-th canonical basis of $R^{K}$. The goal is to recover the cluster labels $c = (c_{1}, \ldots, c_{n})$, up to label permutation. Let $\hat{c}$ denote the estimated cluster labels via an algorithm. We say that the algorithm achieves exact recovery if 
\[\P\lb\exists \mbox{ permutation }\pi, \,\, \hat{c}_{\pi(i)} = c_{i}\,\, \forall i \in [n]\rb = 1 - o(1).\]
The problem has been widely studied and can be solved using different algorithms \citep[e.g.][]{bui1987graph, boppana1987eigenvalues, dyer1989solution, snijders1997estimation, jerrum1998metropolis, condon2001algorithms, carson2001hill, mcsherry2001spectral, giesen2005reconstructing, shamir2007improved, bickel2009nonparametric, coja2010graph, rohe2011spectral, oymak2011finding, balakrishnan2011noise, choi2012stochastic, mossel2014consistency, ames2014guaranteed, chen2014improved, massoulie2014community, yun2014accurate, abbe2015community, abbe2015exact, chin2015stochastic, hajek2016achieving, guedon2016community, yun2016optimal, agarwal2017multisection, gao2017achieving, amini2018semidefinite, bandeira2018random, chen2018convexified, vu2018simple, fei2018exponential, li2018convex, li2018hierarchical, su2019strong}; see \cite{abbe2017community} for a nice review of this topic.

Spectral clustering algorithms are appealing due to the straightforward implementation and computational efficiency compared to other algorithms. In this section we consider the standard spectral clustering algorithm \citep[e.g.][]{von2007tutorial}, which embeds each observation into the subspace spanned by $K$ eigenvectors of some operators and applies $K$-means or $K$-medians algorithm on embedded vectors. Specifically, we consider the following procedure:
\begin{enumerate}[Step 1]
\item Compute the $K$ eigenvectors of the adjacency matrix corresponding to the $K$ largest eigenvalues in absolute values, or the $K$ eigenvectors of the unnormalized Laplacian corresponding to the $K$ smallest eigenvalues, denoted by $U$;
\item Perform $K$-medians algorithm on $U$ to get the estimates of cluster membership.
\end{enumerate}
The intuition underlying the algorithm is that $U$ should approximate $U^{*}$, the eigenvector matrix of the expectation of the adjacency matrix or the Laplacian, which identifies the cluster labels using $K$-medians algorithm. Consider the adjacency matrix as an example. Let $n_{i}$ denote the number of units in cluster $i$ and without loss of generality assume that 
\[Z =
  \begin{bmatrix}
    \one_{n_{1}} & 0 & \cdots & 0\\
    0 & \one_{n_{2}} & \cdots & 0\\
    \vdots & \vdots & \ddots & \vdots\\
    0 & 0 & \cdots & \one_{n_{K}}
  \end{bmatrix}.
\]
Let $M = \diag(\sqrt{n_{1}}, \ldots, \sqrt{n_{K}})$ and $Q = Z M^{-1}$. Then $Q^{T}Q = I$ and 
\begin{equation}
  \label{eq:AstarQM}
  \td{A}^{*} = Q(M B M)Q^{T}.
\end{equation}
Let $V\Lambda V^{T}$ be the spectral decomposition of $M B M$. Then $Q V B (Q V)^{T}$ is the spectral decomposition of $\td{A}^{*}$ since $QV$ is an orthogonal matrix. As a result, the eigenvector matrix of $\td{A}^{*}$ is $\td{U}^{*} = QV$. By definition,
\[\td{U}^{*} = 
  \begin{bmatrix}
    \frac{\one_{n_{1}}V_{1}^{T}}{\sqrt{n_{1}}}\\
    \frac{\one_{n_{2}}V_{2}^{T}}{\sqrt{n_{2}}}\\
    \vdots\\
    \frac{\one_{n_{K}}V_{K}^{T}}{\sqrt{n_{K}}}
  \end{bmatrix},
\]
where $V_{i}^{T}$ is the $i$-th row of $V$. It is clear that $\td{U}_{i}^{*} = \td{U}_{i'}^{*}$ iff $i$ and $i'$ belong to the sam cluster. Thus $K$-medians can perfectly identify the clusters using $\td{U}^{*}$. Since $A^{*}$ approximates $\td{A}^{*}$, $\td{U}^{*} \approx U^{*} \approx U$. This intuitively justifies the spectral clustering algorithm.

Early works investigated the weak recovery of spectral clustering, meaning that the misclassification error is vanishing with high probability \citep[e.g.][]{rohe2011spectral, lei2015consistency, joseph2016impact}. This is weaker than exact recovery which requires the misclassification error to be zero with high probability. The exact recovery was proved for dense graphs of which the average degree is polynomial in the graph size \citep[e.g.][]{mcsherry2001spectral, balakrishnan2011noise}. Perhaps surprisingly, for sparse graphs of which the average degree is of order $\log n$, the exact recovery was not proved until \cite{abbe2017entrywise}. In particular, they proved that the spectral clustering is information theoretically optimal for two-block SBMs with equal block sizes: with within-block probability $p = a\log / n$ and between-block probability $q = b \log n / n$ where $a > b$ are two constants, spectral clustering on the adjacency matrix achieves the exact recovery iff $\sqrt{a} - \sqrt{b} > \sqrt{2}$. Later \cite{su2019strong} extended the result to spectral clustering on normalized Laplacians for general $K$-block SBMs when the average degree is of order $\log n$. 

In this section, we derive the exact recovery of spectral clustering on adjacency matrices or unnormalized Laplacians for general SBMs with fixed or growing $K$ through a simple analysis based on our $\ell_{\ttinf}$ bounds in Section \ref{subsec:binary_wigner}. The results can be directly extended to SBMs with dependent entries using the results from Section \ref{subsec:mdependence} but we leave them to interested readers as the derivation is almost identical. Before delving into the analysis, we prove a useful lemma that connects the $K$-medians algorithm and the $\ell_{\ttinf}$ perturbation bounds. In particular, the $K$-medians algorithm applied on $U$ will return cluster labels as 
\begin{equation}
  \label{eq:hatc}
  \hat{c}_{i} = \argmin_{r}\|U_{i} - \hat{v}_{r}\|_{2},
\end{equation}
where $U_{i}^{T}$ is the $i$-th row of $U$ and 
\begin{equation}
  \label{eq:hatv}
  (\hat{v}_{1}, \ldots, \hat{v}_{K}) = \argmin_{(v_{1}, \ldots, v_{K})} \frac{1}{n}\sum_{i=1}^{n}\min_{r\in [K]}\|U_{i} - v_{r}\|_{2}
\end{equation}

\begin{lemma}\label{lem:Kmedian}
  Let $U, \td{U}^{*}\in \R^{n\times k}$ be two matrices and $\C_{1}, \ldots, \C_{K}$ be a partition of $[n]$ with $|\C_{s}| = n\pi_{s}$. Assume that 
\[\td{U}_{i}^{*} = v_{s}^{*}, \quad \forall i\in \C_{s},\]
and $v_{s}^{*}\not = v_{s'}^{*}$ for any pair $s \not = s'$. Then the $K$-medians algorithm exactly recovers $\C_{1}, \ldots, \C_{K}$ if 
\[d_{\ttinf}(U, \td{U}^{*})\le \frac{\min \pi_{r}}{6}\min_{s\not=s'}\|v_{s}^{*} - v_{s'}^{*}\|_{2}.\]
\end{lemma}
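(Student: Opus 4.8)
The plan is to exploit the invariance of $K$-medians under a common orthogonal rotation of all rows so as to reduce to the case $\mnorm{U-\td{U}^{*}}\le\varepsilon$, and then run a Markov-type counting argument showing that the optimal centers $\hat v_{1},\ldots,\hat v_{K}$ sit close to the true centers $v_{1}^{*},\ldots,v_{K}^{*}$ in a one-to-one fashion.

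First I would reduce to the aligned case. The orthogonal group $\{O\in\R^{k\times k}:O^{T}O=I\}$ is compact, so the infimum defining $\varepsilon:=d_{\ttinf}(U,\td{U}^{*})$ is attained at some $O$. Replacing $U$ by $UO$ transforms the optimal centers in \eqref{eq:hatv} to $\hat v_{r}O$ and leaves the assignment \eqref{eq:hatc} unchanged, so we may assume $\|U_{i}-v_{c_{i}}^{*}\|_{2}\le\varepsilon$ for every $i$, where $c_{i}$ denotes the true label ($i\in\C_{c_{i}}$). Write $\delta=\min_{s\neq s'}\|v_{s}^{*}-v_{s'}^{*}\|_{2}>0$. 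The hypothesis reads $\varepsilon\le\tfrac16(\min_{r}\pi_{r})\delta$, and since $\min_{r}\pi_{r}\le1/K\le1/2$ (the case $K=1$ being trivial) this also gives $\varepsilon\le\delta/12$.

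Next I would count the ``good'' rows. Plugging $(v_{1}^{*},\ldots,v_{K}^{*})$ into \eqref{eq:hatv} shows the optimal $K$-medians value is at most $\frac1n\sum_{i}\|U_{i}-v_{c_{i}}^{*}\|_{2}\le\varepsilon$, hence $\sum_{i}\min_{r}\|U_{i}-\hat v_{r}\|_{2}\le n\varepsilon$. Calling a row $i$ \emph{good} when $\min_{r}\|U_{i}-\hat v_{r}\|_{2}\le\delta/5$, Markov's inequality bounds the number of non-good rows by $5n\varepsilon/\delta\le\tfrac56 n\min_{r}\pi_{r}<n\min_{r}\pi_{r}\le|\C_{s}|$ for every $s$, so each cluster contains a good row; and for any good $i\in\C_{s}$ the assigned center satisfies $\|\hat v_{\hat c_{i}}-v_{s}^{*}\|_{2}\le\|U_{i}-\hat v_{\hat c_{i}}\|_{2}+\|U_{i}-v_{s}^{*}\|_{2}\le\delta/5+\varepsilon\le\tfrac{17}{60}\delta$. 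Picking one good row $i_{s}$ per cluster and setting $\phi(s)=\hat c_{i_{s}}$, if $\phi(s)=\phi(s')$ for $s\neq s'$ then $\delta\le\|v_{s}^{*}-v_{s'}^{*}\|_{2}\le 2(\delta/5+\varepsilon)\le\tfrac{34}{60}\delta<\delta$, a contradiction; hence $\phi:[K]\to[K]$ is injective, so bijective. Finally, for arbitrary $i\in\C_{s}$ one has $\|U_{i}-\hat v_{\phi(s)}\|_{2}\le\varepsilon+\tfrac{17}{60}\delta\le\tfrac{22}{60}\delta$ while $\|U_{i}-\hat v_{\phi(s')}\|_{2}\ge\delta-\varepsilon-\tfrac{17}{60}\delta\ge\tfrac{38}{60}\delta$ for $s'\neq s$, so $\hat c_{i}=\phi(s)=\phi(c_{i})$; thus $K$-medians recovers $\C_{1},\ldots,\C_{K}$ up to the permutation $\phi$.

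I do not expect a genuine obstacle here: the argument is elementary, and the only point requiring care is tracking the numerical constants so that the final comparison $\tfrac{22}{60}\delta<\tfrac{38}{60}\delta$ is strict. The threshold $\delta/5$ in the definition of a good row and the factor $6$ in the hypothesis are chosen precisely to leave this slack (the argument is essentially tight only near a factor of about $4$), and strictness of the last inequality also dispenses with any need to specify a tie-breaking rule for the $\argmin$ in \eqref{eq:hatc}.
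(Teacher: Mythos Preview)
Your proof is correct but takes a slightly different route from the paper's. After bounding the optimal $K$-medians objective by $\varepsilon$ (which you both do), the paper does not pass through a Markov/counting argument with ``good'' rows. Instead it applies the triangle inequality pointwise to obtain $\tfrac{1}{n}\sum_{i}\min_{r}\|\td U_{i}^{*}-\hat v_{r}\|_{2}\le 2\varepsilon$, and then exploits the exact block structure of $\td U^{*}$: since $\td U_{i}^{*}=v_{s}^{*}$ for all $i\in\C_{s}$, this sum collapses to $\sum_{s}\pi_{s}\|v_{s}^{*}-\hat v_{r_{s}}\|_{2}$, immediately yielding the per-cluster bound $\|v_{s}^{*}-\hat v_{r_{s}}\|_{2}\le 2\varepsilon/\min_{r}\pi_{r}\le\delta/3$ for the nearest center $r_{s}$, with no need to locate a representative row. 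Your counting argument is a legitimate alternative and would extend more readily to situations where $\td U^{*}$ is only approximately blockwise constant; the paper's collapse is a bit cleaner here precisely because the block structure is exact. Both arguments finish the same way, via triangle inequalities showing the map $s\mapsto r_{s}$ (or your $\phi$) is a bijection and that every row is strictly closer to the correct permuted center than to any other.
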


\subsection{Exact recovery of SBM with fixed $K$}
In this subsection, we consider a standard asymptotic setting where $K$ is held fixed, 
\[B = \rho_{n}B_{0}, \quad n_{r} / n\rightarrow \pi_{r},\]
for some rate function $\rho_{n}$, fixed matrix $B_{0}$ and fixed numbers $\pi_{1}, \ldots, \pi_{K} > 0$ that sum up to $1$. For this model, it is known that the information theoretic lower bound for $\rho_{n}$ is $\log n / n$, in the sense that if $\rho_{n} / (\log n / n)\rightarrow 0$ then no algorithm can achieve exact recovery \citep[e.g.][]{abbe2015exact, abbe2017community}. In this subsection we will show that spectral clustering can achieve exact recovery if $\rho_{n} > c\log n / n$ for a sufficiently large constant $c$ via either the adjacency matrix or the unnormalized Laplacian separately. 

\begin{theorem}[\textbf{exact recovery via adjacency matrix}]\label{thm:adjacency_spectral_clustering}
Assume that $B_{0}$ is full rank. Fix any $q > 0$. Then there exists constants $c$ and $n_{0}$ that only depends on $B_{0}, \pi_{r}$'s and $q$ such that if $n\ge n_{0}$ and
\begin{equation}
  \label{eq:adjacency_cond}
  n\rho_{n}\ge c\log n,
\end{equation}
then spectral clustering using the adjacency matrix achieves exact recovery with probability at least $1 - n^{-q}$.
\end{theorem}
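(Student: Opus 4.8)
The plan is to feed the row-wise bound of Theorem~\ref{thm:generic_binary} into the geometric recovery criterion of Lemma~\ref{lem:Kmedian}. Write $\tilde A^{*}=ZBZ^{T}$ (notation of Section~\ref{subsec:exact_recovery_background}), so $A^{*}=\tilde A^{*}-\Sigma_{0}$ with $\Sigma_{0}=\diag(\tilde A^{*})$ diagonal and $\|\Sigma_{0}\|_{\op}\le b_{0}\rho_{n}$, $b_{0}=b_{0}(B_{0})$. Since $B_{0}$ is fixed and full rank, $\tilde A^{*}$ has exactly $K$ nonzero eigenvalues (those of $MBM$), all of magnitude $\asymp n\rho_{n}$ with $|\lambda_{K}(\tilde A^{*})|\ge c_{2}n\rho_{n}$ for some $c_{2}=c_{2}(B_{0},\pi)>0$; let $\tilde U^{*}$ be the associated $n\times K$ eigenvector matrix, which by Section~\ref{subsec:exact_recovery_background} is block-constant, $\tilde U^{*}_{i}=v^{*}_{c_{i}}=V_{c_{i}}/\sqrt{n_{c_{i}}}$. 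Orthogonality of $V$ gives $\mnorm{\tilde U^{*}}=1/\sqrt{\min_{r}n_{r}}\asymp 1/\sqrt n$ and, for $s\ne t$, $\|v^{*}_{s}-v^{*}_{t}\|_{2}=\sqrt{n_{s}^{-1}+n_{t}^{-1}}\ge \sqrt{2/n}$. Let $U$ be the $K$ eigenvectors of $A$ of largest magnitude and $U^{*}$ the corresponding ones of $A^{*}=\E A$; when $B_{0}$ is indefinite every application of the perturbation theory below is run separately on the leading positive and the leading negative eigen-blocks (each a contiguous block of eigenvalues in the sense of \eqref{eq:LambdaLambda*}) and the results concatenated.

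First I would bound the \emph{deterministic} gap $d_{\ttinf}(U^{*},\tilde U^{*})$ via the degenerate case of Theorem~\ref{thm:generic_bound} with ``$A$'', ``$A^{*}$'', ``$E$'' replaced by $A^{*}$, $\tilde A^{*}$, $-\Sigma_{0}$: all randomness is absent, so one may take $L_{1}=L_{2}=L_{3}=\lambda_{-}=b_{2}=0$ and $E_{+}=\bar E_{+}=E_{\infty}=b_{\infty}=\|\Sigma_{0}\|_{\op}\le b_{0}\rho_{n}$; here the nonzero-eigenvalue basis $\bar U^{*}$ of $\tilde A^{*}$ is $\tilde U^{*}$ itself, so $\xi_{3}=\mnorm{\bar U^{*}}\asymp 1/\sqrt n$, $\bar\kappa^{*}\le 2K$, $\gap^{*}=|\lambda_{K}(\tilde A^{*})|\asymp n\rho_{n}$, and Assumption~\textbf{A}4 reduces to $n\ge\mathrm{const}$. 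Since $\tilde U^{*}$ consists of eigenvectors of $\tilde A^{*}$, the object played by $AU^{*}(\Lambda^{*})^{-1}$ is $\tilde U^{*}-\Sigma_{0}\tilde U^{*}(\tilde\Lambda^{*})^{-1}$, which differs from $\tilde U^{*}$ by $O(n^{-3/2})$; absorbing this, Theorem~\ref{thm:generic_bound} gives $d_{\ttinf}(U^{*},\tilde U^{*})\preceq n^{-3/2}$, and in particular $\mnorm{U^{*}}\le 2\mnorm{\tilde U^{*}}\asymp 1/\sqrt n$ for $n\ge n_{0}$.

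Next I would invoke Theorem~\ref{thm:generic_binary} with $s=0$, $r=K$, $\alpha=\tfrac12$, $\delta=n^{-q}/(B(K)+1)$, noting $B(K)\le 10K$. In this model $p^{*}\asymp\bar p^{*}\asymp\rho_{n}$, $R(\delta)\asymp(1+q)\log n$, $\mnorm{U^{*}}\asymp 1/\sqrt n$, $\lambda_{\min}^{*}\asymp n\rho_{n}$, and $\gap^{*}\asymp n\rho_{n}$ (the $(K{+}1)$-st largest-magnitude eigenvalue of $A^{*}$ is a perturbation of a zero eigenvalue of $\tilde A^{*}$, hence $O(\rho_{n})$); $\bar\kappa^{*}\le 2K$. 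I discard the psd branch of the $\min\{\cdot\}$ and keep $\xi_{1}=\mnorm{A^{*}}/\lambda_{\min}^{*}\preceq \sqrt n\rho_{n}/(n\rho_{n})=1/\sqrt n$ (Corollary~\ref{cor:full_recovery_binary} is \emph{not} available, since $A^{*}$ is full rank and $\mnorm{\bar U^{*}}\preceq\mnorm{U^{*}}$ fails). Under $n\rho_{n}\ge c\log n$ one checks that the eigen-gap condition \eqref{eq:A4_binary} holds once $c$ exceeds a constant depending on $B_{0},\pi,q$, and that every term of the second displayed bound of Theorem~\ref{thm:generic_binary} is at most $C(B_{0},\pi,q)\,c^{-1/2}n^{-1/2}$ for $n\ge n_{0}$; hence $\sqrt n\,d_{\ttinf}(U,U^{*})\le C(B_{0},\pi,q)\,c^{-1/2}$ with probability at least $1-n^{-q}$. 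The triangle inequality for $d_{\ttinf}$ (valid since right multiplication by an orthogonal matrix preserves row $\ell_{2}$ norms) and the deterministic step give $\sqrt n\,d_{\ttinf}(U,\tilde U^{*})\le C(B_{0},\pi,q)\,c^{-1/2}+o(1)$.

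Finally, enlarging $c$ and then $n_{0}$ (depending only on $B_{0}$, the $\pi_{r}$, and $q$) makes this quantity smaller than $\tfrac{\min_{r}\pi_{r}}{6}\sqrt n\,\min_{s\ne t}\|v^{*}_{s}-v^{*}_{t}\|_{2}$, so Lemma~\ref{lem:Kmedian} (with $\C_{s}$ the true clusters) shows that the $K$-medians step recovers $\C_{1},\dots,\C_{K}$ exactly on this event, i.e.\ with probability at least $1-n^{-q}$. The main obstacle is not any single estimate but correctly handling the ``two-scale'' structure of $A^{*}$: its signal part has rank $K$ and eigenvalues $\asymp n\rho_{n}$, whereas subtracting the diagonal makes $A^{*}$ full rank with $n-K$ spurious eigenvalues of size $O(\rho_{n})$, so one must confirm that the selected eigenvectors of $A$ track $\tilde U^{*}$ rather than those spurious directions — this is precisely why the deterministic step and the $\xi_{1}$-branch (rather than $\mnorm{\bar U^{*}}$) are needed — together with the bookkeeping of how $c_{2},b_{0}$ and the absorbed constants depend on $B_{0}$ and $\pi$, and the positive/negative block split when $B_{0}$ is indefinite.
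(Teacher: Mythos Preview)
Your proposal is correct and follows the same architecture as the paper's proof: reduce to Lemma~\ref{lem:Kmedian}, split $d_{\ttinf}(U,\tilde U^{*})\le d_{\ttinf}(U,U^{*})+d_{\ttinf}(U^{*},\tilde U^{*})$, control the random piece via the $\xi_{1}$-branch of Theorem~\ref{thm:generic_binary} (the paper packages this as Corollary~\ref{cor:typical_binary}, whose hypothesis $\lambda_{\min}^{*}\succeq np^{*}/(\sqrt n\,\mnorm{U^{*}})$ is exactly your $\xi_{1}\preceq 1/\sqrt n$), and control the deterministic piece separately.

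The one substantive difference is the deterministic step. You invoke the degenerate (zero-randomness) instance of Theorem~\ref{thm:generic_bound} with $E=-\Sigma_{0}$, which indeed yields $d_{\ttinf}(U^{*},\tilde U^{*})\preceq n^{-3/2}$ (note a harmless slip: $\lambda_{-}$ is $O(\rho_{n})$ by Weyl, not $0$). The paper instead uses the much lighter bound $d_{\ttinf}(U^{*},\tilde U^{*})\le \inf_{O}\|U^{*}O-\tilde U^{*}\|_{\op}$ together with Davis--Kahan, obtaining $O(1/n)$; this is weaker but still $o(n^{-1/2})$ and avoids re-verifying \textbf{A}1--\textbf{A}4 in the deterministic setting. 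Your explicit splitting into positive and negative eigen-blocks when $B_{0}$ is indefinite is a clean way to keep each application of Theorem~\ref{thm:generic_binary} on a contiguous block of the ordered spectrum; the paper's proof of this theorem does not spell that out (though the same device appears in its proof of Theorem~\ref{thm:grow_K}).
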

\begin{proof}
 Let $\Lambda^{*}\in \R^{K\times K}$ denotes the diagonal matrix of all non-zero eigenvalues of $A^{*}$ and $U^{*}\in \R^{n\times K}$ be the corresponding eigenvector matrix. Let $R_{0} = \diag(\sqrt{\pi_{1}}, \ldots \sqrt{\pi_{K}})$. Recalling the decomposition \eqref{eq:AstarQM} and let $R = M / \sqrt{n}$, we have
\[\td{A}^{*} = n\rho_{n}Q(RB_{0}R)Q^{T}, \quad \td{U}^{*} = QV\]
where $V\in \R^{K\times  K}$ is the matrix formed by all eigenvectors of $RB_{0}R$. By definition of $Q$, $\td{U}_{i}^{*} = \frac{V_{s}}{\sqrt{n_{s}}}$ for any $i\in \C_{s}$ where $V_{s}^{T}$ is the $s$-th row of $V$. Let $v_{s}^{*} = \td{U}_{i}^{*}$ for $i\in \C_{s}$. Using the fact that $V$ is an orthogonal matrix, we have
\begin{align}
  \|v_{s}^{*} - v_{s'}^{*}\|_{2} &= \sqrt{\|v_{s}^{*}\|_{2}^{*} + \|v_{s'}^{*}\|_{2}^{2} - 2\langle v_{s}^{*}, v_{s'}^{*}\rangle} = \sqrt{\frac{1}{n_{s}} + \frac{1}{n_{s'}}} \ge \frac{1}{\min_{s\in [K]}\sqrt{\pi_{s}}}\frac{1}{\sqrt{n}}, \nonumber
\end{align}
By Lemma \ref{lem:Kmedian}, it is left to prove that 
\begin{equation}
  \label{eq:adjacency_goal1}
  d_{\ttinf}(U, \td{U}^{*}) \le \frac{\min_{s\in [K]}\sqrt{\pi_{s}}}{6\sqrt{n}} \triangleq \frac{2c_{1}}{\sqrt{n}}.
\end{equation}
By the triangle inequality, 
\[d_{\ttinf}(U, \td{U}^{*})\le d_{\ttinf}(U, U^{*}) + d_{\ttinf}(U^{*}, \td{U}^{*}).\]
By definition,
\[d_{\ttinf}(U^{*}, \td{U}^{*}) = \inf_{O\in \O^{K}}\mnorm{U^{*}O - \td{U}^{*}}\le \inf_{O\in \O^{K}}\|U^{*}O - \td{U}^{*}\|_{\op}.\]
By Davis-Kahan Theorem \citep[][Theorem 2]{yu2014useful}, 
\[\inf_{O\in \O^{K}}\|U^{*}O - \td{U}^{*}\|_{\op}\le \frac{\sqrt{8K}\|A^{*} - \td{A}^{*}\|_{\op}}{\td{\lambda}_{K}^{*} - \td{\lambda}_{K+1}^{*}}\le \frac{\sqrt{8K}\rho_{n}}{\td{\lambda}_{K}^{*}},\]
where we use the fact that $\td{A}^{*} - A^{*} = \diag(\td{A}^{*})$ and $\rank(\td{A}^{*}) = K$. Since 
\[\td{\lambda}_{K}^{*} = n\rho_{n}\lambda_{\min}(R B_{0}R)\ge n\rho_{n}\lb\min_{r\in [K]}\sqrt{\frac{n_{r}}{n}}\rb\lambda_{\min}(B_{0}).\]
Thus, when $n$ is sufficiently large, 
\begin{equation}\label{eq:dttinf_U*_tdU*}
  d_{\ttinf}(U^{*}, \td{U}^{*})\le \inf_{O\in \O^{K}}\|U^{*}O - \td{U}^{*}\|_{\op}\le \frac{\sqrt{16K}}{\lambda_{\min}(B_{0})\min_{r\in [K]}\sqrt{\pi_{r}}}\frac{1}{n}\le \frac{c_{1}}{\sqrt{n}}.
\end{equation}
Combined with \eqref{eq:adjacency_goal1}, it is left to prove that
\begin{equation}
  \label{eq:adjacency_goal}
   d_{\ttinf}(U, U^{*}) \le \frac{c_{1}}{\sqrt{n}}.
\end{equation}

Throughout the rest of the proof we treat $B_{0}$, $q$ and $\pi_{r}$'s as constants. Note that $R\rightarrow R_{0}$ and hence
\[RB_{0}R\rightarrow R_{0}B_{0}R_{0}.\]
In addition, since $B_{0}$ is full-rank and $\pi_{r} > 0$, $R_{0}B_{0}R_{0}$ is full rank. Then there exists $n_{0}$ that only depends on $B$ and $R_{0}$ such that whenever $n\ge n_{0}$, 
\begin{equation*}
\lambda_{\min}(\td{\Lambda}^{*}) > \frac{2n\rho_{n}}{3}\lambda_{\min}(R_{0}B_{0}R_{0}).
\end{equation*}
By Weyl's inequality, 
\[|\lambda_{\min}(\td{\Lambda}^{*}) - \lambda_{\min}^{*}|\le \|\td{A}^{*} - A^{*}\|_{\op}\le \rho_{n}.\]
Thus for sufficiently large $n$,
\begin{equation}
  \label{eq:adjacency2}
\lambda_{\min}^{*} > \frac{n\rho_{n}}{2}\lambda_{\min}(R_{0}B_{0}R_{0}).
\end{equation}
Let $\td{\gap}^{*}$ be the counterpart of $\gap^{*}$ for $\td{A}^{*}$. Then by Weyl's inequality
\[|\gap^{*} - \td{\gap}^{*}|\le 2\rho_{n}.\]
Since $\td{\gap}^{*} = \lambda_{\min}(\td{\Lambda}^{*})$, for sufficiently large $n$, 
\begin{equation}
  \label{eq:adjacency3}
  \gap^{*} > \frac{n\rho_{n}}{2}\lambda_{\min}(R_{0}B_{0}R_{0}).
\end{equation}
On the other hand, it is easy to see that
\begin{equation*}
  \mnorm{\td{U}^{*}} = \frac{1}{\min_{s\in [K]}\sqrt{n_{s}}} = \frac{1}{\min_{s\in [K]}\sqrt{\pi_{s}}}\frac{1}{\sqrt{n}}.
\end{equation*}
By \eqref{eq:dttinf_U*_tdU*}, 
\[|\mnorm{\td{U}^{*}} - \mnorm{U^{*}}|\le d_{\ttinf}(U^{*}, \td{U}^{*})\le \frac{c_{1}}{\sqrt{n}}.\]
Thus, 
\begin{equation}
    \label{eq:adjacency1}
    \mnorm{U^{*}} \preceq \frac{1}{\sqrt{n}}.
\end{equation}

Set $\delta = n^{-q}$ and $\alpha = 0.5$ in Corollary \ref{cor:typical_binary}. By \eqref{eq:adjacency2} and \eqref{eq:adjacency1}, 
\[\lambda_{\min}^{*}\succeq \frac{np^{*}}{\sqrt{n}\mnorm{U^{*}}}.\]
Moreover, $\bar{\kappa}^{*}\le 2K\preceq 1$, $p^{*}\preceq \rho_{n}$ and 
\[R(\delta)\preceq \log n, \quad g(\delta)\preceq \sqrt{n\rho_{n}} + \frac{\log n}{\log \log n}\]
By \eqref{eq:adjacency3}, for sufficiently large $n$ and $c$ in the condition \eqref{eq:adjacency_cond}, 
\[\gap^{*} > \frac{n\rho_{n}}{2}\lambda_{\min}(R_{0}B_{0}R_{0}) \ge C\bar{\kappa}^{*}g(\delta)\]
where $C$ is the universal constant in Theorem \ref{thm:generic_binary}. Thus, the conditions of Corollary \ref{cor:typical_binary} are satisfied. As a result, 
\begin{align*}
      d_{\ttinf}(U, U^{*})&\preceq \lb\frac{\sqrt{n\rho_{n}} + \log n / \log \log n}{n\rho_{n}} + \frac{\log n}{n\rho_{n}}\rb\frac{1}{\sqrt{n}} + \frac{\sqrt{(\log n) \rho_{n}}}{n\rho_{n}} \preceq \sqrt{\frac{\log n}{n\rho_{n}}}\frac{1}{\sqrt{n}}.
\end{align*}
Equivalently, there exists a constant $c_{2}$ that only depends on $B_{0}$, $q$ and $\pi_{r}$'s such that
\[d_{\ttinf}(U, U^{*})\le \sqrt{\frac{\log n}{n\rho_{n}}}\frac{c_{2}}{\sqrt{n}}.\]
By condition \eqref{eq:adjacency_cond},
\[d_{\ttinf}(U, U^{*})\le \frac{c_{2}}{\sqrt{c}}\frac{1}{\sqrt{n}}.\]
Therefore, \eqref{eq:adjacency_goal} follows if $c > c_{2}^{2} / c_{1}^{2}$. The proof is then completed.
\end{proof}

\begin{theorem}[\textbf{exact recovery via unnormalized Laplacian}]\label{thm:laplacian_spectral_clustering}
Let $R_{0} = \diag(\sqrt{\pi_{1}}, \ldots \sqrt{\pi_{K}})$, $\td{d}_{0} = B_{0}R_{0}^{2}\one_{K}$ and $\td{\D}_{0} = \diag(\td{d}_{01}, \ldots, \td{d}_{0K})$. Further let 
\[\td{\L}_{0} = \td{\D}_{0} - R_{0}B_{0}R_{0}.\] 
Assume that 
\begin{equation}
  \label{eq:laplacian_cond1}
  \rank(\td{\L}_{0}) = K - 1, \quad \lambda_{\max}(\td{\L}_{0}) < \lambda_{\min}(\td{\D}_{0})
\end{equation}
Fix any $q > 0$. Then there exists constants $c$ and $n_{0}$ that only depends on $B_{0}, \pi_{r}$'s and $q$ such that if $n\ge n_{0}$ and
\begin{equation}
  \label{eq:laplacian_cond}
  n\rho_{n}\ge c\log n.
\end{equation}
then spectral clustering using the unnormalized Laplacian achieves exact recovery with probability at least $1 - n^{-q}$.
\end{theorem}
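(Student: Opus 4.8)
The plan is to mirror the proof of Theorem~\ref{thm:adjacency_spectral_clustering}, replacing Corollary~\ref{cor:typical_binary} by the diagonal-surgery Laplacian bound Theorem~\ref{thm:generic_binary_laplacian} (with $\Sigma=\D$, so that the surgered matrix is $\L-\D=-\A$). \emph{Step 1 (population structure of $\L^{*}$).} Since the unnormalized Laplacian ignores self-loops and $\A_{ii}=0$, the population Laplacian $\L^{*}=\E\L=\diag(A^{*}\one_{n})-A^{*}$ equals the weighted Laplacian $\diag(\td{A}^{*}\one_{n})-\td{A}^{*}$ of $\td{A}^{*}=ZBZ^{T}$ \emph{exactly}, so unlike in Theorem~\ref{thm:adjacency_spectral_clustering} no Davis--Kahan step is needed. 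Writing $Q=ZM^{-1}$, $M=\diag(\sqrt{n_{r}})$, $R=M/\sqrt n$ as in \eqref{eq:AstarQM}, a direct computation gives $\td{A}^{*}=Q(MBM)Q^{T}$ and $\diag(\td{A}^{*}\one_{n})Q=Q\diag(d)$ with $d_{s}=\sum_{t}B_{st}n_{t}$; hence $\mathrm{col}(Q)$ is $\L^{*}$-invariant, $Q^{T}\L^{*}Q=\L^{*}_{\mathrm{red}}:=n\rho_{n}\bigl(\diag(B_{0}R^{2}\one_{K})-RB_{0}R\bigr)$, the signal eigenvectors of $\L^{*}$ are $U^{*}=QV$ (block-constant rows, $V$ diagonalizing $\L^{*}_{\mathrm{red}}$), and on $\mathrm{col}(Q)^{\perp}$ the matrix $\L^{*}$ acts as $\diag(d)$, contributing the $n-K$ bulk eigenvalues $d_{s}$. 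Since $n_{r}/n\to\pi_{r}$, $R\to R_{0}$ and $\L^{*}_{\mathrm{red}}/(n\rho_{n})\to\td{\L}_{0}$, $d_{s}/(n\rho_{n})\to(\td{\D}_{0})_{ss}$; invoking \eqref{eq:laplacian_cond1} and continuity of eigenvalues, for all large $n$: (i) $\L^{*}_{\mathrm{red}}$ has exactly one zero eigenvalue (eigenvector $\propto\one_{n}$); (ii) the $K$ smallest eigenvalues of $\L^{*}$ are the signal ones $0=\lambda^{*}_{n}<\lambda^{*}_{n-1}\le\cdots\le\lambda^{*}_{n-K+1}\asymp n\rho_{n}$, separated by $\asymp n\rho_{n}$ from every bulk eigenvalue and from every diagonal entry $\L^{*}_{kk}=\D^{*}_{kk}\asymp n\rho_{n}(\td{\D}_{0})_{c_{k}c_{k}}$ (here the condition $\lambda_{\max}(\td{\L}_{0})<\lambda_{\min}(\td{\D}_{0})$ is used); (iii) for the block $\{n-K+1,\dots,n-1\}$ one has $\lambda_{\min}^{*}\asymp\gap^{*}\asymp n\rho_{n}$.

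\emph{Step 2 (peeling $\one_{n}$ and the $K$-medians reduction).} Since $\L\one_{n}=0$ and $\L\succeq0$ deterministically, $\one_{n}/\sqrt n$ may be taken as a column of the eigenvector matrix $U$ of $\L$ for its $K$ smallest eigenvalues, and likewise for $U^{*}$. Writing $U=[\,U'\mid\one_{n}/\sqrt n\,]$, $U^{*}=[\,U'^{*}\mid\one_{n}/\sqrt n\,]$ and choosing a block-diagonal rotation gives $d_{\ttinf}(U,U^{*})\le d_{\ttinf}(U',U'^{*})$, where $U',U'^{*}\in\R^{n\times(K-1)}$ carry the block $\{n-K+1,\dots,n-1\}$. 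Exactly as in Theorem~\ref{thm:adjacency_spectral_clustering}, the block-constant rows $v^{*}_{s}$ of $U^{*}=QV$ satisfy $\|v^{*}_{s}-v^{*}_{s'}\|_{2}=\sqrt{1/n_{s}+1/n_{s'}}\gtrsim1/\sqrt n$ and $\mnorm{U^{*}}\asymp1/\sqrt n$, so by Lemma~\ref{lem:Kmedian} it suffices to prove $d_{\ttinf}(U',U'^{*})\le c_{1}/\sqrt n$ for a constant $c_{1}$ depending only on $B_{0},\pi_{r}$.

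\emph{Step 3 (applying the Laplacian bound).} Apply Theorem~\ref{thm:generic_binary_laplacian} to the block $\{n-K+1,\dots,n-1\}$ ($s=n-K$, $r=K-1$) with $\delta=n^{-q}$, $\alpha=1/2$. Here $p^{*}\asymp\bar p^{*}\asymp\rho_{n}$, $R(\delta)\lesssim\log n$, $M(\delta)\lesssim\sqrt{n\rho_{n}\log n}+\log n$, $g(\delta)\lesssim\sqrt{n\rho_{n}}+\log n/\log\log n$, $\bar{\kappa}^{*}\le2r\lesssim1$, and $\bar{\kappa}'=\bar{\kappa}^{*}+n\bar p^{*}/\lambda_{\min}^{*}\lesssim1$ by Step~1(iii); Lemma~\ref{lem:Theta1} together with the $\Theta(n\rho_{n})$-separation of the signal eigenvalues from the degrees (Step~1(ii)) gives $\Theta(\delta)\le5\Theta^{*}\lesssim1$ once $c$ is large, whereupon \eqref{eq:A4_binary_laplacian} reduces to $n\rho_{n}\gtrsim\sqrt{n\rho_{n}\log n}+\log n$, valid for $c$ large. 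Substituting these magnitudes, together with $\mnorm{EU'^{*}}\lesssim(M(\delta)+r)\mnorm{U'^{*}}+\sqrt{R(\delta)p^{*}}$ from Lemma~\ref{lem:mnormEU_laplacian}, into the displayed bound makes it collapse exactly as in the \ER computation of Section~\ref{sec:spectral_norm} to
\[
d_{\ttinf}(U',U'^{*})\ \lesssim\ \frac{1}{\sqrt n}\Bigl(\sqrt{\tfrac{\log n}{n\rho_{n}}}+\tfrac{\log n/\log\log n}{n\rho_{n}}\Bigr)\ \le\ \frac{c_{2}}{\sqrt c}\cdot\frac{1}{\sqrt n},
\]
using \eqref{eq:laplacian_cond}. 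Taking $c>c_{2}^{2}/c_{1}^{2}$, enlarging $n_{0}$, and absorbing the factor $B(r)+1$ into $q$ gives exact recovery with probability at least $1-n^{-q}$.

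\emph{Main obstacle.} The only genuinely new work relative to the adjacency case is Step~1: the selected eigenvalue block of $\L^{*}$ abuts the exact zero eigenvalue on one side and the dense ``degree bulk'' $\{d_{s}\}$ on the other, so one must (a) peel off $\one_{n}$ cleanly to keep $\lambda_{\min}^{*}>0$, and (b) show — transferring the two hypotheses in \eqref{eq:laplacian_cond1} from the limiting matrices $\td{\L}_{0},\td{\D}_{0}$ to the finite-$n$ objects — that the signal eigenvalues stay $\Theta(n\rho_{n})$-separated both from $0$ (giving $\gap^{*}\asymp n\rho_{n}$) and from every diagonal degree $\D^{*}_{kk}$ (this last separation is precisely what forces $\Theta(\delta)=O(1)$ and hence makes the diagonal-surgery bound usable). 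Once these structural facts are in hand the remaining estimates are routine bookkeeping identical in spirit to Theorem~\ref{thm:adjacency_spectral_clustering}.
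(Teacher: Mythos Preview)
Your proposal is correct and follows essentially the same route as the paper's proof: both compute the exact eigenstructure of $\L^{*}$ on $\mathrm{col}(Q)$ versus its complement (this is the content of Lemma~\ref{lem:eigen_laplacian}), peel off the $\one_{n}$ eigenvector, reduce via Lemma~\ref{lem:Kmedian}, bound $\Theta(\delta)$ through Lemma~\ref{lem:Theta1} using the separation of signal eigenvalues from the degrees, and plug into Theorem~\ref{thm:generic_binary_laplacian}. The only cosmetic differences are that the paper takes $\alpha=1/\log R(\delta)$ rather than $\alpha=1/2$, and removes $\one_{n}$ by arguing it does not affect $K$-medians rather than by your block-diagonal rotation; neither choice changes the argument.
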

The condition \eqref{eq:laplacian_cond1} is motivated by the following result on the eigen-structure of population Laplacian $\L^{*}$. The proof is relegated to Appendix \ref{subapp:other_exact_recovery}. 

\begin{lemma}\label{lem:eigen_laplacian}
  Let $R = \diag(\sqrt{n_{1} / n}, \ldots, \sqrt{n_{K} / n})$, $\td{d} = B_{0}R^{2}\one_{K}$ and $\td{\D} = \diag(\td{d}_{1}, \ldots, \td{d}_{K})$. Further let 
\[\td{\L} = \td{\D} - RB_{0}R.\]
Let $\td{L} = V\Sigma V^{T}$ be the spectral decomposition. Then the spectral decomposition of $\L^{*}$ can be written as
\[\L^{*}
  =   \begin{bmatrix}
    U^{*} & \td{U}^{*}
  \end{bmatrix}
  \begin{bmatrix}
    \Lambda^{*} & 0\\
    0 & \td{\Lambda}^{*}
  \end{bmatrix}
  \begin{bmatrix}
    U^{*} & \td{U}^{*}
  \end{bmatrix}^{T}
\]
where 
\[U^{*} = QV, \quad \Lambda^{*} = n\rho_{n}\Sigma , \quad \td{U}^{*} =
  \begin{bmatrix}
    Q_{1} & 0 & \ldots & 0\\
    0 & Q_{2} & \ldots & 0\\
    \vdots & \vdots & \ddots & \vdots\\
    0 & 0 & \ldots & Q_{K} 
  \end{bmatrix},  \quad \td{\Lambda}^{*} = n\rho_{n}
  \begin{bmatrix}
    \td{d}_{1}I_{n_{1} - 1} & 0 & \ldots & 0\\
    0 & \td{d}_{2}I_{n_{2} - 1} & \ldots & 0\\
    \vdots & \vdots & \ddots & \vdots\\
    0 & 0 & \ldots & \td{d}_{K}I_{n_{K} - 1} 
  \end{bmatrix}
\]
and $Q_{j}\in \R^{n_{j}\times (n_{j} - 1)}$ can be any matrix such that $Q_{j}Q_{j}^{T} = I_{n_{j}} - \one_{n_{j}}\one_{n_{j}}^{T} / n_{j}$.
\end{lemma}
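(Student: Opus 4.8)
The plan is to write down the eigenvectors explicitly, verify the two eigenvalue identities they must satisfy, and conclude by observing that the combined eigenvector matrix is orthogonal. No perturbation machinery from the earlier sections is needed: this is an exact linear-algebra computation on the population object $\L^{*}$.

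First I would put $\L^{*}$ in a convenient form. Since $A^{*}=\td{A}^{*}-\diag(\td{A}^{*})$ with $\td{A}^{*}=ZBZ^{T}$, the expected degree of vertex $i$ is $\D^{*}_{ii}=\sum_{j\ne i}\td{A}^{*}_{ij}=(\td{A}^{*}\one_{n})_{i}-\td{A}^{*}_{ii}$, so the self-loop terms cancel in $\D^{*}-A^{*}$ and one obtains $\L^{*}=\diag(\td{A}^{*}\one_{n})-\td{A}^{*}$ \emph{exactly}. Writing $Z=QM$ with $M=\diag(\sqrt{n_{1}},\dots,\sqrt{n_{K}})=\sqrt{n}\,R$ and $Q=ZM^{-1}$ (so $Q^{T}Q=I_{K}$ and the $j$-th column of $Q$ is the normalised block indicator $\one_{(j)}/\sqrt{n_{j}}$), I would compute $\td{A}^{*}=n\rho_{n}Q(RB_{0}R)Q^{T}$ and $\td{A}^{*}\one_{n}=n\rho_{n}Z\td{d}$, hence $\diag(\td{A}^{*}\one_{n})=n\rho_{n}\,\mathrm{blockdiag}(\td{d}_{1}I_{n_{1}},\dots,\td{d}_{K}I_{n_{K}})$. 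Therefore $\L^{*}=n\rho_{n}\bigl(\mathrm{blockdiag}(\td{d}_{1}I_{n_{1}},\dots,\td{d}_{K}I_{n_{K}})-Q(RB_{0}R)Q^{T}\bigr)$.

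Next I would set up the orthogonal change of basis. From $Q_{j}Q_{j}^{T}=I_{n_{j}}-\one_{n_{j}}\one_{n_{j}}^{T}/n_{j}$ it follows that each $Q_{j}$ has orthonormal columns ($Q_{j}^{T}Q_{j}=I_{n_{j}-1}$) and $\one_{n_{j}}^{T}Q_{j}=0$; hence $\td{U}^{*}=\mathrm{blockdiag}(Q_{1},\dots,Q_{K})$ has orthonormal columns and $Q^{T}\td{U}^{*}=0$, so $[\,Q\ \ \td{U}^{*}\,]$ is an $n\times n$ orthogonal matrix, and so is $[\,U^{*}\ \ \td{U}^{*}\,]=[\,Q\ \ \td{U}^{*}\,]\,\mathrm{blockdiag}(V,I_{n-K})$ since $V$ is orthogonal. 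It then remains to check two eigen-relations. On the block-constant part, $\mathrm{blockdiag}(\td{d}_{j}I_{n_{j}})Q=Q\td{\D}$ (the $j$-th column of $Q$ is an eigenvector of the former with eigenvalue $\td{d}_{j}$) and $Q^{T}Q=I$, so $\L^{*}Q=n\rho_{n}Q(\td{\D}-RB_{0}R)=n\rho_{n}Q\td{\L}$; feeding in $\td{\L}=V\Sigma V^{T}$ gives $\L^{*}U^{*}=\L^{*}QV=n\rho_{n}QV\Sigma=U^{*}\Lambda^{*}$ with $\Lambda^{*}=n\rho_{n}\Sigma$. On the within-block mean-zero part, $Q^{T}\td{U}^{*}=0$ annihilates the rank-$K$ term while $\mathrm{blockdiag}(\td{d}_{j}I_{n_{j}})\td{U}^{*}=\td{U}^{*}\,\mathrm{blockdiag}(\td{d}_{j}I_{n_{j}-1})$, so $\L^{*}\td{U}^{*}=\td{U}^{*}\td{\Lambda}^{*}$ with $\td{\Lambda}^{*}=n\rho_{n}\,\mathrm{blockdiag}(\td{d}_{j}I_{n_{j}-1})$. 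Stacking the two identities and using orthogonality of $[\,U^{*}\ \ \td{U}^{*}\,]$ yields exactly the claimed spectral decomposition.

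The only slightly delicate step, though it is entirely elementary, is the diagonal bookkeeping in the first stage: one must observe that replacing $A^{*}$ by $\td{A}^{*}$ is harmless precisely because $\D^{*}$ simultaneously drops the same self-loop mass $\td{A}^{*}_{ii}$, so that $\L^{*}=\diag(\td{A}^{*}\one_{n})-\td{A}^{*}$ is an exact identity rather than an approximation. Everything afterwards is routine block linear algebra built on the orthogonal splitting of $\R^{n}$ into block-constant and within-block mean-zero subspaces.
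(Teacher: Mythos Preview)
Your proof is correct and follows essentially the same approach as the paper: both establish the orthogonality of $[\,U^{*}\ \td{U}^{*}\,]$ (you via $[\,Q\ \td{U}^{*}\,]$ orthogonal times $\mathrm{blockdiag}(V,I)$, the paper by directly computing $U^{*}(U^{*})^{T}+\td{U}^{*}(\td{U}^{*})^{T}=I_{n}$) and then verify the two eigen-relations $\L^{*}Q=n\rho_{n}Q\td{\L}$ and $\L^{*}\td{U}^{*}=\td{U}^{*}\td{\Lambda}^{*}$ using $Q^{T}\td{U}^{*}=0$. Your explicit remark that the diagonal self-loop terms cancel so that $\L^{*}=\diag(\td{A}^{*}\one_{n})-\td{A}^{*}$ holds exactly is the same observation the paper makes more tersely.
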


Lemma \ref{lem:eigen_laplacian} implies that $U^{*}$ can be used to identify the clusters. Since $R\rightarrow R_{0}$, we have $\td{L}_{0}\rightarrow \td{L}$ and $\td{D}_{0}\rightarrow \td{D}$. Under condition \eqref{eq:laplacian_cond1}, when $n$ is large enough,
\[\lambda_{\min}(\td{\Lambda}^{*}) > \lambda_{\max}^{*}.\]
As a result, $U^{*}$ corresponds to the $K$ smallest eigenvalues of $\L^{*}$. This is almost necessary for the algorithm described in section \ref{subsec:exact_recovery_background} to work. The proof of Theorem \ref{thm:laplacian_spectral_clustering} is relegated to Appendix \ref{subapp:other_exact_recovery} since it is similar to that of Theorem \ref{thm:adjacency_spectral_clustering}.

\subsection{Exact recovery of SBM with growing $K$}\label{subsec:grow_K}

The last subsection discusses the case with fixed $K$ and the analyses hide all constants that depend on $K$. It is also of interest to investigate the tolerance on $K$ in applications where $K$ tends to be large. Most of work tackling with this question focuses on computationally less efficient algorithms such as likelihood method \citep[e.g.][]{choi2012stochastic} or semidefinite programming (SDP) method \citep[e.g.][]{amini2018semidefinite, chen2018convexified, fei2018exponential}. By contrast, the results on the vanilla spectral clustering algorithms, described in Section \ref{subsec:exact_recovery_background}, are rather limited \citep[e.g.][]{rohe2011spectral, su2019strong}. 

Equipped with the $\ell_{\ttinf}$ perturbation theory in this paper, we can easily derive the results for general SBMs with growing $K$. To keep the exposition clear, we focus on the balanced assortative four-parameter model with 
\begin{equation}\label{eq:pqmodel}
B = \rho_{n}B_{0},\quad   B_{0} = (a - b)I + b\one_{K}\one_{K}^{T}, \quad a > b > 0, \quad n_{1} = \ldots = n_{K} = m \triangleq n / K,
\end{equation}
where $a$ and $b$ are two constants. This is a widely studied special SBM in literature. Under this model, the best available dependence on $K$ is given by SDP methods \citep[e.g.][]{fei2018exponential} with
\begin{equation}
  \label{eq:dependence_K_SDP}
  n\rho_{n}\succeq K^{2} + K\log n.
\end{equation}
For spectral clustering \citep[][Example 2.1]{su2019strong}, the dependence becomes much worse
\begin{equation}
  \label{eq:dependence_K_spectral}
  n\rho_{n}\succeq K^{7}\log n.
\end{equation}
In this subsection, we analyze the spectral clustering algorithms, described in Section \ref{subsec:exact_recovery_background}, using the adjacency matrix and the unnormalized Laplacian, respectively.  
For both algorithms, we show that the dependence on $K$ is better than \eqref{eq:dependence_K_spectral}, although it still leaves a gap to \eqref{eq:dependence_K_SDP}. 

\begin{theorem}\label{thm:grow_K}
Fix any $q > 0$  Under the model \eqref{eq:pqmodel}, there exists a constant $c$ which only depends on $(a, b, q)$ such that
  \begin{enumerate}[(1)]
  \item the spectral clustering algorithm using the adjacency matrix achieves exact recovery with probability $1 - n^{-q}$ if $n\rho_{n}\ge c(K^{4} + K^{3}\log n)$;
  \item the spectral clustering algorithm using the unnormalized Laplacian achieves exact recovery with probability $1 - n^{-q}$ if $n\rho_{n}\ge cK^{3}\log n$;
  \end{enumerate}
\end{theorem}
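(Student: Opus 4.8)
The plan is to follow the template of Theorems~\ref{thm:adjacency_spectral_clustering}--\ref{thm:laplacian_spectral_clustering}, tracking every factor of $K$ and, crucially, splitting the relevant eigenspace by eigenvalue multiplicity so that the effective condition number stays $O(1)$ on each piece. First I would pin down the population eigenstructure. Under \eqref{eq:pqmodel} the blocks are equal-sized, so with $Q = Z/\sqrt m$, $m = n/K$, we have $\tilde A^{*} = ZBZ^{T} = Q(m\rho_n B_0)Q^{T}$, and $m\rho_n B_0$ has eigenvalues $m\rho_n(a+(K-1)b)\asymp n\rho_n b$ (multiplicity one, eigenvector $\propto\one_n$) and $m\rho_n(a-b)\asymp n\rho_n(a-b)/K$ (multiplicity $K-1$). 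For the adjacency matrix $A^{*} = \tilde A^{*}-\rho_n a\,I_n$, which has the same eigenvectors, so the $K$ leading eigenvectors of $A^{*}$ coincide, up to a Davis--Kahan error, with $\tilde U^{*} = QV$ whose rows take $K$ distinct values $v_s^{*} = V_s/\sqrt m$ of norm $\sqrt{K/n}$ with $\|v_s^{*}-v_{s'}^{*}\|_2 = \sqrt{2K/n}$. For the Laplacian, the computation behind Lemma~\ref{lem:eigen_laplacian} gives $\L^{*} = (d^{*}+\rho_n a)I_n - ZBZ^{T}$ with $d^{*}$ the common population degree, whose $K$ smallest eigenvalues are $0$ (eigenvector $\one_n$) and $n\rho_n b$ (multiplicity $K-1$), with cluster-identifying eigenvector matrix $U^{*} = QV$. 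In both cases Lemma~\ref{lem:Kmedian} with $\pi_r = 1/K$ reduces exact recovery to an $\ell_{\ttinf}$ bound: for the Laplacian directly $d_{\ttinf}(U,U^{*})\lesssim 1/\sqrt{Kn}$, and for the adjacency matrix $d_{\ttinf}(U,\tilde U^{*})\lesssim 1/\sqrt{Kn}$, which by the triangle inequality splits into $d_{\ttinf}(U,U^{*})$ plus $d_{\ttinf}(U^{*},\tilde U^{*})\le\sqrt{8K}\,\|\diag\tilde A^{*}\|_{\op}/\tilde\lambda_K^{*}\lesssim K^{3/2}/n$ (Davis--Kahan), the latter being $\lesssim 1/\sqrt{Kn}$ once $n\gtrsim K^{4}$, a consequence of $n\rho_n\gtrsim K^{4}$ and $\rho_n\lesssim 1$.

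Next I would bound $d_{\ttinf}(U,U^{*})$ by applying Theorem~\ref{thm:generic_binary} (adjacency) or Theorem~\ref{thm:generic_binary_laplacian} (Laplacian) \emph{separately} to the one-dimensional near-constant piece and to the $(K-1)$-dimensional piece on which all population eigenvalues are equal. This separation is the heart of the argument: on the $(K-1)$-dimensional piece $\Lambda^{*}$ has identical diagonal entries, so $\bar\kappa^{*} = 1$ there, whereas treating the full $K$-dimensional block would force $\bar\kappa^{*}\asymp K$ and cost an extra power of $K$. The one-dimensional pieces are easy: both $\gap^{*}$ and $\lambda_{\min}^{*}$ are $\asymp n\rho_n$, so Corollary~\ref{cor:typical_binary} applies, and for the Laplacian the trivial eigenvector $\one_n/\sqrt n$ is recovered exactly once the graph is connected. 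For the $(K-1)$-dimensional piece $\gap^{*}\asymp n\rho_n/K$ in both cases, but $\lambda_{\min}^{*}\asymp n\rho_n/K$ for the adjacency matrix versus $\lambda_{\min}^{*}\asymp n\rho_n$ for the Laplacian---this extra factor of $K$ in the denominator of the Laplacian bounds is exactly what removes the $K^{4}$ term. I would then substitute $\mnorm{U^{*}}\lesssim\sqrt{K/n}$, $R(\delta)\asymp\log n + K$, $p^{*}\asymp\rho_n$, $\bar p^{*}\asymp\rho_n$, $g(\delta)\asymp\sqrt{n\rho_n}+(\log n+K)/\log(\log n+K)$, and for the Laplacian $M(\delta)\asymp\sqrt{n\rho_n\log n}+\log n$, $\bar\kappa' = \bar\kappa^{*}+n\bar p^{*}/\lambda_{\min}^{*}\asymp 1$, bounding $\Theta(\delta)$ via Lemma~\ref{lem:Theta1} (and, if the resulting $\Theta\asymp K$ is too crude, carrying out the diagonal surgery around a shifted/centered diagonal rather than $\diag(\L_{kk})$). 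The adjacency bound's last term is handled by $\mnorm{A^{*}}/\lambda_{\min}^{*}\lesssim K/\sqrt n$ after noting $A^{*}$ is not positive semidefinite. Collecting terms, everything is $\lesssim 1/\sqrt{Kn}$ once $n\rho_n\gtrsim K^{4}+K^{3}\log n$ in case (1) and $n\rho_n\gtrsim K^{3}\log n$ in case (2), the binding contribution being the $\sqrt{R(\delta)p^{*}}/\lambda_{\min}^{*}$-type term.

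Finally I would verify the eigen-gap hypotheses: \eqref{eq:A4_binary} (resp.\ \eqref{eq:A4_binary_laplacian}) asks $\gap^{*}\asymp n\rho_n/K$ to dominate $\bar\kappa^{*}g(\delta)$ (resp.\ $\Theta(\delta)\bar\kappa'g(\delta)+(\Theta(\delta)+1)M(\delta)$), along with the auxiliary conditions of Lemma~\ref{lem:Theta1} and Corollary~\ref{cor:typical_binary}; all of these hold under the stated hypothesis provided the absolute constant $c$ is chosen large enough to absorb the $(a,b)$-dependent factors. Taking $\delta = n^{-q}$ and a union bound over the (at most two) applications of the perturbation theorem and the Davis--Kahan step costs $B(r)\delta + O(\delta) = O(n^{-q})$, and rescaling $q$ by a constant completes the proof. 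The main obstacle is the second paragraph: pushing the $K$-dependence through the already intricate bounds of Theorems~\ref{thm:generic_binary} and~\ref{thm:generic_binary_laplacian} and isolating the minimal $n\rho_n$---in particular, splitting the eigenspace by multiplicity so $\bar\kappa^{*}$ stays bounded, exploiting the larger $\lambda_{\min}^{*}$ of the Laplacian, and choosing the diagonal surgery so that $\Theta(\delta)$ does not reintroduce the saved factor of $K$; the $\bar U^{*}$ term in the adjacency bound and the $\Theta(\delta)$ term in the Laplacian bound are the delicate points.
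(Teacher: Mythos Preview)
Your plan matches the paper's: compute the population eigenstructure, reduce via Lemma~\ref{lem:Kmedian} to the target $d_{\ttinf}(U,U^{*})\lesssim 1/(K\sqrt m)$, split the $K$-dimensional eigenspace into its simple and $(K-1)$-fold pieces so that $\bar\kappa^{*}=1$ on each, and then feed $\delta=n^{-q}$ into Corollary~\ref{cor:full_recovery_binary} (adjacency) or Theorem~\ref{thm:generic_binary_laplacian} (Laplacian). Two tactical differences are worth noting. For part~(1) your Davis--Kahan detour is unnecessary: in the balanced model $A^{*}=\tilde A^{*}-\rho_na\,I_n$ shares eigenvectors with $\tilde A^{*}$ \emph{exactly}, so $U^{*}=\tilde U^{*}$ and the paper uses this directly; the paper also takes $\alpha=1/\log R(\delta)$ (so $R^{\alpha}=e$) and applies Corollary~\ref{cor:full_recovery_binary}, whereas you work with the $\xi_1=\mnorm{A^{*}}/\lambda_{\min}^{*}$ branch of Theorem~\ref{thm:generic_binary}---your route is arguably more careful here, since $A^{*}$ has full rank and the hypothesis $\mnorm{\bar U^{*}}\preceq\mnorm{U^{*}}$ of Corollary~\ref{cor:full_recovery_binary} is not obviously met. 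For part~(2), the paper asserts $\Theta^{*}=(a-b)/(Kb)$ and hence $\Theta\preceq 1/K$; your caution is justified, because the direct computation from $\Lambda_{jj}^{*}=n\rho_nb$ and $\L_{kk}^{*}=m\rho_n(a+(K-1)b)$ gives the reciprocal $\Theta^{*}=Kb/(a-b)\asymp K$, and with $\Theta\asymp K$ the eigen-gap condition~\eqref{eq:A4_binary_laplacian} alone already forces $n\rho_n\gtrsim K^{4}\log n$. So you are right to flag $\Theta$ as the delicate point, and the alternative diagonal surgery you propose (or a careful re-derivation of the paper's $\Theta\preceq1/K$) is indeed where the substance of part~(2) lies.
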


\begin{proof}
For the sake of length, we only present the proof for part (1) and leave part (2) to Appendix \ref{subapp:other_exact_recovery}. 

By \eqref{eq:AstarQM} and \eqref{eq:pqmodel}, it is clear that 
\[A^{*} = \td{A}^{*} - \rho_{n}aI_{n}, \quad \td{A}^{*} = \frac{n\rho_{n}}{K}QB_{0}Q^{T}.\]
Thus, $A^{*}$ and $\td{A}^{*}$ have the same eigenvectors and $U^{*} = \td{U}^{*}$. Let $V\Sigma V^{T}$ be the spectral decomposition of $B_{0}$. Then it is easy to see that
\[\Sigma = \diag\lb a + (K - 1)b, \underbrace{a - b, \ldots, a - b}_{(K - 1)\mbox{ copies}}\rb.\]
Thus, the top-$K$ eigenvalue matrix $\Lambda^{*}$ of $A^{*}$ is $(n\rho_{n} / K)\Sigma - \rho_{n}aI_{n} = (m - a)\rho_{n}\Sigma$ and the corresponding eigenvector matrix $U^{*}$ can be written as $U^{*} = QV$. As a result, 
\begin{equation}
  \label{eq:grow_K1}
  \min_{s\not= s'}\|\nu_{s}^{*} - \nu_{s'}^{*}\|_{2} = \sqrt{\frac{2}{m}}.
\end{equation}
By Lemma \ref{lem:Kmedian}, it is left to prove that 
\begin{equation}
  \label{eq:grow_K_goal}
  d_{\ttinf}(U, U^{*})\le \frac{1}{6K}\min_{s\not= s'}\|\nu_{s}^{*} - \nu_{s'}^{*}\|_{2} = \frac{\sqrt{2}}{6K\sqrt{m}}.
\end{equation}
We split $U^{*}$ into two parts $U_{1}^{*}\in\R^{n\times 1}$ and $U_{2}^{*}\in \R^{n\times (K - 1)}$ where $U_{1}^{*}$ corresponds to the largest eigenvalue of $A^{*}$ while $U_{2}^{*}$ gives other eigenvectors in $U^{*}$. The same split is applied to $U$ which yields $U_{1}\in \R^{n\times 1}$ and $U_{2}\in \R^{n\times (K - 1)}$. We also split $\Lambda$ and $\Lambda^{*}$ similarly. It is easy to see that $d_{\ttinf}(U, U^{*})\le d_{\ttinf}(U_{1}, U_{1}^{*}) + d_{\ttinf}(U_{2}, U_{2}^{*})$. Thus \eqref{eq:grow_K_goal} is true provided that
\begin{equation}
  \label{eq:grow_K_goal2}
  d_{\ttinf}(U_{1}, U_{1}^{*})\le \frac{\sqrt{2}}{12K\sqrt{m}}, \quad d_{\ttinf}(U_{2}, U_{2}^{*})\le \frac{\sqrt{2}}{12K\sqrt{m}}
\end{equation}

By definition, 
\[\lambda_{\min}(\Lambda_{1}^{*}) \ge \gap_{1}^{*} = (m - a)\rho_{n}Kb, \quad \bar{\kappa}_{1}^{*} = \kappa_{1}^{*} = 1,\]
and 
\[\lambda_{\min}(\Lambda_{2}^{*}) \ge \gap_{2}^{*} = (m - a)\rho_{n}\min\{Kb, a - b\}, \quad \bar{\kappa}_{1}^{*} = \kappa_{1}^{*} = 1,\]
Set $\delta = n^{-q}$ and $\alpha = 1 / \log R(\delta)$ in Corollary \ref{cor:full_recovery_binary}. Note that this choice of $\alpha$ implies that
\[\frac{R(\delta)}{\alpha \log R(\delta)} = R(\alpha), \quad R(\delta)^{\alpha} = \exp\{\alpha \log R(\delta)\} = e.\]
Moreover, $p^{*}\preceq \rho_{n}$, and 
\[R(\delta) \preceq \log n + K, \quad g(\delta)\preceq \sqrt{n\rho_{n}} + \log n + K.\]
Since $n\rho_{n} > c(K^{4} + K^{3}\log n)$, for sufficiently large $n$ and $c$,
\[\gap_{1}^{*}\ge \gap_{2}^{*} = (m - a)\rho_{n}\min\{Kb, a - b\} = (n - aK)\rho_{n}\frac{\min\{Kb, a - b\}}{K}\ge C\bar{\kappa}_{1}^{*}g(\delta) = C\bar{\kappa}_{2}^{*}g(\delta),\]
where $C$ is the universal constant in \eqref{eq:A4_binary}. Thus the condition of Corollary \ref{cor:full_recovery_binary} is satisfied for both $(\Lambda_{1}^{*}, U_{1}^{*})$ and $(\Lambda_{2}^{*}, U_{2}^{*})$. Note that $m - a \succeq m$. Thus, $\lambda_{\min}(\Lambda_{1}^{*}), \lambda_{\min}(\Lambda_{2}^{*}), \gap_{1}^{*}, \gap_{2}^{*}\succeq m\rho_{n}$. By Corollary \ref{cor:full_recovery_binary}, for both $j = 1, 2$,
\begin{align*}
  d_{\ttinf}(U_{j}, U_{j}^{*}) & \preceq \lb \frac{\sqrt{n\rho_{n}} + \log n + K}{m\rho_{n}} + \frac{\log n + K}{m\rho_{n}}\rb \frac{1}{\sqrt{m}} + \frac{\sqrt{(\log n + K) \rho_{n}}}{m\rho_{n}}\lb 1 + \frac{\sqrt{n\rho_{n}}}{m\rho_{n}}\rb\\
& \stackrel{(i)}{\preceq} \lb \frac{\sqrt{n\rho_{n}}}{m\rho_{n}} + \frac{\log n + K}{m\rho_{n}} + \sqrt{\frac{\log n + K}{m\rho_{n}}}\rb\frac{1}{\sqrt{m}}\\
& \stackrel{(ii)}{\preceq} \sqrt{\frac{\log n + K}{m\rho_{n}}}\frac{1}{\sqrt{m}},
\end{align*}
where (i) uses the fact that
\[\frac{\sqrt{n\rho_{n}}}{m\rho_{n}} = \frac{K}{\sqrt{n\rho_{n}}}\preceq 1,\]
and (ii) uses the fact that 
\[\frac{\log n + K}{m\rho_{n}} = \sqrt{\frac{K\log n}{n\rho_{n}}}\preceq 1, \quad \frac{\sqrt{n\rho_{n}}}{m\rho_{n}} = \sqrt{\frac{K}{m\rho_{n}}}\preceq \sqrt{\frac{\log n + K}{m\rho_{n}}}.\]
Equivalently, there exists a constant $c_{1}$ that only depends on $a, b$ and $q$ such that
\[d_{\ttinf}(U_{j}, U_{j}^{*})\le \sqrt{\frac{\log n + K}{m\rho_{n}}}\frac{c_{1}}{\sqrt{m}} = \sqrt{\frac{K\log n + K^{2}}{n\rho_{n}}}\frac{c_{1}}{\sqrt{m}}\le \frac{c_{1}}{\sqrt{c}K\sqrt{m}}.\]
As a result, \eqref{eq:grow_K_goal2} is true if $c > 1 / 72c_{1}^{2}$. This completes the proof.
\end{proof}

\section{Partial Consistency of Divisive Hierarchical Clustering}\label{sec:hierarchical}

\subsection{Background}
Hierarchical community detection is widely used in practice. As opposed to agglomerative (bottom-up) hierarchical clustering, divisive (top-down) hierarchical clustering starts from the whole network, tests if there are at least two communities, and then divides it into a few mega-communities if so and stops splitting otherwise. The procedure proceeds recursively for each of mega-community until none of the mega-communities at hand passes the test for more than one community. Unlike agglomerative clustering, divisive clustering is scalable for giant networks with large number of clusters in terms of both computation and storage cost. The idea emerged in machine learning problems such as graph partitioning and image segmentation, referred to as graph bi-partitioning \citep{spielman1996spectral, shi2000normalized, kannan2004clusterings}. Despite the empirical success in various applications, the theoretical analysis is challenging. The existing analyses either require complicated but artificial modification of the algorithms \citep[e.g.][]{dasgupta2006spectral} or only hold for dense networks (with polynomial average degree) \citep[e.g.][]{balakrishnan2011noise}.

\begin{figure}
  \centering
  \includegraphics[width = 0.7\textwidth]{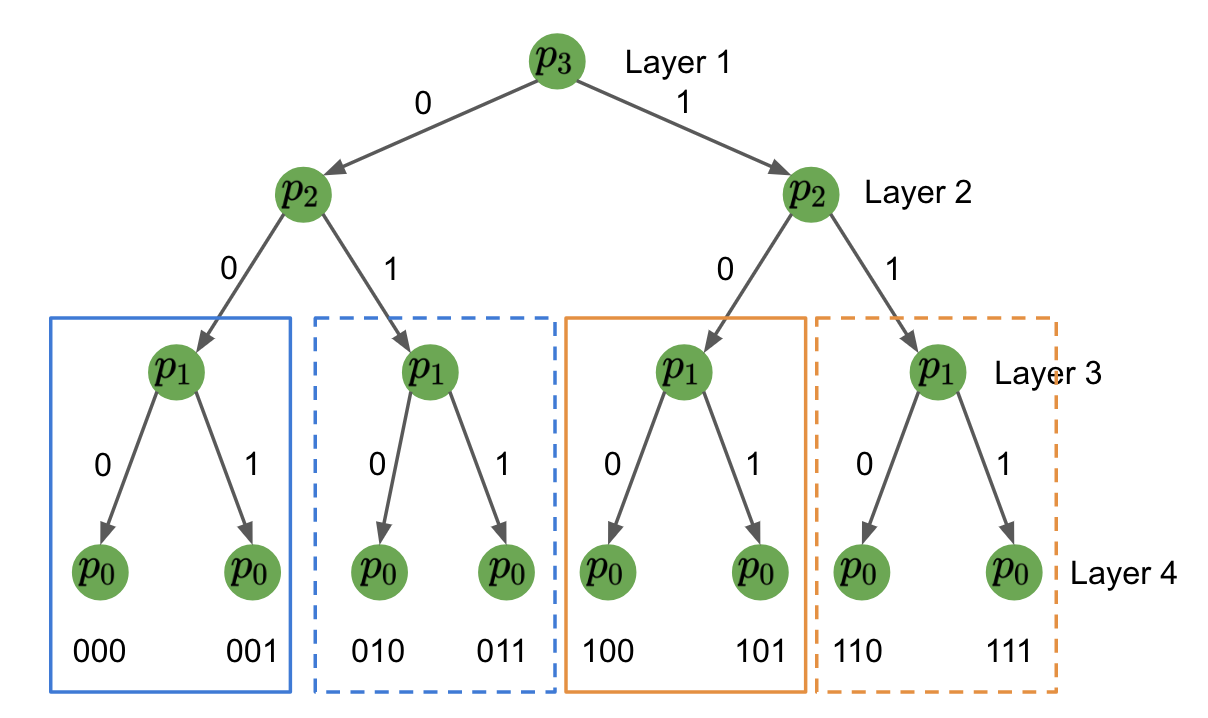}
  \caption{An $8$-cluster Binary Tree SBM.  Rectangles correspond to mega-communities.}\label{fig:BTSBM}
\end{figure}

A recent work by \cite{li2018hierarchical} established a framework to study divisive hierarchical clustering. They proposed the Binary Tree SBM (BTSBM) as the basis for analysis. A BTSBM is an SBM, described in Section \ref{subsec:exact_recovery_background}, with $K = 2^{d}$ clusters, embedded into the leaf nodes of a full binary tree with $d + 1$ layers. The $\ell$-th layer is equipped with a parameter $p_{d - \ell + 1}$ and each cluster is encoded as a length-$d$ binary string. We illustrate it in Figure \ref{fig:BTSBM} with $d = 3$. The connection probability matrix $B$ is then decided as follows: for any two clusters $c$ and $c'$, let $x_{d}\ldots x_{1}$ and $x_{d}'\ldots x_{1}'$ be their binary representation, then 
\[B_{c, c'} = p_{\scriptscriptstyle{D(c, c')}}, \quad \mbox{where }D(c, c') = \max\{i: x_{i}\not= x_{i}'\}I(c \not = c').\]
For instance, for the BTSBM in Figure \ref{fig:BTSBM}, 
\begin{equation}
  \label{eq:BTSBM_B}
  B = \left[
      \begin{array}{cc|cc|cccc}
        p_{0} & p_{1} & p_{2} & p_{2} & p_{3} & p_{3} & p_{3} & p_{3}\\
        p_{1}& p_{0} & p_{2} & p_{2} & p_{3} & p_{3} & p_{3} & p_{3}\\
        \hline
        p_{2} & p_{2} & p_{0} & p_{1} & p_{3} & p_{3} & p_{3} & p_{3}\\
        p_{2} & p_{2} &p_{1}& p_{0} & p_{3} & p_{3} & p_{3} & p_{3}\\
        \hline
        p_{3} & p_{3} & p_{3} & p_{3} & p_{0} & p_{1} & p_{2} & p_{2}\\
        p_{3} & p_{3} & p_{3} & p_{3} &p_{1}& p_{0} & p_{2} & p_{2}\\
        p_{3} & p_{3} & p_{3} & p_{3} & p_{2} & p_{2} & p_{0} & p_{1}\\
        p_{3} & p_{3} & p_{3} & p_{3} & p_{2} & p_{2} &p_{1}& p_{0}
      \end{array}
      \right].
\end{equation}

Assuming equal block sizes for all communities, \cite{li2018hierarchical} analyzed the HCD-Sign algorithm, which splits the network nto two mega-communities according to the sign of the second eigenvector of the adjacency matrix, under both assortative BTSBM ($p_{0} > p_{1} > \ldots > p_{d}$) and dis-assortative BTSBM ($p_{0} < p_{1} < \ldots < p_{d}$). They provided explicit conditions under which all mega-communities in the first $\ell$ layers can be exactly recovered for any $\ell\le d$. Unlike the $K$-way spectral clustering, described in Section \ref{subsec:exact_recovery_background}, which requires knowing $K$ exactly, HCD-Sign can be completely agnostic to $K$ while only requires a ``consistent'' stopping rule, such as the one based on non-backtracking operator \citep{le2015estimating}.

The proof relies on the nice eigen-structure of BTSBM as stated below.
\begin{proposition}\label{prop:eigen_BTSBM}[Theorem 5 of \cite{li2018hierarchical}]
Under either assortative or dis-assorative BTSBM, 
\begin{enumerate}[(1)]
\item the first and the second largest eigenvalue (in absolute value) of $A^{*}$ are both unique, given by
\[\lambda_{1}^{*} = (m - 1)p_{0} + m\sum_{i=1}^{d-1}2^{i-1}p_{i} + m2^{d - 1}p_{d}, \quad \lambda_{2}^{*} = (m - 1)p_{0} + m\sum_{i=1}^{d-1}2^{i-1}p_{i} - m2^{d - 1}p_{d};\]
\item the eigen-gap $\gap^{*}$ between $\lambda_{2}^{*}$ and others is 
\[\gap^{*} = \left\{
    \begin{array}{ll}
     n\min\{p_{d}, |p_{d - 1} - p_{d}| / 2\} & \mbox{(for assortative BTSBM)}\\
     n |p_{d - 1} - p_{d}| / 2 & \mbox{(for dis-assortative BTSBM)}
    \end{array}\right.;
 \] 
\item the eigenvector corresponding to $\lambda_{2}^{*}$ is  
\[u_{2} = \frac{1}{\sqrt{n}}
\begin{bmatrix}
  \one_{n / 2} \\ -\one_{n / 2}
\end{bmatrix};\]
\item $\rank(A^{*}) = K$. $A^{*}$ is psd under assortative BTSBM, but not psd under dis-assortative BTSBM.
\end{enumerate}
\end{proposition}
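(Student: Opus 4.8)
The plan is to reduce the spectral analysis of the $n\times n$ matrix $A^{*}$ to that of the $K\times K$ block probability matrix $B$, and then to diagonalize $B$ explicitly by exploiting the recursive block structure encoded by the binary tree. Write $m=n/K$ for the common block size. Exactly as in \eqref{eq:AstarQM}, with $M=\sqrt m\,I_{K}$ and $Q=ZM^{-1}$ (so that $Q^{T}Q=I_{K}$), one has $\td{A}^{*}=ZBZ^{T}=Q(mB)Q^{T}$; consequently every unit eigenvector $v$ of $B$ with eigenvalue $\mu$ lifts to the unit eigenvector $m^{-1/2}Zv$ of $\td{A}^{*}$ with eigenvalue $m\mu$, while the remaining $n-K$ eigenvalues of $\td{A}^{*}$ are $0$, with eigenspace $(\mathrm{col}\,Z)^{\perp}$. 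Since $A^{*}=\td{A}^{*}-\diag(\td{A}^{*})=\td{A}^{*}-p_{0}I_{n}$, the eigenvectors are unchanged and all eigenvalues decrease by $p_{0}$, so it suffices to diagonalize $B$.

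For $B$, I would set up the depth-$\ell$ recursion $B^{(\ell)}$ obtained by putting $B^{(\ell-1)}$ on the two diagonal blocks and $p_{\ell}$ times an all-ones matrix on the two off-diagonal blocks, with $B^{(0)}=[p_{0}]$ and $B=B^{(d)}$, and prove by induction on $\ell$: (i) $\one_{2^{\ell}}$ is an eigenvector of $B^{(\ell)}$ with eigenvalue $\mu_{\ell}:=p_{0}+\sum_{i=1}^{\ell}2^{i-1}p_{i}$ (the common row sum); (ii) $(\one_{2^{\ell-1}}^{T},-\one_{2^{\ell-1}}^{T})^{T}$ is an eigenvector with eigenvalue $\nu_{\ell}:=\mu_{\ell-1}-2^{\ell-1}p_{\ell}$, because $B^{(\ell-1)}\one=\mu_{\ell-1}\one$ and the off-diagonal block $p_{\ell}J$ maps $\one$ to $2^{\ell-1}p_{\ell}\one$; (iii) every eigenvector $w$ of $B^{(\ell-1)}$ with $\one^{T}w=0$ produces the two eigenvectors $(w^{T},\pm w^{T})^{T}$ of $B^{(\ell)}$ with the same eigenvalue. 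A dimension count, $1+\sum_{\ell=1}^{d}2^{d-\ell}=2^{d}=K$, shows these are all the eigenvectors, so $B$ has the eigenvalue $\mu_{d}$ with multiplicity $1$ and each $\nu_{\ell}$ with multiplicity $2^{d-\ell}$. Translating back, the ``structured'' eigenvalues of $A^{*}$ are $m\mu_{d}-p_{0}$ and $m\nu_{\ell}-p_{0}$; substituting $\mu_{d}=\mu_{d-1}+2^{d-1}p_{d}$, $\nu_{d}=\mu_{d-1}-2^{d-1}p_{d}$ and $n=m2^{d}$ reproduces the stated $\lambda_{1}^{*}$ and $\lambda_{2}^{*}$, and lifting $K^{-1/2}(\one_{2^{d-1}}^{T},-\one_{2^{d-1}}^{T})^{T}$ through $m^{-1/2}Z(\cdot)$ gives $u_{2}=\frac{1}{\sqrt n}(\one_{n/2}^{T},-\one_{n/2}^{T})^{T}$, which is claim (3). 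Claim (4) follows from the same list: the informative part of $A^{*}$ has rank $K$, and from the elementary observation that $\mu_{\ell-1}$ is an average of $p_{0},\dots,p_{\ell-1}$ with total weight $2^{\ell-1}$, so $\mu_{\ell-1}>2^{\ell-1}p_{\ell}$ (hence $\nu_{\ell}>0$) in the assortative case and $\mu_{\ell-1}<2^{\ell-1}p_{\ell}$ (hence $\nu_{\ell}<0$) in the dis-assortative case.

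The remaining, and most delicate, step is the \emph{ordering} needed for the uniqueness in (1) and the eigen-gap in (2). Here I would use the telescoping identities $\nu_{\ell}-\nu_{\ell-1}=2^{\ell-1}(p_{\ell-1}-p_{\ell})$ and $\mu_{d}-\nu_{d}=2^{d}p_{d}$ together with the sign facts just noted. In the assortative regime $0<\nu_{1}<\cdots<\nu_{d}<\mu_{d}$, so the signed spectrum is $\lambda_{1}^{*}>\lambda_{2}^{*}>\lambda_{3}^{*}=m\nu_{d-1}-p_{0}>\cdots>m\nu_{1}-p_{0}>-p_{0}$; thus $\lambda_{1}^{*},\lambda_{2}^{*}$ are simple and $\gap^{*}=\min\{\lambda_{1}^{*}-\lambda_{2}^{*},\,\lambda_{2}^{*}-\lambda_{3}^{*}\}=\min\{m2^{d}p_{d},\,m2^{d-1}(p_{d-1}-p_{d})\}=n\min\{p_{d},\,|p_{d-1}-p_{d}|/2\}$. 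In the dis-assortative regime the $\nu_{\ell}$ are all negative and strictly decreasing, so $m\nu_{1}-p_{0}>\cdots>m\nu_{d}-p_{0}=\lambda_{2}^{*}$ sit below $-p_{0}$, $\lambda_{2}^{*}$ is the smallest eigenvalue and (comparing moduli, using $|\mu_{d}|-|\nu_{d}|=2\mu_{d-1}>0$ and $|\lambda_{2}^{*}|=p_{0}+m|\nu_{d}|>|m\nu_{d-1}-p_{0}|$) it is the unique second-largest in absolute value; its only neighbour is $m\nu_{d-1}-p_{0}$, giving $\gap^{*}=m2^{d-1}|p_{d-1}-p_{d}|=n|p_{d-1}-p_{d}|/2$, where $p_{d}-p_{d-1}<p_{d}$ also confirms this is the binding term rather than $\lambda_{1}^{*}-\lambda_{2}^{*}=np_{d}$. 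The only genuine obstacle is keeping track, regime by regime, of exactly which eigenvalue --- including the degenerate $-p_{0}$ block created by the diagonal shift --- sits immediately on either side of $\lambda_{2}^{*}$, and checking that this block is never the binding neighbour; once the recursion of the second paragraph is established, this reduces to the elementary inequalities above.
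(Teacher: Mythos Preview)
The paper does not prove this proposition; it is quoted verbatim as Theorem~5 of \cite{li2018hierarchical} and used as a black box. Your recursive diagonalization of $B$ via the depth-$\ell$ block structure $B^{(\ell)}=\bigl[\begin{smallmatrix}B^{(\ell-1)}&p_\ell J\\ p_\ell J&B^{(\ell-1)}\end{smallmatrix}\bigr]$, followed by the lift through $Q(mB)Q^{T}$ and the shift by $-p_0 I_n$, is correct and is the standard route to this result; the telescoping identity $\nu_\ell-\nu_{\ell-1}=2^{\ell-1}(p_{\ell-1}-p_\ell)$ cleanly handles the ordering needed for (1)--(3).

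Two small points on part~(4). First, under the convention $A^{*}=\td{A}^{*}-p_0 I_n$ (which you correctly adopt, and which matches the $(m-1)p_0$ in the stated eigenvalue formulas), the $n-K$ zero eigenvalues of $\td{A}^{*}$ become $-p_0$; so strictly $\rank(A^{*})=n$ and $A^{*}$ is not psd even in the assortative case. What your argument actually shows is that $\td{A}^{*}$ (equivalently $B$) has rank $K$ and is psd when assortative. Your phrase ``the informative part of $A^{*}$ has rank $K$'' tacitly acknowledges this, but the psd claim needs the same caveat. This is a wrinkle in the statement as written here, not in your reasoning. Second, in the dis-assortative ordering you deduce $|\lambda_1^{*}|>|\lambda_2^{*}|$ from $|\mu_d|-|\nu_d|=2\mu_{d-1}>0$, but after the $-p_0$ shift the needed inequality is $m\mu_d-p_0>p_0-m\nu_d$, i.e.\ $m\mu_{d-1}>p_0$; this is immediate for $d\ge 2$ (since $\mu_{d-1}>p_0$) or for $m\ge 2$, so it is harmless but worth stating.
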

Proposition \ref{prop:eigen_BTSBM} implies that the second eigenvector of $A^{*}$ perfectly identifies the binary split. They proceed by showing that $\|u_{2} - u_{2}^{*}\|_{\infty} <\!\! < 1 / \sqrt{n}$ thereby proving the exact recovery for each split. The technique underlying their analysis is derived by \cite{eldridge2017unperturbed}. 

Although BTSBM is still restrictive, it is much more general than the typical four-parameter model described in Section \ref{subsec:grow_K}. More importantly, BTSBM captures the multi-scale nature of real-world networks: the clusters are not treated as ``exchangeable'' but encoding different granularity of similarity. Furthermore, divisive hierarchical clustering algorithms are able to achieve partial recovery, i.e. recovering mega-communities up to layer $\ell + 1$ without recovering the communities at the finest level, imply that the stable structure (i.e. mega-communities) may be recovered even if the finest communities cannot, depending on how similar different clusters are. 

With the new $\ell_{\ttinf}$ perturbation bounds in this paper, we can refine \cite{li2018hierarchical}'s results and obtain a more accurate characterization of the partial exact recovery phenomenon. We start from a generic sufficient condition for recovering one split and then apply it to analyze the case with fixed $K$. The case with growing $K$ can be analyzed similarly but we leave it to interested readers for the sake of length.

\subsection{A generic sufficient condition for recovering one split}

The exact recovery for the first split is achieved by HCD-Sign iff there exists $s\in \{-1, +1\}$ such that
\begin{equation}
  \label{eq:equal_sign}
  \sign(u_{2i}^{*}) = \sign(u_{2i})s, \quad \forall i\in [n].
\end{equation}
As observed by \cite{abbe2017entrywise} as well as our theory, $u_{2}$ is closer to $Au_{2}^{*} / \lambda_{2}^{*}$ than to $u_{2}^{*}$. The following lemma provides a sufficient condition for \eqref{eq:equal_sign} based on $Au_{2}^{*} / \lambda_{2}^{*}$.
\begin{lemma}\label{lem:equal_sign}
\eqref{eq:equal_sign} holds if there exists $s\in \{-1, +1\}$ such that for all $i\in [n]$,
  \begin{equation}
    \label{eq:equal_sign_cond1}
    \sign(u_{2i}^{*}) = \sign(A_{i}^{T} (\sqrt{n}u_{2}^{*}))s, 
   \end{equation}
and 
\begin{equation}
  \label{eq:equal_sign_cond2}
|A_{i}^{T} (\sqrt{n}u_{2}^{*})| > \lb \sqrt{n}\bigg\|u_{2} - \frac{Au_{2}^{*}}{\lambda_{2}^{*}}\bigg\|_{\infty}\rb |\lambda_{2}^{*}|.
  \end{equation}
\end{lemma}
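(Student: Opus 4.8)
The plan is to reduce everything to a one-line comparison of magnitudes, applied entrywise. Write $a_i := A_i^T(\sqrt{n}\,u_2^*)$ for the $i$-th coordinate of $A(\sqrt{n}\,u_2^*)$, and set $\theta := \sqrt{n}\,\big\|u_2 - Au_2^*/\lambda_2^*\big\|_{\infty}$ (this uses $\lambda_2^*\neq 0$, which holds in all the applications, e.g.\ by Proposition \ref{prop:eigen_BTSBM}, and is implicit since $\lambda_2^*$ appears in a denominator). The whole point is that $u_2$ is close to $Au_2^*/\lambda_2^*$ in the $\ell_\infty$ sense, so I would first record, for each $i\in[n]$, the exact identity
\[
\lambda_2^*\,\sqrt{n}\,u_{2i} \;=\; a_i \;+\; \lambda_2^*\,\sqrt{n}\Big(u_{2i} - \tfrac{A_i^T u_2^*}{\lambda_2^*}\Big),
\]
whose last term has absolute value at most $|\lambda_2^*|\,\theta$. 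Condition \eqref{eq:equal_sign_cond2} states precisely that $|a_i| > |\lambda_2^*|\,\theta$, so the term $a_i$ strictly dominates the perturbation term; hence $a_i\neq 0$, the left-hand side is nonzero, and $\sign\!\big(\lambda_2^*\sqrt{n}\,u_{2i}\big) = \sign(a_i)$.

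Next, since $\sqrt{n}>0$, we have $\sign\!\big(\lambda_2^*\sqrt{n}\,u_{2i}\big) = \sign(\lambda_2^*)\,\sign(u_{2i})$, and the previous step also gives $u_{2i}\neq 0$. Combining this with condition \eqref{eq:equal_sign_cond1}, which reads $\sign(u_{2i}^*) = \sign(a_i)\,s$, yields
\[
\sign(u_{2i}^*) \;=\; \sign(a_i)\,s \;=\; \sign(\lambda_2^*)\,\sign(u_{2i})\,s .
\]
Setting $s' := \sign(\lambda_2^*)\,s \in \{-1,+1\}$ gives $\sign(u_{2i}^*) = \sign(u_{2i})\,s'$ for every $i\in[n]$, which is exactly the claim \eqref{eq:equal_sign} (with the sign in that display taken to be $s'$).

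The only point requiring a moment's care — not really an obstacle — is the handling of degenerate coordinates: one must check every sign in the chain lies in $\{-1,+1\}$. This follows because the strict inequality in \eqref{eq:equal_sign_cond2} forces $a_i\neq 0$; the dominating-term argument then forces $u_{2i}\neq 0$; and \eqref{eq:equal_sign_cond1} with $a_i\neq 0$ forces $u_{2i}^*\neq 0$. No eigen-gap or concentration estimate enters the proof of this lemma itself — those inputs are needed only later, when one verifies hypotheses \eqref{eq:equal_sign_cond1}–\eqref{eq:equal_sign_cond2} by bounding $\theta$ via Theorem \ref{thm:generic_bound} and controlling $a_i = A_i^T(\sqrt{n}u_2^*)$ through its mean $A_i^{*T}(\sqrt{n}u_2^*) = \sqrt{n}\,\lambda_2^* u_{2i}^*$ plus a fluctuation term.
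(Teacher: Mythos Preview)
Your proof is correct and follows essentially the same approach as the paper's: both argue that the strict inequality in \eqref{eq:equal_sign_cond2} forces $\sign(u_{2i})$ to agree (up to $\sign(\lambda_2^*)$) with $\sign(A_i^T u_2^*)$, then invoke \eqref{eq:equal_sign_cond1}. Your version is in fact slightly cleaner, since by introducing $s' = \sign(\lambda_2^*)\,s$ you handle the assortative and dis-assortative cases uniformly, whereas the paper treats only $\lambda_2^*>0$ explicitly and defers the other case as ``similar.''
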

\begin{proof}
We only prove the assortative case and the proof for the dissortative case is similar. Assume $s = 1$ without loss of generality. The condition\eqref{eq:equal_sign_cond2} can be rewritten as
\[\bigg|\frac{A_{i}^{T} u_{2}^{*}}{\lambda_{2}^{*}}\bigg| > \bigg\|u_{2} - \frac{Au_{2}^{*}}{\lambda_{2}^{*}}\bigg\|_{\infty}\ge \bigg|u_{2i} - \frac{A_{i}^{T}u_{2}^{*}}{\lambda_{2}^{*}}\bigg|.\]
By Proposition \ref{prop:eigen_BTSBM}, $\lambda_{2}^{*} > 0$ and 
\[\sign(u_{2i}) = \sign\lb \frac{A_{i}^{T}u_{2}^{*}}{\lambda_{2}^{*}}\rb = \sign (A_{i}^{T}(\sqrt{n}u_{2}^{*})).\]
Together with condition \eqref{eq:equal_sign_cond1}, we complete the proof.
\end{proof}
By Proposition \ref{prop:eigen_BTSBM}, 
\begin{equation}\label{eq:Aione}
  A_{i}^{T}(\sqrt{n}u_{2}^{*}) = \sum_{j=1}^{n / 2}A_{ij} - \sum_{j= n / 2 + 1}^{n}A_{ij}.
\end{equation}
Let 
\begin{equation}
  \label{eq:Zi}
  Z_{i} = (-1)^{I(i > n / 2)}A_{i}^{T}(\sqrt{n}u_{2}^{*})
\end{equation}
Under BTSBM, it is not hard to see that $Z_{1}, \ldots, Z_{n}$ are i.i.d.. Note that $\sign(u_{2i}^{*}) = (-1)^{I(i > n / 2)}$. By Lemma \ref{lem:equal_sign}, \eqref{eq:equal_sign_cond1} and \eqref{eq:equal_sign_cond2} are both satisfied if 
\begin{equation}
  \label{eq:minmaxZi}
  \min_{i\in [n]}Z_{i} > \lb \sqrt{n}\bigg\|u_{2} - \frac{Au_{2}^{*}}{\lambda_{2}^{*}}\bigg\|_{\infty}\rb |\lambda_{2}^{*}| \quad \mbox{or}\quad \max_{i\in [n]}Z_{i} < -\lb \sqrt{n}\bigg\|u_{2} - \frac{Au_{2}^{*}}{\lambda_{2}^{*}}\bigg\|_{\infty}\rb |\lambda_{2}^{*}|.
\end{equation}
It is left to show the above event occurs with high probability.

The following lemma provides a tail probability estimate for $Z_{i}$. The proof is relegated to Appendix \ref{subapp:other_hierarchical}.
\begin{lemma}\label{lem:prob_Zi}
~  \begin{enumerate}[(1)]
  \item In the assortative case, for any $t > 0$,
\[\log\P(Z_{i} \le t)\le \frac{t}{2}\log \lb\frac{\lambda_{1}^{*} + \lambda_{2}^{*}}{\lambda_{1}^{*} - \lambda_{2}^{*}}\rb  -\frac{1}{2}\lb\sqrt{\lambda_{1}^{*} + \lambda_{2}^{*}} - \sqrt{\lambda_{1}^{*} - \lambda_{2}^{*}}\rb^{2}.\]
  \item In the dis-assortative case, for any $t > 0$,
\[\log\P(Z_{i} \ge -t) \le \frac{t}{2}\log \lb\frac{\lambda_{1}^{*} - \lambda_{2}^{*}}{\lambda_{1}^{*} + \lambda_{2}^{*}}\rb  -\frac{1}{2}\lb\sqrt{\lambda_{1}^{*} + \lambda_{2}^{*}} - \sqrt{\lambda_{1}^{*} - \lambda_{2}^{*}}\rb^{2}.\]
  \end{enumerate}
\end{lemma}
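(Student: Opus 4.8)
The plan is to identify $Z_i$ as a difference of two independent sums of Bernoulli random variables and then run a Chernoff bound with a carefully chosen exponential tilt. First I would use Proposition \ref{prop:eigen_BTSBM} together with \eqref{eq:Aione}--\eqref{eq:Zi} and the equal-block-size assumption of the BTSBM to write, for each $i$, $Z_i \stackrel{d}{=} S_+ - S_-$, where $S_+$ is the sum of the $A_{ij}$ over the vertices $j\ne i$ lying on the same side of the top split as $i$, and $S_-$ is the sum over the opposite side. These two sums are independent (distinct, disjoint edge sets), each a sum of independent Bernoulli variables, and a direct count of the cluster sizes at each tree-distance (together with the formulas in Proposition \ref{prop:eigen_BTSBM}(1)) gives $\E S_+ = (\lambda_1^* + \lambda_2^*)/2 =: \mu_+$ and $\E S_- = (\lambda_1^* - \lambda_2^*)/2 =: \mu_-$; moreover $\mu_+,\mu_- > 0$ since $\lambda_1^*\pm\lambda_2^* > 0$ by Proposition \ref{prop:eigen_BTSBM}(1), and in the assortative regime $\lambda_2^*>0$ so $\mu_+>\mu_-$.

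For part (1), for any $\theta > 0$ I would write $\P(Z_i \le t) \le e^{\theta t}\,\E e^{-\theta S_+}\,\E e^{\theta S_-}$, and then estimate each moment generating function by the elementary bound $1+x\le e^x$: for $X\sim\mathrm{Ber}(p)$, $\E e^{-\theta X} = 1 - p(1-e^{-\theta}) \le \exp\{-p(1-e^{-\theta})\}$ and $\E e^{\theta X} = 1 + p(e^{\theta}-1) \le \exp\{p(e^{\theta}-1)\}$, so $\E e^{-\theta S_+}\le \exp\{-(1-e^{-\theta})\mu_+\}$ and $\E e^{\theta S_-}\le \exp\{(e^{\theta}-1)\mu_-\}$. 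Plugging in the geometric-mean tilt $\theta = \tfrac12\log(\mu_+/\mu_-)$ gives $e^{-\theta}=\sqrt{\mu_-/\mu_+}$ and $e^{\theta}=\sqrt{\mu_+/\mu_-}$, whence $(1-e^{-\theta})\mu_+ = \mu_+ - \sqrt{\mu_+\mu_-}$ and $(e^{\theta}-1)\mu_- = \sqrt{\mu_+\mu_-}-\mu_-$; collecting terms yields $\log\P(Z_i\le t) \le \tfrac{t}{2}\log(\mu_+/\mu_-) - (\sqrt{\mu_+}-\sqrt{\mu_-})^2$. Rewriting $\mu_\pm$ in terms of $\lambda_1^*\pm\lambda_2^*$ (so that $\mu_+/\mu_- = (\lambda_1^*+\lambda_2^*)/(\lambda_1^*-\lambda_2^*)$ and $(\sqrt{\mu_+}-\sqrt{\mu_-})^2 = \tfrac12(\sqrt{\lambda_1^*+\lambda_2^*}-\sqrt{\lambda_1^*-\lambda_2^*})^2$) reproduces exactly the stated inequality.

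Part (2), the dis-assortative case, is symmetric: here $\lambda_2^*<0$, hence $\mu_->\mu_+$, and $Z_i$ concentrates near the negative value $\mu_+-\mu_-$, so $\{Z_i\ge -t\}$ is an upper-tail event. For $\theta>0$ I would bound $\P(Z_i\ge -t) \le e^{\theta t}\,\E e^{\theta S_+}\,\E e^{-\theta S_-}$, apply the same two moment generating function estimates, and choose the tilt $\theta = \tfrac12\log(\mu_-/\mu_+)>0$. The computation is identical to part (1) with the roles of $\mu_+$ and $\mu_-$ interchanged, producing the log-factor $\tfrac{t}{2}\log\bigl((\lambda_1^*-\lambda_2^*)/(\lambda_1^*+\lambda_2^*)\bigr)$ and the same quadratic penalty $\tfrac12(\sqrt{\lambda_1^*+\lambda_2^*}-\sqrt{\lambda_1^*-\lambda_2^*})^2$.

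I do not anticipate a genuine obstacle: this is a standard Chernoff argument once the distributional structure is in place. The two points requiring care are (a) justifying the identity $Z_i\stackrel{d}{=}S_+-S_-$ with the stated means, which relies on the equal-block-size assumption and the explicit block structure of $B$ in the BTSBM (and which also pins down the signs $\mu_+\gtrless\mu_-$ in the two regimes, guaranteeing the chosen $\theta$ is positive), and (b) recognizing that the geometric-mean tilt $\theta=\tfrac12\log(\mu_+/\mu_-)$ is precisely the choice that collapses the bound to the clean closed form in the statement — any other $\theta$ gives a valid but messier estimate.
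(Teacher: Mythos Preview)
Your proposal is correct and is essentially identical to the paper's proof: the paper writes $Z_i$ explicitly as a signed sum of independent Bernoulli variables indexed by the tree levels, bounds the moment generating function via $\log(1+x)\le x$ to obtain $\log\E e^{-\nu Z_i}\le \tfrac{\lambda_1^*+\lambda_2^*}{2}(e^{-\nu}-1)+\tfrac{\lambda_1^*-\lambda_2^*}{2}(e^{\nu}-1)$, and then plugs in the same tilt $\nu=\tfrac12\log\bigl((\lambda_1^*+\lambda_2^*)/(\lambda_1^*-\lambda_2^*)\bigr)$ that you call $\theta$. Your grouping into $S_+$ and $S_-$ with $\E S_\pm=(\lambda_1^*\pm\lambda_2^*)/2$ is exactly the paper's computation in slightly more abstract notation, and the dis-assortative case is handled symmetrically in both.
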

Combined with the high probability upper bound for $\|u_{2} - Au_{2}^{*} / \lambda_{2}^{*}\|_{\infty}$ obtained by Theorem \ref{thm:generic_binary}, we can derive the following result for recovering the first split exactly. The proof is straightforward so we relegate it into Appendix \ref{subapp:other_hierarchical}. 

\begin{theorem}\label{thm:first_split}
Consider a BTSBM that is either assortative or dis-assortative. Let $\gap^{*}$ be defined as in Proposition \ref{prop:eigen_BTSBM} and $p^{*} = \max\{p_{0}, p_{d}\}$. Suppose there exists $\alpha > 0$ and $q > 0$ such that
  \begin{equation}
    \label{eq:first_split_cond}
   \gap^{*} > (q + 1)C\lb \sqrt{\lambda_{1}^{*}} + \frac{\log n}{\alpha \log \log n}\rb, 
  \end{equation}
where $C$ is the universal constant in \eqref{eq:A4_binary}. Fix any $\delta \ge n^{-q}$. Then the first split can be recovered with probability $1 - 2 \delta$ if 
\begin{equation}
  \label{eq:first_split_regime}
  \frac{1}{2}\lb\sqrt{\lambda_{1}^{*} + \lambda_{2}^{*}} - \sqrt{\lambda_{1}^{*} - \lambda_{2}^{*}}\rb^{2} - \log n - C'\frac{\lambda_{2}^{*}}{\gap^{*}}\bigg|\log \lb\frac{\lambda_{1}^{*} + \lambda_{2}^{*}}{\lambda_{1}^{*} - \lambda_{2}^{*}}\rb\bigg|(\xi_{n1} + \xi_{n2})\ge \log \lb\frac{1}{\delta}\rb,
\end{equation}
where $C'$ is a universal constant and
\[\xi_{n1} = \lb\sqrt{\lambda_{1}^{*}} + \frac{\log n}{\alpha\log \log n}\rb\lb 1 + \frac{\log n}{|\lambda_{2}^{*}|}\rb + \frac{\sqrt{(\log n)np^{*}}}{|\lambda_{2}^{*}|} \frac{\log n + \sqrt{\lambda_{1}^{*}(\log n)^{\alpha}}}{\alpha\log \log n},\]
and
\[\xi_{n2} = \left\{
    \begin{array}{ll}
     \min\left\{\sqrt{np^{*}}, \sqrt{\lambda_{2}^{*}K}\right\} & \mbox{(assortative)}\\
      \min\left\{\sqrt{\lambda_{1}^{*}np^{*} / |\lambda_{2}^{*}|}, \sqrt{\lambda_{2}^{*}K}\right\} & \mbox{(dis-assortative)}\\
    \end{array}\right..\]
\end{theorem}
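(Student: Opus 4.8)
The plan is to combine the deterministic reduction already assembled above with the binary $\ell_{\ttinf}$ bound of Theorem~\ref{thm:generic_binary} applied to the second eigenvector and the Chernoff-type estimate of Lemma~\ref{lem:prob_Zi}. Recall that Lemma~\ref{lem:equal_sign} and the discussion through \eqref{eq:minmaxZi} show that the first split is recovered as soon as either $\min_{i\in[n]}Z_i>\tau$ or $\max_{i\in[n]}Z_i<-\tau$, where $\tau:=|\lambda_2^*|\,\sqrt n\,\|u_2-Au_2^*/\lambda_2^*\|_\infty$. Since $\E Z_i=\lambda_2^*$, the first alternative is the relevant one in the assortative case ($\lambda_2^*>0$ by Proposition~\ref{prop:eigen_BTSBM}) and the second in the dis-assortative case ($\lambda_2^*<0$); the absolute value on $\log\big((\lambda_1^*+\lambda_2^*)/(\lambda_1^*-\lambda_2^*)\big)$ in \eqref{eq:first_split_regime} is precisely what lets one statement cover both signs. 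Thus it suffices to exhibit a high-probability event on which \eqref{eq:minmaxZi} holds.

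\emph{Step 1: a deterministic upper bound on $\tau$.} Apply Theorem~\ref{thm:generic_binary} with $r=1$, $U^*=u_2^*$ and $\Lambda^*=(\lambda_2^*)$, so that $\bar\kappa^*=\kappa^*=1$ and $B(r)=10$; replacing $\delta$ by $\delta/(B(1)+1)=\delta/11$ only rescales $R(\cdot)$ by a universal constant since $\delta\ge n^{-q}$, so that $R(\delta)\preceq(q+1)\log n$ and $g(\delta)\preceq\sqrt{\lambda_1^*}+(q+1)\log n/(\alpha\log\log n)$. Now read off the BTSBM eigen-data from Proposition~\ref{prop:eigen_BTSBM}: $\mnorm{u_2^*}=1/\sqrt n$, $\lambda_{\min}^*=|\lambda_2^*|$, $n\bar p^*=\lambda_1^*$ (every degree of $A^*$ equals $\lambda_1^*$, so $\one_n/\sqrt n$ is the Perron eigenvector), $p^*=\max\{p_0,p_d\}$, and $A^*$ is positive semidefinite iff the model is assortative, while in either case $\mnorm{\bar U^*}$ is of order $\sqrt{K/n}$. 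The hypothesis \eqref{eq:first_split_cond} (after absorbing $q$ into the constant) then implies the eigen-gap requirement \eqref{eq:A4_binary}, so the theorem applies. Multiplying the resulting bound by $\sqrt n$ and simplifying with $R(\delta)\preceq\log n$ and the elementary relations among $p^*,\bar p^*$, the first two lines collapse to $\preceq\xi_{n1}/\gap^*$ and the final minimum to $\preceq\xi_{n2}/\gap^*$, the two displayed forms of $\xi_{n2}$ reflecting exactly whether the positive-semidefinite Kato option $\sqrt{\bar\kappa^*p^*/\lambda_{\min}^*}$ is available (assortative) or only the $\mnorm{A^*}/\lambda_{\min}^*$ and $\bar\kappa^*\mnorm{\bar U^*}$ options are (dis-assortative). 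Hence there is an event $\mathcal E$ with $\P(\mathcal E)\ge1-\delta$ on which $\tau\le\tau_0:=C''|\lambda_2^*|(\xi_{n1}+\xi_{n2})/\gap^*$ for a universal constant $C''$.

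\emph{Step 2: the tail of $\min/\max Z_i$ and assembly.} Since $\tau_0$ is deterministic and $\tau\le\tau_0$ on $\mathcal E$, the event $\{\min_iZ_i>\tau_0\}\cap\mathcal E$ (resp.\ $\{\max_iZ_i<-\tau_0\}\cap\mathcal E$) forces \eqref{eq:minmaxZi}. Under BTSBM the $Z_i$ are i.i.d., so a union bound gives $\P(\min_iZ_i\le\tau_0)\le n\,\P(Z_1\le\tau_0)$; inserting Lemma~\ref{lem:prob_Zi}(1) and taking logarithms, the requirement $\log n+\log\P(Z_1\le\tau_0)\le\log\delta$ rearranges — using $\tau_0\le C''|\lambda_2^*|(\xi_{n1}+\xi_{n2})/\gap^*$, the monotonicity of $t\mapsto\P(Z_1\le t)$, and taking the universal $C'$ in \eqref{eq:first_split_regime} to be at least $C''/2$ — into exactly \eqref{eq:first_split_regime}; the dis-assortative case is identical, using Lemma~\ref{lem:prob_Zi}(2) and the $\max$ alternative. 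Combining the two events, $\P(\text{first split recovered})\ge1-\P(\mathcal E^c)-n\,\P(Z_1\le\tau_0)\ge1-2\delta$.

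\emph{Main obstacle.} There is no real conceptual difficulty: the reduction, the $\ell_{\ttinf}$ perturbation bound, and the combinatorial tail estimate are all available. The only laborious part is the bookkeeping in Step~1 — translating the generic bound of Theorem~\ref{thm:generic_binary} into the BTSBM parametrization, verifying that the three Kato-type options simplify to the stated $\xi_{n2}$ in the assortative and dis-assortative cases, and checking that \eqref{eq:first_split_cond} (after $\delta\mapsto\delta/11$) entails \eqref{eq:A4_binary} with the appropriate universal constant.
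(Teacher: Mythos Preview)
Your proposal is correct and follows essentially the same route as the paper: apply Theorem~\ref{thm:generic_binary} with $r=1$, $\bar\kappa^*=1$, $n\bar p^*=\lambda_1^*$, $\lambda_{\min}^*=|\lambda_2^*|$ and $\mnorm{u_2^*}=1/\sqrt n$ to obtain the high-probability bound $\sqrt n\,\|u_2-Au_2^*/\lambda_2^*\|_\infty\le C'(\xi_{n1}+\xi_{n2})/\gap^*$, then feed this deterministic threshold into Lemma~\ref{lem:prob_Zi} and a union bound over $i\in[n]$ to control $\min_iZ_i$ (resp.\ $\max_iZ_i$), arriving at \eqref{eq:first_split_regime}. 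Your handling of the $B(r)+1=11$ factor by rescaling $\delta$ is slightly more explicit than the paper, which absorbs it into the universal constants; otherwise the arguments coincide.
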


\subsection{Exact recovery of mega-communities for BTSBMs}

Consider the setting where $K$ is fixed and
\[p_{j} = \rho_{n}a_{j},\]
for a set of constants $(a_{0}, \ldots, a_{d})$. \cite{li2018hierarchical} proves the exact recovery in the regime $n\rho_{n} \succeq (\log n)^{2+\eps}$. On the other hand, if $K$ is known, the information theoretic lower bound for recovering all communities (not including mega-communities) is $\rho_{n} = \log n / n$ \citep[e.g.][]{abbe2015community}. The extra logarithmic factors in \cite{li2018hierarchical} is simply an artifact of using the non-tight $\ell_{\infty}$ perturbation bound by \cite{eldridge2017unperturbed}. With Theorem \ref{thm:first_split} derived from our $\ell_{\infty}$ perturbation theory, we can prove the exact recovery in the regime $\rho_{n} = \log n / n$ and provide precise condition on the constants $(a_{0}, \ldots, a_{d})$.

\begin{theorem}\label{thm:partial_exact_recovery}
Assume that $\rho_{n} = \log n / n$ and either $a_{0} > a_{1} > \ldots > a_{d} > 0$ (assortative) or $0 < a_{0} < a_{1} < \ldots < a_{d}$ (dis-assortative). Fix any $\ell \in [d]$. If further
\begin{equation}
  \label{eq:BTSBM_fix_K_cond}
  |\sqrt{\bar{a}_{r}} - \sqrt{a_{r}}| > \sqrt{2^{d - r + 1}}, \quad r = d, d - 1, \ldots, d - \ell + 1,
\end{equation}
where
\begin{equation}
  \label{eq:bar_aj}
  \bar{a}_{r} = \frac{a_{0} + \sum_{j=1}^{r-1}2^{j-1}a_{j}}{2^{r-1}},
\end{equation}
then all mega-communities up to layer $\ell + 1$ can be exactly recovered with probability $1 - o(1)$ as $n$ tends to infinity.
\end{theorem}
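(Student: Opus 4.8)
The plan is to run HCD-Sign to depth $\ell$ and to show that, with probability $1-o(1)$, each of the $\sum_{k=1}^{\ell}2^{k-1}=2^{\ell}-1$ splits performed at levels $1,\dots,\ell$ is correct; since $K=2^{d}$ is fixed this is an $O(1)$ union bound, so it suffices to prove that an individual split succeeds with probability $1-2n^{-q}$ for a fixed $q>0$, conditionally on all ancestor splits being correct. The structural fact that makes this work is self-similarity of the BTSBM: conditioned on the level-$1,\dots,k-1$ splits being correct, the level-$k$ split operates on the induced subgraph on a true mega-community of size $m_{k}=n/2^{k-1}$, and this subgraph is \emph{exactly} a BTSBM of depth $r:=d-k+1$ with parameters $p_{0},\dots,p_{r}$ and independent Bernoulli entries (its block-probability pattern is precisely the depth-$r$ pattern, and its edges form a subset of the original independent edges). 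Hence Theorem \ref{thm:first_split} applies verbatim to the $k$-th level sub-problem, with its $n$ replaced by $m_{k}\asymp n$, its $\gap^{*},\lambda_{1}^{*},\lambda_{2}^{*}$ read off from Proposition \ref{prop:eigen_BTSBM} for the depth-$r$ BTSBM, $p^{*}=\max\{p_{0},p_{r}\}$, and $K$ replaced by $2^{r}=O(1)$.

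The next step is to substitute the BTSBM eigen-structure. From Proposition \ref{prop:eigen_BTSBM} (with $d$ replaced by $r$ and $n$ by $m_{k}$, writing $m=n/2^{d}$ for the common finest-block size so that $m\,2^{r}=m_{k}$) one gets $\lambda_{1}^{*}-\lambda_{2}^{*}=m\,2^{r}p_{r}=m_{k}\rho_{n}a_{r}$ and $\lambda_{1}^{*}+\lambda_{2}^{*}=2(m-1)p_{0}+2m\sum_{i=1}^{r-1}2^{i-1}p_{i}=m_{k}\rho_{n}\bar a_{r}\,(1+o(1))$, where $\bar a_{r}$ is exactly the weighted average \eqref{eq:bar_aj} of $a_{0},\dots,a_{r-1}$. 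Since $\rho_{n}=\log n/n$ and $k-1=d-r$, we have $m_{k}\rho_{n}=(\log n)/2^{d-r}$, whence
\[
\tfrac12\bigl(\sqrt{\lambda_{1}^{*}+\lambda_{2}^{*}}-\sqrt{\lambda_{1}^{*}-\lambda_{2}^{*}}\bigr)^{2}=\frac{\log n}{2^{d-r+1}}\bigl(\sqrt{\bar a_{r}}-\sqrt{a_{r}}\bigr)^{2}+o(\log n),
\]
valid in both the assortative and dis-assortative cases (in the latter $\lambda_{1}^{*}-\lambda_{2}^{*}>\lambda_{1}^{*}+\lambda_{2}^{*}$ because $\lambda_{2}^{*}<0$, but the square is symmetric). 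This is the ``mis-specified SBM'' phenomenon: the split is two-block exact recovery on blocks of size $m_{k}/2=n/2^{k}$ whose effective within-block probability is the \emph{mixture} $\bar a_{r}\rho_{n}$ of $p_{0},\dots,p_{r-1}$ and whose between-block probability is $p_{r}$, which is why $\bar a_{r}$ rather than any single $p_{i}$ governs the threshold. I would also record that $\lambda_{1}^{*}\asymp|\lambda_{2}^{*}|\asymp\gap^{*}\asymp\log n$ with constants depending only on $(a_{j})$ and $k$ (using the $\gap^{*}$ formula of Proposition \ref{prop:eigen_BTSBM} and $|\lambda_{2}^{*}|=\tfrac12 m_{k}\rho_{n}|\bar a_{r}-a_{r}|$), so that $|\lambda_{2}^{*}|/\gap^{*}=\Theta(1)$ and $\bigl|\log\bigl((\lambda_{1}^{*}+\lambda_{2}^{*})/(\lambda_{1}^{*}-\lambda_{2}^{*})\bigr)\bigr|=\bigl|\log(\bar a_{r}/a_{r})\bigr|+o(1)=O(1)$.

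It then remains to verify the two hypotheses of Theorem \ref{thm:first_split} with $\alpha=\tfrac12$ fixed. The eigen-gap hypothesis \eqref{eq:first_split_cond} asks $\gap^{*}>(q+1)C\bigl(\sqrt{\lambda_{1}^{*}}+\tfrac{\log n}{\alpha\log\log n}\bigr)$; since $\gap^{*}=\Theta(\log n)$ while $\sqrt{\lambda_{1}^{*}}=\Theta(\sqrt{\log n})$ and $\tfrac{\log n}{\alpha\log\log n}=o(\log n)$, this holds for all large $n$. For \eqref{eq:first_split_regime}, using $\lambda_{1}^{*}\asymp|\lambda_{2}^{*}|\asymp m_{k}p^{*}\asymp\log n$ and $\alpha=\tfrac12$, every summand of $\xi_{n1}$ is $O\!\bigl(\tfrac{\log n}{\log\log n}\bigr)+O\!\bigl((\log n)^{3/4}\bigr)=o(\log n)$, and $\xi_{n2}=O(\sqrt{\log n})=o(\log n)$ in both the assortative and dis-assortative cases; combined with $|\lambda_{2}^{*}|/\gap^{*}=O(1)$ and $|\log(\cdot)|=O(1)$, the entire correction term in \eqref{eq:first_split_regime} is $o(\log n)$. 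Hence the left side of \eqref{eq:first_split_regime} equals $\frac{\log n}{2^{d-r+1}}(\sqrt{\bar a_{r}}-\sqrt{a_{r}})^{2}-\log n-o(\log n)$. By hypothesis \eqref{eq:BTSBM_fix_K_cond}, $(\sqrt{\bar a_{r}}-\sqrt{a_{r}})^{2}/2^{d-r+1}>1$ for every $r\in\{d-\ell+1,\dots,d\}$; choosing $q:=\tfrac12\bigl(\min_{r}(\sqrt{\bar a_{r}}-\sqrt{a_{r}})^{2}/2^{d-r+1}-1\bigr)>0$ and $\delta=n^{-q}$, the left side exceeds $q\log n=\log(1/\delta)$ for all large $n$, so each split succeeds with probability $1-2n^{-q}$. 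A union bound over the $2^{\ell}-1=O(1)$ splits gives overall probability $1-O(n^{-q})=1-o(1)$, and on that event every mega-community through layer $\ell+1$ is recovered.

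I expect the main obstacle to be organizational rather than conceptual: making the conditioning in the recursion fully precise, so that Theorem \ref{thm:first_split} — a statement about a single BTSBM with independent entries — is legitimately applied to the level-$k$ induced subgraph after conditioning on correctness of the ancestor splits, with the various success events living on one probability space (only a union bound, never independence, is used). The remaining effort is the tedious but routine verification that $\xi_{n1},\xi_{n2},\,|\lambda_{2}^{*}|/\gap^{*}$ and $|\log((\lambda_{1}^{*}+\lambda_{2}^{*})/(\lambda_{1}^{*}-\lambda_{2}^{*}))|$ are all of lower order relative to $\log n$, which is precisely what isolates \eqref{eq:BTSBM_fix_K_cond} as the exact threshold; the clean takeaway is that the threshold for recovering the layer-$(d-r+1)$ split is the mis-specified two-block Chernoff--Hellinger threshold with effective within-block parameter $\bar a_{r}$.
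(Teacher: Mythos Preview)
Your proposal is correct and follows essentially the same approach as the paper: reduce to a single split via self-similarity of the BTSBM, apply Theorem~\ref{thm:first_split} to the depth-$r$ sub-BTSBM on $m_k\asymp n$ nodes, compute $\lambda_1^*\pm\lambda_2^*$ in terms of $\bar a_r,a_r$ to get the $\tfrac{\log n}{2^{d-r+1}}(\sqrt{\bar a_r}-\sqrt{a_r})^2$ main term, check that $\xi_{n1},\xi_{n2}$ and the logarithmic ratio are all $o(\log n)$ so \eqref{eq:first_split_regime} reduces to \eqref{eq:BTSBM_fix_K_cond}, and finish with a union bound over $O(1)$ splits. Your handling of the recursion (defining the split events on the true, deterministic mega-communities and using only a union bound) is in fact slightly more explicit than the paper's.
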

\begin{remark}
  The quantity $\bar{a}_{r}$ is essentially the average conncetion probability in each mega-community at $(d - r + 2)$-th layer. 
\end{remark}

The condition \eqref{eq:BTSBM_fix_K_cond} has an interesting implication. Take $\ell = d$, it is equivalent to
\[|\sqrt{\bar{a}_{d}} - \sqrt{a_{d}}| > \sqrt{2}.\]
Further take $d = 3$ for illustration and recall \eqref{eq:BTSBM_B}. Consider the following hypothetical SBM with connection probability matrix
\begin{equation}\label{eq:BTSBM_B'}
  B' = \rho_{n}\left[
      \begin{array}{cccc|cccc}
        \bar{a}_{3} & \bar{a}_{3} & \bar{a}_{3} & \bar{a}_{3} & a_{3} & a_{3} & a_{3} & a_{3}\\
        \bar{a}_{3}& \bar{a}_{3} & \bar{a}_{3} & \bar{a}_{3} & a_{3} & a_{3} & a_{3} & a_{3}\\
        \bar{a}_{3} & \bar{a}_{3} & \bar{a}_{3} & \bar{a}_{3} & a_{3} & a_{3} & a_{3} & a_{3}\\
        \bar{a}_{3} & \bar{a}_{3} &\bar{a}_{3}& \bar{a}_{3} & a_{3} & a_{3} & a_{3} & a_{3}\\
        \hline
        a_{3} & a_{3} & a_{3} & a_{3} & \bar{a}_{3} & \bar{a}_{3} & \bar{a}_{3} & \bar{a}_{3}\\
        a_{3} & a_{3} & a_{3} & a_{3} &\bar{a}_{3}& \bar{a}_{3} & \bar{a}_{3} & \bar{a}_{3}\\
        a_{3} & a_{3} & a_{3} & a_{3} & \bar{a}_{3} & \bar{a}_{3} & \bar{a}_{3} & \bar{a}_{3}\\
        a_{3} & a_{3} & a_{3} & a_{3} & \bar{a}_{3} & \bar{a}_{3} &\bar{a}_{3}& \bar{a}_{3}
      \end{array}
      \right].
\end{equation}
This is essentially a $2$-block SBM with parameter $\bar{a}_{3}\log n / n$ and $a_{3}\log n / n$ where $\bar{a}_{3}$ is the average probability in the mega-community. It is well-known that \eqref{eq:BTSBM_B'} can be exactly recovered \emph{if and only if}
\[|\sqrt{\bar{a}_{3}} - \sqrt{a_{3}}| > \sqrt{2}.\]
In other words, recovering the first split of the BTSBM with connection probability matrix \eqref{eq:BTSBM_B} is indistinguishable from recovering the blocks if the induced $2$-block model, which replaces the connection probabilities by the within-mega-community average. This is an unexpected robustness result for mis-specified SBM models.

On the other hand, as mentioned earlier, we want to investigate the possibility that the finest communites cannot be recovered but higher-level mega-communities can. To investigate this, we first derive the necessary condition for the exact recovery of $K$ communities in the leaf nodes. This is a simple consequence of the existing results on general SBMs \citep[e.g.][]{abbe2015community}. 
\begin{lemma}\label{lem:necessary_cond_BTSBM}
  No algorithm can recovery all of $K$ communities in the leaf nodes of a BTSBM, that is either assortative or dis-assortative, with high probability if 
  \begin{equation}
    \label{eq:necessary_cond_BTSBM}
    |\sqrt{a_{0}} - \sqrt{a_{1}}| < \sqrt{K}.
  \end{equation}
\end{lemma}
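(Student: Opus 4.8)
The plan is to obtain Lemma~\ref{lem:necessary_cond_BTSBM} as a special case of the information-theoretic impossibility result for exact recovery in general stochastic block models of \cite{abbe2015community}. Recall that for an SBM with community proportions $p = (p_{1}, \ldots, p_{K})$ and connectivity matrix $W = (\log n / n)\,Q$ for a fixed symmetric $Q$, exact recovery of all communities fails with high probability whenever $\min_{i \neq j} D_{+}(\mu^{(i)}, \mu^{(j)}) < 1$, where $\mu^{(i)} \in \R^{K}$ has entries $\mu^{(i)}_{k} = p_{k} Q_{ik}$ (the scaled expected degree profile of a community-$i$ vertex) and $D_{+}$ is the Chernoff--Hellinger divergence
\[
D_{+}(\mu, \nu) = \max_{t\in[0,1]} \sum_{k=1}^{K} \lb t\mu_{k} + (1-t)\nu_{k} - \mu_{k}^{t}\nu_{k}^{1-t}\rb .
\]
Since the hypothesis \eqref{eq:necessary_cond_BTSBM} is a strict inequality, it suffices to exhibit one pair $i \neq j$ with $D_{+}(\mu^{(i)}, \mu^{(j)}) < 1$.

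For the BTSBM we have $p_{k} = 1/K$ and $Q_{ik} = a_{D(i,k)}$ with $D(i,i) = 0$; I would take $i$ and $j$ to be two sibling leaf communities, i.e.\ leaves whose binary encodings share the prefix $x_{d}\ldots x_{2}$ and differ only in the bit $x_{1}$, so that $D(i,j) = 1$ (this requires $d \ge 1$, which is implicit in the statement). For any third community $k \notin \{i,j\}$, its encoding disagrees with the common prefix $x_{d}\ldots x_{2}$ at some coordinate $\ge 2$, and that same coordinate is where it first disagrees with both $i$ and $j$; hence $D(i,k) = D(j,k)$ and $\mu^{(i)}_{k} = \mu^{(j)}_{k}$. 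Thus the profiles $\mu^{(i)}$ and $\mu^{(j)}$ differ only in coordinates $i$ and $j$, with $(\mu^{(i)}_{i}, \mu^{(i)}_{j}) = (a_{0}/K, a_{1}/K)$ and $(\mu^{(j)}_{i}, \mu^{(j)}_{j}) = (a_{1}/K, a_{0}/K)$.

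Since the summand in $D_{+}$ vanishes whenever $\mu_{k} = \nu_{k}$, the divergence reduces to a two-term maximization; writing $\alpha = a_{0}/K$ and $\beta = a_{1}/K$,
\[
D_{+}(\mu^{(i)}, \mu^{(j)}) = \max_{t\in[0,1]}\lb \alpha + \beta - \alpha^{t}\beta^{1-t} - \alpha^{1-t}\beta^{t}\rb .
\]
The map $t \mapsto \alpha^{t}\beta^{1-t} + \alpha^{1-t}\beta^{t}$ is convex and symmetric about $t = 1/2$, hence minimized there with value $2\sqrt{\alpha\beta}$, giving $D_{+}(\mu^{(i)}, \mu^{(j)}) = \alpha + \beta - 2\sqrt{\alpha\beta} = (\sqrt{\alpha} - \sqrt{\beta})^{2} = (\sqrt{a_{0}} - \sqrt{a_{1}})^{2}/K$. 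Under \eqref{eq:necessary_cond_BTSBM} this is $<1$, so $\min_{i\neq j} D_{+} < 1$ and exact recovery of all $K$ leaf communities is impossible with high probability.

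The only delicate points are (i) pinning down the precise normalization in the general impossibility theorem so that the critical constant is exactly $1$ — this is where the $p_{k} = 1/K$ weighting of the degree profile, and the negligible effect of excluding self-loops (replacing $m$ by $m-1$ in the within-community count), enter; and (ii) the tree-combinatorial fact that sibling leaves have identical connection profiles to every other community, which is exactly what makes their CH-divergence as small as $(\sqrt{a_{0}} - \sqrt{a_{1}})^{2}/K$. Everything downstream of that — the optimization over $t$ and the identification of the threshold — is routine. (An essentially equivalent self-contained route, should one prefer to avoid quoting the general theorem, is to run the genie-aided binary hypothesis test distinguishing a vertex in community $i$ from one in sibling community $j$, bound its error probability below by the squared Hellinger affinity $\asymp n^{-(\sqrt{a_{0}}-\sqrt{a_{1}})^{2}/K}$, and use a second-moment argument over the $n/K$ vertices of community $i$ to show that at least one is misclassified with probability bounded away from $0$.)
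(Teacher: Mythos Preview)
Your proof is correct and follows essentially the same route as the paper's: invoke the Abbe--Sandon CH-divergence criterion, specialize to equal-size communities so that $\mu^{(i)}_{k}=Q_{ik}/K$, pick a sibling pair of leaves, observe that their degree profiles agree in all but two coordinates, and compute $D_{+}=(\sqrt{a_{0}}-\sqrt{a_{1}})^{2}/K$ via the convexity/symmetry argument. The paper additionally verifies that the sibling pair actually attains $\min_{i\neq j}D_{+}$, but as you correctly note this is unnecessary for the impossibility direction, so your argument is slightly leaner.
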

Note that $\bar{a}_{1} = a_{0}$. The necessary condition \eqref{eq:necessary_cond_BTSBM} is essentially the negation of \eqref{eq:BTSBM_fix_K_cond} with $\ell = d$ and $r = 1$. If $|\sqrt{a_{0}} - \sqrt{a_{1}}| < \sqrt{K}$, Lemma \ref{lem:necessary_cond_BTSBM} implies that the finest communities cannot be recovered exactly by any algorithm, including HCD algorithms. However, if \eqref{eq:BTSBM_fix_K_cond} holds for $\ell = d - 1$, we may recover all mega-communities up to the second last layers. This is true, for instance, if $a_{0}\approx a_{1}$ and
\[|\sqrt{\bar{a}_{r}'} - \sqrt{a_{r}}| > \sqrt{2^{d - r + 1}}, \quad r = d, d - 1, \ldots, 2,\]
where 
\[  \bar{a}_{r}' = \frac{a_{1} + \sum_{j=1}^{r-1}2^{j-1}a_{j}}{2^{r-1}}.\]
Therefore, Theorem \ref{thm:partial_exact_recovery} provides a precise characterization of the partial exact recovery phenomenon. 

\section{Extensions}\label{sec:extension}
\subsection{Random matrices with other dependency structure}\label{subsec:mdependence}
In Section \ref{subsec:binary_laplacian} we discuss the unnormalized Laplacian as an example of random matrices with dependent entries. From assumption \textbf{A}1, it is not hard to see that our generic bounds allow much more flexible dependency structure. As shown in part (b) of Proposition \ref{prop:A1}, \textbf{A}1 is satisfied if the rows are $m$-dependent. If we can further derive bounds for $\|E\|_{\op}$ and $E_{k}^{T}W$ as in \textbf{A}2 and \textbf{A}3, Theorem \ref{thm:generic_bound} would yield an $\ell_{\ttinf}$ perturbation bound. 

Concentration inequalities for both quantities have been investigated for various dependency structures. We consider a slightly artificial one, motivated by \cite{paulin2012concentration}, just to illustrate the possibility to handle complex dependency structure. In particular, we assume that $A_{ij}$'s can be partitioned into $M$ subsets such that the entries within the same block are independent while the blocks can be arbitrarily dependent. In this case, $E$ can be decomposed as the sum of $M$ matrices $\{E^{(\ell)}: \ell \in [M]\}$ where $E^{(\ell)}_{ij} = E_{ij}$ if $(i, j)$ belongs to the $\ell$-th block and $E^{(\ell)}_{ij} = 0$ otherwise. By Lemma \ref{lem:Eop_binary} and a union bound, 
\[\lambda_{-}(\delta), E_{+}(\delta), \bar{E}_{+}(\delta), E_{\infty}(\delta)\preceq E_{2}(\delta)\preceq M\lb\sqrt{n\bar{p}^{*}} + \sqrt{\log (nM / \delta)}\rb.\] 
On the other hand, we apply the same decomposition on $E_{k}^{T}W$ and Lemma \ref{lem:A3_binary} implies that 
\[b_{\infty}(\delta) \preceq \frac{MR(\delta / M)}{\alpha \log R(\delta / M)}, \quad b_{2}(\delta)\preceq \frac{M\sqrt{p^{*}}R(\delta / M)^{(1 + \alpha) / 2}}{\alpha \log R(\delta / M)}.\]
Given these bounds, it is a simple exercise to derive the condition on $\gap^{*}$ through \textbf{A}4 as well as the $\ell_{\ttinf}$ bound for $d_{\ttinf}(U, AU^{*}(\Lambda^{*})^{-1})$ and $d_{\ttinf}(U, U^{*})$ by Theorem \ref{thm:generic_bound} and Theorem \ref{thm:generic_bound2}. It is also straightforward to derive the bounds for the unnormalized Laplacian using the results in Section \ref{subsec:generic_laplacian}.

The above case is by no means the end of the story. More complicated dependency structures can be handled similarly using more delicate bounds \citep[e.g.][]{paulin2012concentration}. The punchline is that our theory reduces the less tractable $\ell_{\ttinf}$ perturbation bound to the more tractable concentration bounds on $E$.

\subsection{Non-binary random matrices}\label{subsec:otherdist}
Our result can also be easily extended to non-binary random matrices. For instance, for Gaussian random matrices with $A_{ij}\stackrel{\mathrm{indep.}}{\sim} N(\mu_{ij}, \sigma_{ij}^{2})$, Corollary 3.9 of \cite{bandeira2016sharp} implies that
\[E_{2}(\delta) \preceq \bar{\sigma}^{*}\sqrt{n} + \sigma^{*}\sqrt{\log \lb\frac{n}{\delta}\rb}, \quad \mbox{where }\bar{\sigma}^{*} = \max_{i}\sqrt{\frac{1}{n}\sum_{j=1}^{n}\sigma_{ij}^{2}}, \quad \sigma^{*} = \max_{i}\sigma_{ij}.\]
This establishes the bounds for quantities in assumption \textbf{A}2. By Propostion \ref{prop:A1}, 
\[L_{1}(\delta)\preceq \mnorm{A^{*}} + E_{2}(\delta), \quad L_{2}(\delta)\preceq 1, \quad L_{3}(\delta)\preceq (\mnorm{A^{*}} + E_{2}(\delta)) / \lambda_{\min}^{*}.\]
On the other hand, for any $W\in \R^{n\times r'}$ with $r'\le r$, $E_{k}^{T}W\sim N(0, W^{T}D_{k}W)$ where $D_{k} = \diag(\sigma_{kj}^{2})_{j=1}^{n}$. Thus, $E_{k}^{T}W\stackrel{d}{=}(W^{T}D_{k}W)^{1/2}X$ where $X\sim N(0, I_{r'})$. Since the mapping $f: x\mapsto \|((W^{T}D_{k}W)^{1/2})x\|_{2}$ is $\|W^{T}D_{k}W\|_{\op}$-Lipschitz and $\|W^{T}D_{k}W\|_{\op}\le \sigma^{*}\|W\|_{\op}$, by Gaussian concentration inequality, 
\[\P\lb \|E_{k}^{T}W\|_{2}\ge \E \|E_{k}^{T}W\|_{2} + t\rb\le \exp\left\{-\frac{t^{2}}{\sigma^{*2}\|W\|_{\op}^{2}}\right\}.\]
Taking $t = \sigma^{*}\|W\|_{\op}\sqrt{\log (n / \delta)}$, we know that 
\begin{align*}
  \|E_{k}^{T}W\|_{2}& \le  \E \|E_{k}^{T}W\|_{2} + t\le \sqrt{\E \|E_{k}^{T}W\|_{2}^{2}} + t \le \sqrt{\tr(W^{T}D_{k}W)} + t\\
& \le \|W\|_{\op}\sigma^{*}\lb\sqrt{r} + \sqrt{\log (n / \delta)}\rb\preceq \sigma^{*}R(\delta)\|W\|_{\op}.
\end{align*}
As a result, we have $b_{\infty}(\delta) = 0$ and $b_{2}(\delta) \preceq\sigma^{*}R(\delta)$ in Assumption \textbf{A}3. As commented in Remark \ref{rem:EUA3}, 
\[\mnorm{EU^{*}}\preceq \sigma^{*}R(\delta).\]
Putting pieces together, we can derive the $\ell_{\ttinf}$ bound in this case by Theorem \ref{thm:generic_bound} and Theorem \ref{thm:generic_bound2}. 

Following the same strategy, we can extend the results to other entry distributions. The bound for $\|E\|_{\op}$ can be found in \cite{bandeira2016sharp, latala2018dimension, rebrova2018spectral} for sub-gaussian, sub-exponential, heavy-tailed, symmetric random variables. The row-wise concentration inequality can be obtained from standard moment generating function arguments \citep[e.g.][]{vershynin2010introduction}. 

\subsection{Asymmetric random matrices}
Our perturbation bound can be extended to asymmetric and rectangular matrices using the \emph{Hermitian dilation} trick \citep[e.g.][]{paulsen2002completely}. Given a rectangular matrix $A\in \R^{m\times n}$ with $m\ge n$, the Hermitian dilation of $A$ is defined as 
\[\td{A} =
  \begin{bmatrix}
    0 & A\\
    A^{T} & 0
  \end{bmatrix}, 
\]
Let $\bar{U}\bar{\Sigma} \bar{V}^{T}$ be the singular value decomposition (SVD) of $A$ where $\bar{U} \in \R^{m\times n}, \bar{V} \in \R^{n\times n}$ are two orthogonal matrices and $\bar{\Sigma}\in \R^{n\times n}$ is a diagonal matrix. Then $\td{U}\td{\Sigma}\td{U}^{T}$ is the SVD of $\td{A}$ where
\[\td{U} = \frac{1}{\sqrt{2}}
  \begin{bmatrix}
    \bar{U} & \bar{U}\\
    \bar{V} & -\bar{V}
  \end{bmatrix}, \quad \td{\Sigma} =
  \begin{bmatrix}
    \bar{\Sigma} & 0 \\
    0 & -\bar{\Sigma} \\
  \end{bmatrix}.
\]
Given a pair of asymmetric matrices $A, A^{*}\in \R^{m\times n}$, their left singular spaces $U, U^{*}$ and corresponding right singular spaces $V, V^{*}$, our bound can be applied to their Hermitian dilation to yield a bound for 
\[d_{\ttinf}\lb
  \begin{bmatrix}
    U \\ V
  \end{bmatrix}
,
\begin{bmatrix}
  U^{*} \\ V^{*}
\end{bmatrix}
\rb.\]
This provides an upper bound for both $d_{\ttinf}(U, U^{*})$ and $d_{\ttinf}(V, V^{*})$.

\subsection{Perturbation in other metrics}
Given an $\ell_{\ttinf}$ bound, we can derive the perturbation bound in other metrics. One example is the $\ell_{\ttinf}$ bound for projection matrices, namely $\mnorm{UU^{T} - U^{*}(U^{*})^{T}}$, which is studied in \cite{mao2017estimating}.  Note that for any $O\in \O^{r}$, 
  \[UU^{T} - U^{*}(U^{*})^{T} = U\O (U\O)^{T} - U^{*}(U^{*})^{T} = (U\O - U^{*})(U\O)^{T} + U^{*}(U\O - U^{*})^{T}.\]
Then 
\[\mnorm{(U\O - U^{*})(U\O)^{T}}\le \mnorm{U\O - U^{*}}\|U\O\|_{\op} = \mnorm{U\O - U^{*}},\]
and 
\[\mnorm{U^{*}(U\O - U^{*})^{T}} \le \mnorm{U^{*}}\|U\O - U^{*}\|_{\op}\le \sqrt{n}\mnorm{U^{*}}\mnorm{U\O - U^{*}}.\]
Taking $O$ as the orthogonal matrix that minimizes $\mnorm{U\O - U^{*}}$, we conclude that
\begin{equation*}
  \mnorm{UU^{T} - U^{*}(U^{*})^{T}}\le (\sqrt{n}\mnorm{U^{*}} + 1)\mnorm{U\O - U^{*}}\preceq \sqrt{n}\mnorm{U^{*}}\mnorm{U\O - U^{*}}.
\end{equation*}
Another example is the entrywise bound for $UU^{T} - U^{*}(U^{*})^{T}$. For any $O\in \O^{r}$, we have
\[UU^{T} - U^{*}(U^{*})^{T} = (U\O - U^{*})(U\O - U^{*})^{T} + U^{*}(U\O - U^{*})^{T} + (U\O - U^{*})(U^{*})^{T}.\]
Taking $O\in \O^{r}$ that minimizes $\mnorm{U\O - U^{*}}$ and using the fact that $\maxnorm{AB}\le \mnorm{A}\mnorm{B}$, we have
\[\maxnorm{UU^{T} - U^{*}(U^{*})^{T}}\preceq d_{\ttinf}(U, U^{*})^{2} + \mnorm{U^{*}}d_{\ttinf}(U, U^{*}).\]
Finally, we can derive an entry-wise bound for $U\Lambda U^{T} - U^{*}\Lambda^{*} (U^{*})^{T}$, which is of interest if the goal is to recover the low-rank component. Similar to the derivation for projection matrices, by Weyl's inequality,
\begin{align*}
  \maxnorm{U\Lambda U^{T} - U^{*}\Lambda^{*} (U^{*})^{T}}& \preceq d_{\ttinf}(U, U^{*})\mnorm{U^{*}}\lambda_{\max}^{*} + \mnorm{U^{*}}^{2}\|E\|_{\op}\\
&  + d_{\ttinf}(U, U^{*})^{2} \lambda_{\max}^{*}  + d_{\ttinf}(U, U^{*})^{2}\|E\|_{\op}.
\end{align*}

\section*{Acknowledgment}
The author would like to thank Peter J. Bickel, Elizaveta Levina, Tianxi Li, Sharmodeep Bhattacharyya and Purnamrita Sarkar for helpful discussion and comments.

\bibliography{two_to_infty}

\begin{thebibliography}{77}
\providecommand{\natexlab}[1]{#1}
\providecommand{\url}[1]{\texttt{#1}}
\expandafter\ifx\csname urlstyle\endcsname\relax
  \providecommand{\doi}[1]{doi: #1}\else
  \providecommand{\doi}{doi: \begingroup \urlstyle{rm}\Url}\fi

\bibitem[Abbe(2017)]{abbe2017community}
Emmanuel Abbe.
\newblock Community detection and stochastic block models: recent developments.
\newblock \emph{The Journal of Machine Learning Research}, 18\penalty0
  (1):\penalty0 6446--6531, 2017.

\bibitem[Abbe and Sandon(2015)]{abbe2015community}
Emmanuel Abbe and Colin Sandon.
\newblock Community detection in general stochastic block models: Fundamental
  limits and efficient algorithms for recovery.
\newblock In \emph{2015 IEEE 56th Annual Symposium on Foundations of Computer
  Science}, pages 670--688. IEEE, 2015.

\bibitem[Abbe et~al.(2015)Abbe, Bandeira, and Hall]{abbe2015exact}
Emmanuel Abbe, Afonso~S Bandeira, and Georgina Hall.
\newblock Exact recovery in the stochastic block model.
\newblock \emph{IEEE Transactions on Information Theory}, 62\penalty0
  (1):\penalty0 471--487, 2015.

\bibitem[Abbe et~al.(2017)Abbe, Fan, Wang, and Zhong]{abbe2017entrywise}
Emmanuel Abbe, Jianqing Fan, Kaizheng Wang, and Yiqiao Zhong.
\newblock Entrywise eigenvector analysis of random matrices with low expected
  rank.
\newblock \emph{arXiv preprint arXiv:1709.09565}, 2017.

\bibitem[Agarwal et~al.(2017)Agarwal, Bandeira, Koiliaris, and
  Kolla]{agarwal2017multisection}
Naman Agarwal, Afonso~S Bandeira, Konstantinos Koiliaris, and Alexandra Kolla.
\newblock Multisection in the stochastic block model using semidefinite
  programming.
\newblock In \emph{Compressed Sensing and its Applications}, pages 125--162.
  Springer, 2017.

\bibitem[Alon et~al.(2002)Alon, Krivelevich, and Vu]{alon2002concentration}
Noga Alon, Michael Krivelevich, and Van~H Vu.
\newblock On the concentration of eigenvalues of random symmetric matrices.
\newblock \emph{Israel Journal of Mathematics}, 131\penalty0 (1):\penalty0
  259--267, 2002.

\bibitem[Ames(2014)]{ames2014guaranteed}
Brendan~PW Ames.
\newblock Guaranteed clustering and biclustering via semidefinite programming.
\newblock \emph{Mathematical Programming}, 147\penalty0 (1-2):\penalty0
  429--465, 2014.

\bibitem[Amini and Levina(2018)]{amini2018semidefinite}
Arash~A Amini and Elizaveta Levina.
\newblock On semidefinite relaxations for the block model.
\newblock \emph{The Annals of Statistics}, 46\penalty0 (1):\penalty0 149--179,
  2018.

\bibitem[Bai and Silverstein(2010)]{bai2010spectral}
Zhidong Bai and Jack~W Silverstein.
\newblock \emph{Spectral analysis of large dimensional random matrices},
  volume~20.
\newblock Springer, 2010.

\bibitem[Balakrishnan et~al.(2011)Balakrishnan, Xu, Krishnamurthy, and
  Singh]{balakrishnan2011noise}
Sivaraman Balakrishnan, Min Xu, Akshay Krishnamurthy, and Aarti Singh.
\newblock Noise thresholds for spectral clustering.
\newblock In \emph{Advances in Neural Information Processing Systems}, pages
  954--962, 2011.

\bibitem[Bandeira(2018)]{bandeira2018random}
Afonso~S Bandeira.
\newblock Random laplacian matrices and convex relaxations.
\newblock \emph{Foundations of Computational Mathematics}, 18\penalty0
  (2):\penalty0 345--379, 2018.

\bibitem[Bandeira and Van~Handel(2016)]{bandeira2016sharp}
Afonso~S Bandeira and Ramon Van~Handel.
\newblock Sharp nonasymptotic bounds on the norm of random matrices with
  independent entries.
\newblock \emph{The Annals of Probability}, 44\penalty0 (4):\penalty0
  2479--2506, 2016.

\bibitem[Bickel and Chen(2009)]{bickel2009nonparametric}
Peter~J Bickel and Aiyou Chen.
\newblock A nonparametric view of network models and newman--girvan and other
  modularities.
\newblock \emph{Proceedings of the National Academy of Sciences}, 106\penalty0
  (50):\penalty0 21068--21073, 2009.

\bibitem[Boppana(1987)]{boppana1987eigenvalues}
Ravi~B Boppana.
\newblock Eigenvalues and graph bisection: An average-case analysis.
\newblock In \emph{28th Annual Symposium on Foundations of Computer Science
  (sfcs 1987)}, pages 280--285. IEEE, 1987.

\bibitem[Boucheron et~al.(2005)Boucheron, Bousquet, Lugosi, and
  Massart]{boucheron2005moment}
St{\'e}phane Boucheron, Olivier Bousquet, G{\'a}bor Lugosi, and Pascal Massart.
\newblock Moment inequalities for functions of independent random variables.
\newblock \emph{The Annals of Probability}, 33\penalty0 (2):\penalty0 514--560,
  2005.

\bibitem[Boucheron et~al.(2013)Boucheron, Lugosi, and
  Massart]{boucheron2013concentration}
St{\'e}phane Boucheron, G{\'a}bor Lugosi, and Pascal Massart.
\newblock \emph{Concentration inequalities: A nonasymptotic theory of
  independence}.
\newblock Oxford university press, 2013.

\bibitem[Bui et~al.(1987)Bui, Chaudhuri, Leighton, and Sipser]{bui1987graph}
Thang~Nguyen Bui, Soma Chaudhuri, Frank~Thomson Leighton, and Michael Sipser.
\newblock Graph bisection algorithms with good average case behavior.
\newblock \emph{Combinatorica}, 7\penalty0 (2):\penalty0 171--191, 1987.

\bibitem[Cand{\`e}s and Recht(2009)]{candes2009exact}
Emmanuel~J Cand{\`e}s and Benjamin Recht.
\newblock Exact matrix completion via convex optimization.
\newblock \emph{Foundations of Computational mathematics}, 9\penalty0
  (6):\penalty0 717, 2009.

\bibitem[Cape et~al.(2019{\natexlab{a}})Cape, Tang, and Priebe]{cape2019signal}
Joshua Cape, Minh Tang, and Carey~E Priebe.
\newblock Signal-plus-noise matrix models: eigenvector deviations and
  fluctuations.
\newblock \emph{Biometrika}, 106\penalty0 (1):\penalty0 243--250,
  2019{\natexlab{a}}.

\bibitem[Cape et~al.(2019{\natexlab{b}})Cape, Tang, Priebe,
  et~al.]{cape2019two}
Joshua Cape, Minh Tang, Carey~E Priebe, et~al.
\newblock The two-to-infinity norm and singular subspace geometry with
  applications to high-dimensional statistics.
\newblock \emph{The Annals of Statistics}, 47\penalty0 (5):\penalty0
  2405--2439, 2019{\natexlab{b}}.

\bibitem[Carson and Impagliazzo(2001)]{carson2001hill}
Ted Carson and Russell Impagliazzo.
\newblock Hill-climbing finds random planted bisections.
\newblock In \emph{Proceedings of the twelfth annual ACM-SIAM symposium on
  Discrete algorithms}, pages 903--909. Society for Industrial and Applied
  Mathematics, 2001.

\bibitem[Chen et~al.(2014)Chen, Sanghavi, and Xu]{chen2014improved}
Yudong Chen, Sujay Sanghavi, and Huan Xu.
\newblock Improved graph clustering.
\newblock \emph{IEEE Transactions on Information Theory}, 60\penalty0
  (10):\penalty0 6440--6455, 2014.

\bibitem[Chen et~al.(2018)Chen, Li, Xu, et~al.]{chen2018convexified}
Yudong Chen, Xiaodong Li, Jiaming Xu, et~al.
\newblock Convexified modularity maximization for degree-corrected stochastic
  block models.
\newblock \emph{The Annals of Statistics}, 46\penalty0 (4):\penalty0
  1573--1602, 2018.

\bibitem[Chin et~al.(2015)Chin, Rao, and Vu]{chin2015stochastic}
Peter Chin, Anup Rao, and Van Vu.
\newblock Stochastic block model and community detection in sparse graphs: A
  spectral algorithm with optimal rate of recovery.
\newblock In \emph{Conference on Learning Theory}, pages 391--423, 2015.

\bibitem[Choi et~al.(2012)Choi, Wolfe, and Airoldi]{choi2012stochastic}
David~S Choi, Patrick~J Wolfe, and Edoardo~M Airoldi.
\newblock Stochastic blockmodels with a growing number of classes.
\newblock \emph{Biometrika}, 99\penalty0 (2):\penalty0 273--284, 2012.

\bibitem[Coja-Oghlan(2010)]{coja2010graph}
Amin Coja-Oghlan.
\newblock Graph partitioning via adaptive spectral techniques.
\newblock \emph{Combinatorics, Probability and Computing}, 19\penalty0
  (2):\penalty0 227--284, 2010.

\bibitem[Condon and Karp(2001)]{condon2001algorithms}
Anne Condon and Richard~M Karp.
\newblock Algorithms for graph partitioning on the planted partition model.
\newblock \emph{Random Structures \& Algorithms}, 18\penalty0 (2):\penalty0
  116--140, 2001.

\bibitem[Damle and Sun(2019)]{damle2019uniform}
Anil Damle and Yuekai Sun.
\newblock Uniform bounds for invariant subspace perturbations.
\newblock \emph{arXiv preprint arXiv:1905.07865}, 2019.

\bibitem[Dasgupta et~al.(2006)Dasgupta, Hopcroft, Kannan, and
  Mitra]{dasgupta2006spectral}
Anirban Dasgupta, John Hopcroft, Ravi Kannan, and Pradipta Mitra.
\newblock Spectral clustering by recursive partitioning.
\newblock In \emph{European Symposium on Algorithms}, pages 256--267. Springer,
  2006.

\bibitem[Davis and Kahan(1970)]{davis1970rotation}
Chandler Davis and William~Morton Kahan.
\newblock The rotation of eigenvectors by a perturbation. iii.
\newblock \emph{SIAM Journal on Numerical Analysis}, 7\penalty0 (1):\penalty0
  1--46, 1970.

\bibitem[Dyer and Frieze(1989)]{dyer1989solution}
Martin~E. Dyer and Alan~M. Frieze.
\newblock The solution of some random np-hard problems in polynomial expected
  time.
\newblock \emph{Journal of Algorithms}, 10\penalty0 (4):\penalty0 451--489,
  1989.

\bibitem[Eldridge et~al.(2017)Eldridge, Belkin, and
  Wang]{eldridge2017unperturbed}
Justin Eldridge, Mikhail Belkin, and Yusu Wang.
\newblock Unperturbed: spectral analysis beyond davis-kahan.
\newblock \emph{arXiv preprint arXiv:1706.06516}, 2017.

\bibitem[Erd\"{o}s et~al.(2013)Erd\"{o}s, Knowles, Yau, and
  Yin]{erdos2013spectral}
L\'{a}szl\'{o} Erd\"{o}s, Antti Knowles, Horng-Tzer Yau, and Jun Yin.
\newblock Spectral statistics of erd\"{o}s--r\'{e}nyi graphs i: local
  semicircle law.
\newblock \emph{The Annals of Probability}, 41\penalty0 (3B):\penalty0
  2279--2375, 2013.

\bibitem[Fan et~al.(2018)Fan, Wang, and Zhong]{fan2018eigenvector}
Jianqing Fan, Weichen Wang, and Yiqiao Zhong.
\newblock An $\ell_{\infty}$ eigenvector perturbation bound and its application
  to robust covariance estimation.
\newblock \emph{Journal of Machine Learning Research}, 18\penalty0
  (207):\penalty0 1--42, 2018.

\bibitem[Fei and Chen(2018)]{fei2018exponential}
Yingjie Fei and Yudong Chen.
\newblock Exponential error rates of sdp for block models: Beyond
  grothendieck’s inequality.
\newblock \emph{IEEE Transactions on Information Theory}, 65\penalty0
  (1):\penalty0 551--571, 2018.

\bibitem[Gao et~al.(2017)Gao, Ma, Zhang, and Zhou]{gao2017achieving}
Chao Gao, Zongming Ma, Anderson~Y Zhang, and Harrison~H Zhou.
\newblock Achieving optimal misclassification proportion in stochastic block
  models.
\newblock \emph{The Journal of Machine Learning Research}, 18\penalty0
  (1):\penalty0 1980--2024, 2017.

\bibitem[Giesen and Mitsche(2005)]{giesen2005reconstructing}
Joachim Giesen and Dieter Mitsche.
\newblock Reconstructing many partitions using spectral techniques.
\newblock In \emph{International Symposium on Fundamentals of Computation
  Theory}, pages 433--444. Springer, 2005.

\bibitem[Gu{\'e}don and Vershynin(2016)]{guedon2016community}
Olivier Gu{\'e}don and Roman Vershynin.
\newblock Community detection in sparse networks via grothendieck’s
  inequality.
\newblock \emph{Probability Theory and Related Fields}, 165\penalty0
  (3-4):\penalty0 1025--1049, 2016.

\bibitem[Hajek et~al.(2016)Hajek, Wu, and Xu]{hajek2016achieving}
Bruce Hajek, Yihong Wu, and Jiaming Xu.
\newblock Achieving exact cluster recovery threshold via semidefinite
  programming: Extensions.
\newblock \emph{IEEE Transactions on Information Theory}, 62\penalty0
  (10):\penalty0 5918--5937, 2016.

\bibitem[Ipsen and Meyer(1994)]{ipsen1994uniform}
Ilse~CF Ipsen and Carl~D Meyer.
\newblock Uniform stability of markov chains.
\newblock \emph{SIAM Journal on Matrix Analysis and Applications}, 15\penalty0
  (4):\penalty0 1061--1074, 1994.

\bibitem[Jerrum and Sorkin(1998)]{jerrum1998metropolis}
Mark Jerrum and Gregory~B Sorkin.
\newblock The metropolis algorithm for graph bisection.
\newblock \emph{Discrete Applied Mathematics}, 82\penalty0 (1-3):\penalty0
  155--175, 1998.

\bibitem[Joseph and Yu(2016)]{joseph2016impact}
Antony Joseph and Bin Yu.
\newblock Impact of regularization on spectral clustering.
\newblock \emph{The Annals of Statistics}, 44\penalty0 (4):\penalty0
  1765--1791, 2016.

\bibitem[Kannan et~al.(2004)Kannan, Vempala, and Vetta]{kannan2004clusterings}
Ravi Kannan, Santosh Vempala, and Adrian Vetta.
\newblock On clusterings: Good, bad and spectral.
\newblock \emph{Journal of the ACM (JACM)}, 51\penalty0 (3):\penalty0 497--515,
  2004.

\bibitem[Kato(1949)]{kato1949convergence}
Tosio Kato.
\newblock On the convergence of the perturbation method. i.
\newblock \emph{Progress of Theoretical Physics}, 4:\penalty0 514--523, 1949.

\bibitem[Kato(2013)]{kato2013perturbation}
Tosio Kato.
\newblock \emph{Perturbation theory for linear operators}, volume 132.
\newblock Springer Science \& Business Media, 2013.

\bibitem[Lata{\l}a et~al.(2018)Lata{\l}a, van Handel, and
  Youssef]{latala2018dimension}
Rafa{\l} Lata{\l}a, Ramon van Handel, and Pierre Youssef.
\newblock The dimension-free structure of nonhomogeneous random matrices.
\newblock \emph{Inventiones mathematicae}, 214\penalty0 (3):\penalty0
  1031--1080, 2018.

\bibitem[Le and Levina(2015)]{le2015estimating}
Can~M Le and Elizaveta Levina.
\newblock Estimating the number of communities in networks by spectral methods.
\newblock \emph{arXiv preprint arXiv:1507.00827}, 2015.

\bibitem[Lei and Rinaldo(2015)]{lei2015consistency}
Jing Lei and Alessandro Rinaldo.
\newblock Consistency of spectral clustering in stochastic block models.
\newblock \emph{The Annals of Statistics}, 43\penalty0 (1):\penalty0 215--237,
  2015.

\bibitem[Li et~al.(2018{\natexlab{a}})Li, Lei, Bhattacharyya, Sarkar, Bickel,
  and Levina]{li2018hierarchical}
Tianxi Li, Lihua Lei, Sharmodeep Bhattacharyya, Purnamrita Sarkar, Peter~J
  Bickel, and Elizaveta Levina.
\newblock Hierarchical community detection by recursive bi-partitioning.
\newblock \emph{arXiv preprint arXiv:1810.01509}, 2018{\natexlab{a}}.

\bibitem[Li et~al.(2018{\natexlab{b}})Li, Chen, and Xu]{li2018convex}
Xiaodong Li, Yudong Chen, and Jiaming Xu.
\newblock Convex relaxation methods for community detection.
\newblock \emph{arXiv preprint arXiv:1810.00315}, 2018{\natexlab{b}}.

\bibitem[Lugosi et~al.(2018)Lugosi, Mendelson, and
  Zhivotovskiy]{lugosi2018concentration}
G{\'a}bor Lugosi, Shahar Mendelson, and Nikita Zhivotovskiy.
\newblock Concentration of the spectral norm of erd\"{o}s-r\'{e}nyi random
  graphs.
\newblock \emph{arXiv preprint arXiv:1801.02157}, 2018.

\bibitem[Mao et~al.(2017)Mao, Sarkar, and Chakrabarti]{mao2017estimating}
Xueyu Mao, Purnamrita Sarkar, and Deepayan Chakrabarti.
\newblock Estimating mixed memberships with sharp eigenvector deviations.
\newblock \emph{arXiv preprint arXiv:1709.00407}, 2017.

\bibitem[Massouli{\'e}(2014)]{massoulie2014community}
Laurent Massouli{\'e}.
\newblock Community detection thresholds and the weak ramanujan property.
\newblock In \emph{Proceedings of the forty-sixth annual ACM symposium on
  Theory of computing}, pages 694--703. ACM, 2014.

\bibitem[McSherry(2001)]{mcsherry2001spectral}
Frank McSherry.
\newblock Spectral partitioning of random graphs.
\newblock In \emph{Proceedings 42nd IEEE Symposium on Foundations of Computer
  Science}, pages 529--537. IEEE, 2001.

\bibitem[Mitra(2009)]{mitra2009entrywise}
Pradipta Mitra.
\newblock Entrywise bounds for eigenvectors of random graphs.
\newblock \emph{the electronic journal of combinatorics}, 16\penalty0
  (1):\penalty0 131, 2009.

\bibitem[Mossel et~al.(2014)Mossel, Neeman, and Sly]{mossel2014consistency}
Elchanan Mossel, Joe Neeman, and Allan Sly.
\newblock Consistency thresholds for binary symmetric block models.
\newblock \emph{arXiv preprint arXiv:1407.1591}, 3\penalty0 (5), 2014.

\bibitem[O'cinneide(1993)]{o1993entrywise}
Colm~Art O'cinneide.
\newblock Entrywise perturbation theory and error analysis for markov chains.
\newblock \emph{Numerische Mathematik}, 65\penalty0 (1):\penalty0 109--120,
  1993.

\bibitem[Oliveira(2009)]{oliveira2009concentration}
Roberto~Imbuzeiro Oliveira.
\newblock Concentration of the adjacency matrix and of the laplacian in random
  graphs with independent edges.
\newblock \emph{arXiv preprint arXiv:0911.0600}, 2009.

\bibitem[Oymak and Hassibi(2011)]{oymak2011finding}
Samet Oymak and Babak Hassibi.
\newblock Finding dense clusters via" low rank+ sparse" decomposition.
\newblock \emph{arXiv preprint arXiv:1104.5186}, 2011.

\bibitem[Paulin(2012)]{paulin2012concentration}
Daniel Paulin.
\newblock Concentration inequalities in locally dependent spaces.
\newblock \emph{arXiv preprint arXiv:1212.2013}, 2012.

\bibitem[Paulsen(2002)]{paulsen2002completely}
Vern Paulsen.
\newblock \emph{Completely bounded maps and operator algebras}, volume~78.
\newblock Cambridge University Press, 2002.

\bibitem[Rebrova(201)]{rebrova2018spectral}
Elizaveta Rebrova.
\newblock \emph{Spectral Properties of Heavy-Tailed Random Matrices}.
\newblock PhD thesis, University of Michigan, 201.

\bibitem[Rohe et~al.(2011)Rohe, Chatterjee, Yu, et~al.]{rohe2011spectral}
Karl Rohe, Sourav Chatterjee, Bin Yu, et~al.
\newblock Spectral clustering and the high-dimensional stochastic blockmodel.
\newblock \emph{The Annals of Statistics}, 39\penalty0 (4):\penalty0
  1878--1915, 2011.

\bibitem[Shamir and Tsur(2007)]{shamir2007improved}
Ron Shamir and Dekal Tsur.
\newblock Improved algorithms for the random cluster graph model.
\newblock \emph{Random Structures \& Algorithms}, 31\penalty0 (4):\penalty0
  418--449, 2007.

\bibitem[Shi and Malik(2000)]{shi2000normalized}
Jianbo Shi and Jitendra Malik.
\newblock Normalized cuts and image segmentation.
\newblock \emph{Pattern Analysis and Machine Intelligence, IEEE Transactions
  on}, 22\penalty0 (8):\penalty0 888--905, 2000.

\bibitem[Snijders and Nowicki(1997)]{snijders1997estimation}
Tom~AB Snijders and Krzysztof Nowicki.
\newblock Estimation and prediction for stochastic blockmodels for graphs with
  latent block structure.
\newblock \emph{Journal of classification}, 14\penalty0 (1):\penalty0 75--100,
  1997.

\bibitem[Spielman and Teng(1996)]{spielman1996spectral}
Daniel~A Spielman and Shang-Hua Teng.
\newblock Spectral partitioning works: Planar graphs and finite element meshes.
\newblock In \emph{Foundations of Computer Science, 1996. Proceedings., 37th
  Annual Symposium on}, pages 96--105. IEEE, 1996.

\bibitem[Stewart(1990)]{stewart1990matrix}
Gilbert~W Stewart.
\newblock \emph{Matrix perturbation theory}.
\newblock Citeseer, 1990.

\bibitem[Su et~al.(2019)Su, Wang, and Zhang]{su2019strong}
Liangjun Su, Wuyi Wang, and Yichong Zhang.
\newblock Strong consistency of spectral clustering for stochastic block
  models.
\newblock \emph{arXiv preprint arXiv:1710.06191}, 2019.

\bibitem[Vershynin(2010)]{vershynin2010introduction}
Roman Vershynin.
\newblock Introduction to the non-asymptotic analysis of random matrices.
\newblock \emph{arXiv preprint arXiv:1011.3027}, 2010.

\bibitem[Von~Luxburg(2007)]{von2007tutorial}
Ulrike Von~Luxburg.
\newblock A tutorial on spectral clustering.
\newblock \emph{Statistics and computing}, 17\penalty0 (4):\penalty0 395--416,
  2007.

\bibitem[Vu(2018)]{vu2018simple}
Van Vu.
\newblock A simple svd algorithm for finding hidden partitions.
\newblock \emph{Combinatorics, Probability and Computing}, 27\penalty0
  (1):\penalty0 124--140, 2018.

\bibitem[Weyl(1912)]{weyl1912asymptotische}
Hermann Weyl.
\newblock Das asymptotische verteilungsgesetz der eigenwerte linearer
  partieller differentialgleichungen (mit einer anwendung auf die theorie der
  hohlraumstrahlung).
\newblock \emph{Mathematische Annalen}, 71\penalty0 (4):\penalty0 441--479,
  1912.

\bibitem[Yu et~al.(2014)Yu, Wang, and Samworth]{yu2014useful}
Yi~Yu, Tengyao Wang, and Richard~J Samworth.
\newblock A useful variant of the davis--kahan theorem for statisticians.
\newblock \emph{Biometrika}, 102\penalty0 (2):\penalty0 315--323, 2014.

\bibitem[Yun and Proutiere(2014)]{yun2014accurate}
Se-Young Yun and Alexandre Proutiere.
\newblock Accurate community detection in the stochastic block model via
  spectral algorithms.
\newblock \emph{arXiv preprint arXiv:1412.7335}, 2014.

\bibitem[Yun and Proutiere(2016)]{yun2016optimal}
Se-Young Yun and Alexandre Proutiere.
\newblock Optimal cluster recovery in the labeled stochastic block model.
\newblock In \emph{Advances in Neural Information Processing Systems}, pages
  965--973, 2016.

\bibitem[Zhong and Boumal(2018)]{zhong2018near}
Yiqiao Zhong and Nicolas Boumal.
\newblock Near-optimal bounds for phase synchronization.
\newblock \emph{SIAM Journal on Optimization}, 28\penalty0 (2):\penalty0
  989--1016, 2018.

\end{thebibliography}
\bibliographystyle{plainnat}

\newpage
\appendix
\section{Proof of Theorem \ref{thm:generic_bound}}\label{app:generic_bound}
The proof is very involved, so we split the proof into six steps.

\subsection{Notation}\label{subsec:notation}
Let $\O^{r}$ denote the space of all $r\times r$ orthogonal matrices and $\one_{n}$ denote a $n$-dimensional vector with all entries $1$. 
For any vector $x$, let $\|x\|_{p}$ denotes its $p$-norm. For any matrix $M$, denote by $M_{k}^{T}$ the $m$-th row of $M$, by $\|M\|_{\op}$ its operator norm and by $\|M\|_{\mathrm{F}}$ its Frobenius norm. Moreover, for any $p, q\in [1, \infty]$, let
\[\|M\|_{p\rightarrow q} = \sup_{\omega: \|\omega\|_{p} = 1}\|M\omega\|_{q}.\]
In particular,
\[\mnorm{M} = \max_{k}\|M_{k}\|_{2}.\]
Suppose $U\Sigma V^{T}$ is the singular value decomposition of $M$. When $M$ is a square matrix, we define the matrix sign as
\[\sign(M) = UV^{T}.\]
By definition, $\sign(M)$ is orthogonal. When $n = 1$, $M$ is a scalar and $\sign(M)$ reduces to the classical sign of scalars. Further we denote by $\lambda_{\max}(M)$ (resp. $\lambda_{\min}(M)$) the largest (resp. the smallest) eigenvalue of $M$ in absolute values and by $\kappa(M)$ the condition number $\lambda_{\max}(M) / \lambda_{\min}(M)$. We say a square matrix $M$ positive semi-definite (psd) if all eigenvalues of $M$ are non-negative. In particular, we write $\lambda_{\max}(\Lambda^{*})$ (resp. $\lambda_{\min}(\Lambda^{*})$) as $\lambda_{\min}^{*}$ (resp. $\lambda_{\max}^{*}$) and $\kappa(\Lambda^{*})$ as $\kappa^{*}$ for short.

For any matrices $U, Z\in \R^{n\times r}$ with orthonormal columns, let $\Theta$ denote the principal angle matrix between the two subspaces spanned by $U$ and $Z$, such that $U^{T}Z$ has the singular value decomposition $U^{T}Z = \bar{U}(\cos \Theta) \bar{V}^{T}$ where $\Theta = \diag(\theta_{1}, \ldots, \theta_{r})$ with $\theta_{j}\in [0, \frac{\pi}{2}]$.

For any Hermitian matrices $B_{1}, B_{2}\in \R^{n\times n}$, let $\lambda_{1}(B_{j})\ge \lambda_{2}(B_{j})\ge \ldots \ge \lambda_{n}(B_{j})$ be the eigenvalues of $B_{j} (j = 1, 2)$. Let 
\[\sep_{s+1, s+r}(B_{1}, B_{2}) = \min\{|\lambda_{i}(B_{1}) - \lambda_{j}(B_{2})|: i\not\in \{s + 1, \ldots, s + r\}, j\in \{s + 1, \ldots, s + r\}\}.\]
Note that $\sep_{s+1, s+r}$ is not symmetric in the sense that $\sep_{s+1, s+r}(B_{1}, B_{2})\not = \sep_{s+1, s+r}(B_{2}, B_{1})$. When $B_{1} = B_{2} = B$, we write it as $\sep_{s + 1, s + r}(B)$ for short.

\subsection{Preparation: preliminary properties}\label{subapp:preparation}
When $r = 1$, $U$ and $U^{*}$ are vectors and it is straightforward to show that
\[d_{2\rightarrow \infty}(U, U^{*}) = \mnorm{U\sign(U^{T}U^{*}) - U^{*}}.\]
This motivates us to consider an upper bound of $d_{2\rightarrow \infty}(U, U^{*})$ as
\begin{equation}\label{eq:UUstar}
d_{2\rightarrow \infty}(U, U^{*}) \le \mnorm{U\sign(H) - U^{*}},
\end{equation}
where
\begin{equation}
  \label{eq:H}
  H = U^{T}U^{*}.
\end{equation}
Similarly for distance between $U$ and $AU^{*}(\Lambda^{*})^{-1}$, we consider the upper bound
\begin{equation}\label{eq:UAUstar}
  d_{2\rightarrow \infty}(U, AU^{*}(\Lambda^{*})^{-1}) \le \mnorm{U\sign(H) - AU^{*}(\Lambda^{*})^{-1}}.
\end{equation}
This was also considered in \cite{abbe2017entrywise}. Our goal is to derive upper bounds for \eqref{eq:UAUstar} and \eqref{eq:UUstar}.


Finally, let $A^{(1)}, \ldots, A^{(n)}$ be $n$ auxiliary matrices that satisfy the following condition, as the deterministic analogue of assumption \textbf{A}1 with $S = [r]$. 
\begin{enumerate}[\textbf{C}0]
\item There exists $L_{1}, L_{2}, L_{3}$ such that for all $k$,
\[\|A^{(k)} - A\|_{\op}\le L_{1}, \quad \frac{\|(A^{(k)} - A)U\|_{\op}}{\lambda_{\min}^{*}}\le \lb \kappa^{*}L_{2} + L_{3}\rb \mnorm{U}.\]
\end{enumerate}
Similarly we define $\Lambda^{(k)}$ as the diagonal matrix given by the $(s + 1)$-th to the $(s + r)$-th largest eigenvalues and $U^{(k)}\in \R^{n\times r}$ as a matrix of eigenvectors corresponding to $\Lambda^{(k)}$ i.e. 
\[A^{(k)}U^{(k)} = U^{(k)}\Lambda^{(k)}.\]
Further let
\[H^{(k)} = (U^{(k)})^{T}U^{*}.\]
The following proposition provides a simple yet important property of eigen-separation. 
\begin{proposition}\label{prop:separation}
  For any Hermitian matrices $B_{1}, B_{2}\in \R^{n\times n}$,
\[\sep_{s+1, s+r}(B_{1}, B_{2})\ge \max \{\sep_{s+1, s+r}(B_{1}), \sep_{s+1, s+r}(B_{2})\} - \max_{i\in [s + 1, s + r]}|\lambda_{i}(B_{1}) - \lambda_{i}(B_{2})|.\]
\end{proposition}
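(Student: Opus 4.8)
The plan is to strip the definitions down to a single scalar inequality and prove it with one application of the triangle inequality, the only subtle point being the choice of pivot. Write $\mathcal{J}=\{s+1,\dots,s+r\}$ for the block of indices and abbreviate $\varepsilon=\max_{k\in\mathcal{J}}|\lambda_k(B_1)-\lambda_k(B_2)|$ for the block-restricted eigenvalue displacement appearing on the right-hand side. Since $\sep_{s+1,s+r}(B_1,B_2)$ is by definition the minimum of $|\lambda_i(B_1)-\lambda_j(B_2)|$ over all $i\notin\mathcal{J}$ and $j\in\mathcal{J}$, it suffices to show that each such quantity is at least $\sep_{s+1,s+r}(B_1)-\varepsilon$, and then to take the minimum over the admissible pairs $(i,j)$; the $\max$ on the right-hand side will come from running the same estimate with $B_1$ and $B_2$ exchanged.

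Fix $i\notin\mathcal{J}$ and $j\in\mathcal{J}$. For the bound against $\sep_{s+1,s+r}(B_1)$ I would pivot the triangle inequality at the $B_1$-eigenvalue carrying the \emph{in-block} index $j$:
\[
|\lambda_i(B_1)-\lambda_j(B_2)|\ \ge\ |\lambda_i(B_1)-\lambda_j(B_1)|-|\lambda_j(B_1)-\lambda_j(B_2)|\ \ge\ \sep_{s+1,s+r}(B_1)-\varepsilon .
\]
Here the first summand is bounded below using that $i\notin\mathcal{J}$, $j\in\mathcal{J}$, so $|\lambda_i(B_1)-\lambda_j(B_1)|$ is one of the quantities over which $\sep_{s+1,s+r}(B_1)$ is a minimum; and the second summand is at most $\varepsilon$ precisely because the common index $j$ lies in $\mathcal{J}$. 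Minimizing over $i\notin\mathcal{J}$, $j\in\mathcal{J}$ gives $\sep_{s+1,s+r}(B_1,B_2)\ge\sep_{s+1,s+r}(B_1)-\varepsilon$. The companion estimate $\sep_{s+1,s+r}(B_2,B_1)\ge\sep_{s+1,s+r}(B_2)-\varepsilon$ follows verbatim after interchanging $B_1$ and $B_2$ (note $\varepsilon$ is unchanged under this swap, while $\sep_{s+1,s+r}(\cdot,\cdot)$ is not); since the proposition is applied with the pair in whichever order is convenient, combining the two estimates and keeping the larger lower bound produces the claimed $\max\{\sep_{s+1,s+r}(B_1),\sep_{s+1,s+r}(B_2)\}-\varepsilon$.

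I do not expect a genuine obstacle here --- this is Weyl's perturbation inequality applied one eigenvalue at a time --- but there is a single point that must be handled correctly, and it is exactly what makes the bound sharper than the classical $\sep_{s+1,s+r}(B_1,B_2)\ge\sep_{s+1,s+r}(B_1)-\|B_1-B_2\|_{\op}$. The pivot in the triangle inequality must be routed through an index lying \emph{inside} the block $\mathcal{J}$, so that the displacement incurred is of the form $|\lambda_k(B_1)-\lambda_k(B_2)|$ with $k\in\mathcal{J}$, hence at most $\varepsilon$. Pivoting instead through the out-of-block index $i$ would leave a displacement $|\lambda_i(B_1)-\lambda_i(B_2)|$, which is only controlled by $\|B_1-B_2\|_{\op}$ and would recover merely the weaker, operator-norm version. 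The bookkeeping --- tracking which of $i$ and $j$ sits in $\mathcal{J}$, and correspondingly which matrix must supply the in-block eigenvalue --- is therefore the entire substance of the argument.
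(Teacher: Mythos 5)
Your first bound, $\sep_{s+1,s+r}(B_1,B_2)\ge \sep_{s+1,s+r}(B_1)-\varepsilon$ with $\varepsilon=\max_{k\in\{s+1,\dots,s+r\}}|\lambda_k(B_1)-\lambda_k(B_2)|$, is correct, and your insistence on pivoting the triangle inequality at the in-block index $j$ is exactly the right observation: only then is the incurred displacement an in-block one, hence at most $\varepsilon$. The gap is in the last step. Interchanging $B_1$ and $B_2$ gives a lower bound on $\sep_{s+1,s+r}(B_2,B_1)$, which, as you yourself note, is a \emph{different} quantity from $\sep_{s+1,s+r}(B_1,B_2)$; "keeping the larger lower bound" therefore does not yield the claimed $\max$. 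Indeed, the statement with the error term restricted to in-block indices is false: take $n=2$, $s=0$, $r=1$, $B_1=\diag(10,\,9.5)$, $B_2=\diag(10,\,0)$. Then $\sep_{1,1}(B_1,B_2)=|\lambda_2(B_1)-\lambda_1(B_2)|=0.5$, whereas $\max\{\sep_{1,1}(B_1),\sep_{1,1}(B_2)\}-|\lambda_1(B_1)-\lambda_1(B_2)|=10-0=10$. The obstruction is structural: to bring $\sep_{s+1,s+r}(B_2)$ into play you must replace $\lambda_i(B_1)$ by $\lambda_i(B_2)$ at the \emph{out-of-block} index $i$, and that displacement is controlled only by $\max_{i\notin\{s+1,\dots,s+r\}}|\lambda_i(B_1)-\lambda_i(B_2)|$ (or by $\|B_1-B_2\|_{\op}$ via Weyl), not by $\varepsilon$.

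For comparison, the paper's one-line proof pivots the other way, writing $|\lambda_i(B_1)-\lambda_j(B_2)|\ge|\lambda_i(B_2)-\lambda_j(B_2)|-|\lambda_i(B_1)-\lambda_i(B_2)|$, so its error term sits at the out-of-block index $i$; as written it only yields $\sep_{s+1,s+r}(B_1,B_2)\ge\sep_{s+1,s+r}(B_2)-\max_{i\notin\{s+1,\dots,s+r\}}|\lambda_i(B_1)-\lambda_i(B_2)|$ and likewise does not deliver the displayed proposition. So what is actually provable is a pair of one-sided bounds — yours, with $\sep_{s+1,s+r}(B_1)$ and the in-block error, and the paper's, with $\sep_{s+1,s+r}(B_2)$ and the out-of-block (or operator-norm) error — and the $\max$ with a purely in-block error term cannot be reached by either pivot. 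If you want a correct combined statement, the second half of the $\max$ must carry the larger error term $\max_{1\le i\le n}|\lambda_i(B_1)-\lambda_i(B_2)|\le\|B_1-B_2\|_{\op}$.
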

\begin{proof}
For any $i, j$,
\begin{align*}
  |\lambda_{i}(B_{1}) - \lambda_{j}(B_{2})|&\ge |\lambda_{i}(B_{2}) - \lambda_{j}(B_{2})| - |\lambda_{i}(B_{1}) - \lambda_{i}(B_{2})|.
\end{align*}
The proof is completed by considering all pairs of $i$ and $j$.
\end{proof}
Based on Proposition \ref{prop:separation}, we can derive the eigen-separation among $A$, $A^{*}$ and $A^{(k)}$.
\begin{lemma}\label{lem:separation}
Let $E$ be defined as in \eqref{eq:E}. Under condition \textbf{C}0, 
  \[\sep_{s+1, s+r}(A, A^{*})\ge \sep_{s+1, s+r}(A^{*}) - \maxnorm{\Lambda - \Lambda^{*}},\]
and for any $k$,
\[\sep_{s+1, s+r}(A^{(k)}, A)\ge \sep_{s+1, s+r}(A^{*}) - L_{1} - 2\maxnorm{\Lambda - \Lambda^{*}}.\]
\end{lemma}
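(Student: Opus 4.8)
The plan is to derive both inequalities from Proposition~\ref{prop:separation}, Weyl's inequality, and a pair of triangle inequalities, treating the two claims in sequence. The whole argument is deterministic and short; no probabilistic input is needed beyond what condition~\textbf{C}0 already provides.

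For the first bound I would invoke Proposition~\ref{prop:separation} with $B_1 = A$ and $B_2 = A^*$. Since $\lambda_i(A) = \lambda_i$ and $\lambda_i(A^*) = \lambda_i^*$ for $i \in \{s+1,\dots,s+r\}$, the correction term $\max_{i \in [s+1,s+r]} |\lambda_i(A) - \lambda_i(A^*)|$ is precisely $\maxnorm{\Lambda - \Lambda^*}$, the largest absolute diagonal entry of $\Lambda - \Lambda^*$. Dropping $\sep_{s+1, s+r}(A)$ from the maximum then gives $\sep_{s+1, s+r}(A, A^*) \ge \sep_{s+1, s+r}(A^*) - \maxnorm{\Lambda - \Lambda^*}$.

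For the second bound I would argue at the level of individual admissible index pairs. Fix $i \notin \{s+1,\dots,s+r\}$ and $j \in \{s+1,\dots,s+r\}$. Inserting $\lambda_i(A)$ and applying Weyl's inequality together with condition~\textbf{C}0 gives $|\lambda_i(A^{(k)}) - \lambda_j(A)| \ge |\lambda_i(A) - \lambda_j(A)| - \|A^{(k)} - A\|_{\op} \ge |\lambda_i(A) - \lambda_j(A)| - L_1$. Next, inserting $\lambda_j(A^*)$ and using that $j$ lies in the block, so that $|\lambda_j(A) - \lambda_j(A^*)| = |\lambda_j - \lambda_j^*| \le \maxnorm{\Lambda - \Lambda^*}$, gives $|\lambda_i(A) - \lambda_j(A)| \ge |\lambda_i(A) - \lambda_j(A^*)| - \maxnorm{\Lambda - \Lambda^*} \ge \sep_{s+1, s+r}(A, A^*) - \maxnorm{\Lambda - \Lambda^*}$, the last step because $(i,j)$ is an admissible pair in the definition of $\sep_{s+1, s+r}(A, A^*)$. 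Substituting the first bound and taking the minimum over all admissible $(i,j)$ then yields $\sep_{s+1, s+r}(A^{(k)}, A) \ge \sep_{s+1, s+r}(A^*) - L_1 - 2\maxnorm{\Lambda - \Lambda^*}$.

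The one point that needs care — and it is the only place the argument could go wrong — is to arrange the triangle inequalities so that every ``inserted'' eigenvalue index stays inside the block $\{s+1,\dots,s+r\}$, so that the attendant discrepancy is controlled by $\maxnorm{\Lambda - \Lambda^*}$ rather than by the coarser operator norm of $E = A - A^*$; the index outside the block is only ever compared between $A^{(k)}$ and $A$, where Weyl's inequality and \textbf{C}0 supply the $L_1$ term. Since $A$, $A^*$, $A^{(k)}$ are all Hermitian their eigenvalues are real, so Weyl's inequality and Proposition~\ref{prop:separation} apply verbatim; the remainder is bookkeeping, and I do not anticipate a genuine obstacle.
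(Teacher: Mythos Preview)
Your proof is correct and follows essentially the same route as the paper: both parts are obtained from Proposition~\ref{prop:separation}, Weyl's inequality, and triangle inequalities, with the in-block index always compared via $\maxnorm{\Lambda-\Lambda^*}$ and the out-of-block index compared via $\|A^{(k)}-A\|_{\op}\le L_1$. The only cosmetic difference is that the paper packages the second bound through the intermediate quantity $\sep_{s+1,s+r}(A)$, whereas you work directly at the level of individual admissible pairs $(i,j)$; the two arguments are line-for-line equivalent.
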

\begin{proof}
The first part is a direct result of Proposition \ref{prop:separation}. By definition, 
\[\|A^{(k)} - A\|_{\op}\le L_{1}.\]
The second part is then proved by noting that
\[\sep_{s+1, s+r}(A^{(k)}, A)\ge \sep_{s+1, s+r}(A) - \|A^{(k)} - A\|_{\op}\]
where the last inequality uses Weyl's inequality and
\[\sep_{s+1, s+r}(A) \ge \sep_{s+1, s+r}(A, A^{*}) - \maxnorm{\Lambda - \Lambda^{*}} \ge \sep_{s+1, s+r}(A^{*}) - 2\maxnorm{\Lambda - \Lambda^{*}}.\]
\end{proof}

Recall the definition of $\gap^{*}$ in \eqref{eq:gap} and let
\begin{equation}\label{eq:tdgap}
\tdgap = \frac{\gap^{*} - L_{1}}{2}.
\end{equation}
Note that the first term of $\tdgap$ is essentially the half eigen-gap if $0$ is an eigenvalue but not in $\Lambda^{*}$. Under assumption \textbf{A}4, $\tdgap$ has the same order as $\gap^{*}$. Throughout the rest of this section, we assume the following condition:
\begin{enumerate}[\textbf{C}1]
\item $\maxnorm{\Lambda - \Lambda^{*}}\le \tdgap / 2$ where $E$ is defined in \eqref{eq:E} and $\tdgap$ is defined in \eqref{eq:tdgap}.
\end{enumerate}

\begin{corollary}\label{cor:separation}
Under condition \textbf{C}0 and \textbf{C}1,
\[\min\{\sep_{s+1, s+r}(A, A^{*}), \sep_{s+1, s+r}(A^{(k)}, A)\}\ge \tdgap.\]
\end{corollary}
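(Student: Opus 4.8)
The plan is to combine Lemma \ref{lem:separation} with the definition of $\gap^{*}$ in \eqref{eq:gap} and the slack afforded by condition \textbf{C}1. The key elementary observation is that $\gap^{*} = \min\{\sep_{s+1,s+r}(A^{*}), \lambda_{\min}^{*}\}$ immediately gives $\sep_{s+1,s+r}(A^{*}) \ge \gap^{*}$, so Lemma \ref{lem:separation} can be rewritten with $\gap^{*}$ in place of $\sep_{s+1,s+r}(A^{*})$ on the right-hand side. It is also convenient to record that $\tdgap = (\gap^{*} - L_{1})/2$ rearranges to $\gap^{*} = 2\tdgap + L_{1}$, so in particular $\gap^{*}\ge 2\tdgap$.

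Next I would substitute the bound $\maxnorm{\Lambda - \Lambda^{*}} \le \tdgap/2$ from \textbf{C}1 into each of the two inequalities of Lemma \ref{lem:separation} (which applies since \textbf{C}0 is in force). For the first, $\sep_{s+1,s+r}(A, A^{*}) \ge \gap^{*} - \maxnorm{\Lambda-\Lambda^{*}} \ge 2\tdgap - \tdgap/2 = \tfrac{3}{2}\tdgap \ge \tdgap$. For the second, $\sep_{s+1,s+r}(A^{(k)}, A) \ge \gap^{*} - L_{1} - 2\maxnorm{\Lambda-\Lambda^{*}} \ge (2\tdgap + L_{1}) - L_{1} - \tdgap = \tdgap$. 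Taking the minimum of these two lower bounds yields the claimed inequality.

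There is no genuine obstacle here; the corollary is purely a matter of bookkeeping among $\gap^{*}$, $\tdgap$, $L_{1}$ and the $\tdgap/2$ budget allotted by \textbf{C}1, together with the trivial domination $\sep_{s+1,s+r}(A^{*})\ge\gap^{*}$. The only points requiring a moment of care are (i) citing Lemma \ref{lem:separation} correctly with $\gap^{*}$ substituted in, and (ii) confirming that the factor-of-two loss from the $A^{(k)}$ term is exactly absorbed by the definition $\tdgap=(\gap^{*}-L_{1})/2$.
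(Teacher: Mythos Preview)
Your proposal is correct and is exactly the intended argument: the paper does not spell out a proof for this corollary, but your combination of Lemma \ref{lem:separation}, the inequality $\sep_{s+1,s+r}(A^{*})\ge\gap^{*}$ from \eqref{eq:gap}, the identity $\gap^{*}=2\tdgap+L_{1}$ from \eqref{eq:tdgap}, and condition \textbf{C}1 is precisely the bookkeeping the reader is expected to fill in.
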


The above results on eigen-gaps allow us to apply Davis-Kahan Theorem \citep{davis1970rotation} to bound the discrepancy between the eigenspaces of $A$ and $A^{*}$. In particular, we use the following version of Davis-Kahan Theorem.

\begin{proposition}\label{prop:davis_kahan}\citep[Chap. V, Theorem 3.6][]{stewart1990matrix}
For any Hermitian matrix $B\in \R^{n\times n}$, $M\in \R^{r\times r}$ and any matrix $Z\in \R^{n\times r}$ with orthonormal columns, let $B$ have the spectral decomposition
\[
  \begin{bmatrix}
    U^{T} \\ \td{U}^{T}
  \end{bmatrix} B
  \begin{bmatrix}
    U & \td{U}
  \end{bmatrix}
 =
  \begin{bmatrix}
    \Lambda & 0\\
    0 & \td{\Lambda}
  \end{bmatrix}.\]
Assume that there exists some $\omega > 0$ and $a, b\in \R$, 
\[\eig(M)\subset [a, b], \quad \eig(\td{\Lambda})\subset \R \setminus [a - \omega, b + \omega],\]
where $\eig(\cdot)$ denote the set of all eigenvalues. Further let 
\[R = BZ - ZM\]
and $\Theta$ be the principal angle matrix between $U$ and $Z$. Then for any unitarily invariant norm $\|\cdot\|$, 
\[\|\sin \Theta\|\le \frac{\|R\|}{\omega}.\]
\end{proposition}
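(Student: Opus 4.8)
The plan is to run the classical three–move proof of the $\sin\Theta$ theorem: pass to the orthogonal complement, turn the residual identity into a Sylvester equation, and invert the Sylvester operator using the interval–separation hypothesis. The first move is to record the elementary fact from the CS decomposition that the singular values of $\td U^{T}Z$ are exactly $\sin\theta_{1},\dots,\sin\theta_{r}$. Indeed, $Z^{T}Z=I$ together with $UU^{T}+\td U\td U^{T}=I$ gives
\[(U^{T}Z)^{T}(U^{T}Z)+(\td U^{T}Z)^{T}(\td U^{T}Z)=I,\]
and since $U^{T}Z$ has singular values $\cos\theta_{j}$ and the two summands commute, the singular values of $\td U^{T}Z$ are $\sin\theta_{j}$; hence $\|\sin\Theta\|=\|\td U^{T}Z\|$ for every unitarily invariant norm, and it suffices to bound $\|\td U^{T}Z\|$.

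The second move is to left-multiply $R=BZ-ZM$ by $\td U^{T}$ and use $B\td U=\td U\td\Lambda$, i.e. $\td U^{T}B=\td\Lambda\td U^{T}$, to obtain
\[\td U^{T}R=\td\Lambda\,(\td U^{T}Z)-(\td U^{T}Z)\,M.\]
Writing $P=\td U^{T}Z$, this is a Sylvester equation $\td\Lambda P-PM=\td U^{T}R$, and since $\td U^{T}$ is a co-isometry one has $\|\td U^{T}R\|\le\|R\|$ for every unitarily invariant norm. Thus the theorem reduces to the operator lower bound
\[\|\td\Lambda X-XM\|\ \ge\ \omega\,\|X\|\qquad\text{for all }X,\]
because then $\|P\|\le\|\td U^{T}R\|/\omega\le\|R\|/\omega$ and $\|\sin\Theta\|=\|P\|$ closes the argument.

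The crux is this Sylvester bound, and it is exactly where the interval hypothesis enters. After the harmless shift $B\mapsto B-\tfrac{a+b}{2}I$, $M\mapsto M-\tfrac{a+b}{2}I$ (which changes neither $P$ nor the residual relation), the hypothesis becomes $\eig(M)\subset[-\rho,\rho]$ with $\rho=\tfrac{b-a}{2}$ and $|\mu|\ge\rho+\omega$ for every $\mu\in\eig(\td\Lambda)$; in particular $\td\Lambda$ is invertible with $\|\td\Lambda^{-1}\|_{\op}\le(\rho+\omega)^{-1}$. Iterating $X=\td\Lambda^{-1}G+\td\Lambda^{-1}XM$ (with $G=\td\Lambda X-XM$) yields the Neumann expansion $X=\sum_{k\ge0}\td\Lambda^{-(k+1)}G\,M^{k}$, which converges because $Y\mapsto\td\Lambda^{-1}YM$ has spectral radius at most $\rho/(\rho+\omega)<1$. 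When $M$ is normal — the situation in every application of the proposition in this paper, where $M$ is a symmetric Rayleigh quotient or a diagonal eigenvalue block — one has $\|M^{k}\|_{\op}\le\rho^{k}$, so for any unitarily invariant norm
\[\|X\|\ \le\ \|G\|\sum_{k\ge0}\|\td\Lambda^{-(k+1)}\|_{\op}\|M^{k}\|_{\op}\ \le\ \|G\|\sum_{k\ge0}\frac{\rho^{k}}{(\rho+\omega)^{k+1}}\ =\ \frac{\|G\|}{\omega};\]
equivalently one diagonalizes $\td\Lambda$ and $M$, recognizes the solution map as the Hadamard multiplier with entries $(\mu_{i}-\nu_{j})^{-1}$, splits it into rank-one pieces $\td\Lambda^{-(k+1)}(\,\cdot\,)M^{k}$ (each a contraction up to the scalar $\rho^{k}(\rho+\omega)^{-(k+1)}$), and sums the geometric series. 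For a genuinely non-normal $M$ the same conclusion follows from the contour representation $P=\frac{1}{2\pi i}\oint_{\Gamma}(\td\Lambda-zI)^{-1}\,(\td U^{T}R)\,(zI-M)^{-1}\,dz$ with $\Gamma$ separating $\eig(M)$ from $\eig(\td\Lambda)$ inside the gap, combined with a resolvent estimate on $\Gamma$; the interval geometry is precisely what lets one choose $\Gamma$ so that the constant comes out to $1/\omega$. I expect this Sylvester step to be the only real obstacle — the first two moves are bookkeeping — and the delicate point there is keeping the estimate uniform over all unitarily invariant norms rather than just the operator or Frobenius norm.
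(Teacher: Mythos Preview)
The paper does not prove this proposition; it is quoted verbatim from Stewart and Sun's textbook and used as a black box in the proof of Lemma~\ref{lem:davis_kahan}. Your three-step argument --- identify $\|\sin\Theta\|$ with $\|\td U^{T}Z\|$ via the CS identity, derive the Sylvester equation $\td\Lambda P-PM=\td U^{T}R$, and then invert the Sylvester operator using the interval separation --- is exactly the classical proof that appears in Stewart--Sun, so there is nothing to compare.

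Your handling of the Sylvester step is correct for normal $M$, and your remark that every invocation in this paper takes $M$ to be a diagonal eigenvalue block ($\Lambda^{*}$ or $\Lambda$) is accurate, so the Neumann-series bound you wrote is already sufficient for all downstream uses here. The only place your write-up is a bit loose is the non-normal case: the contour representation you sketch does give the result, but extracting the constant $1/\omega$ uniformly over all unitarily invariant norms from a resolvent integral requires more than ``choose $\Gamma$ in the gap'' --- one typically appeals to the fact that the Cauchy-type Schur multiplier $(\mu_i-\nu_j)^{-1}$ has Schur-multiplier norm exactly $1/\omega$ under interval separation (this is the Bhatia--Davis--McIntosh theorem). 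Since the paper never needs that generality, this is a cosmetic point.
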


\begin{proposition}\label{prop:sintheta}\citep[Chap. I, Theorem 5.5][]{stewart1990matrix}
Let $\Theta$ be the principal angle (matrix) between $U\in \R^{n\times r}$ and $Z\in \R^{n\times r} $, then
\[\|UU^{T} - ZZ^{T}\|_{\op} = \|\sin \Theta\|_{\op}.\]  
\end{proposition}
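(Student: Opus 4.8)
The plan is to reduce everything to the algebra of the two orthogonal projections $P = UU^{T}$ and $Q = ZZ^{T}$, whose ranges are the $r$-dimensional subspaces $\mathcal{U} = \mathrm{span}(U)$ and $\mathcal{Z} = \mathrm{span}(Z)$. Since $P-Q$ is symmetric, $\|P-Q\|_{\op}^{2} = \|(P-Q)^{2}\|_{\op}$, so it suffices to identify the spectrum of the positive semi-definite operator $(P-Q)^{2}$.

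First I would record the identity $(P-Q)^{2} = P + Q - PQ - QP$ and check, using $P^{2}=P$ and $Q^{2}=Q$, that $P(P-Q)^{2} = P - PQP = (P-Q)^{2}P$, and similarly that $(P-Q)^{2}$ commutes with $Q$. Hence $(P-Q)^{2}$ leaves both $\mathcal{U}$ and $\mathcal{U}^{\perp}$ invariant. Completing $U$ to an orthogonal matrix $[\,U\ \td U\,]$, I would then compute the two restrictions in coordinates: for a vector $Uc \in \mathcal{U}$ one gets $(P-Q)^{2}Uc = U\bigl(I_{r} - (U^{T}Z)(U^{T}Z)^{T}\bigr)c$ (using $U^{T}U=I_r$), and for a vector $\td U c \in \mathcal{U}^{\perp}$ one gets $(P-Q)^{2}\td U c = \td U (\td U^{T}Z)(\td U^{T}Z)^{T} c$ (using $U^{T}\td U=0$ and $\td U^{T}\td U = I$).

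The key input is the principal-angle bookkeeping. By definition the singular values of $U^{T}Z$ are $\cos\theta_{1},\dots,\cos\theta_{r}$, so the eigenvalues of $(U^{T}Z)(U^{T}Z)^{T}$ are $\cos^{2}\theta_{j}$, which gives eigenvalues $\sin^{2}\theta_{j}$ for the restriction of $(P-Q)^{2}$ to $\mathcal{U}$. For the complement, from $Z^{T}Z = I_{r}$ we have $(U^{T}Z)^{T}(U^{T}Z) + (\td U^{T}Z)^{T}(\td U^{T}Z) = I_{r}$, so $(\td U^{T}Z)^{T}(\td U^{T}Z)$ has eigenvalues $\sin^{2}\theta_{j}$; since $(\td U^{T}Z)(\td U^{T}Z)^{T}$ has the same nonzero eigenvalues, the restriction of $(P-Q)^{2}$ to $\mathcal{U}^{\perp}$ has spectrum $\{\sin^{2}\theta_{j}:\theta_{j}\neq 0\}$ together with zeros. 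Taking the maximum over both invariant pieces yields $\|(P-Q)^{2}\|_{\op} = \max_{j}\sin^{2}\theta_{j}$, hence $\|UU^{T}-ZZ^{T}\|_{\op} = \max_{j}|\sin\theta_{j}| = \|\sin\Theta\|_{\op}$.

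I expect the only delicate point to be the coordinate computation of the two restrictions and the cross-check that the nonzero part of the spectrum on $\mathcal{U}^{\perp}$ really coincides with the $\sin^{2}\theta_{j}$ — this is a miniature CS-decomposition argument — while everything else is routine projection algebra. As an alternative, one could quote the CS decomposition of the pair $(U,Z)$ directly and read the spectrum of $UU^{T}-ZZ^{T}$ off its block-diagonal normal form, but the projection-identity route above avoids explicitly setting up that block structure.
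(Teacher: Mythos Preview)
Your argument is correct: the commutation identity $P(P-Q)^{2}=(P-Q)^{2}P=P-PQP$ is right, the restrictions to $\mathcal{U}$ and $\mathcal{U}^{\perp}$ are computed correctly (on $\mathcal{U}^{\perp}$ you are implicitly using $(P-Q)^{2}x=(I-P)Qx$ for $x\in\ker P$, which gives $\td U(\td U^{T}Z)(\td U^{T}Z)^{T}c$), and the CS-style bookkeeping $(U^{T}Z)^{T}(U^{T}Z)+(\td U^{T}Z)^{T}(\td U^{T}Z)=I_{r}$ pins down both spectra as $\{\sin^{2}\theta_{j}\}$.

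There is nothing to compare against, however: the paper does not prove this proposition but simply quotes it from Stewart and Sun (Chapter~I, Theorem~5.5), so your projection-algebra derivation is supplying a proof where the paper relies on a citation. What you have written is essentially the standard textbook argument behind that reference.
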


\begin{lemma}\label{lem:davis_kahan}
Let $E$ be defined as in \eqref{eq:E}. Under condition \textbf{C}0 and \textbf{C}1, 
\begin{equation}\label{eq:davis_kahan_UUstar}  
\|UU^{T} - U^{*}(U^{*})^{T}\|_{\op}\le \frac{\|EU^{*}\|_{\op}}{\tdgap},
\end{equation}
and for any $k$,
\begin{equation}\label{eq:davis_kahan_UmU} 
\|U^{(k)}(U^{(k)})^{T} - UU^{T}\|_{\op}\le \frac{\lambda_{\min}^{*}(\kappa^{*}L_{2} + L_{3})}{\tdgap}\mnorm{U}.
\end{equation}
\end{lemma}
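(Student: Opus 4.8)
The plan is to apply Davis--Kahan (Proposition \ref{prop:davis_kahan}) twice: once to the pair $(A, A^{*})$ and once to the pair $(A^{(k)}, A)$, then convert the resulting $\sin\Theta$ bounds into the projection-distance bounds via Proposition \ref{prop:sintheta}, and finally simplify the residual terms using the eigen-separation estimates already established in Corollary \ref{cor:separation}.

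For \eqref{eq:davis_kahan_UUstar}: take $B = A$, $Z = U^{*}$, and $M = \Lambda^{*}$ in Proposition \ref{prop:davis_kahan}. Then the residual is $R = AU^{*} - U^{*}\Lambda^{*} = (A - A^{*})U^{*} = EU^{*}$, using $A^{*}U^{*} = U^{*}\Lambda^{*}$. The spectrum of $M = \Lambda^{*}$ lies in the interval spanned by $\lambda_{s+1}^{*},\dots,\lambda_{s+r}^{*}$, and the complementary eigenvalues of $A$ are separated from this interval by exactly $\sep_{s+1,s+r}(A, A^{*})$; hence by Corollary \ref{cor:separation} we may take $\omega = \tdgap$. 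Proposition \ref{prop:davis_kahan} then gives $\|\sin\Theta\|_{\op} \le \|EU^{*}\|_{\op}/\tdgap$, and Proposition \ref{prop:sintheta} converts the left-hand side to $\|UU^{T} - U^{*}(U^{*})^{T}\|_{\op}$, which is \eqref{eq:davis_kahan_UUstar}.

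For \eqref{eq:davis_kahan_UmU}: take $B = A^{(k)}$, $Z = U$, and $M = \Lambda$. The residual is $R = A^{(k)}U - U\Lambda = (A^{(k)} - A)U + AU - U\Lambda = (A^{(k)} - A)U$, since $AU = U\Lambda$. By condition \textbf{C}0, $\|(A^{(k)} - A)U\|_{\op} \le \lambda_{\min}^{*}(\kappa^{*}L_{2} + L_{3})\mnorm{U}$. The spectrum of $M = \Lambda$ consists of $\lambda_{s+1},\dots,\lambda_{s+r}$, and the complementary eigenvalues of $A^{(k)}$ are separated from it by $\sep_{s+1,s+r}(A^{(k)}, A)$, so again Corollary \ref{cor:separation} lets us take $\omega = \tdgap$. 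Then Proposition \ref{prop:davis_kahan} yields $\|\sin\Theta\|_{\op} \le \lambda_{\min}^{*}(\kappa^{*}L_{2} + L_{3})\mnorm{U}/\tdgap$, and Proposition \ref{prop:sintheta} (with $U^{(k)}$ playing the role of the eigenspace of $B = A^{(k)}$ and $U = Z$) gives $\|U^{(k)}(U^{(k)})^{T} - UU^{T}\|_{\op} \le \lambda_{\min}^{*}(\kappa^{*}L_{2} + L_{3})\mnorm{U}/\tdgap$.

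The only real subtlety — and the step I expect to require the most care — is verifying that the interval/gap hypotheses of Proposition \ref{prop:davis_kahan} are met with $\omega = \tdgap$ in each application: one must check that the ``outside'' eigenvalues of the perturbed matrix ($A$ in the first case, $A^{(k)}$ in the second) avoid the $\omega$-inflated interval around $\eig(M)$, which is precisely the content of the $\sep_{s+1,s+r}$ lower bounds in Corollary \ref{cor:separation}, and that $M$'s spectrum can be enclosed in a single interval $[a,b]$ (true since $\Lambda$, $\Lambda^{*}$ are diagonal with the relevant consecutive eigenvalues). Everything else is a direct substitution; no nontrivial computation is involved.
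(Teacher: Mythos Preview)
Your proposal is correct and mirrors the paper's proof essentially step for step: the same choices $B=A,\,Z=U^{*},\,M=\Lambda^{*}$ for \eqref{eq:davis_kahan_UUstar} and $B=A^{(k)},\,Z=U,\,M=\Lambda$ for \eqref{eq:davis_kahan_UmU}, the same residual computations $R=EU^{*}$ and $R=(A^{(k)}-A)U$, and the same appeal to Corollary \ref{cor:separation} for the gap and to Proposition \ref{prop:sintheta} for the conversion to projection distance. The subtlety you flag about the interval hypothesis is handled implicitly in the paper as well; there is no meaningful difference between the two arguments.
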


\begin{proof}
First let $B = A, Z = U^{*}, M = \Lambda^{*}$ in Proposition \ref{prop:davis_kahan}. Then by Proposition \ref{prop:davis_kahan} and Proposition \ref{prop:sintheta},
\[\|UU^{T} - U^{*}(U^{*})^{T}\|_{\op}\le \frac{\|AU^{*} - U^{*}\Lambda^{*}\|_{\op}}{\sep_{s+1, s+r}(A, A^{*})} = \frac{\|AU^{*} - A^{*}U^{*}\|_{\op}}{\sep_{s+1, s+r}(A, A^{*})} = \frac{\|E U^{*}\|_{\op}}{\sep_{s+1, s+r}(A, A^{*})}.\]
The proof of the first part is completed by Corollary \ref{cor:separation}.

~\\
\noindent For the second part, let $B = A^{(k)}, Z = U, M = \Lambda$. Then by Proposition \ref{prop:sintheta} and Proposition \ref{prop:sintheta},
\[\|U^{(k)}(U^{(k)})^{T} - UU^{T}\|_{\op}\le \frac{\|A^{(k)}U - U\Lambda\|_{\op}}{\sep_{s+1, s+r}(A^{(k)}, A)} = \frac{\|(A^{(k)} - A)U\|_{\op}}{\sep_{s+1, s+r}(A^{(k)}, A)}.\]
The proof is completed by condition \textbf{C}0 and Corollary \ref{cor:separation}.
\end{proof}

\subsection{Step I: a preliminary deterministic bound}\label{subapp:step1}
Throughout this subsection we assume that all eigenvalues are of the same sign, i.e. $\lambda_{s + 1}^{*} \lambda_{s + r}^{*} > 0$. In step V we deal with the general case. 

\begin{lemma}\label{lem:step1}
Assume that $\lambda_{s + 1}^{*}\lambda_{s + r}^{*} > 0$. Under condition \textbf{C}0 and \textbf{C}1,
\begin{align}\label{eq:deterministic1}
  &\mnorm{(U\sign(H) - AU^{*}(\Lambda^{*})^{-1})}\nonumber\\
\le &\beta \mnorm{U} + \frac{\mnorm{A^{*}(UH - U^{*})}}{\lambda_{\min}^{*}} + \frac{\max_{k}\|E_{k}^{T}(U^{(k)}H^{(k)} - U^{*})\|_{2}}{\lambda_{\min}^{*}},
\end{align}
where 
\begin{equation}
  \label{eq:beta}
  \beta \triangleq \frac{\|EU^{*}\|_{\op}^{2}}{\tdgap^{2}} + \frac{\|EU^{*}\|_{\op}}{\lambda_{\min}^{*}} + \frac{\mnorm{E}(\kappa^{*}L_{2}(\delta) + L_{3}(\delta))}{\tdgap}.
\end{equation}
\end{lemma}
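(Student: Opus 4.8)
The plan is to decompose $U\sign(H) - AU^*(\Lambda^*)^{-1}$ into three pieces that correspond exactly to the three terms on the right-hand side of \eqref{eq:deterministic1}, and to bound each piece separately. The starting point is the identity $AU^* = A^*U^* + EU^* = U^*\Lambda^* + EU^*$, which gives $AU^*(\Lambda^*)^{-1} = U^* + EU^*(\Lambda^*)^{-1}$. On the other side, since $AU = U\Lambda$, we have $U\sign(H) = AU\Lambda^{-1}\sign(H)$, but it is cleaner to write $U\sign(H) - AU^*(\Lambda^*)^{-1} = (U\sign(H) - U^*) - EU^*(\Lambda^*)^{-1}$ and then re-expand $U\sign(H) - U^*$. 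The natural route (this is the Abbe--Fan--Wang style expansion used in \cite{abbe2017entrywise}) is to write, row by row,
\[
U_k^T\sign(H) = \Lambda^{-1}(U^T A)_k \sign(H) = \Lambda^{-1}\sign(H)\cdot(\text{something}),
\]
and to substitute $A = A^{(k)} - (A^{(k)} - A)$ so that the contribution of row $k$ of $A$ — which is where the row-wise randomness of $E_k$ enters — gets paired with $U^{(k)}$, which is (nearly) independent of $E_k$ under \textbf{A}1/\textbf{C}0. Concretely I would aim for a decomposition of the form
\[
U\sign(H) - AU^*(\Lambda^*)^{-1} = \underbrace{(\text{eigenvalue/eigenspace misalignment terms})}_{\preceq \beta\,\mnorm{U}} \;+\; \underbrace{A^*(UH - U^*)(\Lambda^*)^{-1}}_{\text{second term}} \;+\; \underbrace{E(UH-U^*)(\Lambda^*)^{-1}}_{\text{handled via }A^{(k)}},
\]
where in the last term row $k$ is rewritten using $E_k^T(UH - U^*) = E_k^T(U^{(k)}H^{(k)} - U^*) + E_k^T(UH - U^{(k)}H^{(k)})$, the first summand giving the third term of \eqref{eq:deterministic1} and the second summand being absorbed into $\beta\mnorm{U}$ via the Davis--Kahan bound \eqref{eq:davis_kahan_UmU} of Lemma \ref{lem:davis_kahan}.

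For the misalignment terms collected into $\beta\mnorm{U}$, the three summands of $\beta$ in \eqref{eq:beta} should arise as follows: the $\|EU^*\|_{\op}^2/\tdgap^2$ term from a second-order $\sin\Theta$ contribution, i.e. $\|\sign(H) - H\|$ or $\|H H^T - I\|$ type quantities, which are controlled by $\|\sin\Theta\|_{\op}^2 \le (\|EU^*\|_{\op}/\tdgap)^2$ using \eqref{eq:davis_kahan_UUstar} and Proposition \ref{prop:sintheta}; the $\|EU^*\|_{\op}/\lambda_{\min}^*$ term from a first-order cross term of the shape $\|\Lambda^{-1} - \sign(H)(\Lambda^*)^{-1}\sign(H)^T\|$ combined with $\|EU^*\|_{\op}$, using Weyl's inequality $\maxnorm{\Lambda-\Lambda^*}\le\|EU^*\|_{\op}$; and the $\mnorm{E}(\kappa^*L_2 + L_3)/\tdgap$ term precisely from $E_k^T(UH - U^{(k)}H^{(k)})$ via $\|UH - U^{(k)}H^{(k)}\|_{\op}\le \|UU^T - U^{(k)}(U^{(k)})^T\|_{\op}$ and \eqref{eq:davis_kahan_UmU}, together with $\|E_k^T X\|_2 \le \mnorm{E}\cdot$(nothing here since we only use the operator norm of $X$ against a single row — actually $\|E_k^T X\|_2 \le \|E_k\|_2\|X\|_{\op} \le \mnorm{E}\|X\|_{\op}$). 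Throughout I would freely use $\lambda_{\min}^* = \lambda_{\min}(\Lambda^*)$, $\kappa^* = \kappa(\Lambda^*)$, $\|U\|_{\op} = \|U^*\|_{\op} = 1$, and the assumption $\lambda_{s+1}^*\lambda_{s+r}^*>0$ to ensure $\Lambda, \Lambda^*, \Lambda^{(k)}$ all have constant sign and $\sign(H)$ behaves like a genuine rotation aligning the two $r$-dimensional spaces (so that $\tdgap$ in \eqref{eq:tdgap} really is the relevant gap).

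The main obstacle I anticipate is the bookkeeping in isolating exactly the $A^*(UH-U^*)$ term rather than $A(UH-U^*)$ or $(A - \text{correction})(UH-U^*)$: one must split $A = A^* + E$ at the right moment so that the deterministic, structured part $A^*$ multiplies $UH - U^*$ (whose $\ell_{2\to\infty}$ norm is handled later, in Step II, via Kato's integral — hence the three-way minimum with $\xi_1,\xi_2,\xi_3$) while the stochastic part $E$ only ever hits $U^{(k)}H^{(k)} - U^*$ row-wise, exploiting the near-independence from \textbf{A}1. A secondary subtlety is controlling $\|H - \sign(H)\|_{\op}$ and $\|(U^{(k)})^TU^* - (\text{rotation})\|$ type differences uniformly in $k$; these are all $O(\|\sin\Theta\|^2)$ and feed the first term of $\beta$, but keeping the universal constants explicit (the paper advertises $C = 72$ downstream) requires care. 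I expect no genuinely hard inequality beyond Davis--Kahan plus Weyl; the difficulty is entirely in organizing the algebraic expansion so that every leftover term lands in one of the three advertised buckets.
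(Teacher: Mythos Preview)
Your proposal is essentially the paper's route: split off $U(\sign(H)-H)$, expand $UH - AU^*(\Lambda^*)^{-1}$ row-wise using $U\Lambda = AU$, then split $A = A^* + E$ and further split $E_k^T(UH-U^*)$ via $U^{(k)}H^{(k)}$. The first and third summands of $\beta$ arise exactly as you describe.

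The one place your plan is off is the middle term $\|EU^*\|_{\op}/\lambda_{\min}^*$. It does \emph{not} come from a Weyl-type bound on $\|\Lambda^{-1} - \sign(H)(\Lambda^*)^{-1}\sign(H)^T\|$; Weyl would only deliver $\|E\|_{\op}$, not $\|EU^*\|_{\op}$. In the paper's expansion the relevant piece is the commutator $U_k^T(H\Lambda^* - \Lambda H)(\Lambda^*)^{-1}$, and the key identity is
\[
H\Lambda^* - \Lambda H \;=\; U^T(U^*\Lambda^*) - (U^T A)U^* \;=\; U^T A^* U^* - U^T A U^* \;=\; -\,U^T E U^*,
\]
using $A^*U^* = U^*\Lambda^*$ and $U^T A = \Lambda U^T$. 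Hence $\|H\Lambda^* - \Lambda H\|_{\op} \le \|EU^*\|_{\op}$ directly, and $J_2 := \|U_k^T(H\Lambda^* - \Lambda H)\|_2 \le \mnorm{U}\,\|EU^*\|_{\op}$; dividing by $\lambda_{\min}^*$ from the $(\Lambda^*)^{-1}$ gives exactly the middle summand of $\beta$. Once you replace your Weyl step with this commutator identity, your outline matches the paper's proof line for line.
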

\begin{proof}
Without loss of generality we assume that $\lambda_{s + 1}^{*}\ge \lambda_{s + r}^{*} > 0$. Otherwise we replace $A$ (resp. $A^{*}, \Lambda, \Lambda^{*}$) by $-A$ (resp. $-A^{*}, -\Lambda, -\Lambda^{*}$). 

Applying the triangle inequality, we have
  \begin{equation*}
    \|(U\sign(H) - AU^{*}(\Lambda^{*})^{-1})_{k}\|_{2} \le \underbrace{\|U_{k}^{T}(\sign(H) - H)\|_{2}}_{J_{1}} + \|(UH - AU^{*}(\Lambda^{*})^{-1})_{k}\|_{2}.
  \end{equation*}
The second term can be further bounded as follows:
  \begin{align}
    \lefteqn{(UH - AU^{*}(\Lambda^{*})^{-1})_{k}^{T}} \nonumber\\
& = (UH\Lambda^{*}(\Lambda^{*})^{-1} - U\Lambda H(\Lambda^{*})^{-1})_{k}^{T} + (U\Lambda H(\Lambda^{*})^{-1} - AU^{*}(\Lambda^{*})^{-1})_{k}^{T}\nonumber\\
& \stackrel{(i)}{=} U_{k}^{T}(H\Lambda^{*} - \Lambda H)(\Lambda^{*})^{-1} + (AUH - AU^{*})_{k}^{T}(\Lambda^{*})^{-1}\nonumber\\
& = \left\{U_{k}^{T}(H\Lambda^{*} - \Lambda H) + A_{k}^{T}(UH - U^{*})\right\}(\Lambda^{*})^{-1}\nonumber\\
& = \left\{U_{k}^{T}(H\Lambda^{*} - \Lambda H)  + E_{k}^{T}(UH - U^{*}) + (A_{k}^{*})^{T}(UH - U^{*})\right\}(\Lambda^{*})^{-1}\nonumber\\
& = \left\{U_{k}^{T}(H\Lambda^{*} - \Lambda H)  + E_{k}^{T}(UH - U^{(k)}H^{(k)}) + E_{k}^{T}(U^{(k)}H^{(k)} - U^{*}) + (A_{k}^{*})^{T}(UH - U^{*})\right\}(\Lambda^{*})^{-1}\label{eq:main_trick}
\end{align}
where (i) uses the fact that $U\Lambda = AU$. Applying the triangle inequality again we obtain that
\begin{align*}
  \|(UH - AU^{*}(\Lambda^{*})^{-1})_{k}\|_{2}& \le \frac{1}{\lambda_{\min}^{*}}\bigg\{\underbrace{\|U_{k}^{T}(H\Lambda^{*} - \Lambda H)\|_{2}}_{J_{2}} + \underbrace{\|E_{k}^{T}(UH - U^{(k)}H^{(k)})\|_{2}}_{J_{3}}\\
& \qquad \qquad  + \|E_{k}^{T}(U^{(k)}H^{(k)} - U^{*})\|_{2} + \|(A_{k}^{*})^{T}(UH - U^{*})\|_{2}\bigg\}.
\end{align*}
We will derive bounds for $J_{1}$, $J_{2}$ and $J_{3}$ separately in the rest of the proof. 

~\\
\noindent \textbf{Step 1: Bounding $J_{1}$.} Let $H$ have the singular value decomposition $H = \bar{U}(\cos \Theta)\bar{V}^{T}$. Then
\begin{align*}
  \|H - \sign(H)\|_{\op} & = \|\bar{U} (I - \cos \Theta)\bar{V}^{T}\|_{\op} \le \|I - \cos \Theta\|_{\op}.
\end{align*}
For any $\theta\le \pi / 2$, 
\[1 - \cos\theta \le 1 - \cos^{2}\theta = \sin^{2}\theta.\]
Thus,
\[\|I - \cos \Theta\|_{\op}\le \|\sin \Theta\|_{\op}^{2}.\]
By Proposition \ref{prop:sintheta}, 
\[\|\sin \Theta\|_{\op}^{2} = \|UU^{T} - U^{*}(U^{*})^{T}\|_{\op}^{2}.\]
Finally by Lemma \ref{lem:davis_kahan} we obtain that
\begin{equation}
  \label{eq:J1}
  J_{1}\le \|U_{k}\|_{2}\|H - \sign(H)\|_{\op}\le \frac{\|EU^{*}\|_{\op}^{2}}{\tdgap^{2}}\|U_{k}\|_{2} \le \frac{\|EU^{*}\|_{\op}^{2}}{\tdgap^{2}}\mnorm{U}.
\end{equation}

~\\
\noindent \textbf{Step 2: Bounding $J_{2}$.} By definition, $U^{T}A = (A^{T}U)^{T} = (AU)^{T} = (U\Lambda)^{T} = \Lambda U^{T}$ and $U^{*}\Lambda^{*} = A^{*}U^{*}$. Thus,
\[H\Lambda^{*} - \Lambda H = \Lambda U^{T}U^{*} - U^{T}U^{*}\Lambda^{*} = U^{T}AU^{*} - U^{T}A^{*}U^{*} = U^{T}EU^{*}.\]
Since $U$ and $U^{*}$ have orthonormal columns,
\[\|H\Lambda^{*} - \Lambda H\|_{\op} \le \|EU^{*}\|_{\op}.\]
Thus, 
\begin{equation}
  \label{eq:J2}
  J_{2}\le \|EU^{*}\|_{\op} \|U_{k}\|_{2}\le \|EU^{*}\|_{\op} \mnorm{U}.
\end{equation}

~\\
\noindent \textbf{Step 3: Bounding $J_{3}$.} Since $H^{(k)} = (U^{(k)})^{T}U^{*}$ and $U^{*}$ has orthonormal columns,
\begin{equation}
  \label{eq:UmU_H}
  \|U^{(k)}H^{(k)} - UH\|_{\op} = \|U^{(k)}(U^{(k)})^{T}U^{*} - UU^{T}U^{*}\|_{\op} \le \|U^{(k)}(U^{(k)})^{T} - UU^{T}\|_{\op}.
\end{equation}
By Lemma \ref{lem:davis_kahan},
\begin{equation}
  \label{eq:J3}
  J_{3}\le \frac{\|E_{k}\|_{2}\lambda_{\min}^{*}( \kappa^{*}L_{2} + L_{3})}{\tdgap}\|U_{k}\|_{2}\le \frac{\mnorm{E}\lambda_{\min}^{*}( \kappa^{*}L_{2} + L_{3})}{\tdgap}\mnorm{U}.
\end{equation}
The proof is then completed by combining \eqref{eq:J1}, \eqref{eq:J2} and \eqref{eq:J3}.
\end{proof}

\subsection{Step II: deterministic bound for $\|A^{*}(UH - U^{*})\|_{2}$ via Kato's integral}\label{subapp:step2}

\begin{lemma}\label{lem:step2}
Assume that $\lambda_{s + 1}^{*}\lambda_{s + r}^{*} > 0$. Under condition \textbf{C}1,
  \begin{itemize}
  \item It always holds that
    \begin{equation}\label{eq:case1}
      \mnorm{A^{*}(UH - U^{*})}\le \frac{\|EU^{*}\|_{\op}\mnorm{A^{*}}}{\tdgap}.
    \end{equation}
  \item If $A^{*}$ is positive semidefinite, then 
    \begin{equation}\label{eq:case2}
      \mnorm{A^{*}(UH - U^{*})}\le 3.61\frac{\|E\bar{U}^{*}\|_{\op}\sqrt{\lambda_{\max}^{*}\maxnorm{A^{*}}}}{\tdgap}.
    \end{equation}
  \item If $A^{*}$ is low-rank (with rank $K$) with $\bar{U}^{*}$ being defined in \eqref{eq:barU} in page \pageref{eq:barU}, then
    \begin{equation}
      \label{eq:case3}
      \mnorm{A^{*}(UH - U^{*})}\le 3.84\frac{\|E\bar{U}^{*}\|_{\op}\lambda_{\max}^{*}}{\tdgap}\mnorm{\bar{U}^{*}}.
    \end{equation}
  \end{itemize}
\end{lemma}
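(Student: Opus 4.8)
The plan is to bound $\mnorm{A^*(UH-U^*)}$ by controlling the row norms $\|(A^*(UH-U^*))_k\|_2$ uniformly in $k$, using the representation of $UH-U^*$ (more precisely of $UU^T-U^*(U^*)^T$) as a contour integral of resolvents — Kato's integral formula \citep{kato1949convergence}. Recall $UH = UU^TU^*$, so $A^*(UH-U^*) = A^*(UU^T - U^*(U^*)^T)U^*$, and the difference of spectral projectors $P - P^* = UU^T - U^*(U^*)^T$ can be written as $\frac{1}{2\pi\i}\oint_{\gamma}\left[(zI-A)^{-1} - (zI-A^*)^{-1}\right]dz$ for a contour $\gamma$ enclosing exactly $\lambda_{s+1},\ldots,\lambda_{s+r}$ (and the corresponding starred eigenvalues), which by condition \textbf{C}1 and Corollary \ref{cor:separation} stays at distance $\gtrsim \tdgap$ from the rest of both spectra. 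Writing $(zI-A)^{-1} - (zI-A^*)^{-1} = (zI-A)^{-1}E(zI-A^*)^{-1}$ and expanding the first resolvent as a Neumann-type series around $A^*$ turns $A^*(P-P^*)U^*$ into a sum of terms of the form $A^*(z-A^*)^{-1}E(z-A^*)^{-1}\cdots(z-A^*)^{-1}U^*$; since $A^*\bar U^* = \bar U^*\bar\Lambda^*$, the leading $A^*(z-A^*)^{-1}$ factor can be replaced by something bounded involving $\bar U^*$ and $\bar\Lambda^*$, and each $(z-A^*)^{-1}U^* = U^*(z-\Lambda^*)^{-1}$ has operator norm $\lesssim 1/\tdgap$ on $\gamma$.

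For the three cases I would separate out the first factor differently. For the universal bound \eqref{eq:case1}: use $\mnorm{A^*M}\le \mnorm{A^*}\|M\|_{\op}$ with $M = (UH-U^*)$ and the Davis–Kahan bound $\|UH - U^*\|_{\op} \le \sqrt{2}\|UU^T-U^*(U^*)^T\|_{\op}\le \sqrt{2}\|EU^*\|_{\op}/\tdgap$; adjusting constants this gives the stated $\|EU^*\|_{\op}\mnorm{A^*}/\tdgap$. (One should double-check whether the factor $\sqrt{2}$ is absorbed or whether a sharper $\|UH-U^*\|_{\op}\le \|\sin\Theta\|_{\op}$ argument is used; either way the order is right.) For the psd case \eqref{eq:case2}: write $A^* = (A^*)^{1/2}(A^*)^{1/2}$, so $\mnorm{A^*(P-P^*)U^*}\le \mnorm{(A^*)^{1/2}}\,\|(A^*)^{1/2}(P-P^*)U^*\|_{\op}$; the first factor is $\sqrt{\maxnorm{A^*}}$-ish — more precisely $\mnorm{(A^*)^{1/2}}\le \sqrt{\mnorm{A^*}}$, but since the entries are nonnegative one can in fact bound $\mnorm{(A^*)^{1/2}}$ by $\sqrt{\maxnorm{A^*}}$ times a dimension-free constant — and the second factor is controlled via Kato's integral by $\sqrt{\lambda_{\max}^*}\,\|E\bar U^*\|_{\op}/\tdgap$ because the resolvent $(z-A^*)^{-1/2}$ contributes $\sqrt{\lambda_{\max}^*}$-type growth on the contour and $E$ acts through $\bar U^*$ only (since $U^*$ and hence $P^*$ lives in the range of $\bar U^*$). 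The numerical constant $3.61$ comes from optimizing the contour radius. For the low-rank case \eqref{eq:case3}: now $A^* = \bar U^*\bar\Lambda^*(\bar U^*)^T$, so $A^*(P-P^*)U^* = \bar U^*\bar\Lambda^*(\bar U^*)^T(P-P^*)U^*$ and $\mnorm{A^*(P-P^*)U^*}\le \mnorm{\bar U^*}\,\lambda_{\max}^*\,\|(\bar U^*)^T(P-P^*)U^*\|_{\op}$; again Kato's integral bounds the last operator norm by $\lesssim \|E\bar U^*\|_{\op}/\tdgap$, and the contour optimization yields $3.84$.

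The main obstacle is the careful bookkeeping in the Kato integral expansion: one must (i) verify that the contour can be chosen as (say) two circles or a rectangle enclosing precisely the target block of eigenvalues of both $A$ and $A^*$ and at distance $\asymp\tdgap$ from everything else — this is exactly what \textbf{C}1 plus Corollary \ref{cor:separation} buy — (ii) expand $(zI-A)^{-1}$ around $A^*$, keeping track of which terms need $\|E\bar U^*\|_{\op}$ (those where $E$ hits something in $\mathrm{range}(\bar U^*)$) versus $\|EU^*\|_{\op}$ or $\|E\|_{\op}$, and (iii) choose the contour radius to optimize the resulting geometric series in $\|E\|_{\op}/\tdgap$, which under \textbf{A}4/\textbf{C}1 is $\le 1/2$ or so, making the series converge and producing the explicit constants $3.61$, $3.84$. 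The structural point I want to exploit throughout is that $P^* = U^*(U^*)^T$ factors through $\bar U^*$, so every appearance of $E$ adjacent to a projector can be converted into $E\bar U^*$; this is what keeps the psd and low-rank bounds in terms of $\|E\bar U^*\|_{\op}$ rather than the cruder $\|E\|_{\op}$. I would present the universal bound \eqref{eq:case1} first as a warm-up (it needs only Davis–Kahan, no Kato integral), then set up the Kato machinery once and apply it twice for \eqref{eq:case2} and \eqref{eq:case3}, flagging that the constants are artifacts of the contour optimization and not essential.
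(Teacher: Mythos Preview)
Your overall structure is right and matches the paper --- Case~1 is pure Davis--Kahan (no contour integral needed), and Cases~2 and~3 factorize $A^*$ to pull out $\sqrt{\maxnorm{A^*}}$ or $\mnorm{\bar U^*}$ and then invoke Kato's integral. But the Neumann expansion is both unnecessary and a genuine gap under the stated hypotheses. The paper applies the resolvent identity \emph{once}, writing $UU^T-U^*(U^*)^T=-\frac{1}{2\pi\i}\oint_{\C}(A^*-zI)^{-1}E(A-zI)^{-1}\,dz$, and then diagonalizes \emph{both} resolvents: the left one via $A^*=\bar U^*\bar\Lambda^*(\bar U^*)^T$, the right one via the \emph{full} spectral decomposition $A=\bar U\bar\Lambda\bar U^T$ with $\bar U\in\R^{n\times n}$ orthogonal. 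After left-multiplying by $(\bar\Lambda^*)^{1/2}(\bar U^*)^T$ (Case~2) or $\bar\Lambda^*(\bar U^*)^T$ (Case~3), the integrand contains a single factor $(\bar U^*)^TE\bar U$, whose operator norm is exactly $\|E\bar U^*\|_{\op}$ since $\bar U$ is orthogonal. Your Neumann series would instead produce higher-order terms with $E$ flanked by $(zI-A^*)^{-1}$ on both sides; those resolvents have a component orthogonal to $\bar U^*$, so the middle $E$'s do \emph{not} reduce to $\|E\bar U^*\|_{\op}$, and summing the series requires $\|E\|_{\op}<\tdgap$, which condition~\textbf{C}1 does not give (it only bounds $\maxnorm{\Lambda-\Lambda^*}$).

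Two smaller corrections. The contour is a rectangle with vertical edges at $a'=\lambda_{\min}^*-\tdgap$ and $b'=\lambda_{\max}^*+\tdgap$ and horizontal edges at height $\pm\gamma$; the constants $3.61$ and $3.84$ arise from explicit integration of the diagonal bounds $\max_i|\lambda_i^*|^{1/2}/|\lambda_i^*-z|$ and $\max_i 1/|\lambda_i-z|$ along the vertical edges (with a case split at $|x|=\sqrt{a'h}$, $h=\tdgap/2$), then sending $\gamma\to\infty$ in Case~2 and taking $\gamma=\lambda_{\max}^*$ in Case~3 --- not from optimizing a circle radius. And in Case~2 the bound $\mnorm{\bar U^*(\bar\Lambda^*)^{1/2}}=\sqrt{\maxnorm{A^*}}$ is an exact identity from $A^*_{ii}=\|(\bar U^*(\bar\Lambda^*)^{1/2})_i\|_2^2$; it uses only positive semidefiniteness, not entrywise nonnegativity. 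For Case~1 the $\sqrt2$ is avoided by first passing to $\mnorm{A^*(UU^T-U^*(U^*)^T)U^*}\le\mnorm{A^*(UU^T-U^*(U^*)^T)}\le\mnorm{A^*}\,\|UU^T-U^*(U^*)^T\|_{\op}$ and then applying Lemma~\ref{lem:davis_kahan}.
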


\begin{proof}
First we notice that 
\[\mnorm{A^{*}(UH - U^{*})} = \mnorm{A^{*}(UU^{T} - U^{*}(U^{*})^{T})U^{*}}\le \mnorm{A^{*}(UU^{T} - U^{*}(U^{*})^{T})}.\]
We derive bounds for $\mnorm{A^{*}(UU^{T} - U^{*}(U^{*})^{T})}$ in each case separately.

\noindent \textbf{Case 1: $A^{*}$ has no constraint}

By Lemma \ref{lem:davis_kahan}, 
\[\|UU^{T} - U^{*}(U^{*})^{T}\|_{\op}\le \frac{\|EU^{*}\|_{\op}}{\tdgap}.\]
Thus,
\[\mnorm{A^{*}(UU^{T} - U^{*}(U^{*})^{T})}\le \frac{\|UU^{T} - U^{*}(U^{*})^{T}\|_{\op}\mnorm{A^{*}}}{\tdgap}\le \frac{\|EU^{*}\|_{\op}\mnorm{A^{*}}}{\tdgap}.\]

\noindent \textbf{Case 2: $A^{*}$ is positive semidefinite}

Recall that $\bar{U}^{*}\bar{\Lambda}^{*}(\bar{U}^{*})^{T}$ is the singular value decomposition of $A^{*}$. Then
\[\mnorm{A^{*}(UU^{T} - U^{*}(U^{*})^{T})} \le \mnorm{\bar{U}^{*}(\bar{\Lambda}^{*})^{1/2}}\|(\bar{\Lambda}^{*})^{1/2}(\bar{U}^{*})^{T}\lb UU^{T} - U^{*}(U^{*})^{T}\rb\|_{\op}.\]
Let $Q = \bar{U}^{*}(\bar{\Lambda}^{*})^{1/2}$ with $Q_{i}$ being the $i$-th row. Then $A^{*} = QQ^{T}$ and hence 
\[\max_{i}\|Q_{i}\|_{2} = \max_{i}\sqrt{A_{ii}^{*}} = \sqrt{\maxnorm{A^{*}}}.\]
As a result,
\begin{equation}\label{eq:case2_pre}
  \mnorm{A^{*}(UU^{T} - U^{*}(U^{*})^{T})} \le \sqrt{\maxnorm{A^{*}}}\|(\bar{\Lambda}^{*})^{1/2}(\bar{U}^{*})^{T}\lb UU^{T} - U^{*}(U^{*})^{T}\rb\|_{\op}.
\end{equation}
Thus it is left to bound $\|(\bar{\Lambda}^{*})^{1/2}\bar{U}^{*}\lb UU^{T} - U^{*}(U^{*})^{T}\rb\|_{\op}$.

WLOG, we assume that all eigenvalues are positive. For convenience, write 
\begin{equation}\label{eq:abh}
  a = \lambda_{\min}^{*}, \quad b = \lambda_{\max}^{*}, \quad h = \frac{\tdgap}{2} = \frac{\gap^{*} - L_{1}}{4}.
\end{equation}
Further we write 
\begin{equation}\label{eq:abprime}
  a' = a - 2h, \quad b' = b + 2h.
\end{equation}
Note that 
\[h\le \frac{1}{4}\gap^{*} = \frac{1}{4}\min\{\sep_{s+1, s+r}(A^{*}), \lambda_{\min}^{*}\},\]
and thus all eigenvalues of $A^{*}$, as well as $0$, are at least $2h$ apart from $a'$ and $b'$. Fix any $\gamma > 0$. Let $\C$ be a positively oriented rectangular contour on the complex plane with corners $a'\pm \gamma\i$ and $b' \pm \gamma \i$. Then all eigenvalues in $\Lambda^{*}$ are inside $\C$ while all other eigenvalues are outside $\C$. By Assumption \textbf{C}1, $\C$ also separates the eigenvalues in $\Lambda$. The famous Kato's integral \citep{kato1949convergence} implies that
\[UU^{T} = \frac{1}{2\pi \i}\oint_{\C}(A - z I)^{-1}dz, \quad U^{*}(U^{*})^{T} = \frac{1}{2\pi \i}\oint_{\C}(A^{*} - z I)^{-1}dz.\]
Noting that $C^{-1} - B^{-1} = -B^{-1}(C - B)C^{-1}$, we have
\begin{equation}\label{eq:kato_generic}
  UU^{T} - U^{*}(U^{*})^{T} = -\frac{1}{2\pi\i}\oint_{\C}(A^{*} - z I)^{-1}E(A - z I)^{-1}dz.
\end{equation}
Let $A = \bar{U}\bar{\Lambda}\bar{U}^{T}$ be the SVD of $A$ where $\bar{U}\in \R^{n\times n}$ be an orthogonal matrix. Then 
\begin{align}
  &(\bar{\Lambda}^{*})^{1/2}\bar{U}^{*}\lb UU^{T} - U^{*}(U^{*})^{T}\rb = -\frac{1}{2\pi\i}(\bar{\Lambda}^{*})^{1/2}\bar{U}^{*}\oint_{\C}(A^{*} - z I)^{-1}E(A - z I)^{-1}dz\nonumber\\
= & -\frac{1}{2\pi\i}(\bar{\Lambda}^{*})^{1/2}(\bar{U}^{*})^{T}\oint_{\C}\bar{U}^{*}(\bar{\Lambda}^{*} - z I)^{-1}(\bar{U}^{*})^{T}E\bar{U}(\bar{\Lambda} - z I)^{-1}\bar{U}^{T}dz\nonumber\\
= & -\frac{1}{2\pi\i}\lb \oint_{\C}(\bar{\Lambda}^{*})^{1/2}(\bar{\Lambda}^{*} - z I)^{-1}(\bar{U}^{*})^{T}E\bar{U}(\bar{\Lambda} - z I)^{-1} dz\rb \bar{U}^{T}.\label{eq:case2_Kato}
\end{align}
Let $q(z)$ be the integrand and $\C_1, \C_2, \C_3, \C_4$ be the four edges of $\C$, i.e.
\[\C_{1} = \{a' + x\i: x\in [-\gamma, \gamma]\}, \C_{2} = \{y - \gamma \i: y\in [a', b']\},\] 
\[\C_{3} = \{b' + x\i: x\in [-\gamma, \gamma]\}, \C_4 = \{y + \gamma \i: y\in [a', b']\}.\]
Then 
\begin{align}
  &\|(\bar{\Lambda}^{*})^{1/2}\bar{U}^{*}\lb UU^{T} - U^{*}(U^{*})^{T}\rb\|_{\op}\nonumber\\
\le &\frac{1}{2\pi}\lb\left\|\oint_{\C_1}q(z)dz\right\|_{\op} + \left\|\oint_{\C_2}q(z)dz\right\|_{\op} + \left\|\oint_{\C_3}q(z)dz\right\|_{\op} + \left\|\oint_{\C_4}q(z)dz\right\|_{\op}\rb.  \label{eq:C1C2C3C4}
\end{align}
We note that Kato's integral is also deployed by \cite[][Lemma A.2]{oliveira2009concentration} and \cite[][Section 5]{mao2017estimating}. However, they directly bound the above quantity by
\[\max_{z\in \C} |q(z)| \times (\mbox{the perimeter of }\C).\]
This turns out to be loose. Instead we will bound each term in \eqref{eq:C1C2C3C4} separately.

We start from $\left\|\oint_{\C_1}q(z)dz\right\|_{\op}$. Since $\C_1$ is a vertical line with intercept $a'$, we have
\begin{align*}
  \left\|\oint_{\C_1}q(z)dz\right\|_{\op} = \left\|\int_{-\gamma}^{\gamma}q(a' + x\i)dx\right\|_{\op}\le \int_{-\gamma}^{\gamma}\|q(a' + x\i)\|_{\op}dx.
\end{align*}
Because $\|\cdot\|_{\op}$ is sub-multiplicative,
\begin{align}
  \|q(a' + x\i)\|_{\op}&\le \|(\bar{\Lambda}^{*})^{1/2}(\bar{\Lambda}^{*} - (a' + x\i) I)^{-1}\|_{\op}\|(\bar{U}^{*})^{T}E\bar{U}\|_{\op}\|(\bar{\Lambda} - (a' + x\i) I)^{-1}\|_{\op}\nonumber\\
& \le \|E\bar{U}^{*}\|_{\op}\max_{i\in [n]}\frac{1}{\sqrt{(\lambda_{i} - a')^{2} + x^{2}}}\max_{i\in [n]} \sqrt{\frac{|\lambda_{i}^{*}|}{(\lambda_{i}^{*} - a')^{2} + x^{2}}} \label{eq:doublemax1}
\end{align}
We emphasize that the maximum is taken over all eigenvalues instead of just the eigenvalues in $\Lambda$ and $\Lambda^{*}$. By Assumption \textbf{C}1,
\[|\lambda_{i} - \lambda_{i}^{*}|\le \maxnorm{\Lambda - \Lambda^{*}}\le \frac{\tdgap}{2} = h.\]
By construction, for any $i$,
\[|\lambda_{i}^{*} - a'| = |\lambda_{i}^{*} - a + 2h| \ge \min\{2h, \sep_{s+1, s+r}(A^{*}) - 2h\} = 2h.\]
By the triangle inequality we find that for any $i$,
\[|\lambda_{i} - a'|\ge h.\]
Therefore,
\begin{equation}
  \label{eq:doublemax2}
  \max_{i\in [n]}\frac{1}{\sqrt{(\lambda_{i} - a')^{2} + x^{2}}} \le \frac{1}{\sqrt{h^{2} + x^{2}}}.
\end{equation}
On the other hand, let 
\begin{equation}\label{eq:ratio_func}
g(z; x, a') = \frac{|z + a'|}{\sqrt{z^{2} + x^{2}}}.
\end{equation}
Then 
\begin{align}
  \max_{i\in [n]} \sqrt{\frac{|\lambda_{i}^{*}|}{(\lambda_{i}^{*} - a')^{2} + x^{2}}}&\le \max_{i\in [n]}\sqrt{g(\lambda_{i}^{*} - a'; x, a')}\max_{i\in [n]}\frac{1}{((\lambda_{i}^{*} - a')^{2} + x^{2})^{1/4}}\nonumber\\
&\le \max_{i\in [n]}\sqrt{g(\lambda_{i}^{*} - a'; x, a')}\frac{1}{(h^{2} + x^{2})^{1/4}}\nonumber\\
& \le \sup_{z: |z|\ge h} \sqrt{g(z; x, a')}\frac{1}{(h^{2} + x^{2})^{1/4}}\label{eq:doublemax3}.
\end{align}
where the last step uses the fact that $|\lambda_{i}^{*} - a'|\ge 2h\ge h$. Now we study the properties of $g(z; x, a')$. Note that when $z \not =  -a'$,
\begin{equation}
  \label{eq:gderiv}
  \frac{d}{dz}(\log g(z; x, a')) = \frac{1}{z + a'} - \frac{z}{z^{2} + x^{2}} = \frac{x^{2} - a'z}{(z^{2} + x^{2})(z + a')}.
\end{equation}
Since $a' = a - 2h\ge 2h > 0$, $g(z; x, a')$ is decreasing on $(-\infty, -a']$, increasing on $(-a', \frac{x^{2}}{a'}]$ and decreasing on $[\frac{x^{2}}{a'}, \infty)$. As a result, 
\begin{equation}\label{eq:gzxa}
 \sup_{z: |z|\ge h} g(z; x, a')\le \left\{
    \begin{array}{ll}
     \max(g(-\infty; x, a'), g(\pm h; x, a')\} & (\frac{x^{2}}{a'}\le h)\\
      g\lb\frac{x^{2}}{a'}; x, a'\rb & (\frac{x^{2}}{a'} > h)
    \end{array}
\right. = \left\{
    \begin{array}{ll}
     \frac{a' + h}{\sqrt{h^{2} + x^{2}}} & (\frac{x^{2}}{a'}\le h)\\
      \frac{\sqrt{a'^{2} + x^{2}}}{x} & (\frac{x^{2}}{a'} > h)
    \end{array}
\right..
\end{equation}
When $x^{2} / a' > h$, $\frac{\sqrt{a'^{2} + x^{2}}}{x}\le \sqrt{\frac{a' + h}{h}}$. Therefore
\begin{equation}
  \label{eq:gbound}
  \sup_{z: |z|\ge h} g(z; x, a')\le \frac{a' + h}{\sqrt{h^{2} + x^{2}}}I(|x|\le \sqrt{a'h}) + \sqrt{\frac{a' + h}{h}}I(|x| > \sqrt{a'h}).
\end{equation}
Putting \eqref{eq:doublemax1}, \eqref{eq:doublemax2}, \eqref{eq:doublemax3} and \eqref{eq:gbound} together, we obtain that
\begin{align}
  &\|q(a' + x\i)\|_{\op}\nonumber\\
\le &\|E\bar{U}^{*}\|_{\op}\frac{\sqrt{a' + h}}{h^{2} + x^{2}}I(|x|\le \sqrt{a'h}) + \|E\bar{U}^{*}\|_{\op}\lb\frac{a' + h}{h}\rb^{1/4}\frac{1}{(h^{2} + x^{2})^{3/4}}I(|x| > \sqrt{a'h}).  \label{eq:qbound}
\end{align}
As a consequence,
\begin{align}
   & \left\|\oint_{\C_1}q(z)dz\right\|_{\op}\le \int_{-\infty}^{\infty}\|q(a' + x\i)\|_{\op}dx\nonumber\\
\le & \|E\bar{U}^{*}\|_{\op}\sqrt{a' + h}\int_{|x| \le \sqrt{a'h}}\frac{dx}{h^{2} + x^{2}} + \|E\bar{U}^{*}\|_{\op}\lb\frac{a' + h}{h}\rb^{1/4}\int_{|x| > \sqrt{a'h}}\frac{dx}{(h^{2} + x^{2})^{3/4}}\nonumber\\
= & \frac{2\sqrt{a' + h}\|E\bar{U}^{*}\|_{\op}}{h}\int_{0}^{\sqrt{a' / h}}\frac{dy}{1 + y^{2}} + \frac{2(a' + h)^{1/4}\|E\bar{U}^{*}\|_{\op}}{h^{3/4}}\int_{\sqrt{a' / h}}^{\infty}\frac{dy}{(1 + y^{2})^{3/4}}\nonumber\\
\le & \frac{2\sqrt{a' + h}\|E\bar{U}^{*}\|_{\op}}{h}\int_{0}^{\infty}\frac{dy}{1 + y^{2}} + \frac{2(a' + h)^{1/4}\|E\bar{U}^{*}\|_{\op}}{h^{3/4}}\int_{\sqrt{a' / h}}^{\infty}\frac{dy}{y^{3/2}}\nonumber\\
= & \frac{\|E\bar{U}^{*}\|_{\op}}{h}\lb \pi\sqrt{a' + h} + \frac{4(a' + h)^{1/4}\sqrt{h}}{a'^{1/4}}\rb.\nonumber
\end{align}
Similarly, 
\begin{equation*}
  \left\|\oint_{\C_3}q(z)dz\right\|_{\op}\le \frac{\|E\bar{U}^{*}\|_{\op}}{h}\lb \pi\sqrt{b' + h} + \frac{4(b' + h)^{1/4}\sqrt{h}}{b'^{1/4}}\rb.
\end{equation*}
Since $x\mapsto (x + h)^{1/2}$ is concave and also $a' < a \le b < b'$, we have
\begin{equation}\label{eq:sqrt_concavity}
  \sqrt{a' + h} + \sqrt{b' + h}\le \sqrt{a + h} + \sqrt{b + h} \le 2\sqrt{b + h}.
\end{equation}
Since $h \le a / 4\le b / 4$, we have $h\le a' / 2$, $h \le b' / 6$. Then
\begin{align}
  \left\|\oint_{\C_1}q(z)dz\right\|_{\op} + \left\|\oint_{\C_3}q(z)dz\right\|_{\op}&\le \frac{\|E\bar{U}^{*}\|_{\op}}{h}\lb 2\pi\sqrt{b + h} + \frac{4(a' + h)^{1/4}\sqrt{h}}{a'^{1/4}} + \frac{4(b' + h)^{1/4}\sqrt{h}}{b'^{1/4}}\rb\nonumber\\
& \le \frac{\|E\bar{U}^{*}\|_{\op}}{h}\lb 2\pi\sqrt{\frac{5}{4}}\sqrt{b} + 4\lb\frac{3}{2}\rb^{1/4}\sqrt{h} + 4\lb\frac{7}{6}\rb^{1/4}\sqrt{h}\rb\nonumber\\
& \le \frac{\|E\bar{U}^{*}\|_{\op}}{h}\lb 2\pi\sqrt{\frac{5}{4}}\sqrt{b} + 2\lb\frac{3}{2}\rb^{1/4}\sqrt{b} + 2\lb\frac{7}{6}\rb^{1/4}\sqrt{b}\rb\nonumber\\
& \le \frac{11.32\|E\bar{U}^{*}\|_{\op}\sqrt{b}}{h}.  \label{eq:C1C3}
\end{align}

Note that \eqref{eq:C1C3} is independent of $\gamma$. For the integral on $\C_2$, we use the crude bound that
\[\left\|\oint_{\C_{2}}q(z)dz\right\|_{\op} \le |\C_2|\max_{z\in \C_2}\|q(z)\|_{\op} = (b' - a')\max_{z\in \C_2}\|q(z)\|_{\op}.\]
By \eqref{eq:doublemax1}, for any $y\in \R$,
\[\|q(y + \gamma \i)\|_{\op}\le \frac{\|E\bar{U}^{*}\|_{\op}\max_{i}\sqrt{\lambda_{i}^{*}}}{\gamma^{2}}.\]
Letting $\gamma \rightarrow \infty$,
\begin{equation}
  \label{eq:C2C4}
  \left\|\oint_{\C_{2}}q(z)dz\right\|_{\op} + \left\|\oint_{\C_{4}}q(z)dz\right\|_{\op} \rightarrow 0.
\end{equation}
Putting \eqref{eq:C1C2C3C4}, \eqref{eq:C1C3} and \eqref{eq:C2C4} together and recalling that $\tdgap = 2h$, we conclude that
\begin{align*}
  & \|(\bar{\Lambda}^{*})^{1/2}\bar{U}^{*}\lb UU^{T} - U^{*}(U^{*})^{T}\rb\|_{\op} \le \frac{11.32\|E\bar{U}^{*}\|_{\op}\sqrt{b}}{2\pi h} \le 3.61\frac{\|E\bar{U}^{*}\|_{\op}\sqrt{\lambda_{\max}^{*}}}{\tdgap}.
\end{align*}
The proof is then completed by \eqref{eq:case2_pre}.

\noindent \textbf{Case 3: $A^{*}$ is low rank}

Note that 
  \begin{equation}
    \label{eq:case3_pre}
  \mnorm{A^{*}(UU^{T} - U^{*}(U^{*})^{T})} \le \mnorm{\bar{U}^{*}}\|\bar{\Lambda}^{*}(\bar{U}^{*})^{T}\lb UU^{T} - U^{*}(U^{*})^{T}\rb\|_{\op}.  
  \end{equation}
Thus it is left to bound $\|\bar{\Lambda}^{*}(\bar{U}^{*})^{T}\lb UU^{T} - U^{*}(U^{*})^{T}\rb\|_{\op}$.

Similar to \eqref{eq:case2_Kato}, we deduce that
\begin{equation}
  \bar{\Lambda}^{*}\bar{U}^{*}\lb UU^{T} - U^{*}(U^{*})^{T}\rb = -\frac{1}{2\pi\i}\lb \oint_{\C}\bar{\Lambda}^{*}(\bar{\Lambda}^{*} - z I)^{-1}\bar{U}^{*}E\bar{U}^{T}(\bar{\Lambda} - z I)^{-1} dz\rb \bar{U}.\label{eq:case3_Kato}
\end{equation}
Let $\td{q}(z)$ be the integrand and $a', b',\C_1, \C_2, \C_3, \C_4$ and $g(z; x, a')$ be defined as in \eqref{eq:ratio_func} in Case 2. Throughout this part we assume that $\gamma \ge h$. Similar to \eqref{eq:doublemax1} and by \eqref{eq:gbound},
\begin{align}
  &\|\td{q}(a' + x\i)\|_{\op} \le \|E\bar{U}^{*}\|_{\op}\max_{i\in [n]}\frac{1}{\sqrt{(\lambda_{i} - a')^{2} + x^{2}}}\max_{i\in [n]} \frac{|\lambda_{i}^{*}|}{\sqrt{(\lambda_{i}^{*} - a')^{2} + x^{2}}}\nonumber\\
& \le \|E\bar{U}^{*}\|_{\op}\frac{1}{\sqrt{h^{2} + x^{2}}}\sup_{z: |z|\ge h} g(z; x, a')\nonumber\\
& \le \|E\bar{U}^{*}\|_{\op}\frac{a' + h}{h^{2} + x^{2}}I(|x|\le \sqrt{a'h}) + \|E\bar{U}^{*}\|_{\op}\sqrt{\frac{a' + h}{h (h^{2} + x^{2})}}I(|x| > \sqrt{a'h}). \label{eq:doublemax1_case4}
\end{align}
As a consequence, 
\begin{align}
   & \left\|\oint_{\C_1}\td{q}(z)dz\right\|_{\op}\le \int_{-\gamma}^{\gamma}\|\td{q}(a' + x\i)\|_{\op}dx\nonumber\\
\le & \|E\bar{U}^{*}\|_{\op}(a' + h)\int_{|x| \le \sqrt{a'h}}\frac{dx}{h^{2} + x^{2}} + \|E\bar{U}^{*}\|_{\op}\sqrt{\frac{a' + h}{h}}\int_{\gamma \ge |x| > \sqrt{a'h}}\frac{dx}{\sqrt{h^{2} + x^{2}}}\nonumber\\
= & \frac{2 (a' + h)\|E\bar{U}^{*}\|_{\op}}{h}\int_{0}^{\sqrt{a' / h}}\frac{dy}{1 + y^{2}} + \frac{2\sqrt{a' + h}\|E\bar{U}^{*}\|_{\op}}{\sqrt{h}}\int_{\sqrt{a' / h}}^{\gamma / h}\frac{dy}{\sqrt{1 + y^{2}}}\nonumber\\
\le & \frac{2 (a' + h)\|E\bar{U}^{*}\|_{\op}}{h}\int_{0}^{\infty}\frac{dy}{1 + y^{2}} + \frac{2\sqrt{a' + h}\|E\bar{U}^{*}\|_{\op}}{\sqrt{h}}\int_{1}^{\gamma / h}\frac{dy}{\sqrt{1 + y^{2}}}\nonumber\\
= & \frac{\|E\bar{U}^{*}\|_{\op}}{h}\lb \pi (a' + h) + 2\sqrt{a' + h}\sqrt{h}\log (y + \sqrt{1 + y^{2}})\bigg|_{1}^{\gamma / h}\rb\nonumber\\
= & \frac{\|E\bar{U}^{*}\|_{\op}}{h}\lb \pi (a' + h) + 2\sqrt{a' + h}\sqrt{h}\lb\log (\gamma / h + \sqrt{1 + \gamma^{2} / h^{2}}) - \log (1 + \sqrt{2})\rb\rb\nonumber.
\end{align}
Since $\gamma \ge h$, we have
\[\log (\gamma / h + \sqrt{1 + \gamma^{2} / h^{2}})\le \log (\gamma / h + \sqrt{2\gamma^{2} / h^{2}}) = \log\lb\frac{\gamma}{h}\rb + \log (1 + \sqrt{2}).\]
Thus we have
\[\left\|\oint_{\C_1}\td{q}(z)dz\right\|_{\op}\le \int_{-\gamma}^{\gamma}\|\td{q}(a' + x\i)\|_{\op}dx\le \frac{\|E\bar{U}^{*}\|_{\op}}{h}\lb \pi (a' + h) + 2\sqrt{a' + h}\sqrt{h}\log \lb\frac{\gamma}{h}\rb\rb.\]
Similarly, 
\[\left\|\oint_{\C_3}\td{q}(z)dz\right\|_{\op}\le \frac{\|E\bar{U}^{*}\|_{\op}}{h}\lb \pi (b' + h) + 2\sqrt{b' + h}\sqrt{h}\log \lb\frac{\gamma}{h}\rb\rb.\]
We recall \eqref{eq:sqrt_concavity} that 
\[\sqrt{a' + h} + \sqrt{b' + h}\le \sqrt{a + h} + \sqrt{b + h} \le 2\sqrt{b + h}.\]
Also recalling that $h\le a / 4 \le b / 4$, we have that
\begin{align}
  \left\|\oint_{\C_1}\td{q}(z)dz\right\|_{\op} + \left\|\oint_{\C_3}\td{q}(z)dz\right\|_{\op}&\le \frac{\|E\bar{U}^{*}\|_{\op}}{h}\lb \pi(a' + b' + 2h) + 4\sqrt{b + h}\sqrt{h}\log \lb\frac{\gamma}{h}\rb\rb\nonumber\\
& \le \frac{\|E\bar{U}^{*}\|_{\op}}{h}\lb 2\pi(b + h) + 4\sqrt{b + h}\sqrt{h}\log \lb\frac{\gamma}{h}\rb\rb\nonumber\\
& \le \frac{\|E\bar{U}^{*}\|_{\op}}{h}\lb \frac{5\pi}{2}b + 4\sqrt{\frac{5}{4}}\sqrt{hb}\log \lb\frac{\gamma}{h}\rb\rb.  \label{eq:C1C3_case3}
\end{align}
To bound the integrals on $\C_2$ and $\C_4$, we use the same strategy as in Case 2,
\begin{align}
  \left\|\oint_{\C_{2}}\td{q}(z)dz\right\|_{\op} + \left\|\oint_{\C_{4}}\td{q}(z)dz\right\|_{\op}&\le 2(b' - a')\max_{z\in \C_2\cup \C_4}\|\td{q}(z)\|_{\op}\nonumber\\
&\le  2b \max_{w\in [a', b']}\|\td{q}(w + \gamma \i)\|_{\op},\label{eq:C2C4_case3_1}
\end{align}
where the last step uses the fact that $b' - a' = b - a + 4h \le b$ and $\td{q}(w + \gamma\i) = \td{q}(w - \gamma\i)$. Then
\begin{align}
    \|\td{q}(w + \gamma\i)\|_{\op} &\le \|E\bar{U}^{*}\|_{\op}\max_{i\in [n]}\frac{1}{\sqrt{(\lambda_{i} - w)^{2} + \gamma^{2}}}\max_{i\in [n]} \frac{|\lambda_{i}^{*}|}{\sqrt{(\lambda_{i}^{*} - w)^{2} + \gamma^{2}}}\nonumber\\
& \le \frac{\|E\bar{U}^{*}\|_{\op}}{\gamma}\max_{i\in [n]} \frac{|\lambda_{i}^{*}|}{\sqrt{(\lambda_{i}^{*} - w)^{2} + \gamma^{2}}}\nonumber \\
& \le \frac{\|E\bar{U}^{*}\|_{\op}}{\gamma}\max_{i\in [n]}\sup_{w\in [a', b']} \frac{|\lambda_{i}^{*}|}{\sqrt{(\lambda_{i}^{*} - w)^{2} + \gamma^{2}}}\label{eq:doublemax5}.
\end{align}
To bound the last term we distinguish two cases:
\begin{itemize}
\item If $\lambda_{i}^{*} \in [a', b']$, then
\[\sup_{w\in [a', b']} \frac{|\lambda_{i}^{*}|}{\sqrt{(\lambda_{i}^{*} - w)^{2} + \gamma^{2}}} = \frac{|\lambda_{i}^{*}|}{\gamma}\le \frac{b'}{\gamma}.\]
\item If $\lambda_{i}^{*} \in (-\infty, a')$, then by \eqref{eq:gzxa},
\[\sup_{w\in [a', b']} \frac{|\lambda_{i}^{*}|}{\sqrt{(\lambda_{i}^{*} - w)^{2} + \gamma^{2}}} = \frac{|\lambda_{i}^{*}|}{\sqrt{(\lambda_{i}^{*} - a')^{2} + \gamma^{2}}} \le g(|\lambda_{i}^{*}| - a'; \gamma, a')\le \sup_{z}g(z; \gamma, a')\le \frac{\sqrt{a'^{2} + \gamma^{2}}}{\gamma}.\]
\item When $\lambda_{i}^{*}\in (b', \infty)$, using a similar argument as above, we obtain that
\[\sup_{w\in [a', b']} \frac{|\lambda_{i}^{*}|}{\sqrt{(\lambda_{i}^{*} - w)^{2} + \gamma^{2}}} = \frac{|\lambda_{i}^{*}|}{\sqrt{(\lambda_{i}^{*} - b')^{2} + \gamma^{2}}}\le \frac{\sqrt{b'^{2} + \gamma^{2}}}{\gamma}.\]
\end{itemize}
In summary, 
\begin{equation}
  \label{eq:doublemax6}
  \max_{i\in [n]}\sup_{w\in [a', b']} \frac{|\lambda_{i}^{*}|}{\sqrt{(\lambda_{i}^{*} - w)^{2} + \gamma^{2}}}\le \frac{\sqrt{b'^{2} + \gamma^{2}}}{\gamma}.
\end{equation}
Putting \eqref{eq:C2C4_case3_1}, \eqref{eq:doublemax5} and \eqref{eq:doublemax6} together, we have that
\begin{equation}
  \left\|\oint_{\C_{2}}\td{q}(z)dz\right\|_{\op} + \left\|\oint_{\C_{4}}\td{q}(z)dz\right\|_{\op} \le  \frac{2b\|E\bar{U}^{*}\|_{\op}\sqrt{b'^{2} + \gamma^{2}}}{\gamma^{2}} ,\label{eq:C2C4_case3}
\end{equation}
Then \eqref{eq:case3_Kato}, \eqref{eq:C1C3_case3} and \eqref{eq:C2C4_case3} together yield
\[\mnorm{\bar{\Lambda}^{*}\bar{U}^{*}\lb UU^{T} - U^{*}(U^{*})^{T}\rb}\le \frac{b\|E\bar{U}^{*}\|_{\op}}{2\pi h} \lb \frac{5\pi}{2} + 4\sqrt{\frac{5}{4}}\sqrt{\frac{h}{b}}\log\lb\frac{\gamma}{h}\rb\rb + \frac{b\|E\bar{U}^{*}\|_{\op}\sqrt{b'^{2} + \gamma^{2}}}{\pi\gamma^{2}}.\]
Let $\gamma = b$. Then
\begin{equation}\label{eq:setgammaj}
  \mnorm{\bar{\Lambda}^{*}\bar{U}^{*}\lb UU^{T} - U^{*}(U^{*})^{T}\rb}\le \frac{b\|E\bar{U}^{*}\|_{\op}}{2\pi h} \lb \frac{5\pi}{2} + 8\sqrt{\frac{5}{4}}\sqrt{\frac{h}{b}}\log\lb\sqrt{\frac{b}{h}}\rb + 2\sqrt{\frac{b^{'2} + b^{2}}{b^{2}}}\frac{h}{b}\rb.
\end{equation}
Let $m(x) = x / \exp(x)$. Note that $\frac{d}{dx}(\log m(x)) = \frac{1}{x} - 1$. Thus $m(x)$ reaches its maximum at $x = 1$. Then 
 \[\sqrt{\frac{h}{b}}\log\lb\sqrt{\frac{b}{h}}\rb = m\lb \log \sqrt{\frac{b}{h}}\rb\le m(1) = \frac{1}{e}.\]
On the other hand,
\[\frac{b^{'2} + b^{2}}{b^{2}} = \lb\frac{b + 2h}{b}\rb^{2} + 1\le \frac{9}{4} + 1 = \frac{13}{4}.\]
Thus, \eqref{eq:setgammaj} implies that
\[\|\bar{\Lambda}^{*}\bar{U}^{*}\lb UU^{T} - U^{*}(U^{*})^{T}\rb\|_{\op}\le \frac{b\|E\bar{U}^{*}\|_{\op}}{2\pi h} \lb \frac{5\pi}{2} + \frac{8}{e}\sqrt{\frac{5}{4}}  + \frac{1}{2}\sqrt{\frac{13}{4}}\rb\le \frac{1.918b\|E\bar{U}^{*}\|_{\op}}{h}\le \frac{3.84b\|E\bar{U}^{*}\|_{\op}}{\tdgap},\]
where the last inequality uses the fact that $h = \tdgap / 2$. The proof is then completed by \eqref{eq:case3_pre}.
\end{proof}

\subsection{Step III: stochastic bound for $\|E_{k}^{T}(U^{(k)}H^{(k)} - U^{*})\|_{2}$}
From this subsection we will derive the stochastic bound by taking the randomness of $A$ into account. We assume \textbf{A}1 and \textbf{A}3 hold. In particular, we choose $A^{(1)}, \ldots A^{(n)}$ that satisfy \textbf{A}1 with the subset $S = [r]$, i.e. 
\[d_{TV}(\P_{(A_{k}, A^{(k)})}, \P_{A_{k}}\times \P_{A^{(k)}})\le \delta / n,\]
and it holds with probability at least $1 - \delta$ that
\[\|A^{(k)} - A\|_{\op}\le L_{1}(\delta), \quad \frac{\|(A^{(k)} - A)U\|_{\op}}{\lambda_{\min}^{*}}\le \lb \kappa^{*}L_{2}(\delta) + L_{3}(\delta)\rb \mnorm{U},\]
simultaneously for all $k$. In addition, we re-define $\tdgap$ as follows by setting $L_{1} = L_{1}(\delta)$,
\begin{equation}
  \label{eq:tdgap_delta}
  \tdgap(\delta) = \frac{\gap^{*} - L_{1}(\delta)}{2}.
\end{equation}
We start from a concentration bound. 
\begin{lemma}\label{lem:EmW}
  Given any $\delta \in (0, 1)$ and $W^{(k)}\in \R^{n\times r}$ that only depends on $A^{(k)}$. Then under assumptions \textbf{A}1 and \textbf{A}3, it holds simultaneously for all $k$ that
\[\|E_{k}^{T}W^{(k)}\|_{2}\le b_{\infty}(\delta)\mnorm{W^{(k)}} + b_{2}(\delta)\|W^{(k)}\|_{\op}\]
with probability at least $1 - 2\delta$ where $b_{\infty}(\delta), b_{2}(\delta)$ are defined in assumption \textbf{A}3.
\end{lemma}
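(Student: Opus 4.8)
The plan is to reduce the matrix concentration statement of Lemma~\ref{lem:EmW} to the scalar/vector assumption \textbf{A}3 by exploiting the near-independence of $A_k$ and $A^{(k)}$ guaranteed by \textbf{A}1, and then taking a union bound over $k$. The key subtlety is that $W^{(k)}$ is allowed to depend on $A^{(k)}$, so $E_k^T W^{(k)}$ is not a sum of independent terms; the coupling in \textbf{A}1 is exactly what lets us pretend otherwise at the cost of a total-variation correction of size $\delta/n$.

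First I would fix $k$ and introduce an auxiliary random matrix $\hat W^{(k)}$ constructed from $A^{(k)}$ in the same measurable way that $W^{(k)}$ is, but which is a genuine function of the ``decoupled'' copy of $A^{(k)}$ rather than the real one; equivalently, work on the product space $\P_{A_k}\times\P_{A^{(k)}}$. On that product space, $E_k$ (a function of $A_k$, since $A_k^*$ is deterministic) is independent of $\hat W^{(k)}$, so conditionally on $\hat W^{(k)}=W$ we may apply assumption \textbf{A}3 directly with $r'=r$ and the fixed matrix $W$: with probability at least $1-\delta/n$ (over $A_k$), $\|E_k^T W\|_2 \le b_\infty(\delta)\mnorm{W} + b_2(\delta)\|W\|_{\op}$. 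Integrating over the law of $\hat W^{(k)}$ shows that on the product space the event $\mathcal{E}_k = \{\|E_k^T W^{(k)}\|_2 \le b_\infty(\delta)\mnorm{W^{(k)}} + b_2(\delta)\|W^{(k)}\|_{\op}\}$ has probability at least $1-\delta/n$. Since this event is a measurable function of the pair $(E_k, W^{(k)})$, and $W^{(k)}$ is a function of $A^{(k)}$, we can pull it back to the true joint law of $(A_k, A^{(k)})$ at the cost of $d_{TV}(\P_{(A_k,A^{(k)})}, \P_{A_k}\times\P_{A^{(k)}}) \le \delta/n$ by \eqref{eq:TV_cond}. Hence under the true law $\P(\mathcal{E}_k^c) \le \delta/n + \delta/n = 2\delta/n$.

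Finally I would take a union bound over $k\in[n]$: $\P(\bigcup_k \mathcal{E}_k^c) \le n\cdot (2\delta/n) = 2\delta$, which is exactly the claimed failure probability. One bookkeeping point: assumption \textbf{A}3 as stated gives probability $1-\delta/n$ for each fixed $W$, which already has the $1/n$ built in, so the only place the extra factor of $2$ (rather than, say, a union over both a TV term and an \textbf{A}3 term that each cost $\delta$) enters is the per-$k$ split into the TV discrepancy and the \textbf{A}3 failure; summing these $n$ pairs of $\delta/n$-terms yields $2\delta$ total.

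The main obstacle is the measure-theoretic care in the coupling step: one must be precise that ``$W^{(k)}$ depends only on $A^{(k)}$'' so that the event $\mathcal{E}_k$ is indeed a function of $(A_k, A^{(k)})$ and the total-variation bound applies verbatim, and that the decoupled version of $W^{(k)}$ is constructed by the same measurable map. Everything downstream — the conditional application of \textbf{A}3 and the union bound — is routine once this coupling is set up cleanly.
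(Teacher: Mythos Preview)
Your proposal is correct and follows essentially the same approach as the paper. The only cosmetic difference is that the paper phrases the decoupling via the maximal-coupling characterization of total variation (constructing $\hat{E}_k$ independent of $A^{(k)}$ with $\P(E_k\neq\hat{E}_k)\le\delta/n$), whereas you invoke the equivalent fact that event probabilities under the true joint and the product measure differ by at most $d_{TV}\le\delta/n$; the union-bound bookkeeping and the final $2\delta$ are identical.
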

\begin{proof}
Using the representation of total variation distance, there exists a coupling $\hat{E}_{k}$ of $E_{k}$ for each $k$ such that
\[\hat{E}_{k}\mbox{ is independent of }A^{(k)}, \quad \P(E_{k} \not = \hat{E}_{k})\le \delta / n.\]
The lemma follows if we can prove that
\begin{equation*}
  \|\hat{E}_{k}^{T}W^{(k)}\|_{2}\le b_{\infty}(\delta)\mnorm{W^{(k)}} + b_{2}(\delta)\|W^{(k)}\|_{\op}
\end{equation*}
holds simultaneously for all $k$ with probability at least $1 - \delta$. Since $\hat{E}_{k}$ is indpendent of $W^{(k)}$, the above inequality is guaranteed by assumption \textbf{A}3.
\end{proof}

\begin{lemma}\label{lem:step3}
Assume that $\lambda_{s + r}^{*}\lambda_{s + 1}^{*} > 0$. Fix any $\delta \in (0, 1)$. Under assumptions \textbf{A}1 and \textbf{A}3, it holds simultaneously for all $k$ with probability at least $1 - 3\delta$,
\begin{align*}
  \lefteqn{\max_{k}\|E_{k}^{T}(U^{(k)}H^{(k)} - U^{*})\|_{2} \le b_{\infty}(\delta)\mnorm{U\sign(H) - AU^{*}(\Lambda^{*})^{-1}} + \frac{b_{\infty}(\delta)\mnorm{EU^{*}}}{\lambda_{\min}^{*}}} \\
& + \frac{b_{2}(\delta)\|EU^{*}\|_{\op}}{\tdgap(\delta)} + \lb\frac{\lambda_{\min}^{*}(b_{\infty}(\delta) + b_{2}(\delta))(\kappa^{*}L_{2}(\delta) + L_{3}(\delta))}{\tdgap(\delta)} + \frac{b_{\infty}(\delta)\|EU^{*}\|_{\op}^{2}}{\tdgap(\delta)^{2}}\rb\mnorm{U}.
\end{align*}
\end{lemma}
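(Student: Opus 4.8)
The plan is to decompose the target quantity $\|E_k^T(U^{(k)}H^{(k)} - U^*)\|_2$ into pieces that can each be controlled, crucially exploiting that $U^{(k)}H^{(k)} - U^*$ can be rewritten in terms of quantities that depend \emph{only} on $A^{(k)}$ (so that Lemma \ref{lem:EmW} applies) plus quantities that are already bounded deterministically. First I would introduce the auxiliary vector $W^{(k)} = U^{(k)}H^{(k)} - A^{(k)}U^*(\Lambda^*)^{-1}$, which depends only on $A^{(k)}$; the strategy is to write
\[
U^{(k)}H^{(k)} - U^* = W^{(k)} + \big(A^{(k)}U^*(\Lambda^*)^{-1} - AU^*(\Lambda^*)^{-1}\big) + \big(AU^*(\Lambda^*)^{-1} - U^*\big),
\]
and note the last term equals $EU^*(\Lambda^*)^{-1}$ since $AU^* - A^*U^* = EU^*$ and $A^*U^* = U^*\Lambda^*$. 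Applying Lemma \ref{lem:EmW} to the $A^{(k)}$-measurable part $W^{(k)}$ and the triangle inequality to the rest, $\|E_k^T(U^{(k)}H^{(k)} - U^*)\|_2$ splits into $b_\infty(\delta)\mnorm{W^{(k)}} + b_2(\delta)\|W^{(k)}\|_{\op}$, plus $\|E_k^T(A^{(k)}-A)U^*(\Lambda^*)^{-1}\|_2$, plus $\|E_k^T EU^*(\Lambda^*)^{-1}\|_2$.

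Next I would bound each leftover piece. For $\|W^{(k)}\|_{\op}$: since $U^{(k)},U^*$ have orthonormal columns and $H^{(k)}=(U^{(k)})^T U^*$, one has $\|U^{(k)}H^{(k)}-A^{(k)}U^*(\Lambda^*)^{-1}\|_{\op}$, and applying the Davis–Kahan bound (Lemma \ref{lem:davis_kahan}, in its $A^{(k)}$-versus-$A^*$ form, using Corollary \ref{cor:separation}) together with the identity analogous to Step II yields something of order $\|EU^*\|_{\op}/\tdgap(\delta)$ --- this accounts for the $b_2(\delta)\|EU^*\|_{\op}/\tdgap(\delta)$ term. For $\mnorm{W^{(k)}}$, I would relate it to $\mnorm{U\sign(H)-AU^*(\Lambda^*)^{-1}}$ (the target of Theorem \ref{thm:generic_bound}) via the triangle inequality, picking up the difference $\mnorm{U\sign(H)-U^{(k)}H^{(k)}}$ and $\mnorm{(A-A^{(k)})U^*(\Lambda^*)^{-1}}$; the former is controlled by $\|UU^T-U^{(k)}(U^{(k)})^T\|_{\op}$ (Lemma \ref{lem:davis_kahan}, eq.\ \eqref{eq:davis_kahan_UmU}) giving the $\kappa^*L_2+L_3$ term times $\mnorm{U}$, and the latter by assumption \textbf{A}1 (the $\|(A^{(k)}-A)U_S\|_{\op}$ bound with $S=[r]$), again producing a $\kappa^*L_2+L_3$ contribution. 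For the $\|E_k^T EU^*(\Lambda^*)^{-1}\|_2$ term I would use $\|E_k^T EU^*\|_2 \le \|E_k\|_2\|EU^*\|_{\op}$ naively, or better, bound $EU^*$ componentwise: $\|E_k^T EU^*\|_2 = \|E_k^T (EU^*)\|_2$ and apply assumption \textbf{A}3-type control after noting $EU^*$ has rows bounded by $\mnorm{EU^*}$; this gives the $b_\infty(\delta)\mnorm{EU^*}/\lambda_{\min}^*$ term and the quadratic $b_\infty(\delta)\|EU^*\|_{\op}^2/\tdgap(\delta)^2$ term (via the further bound $\mnorm{EU^*}\le \|UU^T-U^*(U^*)^T\|_{\op}\cdot(\cdots)$ combined with Davis–Kahan). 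Similarly $\|E_k^T(A^{(k)}-A)U^*(\Lambda^*)^{-1}\|_2 \le \|E_k\|_2 \cdot \|(A^{(k)}-A)U^*\|_{\op}/\lambda_{\min}^*$, and \textbf{A}1 bounds the operator norm, feeding the $\kappa^*L_2+L_3$ term scaled by $\mnorm{U}$ (using $\mnorm{E}\le\mnorm{E}$ and absorbing into $\etap$-type quantities).

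The final step is bookkeeping: collect the failure probabilities. Lemma \ref{lem:EmW} costs $2\delta$; the \textbf{A}1/\textbf{A}3 high-probability events and the concentration for $\|E_k\|_2$-type quantities cost another $\delta$ (or can be folded into the $2\delta$ and $\delta$ already counted, yielding the stated $1-3\delta$); and one must verify that conditions \textbf{C}0, \textbf{C}1 hold on these events so that Corollary \ref{cor:separation} and Lemma \ref{lem:davis_kahan} are legitimately invoked --- here assumption \textbf{A}4 guarantees $\tdgap(\delta)$ has the same order as $\gap^*$ and that \textbf{C}1 is satisfied since $\maxnorm{\Lambda-\Lambda^*}\le\lambda_-(\delta)$ is small relative to the gap. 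I expect the main obstacle to be the \emph{circularity}: the bound for $\mnorm{W^{(k)}}$ is expressed in terms of $\mnorm{U\sign(H)-AU^*(\Lambda^*)^{-1}}$, which is precisely the quantity Theorem \ref{thm:generic_bound} is trying to bound and which, via Lemma \ref{lem:step1}, depends back on $\max_k\|E_k^T(U^{(k)}H^{(k)}-U^*)\|_2$. So this lemma is deliberately \emph{not} self-contained --- it is one half of a fixed-point argument, and the care required is to keep the coefficient of the recursive term ($b_\infty(\delta)$) explicit and small enough that when combined with Lemma \ref{lem:step1} in a later step, the self-referential term can be absorbed (this is where assumption \textbf{A}4's factor of $4$ and the definition of $\eta(\delta)$ will be used). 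The cleanest way to present it is exactly as stated: carry $\mnorm{U\sign(H)-AU^*(\Lambda^*)^{-1}}$ on the right-hand side without attempting to eliminate it here.
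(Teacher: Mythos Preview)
Your decomposition is more complicated than necessary and creates terms you cannot cleanly control. The key observation you are missing is that $U^{*}$ is \emph{deterministic} (it is the eigenvector matrix of the fixed matrix $A^{*}$). Consequently the entire quantity $W^{(k)} \triangleq U^{(k)}H^{(k)} - U^{*}$ is already a function of $A^{(k)}$ alone, since $U^{(k)}$ and $H^{(k)}=(U^{(k)})^{T}U^{*}$ depend only on $A^{(k)}$. Lemma~\ref{lem:EmW} therefore applies directly to the full $W^{(k)}$, yielding
\[
\|E_{k}^{T}(U^{(k)}H^{(k)}-U^{*})\|_{2}\le b_{\infty}(\delta)\,\mnorm{U^{(k)}H^{(k)}-U^{*}}+b_{2}(\delta)\,\|U^{(k)}H^{(k)}-U^{*}\|_{\op}
\]
simultaneously for all $k$ with probability at least $1-2\delta$; an additional $\delta$ is spent on the event from \textbf{A}1. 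The remainder of the paper's proof is purely deterministic: bound $\mnorm{U^{(k)}H^{(k)}-U^{*}}$ and $\|U^{(k)}H^{(k)}-U^{*}\|_{\op}$ by inserting the intermediate point $UH$, then using $\|U^{(k)}H^{(k)}-UH\|_{\op}\le\|U^{(k)}(U^{(k)})^{T}-UU^{T}\|_{\op}$ together with Lemma~\ref{lem:davis_kahan}, and expanding $\mnorm{UH-U^{*}}$ through $AU^{*}(\Lambda^{*})^{-1}$ exactly as in \eqref{eq:step3_term1_3}.

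By contrast, your route defines $W^{(k)}=U^{(k)}H^{(k)}-A^{(k)}U^{*}(\Lambda^{*})^{-1}$ and leaves residual terms $E_{k}^{T}EU^{*}(\Lambda^{*})^{-1}$ and $E_{k}^{T}(A^{(k)}-A)U^{*}(\Lambda^{*})^{-1}$. Neither can be handled by \textbf{A}3: the matrix $EU^{*}$ is random and depends on $E_{k}$, so ``applying \textbf{A}3-type control'' to it is not legitimate; falling back to $\|E_{k}\|_{2}\|EU^{*}\|_{\op}$ introduces an $\mnorm{E}$ factor that does not appear in the lemma's statement (every term there carries only $b_{\infty}$ or $b_{2}$). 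Similarly, \textbf{A}1 bounds $\|(A^{(k)}-A)U\|_{\op}$ for the \emph{random} eigenvectors $U$, not $\|(A^{(k)}-A)U^{*}\|_{\op}$, so your proposed control of the second residual does not follow from the stated assumptions. Once you recognize that $U^{*}$ is fixed, the whole three-way split becomes unnecessary and the proof collapses to a single application of Lemma~\ref{lem:EmW} followed by straightforward deterministic bounds.
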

\begin{proof}
For notational convenience, we will suppress the notation $(\delta)$ for all quantities that involve it. Let $W^{(k)} = U^{(k)}H^{(k)} - U^{*}$ and let $\event_{1}$ denote the event that 
\[\|E_{k}^{T}W^{(k)}\|_{2}\le b_{\infty}\mnorm{W^{(k)}} + b_{2}\|W^{(k)}\|_{\op} \mbox{ simultaneously for all }k,\]
and $\event_{2}$ denote the event that
\[\|A^{(k)} - A\|_{\op}\le L_{1}, \quad \frac{\|(A^{(k)} - A)U\|_{\op}}{\lambda_{\min}^{*}}\le \lb \kappa^{*}L_{2} + L_{3}\rb \mnorm{U}, \mbox{ simultaneously for all }k.\]
Then Lemma \ref{lem:EmW} and assumption \textbf{A}1 implies that
\[\P(\event_{1})\ge 1 - 2\delta, \quad \P(\event_{2})\ge 1 - \delta.\]
A simple union bound implies that
\[P(\event)\ge 1 - 3\delta, \quad \mbox{where }\event = \event_{1}\cap \event_{2}.\]
Throughout the rest of the proof we restrict the attention into $\event$. On $\event$, for all $k$,
\begin{equation}
  \label{eq:step3_1}
  \|E_{k}^{T}(U^{(k)}H^{(k)} - U^{*})\|_{2}\le b_{\infty}\mnorm{U^{(k)}H^{(k)} - U^{*}} + b_{2}\|U^{(k)}H^{(k)} - U^{*}\|_{\op}.
\end{equation}
First we bound $\mnorm{U^{(k)}H^{(k)} - U^{*}}$. Applying the triangle inequality, 
\begin{equation}
  \label{eq:step3_term1_1}
  \mnorm{U^{(k)}H^{(k)} - U^{*}}\le \mnorm{U^{(k)}H^{(k)} - UH} + \mnorm{UH - U^{*}}.
\end{equation}
Note that $\mnorm{B}\le \|B\|_{\op}$ for any Hermitian matrix $B$ and thus
\begin{align}
  &\mnorm{U^{(k)}H^{(k)} - UH} \le \|U^{(k)}H^{(k)} - UH\|_{\op}\nonumber\\
= & \|(U^{(k)}(U^{(k)})^{T} - UU^{T})U^{*}\|_{\op}\le \|U^{(k)}(U^{(k)})^{T} - UU^{T}\|_{\op}.\label{eq:step3_term1_2}
\end{align}
On the other hand,
\begin{align}
  \lefteqn{\mnorm{UH - U^{*}}}\nonumber\\
&\le \mnorm{UH - AU^{*}(\Lambda^{*})^{-1}} + \mnorm{AU^{*}(\Lambda^{*})^{-1} - U^{*}}\nonumber\\
& = \mnorm{UH - AU^{*}(\Lambda^{*})^{-1}} + \mnorm{AU^{*}(\Lambda^{*})^{-1} - A^{*}U^{*}(\Lambda^{*})^{-1}}\nonumber\\
& = \mnorm{UH - AU^{*}(\Lambda^{*})^{-1}} + \mnorm{EU^{*}(\Lambda^{*})^{-1}}\nonumber\\
& \le \mnorm{UH - AU^{*}(\Lambda^{*})^{-1}} + \frac{\mnorm{EU^{*}}}{\lambda_{\min}^{*}}\nonumber\\
& \le \mnorm{U\sign(H) - AU^{*}(\Lambda^{*})^{-1}} + \mnorm{U(H - \sign(H))} + \frac{\mnorm{EU^{*}}}{\lambda_{\min}^{*}}\nonumber\\
& \le \mnorm{U\sign(H) - AU^{*}(\Lambda^{*})^{-1}} + \frac{\|EU^{*}\|_{\op}^{2}}{\tdgap^{2}}\mnorm{U} + \frac{\mnorm{EU^{*}}}{\lambda_{\min}^{*}}\label{eq:step3_term1_3},
\end{align}
where the last line uses \eqref{eq:J1} in page \pageref{eq:J1}. Putting \eqref{eq:step3_term1_1}--\eqref{eq:step3_term1_3} together, we obtain that
\begin{align}
  &\mnorm{U^{(k)}H^{(k)} - U^{*}} \le \|U^{(k)}(U^{(k)})^{T} - UU^{T}\|_{\op}\nonumber\\
&\,\, + \mnorm{U\sign(H) - AU^{*}(\Lambda^{*})^{-1}} + \frac{\|EU^{*}\|_{\op}^{2}}{\tdgap^{2}}\mnorm{U} + \frac{\mnorm{EU^{*}}}{\lambda_{\min}^{*}}.\label{eq:step3_term1}
\end{align}
Next we bound $\|U^{(k)}H^{(k)} - U^{*}\|_{\op}$. Applying the triangle inequality, 
\begin{equation*}
  \|U^{(k)}H^{(k)} - U^{*}\|_{\op}\le \|U^{(k)}H^{(k)} - UH\|_{\op} + \|UH - U^{*}\|_{\op}.
\end{equation*}
It has been proved in \eqref{eq:step3_term1_2} that 
\[\|U^{(k)}H^{(k)} - UH\|_{\op}\le \|U^{(k)}(U^{(k)})^{T} - UU^{T}\|_{\op}.\]
Similarly, 
\[\|UH - U^{*}\|_{\op}\le \|UU^{T} - U^{*}(U^{*})^{T}\|_{\op}.\]
Thus, 
\begin{equation}
  \label{eq:step3_term2}
  \|U^{(k)}H^{(k)} - U^{*}\|_{\op}\le \|U^{(k)}(U^{(k)})^{T} - UU^{T}\|_{\op} + \|UU^{T} - U^{*}(U^{*})^{T}\|_{\op}.
\end{equation}
Putting \eqref{eq:step3_1}, \eqref{eq:step3_term1} and \eqref{eq:step3_term2} together, we conclude that 
\begin{align*}
  \lefteqn{\|E_{k}^{T}(U^{(k)}H^{(k)} - U^{*})\|_{2}}\\
&\le b_{\infty}\lb \mnorm{U\sign(H) - AU^{*}(\Lambda^{*})^{-1}} + \frac{\|EU^{*}\|_{\op}^{2}}{\tdgap^{2}}\mnorm{U} + \frac{\mnorm{EU^{*}}}{\lambda_{\min}^{*}}\rb\\
& \,\, + (b_{\infty} + b_{2})\|U^{(k)}(U^{(k)})^{T} - UU^{T}\|_{\op} + b_{2}\|UU^{T} - U^{*}(U^{*})^{T}\|_{\op}.
\end{align*}
The proof is then completed by Lemma \ref{lem:davis_kahan}.
\end{proof}

\subsection{Step IV: summarizing Step I -- Step III}
Putting Lemma \ref{lem:step1} -- \ref{lem:step3} together, we arrive at our first bound.
\begin{lemma}\label{lem:step4}
Assume that $\lambda_{s + 1}^{*}\lambda_{s + r}^{*} > 0$. Under assumptions \textbf{A}1 - \textbf{A}3 and
\[\tdgap(\delta)\ge 2\max\{E_{+}(\delta), \lambda_{-}(\delta)\},\]
it holds with probability at least $1 - 4\delta$ that
\begin{align*}
\lefteqn{\lb 1 - \frac{(\kappa^{*}L_{2}(\delta) + L_{3}(\delta) + 1)\eta(\delta) + E_{+}(\delta)}{\tdgap(\delta)}\rb\mnorm{U\sign(H) - AU^{*}(\Lambda^{*})^{-1}}}\\ 
 &\le \frac{(\kappa^{*}L_{2}(\delta) + L_{3}(\delta) + 1)\eta(\delta) + E_{+}(\delta)}{\tdgap(\delta)}\lb\mnorm{U^{*}} + \frac{\mnorm{EU^{*}}}{\lambda_{\min}^{*}}\rb + \frac{1}{\tdgap(\delta)}\lb \frac{E_{+}(\delta)b_{2}(\delta)}{\lambda_{\min}^{*}} + \xi(\delta)\rb,
\end{align*}
where $\tdgap(\delta)$ is defined in \eqref{eq:tdgap}, 
\begin{align}
  \xi(\delta) & = \min\bigg\{E_{+}(\delta)\xi_{1}, \,\, 3.61\bar{E}_{+}(\delta)\sqrt{\kappa^{*}}\xi_{2}, \,\, 3.84\bar{E}_{+}(\delta)\kappa^{*}\xi_{3}\bigg\}\nonumber
\end{align}
\end{lemma}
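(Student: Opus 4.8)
The plan is to combine the three preceding lemmas mechanically, tracking the dominant term at each stage. First I would invoke Lemma \ref{lem:step1}, which under conditions \textbf{C}0 and \textbf{C}1 bounds $\mnorm{U\sign(H) - AU^{*}(\Lambda^{*})^{-1}}$ by $\beta\mnorm{U} + \mnorm{A^{*}(UH - U^{*})}/\lambda_{\min}^{*} + \max_{k}\|E_{k}^{T}(U^{(k)}H^{(k)} - U^{*})\|_{2}/\lambda_{\min}^{*}$, where $\beta$ is given in \eqref{eq:beta}. To apply it I must verify \textbf{C}0 and \textbf{C}1 on a high-probability event: \textbf{C}0 holds with probability $1-\delta$ by assumption \textbf{A}1 (with $L_i = L_i(\delta)$), and \textbf{C}1, namely $\maxnorm{\Lambda - \Lambda^{*}}\le \tdgap(\delta)/2$, holds with probability $1-\delta$ by assumption \textbf{A}2 once $\lambda_{-}(\delta)\le \tdgap(\delta)/2$, which is exactly the hypothesis $\tdgap(\delta)\ge 2\max\{E_{+}(\delta),\lambda_{-}(\delta)\}$. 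On this event Weyl's inequality gives $\maxnorm{\Lambda-\Lambda^{*}}\le \maxnorm{E}\le E_{\infty}(\delta)$, but we use the sharper \textbf{A}2 bound directly.

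Next I would substitute the bound for $\mnorm{A^{*}(UH-U^{*})}$ from Lemma \ref{lem:step2}. That lemma gives three alternatives — the generic one $\|EU^{*}\|_{\op}\mnorm{A^{*}}/\tdgap$, the psd one $3.61\|E\bar U^{*}\|_{\op}\sqrt{\lambda_{\max}^{*}\maxnorm{A^{*}}}/\tdgap$, and the low-rank one $3.84\|E\bar U^{*}\|_{\op}\lambda_{\max}^{*}\mnorm{\bar U^{*}}/\tdgap$ — so dividing by $\lambda_{\min}^{*}$ and using $\lambda_{\max}^{*}/\lambda_{\min}^{*} = \kappa^{*}$ (and $\sqrt{\lambda_{\max}^{*}}/\sqrt{\lambda_{\min}^{*}}=\sqrt{\kappa^{*}}$) produces $\xi(\delta)/\tdgap(\delta)$ with $\xi(\delta)$ the stated minimum of $E_{+}(\delta)\xi_1$, $3.61\bar E_{+}(\delta)\sqrt{\kappa^{*}}\xi_2$, $3.84\bar E_{+}(\delta)\kappa^{*}\xi_3$, after replacing $\|EU^{*}\|_{\op}, \|E\bar U^{*}\|_{\op}$ by their \textbf{A}2 bounds $E_{+}(\delta), \bar E_{+}(\delta)$ on the relevant event. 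Here I must be slightly careful that \eqref{eq:case2} in Lemma \ref{lem:step2} contains $\sqrt{\maxnorm{A^{*}}}$ while $\xi_2$ in \eqref{eq:xi} is written with $\sqrt{\lambda_{\min}^{*}}$ in the denominator and the indicator $I(A^{*}\text{ psd})$; matching these requires noting $\maxnorm{A^{*}}\le$ something or simply that the definition of $\xi_2$ is set up to absorb it — I would state $\xi_2 = \sqrt{\maxnorm{A^{*}}}/(\sqrt{\lambda_{\min}^{*}} I(A^{*}\text{ psd}))$ consistently with \eqref{eq:xi}.

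Then I would substitute the bound from Lemma \ref{lem:step3} for $\max_k\|E_k^{T}(U^{(k)}H^{(k)}-U^{*})\|_2$, which crucially contains a term $b_{\infty}(\delta)\mnorm{U\sign(H)-AU^{*}(\Lambda^{*})^{-1}}$ on its right-hand side — this is what produces the $(1 - \cdots)$ coefficient on the left of the final inequality. After dividing Lemma \ref{lem:step3}'s bound by $\lambda_{\min}^{*}$ and adding everything, I would collect all coefficients of $\mnorm{U\sign(H)-AU^{*}(\Lambda^{*})^{-1}}$ onto the left, all coefficients of $\mnorm{U}$ and $\mnorm{EU^{*}}/\lambda_{\min}^{*}$ and $\mnorm{U^{*}}$ (bounding $\mnorm{U}\le \mnorm{UH-U^{*}}+\mnorm{U^{*}}$ type manipulations, or more simply using $\mnorm{U}\le \mnorm{U\sign(H)-AU^{*}(\Lambda^{*})^{-1}} + \mnorm{U^{*}} + \cdots$ and re-absorbing into the left side) onto the right. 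All the scattered $\tdgap(\delta)^{-2}$, $b_2(\delta)/\tdgap(\delta)$, $b_{\infty}(\delta)/\lambda_{\min}^{*}$ pieces must be bundled using the single quantity $\eta(\delta) = E_{\infty}(\delta)+b_{\infty}(\delta)+b_2(\delta)$ from \eqref{eq:eta}, together with the crude bounds $\|EU^{*}\|_{\op}/\tdgap(\delta)\le 1/2$ and $\mnorm{E}\le E_{\infty}(\delta)\le \eta(\delta)$, to show every cross term is at most $(\kappa^{*}L_2(\delta)+L_3(\delta)+1)\eta(\delta)/\tdgap(\delta)$ up to the already-accounted pieces. The main obstacle I anticipate is precisely this bookkeeping: making sure that after re-absorbing $\mnorm{U}$ in terms of $\mnorm{U^{*}}$ and the left-hand quantity, the residual coefficient on the left is exactly $1 - [(\kappa^{*}L_2(\delta)+L_3(\delta)+1)\eta(\delta)+E_{+}(\delta)]/\tdgap(\delta)$ and not something larger, and that the probability budget is exactly $4\delta$ ($\delta$ for \textbf{C}0 via \textbf{A}1, $\delta$ for \textbf{A}2 giving \textbf{C}1 and the operator-norm bounds, and $2\delta$ for the coupling-plus-\textbf{A}3 event in Lemma \ref{lem:EmW}, which already subsumes the \textbf{C}0 event's $\delta$ — so really $\delta+3\delta$ as in Lemma \ref{lem:step3} plus the \textbf{A}2 event, capped at $4\delta$ by union bound). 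The algebra itself is routine once the grouping is fixed.
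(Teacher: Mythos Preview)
Your proposal is correct and follows essentially the same approach as the paper: combine Lemma~\ref{lem:step1}, Lemma~\ref{lem:step2}, and Lemma~\ref{lem:step3} on the intersection of the \textbf{A}2 event (probability $1-\delta$) with the event from Lemma~\ref{lem:step3} (probability $1-3\delta$), re-absorb $\mnorm{U}$ via $\mnorm{U}=\mnorm{U\sign(H)}\le \mnorm{U\sign(H)-AU^{*}(\Lambda^{*})^{-1}}+\mnorm{U^{*}}+\mnorm{EU^{*}}/\lambda_{\min}^{*}$, and then bundle the resulting coefficient $\tilde\beta + b_{\infty}/\lambda_{\min}^{*}$ into $[(\kappa^{*}L_2+L_3+1)\eta+E_{+}]/\tdgap$ using $\tdgap\ge 2E_{+}$ and $\lambda_{\min}^{*}\ge \tdgap/2$. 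Your probability accounting and the bookkeeping you flag as the main obstacle are exactly what the paper does in \eqref{eq:step4_1}--\eqref{eq:tdbeta_bound2}.
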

\begin{proof}
Without loss of generality we assume that $\lambda_{s + 1}^{*}\ge \lambda_{s + r}^{*} > 0$. Otherwise we replace $A$ (resp. $A^{*}, \Lambda, \Lambda^{*}$) by $-A$ (resp. $-A^{*}, -\Lambda, -\Lambda^{*}$). Let $\event$ be the event in Lemma \ref{lem:step3} and $\event'$ be the event in assumption \textbf{A}2. Then
\[P(\td{\event})\ge 1 - 4\delta, \quad \mbox{where }\td{\event} = \event\cap \event'.\]
Throughout the rest of the proof we restrict the attention onto $\td{\event}$. For notational convenience, we will suppress the notation $(\delta)$ for all quantities that involve it.

Since $\tdgap\ge 2\lambda_{-}$, condition \textbf{C}1 is satisfied on event $\td{\event}$. By Lemma \ref{lem:step2}, on $\td{\event}$,
\[\frac{\|A^{*}(UH - U^{*})\|_{2}}{\lambda_{\min}^{*}}\le \frac{\xi}{\tdgap},\]
By Lemma \ref{lem:step1} and Lemma \ref{lem:step3}, on $\td{\event}$,
\begin{align}
    &\|(U\sign(H) - AU^{*}(\Lambda^{*})^{-1})_{k}\|_{2} \le \frac{b_{\infty}}{\lambda_{\min}^{*}}\mnorm{U\sign(H) - AU^{*}(\Lambda^{*})^{-1}} + \frac{1}{\tdgap}\lb \frac{E_{+}b_{2}}{\lambda_{\min}^{*}} + \xi\rb\nonumber\\
& \,\, + \frac{b_{\infty}\mnorm{EU^{*}}}{\lambda_{\min}^{*2}} + \lb\beta + \frac{(b_{\infty} + b_{2})(\kappa^{*}L_{2} + L_{3})}{\tdgap} + \frac{b_{\infty}E_{+}^{2}}{\lambda_{\min}^{*}\tdgap^{2}}\rb\mnorm{U} .\label{eq:step4_1}
\end{align}
Recalling the definition of $\beta$ in \eqref{eq:beta}, on event $\td{\event}$,
\[\beta \le \frac{E_{+}^{2}}{\tdgap^{2}} + \frac{E_{+}}{\lambda_{\min}^{*}} + \frac{E_{\infty}(\kappa^{*}L_{2} + L_{3})}{\tdgap}.\]
Then \eqref{eq:step4_1} implies that
\begin{align}
  \lefteqn{\lb 1 - \frac{b_{\infty}}{\lambda_{\min}^{*}}\rb \mnorm{U\sign(H) - AU^{*}(\Lambda^{*})^{-1}}}\nonumber\\
& \le \td{\beta}\mnorm{U} + \frac{b_{\infty}}{\lambda_{\min}^{*}}\frac{\mnorm{EU^{*}}}{\lambda_{\min}^{*}} + \frac{1}{\tdgap}\lb \frac{E_{+}b_{2}}{\lambda_{\min}^{*}} + \xi\rb,\label{eq:step4_2}
\end{align}
where
\begin{equation}
  \label{eq:tdbeta}
  \td{\beta} = \beta + \frac{(b_{\infty} + b_{2})(\kappa^{*}L_{2} + L_{3})}{\tdgap} + \frac{b_{\infty}E_{+}^{2}}{\lambda_{\min}^{*}\tdgap^{2}}.
\end{equation}
On the other hand, since $\sign(H)$ is orthogonal,
\begin{align}
  & \mnorm{U} = \mnorm{U\sign(H)}  \le \mnorm{U\sign(H) - AU^{*}(\Lambda^{*})^{-1}} + \mnorm{AU^{*}(\Lambda^{*})^{-1}}\nonumber\\
\le & \mnorm{U\sign(H) - AU^{*}(\Lambda^{*})^{-1}} + \mnorm{A^{*}U^{*}(\Lambda^{*})^{-1}} + \frac{\mnorm{EU^{*}}}{\lambda_{\min}^{*}}\nonumber\\
= & \mnorm{U\sign(H) - AU^{*}(\Lambda^{*})^{-1}} + \mnorm{U^{*}} + \frac{\mnorm{EU^{*}}}{\lambda_{\min}^{*}}\label{eq:step4_3}.
\end{align}
Combining \eqref{eq:step4_3} with \eqref{eq:step4_2}, we obtain that
\begin{align}
  \lefteqn{\lb 1 - \td{\beta} - \frac{b_{\infty}}{\lambda_{\min}^{*}}\rb \mnorm{U\sign(H) - AU^{*}(\Lambda^{*})^{-1}}}\nonumber\\
& \le \td{\beta}\mnorm{U^{*}} + \lb\td{\beta} + \frac{b_{\infty}}{\lambda_{\min}^{*}}\rb\frac{\mnorm{EU^{*}}}{\lambda_{\min}^{*}} + \frac{1}{\tdgap}\lb \frac{E_{+}b_{2}}{\lambda_{\min}^{*}} + \xi \rb\nonumber\\
& \le \lb\td{\beta} + \frac{b_{\infty}}{\lambda_{\min}^{*}}\rb\lb\mnorm{U^{*}} + \frac{\mnorm{EU^{*}}}{\lambda_{\min}^{*}}\rb + \frac{1}{\tdgap}\lb \frac{E_{+}b_{2}}{\lambda_{\min}^{*}} + \xi \rb.\label{eq:step4_4}  
\end{align}
By definition of $\beta$ in \eqref{eq:beta}, that of $\td{\beta}$ in \eqref{eq:tdbeta} and that of $\eta$ in \eqref{eq:eta},
\begin{align}
\td{\beta} + \frac{b_{\infty}}{\lambda_{\min}^{*}}& = \lb 1 + \frac{b_{\infty}}{\lambda_{\min}^{*}}\rb\frac{E_{+}^{2}}{\tdgap^{2}} + \frac{b_{\infty} + E_{+}}{\lambda_{\min}^{*}} + \frac{(E_{\infty} + b_{\infty} + b_{2})(\kappa^{*}L_{2} + L_{3})}{\tdgap}\nonumber\\
 & = \lb 1 + \frac{E_{+}^{2}}{\tdgap^{2}}\rb\frac{b_{\infty}}{\lambda_{\min}^{*}} + \frac{E_{+}^{2}}{\tdgap^{2}} + \frac{E_{+}}{\lambda_{\min}^{*}} + \frac{\eta(\kappa^{*}L_{2} + L_{3})}{\tdgap}.\label{eq:tdbeta_bound}
\end{align}
Since $\tdgap\ge 2 E_{+}$,
\[\lb 1 + \frac{E_{+}^{2}}{\tdgap^{2}}\rb\frac{b_{\infty}}{\lambda_{\min}^{*}}\le \frac{5\eta}{4\lambda_{\min}^{*}}\le \frac{5\eta}{8\tdgap}\le \frac{\eta}{\tdgap}.\]
Similarly, 
\[\frac{E_{+}^{2}}{\tdgap^{2}} + \frac{E_{+}}{\lambda_{\min}^{*}}\le \frac{E_{+}}{2\tdgap} + \frac{E_{+}}{2\tdgap} = \frac{E_{+}}{\tdgap}.\]
Therefore, \eqref{eq:tdbeta_bound} implies that 
\begin{equation}
  \label{eq:tdbeta_bound2}
  \td{\beta} + \frac{b_{\infty}}{\lambda_{\min}^{*}}\le \frac{(\kappa^{*}L_{2} + L_{3} + 1)\eta + E_{+}}{\tdgap}.
\end{equation}
The proof is then completed by \eqref{eq:step4_4}.
\end{proof}

If $(\kappa^{*}L_{2} + L_{3} + 1)\eta + E_{+} < \tdgap$, we can use a self-bounding argument to derive the bound for $\mnorm{U\sign(H) - AU^{*}(\Lambda^{*})^{-1}}$. In particular, this is true if the following stronger version of \textbf{A}4 is satisfied:
\begin{enumerate}[$\td{\mathbf{A}}$1]
\setcounter{enumi}{3}
\item $\gap^{*}\ge 4\lb (\kappa^{*}L_{2}(\delta) + L_{3}(\delta) + 1)\eta(\delta) + E_{+}(\delta) + L_{1}(\delta) + \lambda_{-}(\delta)\rb.$
\end{enumerate}
Note that $\td{\mathbf{A}}$4' is stronger than \textbf{A}4 since $\kappa^{*}\ge \bar{\kappa}^{*}$. Then $\td{\mathbf{A}}$4' implies that
\begin{align*}
\tdgap(\delta)\ge 2\lb (\kappa^{*}L_{2}(\delta) + L_{3}(\delta) + 1)\eta(\delta) + E_{+}(\delta) + L_{1}(\delta) + \lambda_{-}(\delta)\rb.
\end{align*}
Since $ \eta(\delta)\ge E_{+}(\delta)$, 
\[\tdgap(\delta)\ge 2\max\{E_{+}(\delta), \lambda_{-}(\delta)\}.\]
Thus under $\td{\mathbf{A}}$4', the assumptions of Lemma \ref{lem:step4} are satisfied. On the other hand, 
\begin{align*}
\frac{(\kappa^{*}L_{2}(\delta) + L_{3}(\delta) + 1)\eta(\delta) + E_{+}(\delta)}{\tdgap(\delta)}\le \frac{1}{2},
\end{align*}
and 
\[\tdgap(\delta) \ge 4L_{1}(\delta)\Longrightarrow\tdgap(\delta) = \frac{1}{2}(\gap^{*} - L_{1}(\delta))\ge \frac{3}{8}\gap^{*}.\]
Therefore, Lemma \ref{lem:step4} implies the following result.
\begin{lemma}\label{lem:bound1}
Assume that $\lambda_{s + r}^{*}\lambda_{s + 1}^{*} > 0$. Then under conditions \textbf{A}1 - \textbf{A}3 and $\td{\mathbf{A}}$4,
  \begin{align}
    &\mnorm{U\sign(H) - AU^{*}(\Lambda^{*})^{-1}}\nonumber\\
\le &\frac{16}{3\gap^{*}}\left\{\lb(\kappa^{*}L_{2}(\delta) + L_{3}(\delta) + 1)\eta(\delta) + E_{+}(\delta)\rb\lb\mnorm{U^{*}} + \frac{\mnorm{EU^{*}}}{\lambda_{\min}^{*}}\rb + \lb\frac{E_{+}(\delta)b_{2}(\delta)}{\lambda_{\min}^{*}} + \xi(\delta)\rb\right\},\nonumber
  \end{align}
with probability at least $1 - 4\delta$, where the quantities are defined in Lemma \ref{lem:step4}.
\end{lemma}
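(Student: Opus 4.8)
The plan is to obtain Lemma~\ref{lem:bound1} as a short self-bounding consequence of Lemma~\ref{lem:step4}, along the lines already sketched in the paragraph preceding the statement. Throughout I would abbreviate $\tau = (\kappa^{*}L_{2}(\delta) + L_{3}(\delta) + 1)\eta(\delta) + E_{+}(\delta)$; since $\eta(\delta) \ge E_{+}(\delta) \ge 0$ and $\kappa^{*}L_{2}(\delta) + L_{3}(\delta) \ge 0$, one has $\tau \ge E_{+}(\delta) \ge 0$ and every quantity entering the argument is nonnegative.

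First I would check that assumption $\td{\mathbf{A}}$4 supplies the hypotheses needed to invoke Lemma~\ref{lem:step4}. From $\td{\mathbf{A}}$4, $\gap^{*} \ge 4(\tau + L_{1}(\delta) + \lambda_{-}(\delta))$; subtracting $L_{1}(\delta)$ and halving, and recalling $\tdgap(\delta) = (\gap^{*} - L_{1}(\delta))/2$, gives
\[
\tdgap(\delta) \ \ge\ 2\tau + \tfrac{3}{2}L_{1}(\delta) + 2\lambda_{-}(\delta) \ \ge\ 2\tau + 2\lambda_{-}(\delta) .
\]
Because $\tau \ge E_{+}(\delta)$ and $\tau, \lambda_{-}(\delta) \ge 0$, this yields $\tdgap(\delta) \ge 2\max\{E_{+}(\delta), \lambda_{-}(\delta)\}$, exactly the condition required by Lemma~\ref{lem:step4}, and also $\tau / \tdgap(\delta) \le 1/2$. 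Separately $\gap^{*} \ge 4L_{1}(\delta)$, so $L_{1}(\delta) \le \gap^{*}/4$ and hence $\tdgap(\delta) = (\gap^{*} - L_{1}(\delta))/2 \ge \tfrac{3}{8}\gap^{*}$.

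Then I would invoke Lemma~\ref{lem:step4}: on the event of probability at least $1 - 4\delta$ it provides,
\[
\left(1 - \tfrac{\tau}{\tdgap(\delta)}\right)\mnorm{U\sign(H) - AU^{*}(\Lambda^{*})^{-1}} \le \tfrac{\tau}{\tdgap(\delta)}\!\left(\mnorm{U^{*}} + \tfrac{\mnorm{EU^{*}}}{\lambda_{\min}^{*}}\right) + \tfrac{1}{\tdgap(\delta)}\!\left(\tfrac{E_{+}(\delta)b_{2}(\delta)}{\lambda_{\min}^{*}} + \xi(\delta)\right).
\]
Since by Step~1 the coefficient on the left is at least $1/2$, dividing through multiplies the right-hand side by at most $2$; replacing $1/\tdgap(\delta) \le 8/(3\gap^{*})$ then gives
\[
\mnorm{U\sign(H) - AU^{*}(\Lambda^{*})^{-1}} \le \frac{16}{3\gap^{*}}\left\{\tau\!\left(\mnorm{U^{*}} + \frac{\mnorm{EU^{*}}}{\lambda_{\min}^{*}}\right) + \frac{E_{+}(\delta)b_{2}(\delta)}{\lambda_{\min}^{*}} + \xi(\delta)\right\},
\]
which is the claimed inequality, $\xi(\delta)$ being defined here exactly as in Lemma~\ref{lem:step4}.

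I do not anticipate a real obstacle here: the heart of the matter is the elementary implication ``$x \le a + bx$ with $b \le \tfrac12$ forces $x \le 2a$'', applied on top of the chain of inequalities relating $\gap^{*}$ and $\tdgap(\delta)$. The only point needing a little care is the numerology — verifying that the factor $4$ in $\td{\mathbf{A}}$4 is exactly strong enough to make both $\tau/\tdgap(\delta) \le \tfrac12$ and $\tdgap(\delta) \ge \tfrac38\gap^{*}$ hold simultaneously, so that the stated constant $16/3$ emerges — together with the observation that $\td{\mathbf{A}}$4 is stronger than \textbf{A}4 (because $\kappa^{*} \ge \bar{\kappa}^{*}$), which is what lets this bound feed cleanly into the remaining steps of the proof of Theorem~\ref{thm:generic_bound}.
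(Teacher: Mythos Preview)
Your proposal is correct and follows essentially the same route as the paper: the paper's proof of Lemma~\ref{lem:bound1} is exactly the paragraph between Lemma~\ref{lem:step4} and the statement of Lemma~\ref{lem:bound1}, which verifies that $\td{\mathbf{A}}4$ gives $\tdgap(\delta)\ge 2\max\{E_{+}(\delta),\lambda_{-}(\delta)\}$, that $\tau/\tdgap(\delta)\le 1/2$, and that $\tdgap(\delta)\ge \tfrac{3}{8}\gap^{*}$, then applies the self-bounding step to Lemma~\ref{lem:step4}. One cosmetic remark: the inequality $\tau\ge E_{+}(\delta)$ is immediate because $E_{+}(\delta)$ is an additive summand of $\tau$, so you do not actually need the (not-entirely-obvious) claim $\eta(\delta)\ge E_{+}(\delta)$.
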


\subsection{Step V: removing the dependence on the condition number via eigen-partition}\label{subapp:step5}
The bound in Lemma \ref{lem:bound1} involves the condition number $\kappa^{*}$, which can be ineffective in ill-conditioned cases. Fortunately, we can remove this dependence by appropriately partitioning the columns $U$ and $U^{*}$ into blocks and applying Lemma \ref{lem:bound1} separately on each block. This idea is also proposed in \cite{mao2017estimating}. Specifically, given a partition $(S_{1}, \ldots, S_{B})$ of $\{s + r, s + r - 1, \ldots, s + 1\}$ with each $S_{j}$ being a contiguous block, let $\Lambda_{j}^{*}$ (resp. $\Lambda_{j}$) be the diagonal matrix formed by the eigenvalues $(\lambda_{i}^{*}: i\in S_{j})$ (resp. $(\lambda_{i}: i\in S_{j})$) and $U_{j}^{*}$ (resp. $U_{j}$) be the eigenvectors corresponding to $\Lambda_{j}^{*}$ (resp. $\Lambda_{j}$). 

Let
\begin{equation}\label{eq:gapj}
\kappa_{j}^{*} = \kappa(\Lambda_{j}^{*}), \quad \lambda_{\min, j}^{*} = \lambda_{\min}(\Lambda_{j}^{*}), \quad \sep_{j}(A^{*}) = \sep_{S_{j}}(A^{*}),\quad   \gap_{j}^{*} \triangleq \min\{\sep_{j}(A^{*}), \lambda_{\min, j}^{*}\}.
\end{equation}
Further let $\xi_{j}(\delta), \xi_{1j}, \xi_{2j}, \xi_{3j}$ be the counterpart in Lemma \ref{lem:bound1} for $j$-th block. Note that $\eta(\delta)$ does not depend on $j$. Then under assumptions \textbf{A}1 - \textbf{A}3 and assume $\td{\mathbf{A}}$4 holds for each block, by Lemma \ref{lem:bound1}, it holds simultaneously for all blocks with probability at least $1 - 4B\delta$ that
\begin{align}
  \lefteqn{\mnorm{U_{j}\sign(H_{j}) - AU_{j}^{*}(\Lambda_{j}^{*})^{-1}}}\nonumber\\
& \le \frac{16}{3\gap_{j}^{*}}\left\{\lb\{\kappa_{j}^{*}L_{2}(\delta) + L_{3}(\delta) + 1\} \eta(\delta) + E_{+}(\delta)\rb\lb\mnorm{U_{j}^{*}} + \frac{\mnorm{EU_{j}^{*}}}{\lambda_{\min, j}^{*}}\rb + \lb\frac{E_{+}(\delta)b_{2}(\delta)}{\lambda_{\min, j}^{*}} + \xi_{j}(\delta)\rb\right\}\nonumber\\
& \le \frac{16}{3\gap_{j}^{*}}\left\{\lb\{\kappa_{j}^{*}L_{2}(\delta) + L_{3}(\delta) + 1\} \eta(\delta) + E_{+}(\delta)\rb\lb\mnorm{U^{*}} + \frac{\mnorm{EU^{*}}}{\lambda_{\min, j}^{*}}\rb + \lb\frac{E_{+}(\delta)b_{2}(\delta)}{\lambda_{\min, j}^{*}} + \xi_{j}(\delta)\rb\right\},
\label{eq:step5_1}
\end{align}
where the last inequality uses the fact that $U_{j}^{*}$ (resp. $EU_{j}^{*}$) is a sub-block of $U^{*}$ (resp. $EU^{*}$) and thus has a smaller norm. A sufficient condition for $\td{\mathbf{A}}$4 to hold on each block is 
\begin{equation}
  \label{eq:sufficient_A4}
  \gap_{j}^{*}\ge \gap^{*}.
\end{equation}

Let $O_{j} = \sign(H_{j})$ and $O = \diag(O_{1}, \ldots, O_{B})$. By definition,
\[UO - AU^{*}(\Lambda^{*})^{-1} = (U_{1}O_{1} -  AU_{1}^{*}(\Lambda^{*}_{1})^{-1}\, \vdots\,  U_{2}O_{2} - AU_{2}^{*}(\Lambda^{*}_{2})^{-1}\, \vdots \, \cdots \, \vdots \, U_{B}O_{B} - AU_{B}^{*}(\Lambda^{*}_{B})^{-1}).\]
Thus by \eqref{eq:step5_1},
\begin{align}
  &\mnorm{UO - AU^{*}(\Lambda^{*})^{-1}} \le \sqrt{\sum_{j=1}^{B}\mnorm{U_{j}O_{j} - AU_{j}^{*}(\Lambda_{j}^{*})^{-1}}^{2}} \le \sum_{j=1}^{B}\mnorm{U_{j}O_{j} - AU_{j}^{*}(\Lambda_{j}^{*})^{-1}}\nonumber\\
\le & \sum_{j=1}^{B}\frac{16}{3\gap_{j}^{*}}\left\{\lb\{\kappa_{j}^{*}L_{2}(\delta) + L_{3}(\delta) + 1\} \eta(\delta) + E_{+}(\delta)\rb\lb\mnorm{U^{*}} + \frac{\mnorm{EU^{*}}}{\lambda_{\min, j}^{*}}\rb + \frac{E_{+}(\delta)b_{2}(\delta)}{\lambda_{\min, j}^{*}} + \xi_{j}(\delta)\right\}\label{eq:step5_later}
\end{align}
Since $O^{T}O = \diag(O_{j}^{T}O_{j}) = I$, $O\in \O^{r}$. Therefore,
\begin{align}
  \lefteqn{\frac{3}{16}d_{\ttinf}(U, AU^{*}(\Lambda^{*})^{-1})}\nonumber\\
& \le \sum_{j=1}^{B}\frac{1}{\gap_{j}^{*}}\left\{\lb\{\kappa_{j}^{*}L_{2}(\delta) + L_{3}(\delta) + 1\} \eta(\delta) + E_{+}(\delta)\rb\lb\mnorm{U^{*}} + \frac{\mnorm{EU^{*}}}{\lambda_{\min, j}^{*}}\rb + \frac{E_{+}(\delta) b_{2}(\delta)}{\lambda_{\min, j}^{*}} + \xi_{j}\right\}\nonumber\\
& = L_{2}(\delta)\eta(\delta)\lb\sum_{j=1}^{B}\frac{\kappa_{j}^{*}}{\gap_{j}^{*}}\rb\mnorm{U^{*}} + ((L_{3}(\delta) + 1)\eta(\delta) + E_{+}(\delta))\lb\sum_{j=1}^{B}\frac{1}{\gap_{j}^{*}}\rb\mnorm{U^{*}} \nonumber\\
& \,\, + L_{2}(\delta)\eta(\delta)\lb\sum_{j=1}^{B}\frac{\kappa_{j}^{*}}{\gap_{j}^{*}\lambda_{\min, j}^{*}}\rb\mnorm{EU^{*}} + ((L_{3}(\delta) + 1)\eta(\delta) + E_{+}(\delta))\lb\sum_{j=1}^{B}\frac{1}{\gap_{j}^{*}\lambda_{\min, j}^{*}}\rb\mnorm{EU^{*}}\nonumber\\
& \,\, + E_{+}(\delta)b_{2}(\delta)\lb\sum_{j=1}^{B} \frac{1}{\gap_{j}^{*}\lambda_{\min, j}^{*}}\rb + \lb\sum_{j=1}^{B} \frac{\xi_{j}(\delta)}{\gap_{j}^{*}}\rb.\label{eq:step5_2}
\end{align}
By definition of $\xi_{j}$, we deduce that
\begin{align}
    &\lefteqn{\sum_{j=1}^{B} \frac{\xi_{j}(\delta)}{\gap_{j}^{*}} \le \min\bigg\{\mnorm{A^{*}}\lb\sum_{j=1}^{B}\frac{1}{\gap_{j}^{*}\lambda_{\min, j}^{*}}\rb,} \nonumber\\
& \qquad 3.61\frac{\sqrt{\maxnorm{A^{*}}}}{I(A^{*}\mbox{ is psd})}\lb\sum_{j=1}^{B}\frac{\sqrt{\kappa_{j}^{*}}}{\gap_{j}^{*}\sqrt{\lambda_{\min, j}^{*}}}\rb, 3.84\mnorm{\bar{U}^{*}}\lb\sum_{j=1}^{B}\frac{\kappa_{j}^{*}}{\gap_{j}^{*}}\rb \bigg\} .\label{eq:step5_3}
\end{align}

The final task is to find a desirable partition of eigenvalues. In particular, we propose a generic partition that is a modification of the one in Definition 5.1 of \cite{mao2017estimating} which yields a better pre-conditioning. 

\textbf{Warm-up: eigen-partition for positive eigenvalues}

Since the description is rather technical, we start from a simple case where all eigenvalues in $\Lambda^{*}$ are strictly positive. 

\begin{definition}[a pre-conditioned eigen-partition for positive eigenvalues]\label{def:partition}
Assume that $\lambda_{s + r}^{*} > 0$. Let 
\[g_{t}^{*} = \lambda_{t}^{*} - \max\{\lambda_{t + 1}^{*}, 0\}, \quad t = s + r, s + r - 1, \ldots, s + 1 \quad \mbox{and }\lambda_{0}^{*} = \infty.\]
Let $t_{0} = s + r$ and define $t_{1}, t_{2}, \ldots$ recursively as
\[t_{\ell} = \max\{s < t < t_{\ell-1}: g_{t}^{*} > 2g_{t_{\ell-1}}^{*}, \lambda_{t}^{*} > 2\lambda_{t_{\ell - 1}}^{*}\}.\]
Let $B = \min\{\ell: t_{\ell}\mbox{ does not exist}\}$ and let $t_{B} = s$. Finally we define the partition as 
\[S_{j} = \{t_{j-1}, t_{j-1} - 1, \ldots t_{j} + 1\}, \quad j = 1, 2, \ldots, B.\]
\end{definition}

We use the following example to illustrate these quantities. 
\begin{example}\label{example:separation}
  Let $s = 0, r = 10$. Table \ref{tab:example_separation} gives the values of $\lambda_{t}^{*}$, $g_{t}^{*}$ and $t_{\ell}$. 
  \begin{table}[h]
    \centering
    \begin{tabular}{c|cccccccccccc}
      \toprule
      index $k$ & $11$ & $10$ & $9$ & $8$ & $7$ & $6$ & $5$ & $4$ & $3$ & $2$ & $1$ & $0$\\
      \midrule 
      $\lambda^{*}_{t}$ & $-1$ & $1$ & $3$ & $5$ & $8$ & $15$ & $16$ & $23$ & $25$ & $40$ & $55$ & \\
      $g_{t}$ & & $1$ & $2$ & $2$ & $3$ & $7$ & $1$ & $7$ & $2$ & $15$ & $15$ &\\
      $t_{\ell}$ & & $t_{0}$ & & & $t_{1}$ & & & $t_{2}$ & & & $t_{3}$ & $t_{4}$\\
      \bottomrule
    \end{tabular}
    \caption{An illustrating example of eigen-separation (Example \ref{example:separation}).}\label{tab:example_separation}
  \end{table}
This setting gives four blocks: $S_{1} = \{10, 9, 8\}, S_{2} = \{7, 6, 5\}, S_{3} = \{4, 3, 2\}, S_{4} = \{1\}$. 
\end{example}

Roughly speaking, the first step guarantees that each block has sufficient eigen-gap with other eigenvalues and the second step guarantees the condition number within each block and the number of blocks are both small. The following lemma gives the property of the partition.

\begin{lemma}\label{lem:partition}
 Assume that $\lambda_{s + r}^{*} > 0$. Let $S_{1}, \ldots, S_{B}$ be the partition generated in Definition \ref{def:partition}. 
Then 
\[B \le \min\{r, 1 + \log_{2}\kappa^{*}\}, \quad \kappa_{j}^{*}\le 2|S_{j}|, \quad \gap_{j}^{*}\ge \gap^{*}, \quad \lambda_{\min, j}^{*}\ge \lambda_{\min}^{*},\]
and for any $\gamma, \gamma' > 0$,
\[\sum_{j=1}^{B}\frac{1}{\gap_{j}^{*\gamma}\lambda_{\min, j}^{*\gamma'}}\le \frac{H(\gamma, \gamma')}{\gap^{*\gamma}\lambda_{\min}^{*\gamma'}},\]
where
\begin{equation}
  \label{eq:Hfunc}
  H(\gamma, \gamma') =
  \left\{\begin{array}{ll}
     \frac{1}{a} + \frac{\gamma}{\gamma + \gamma'}\lb \frac{a\gamma'}{\gamma + \gamma'}\rb^{\gamma' / \gamma},  & (\gamma' > 0)\\
     \frac{1}{a} + 1 & (\gamma' = 0)
   \end{array}\right., \quad a = 1 - 2^{-(\gamma + \gamma')}
\end{equation}
\end{lemma}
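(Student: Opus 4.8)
\textbf{Proof plan for Lemma \ref{lem:partition}.}

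The plan is to verify each of the four pointwise properties and the summation bound directly from the recursive construction in Definition \ref{def:partition}. First I would establish $\gap_j^* \ge \gap^*$ and $\lambda_{\min,j}^* \ge \lambda_{\min}^*$, which are the easiest: since $S_j$ is a contiguous block of indices and $\gap^* = \min\{\sep_{s+1,s+r}(A^*), \lambda_{\min}^*\}$ involves a minimum over the \emph{outermost} gaps of the whole range $[s+1,s+r]$, every interior separation $\sep_j(A^*)$ at a block boundary is at least $\gap^*$ (the block boundaries are a subset of the places where consecutive eigenvalues could be close, and the construction via $g_t^* > 2g_{t_{\ell-1}}^*$ only ever \emph{cuts} at large gaps). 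Likewise $\lambda_{\min,j}^* = \lambda^*_{t_{j-1}} \ge \lambda^*_{s+r} = \lambda_{\min}^*$ trivially since $t_{j-1} \le s+r$ and eigenvalues are decreasing and positive. For $\gap_j^*$ one combines these.

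Next I would bound $B$ and $\kappa_j^*$. For $\kappa_j^* \le 2|S_j|$: within block $S_j = \{t_{j-1}, \ldots, t_j+1\}$, the construction ensures that for every $s < t < t_{j-1}$ with $t \ge t_j + 1$ we have $g_t^* \le 2 g_{t_{j-1}}^*$ \emph{and} $\lambda_t^* \le 2\lambda_{t_{j-1}}^*$ (otherwise $t$ would have been chosen as $t_j$ or later). Telescoping the lower end: $\lambda^*_{t_j+1} = \lambda^*_{t_{j-1}} + \sum_{t=t_j+1}^{t_{j-1}-1} g_t^* \le \lambda^*_{t_{j-1}} + (|S_j|-1)\cdot 2 g_{t_{j-1}}^* \le \lambda^*_{t_{j-1}} + 2(|S_j|-1)\lambda^*_{t_{j-1}} \le (2|S_j|-1)\lambda^*_{t_{j-1}}$, using $g_{t_{j-1}}^* \le \lambda^*_{t_{j-1}}$; hence $\kappa_j^* = \lambda^*_{\max,j}/\lambda^*_{\min,j} = \lambda^*_{t_j+1}/\lambda^*_{t_{j-1}} \le 2|S_j|-1 \le 2|S_j|$. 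For $B \le r$: trivial since there are only $r$ indices and blocks are nonempty. For $B \le 1 + \log_2 \kappa^*$: at each recursion step the construction forces $\lambda^*_{t_\ell} > 2\lambda^*_{t_{\ell-1}}$, so $\lambda^*_{t_{B-1}} > 2^{B-1}\lambda^*_{t_0} = 2^{B-1}\lambda^*_{s+r} = 2^{B-1}\lambda_{\min}^*$, while $\lambda^*_{t_{B-1}} \le \lambda^*_{s+1} = \lambda_{\max}^*$; rearranging gives $2^{B-1} < \kappa^*$, i.e. $B < 1 + \log_2\kappa^*$.

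Finally, for the summation bound $\sum_j \gap_j^{*-\gamma}\lambda_{\min,j}^{*-\gamma'} \le H(\gamma,\gamma') \gap^{*-\gamma}\lambda_{\min}^{*-\gamma'}$, I would use $\gap_j^* \ge \min\{g_{t_{j-1}}^*, \lambda^*_{t_{j-1}}\}$ together with the geometric growth. The key structural facts are that $\lambda^*_{t_{j-1}} \ge 2^{j-1}\lambda_{\min}^*$ (from the doubling of $\lambda^*$ across the chosen breakpoints, indexing so that $j=1$ is the innermost block containing $s+r$) and, more delicately, that the gaps $g^*_{t_{j-1}}$ also grow geometrically, $g^*_{t_{j-1}} \ge 2^{j-1}\gap^*$ up to the $a = 1-2^{-(\gamma+\gamma')}$ correction. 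Substituting these into the sum turns it into a geometric series $\sum_j 2^{-(j-1)(\gamma+\gamma')}$ plus boundary terms, and optimizing the split between the ``$\gap$-limited'' blocks and the ``$\lambda_{\min}$-limited'' blocks produces exactly the constant $H(\gamma,\gamma')$ via the elementary inequality $\min\{x^\gamma, y^\gamma\}$-type balancing that yields the $\frac{\gamma}{\gamma+\gamma'}(a\gamma'/(\gamma+\gamma'))^{\gamma'/\gamma}$ term; the $\gamma'=0$ case degenerates to $1/a + 1$. I expect this last step — tracking the geometric growth of $g^*_{t_{j-1}}$ relative to $\gap^*$ and handling the worst-case index where the block switches from being limited by $\gap_j^*$ to being limited by $\lambda_{\min,j}^*$ — to be the main obstacle, since it requires care about which of the two quantities in $\min\{\sep_j, \lambda_{\min,j}\}$ is active and the bookkeeping of the correction factor $a$.
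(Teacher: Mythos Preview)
Your argument for $\kappa_j^* \le 2|S_j|$ contains a genuine logical error. You claim that for every $t \in (t_j, t_{j-1})$ both $g_t^* \le 2 g_{t_{j-1}}^*$ and $\lambda_t^* \le 2\lambda_{t_{j-1}}^*$ hold, ``otherwise $t$ would have been chosen as $t_j$.'' But the definition says $t_j = \max\{s<t<t_{j-1}: g_t^*>2g_{t_{j-1}}^* \text{ \emph{and} } \lambda_t^*>2\lambda_{t_{j-1}}^*\}$, so for $t>t_j$ the negation only gives a \emph{disjunction}: $g_t^* \le 2g_{t_{j-1}}^*$ \emph{or} $\lambda_t^* \le 2\lambda_{t_{j-1}}^*$. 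It is entirely possible that some $t$ in the block has $g_t^* > 2g_{t_{j-1}}^*$ while $\lambda_t^* \le 2\lambda_{t_{j-1}}^*$, which kills your telescoping bound $\sum g_t^* \le (|S_j|-1)\cdot 2g_{t_{j-1}}^*$. The paper handles this by introducing an auxiliary index $t_j' = \min\{t_j<t<t_{j-1}: g_t^*>2g_{t_{j-1}}^*,\ \lambda_t^* \le 2\lambda_{t_{j-1}}^*\}$ and splitting into three cases; when $t_j'$ exists and $>t_j+1$, one shows $g_t^* < g_{t_j'}^*$ on $(t_j,t_j')$ (combining both disjunctions) to get $\lambda_{t_j+1}^*/\lambda_{t_j'}^* \le |S_j|$, and then uses $\lambda_{t_j'}^*/\lambda_{t_{j-1}}^* \le 2$ separately.

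Your description of the summation bound is also somewhat off. There is no ``split between gap-limited and $\lambda_{\min}$-limited blocks'' to optimize. The actual structure is simpler: one shows $\lambda_{\min,j}^* \ge 2^{j-1}\lambda_{\min}^*$ for \emph{all} $j$, and $\gap_j^* \ge 2^{j-1}\gap^*$ for $j=1,\ldots,B-1$ but only $\gap_B^* \ge \gap^*$ for the last block (because the outer boundary at $t_B=s$ need not satisfy the doubling condition). This gives $\sum_j \gap_j^{*-\gamma}\lambda_{\min,j}^{*-\gamma'} \le \gap^{*-\gamma}\lambda_{\min}^{*-\gamma'}\bigl(2^{-(B-1)\gamma'} + \sum_{j=1}^{B-1}2^{-(j-1)(\gamma+\gamma')}\bigr)$, and then one maximizes the parenthesized expression over $B$, or equivalently over $x=2^{-(B-1)\gamma'}$, via elementary calculus to obtain $H(\gamma,\gamma')$. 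The rest of your plan (the bounds on $B$, $\lambda_{\min,j}^*$, and the rough idea for $\gap_j^*$) is correct.
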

\begin{proof}
We use the notation in Definition \ref{def:partition}. To prove the first result, note that
\[\kappa^{*} = \frac{\lambda_{s+1}^{*}}{\lambda_{s+r}^{*}} = \frac{\lambda_{t_{B} + 1}^{*}}{\lambda_{t_{0}}^{*}} \ge \frac{\lambda_{t_{B-1}}^{*}}{\lambda_{t_{0}}^{*}} =\prod_{j=1}^{B-1}\frac{\lambda_{t_{j}}^{*}}{\lambda_{t_{j-1}}^{*}} > 2^{B - 1}.\]
This implies that $B \le 1 + \log_{2}\kappa^{*}$. Since all blocks are non-empty, we also have $B\le r$.

To prove the second result, let 
\[t_{j}' = \min\{t_{j} < t < t_{j - 1}: g_{t} > 2g_{t_{j - 1}}, \lambda_{t}^{*} \le 2 \lambda_{t_{j - 1}}^{*}\}.\]
In other words, $t_{j}'$ is the point that is closest to $t_{j}$ in $j$-th block such that the eigengap is sufficiently large but the corresponding eigenvalue is small. Note that $t_{j}'$ may not exist. In Example \ref{example:separation}, it is easy to see that $t_{1}'$ does not exist, $t_{2}' = 6$ and $t_{3}' = 2$. We distinguish three cases:
\begin{itemize}
\item If $t_{j}'$ does not exist, by definition of $t_{j}$ in Definition \ref{def:partition}, we know that
\[g_{t} \le 2g_{t_{j - 1}}, \quad \forall\,\, t \in (t_{j}, t_{j - 1}).\]
Then 
\[\kappa_{j}^{*} = \frac{\lambda_{t_{j} + 1}^{*}}{\lambda_{t_{j - 1}}^{*}}\le \frac{\lambda_{t_{j} + 1}^{*} - \max\{\lambda_{t_{j - 1} + 1}^{*}, 0\}}{\lambda_{t_{j - 1}}^{*} - \max\{\lambda_{t_{j - 1} + 1}^{*}, 0\}} = \frac{\sum_{i = t_{j} + 1}^{t_{j - 1}}g_{i}}{g_{t_{j - 1}}}\le 2(t_{j - 1} - t_{j}) = 2|S_{j}|.\]
\item if $t_{j}' = t_{j} + 1$, then
  \[\kappa_{j}^{*} = \frac{\lambda_{t_{j} + 1}^{*}}{\lambda_{t_{j-1}}^{*}} = \frac{\lambda_{t_{j}'}^{*}}{\lambda_{t_{j-1}}^{*}}\le 2 \le 2 |S_{j}|.\]
\item if $t_{j}' > t_{j} + 1$, by definition of $t_{j}'$, 
\[\lambda_{t_{j}'}^{*}\le 2\lambda_{t_{j-1}}^{*}, \,\, \mbox{and }g_{t} \le 2g_{t_{j - 1}} < g_{t_{j}'}, \quad \forall\,\, t \in (t_{j}', t_{j - 1}).\]
Using a similar argument as in the above case we can show that
\[\frac{\lambda_{t_{j} + 1}^{*}}{\lambda_{t_{j}'}^{*}}\le \frac{\lambda_{t_{j} + 1}^{*} - \max\{\lambda_{t_{j}' + 1}^{*}, 0\}}{\lambda_{t_{j - 1}}^{*} - \max\{\lambda_{t_{j}' + 1}^{*}, 0\}} = \frac{\sum_{i = t_{j} + 1}^{t_{j}'}g_{i}}{g_{t_{j'}}}\le t_{j}' - t_{j} \le |S_{j}|.\]
Therefore,
\[\kappa_{j}^{*} = \frac{\lambda_{t_{j} + 1}^{*}}{\lambda_{t_{j - 1}}^{*}} = \frac{\lambda_{t_{j} + 1}^{*}}{\lambda_{t_{j}'}^{*}}\frac{\lambda_{t_{j}'}^{*}}{\lambda_{t_{j - 1}}^{*}}\le 2|S_{j}|.\]
\end{itemize}

To prove the last three results, note that $t_{0} = s + r$ and $t_{B} = s$,
\[\sep_{j}(A^{*}) = \min\left\{\lambda_{t_{j-1}}^{*} - \lambda_{t_{j-1} + 1}^{*}, \lambda_{t_{j}}^{*} - \lambda_{t_{j} + 1}^{*}\right\} = \min\{g_{t_{j-1}}, g_{t_{j}}\},\]
and  
\[\sep_{s+1, s+r}(A^{*}) = \min\left\{\lambda_{t_{0}}^{*} - \lambda_{t_{0} + 1}^{*}, \lambda_{t_{B}}^{*} - \lambda_{t_{B} + 1}^{*}\right\} = \min\{g_{t_{0}}, g_{t_{B}}\}.\]
We distinguish two cases:
\begin{itemize}
\item If $j < B$, the definition of $t_{j}$ guarantees that 
\[\sep_{j}(A^{*}) = g_{t_{j-1}} > 2g_{t_{j - 2}} = \sep_{j-1}(A^{*}).\]
Also noticing that $\lambda_{\min, j}^{*} = \lambda^{*}_{t_{j-1}}\ge 2\lambda^{*}_{t_{j-2}} = 2\lambda_{\min}(\Lambda_{j-1}^{*})$, we have
\begin{align*}
  &\gap_{j}^{*} \ge 2\gap_{j-1}^{*}.
\end{align*}
\item If $j = B$, then
\[\sep_{j}(A^{*}) = \min\{g_{t_{B-1}}, g_{t_{B}}\}\ge \min\{g_{t_{0}}, g_{t_{B}}\} = \sep_{s+1, s+r}(A^{*}),\]
and $\lambda_{\min}(\Lambda_{B}^{*})\ge \lambda_{\min}^{*}$. Thus,
\[\gap_{B}^{*}\ge \gap^{*}.\]
\end{itemize}
In summary, 
\[\lambda_{\min, j}^{*}\ge 2^{j-1}\lambda_{\min}^{*} \quad (j = 1, \ldots, B)\]
and
\[\gap_{j}^{*}\ge 2^{j-1}\gap^{*} \quad (j = 1, \ldots, B - 1), \quad \gap_{B}^{*}\ge \gap^{*}.\]
As a result,
\begin{align*}
  \sum_{j=1}^{B}\frac{1}{\gap_{j}^{*\gamma}\lambda_{\min, j}^{*\gamma'}}&\le \frac{1}{\gap^{*\gamma}\lambda_{\min}^{*\gamma'}}\lb 2^{-(B - 1)\gamma'} + \sum_{j=1}^{B-1}2^{-(j-1)(\gamma + \gamma')}\rb \\
& = \frac{1}{\gap^{*\gamma}\lambda_{\min}^{*\gamma'}}\lb 2^{-(B - 1)\gamma'} + \frac{1 - 2^{-(B - 1)(\gamma + \gamma')}}{1 - 2^{-(\gamma + \gamma')}}\rb.
\end{align*}
Let $x = 2^{-(B - 1)\gamma'}$ and $a = 1 - 2^{-(\gamma + \gamma')}$. If $\gamma' = 0$, then
\[2^{-(B - 1)\gamma'} + \frac{1 - 2^{-(B - 1)(\gamma + \gamma')}}{1 - 2^{-\gamma}} = 1 + \frac{1 - 2^{-(B - 1)\gamma}}{a}\le 1 + \frac{1}{a} = H(\gamma, 0).\]
If $\gamma' > 0$, then 
\[2^{-(B - 1)\gamma'} + \frac{1 - 2^{-(B - 1)(\gamma + \gamma')}}{1 - 2^{-(\gamma + \gamma')}} = x + \frac{1 - x^{(\gamma + \gamma') / \gamma'}}{a}\triangleq h(x; \gamma, \gamma').\]
Note that
\[\frac{d}{dx}h(x; \gamma, \gamma') = 1 - \frac{\gamma + \gamma'}{a\gamma'}x^{\gamma / \gamma'}\]
and thus $h(x; \gamma, \gamma')$ reaches its maximum at $x_{*} = (a \gamma' / (\gamma + \gamma'))^{\gamma' / \gamma}$. Then
\[h(x; \gamma, \gamma') \le h(x_{*}; \gamma, \gamma') = \frac{1}{a} + x_{*}\lb 1 - \frac{x_{*}^{\gamma / \gamma'}}{a}\rb = \frac{1}{a} + \frac{\gamma x_{*}}{\gamma + \gamma'} = H(\gamma; \gamma').\]
\end{proof}

\textbf{Eigen-partition in general cases}

Suppose $\Lambda^{*}$ contains both positive and negative eigenvalues, then we can first split them into the positive and the negative blocks and partition each according to Definition \ref{def:partition}. 

\begin{definition}[a pre-conditioned eigen-partition in general cases]\label{def:partition_general}
~  \begin{enumerate}
  \item If $\lambda_{s + r}^{*} > 0$, define the partition $S_{1}, \ldots, S_{B}$ by Definition \ref{def:partition};
  \item If $\lambda_{s + 1}^{*} < 0$, define the partition $S_{1}, \ldots, S_{B}$ on $(-\lambda^{*}_{s+r}, \ldots, -\lambda^{*}_{s+1})$ by Definition \ref{def:partition};
  \item If $\lambda_{s+r}^{*} < 0 < \lambda_{s+1}^{*}$, let $b$ be the integer such that $\lambda_{s+b}^{*} < 0 < \lambda_{s+b+1}^{*}$. Define $S_{1}^{+}, \ldots, S_{B^{+}}^{+}$ on $(\lambda^{*}_{s+b+1}, \ldots, \lambda_{s+1}^{*})$ and $S_{1}^{-}, \ldots, S_{B^{-}}^{-}$ on $(\lambda^{*}_{s+r}, \ldots, \lambda_{s+b}^{*})$ by Definition \ref{def:partition}. Finally re-index the subsets as $S_{1}, \ldots, S_{B}$ with $B = B^{+} + B^{-}$ with any ordering.
  \end{enumerate}
\end{definition}

It is straightforward to derive the following counterpart result of Lemma \ref{lem:partition}.
\begin{lemma}\label{lem:partition_general}
  Let $S_{1}, \ldots, S_{B}$ be the partition generated in Definition \ref{def:partition_general}. Then 
\[B \le \min\{r, 2 + 2\log_{2}\kappa^{*}\}, \quad \kappa_{j}^{*}\le 2|S_{j}|,\quad \gap_{j}^{*}\ge \gap^{*}, \quad \lambda_{\min, j}^{*}\ge \lambda_{\min}^{*},\]
and for any $\gamma, \gamma' > 0$,
\[\sum_{j=1}^{B}\frac{1}{\gap_{j}^{*\gamma}\lambda_{\min, j}^{*\gamma'}}\le \frac{2H(\gamma, \gamma')}{\gap^{*\gamma}\lambda_{\min}^{*\gamma'}},\]
where $H(\gamma, \gamma')$ is defined in \eqref{eq:Hfunc}.
\end{lemma}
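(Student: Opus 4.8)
The plan is to reduce Lemma \ref{lem:partition_general} entirely to Lemma \ref{lem:partition}, treating the three cases of Definition \ref{def:partition_general} separately. In cases 1 and 2 there is nothing new: the partition is literally the one from Definition \ref{def:partition} (applied to $\Lambda^{*}$ or to $-\Lambda^{*}$, whose eigenvalues have the same moduli, gaps, and separations), so Lemma \ref{lem:partition} gives directly $B\le\min\{r,1+\log_{2}\kappa^{*}\}\le\min\{r,2+2\log_{2}\kappa^{*}\}$, $\kappa_{j}^{*}\le 2|S_{j}|$, $\gap_{j}^{*}\ge\gap^{*}$, $\lambda_{\min,j}^{*}\ge\lambda_{\min}^{*}$, and $\sum_{j}\gap_{j}^{*-\gamma}\lambda_{\min,j}^{*-\gamma'}\le H(\gamma,\gamma')/(\gap^{*\gamma}\lambda_{\min}^{*\gamma'})\le 2H(\gamma,\gamma')/(\gap^{*\gamma}\lambda_{\min}^{*\gamma'})$. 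So the content is case 3.

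For case 3, let $\Lambda^{*+}$ denote the diagonal block of positive eigenvalues $(\lambda_{s+1}^{*},\dots,\lambda_{s+b}^{*})$ and $\Lambda^{*-}$ the block of negative ones $(\lambda_{s+b+1}^{*},\dots,\lambda_{s+r}^{*})$; apply Definition \ref{def:partition} to $\Lambda^{*+}$ and to $-\Lambda^{*-}$ to get $S_{1}^{+},\dots,S_{B^{+}}^{+}$ and $S_{1}^{-},\dots,S_{B^{-}}^{-}$. The first step is to check that the per-block quantities computed inside each sub-family coincide with the global ones. The key point is the ``effective'' modification hidden in \eqref{eq:gap}: because $\Lambda^{*}$ straddles $0$, $\gap^{*}=\min\{\sep_{s+1,s+r}(A^{*}),\lambda_{\min}^{*}\}$, and in Definition \ref{def:partition} the gaps $g_{t}^{*}$ are measured against $\max\{\lambda_{t+1}^{*},0\}$ rather than $\lambda_{t+1}^{*}$ — exactly so that the innermost positive block sees a gap down to $0$, which is $\ge\lambda_{\min}^{*}\ge\gap^{*}$, and likewise (by symmetry) for $-\Lambda^{*-}$. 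Hence when Lemma \ref{lem:partition} is invoked on $\Lambda^{*+}$ it yields $\gap_{j}^{*}\ge\min\{\sep(\Lambda^{*+},\text{with }0\text{ as floor}),\lambda_{\min}(\Lambda^{*+})\}$, and this quantity is $\ge\gap^{*}$ because $\sep$ of the positive sub-family against $\{$its neighbours and $0\}$ dominates $\sep_{s+1,s+r}(A^{*})$ while $\lambda_{\min}(\Lambda^{*+})\ge\lambda_{\min}^{*}$; similarly $\lambda_{\min,j}^{*}\ge\lambda_{\min}(\Lambda^{*+})\ge\lambda_{\min}^{*}$, and the conditioning bound $\kappa_{j}^{*}\le 2|S_{j}|$ is intrinsic to each sub-family. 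The same holds verbatim for $-\Lambda^{*-}$, noting $\kappa(-\Lambda^{*-})$, $\lambda_{\min}(-\Lambda^{*-})$, $\sep(-\Lambda^{*-})$ all equal their $\Lambda^{*-}$ counterparts in absolute value.

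The second step handles the cardinalities and the sum. For $B$: Lemma \ref{lem:partition} gives $B^{+}\le\min\{b-?,1+\log_{2}\kappa(\Lambda^{*+})\}\le 1+\log_{2}\kappa^{*}$ since $\kappa(\Lambda^{*+})=\lambda_{s+1}^{*}/\lambda_{s+b}^{*}\le\lambda_{\max}^{*}/\lambda_{\min}^{*}=\kappa^{*}$, and likewise $B^{-}\le 1+\log_{2}\kappa^{*}$; adding, $B=B^{+}+B^{-}\le 2+2\log_{2}\kappa^{*}$, and of course $B\le r$ since all blocks are nonempty. For the sum, split $\sum_{j=1}^{B}\gap_{j}^{*-\gamma}\lambda_{\min,j}^{*-\gamma'}=\sum_{j\in+}+\sum_{j\in-}$; Lemma \ref{lem:partition} bounds each of the two sub-sums by $H(\gamma,\gamma')/(\gap^{*\gamma}\lambda_{\min}^{*\gamma'})$ — here one uses that the $\gap_{j}^{*}$ and $\lambda_{\min,j}^{*}$ produced within the positive (resp. negative) sub-family geometrically increase off the respective extreme block, and that the extreme blocks' values are $\ge\gap^{*}$ and $\ge\lambda_{\min}^{*}$ — so the total is $\le 2H(\gamma,\gamma')/(\gap^{*\gamma}\lambda_{\min}^{*\gamma'})$, which is the claimed factor of $2$. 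The main obstacle is precisely the bookkeeping in this paragraph: making sure that ``$\gap_{j}^{*}\ge\gap^{*}$'' survives the passage from the sub-family's internal gap (which is measured with $0$ as a floor) to the global $\gap^{*}=\min\{\sep_{s+1,s+r}(A^{*}),\lambda_{\min}^{*}\}$, i.e. that the innermost block of each sign is separated from the rest of $\Lambda^{*}$ by at least $\gap^{*}$ even though its nearest neighbour on the other side of $0$ may be close in value — this is exactly what the $\min$ with $\lambda_{\min}^{*}$ in \eqref{eq:gap} and the floor $\max\{\lambda_{t+1}^{*},0\}$ in Definition \ref{def:partition} are engineered to guarantee, and the proof just needs to articulate that alignment carefully.
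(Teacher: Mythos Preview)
Your proposal is correct and follows exactly the approach the paper has in mind: the paper does not write out a proof of Lemma \ref{lem:partition_general} at all, stating only that ``it is straightforward to derive the following counterpart result of Lemma \ref{lem:partition},'' and your argument---reduce cases 1 and 2 directly, and in case 3 split into the positive and negative sub-families, apply Lemma \ref{lem:partition} to each, then sum---is precisely that reduction. The only substantive point to verify, which you identify, is that the sub-family effective gap $\min\{\sep_{s+1,s+b}(A^{*}),\lambda_{s+b}^{*}\}$ (and its negative-side analogue) is bounded below by the global $\gap^{*}$; this holds because $\lambda_{s+b}^{*}-\lambda_{s+b+1}^{*}\ge\lambda_{s+b}^{*}\ge\lambda_{\min}^{*}$ when $\lambda_{s+b+1}^{*}<0$, so the sub-family gap is $\ge\min\{\lambda_{s}^{*}-\lambda_{s+1}^{*},\lambda_{\min}^{*}\}\ge\gap^{*}$, and then the sum bound and the factor of $2$ follow exactly as you describe.
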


\textbf{Removing the dependence on the condition number viaeigen-partition}

Let $S_{1}, \ldots, S_{B}$ be the partition generated in Definition \ref{def:partition_general}. By Lemma \ref{lem:partition_general}, 
\begin{align*}
  \sum_{j=1}^{B}\frac{\kappa_{j}^{*}}{\gap_{j}^{*}} & \le \sum_{j=1}^{B}\frac{2|S_{j}|}{\gap^{*}} = \frac{2r}{\gap^{*}}\\
\sum_{j=1}^{B}\frac{1}{\gap_{j}^{*}} & \le \frac{2H(1, 0)}{\gap^{*}} = \frac{6}{\gap^{*}}\\
\sum_{j=1}^{B}\frac{\kappa_{j}^{*}}{\gap_{j}^{*}\lambda_{\min, j}^{*}} & \le \sum_{j=1}^{B}\frac{2|S_{j}|}{\gap^{*}\lambda_{\min}^{*}}\le \frac{2r}{\gap^{*}\lambda_{\min}^{*}}\\
\sum_{j=1}^{B}\frac{1}{\gap_{j}^{*}\lambda_{\min, j}^{*}} & \le \frac{2H(1, 1)}{\gap^{*}\lambda_{\min}^{*}} \le \frac{3.06}{\gap^{*}\lambda_{\min}^{*}}\\
\sum_{j=1}^{B}\frac{\sqrt{\kappa_{j}^{*}}}{\gap_{j}^{*}\sqrt{\lambda_{\min, j}^{*}}} & \le \frac{\sqrt{2r}2H(1, 0.5)}{\gap^{*}\sqrt{\lambda_{\min}^{*}}} = \sqrt{2r}\frac{3.72}{\gap^{*}\sqrt{\lambda_{\min}^{*}}}
\end{align*}
To apply \eqref{eq:step5_2}, we still need \textbf{A}4 holds for each block. By Lemma \ref{lem:partition_general}, $\kappa_{j}^{*}\le 2r$ for all $j$, thus it is sufficient to assume the following stronger version of \textbf{A}4:
\begin{enumerate}[$\td{\td{\mathbf{A}}}$1]
\setcounter{enumi}{3}
\item $\gap^{*}\ge 4\bigg(\{2rL_{2}(\delta) + L_{3}(\delta) + 1\}\eta(\delta) + E_{+}(\delta) + L_{1}(\delta) + \lambda_{-}(\delta)\bigg)$.
\end{enumerate}
Combining the bounds with \eqref{eq:step5_1}, \eqref{eq:step5_2} and \eqref{eq:step5_3}, we reach the following result.
\begin{lemma}\label{lem:bound2}
Under assumptions \textbf{A}1 - \textbf{A}3 and $\td{\td{\mathbf{A}}}$4,
  \begin{align}
    &d_{\ttinf}(U, AU^{*}(\Lambda^{*})^{-1})\le \frac{C}{\gap^{*}}\bigg\{\lb\{2rL_{2}(\delta) + L_{3}(\delta) + 1\}\eta(\delta) + E_{+}(\delta)\rb\lb\mnorm{U^{*}} + \frac{\mnorm{EU^{*}}}{\lambda_{\min}^{*}}\rb\nonumber\\
&\qquad   + \frac{E_{+}(\delta)b_{2}(\delta)}{\lambda_{\min}^{*}} + \min\left\{E_{+}(\delta)\xi_{1},\,\, \sqrt{2r}\bar{E}_{+}(\delta)\xi_{2}, \,\, 2r\bar{E}_{+}(\delta)\xi_{3}\right\}\bigg\},\nonumber
  \end{align}
with probability at least $1 - 4\min\{r, 2 + 2\log_{2}\kappa^{*}\}\delta$, where $C$ is a universal constant (that can be chosen as $72$).
 \end{lemma}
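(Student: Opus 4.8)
The plan is to derive Lemma \ref{lem:bound2} by applying the single‑sign bound of Lemma \ref{lem:bound1} separately on each block of the pre‑conditioned eigen‑partition $S_1,\ldots,S_B$ of $\{s+1,\ldots,s+r\}$ constructed in Definition \ref{def:partition_general}, and then patching the blocks back together. First I would record the structural properties supplied by Lemma \ref{lem:partition_general}: $B\le\min\{r,2+2\log_2\kappa^*\}$, $\kappa_j^*\le 2|S_j|\le 2r$, $\gap_j^*\ge\gap^*$, $\lambda_{\min,j}^*\ge\lambda_{\min}^*$, and the summability estimates $\sum_j 1/(\gap_j^{*\gamma}\lambda_{\min,j}^{*\gamma'})\le 2H(\gamma,\gamma')/(\gap^{*\gamma}\lambda_{\min}^{*\gamma'})$, which I will specialize to $(\gamma,\gamma')\in\{(1,0),(1,1),(1,\tfrac12)\}$ together with $\sum_j\kappa_j^*/\gap_j^*\le 2r/\gap^*$ and $\sum_j\kappa_j^*/(\gap_j^*\lambda_{\min,j}^*)\le 2r/(\gap^*\lambda_{\min}^*)$ (both from $\kappa_j^*\le 2|S_j|$ and $\sum_j|S_j|=r$).

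Next I would verify the hypotheses of Lemma \ref{lem:bound1} on each block. Each $\Lambda_j^*$ is a contiguous run of eigenvalues of a single sign, so the sign hypothesis $\lambda_{\min,j}^*\lambda_{\max,j}^*>0$ holds; assumptions \textbf{A}1–\textbf{A}3 pass to every block unchanged, because the operator‑norm, row‑wise and total‑variation bounds there already quantify over all contiguous $S\subset[r]$ and because $U_j^*$, $EU_j^*$ are column sub‑blocks of $U^*$, $EU^*$ and hence have no larger norm; and the block version of the eigen‑gap condition $\td{\mathbf{A}}$4, i.e. $\gap_j^*\ge 4(\{\kappa_j^*L_2+L_3+1\}\eta+E_++L_1+\lambda_-)$, follows from $\kappa_j^*\le 2r$ and $\gap_j^*\ge\gap^*$ together with $\td{\td{\mathbf{A}}}$4. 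Thus Lemma \ref{lem:bound1} applies on every block, and a union bound over the $B$ blocks gives a failure probability at most $4B\delta\le 4\min\{r,2+2\log_2\kappa^*\}\delta$.

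Then I would reassemble. Taking $O=\diag(\sign(H_1),\ldots,\sign(H_B))\in\O^r$, the matrix $UO-AU^*(\Lambda^*)^{-1}$ has its $j$‑th block of columns equal to $U_j\sign(H_j)-AU_j^*(\Lambda_j^*)^{-1}$, so each row of the whole matrix has $\ell_2$ norm bounded by the sum of the $\ell_2$ norms of the corresponding rows of the blocks, giving $\mnorm{UO-AU^*(\Lambda^*)^{-1}}\le\sum_j\mnorm{U_j\sign(H_j)-AU_j^*(\Lambda_j^*)^{-1}}$; since $d_{\ttinf}(U,AU^*(\Lambda^*)^{-1})\le\mnorm{UO-AU^*(\Lambda^*)^{-1}}$, this is the quantity to bound. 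Substituting the per‑block estimate of Lemma \ref{lem:bound1}, separating the $L_2\kappa_j^*$ contribution from the rest, and collapsing the resulting sums with the estimates above — and for the Kato‑integral term using $\sum_j\xi_j(\delta)/\gap_j^*\le\min\{E_+\mnorm{A^*}\sum_j 1/(\gap_j^*\lambda_{\min,j}^*),\ 3.61\bar E_+\sqrt{\maxnorm{A^*}}I(A^*\text{ psd})^{-1}\sum_j\sqrt{\kappa_j^*}/(\gap_j^*\sqrt{\lambda_{\min,j}^*}),\ 3.84\bar E_+\mnorm{\bar U^*}\sum_j\kappa_j^*/\gap_j^*\}$ and the three corresponding sum bounds — everything collapses onto a single $1/\gap^*$ prefactor with the effective condition number $2r$ in place of $\kappa^*$, yielding the claimed form with $\xi_1,\xi_2,\xi_3$ as in \eqref{eq:xi} and the three‑way minimum $\min\{E_+\xi_1,\sqrt{2r}\,\bar E_+\xi_2,2r\,\bar E_+\xi_3\}$.

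The step I expect to be the main obstacle is purely bookkeeping: tracking all the numerical constants — the $16/3$ of Lemma \ref{lem:bound1}, the explicit values of $2H(\gamma,\gamma')$ (around $6$, $3.06$, $3.72\sqrt{2r}$), and the $3.61$, $3.84$ from the Kato estimates — through the block sums and confirming that their worst combination can be absorbed into the single universal constant $C=72$; in particular one must check that the $\sqrt{2r}$ and $2r$ factors that survive inside the final minimum come from $\sum_j\sqrt{\kappa_j^*}/(\gap_j^*\sqrt{\lambda_{\min,j}^*})$ and $\sum_j\kappa_j^*/\gap_j^*$ respectively, while the remaining prefactors are genuinely $O(1)$. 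The conceptual content is already complete in Steps I–V; no new idea is needed beyond the eigen‑partition machinery.
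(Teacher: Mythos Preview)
Your proposal is correct and follows essentially the same approach as the paper: apply Lemma \ref{lem:bound1} blockwise on the pre-conditioned eigen-partition of Definition \ref{def:partition_general}, use Lemma \ref{lem:partition_general} to control $\kappa_j^*$, $\gap_j^*$, $\lambda_{\min,j}^*$ and the various harmonic-type sums (including the specific values $2H(1,0)=6$, $2H(1,1)\le 3.06$, $2H(1,\tfrac12)\le 3.72$), and reassemble via the block-diagonal orthogonal matrix $O=\diag(\sign(H_1),\ldots,\sign(H_B))$ with a union bound over the $B\le\min\{r,2+2\log_2\kappa^*\}$ blocks. Your identification of the three sums driving the $\min\{E_+\xi_1,\sqrt{2r}\,\bar E_+\xi_2,2r\,\bar E_+\xi_3\}$ term and the observation that the remaining work is constant-tracking are both on point.
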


\subsection{Final step}\label{subapp:step6}

When $\kappa^{*} <\!\! < r$, Lemma \ref{lem:bound1} yields better results than Lemma \ref{lem:bound2}. However, it has an extra assumption that $\lambda_{s + r}^{*}\lambda_{s+1}^{*} > 0$. Fortunately, this condition can be removed by partitioning the eigenvalues into 2 blocks, with all positive and negative eigenvalues in $\Lambda^{*}$, respsectively. Using the same argument as \eqref{eq:step5_1} and noting that \textbf{A}4 holds for both blocks, we can prove the following result.

\begin{lemma}\label{lem:bound3}
Under assumptions \textbf{A}1 - \textbf{A}3 and $\td{\mathbf{A}}$4,
  \begin{align}
    &d_{\ttinf}(U, AU^{*}(\Lambda^{*})^{-1})\le \frac{C}{\gap^{*}}\bigg\{\lb\{\kappa^{*}L_{2}(\delta) + L_{3}(\delta) + 1\}\eta(\delta) + E_{+}(\delta)\rb\lb\mnorm{U^{*}} + \frac{\mnorm{EU^{*}}}{\lambda_{\min}^{*}}\rb\nonumber\\
&\qquad   + \frac{E_{+}(\delta)b_{2}(\delta)}{\lambda_{\min}^{*}} + \min\left\{E_{+}(\delta)\xi_{1},\,\, \sqrt{\kappa^{*}}\bar{E}_{+}(\delta)\xi_{2}, \,\, \kappa^{*}\bar{E}_{+}(\delta)\xi_{3}\right\}\bigg\},\nonumber
  \end{align}
with probability at least $1 - 8\delta$, where $C$ is a universal constant (that can be chosen as $41$).
\end{lemma}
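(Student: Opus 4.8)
The plan is to obtain Lemma~\ref{lem:bound3} from Lemma~\ref{lem:bound1} by the same eigen-partition device used in Appendix~\ref{subapp:step5}, but with the trivially coarsest partition that only separates positive from negative eigenvalues. Concretely, if $\lambda_{s+r}^{*}\lambda_{s+1}^{*}>0$ then $\Lambda^{*}$ has eigenvalues of one sign and Lemma~\ref{lem:bound1} applies directly, so the only case to handle is $\lambda_{s+r}^{*}<0<\lambda_{s+1}^{*}$. In that case let $b$ be the index with $\lambda_{s+b}^{*}<0<\lambda_{s+b+1}^{*}$, set $S_{1}=\{s+1,\dots,s+b\}$ and $S_{2}=\{s+b+1,\dots,s+r\}$, and let $\Lambda_{j}^{*},\Lambda_{j},U_{j}^{*},U_{j}$ and $\kappa_{j}^{*},\lambda_{\min,j}^{*},\gap_{j}^{*}$ be the corresponding sub-blocks, exactly as in \eqref{eq:gapj}. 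The elementary observations we need are: both $S_{1},S_{2}$ are contiguous, $\lambda_{s+1}^{*}\lambda_{s+b}^{*}>0$ and $\lambda_{s+b+1}^{*}\lambda_{s+r}^{*}>0$, and $\kappa_{j}^{*}\le \kappa^{*}$, $\lambda_{\min,j}^{*}\ge \lambda_{\min}^{*}$, $\gap_{j}^{*}\ge \gap^{*}$ for $j=1,2$ (the separation lower bound uses that a sign change between two eigenvalues forces the gap between them to be at least each of their absolute values, hence at least $\lambda_{\min}^{*}$, together with the outer-separation bounds $\sep_{S_{j}}(A^{*})\ge \sep_{s+1,s+r}(A^{*})$). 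Thus the sufficient condition \eqref{eq:sufficient_A4} $\gap_{j}^{*}\ge \gap^{*}$ holds, and since $\kappa_{j}^{*}\le \kappa^{*}$ the hypothesis $\td{\mathbf{A}}$4 for the whole problem implies $\td{\mathbf{A}}$4 for each block.

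Next I would apply Lemma~\ref{lem:bound1} to each block $j=1,2$, obtaining with probability at least $1-4\delta$ for each $j$ the bound on $\mnorm{U_{j}\sign(H_{j})-AU_{j}^{*}(\Lambda_{j}^{*})^{-1}}$; a union bound over the two blocks gives probability at least $1-8\delta$, which matches the claimed failure probability. Then, exactly as in \eqref{eq:step5_later}, set $O=\diag(\sign(H_{1}),\sign(H_{2}))\in\O^{r}$ and use
\[
\mnorm{UO-AU^{*}(\Lambda^{*})^{-1}}\le \sum_{j=1}^{2}\mnorm{U_{j}\sign(H_{j})-AU_{j}^{*}(\Lambda_{j}^{*})^{-1}},
\]
together with $\mnorm{U_{j}^{*}}\le\mnorm{U^{*}}$, $\mnorm{EU_{j}^{*}}\le\mnorm{EU^{*}}$, $\kappa_{j}^{*}\le\kappa^{*}$, $\lambda_{\min,j}^{*}\ge\lambda_{\min}^{*}$, $\gap_{j}^{*}\ge\gap^{*}$, and $B(r)\ge d_{\ttinf}(U,AU^{*}(\Lambda^{*})^{-1})$ (cf.\ the passage from \eqref{eq:step5_2} to Lemma~\ref{lem:bound2}). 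For the $\xi_{j}$ terms I would invoke the block analogue of \eqref{eq:step5_3}: each of the three candidate bounds $E_{+}\xi_{1j}$, $3.61\,\bar{E}_{+}\sqrt{\kappa_{j}^{*}}\,\xi_{2j}$, $3.84\,\bar{E}_{+}\kappa_{j}^{*}\,\xi_{3j}$ has $\mnorm{A^{*}_{S_{j}}}\le\mnorm{A^{*}}$, $\maxnorm{A^{*}}$ unchanged, $\mnorm{\bar{U}^{*}}$ unchanged, $\kappa_{j}^{*}\le\kappa^{*}$, $\gap_{j}^{*}\ge\gap^{*}$, $\lambda_{\min,j}^{*}\ge\lambda_{\min}^{*}$, so summing two copies and absorbing the factor~$2$ into the universal constant yields the $\min\{E_{+}\xi_{1},\sqrt{\kappa^{*}}\bar{E}_{+}\xi_{2},\kappa^{*}\bar{E}_{+}\xi_{3}\}$ term in the statement. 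Tracking constants: Lemma~\ref{lem:bound1} carries the prefactor $16/3$, and the extra factor of $2$ from summing over two blocks gives $32/3\approx 10.7$; the $\xi$-terms pick up $2\cdot 3.84\cdot(16/3)\approx 41$, which is the advertised choice $C=41$.

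The only genuinely non-routine point is verifying $\gap_{j}^{*}\ge\gap^{*}$ and $\lambda_{\min,j}^{*}\ge\lambda_{\min}^{*}$ when the split occurs across zero. For $\lambda_{\min,j}^{*}$ this is immediate since the block eigenvalues are a subset of those of $\Lambda^{*}$. For $\sep_{S_{j}}(A^{*})$, note that the two ``new'' separations created by the split are $\lambda_{s+b}^{*}-\lambda_{s+b+1}^{*}$, measured from each side; because one of these eigenvalues is negative and the other positive, this quantity is at least $|\lambda_{s+b}^{*}|\vee|\lambda_{s+b+1}^{*}|\ge\lambda_{\min}^{*}\ge\gap^{*}$, while all other boundary separations of $S_{j}$ coincide with boundary separations of the original block $\{s+1,\dots,s+r\}$ and are therefore $\ge\sep_{s+1,s+r}(A^{*})\ge\gap^{*}$. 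Hence $\gap_{j}^{*}=\min\{\sep_{S_{j}}(A^{*}),\lambda_{\min,j}^{*}\}\ge\gap^{*}$, confirming \eqref{eq:sufficient_A4}. Everything else is bookkeeping identical to Appendix~\ref{subapp:step5}, and I would present it as such rather than re-deriving the chain \eqref{eq:step5_1}--\eqref{eq:step5_3}.
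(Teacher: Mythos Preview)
Your proposal is correct and follows precisely the approach the paper itself sketches in Appendix~\ref{subapp:step6}: split $\Lambda^{*}$ into its positive and negative parts, apply Lemma~\ref{lem:bound1} to each block, and combine via the block-diagonal orthogonal matrix as in \eqref{eq:step5_later}. You have filled in more detail than the paper (notably the verification that $\gap_{j}^{*}\ge\gap^{*}$ across the sign change and the constant tracking leading to $C=41$), but the argument is the same; the stray clause ``$B(r)\ge d_{\ttinf}(U,AU^{*}(\Lambda^{*})^{-1})$'' appears to be a typo and should be removed.
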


Finally, if $\kappa^{*} > 2r$, \textbf{A}4 is equivalent to $\td{\td{\mathbf{A}}}$4 and we can apply Lemma \ref{lem:bound3}; otherwise, \textbf{A}4 is equivalent to $\td{\mathbf{A}}$4 and we can apply Lemma \ref{lem:bound2}. Theorem \ref{thm:generic_bound} is then proved by noticing that
\[\min\left\{1 - 4\min\{r, 2 + 2\log_{2}\kappa^{*}\}\delta, 1 - 8\delta\right\}\ge 1 - B(r)\delta,\]

\section{Proof of Other Results in Section \ref{sec:main}}\label{app:main}
\begin{proof}[\textbf{Proof of Proposition \ref{prop:A1}}]
Assume $S = [r]$ without loss of generality. Let $\event$ denote the event that 
\[\maxnorm{\Lambda - \Lambda^{*}}\le \lambda_{-}(\delta), \quad \|EU^{*}\|_{\op}\le E_{+}(\delta), \quad \mnorm{E}\le E_{\infty}(\delta).\]
Then by definition,
\[\P(\event)\ge 1 - \delta.\]
We prove each case separately. 
  \begin{enumerate}[(a)]
  \item Let $A^{(k)}$ be define as
\[[A^{(k)}]_{ij} = A_{ij}I(i\not = k, j \not = k).\]
Then since $A_{ij}$'s are independent, $A^{(k)}$ is independent of $A_{k}$. It is left to prove the deterministic inequalities on the event $\event$. First, 
\[\|A^{(k)} - A\|_{\op}\le \|A^{(k)} - A\|_{F} \le \sqrt{\sum_{j=1}^{n}A_{jk}^{2} + A_{kj}^{2}} = \sqrt{2}\|A_{k}\|_{2}\le \sqrt{2}\mnorm{A}.\]
Note that on the event $\event$,
\begin{equation}
  \label{eq:mnormAEop}
  \mnorm{A}\le \mnorm{A^{*}} + \mnorm{E}\le \mnorm{A^{*}} + E_{\infty}(\delta).
\end{equation}
Thus, $L_{1}(\delta)$ can be chosen as $\sqrt{2}(\mnorm{A^{*}} + E_{\infty}(\delta))$. On the other hand, let $A'_{ki} = A_{ki}I(i\not = k)$. Then
\begin{align*}
  \|(A^{(k)} - A)U\|_{\op} &\le \|A_{k}^{T}U\|_{2} + \|A'_{k}U_{k}^{T}\|_{\op} = \|(AU)_{k}^{T}\|_{2} + \|A'_{k}\|_{2}\|U_{k}\|_{2}\\
& = \|(U\Lambda)_{k}\|_{2} + \|A'_{k}\|_{2}\|U_{k}\|_{2} = \|U_{k}^{T}\Lambda\|_{2} + \|A'_{k}\|_{2}\|U_{k}\|_{2}\\
& \le (\lambda_{\max}(\Lambda) + \|A_{k}\|_{2})\|U_{k}\|_{2}.
\end{align*}
By definition,
\[|\lambda_{\max}(\Lambda) - \lambda_{\max}^{*}|\le \lambda_{-}(\delta).\]
Then by \eqref{eq:mnormAEop}, we conclude that on the event $\event$, 
\[\|(A^{(k)} - A)U\|_{\op} \le \lb\lambda_{\max}^{*} + E_{\infty}(\delta) + \lambda_{-}(\delta) + \mnorm{A^{*}}\rb\mnorm{U}\]
Thus, $L_{2}(\delta) = 1$ and $L_{3}(\delta) = \frac{E_{\infty}(\delta) + \lambda_{-}(\delta) + \mnorm{A^{*}}}{\lambda_{\min}^{*}}$.
\item This is a generalized version of part (a) and the proof strategy is almost the same. Let
  \begin{equation}
    \label{eq:AkNk}
    [A^{(k)}]_{ij} = A_{ij}I(i\not\in \N_{k}, j \not\in \N_{k}).
  \end{equation}
Then $A^{(k)}$ is independent of $A_{k}$. It is left to prove the deterministic inequalities on the event $\event$. First, 
\[\|A^{(k)} - A\|_{\op}\le \|A^{(k)} - A\|_{F} \le \sqrt{\sum_{i\in \N_{k}}\sum_{j=1}^{n}(A_{ji}^{2} + A_{ij}^{2})} = \sqrt{2\sum_{i\in \N_{k}}\|A_{i}\|_{2}^{2}}\le \sqrt{2|\N_{k}|}\mnorm{A}.\]
Since $|\N_{k}|\le m$, $L_{1}(\delta)$ can be taken as $\sqrt{2m}(\mnorm{A^{*}} + E_{+}(\delta))$ by \eqref{eq:mnormAEop}. On the other hand, let $\td{A}_{ij} = A_{ij}I(j\not\in \N_{k})$ for $i \in \N_{k}$. Then on event $\event$, 
\begin{align*}
  \|(A^{(k)} - A)U\|_{\op} &\le \sum_{i\in \N_{k}}\lb\|A_{i}^{T}U\|_{2} + \|\td{A}_{i}U_{i}^{T}\|_{\op}\rb = \sum_{i\in \N_{k}}\lb\|(A U)_{i}^{T}\|_{2} + \|\td{A}_{i}\|_{2}\|U_{i}\|_{2}\rb\\
& = \sum_{i\in \N_{k}}\lb\|(U\Lambda)_{i}\|_{2} + \|\td{A}_{i}\|_{2}\|U_{i}\|_{2}\rb = \sum_{i\in \N_{k}}\|U_{i}^{T}\Lambda\|_{2} + \|\td{A}_{i}\|_{2}\|U_{i}\|_{2}\\
& \le \sum_{i\in \N_{k}}(\lambda_{\max}(\Lambda) + \|A_{i}\|_{2})\|U_{i}\|_{2}\\ 
& \le |\N_{k}|(\lambda_{\max}^{*} + E_{\infty}(\delta) + \lambda_{-}(\delta) + \mnorm{A^{*}})\mnorm{U}.
\end{align*}
Since $|\N_{k}|\le m$, we can take $L_{2}(\delta) = m$ and $L_{3}(\delta) = \frac{m(E_{\infty}(\delta) + \lambda_{-}(\delta) + \mnorm{A^{*}})}{\lambda_{\min}^{*}}$.
  \end{enumerate}
\end{proof}

\begin{proof}[\textbf{Proof of Proposition \ref{prop:vector2matrix}}]
Let $\S^{r-1}$ be the $r$-dimensional unit sphere and $\M(\eps)$ be a minimal $\eps$-net of $\S^{r-1}$, i.e. $\forall \zeta\in \S^{r - 1}$, there exists $\zeta'\in \M(\eps)$ such that $\|\zeta - \zeta'\|_{2}\le \eps$. It is well-known that
    \begin{equation}\label{eq:Meps}
      |\M(\eps)|\le \lb 1 + \frac{2}{\eps}\rb^{r}.
    \end{equation}
Then for any vector $x\in \R^{r}$, 
\begin{align*}
  \|x\|_{2} &= \sup_{\zeta\in \S^{r-1}}\zeta^{T}x = \sup_{\zeta \in \S^{r-1}, \zeta'\in \M(\eps), \|\zeta - \zeta'\|_{2}\le \eps}(\zeta^{'T}x + (\zeta - \zeta')^{T}x)\\
&\le \max_{\zeta'\in \M(\eps)}(\zeta')^{T}x + \|x\|_{2}\eps.
\end{align*}
This implies that 
\begin{equation}
  \label{eq:eps_net}
  \|x\|_{2}\le \frac{1}{1 - \eps}\max_{\zeta\in \M(\eps)}x^{T}\zeta.
\end{equation}
Applying \eqref{eq:eps_net} with $x = E_{k}^{T}W$, we have
\[\|E_{k}^{T}W\|_{2}\le \frac{1}{1 - \eps}\max_{\zeta\in \M(\eps)}E_{k}^{T}(W\zeta).\]
Let $\delta' = \delta / (1 + 2 / \eps)^{r}n$, $a'_{\infty} = a_{\infty}(\delta') / (1 - \eps)$ and $a'_{2} = a_{2}(\delta') / (1 - \eps)$. Further, \eqref{eq:Meps} implies that $\delta' \le \delta / |\M(\eps)|n$ and \textbf{A}1 implies that for each given $\zeta \in \M(\eps)$,
\[E_{k}^{T}(W\zeta)\le (1 - \eps)\lb a'_{\infty}\|W\zeta\|_{\infty} + a'_{2}\|W\zeta\|_{2}\rb\le (1 - \eps)\lb a'_{\infty}\mnorm{W} + a'_{2}\|W\|_{\op}\rb.\]
with probability $1 - \delta'$. Applying the union bound implies that 
\[\|E_{k}^{T}W\|_{2} \le \frac{1}{1 - \eps}\max_{\zeta\in \M(\eps)}E_{k}^{T}(W\zeta)\le a'_{\infty}\mnorm{W} + a'_{2}\|W\|_{\op}\]
holds simultaneously for all $\zeta\in \M(\eps)$ with probability at least $1 - |\M(\eps)|\delta' \ge 1 - \delta / n$. The proof is then completed by \eqref{eq:eps_net} and taking $\eps = 0.5$.
\end{proof}

\begin{proof}[\textbf{Proof of Theorem \ref{thm:generic_bound2}}]
By the triangle inequality, 
\begin{equation}
  \label{eq:triangleq1}
  d_{\ttinf}(U, U^{*})\le d_{\ttinf}(U, AU^{*}(\Lambda^{*})^{-1}) + d_{\ttinf}(AU^{*}(\Lambda^{*})^{-1}, U^{*}).
\end{equation}
By definition,
\begin{equation}
  \label{eq:triangleq2}
  d_{\ttinf}(AU^{*}(\Lambda^{*})^{-1}, U^{*})\le \mnorm{AU^{*}(\Lambda^{*})^{-1} - U^{*}} = \mnorm{EU^{*}(\Lambda^{*})^{-1}}\le \frac{\mnorm{EU^{*}}}{\lambda_{\min}^{*}}.
\end{equation}
By Theorem \ref{thm:generic_bound},
  \begin{align}
    d_{\ttinf}(U, AU^{*}(\Lambda^{*})^{-1})&\le \frac{\mnorm{EU^{*}}}{\lambda_{\min}^{*}} + \frac{C}{\gap^{*}}\bigg\{\{\bar{\kappa}^{*}L_{2}(\delta) + L_{3}(\delta) + 1\}\eta(\delta)\lb\mnorm{U^{*}} + \frac{\mnorm{EU^{*}}}{\lambda_{\min}^{*}}\rb\nonumber\\
& \qquad  + \frac{E_{+}(\delta)b_{2}(\delta)}{\lambda_{\min}^{*}} + \min\left\{E_{+}(\delta)\xi_{1}, \bar{E}_{+}(\delta)\sqrt{\bar{\kappa}^{*}}\xi_{2}, \bar{E}_{+}(\delta)\bar{\kappa}^{*}\xi_{3}\right\}\bigg\},\nonumber
  \end{align}
By assumption \textbf{A}4, the coefficient of $\mnorm{EU^{*}} / \lambda_{\min}^{*}$ can be bounded by
\[1 + \frac{C\{\bar{\kappa}^{*}L_{2}(\delta) + L_{3}(\delta) + 1\}\eta(\delta)}{\gap^{*}}\le 1 + \frac{C}{4}.\]
When $C$ is chosen as $72$, $1 + C / 4\le 72$. The proof is then completed. 
\end{proof}

\begin{proof}[\textbf{Proof of Theorem \ref{thm:generic_bound_trick}}]
We only need to modify the six steps in the proof of Theorem \ref{thm:generic_bound} in Appendix \ref{app:generic_bound}. In particular, Step I and Step III need to be substantially modified while all other steps remain almost the same. Let $\event_{0}$ be the event given by \textbf{A}'0 - \textbf{A}'2, i.e. 
\[\frac{\min_{j\in [s + 1, s + r]}|\Lambda_{jj}^{*}|}{\min_{j\in [s + 1, s + r], k\in [n]}|\Lambda_{jj}^{*} - \Sigma_{kk}|}\le \Theta(\delta),\]
\[\|A^{(k)} - A\|_{\op}\le L_{1}(\delta), \quad \frac{\|(A^{(k)} - A)U\|_{\op}}{\lambda_{\min}^{*}}\le \lb \kappa^{*}L_{2}(\delta) + L_{3}(\delta)\rb \mnorm{U},\]
where $A^{(k)}$ satisfies the total variation condition in \textbf{A}'1 and 
\[\maxnorm{\Lambda - \Lambda^{*}}\le \lambda_{-}(\delta), \quad \|E U^{*}\|_{\op}\le E_{+}(\delta), \quad \mnorm{\Ep}\le \Ep_{\infty}(\delta).\]
Then 
\begin{equation}
  \label{eq:event0_trick}
\P(\event_{0})\ge 1 - 3\delta.
\end{equation}
Throughout the proof we will restrict the attention into $\event_{0}$ and suppress the notation $(\delta)$ for all quantities that involve it.  

\noindent \textbf{Step I}: assume \textbf{C}1 hold as in Appendix \ref{subapp:preparation}. Again we start by assuming that all eigenvalues are of the same sign, i.e. $\lambda_{s + 1}^{*} \lambda_{s + r}^{*} > 0$. In step V we deal with the general case. 

Recalling that
\[\Ap = A - \Sigma, \quad \Ap^{*} = \E \Ap, \quad \Ep = \Ap - \Ap^{*},\] 
we have
\begin{align}
    &(UH - U^{*})_{k}^{T} = (UH - AU^{*}(\Lambda^{*})^{-1})_{k}^{T} + (EU^{*}(\Lambda^{*})^{-1})_{k}^{T} \nonumber\\
& = \left\{U_{k}^{T}(H\Lambda^{*} - \Lambda H)  + A_{k}^{T}(UH - U^{*})\right\}(\Lambda^{*})^{-1} + E_{k}^{T}U^{*}(\Lambda^{*})^{-1}\nonumber\\
& = \left\{U_{k}^{T}(H\Lambda^{*} - \Lambda H)  + \td{A}_{k}^{T}(UH - U^{*})\right\}(\Lambda^{*})^{-1} + \Sigma_{kk}(UH - U^{*})_{k}^{T}(\Lambda^{*})^{-1} + E_{k}^{T}U^{*}(\Lambda^{*})^{-1}\nonumber\\
& = \left\{U_{k}^{T}(H\Lambda^{*} - \Lambda H)  + \Ep_{k}^{T}(UH - U^{(k)}H^{(k)}) + \Ep_{k}^{T}(U^{(k)}H^{(k)} - U^{*}) + (\Ap_{k}^{*})^{T}(UH - U^{*})\right\}(\Lambda^{*})^{-1}\nonumber\\
& \quad + \Sigma_{kk}(UH - U^{*})_{k}^{T}(\Lambda^{*})^{-1} + E_{k}^{T}U^{*}(\Lambda^{*})^{-1}.\nonumber
\end{align}
Rearranging the second last term to the left handed side and multiplying both sides by $\Lambda^{*}$ and recalling that $V_{k}^{T} = E_{k}^{T}U^{*}(\Lambda^{*} - \Sigma_{kk}I)^{-1}$, we obtain that
\begin{align*}
  \lefteqn{(UH - U^{*} - \V)_{k}^{T}(\Lambda^{*} - \Sigma_{kk}I)} \\
& = U_{k}^{T}(H\Lambda^{*} - \Lambda H)  + \Ep_{k}^{T}(UH - U^{(k)}H^{(k)}) + \Ep_{k}^{T}(U^{(k)}H^{(k)} - U^{*}) + (\Ap_{k}^{*})^{T}(UH - U^{*})
\end{align*}
By the triangle inequality and the definition of $\Theta$, on event $\event$ we obtain that
\begin{align}
  \|(UH - U^{*} - \V)_{k}\|_{2} &\le \frac{\Theta}{\lambda_{\min}^{*}}\bigg\{\|U_{k}^{T}(H\Lambda^{*} - \Lambda H)\|_{2} + \|\Ep_{k}^{T}(UH - U^{(k)}H^{(k)})\|_{2}\nonumber\\
& \qquad \qquad  + \|\Ep_{k}^{T}(U^{(k)}H^{(k)} - U^{*})\|_{2} + \|(\Ap_{k}^{*})^{T}(UH - U^{*})\|_{2}\bigg\}.\nonumber
\end{align}
By \eqref{eq:J2} and \eqref{eq:J3} in page \pageref{eq:J3}, on event $\event_{0}$ defined at the beginning of the proof,
\begin{align}
  \|(UH - U^{*} - \V)_{k}\|_{2} & \le \Theta\lb\frac{E_{+}}{\lambda_{\min}^{*}} + \frac{\Ep_{\infty}(\kappa^{*}L_{2} + L_{3})}{\tdgap}\rb\mnorm{U} \nonumber\\
& \quad  + \frac{\Theta}{\lambda_{\min}^{*}}\left\{\|\Ep_{k}^{T}(U^{(k)}H^{(k)} - U^{*})\|_{2} + \|(\Ap_{k}^{*})^{T}(UH - U^{*})\|_{2}\right\},\nonumber
\end{align}
where 
\begin{equation*}
  \tdgap = \frac{1}{2}(\gap^{*} - L_{1}).
\end{equation*}
By \eqref{eq:J1},
\begin{align}
  \lefteqn{ \|(U\sign(H) - U^{*} - \V)_{k}\|_{2} \le \|(UH - U^{*} - \V)_{k}\|_{2} + \mnorm{U(\sign(H) - H)}}\nonumber\\
&\le \beta\mnorm{U}  + \frac{\Theta}{\lambda_{\min}^{*}}\left\{\|\Ep_{k}^{T}(U^{(k)}H^{(k)} - U^{*})\|_{2} + \mnorm{\Ap^{*}(UH - U^{*})}\right\}\label{eq:Step1_trick},
\end{align}
where 
\begin{equation}
  \label{eq:beta'}
    \beta \triangleq \frac{E_{+}^{2}}{\tdgap^{2}} + \frac{\Theta E_{+}}{\lambda_{\min}^{*}} + \frac{\Theta \Ep_{\infty}(\kappa^{*}L_{2} + L_{3})}{\tdgap}.
\end{equation}

\noindent \textbf{Step II}: since $\Ap^{*}$ may have very different eigenvalues and eigenvectors from $A^{*}$, the Kato's integral cannot be directly applied here. For this reason, we only consider the bound \eqref{eq:case1}. The same proof shows that
\begin{equation}
  \label{eq:Step2_trick}
  \mnorm{\Ap^{*}(UH - U^{*})}\le \frac{E_{+}\mnorm{\Ap^{*}}}{\tdgap}.
\end{equation}

\noindent \textbf{Step III}: assuming $\lambda_{s + 1}^{*}\lambda_{s + r}^{*} > 0$. We can follow the proof of Lemma \ref{lem:step3} to derive a bound for $\max_{k}\|\Ep_{k}^{T}(U^{(k)}H^{(k)} - U^{*})\|_{2}$. since $\Ep_{k}$ is a function of $A_{k}$, 
\[d_{TV}(\Ep_{k}, A^{(k)})\le \delta / n,\]
we can still apply Lemma \ref{lem:EmW} with $W^{(k)} = U^{(k)}H^{(k)} - U^{*}$. Let $\event_{1}$ denote the event that 
\[\|\Ep_{k}^{T}W^{(k)}\|_{2}\le \bp_{\infty}\mnorm{W^{(k)}} + \bp_{2}\|W^{(k)}\|_{\op} \mbox{ simultaneously for all }k.\]
Then Lemma \ref{lem:EmW} implies that
\begin{equation}
  \label{eq:event1_trick}
  \P(\event_{1})\ge 1 - 2\delta.
\end{equation}
A simple union bound implies that
\begin{equation}
  \label{eq:event_trick}
  \P(\event)\ge 1 - 5\delta, \quad \mbox{where }\event = \event_{0}\cap \event_{1}.
\end{equation}
Throughout the rest of the proof we will restrict the attention into $\event$. By \eqref{eq:step3_term1_1} and \eqref{eq:step3_term1_2} in page \pageref{eq:step3_term1_2}, we have
\begin{equation*}
  \mnorm{W^{(k)}}\le \|U^{(k)}(U^{(k)})^{T} - UU^{T}\|_{\op} + \mnorm{UH - U^{*}}.
\end{equation*}
By \eqref{eq:step3_term2},
\begin{equation*}
  \|W^{(k)}\|_{\op}\le \|U^{(k)}(U^{(k)})^{T} - UU^{T}\|_{\op} + \|UU^{T} - U^{*}(U^{*})^{T}\|_{\op}.
\end{equation*}
Putting pieces together, we know that on event $\event$, 
\begin{align*}
\lefteqn{  \|\Ep_{k}^{T}(U^{(k)}H^{(k)} - U^{*})\|_{2}}\\
&\le  (\bp_{\infty} + \bp_{2})\|U^{(k)}(U^{(k)})^{T} - UU^{T}\|_{\op} + \bp_{2}\|UU^{T} - U^{*}(U^{*})^{T}\|_{\op} + \bp_{\infty}\mnorm{UH - U^{*}}\\
& \le (\bp_{\infty} + \bp_{2})\|U^{(k)}(U^{(k)})^{T} - UU^{T}\|_{\op} + \bp_{2}\|UU^{T} - U^{*}(U^{*})^{T}\|_{\op} \\
& \qquad + \bp_{\infty}\mnorm{U\sign(H) - U^{*}} + \bp_{\infty}\mnorm{U(\sign(H) - H)}
\end{align*}
By Lemma \ref{lem:davis_kahan}, \eqref{eq:J1} and Corollary \ref{cor:separation},
\begin{align}
  \lefteqn{\|\Ep_{k}^{T}(U^{(k)}H^{(k)} - U^{*})\|_{2}\le \bp_{\infty}\mnorm{U\sign(H) - U^{*}} }\nonumber\\
& + \lb\frac{\lambda_{\min}^{*}(\bp_{\infty} + \bp_{2})(\kappa^{*}L_{2} + L_{3})}{\tdgap} + \frac{\bp_{\infty}E_{+}^{2}}{\tdgap^{2}}\rb\mnorm{U} + \frac{\bp_{2}E_{+}}{\tdgap}\label{eq:Step3_trick}.
\end{align}

\noindent \textbf{Step IV}: assuming $\lambda_{s + 1}^{*}\lambda_{s + r}^{*} > 0$. Putting \eqref{eq:Step1_trick}, \eqref{eq:Step2_trick} and \eqref{eq:Step3_trick} together, we obtain that
\begin{align}
\lefteqn{ \lb 1 - \frac{\Theta \bp_{\infty}}{\lambda_{\min}^{*}}\rb \| (U\sign(H) - U^{*} - \V)_{k}\|_{2}} \nonumber\\
&\le \td{\beta}\mnorm{U}  + \frac{\Theta}{\lambda_{\min}^{*}}\lb\frac{\bp_{2}E_{+}}{\tdgap} + \frac{E_{+}\mnorm{\Ap^{*}}}{\tdgap}\rb.\label{eq:Step4_trick_1}
\end{align}
where
\begin{equation}
  \label{eq:tdbeta'}
  \td{\beta} = \beta + \frac{\Theta(\bp_{\infty} + \bp_{2})(\kappa^{*}L_{2} + L_{3})}{\tdgap} + \frac{\Theta \bp_{\infty}E_{+}^{2}}{\lambda_{\min}^{*}\tdgap^{2}}
\end{equation}
and $\beta$ is defined in \eqref{eq:beta'}. On the other hand, since $\sign(H)$ is orthogonal,
\begin{align*}
  \mnorm{U} = \mnorm{U\sign(H)}\le \mnorm{U\sign(H) - U^{*} - \V} + \mnorm{U^{*}} + \mnorm{\V}.
\end{align*}
Plugging this into \eqref{eq:Step4_trick_1}, we have
\begin{align}
 \lefteqn{\lb 1 - \td{\beta} - \frac{\Theta \bp_{\infty}}{\lambda_{\min}^{*}}\rb \|(U\sign(H) - U^{*} - V)_{k}\|_{2}}\nonumber\\ 
& \le \td{\beta}\lb\mnorm{U^{*}} + \mnorm{\V}\rb  + \frac{\Theta(\bp_{2} + \mnorm{\Ap^{*}})E_{+}}{\lambda_{\min}^{*}\tdgap}.\label{eq:Step4_trick_2}
\end{align}
By definition, 
\begin{align}
\lefteqn{\td{\beta} + \frac{\Theta \bp_{\infty}}{\lambda_{\min}^{*}}}\nonumber\\
 & = \lb 1 + \frac{\Theta \bp_{\infty}}{\lambda_{\min}^{*}}\rb\frac{E_{+}^{2}}{\tdgap^{2}} + \frac{\Theta(\bp_{\infty} + E_{+})}{\lambda_{\min}^{*}} + \frac{\Theta(\Ep_{\infty} + \bp_{\infty} + \bp_{2})(\kappa^{*}L_{2} + L_{3})}{\tdgap}\nonumber\\
 & = \frac{E_{+}^{2}}{\tdgap^{2}} + \lb 1 + \frac{E_{+}^{2}}{\tdgap^{2}}\rb\frac{\Theta \bp_{\infty}}{\lambda_{\min}^{*}} + \frac{\Theta(\etap(\kappa^{*}L_{2} + L_{3}) + E_{+})}{\tdgap}.\label{eq:tdbetak}
\end{align}
Similar to Step IV in Appendix \ref{app:generic_bound}, we start from a stronger version of assumption \textbf{A}4:
\begin{enumerate}[$\td{\mathbf{A}}$'1]
\setcounter{enumi}{3}
\item $\gap^{*}\ge 4\bigg(\Theta (\{\kappa^{*}L_{2} + L_{3} + 1\}\etap + E_{+}) + L_{1} + \lambda_{-} + E_{+}\bigg)$.
\end{enumerate}
Under assumption $\td{\mathbf{A}}$'4, \textbf{C}1 holds and 
\[\tdgap\ge 2E_{+}.\]
As a result, 
\[\lb 1 + \frac{E_{+}^{2}}{\tdgap^{2}}\rb\frac{b_{\infty}}{\lambda_{\min}^{*}}\le \frac{5\etap}{4\lambda_{\min}^{*}}\le \frac{5\etap}{8\tdgap}\le \frac{\etap}{\tdgap}.\]
By \eqref{eq:tdbetak}, 
\[\td{\beta} + \frac{\Theta \bp_{\infty}}{\lambda_{\min}^{*}}\le \frac{E_{+}^{2}}{\tdgap^{2}} + \frac{\Theta(\{\kappa^{*}L_{2} + L_{3} + 1\}\etap + E_{+})}{\tdgap}.\]
On the other hand, assumption $\td{\mathbf{A}}$'4 implies that
\[\tdgap\ge 2 \Theta(\{\kappa^{*}L_{2} + L_{3} + 1\}\etap + E_{+}).\]
Then 
\[\td{\beta} + \frac{\Theta \bp_{\infty}}{\lambda_{\min}^{*}}\le \frac{1}{4} + \frac{1}{2} = \frac{3}{4}.\]
By \eqref{eq:Step4_trick_2}, we deduce that
\begin{align}
\lefteqn{ \|(U\sign(H) - U^{*} - \V)_{k}\|_{2}} \nonumber\\
&\le 4\lb \frac{E_{+}^{2}}{\tdgap^{2}} + \frac{\Theta(\{\kappa^{*}L_{2} + L_{3} + 1\}\etap + E_{+})}{\tdgap}\rb\lb\mnorm{U^{*}} + \mnorm{\V}\rb + \frac{4\Theta(\bp_{2} + \mnorm{\Ap^{*}})E_{+}}{\lambda_{\min}^{*}\tdgap}\nonumber\\
&\le 29\lb \frac{E_{+}^{2}}{(\gap^{*})^{2}} + \frac{\Theta(\{\kappa^{*}L_{2} + L_{3} + 1\}\etap + E_{+})}{\gap^{*}}\rb\lb\mnorm{U^{*}} + \mnorm{\V}\rb + \frac{11\Theta(\bp_{2} + \mnorm{\Ap^{*}})E_{+}}{\lambda_{\min}^{*}\gap^{*}},\label{eq:Step4_trick_3}
\end{align}
where the last inequality uses the fact that 
\[\tdgap = \frac{1}{2}(\gap^{*} - L_{1})\ge \frac{3}{8}\gap^{*}.\]
On the other hand, 
\begin{equation}\label{eq:mnormV}
\mnorm{\V} = \max_{k}\|\V_{k}\|_{2}\le \frac{\Theta\mnorm{E U^{*}}}{\lambda_{\min}^{*}}.  
\end{equation}
By \eqref{eq:Step4_trick_3} we obtain that
\begin{align}
\lefteqn{ \mnorm{U\sign(H) - U^{*} - \V}} \nonumber\\
&\le 29\lb \frac{E_{+}^{2}}{(\gap^{*})^{2}} + \frac{\Theta(\{\kappa^{*}L_{2} + L_{3} + 1\}\etap + E_{+})}{\gap^{*}}\rb\lb\mnorm{U^{*}} + \frac{\Theta\mnorm{E U^{*}}}{\lambda_{\min}^{*}}\rb + \frac{11\Theta(\bp_{2} + \mnorm{\Ap^{*}})E_{+}}{\lambda_{\min}^{*}\gap^{*}}.\label{eq:Step4_trick}
\end{align}
Recall that this is true on $\event$, which has probability at least $1 - 5\delta$ according to \eqref{eq:event_trick}.

\noindent \textbf{Step V}: let $S_{1}, \ldots, S_{B}$ be the partition given by Lemma \ref{lem:partition_general}. As in Appendix \ref{subapp:step5}, let 
\begin{equation}\label{eq:gapj}
\sep_{j}(A^{*}) = \sep_{S_{j}}(A^{*}),\quad   \gap_{j}^{*} \triangleq \min\{\sep_{j}(A^{*}), \lambda_{\min, j}^{*}\}.
\end{equation}
Then with probability at least $1 - 5B\delta$, it holds simultaneously for all blocks that 
\begin{align}
\lefteqn{ \mnorm{U_{j}\sign(H_{j}) - U_{j}^{*} - \V_{j}}} \nonumber\\
&\le 29\lb \frac{E_{+}^{2}}{(\gap_{j}^{*})^{2}} + \frac{\Theta(\{\kappa_{j}^{*} L_{2} + L_{3} + 1\}\etap + E_{+})}{\gap_{j}^{*}}\rb\lb\mnorm{U_{j}^{*}} + \frac{\Theta\mnorm{EU_{j}^{*}}}{\lambda_{\min, j}^{*}}\rb + \frac{11\Theta(\bp_{2} + \mnorm{\Ap^{*}})E_{+}}{\lambda_{\min, j}^{*}\gap_{j}^{*}}\nonumber\\
&\le 29\lb \frac{E_{+}^{2}}{(\gap_{j}^{*})^{2}} + \frac{\Theta(\{\kappa_{j}^{*} L_{2} + L_{3} + 1\}\etap + E_{+})}{\gap_{j}^{*}}\rb\lb\mnorm{U^{*}} + \frac{\Theta\mnorm{E U^{*}}}{\lambda_{\min, j}^{*}}\rb + \frac{11\Theta(\bp_{2} + \mnorm{\Ap^{*}})E_{+}}{\lambda_{\min, j}^{*}\gap_{j}^{*}},\nonumber
\end{align}
where the last inequality uses the fact that $U_{j}^{*}$ (resp. $EU_{j}^{*}$) is a sub-block of $U^{*}$ (resp. $EU^{*}$) ,and thus has a smaller norm. Recalling \eqref{eq:step5_later} in page \pageref{eq:step5_later}, we have
\begin{align}
 d_{\ttinf}(U, U^{*} + \V)&\le 29\sum_{j=1}^{B}\lb \frac{E_{+}^{2}}{(\gap_{j}^{*})^{2}} + \frac{\Theta(\{\kappa_{j}^{*} L_{2} + L_{3} + 1\}\etap + E_{+})}{\gap_{j}^{*}}\rb\lb\mnorm{U^{*}} + \frac{\Theta\mnorm{E U^{*}}}{\lambda_{\min, j}^{*}}\rb\nonumber\\
&\quad  + 11\sum_{j=1}^{B}\frac{\Theta(\bp_{2} + \mnorm{\Ap^{*}})E_{+}}{\lambda_{\min, j}^{*}\gap_{j}^{*}}.\label{eq:Step5_trick_1}
\end{align}
By Lemma \ref{lem:partition_general}, 
\begin{align*}
  \sum_{j=1}^{B}\frac{1}{(\gap_{j}^{*})^{2}}& \le \frac{2H(2, 0)}{(\gap^{*})^{2}} = \frac{14}{3(\gap^{*})^{2}}\\
  \sum_{j=1}^{B}\frac{\kappa_{j}^{*}}{\gap_{j}^{*}} & \le \sum_{j=1}^{B}\frac{2|S_{j}|}{\gap^{*}} = \frac{2r}{\gap^{*}}\\
\sum_{j=1}^{B}\frac{1}{(\gap_{j}^{*})^{2}\lambda_{\min, j}^{*}} & \le \frac{2H(2, 1)}{(\gap^{*})^{2}\lambda_{\min}^{*}} \le \frac{3.01}{\gap^{*}\lambda_{\min}^{*}}\\
\sum_{j=1}^{B}\frac{\kappa_{j}^{*}}{\gap_{j}^{*}\lambda_{\min, j}^{*}} & \le \sum_{j=1}^{B}\frac{2|S_{j}|}{\gap^{*}\lambda_{\min}^{*}}\le \frac{2r}{\gap^{*}\lambda_{\min}^{*}}\\
\sum_{j=1}^{B}\frac{1}{\gap_{j}^{*}\lambda_{\min, j}^{*}} & \le \frac{2H(1, 1)}{\gap^{*}\lambda_{\min}^{*}} \le \frac{3.06}{\gap^{*}\lambda_{\min}^{*}}.
\end{align*}
To apply \eqref{eq:Step5_trick_1}, we still need \textbf{A}4 holds for each block. By Lemma \ref{lem:partition_general}, $\kappa_{j}^{*}\le 2r$ for all $j$, thus it is sufficient to assume the following stronger version of \textbf{A}4:
\begin{enumerate}[$\td{\td{\mathbf{A}}}$'1]
\setcounter{enumi}{3}
\item $\gap^{*}\ge 4\lb\Theta (\{2rL_{2} + L_{3} + 1\}\etap + E_{+}) + L_{1} + \lambda_{-} + E_{+}\rb$.
\end{enumerate}
Then under assumptions \textbf{A}'1 - \textbf{A}'3 and $\td{\td{\mathbf{A}}}$'4,
\begin{align}
\lefteqn{ d_{\ttinf}(U, U^{*} + \V)} \nonumber\\
& \le C\left\{\lb \frac{E_{+}^{2}}{(\gap^{*})^{2}} + \frac{\Theta(\{2rL_{2} + L_{3} + 1\}\etap + E_{+})}{\gap^{*}}\rb\lb\mnorm{U^{*}} + \frac{\Theta\mnorm{E U^{*}}}{\lambda_{\min}^{*}}\rb + \frac{\Theta(\bp_{2} + \mnorm{\Ap^{*}})E_{+}}{\lambda_{\min}^{*}\gap^{*}}\right\},\label{eq:Step5_trick}
\end{align}n
with probability at least $1 - 5B\delta$, where $C$ is a universal constant that can be chosen as $136$. 

\noindent \textbf{Final step}: when $\kappa^{*} <\!\! < r$, we should use \eqref{eq:Step4_trick} instead of \eqref{eq:Step5_trick}. We can split the eigenvalues into 2 blocks, with all positive and negative eigenvalues in $\Lambda^{*}$, respsectively. Then similar to Appendix \ref{subapp:step6}, we have
\begin{align}
d_{\ttinf}(U, U^{*} + \V)&\le C\bigg\{\lb \frac{E_{+}^{2}}{(\gap^{*})^{2}} + \frac{\Theta(\{\kappa^{*}L_{2} + L_{3} + 1\}\etap + E_{+})}{\gap^{*}}\rb\lb\mnorm{U^{*}} + \frac{\Theta\mnorm{E U^{*}}}{\lambda_{\min}^{*}}\rb\nonumber\\
& \qquad\quad   + \frac{\Theta(\bp_{2} + \mnorm{\Ap^{*}})E_{+}}{\lambda_{\min}^{*}\gap^{*}}\bigg\},\label{eq:Step5_trick}
\end{align}
with probability at least $1 - 10\delta$, where $C$ is a universal constant that can be chosen as $58$. 

The proof of Theorem \ref{thm:generic_bound_trick} is then completed by considering two cases $\kappa^{*} > 2r$ and $\kappa^{*} \le 2r$ separately as in Appendix \ref{subapp:step6}.
\end{proof}

\begin{proof}[\textbf{Proof of Theorem \ref{thm:generic_bound2_trick}}]
  By \eqref{eq:mnormV}, 
\[\mnorm{V}\preceq \frac{\Theta \mnorm{EU^{*}}}{\lambda_{\min}^{*}}.\]
The proof is then completed by assumption \textbf{A}'4.
\end{proof}

\section{Proofs of Results in Section \ref{sec:binarybound}}\label{app:binarybound}

\subsection{Proofs for Section \ref{subsec:binary_wigner}}
The proofs heavily exploit concentration inequalities for binary random variables derived in Appendix \ref{app:concentration}. 

\begin{proof}[\textbf{Proof of Lemma \ref{lem:A3_binary}}]
 Setting
\[2\gamma = (\log (1 / \delta))^{-(1 - \alpha)}\]
 in Lemma \ref{lem:bernoulli2} yields that the condition of Proposition \ref{prop:vector2matrix} holds with
\[a_{\infty}(\delta) = \frac{2\log (1 / \delta)}{F^{-1}(2\gamma\log (1 / \delta))}, \quad a_{2}(\delta) = a_{\infty}(\delta)\sqrt{\frac{p^{*}}{2(\log (1 / \delta))^{1 - \alpha}}}.\]
By Lemma \ref{lem:Finv}, $F^{-1}(x) \ge \log x / 2$ and thus
\[a_{\infty}(\delta)\le \frac{4\log (1 / \delta)}{\alpha \log \log (1 / \delta)}, \quad a_{2}(\delta)\le \frac{\sqrt{8p^{*}(\log (1 / \delta))^{1 + \alpha}}}{\alpha \log\log (1 / \delta)}.\]
By Proposition \ref{prop:vector2matrix}, 
\[b_{\infty}(\delta) = 2a_{\infty}\lb\frac{\delta}{5^{r}n}\rb, \quad b_{2}(\delta) = 2a_{2}\lb\frac{\delta}{5^{r}n}\rb.\]
The proof is completed by the fact that $x\mapsto \log x / \log \log x$ is increasing in $x$ and 
\[\log(5^{r}n / \delta) \preceq R(\delta).\]
\end{proof}

To prove Lemma \ref{lem:Eop_binary}, we need the following concentration inequality.
\begin{proposition}\label{prop:Eop}\citep[][Remark 4.12]{latala2018dimension}
  There exists a universal constant $C$ such that for any $\eps \in [0, 1]$ and $t \ge 0$,
\[\P\lb\|E\|_{\op}\ge 2(1 + \eps) \max_{i}\sqrt{\sum_{j}\E [E_{ij}^{2}]} + t\rb\le n\exp\left\{-\frac{\eps t^{2}}{C}\right\}.\]
\end{proposition}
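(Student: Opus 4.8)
\textbf{Proof proposal for Proposition \ref{prop:Eop}.}

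The plan is to reduce the tail bound for $\|E\|_{\op}$ to a matrix concentration statement and then apply a standard non-commutative moment/entropy argument. Since $E$ is symmetric with independent entries $E_{ij}$ on and above the diagonal, we may write $E = \sum_{i\le j} \xi_{ij}(e_i e_j^T + e_j e_i^T)$ for $i<j$ and $E = \sum_i \xi_{ii} e_i e_i^T$ on the diagonal, where $\xi_{ij} = E_{ij}$ are independent, mean-zero, and bounded (in our applications $|\xi_{ij}|\le 1$, but more generally one controls $\E[E_{ij}^2]$ and a subexponential norm). The relevant quantity governing the bulk of the spectrum is $\sigma_* = \max_i \sqrt{\sum_j \E[E_{ij}^2]}$, the maximal expected row norm; the claim is that $\|E\|_{\op}$ concentrates around $2\sigma_*$ up to lower-order fluctuations. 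First I would cite the sharp bound of Latała, van Handel and Youssef (which is exactly the source \citep{latala2018dimension} referenced) for the expectation: $\E\|E\|_{\op}\le 2(1+\eps)\sigma_* + C_\eps \max_{ij}\|E_{ij}\|_\infty \sqrt{\log n}$ for any $\eps>0$, with $C_\eps$ depending only on $\eps$; this is Theorem 4.x / Remark 4.12 of that paper restated, and in our setting the second term simplifies since the entries are at most $1$ in magnitude.

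Second, I would upgrade the expectation bound to a tail bound by a bounded-differences / concentration-of-measure argument. Writing $\|E\|_{\op} = f(\{\xi_{ij}\}_{i\le j})$, changing a single entry $\xi_{ij}$ changes $\|E\|_{\op}$ by at most $2|\xi_{ij}-\xi_{ij}'|\le 2$, so $f$ is $\sqrt{2}$-Lipschitz in the Hamming sense coordinatewise with bounded increments; but a cruder bounded-differences inequality gives only a Gaussian tail with variance proxy $\asymp n$, which is too weak for the $\eps t^2/C$ rate claimed. Instead I would invoke the self-bounding / Talagrand-type inequality for the operator norm: using the variational representation $\|E\|_{\op} = \sup_{\|u\|=1} |u^T E u|$ and the fact that for the optimizing $u$ the ``sensitivity'' $\sum_{i\le j}(\partial f/\partial \xi_{ij})^2 \le 4\sum_{i\le j} u_i^2 u_j^2 \le 2$, one gets via the bounded-differences inequality with data-dependent increments (or equivalently Boucheron–Lugosi–Massart's functional, precisely Proposition \ref{prop:boucheron} applied to $f$) that the fluctuation above the median is subgaussian with an $O(1)$ variance proxy — which would actually give $\P(\|E\|_{\op} > \E\|E\|_{\op} + t)\le e^{-ct^2}$, stronger than stated. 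The factor $\eps$ and the $n$ prefactor in the stated inequality come from a slightly different route: one discretizes the sphere, applies a Bernstein bound for each fixed quadratic form $u^T E u$, and union-bounds; the $\eps$ absorbs the net-cardinality loss $e^{O(n)}$ into the exponent when $t$ is large enough, and the factor $n$ is the residual of the net bound. So concretely I would (i) take an $\eps$-net $\mathcal N$ of $S^{n-1}$ of size $(3/\eps)^n$, (ii) for each $u\in\mathcal N$ bound $\P(|u^T E u| > 2\sigma_* + t)$ by a Bernstein inequality using $\Var(u^T E u)\le 2\sigma_*^2$ and $\|u^T E u\|_{\psi_1}\lesssim \max\|E_{ij}\|_\infty$, (iii) union bound and rescale from the net to the sphere.

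The main obstacle is getting the constants and the precise form of the prefactor $n$ and the $\eps$-dependence to match the cited statement rather than a weaker surrogate. The net argument naively gives a prefactor $(3/\eps)^n$, not $n$; to recover the claimed form one must instead follow the moment method of \citep{latala2018dimension} directly: bound $(\E\|E\|_{\op}^p)^{1/p}$ by the trace-moment $(\E\tr(E^p))^{1/p}$, expand into a sum over closed walks, and use the combinatorial/graph-theoretic bookkeeping of that paper to show $(\E\|E\|_{\op}^p)^{1/p}\le 2(1+\eps)\sigma_* + C(\eps) p \max\|E_{ij}\|_\infty$ for all $p\ge 2$; then Markov's inequality at the optimal $p\asymp t/\max\|E_{ij}\|_\infty$ converts this into $\P(\|E\|_{\op} > 2(1+\eps)\sigma_* + t)\le (C' t / \ldots)$-type bound, and the factor $n$ arises from the base case $p=2\log n$ where the moment and the sup norm coincide up to logs. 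Since the excerpt explicitly attributes the inequality to Remark 4.12 of \citep{latala2018dimension}, the honest and shortest path is to state that the claim is an immediate consequence of their result after specializing $\E[E_{ij}^2]\le p_{ij}(1-p_{ij})\le \bar p^*$ and $\|E_{ij}\|_\infty\le 1$; the only work on our side is verifying that the symmetric-with-independent-entries hypothesis of that theorem is met and tracking that $\max_i\sqrt{\sum_j\E[E_{ij}^2]}\le \sqrt{n\bar p^*}$, which is routine. I would therefore present the proof as: (1) recall the Latała–van Handel–Youssef bound verbatim, (2) check hypotheses for our $E$, (3) substitute the variance and boundedness estimates, (4) note the resulting inequality is exactly the stated display.
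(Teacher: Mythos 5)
The paper gives no proof of this proposition at all: it is imported verbatim from Remark 4.12 of \citet{latala2018dimension}, so your concluding plan — quote their bound, verify that the symmetric-with-independent-bounded-entries hypothesis holds for $E$, and only later (as the paper does in the proof of Lemma \ref{lem:Eop_binary}) substitute $\E[E_{ij}^2]\le p_{ij}$ to get the $\sqrt{n\bar{p}^{*}}$ form — is exactly the paper's approach. Your exploratory detours (bounded differences, $\eps$-net union bound, trace-moment expansion) are unnecessary and, as you yourself observe, would not reproduce the precise prefactor $n$ and $\eps$-dependence of the stated display, but since you correctly settle on the citation route, the proposal matches the paper.
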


\begin{proof}[\textbf{Proof of Lemma \ref{lem:Eop_binary}}]
For our purpose, we let $\eps = 1$, $t = \sqrt{C\log(n / \delta)}$, then
\[\|E\|_{\op}\le 2(1 + \eps) \max_{i}\sqrt{\sum_{j}\E [E_{ij}^{2}]} + t = 4 \max_{i}\sqrt{\sum_{j}\E [E_{ij}^{2}]} + \sqrt{C\lb\log (n / \delta)\rb},\]
with probability at least $1 - \delta$. Since $\E [E_{ij}^{2}] = p_{ij}(1 - p_{ij})\le p_{ij}$, we have
\[E_{2}(\delta)\preceq \sqrt{n\bar{p}^{*}} + \sqrt{\log (n / \delta)}.\]
The result is proved by \eqref{eq:cond_E2}.
\end{proof}

\begin{proof}[\textbf{Proof of Lemma \ref{lem:mnormEU_binary}}]
Note that $F^{-1}(e) = 1$. Setting
\[2\gamma = \frac{e}{\log (1 / \delta)}.\]
in Lemma \ref{lem:bernoulli2} yields that the condition of Proposition \ref{prop:vector2matrix} holds with
\[a_{\infty}(\delta) = \frac{2\log (1 / \delta)}{F^{-1}(e)} = 2\log (1 / \delta), \quad a_{2}(\delta) = a_{\infty}(\delta)\sqrt{\frac{ep^{*}}{2\log (1 / \delta)}} = \sqrt{2ep^{*}\log (1 / \delta)}.\]
Note that they are different from the ones in Lemma \ref{lem:A3_binary}. By Proposition \ref{prop:vector2matrix},
\[\mnorm{EU^{*}}\le 2a_{\infty}(\delta / 5^{r}n)\mnorm{U^{*}}  + 2a_{2}(\delta / 5^{r}n).\]
where we use the fact that $\|U^{*}\|_{\op} = 1$. Then
\[log(5^{r}n / \delta) = \log (n / \delta) + (\log 5)r \preceq R(\delta),\]
The proof is then completed.
\end{proof}

\begin{proof}[\textbf{Proof of Theorem \ref{thm:generic_binary}}]
Let $\event$ be the intersection of the events in Theorem \ref{thm:generic_bound} and Lemma \ref{lem:mnormEU_binary}. Then a union bound implies that
\[P(\event) \ge 1 - (B(r) + 1)\delta.\]
Throughout the rest of the proof we restrict the attention on $\event$. For notational convenience we will suppress the notation $(\delta)$ for all quantities that involve it. 

\noindent By Lemma \ref{lem:Eop_binary}
\begin{equation}\label{eq:etag}
  \eta\preceq \sqrt{n\bar{p}^{*}} + \sqrt{\log(n / \delta)} + \frac{R}{\alpha \log R} + \frac{\sqrt{R^{1 + \alpha}p^{*}}}{\alpha \log R}\preceq g.
\end{equation}
By part (a) of Proposition \ref{prop:A1} and Lemma \ref{lem:Eop_binary},
\begin{equation}
  \label{eq:L1_binary}
  L_{1} \preceq \mnorm{A^{*}} + E_{\infty} \preceq  g,
\end{equation}
where we use the fact that
\[\mnorm{A^{*}} = \max_{i}\sqrt{\sum_{j=1}^{n}p_{ij}^{2}}\le \sqrt{n\bar{p}^{*}p^{*}}\le \sqrt{n\bar{p}^{*}}.\]
In addition, 
\begin{equation}
  \label{eq:L2L3_binary}
  L_{2} = 1, \quad L_{3}\le \frac{\mnorm{A^{*}} + E_{\infty} + \lambda_{-}}{\lambda_{\min}^{*}}\preceq \frac{g}{\gap^{*}}\preceq 1.
\end{equation}
First we verify that assumption \textbf{A}4 holds in this case. By \eqref{eq:A4_binary} in page \pageref{eq:A4_binary}, \eqref{eq:etag} and \eqref{eq:L2L3_binary}, 
\[\gap^{*}\ge C\bar{\kappa}^{*}g \ge CC'(\bar{\kappa}^{*}L_{2} + L_{3} + 1)\eta\]
where $C'$ is a universal constant. In addition, by \eqref{eq:A4_binary}, \eqref{eq:L1_binary} and \eqref{eq:L2L3_binary}, 
\[\gap^{*}\ge C\bar{\kappa}^{*}g \ge CC''(E_{+} + L_{1} + \lambda_{-}).\]
By taking $C = 4/C' + 4/C''$, we prove that
\[\gap^{*}\ge 4\lb\sigma + L_{1} + \lambda_{-}\rb.\]
This validates assumption \textbf{A}4. Since assumptions \textbf{A}1 - \textbf{A}4 hold, by Theorem \ref{thm:generic_bound}, we obtain that 
  \begin{align*}
    \lefteqn{d_{\ttinf}(U, AU^{*}(\Lambda^{*})^{-1})}\\
& \stackrel{(i)}{\preceq} \frac{1}{\gap^{*}}\bigg\{(\bar{\kappa}^{*}\eta + E_{+})\lb\mnorm{U^{*}} + \frac{\mnorm{EU^{*}}}{\lambda_{\min}^{*}}\rb  + \lb\frac{E_{+}b_{2}}{\lambda_{\min}^{*}} + \min\{E_{+}\xi_{1}, \bar{E}_{+}\sqrt{\bar{\kappa}^{*}}\xi_{2}, \bar{E}_{+}\bar{\kappa}^{*}\xi_{3}\}\rb\bigg\}\\
& \stackrel{(ii)}{\preceq} \frac{1}{\gap^{*}}\bigg\{\bar{\kappa}^{*}g\lb\mnorm{U^{*}} + \frac{R}{\lambda_{\min}^{*}}\mnorm{U^{*}} + \frac{\sqrt{Rp^{*}}}{\lambda_{\min}^{*}}\rb\\
& \qquad  + (\sqrt{n\bar{p}^{*}} + \sqrt{\log (n / \delta)})\lb\frac{\sqrt{R^{1 + \alpha}p^{*}}}{\alpha (\log R)\lambda_{\min}^{*}} + \min\{\xi_{1}, \sqrt{\bar{\kappa}^{*}}\xi_{2}, \bar{\kappa}^{*}\xi_{3}\}\rb\bigg\}\\
& \preceq \frac{1}{\gap^{*}}\left\{\bar{\kappa}^{*}g\lb 1 + \frac{R}{\lambda_{\min}^{*}}\rb\mnorm{U}^{*} + \frac{\sqrt{Rp^{*}}}{\lambda_{\min}^{*}}\lb \bar{\kappa}^{*}g + \frac{(\sqrt{n\bar{p}^{*}} + \sqrt{\log (n / \delta)})\sqrt{R^{\alpha}}}{\alpha \log R}\rb\right\}\\
& \qquad + \frac{\sqrt{n\bar{p}^{*}} + \sqrt{\log (n / \delta)}}{\gap^{*}}\min\{\xi_{1}, \sqrt{\bar{\kappa}^{*}}\xi_{2}, \bar{\kappa}^{*}\xi_{3}\}\\
& \stackrel{(iii)}{\preceq} \frac{1}{\gap^{*}}\left\{\bar{\kappa}^{*}g\lb 1 + \frac{R}{\lambda_{\min}^{*}}\rb\mnorm{U}^{*} + \frac{\sqrt{Rp^{*}}}{\lambda_{\min}^{*}}\lb \bar{\kappa}^{*}g + \frac{\sqrt{n\bar{p}^{*}R^{\alpha}}}{\alpha \log R}\rb\right\}\\
& \qquad + \frac{\sqrt{n\bar{p}^{*}} + \sqrt{\log (n / \delta)}}{\gap^{*}}\min\{\xi_{1}, \sqrt{\bar{\kappa}^{*}}\xi_{2}, \bar{\kappa}^{*}\xi_{3}\},
  \end{align*}
where (i) uses the fact that $\bar{\kappa}^{*}L_{2} + L_{3} + 1 \preceq \bar{\kappa}^{*}$, (ii) uses Lemma \ref{lem:Eop_binary} and \eqref{eq:etag}, and (iii) uses the fact that
\[\frac{\sqrt{\log (n / \delta)R^{\alpha}}}{\alpha \log R} \preceq \frac{R}{\alpha \log R}\preceq g \preceq \bar{\kappa}^{*}g.\]
The proof of this inequality is completed by plugging in the definition of $\xi_{1}, \xi_{2}$ and $\xi_{3}$.

\noindent By Lemma \ref{lem:mnormEU_binary} we obtain that
  \begin{align*}
    &d_{\ttinf}(U, U^{*})\preceq \left\{\frac{\bar{\kappa}^{*}g}{\gap^{*}}\lb 1 + \frac{R}{\lambda_{\min}^{*}}\rb + \frac{R}{\lambda_{\min}^{*}}\right\}\mnorm{U^{*}} + \frac{\sqrt{Rp^{*}}}{\lambda_{\min}^{*}}\lb \frac{\bar{\kappa}^{*}g}{\gap^{*}} + \frac{\sqrt{n\bar{p}^{*}R^{\alpha}}}{\alpha \gap^{*}\log R} + 1\rb\\
& \,\, + \frac{\sqrt{n\bar{p}^{*}} + \sqrt{\log (n / \delta)}}{\gap^{*}}\min\left\{\frac{\mnorm{A^{*}}}{\lambda_{\min}^{*}}, \frac{\sqrt{\bar{\kappa}^{*}p^{*}}}{\sqrt{\lambda_{\min}^{*}}I(A^{*}\mbox{ is psd})}, \bar{\kappa}^{*}\mnorm{\bar{U}^{*}}\right\}.
  \end{align*}
By \eqref{eq:A4_binary}, $\bar{\kappa}^{*}g / \gap^{*}\preceq 1$ and thus the above bound can be simplified as the one in Theorem \ref{thm:generic_binary}.
\end{proof}

\begin{proof}[\textbf{Proof of Corollary \ref{cor:full_recovery_binary}}]
  In this case, we only keep the third term $\bar{\kappa}^{*}\mnorm{\bar{U}^{*}}$ in the minimum. Then 
\[(\sqrt{n\bar{p}^{*}} + \sqrt{\log (n / \delta)})\bar{\kappa}^{*}\mnorm{\bar{U}^{*}} \preceq (\sqrt{n\bar{p}^{*}} + \sqrt{R})\bar{\kappa}^{*}\mnorm{\bar{U}^{*}}\preceq \bar{\kappa}^{*}g\mnorm{\bar{U}^{*}}\preceq \bar{\kappa}^{*}g\mnorm{U^{*}}.\]
This can be incorporated into the first term $\bar{\kappa}^{*}g\lb 1 + \frac{R}{\lambda_{\min}^{*}}\rb\mnorm{U^{*}}$. The proof is then completed.
\end{proof}

\begin{proof}[\textbf{Proof of Corollary \ref{cor:typical_binary}}]
 In this case, we only keep the first term $\frac{\mnorm{A^{*}}}{\lambda_{\min}^{*}}$ in the minimum. Then 
\[(\sqrt{n\bar{p}^{*}} + \sqrt{\log (n / \delta)})\frac{\mnorm{A^{*}}}{\lambda_{\min}^{*}}\preceq g\frac{\sqrt{n}p^{*}}{np^{*} / \sqrt{n}\mnorm{U^{*}}} = g\mnorm{U^{*}} \preceq \bar{\kappa}^{*}g\mnorm{U^{*}}.\]
This term can also be incorporated into the first term $\bar{\kappa}^{*}g\lb 1 + \frac{R}{\lambda_{\min}^{*}}\rb\mnorm{U^{*}}$. Thus, we obtain that
  \begin{align*}
    d_{\ttinf}(U, AU^{*}(\Lambda^{*})^{-1})&\preceq \frac{1}{\gap^{*}}\bigg\{\bar{\kappa}^{*}g\lb 1 + \frac{R}{\lambda_{\min}^{*}}\rb\mnorm{U^{*}} + \frac{\sqrt{Rp^{*}}}{\lambda_{\min}^{*}}\lb \bar{\kappa}^{*}g + \frac{\sqrt{n\bar{p}^{*}R^{\alpha}}}{\alpha \log R}\rb\bigg\}.
  \end{align*}
Note that $\sqrt{n}\mnorm{U^{*}}\succeq \sqrt{r}\succeq 1$. By \eqref{eq:typical_minLambda_binary}, 
\[np^{*} \preceq \lambda_{\min}^{*}\sqrt{n}\mnorm{U^{*}}\preceq \lambda_{\min}^{*}(\sqrt{n}\mnorm{U^{*}})^{2}.\]
As a result,
\begin{align*}
  \frac{\sqrt{Rp^{*}}}{\lambda_{\min}^{*}} &= \frac{1}{\sqrt{n}}\sqrt{\frac{R}{\lambda_{\min}^{*}}}\sqrt{\frac{np^{*}}{\lambda_{\min}^{*}}}\preceq \sqrt{\frac{R}{\lambda_{\min}^{*}}}\mnorm{U^{*}}\\
& \preceq \lb 1 + \frac{R}{\lambda_{\min}^{*}}\rb\mnorm{U^{*}}.
\end{align*}
On the other hand, 
\begin{align*}
  \frac{\sqrt{Rp^{*}}}{\lambda_{\min}^{*}}\frac{\sqrt{n\bar{p}^{*}R^{\alpha}}}{\alpha \log R} &\preceq \frac{1}{\sqrt{n}}\frac{np^{*}}{\lambda_{\min}^{*}} \frac{\sqrt{R^{1 + \alpha}}}{\alpha \log R}\\
& \preceq \mnorm{U^{*}} \frac{\sqrt{R^{1 + \alpha}}}{\alpha \log R} \preceq \mnorm{U^{*}}g
\end{align*}
where the last inequality uses the fact that $\alpha < 1$. Therefore, we conclude that 
  \begin{align*}
    d_{\ttinf}(U, AU^{*}(\Lambda^{*})^{-1})&\preceq \frac{\bar{\kappa}^{*}g}{\gap^{*}}\lb 1 + \frac{R}{\lambda_{\min}^{*}}\rb\mnorm{U^{*}}.
  \end{align*}
Finally, by the triangle inequality and Lemma \ref{lem:mnormEU_binary}, 
\begin{align*}
  d_{\ttinf}(U, U^{*}) &\preceq d_{\ttinf}(U, AU^{*}(\Lambda^{*})^{-1}) + \frac{\mnorm{EU^{*}}}{\lambda_{\min}^{*}}\\
& \preceq \left\{\frac{\bar{\kappa}^{*}g}{\gap^{*}}\lb 1 + \frac{R}{\lambda_{\min}^{*}}\rb + \frac{R}{\lambda_{\min}^{*}}\right\}\mnorm{U^{*}} + \frac{\sqrt{Rp^{*}}}{\lambda_{\min}^{*}}\\
& \preceq \lb\frac{\bar{\kappa}^{*}g}{\gap^{*}} + \frac{R}{\lambda_{\min}^{*}}\rb\mnorm{U^{*}} + \frac{\sqrt{Rp^{*}}}{\lambda_{\min}^{*}}.
\end{align*}
\end{proof}

\subsection{Proofs for Section \ref{subsec:binary_laplacian}}
Note that $\Ep = \Lp - \E \Lp = - (\Ap - \E \Ap)$ where $\Ap$ is a binary matrix with independent entries. Thus Lemma \ref{lem:A3_laplacian} is a direct consequence of Lemma \ref{lem:A3_binary} and the bound for $\Ep_{\infty}(\delta)$ in Lemma \ref{lem:Eop_laplacian} is a direct consequence of Lemma \ref{lem:Eop_binary}. For other results we need the following lemma.
\begin{lemma}\label{lem:E2Ep2}
For any $\delta \in (0, 1)$, it holds with probability $1 - \delta$ that
\[  \max_{k}|\L_{kk} - \L_{kk}^{*}|\le 4M(\delta).\]
Moreover, 
\[E_{2}(\delta)\preceq M(\delta), \quad \Ep_{2}(\delta)\preceq \sqrt{n\bar{p}^{*}} + \sqrt{\log (n / \delta)}.\]
\end{lemma}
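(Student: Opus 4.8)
The plan is to prove Lemma \ref{lem:E2Ep2} by decomposing the two error matrices $E = \L - \L^{*}$ and $\Ep = \Lp - \Ap^{*}$ into their diagonal and off-diagonal parts, controlling the diagonal part via a scalar Bernstein-type concentration inequality for the degree fluctuations, and controlling the off-diagonal part via the spectral-norm bound already available for binary matrices with independent entries (Lemma \ref{lem:Eop_binary}, or more precisely the underlying Proposition \ref{prop:Eop}). Recall that $\L = \D - \A$ with $\A$ a binary matrix with independent entries (and $\A_{ii}=0$), $\Sigma = \diag(\L_{11},\dots,\L_{nn})$, so $\Ep = \L - \Sigma - (\L^{*} - \Sigma^{*}) = -(\A - \A^{*})$ on the off-diagonal and is identically zero on the diagonal. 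In particular $\Ep = -(\A - \E\A)$ is \emph{exactly} a centered binary matrix with independent entries, so the bound $\Ep_{2}(\delta) \preceq \sqrt{n\bar p^{*}} + \sqrt{\log(n/\delta)}$ follows \emph{verbatim} from Lemma \ref{lem:Eop_binary} applied to $\A$; this also handles the $\Ep_{\infty}(\delta)$ claim in Lemma \ref{lem:Eop_laplacian} as noted in the text. That part is essentially immediate.

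The substantive step is the degree concentration $\max_k |\L_{kk} - \L_{kk}^{*}| \le 4M(\delta)$. First I would observe that $\L_{kk} = \D_{kk} = \sum_{j=1}^{n}\A_{kj}$ and $\L_{kk}^{*} = \sum_{j=1}^{n}p_{kj}$, so $\L_{kk} - \L_{kk}^{*} = \sum_{j}(\A_{kj} - p_{kj})$ is a sum of independent centered Bernoulli variables with variance proxy $\sum_j p_{kj}(1-p_{kj}) \le \sum_j p_{kj} \le n\bar p^{*}$ and each term bounded by $1$ in absolute value. A Bernstein inequality then gives, for each fixed $k$,
\[
\P\!\left(|\L_{kk} - \L_{kk}^{*}| \ge t\right) \le 2\exp\!\left(-\frac{t^{2}/2}{n\bar p^{*} + t/3}\right),
\]
and choosing $t = c(\sqrt{n\bar p^{*}\log(n/\delta)} + \log(n/\delta))$ for an appropriate absolute constant makes the right-hand side at most $\delta/n$. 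A union bound over $k\in[n]$ then yields $\max_k|\L_{kk}-\L_{kk}^{*}| \le 4M(\delta)$ with probability $1-\delta$, where $M(\delta) = \sqrt{n\bar p^{*}\log(n/\delta)} + \log(n/\delta)$; the constant $4$ is a convenient choice (one could equally use any fixed constant absorbed by $\preceq$, but the statement as written with $4$ just requires tracking the Bernstein constants, which is routine, or appealing to the binary-variable concentration inequalities collected in Appendix \ref{app:concentration}). Finally, for $E_{2}(\delta) \preceq M(\delta)$, write $E = (\Sigma - \Sigma^{*}) + \Ep$, so $\|E\|_{\op} \le \|\Sigma - \Sigma^{*}\|_{\op} + \|\Ep\|_{\op} = \max_k|\L_{kk}-\L_{kk}^{*}| + \|\Ep\|_{\op}$; on the intersection of the degree-concentration event and the spectral-norm event (union bound, rescaling $\delta$ by a constant factor absorbed into $\preceq$), this is $\preceq M(\delta) + \sqrt{n\bar p^{*}} + \sqrt{\log(n/\delta)} \preceq M(\delta)$, since $\sqrt{n\bar p^{*}} + \sqrt{\log(n/\delta)} \le \sqrt{n\bar p^{*}\log(n/\delta)} + \log(n/\delta) + 1 \preceq M(\delta)$ whenever $\log(n/\delta)\ge 1$.

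The main obstacle, such as it is, is bookkeeping rather than conceptual: one must be careful that $\A_{ii}=0$ (so the diagonal of $\A - \A^{*}$ vanishes and the degree sums run over the correct index set), that the diagonal matrix $\Sigma$ is \emph{random} so that $\Sigma - \Sigma^{*}$ is itself a random diagonal matrix whose operator norm equals the maximum absolute diagonal fluctuation, and that the three probabilistic events (degree concentration, $\|\Ep\|_{\op}$ bound, and any auxiliary bound) are combined with a union bound at a slightly shrunken confidence level, which is harmless under the $\preceq$ convention. No single estimate here is delicate; the lemma is a packaging result assembling Proposition \ref{prop:Eop} and a scalar Bernstein bound into the forms needed downstream in Lemmas \ref{lem:Eop_laplacian}--\ref{lem:mnormEU_laplacian}.
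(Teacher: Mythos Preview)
Your proposal is correct and follows essentially the same route as the paper: the paper also bounds the degree fluctuations $\max_k|\L_{kk}-\L_{kk}^{*}|$ by a Bernstein-type concentration (specifically its own Lemma \ref{lem:bernoulli2} with $w=\one_n$, which yields exactly the same $\sqrt{n\bar p^{*}\log(n/\delta)}+\log(n/\delta)$ form) together with a union bound, obtains $\Ep_{2}(\delta)$ directly from Lemma \ref{lem:Eop_binary}, and then gets $E_{2}(\delta)\preceq M(\delta)$ from the decomposition $E=(\Sigma-\Sigma^{*})+\Ep$ and the triangle inequality for $\|\cdot\|_{\op}$. The only cosmetic difference is that you cite Bernstein directly while the paper invokes its in-house Lemma \ref{lem:bernoulli2}; either works.
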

\begin{proof}
By Lemma \ref{lem:bernoulli2} with $w = \one_{n}$ and $\gamma = e / 2\log(1 / \delta')$, it holds with probability $1 - \delta'$ that
\[\L_{kk} - \L_{kk}^{*}\le 2\log (1 / \delta')(1 + \sqrt{\gamma n\bar{p}^{*}})\le 2\log (1 / \delta') + \sqrt{2e n\bar{p}^{*}\log (1 / \delta')}.\]
Similarly, with probability $1 - \delta'$, 
\[\L_{kk}^{*} - \L_{kk} \le 2\log (1 / \delta') + \sqrt{2e n\bar{p}^{*}\log (1 / \delta')}.\]
Letting $\delta' = \delta / 2n$ and applying the union bound, we obtain that
\begin{equation}
  \label{eq:maxLkk}
  \max_{k}|\L_{kk} - \L_{kk}^{*}|\le 2\log (2n / \delta) + \sqrt{2e n\bar{p}^{*}\log (2n / \delta)}\le 4\log (n / \delta) + \sqrt{4e n\bar{p}^{*}\log(n / \delta)} \le 4M(\delta).
\end{equation}
The result on $\Ep_{2}(\delta)$ can be obtained from Lemma \ref{lem:Eop_binary}. By Weyl's inequality,
\[\|E\|_{\op}\le \|\Ep\|_{\op} + \max_{k}|\L_{kk} - \L_{kk}^{*}|.\]  
Thus, 
\[E_{2}(\delta)\preceq M(\delta) + \sqrt{n\bar{p}^{*}} + \sqrt{\log (n / \delta)}\preceq M(\delta).\]
\end{proof}

\begin{proof}[\textbf{Proof of Lemma \ref{lem:Eop_laplacian}}]
Since $\lambda_{-}(\delta), E_{+}(\delta)\preceq E_{2}(\delta)$. This is a direct consequence of Lemma \ref{lem:E2Ep2}.
\end{proof}

\begin{proof}[\textbf{Proof of Lemma \ref{lem:mnormEU_laplacian}}]
By Lemma \ref{lem:mnormEU_binary}, 
\[\mnorm{\Ep U^{*}}\preceq R(\delta)\mnorm{U^{*}} + \sqrt{R(\delta)p^{*}}.\]
By the triangle inequality and Lemma \ref{lem:E2Ep2},
\begin{align*}
  \mnorm{EU^{*}}&\le \mnorm{\Ep U^{*}} + \mnorm{(E - \Ep)U^{*}} \le \mnorm{\Ep U^{*}} + \max_{k}|\L_{kk} - \L_{kk}^{*}| \mnorm{U^{*}}\\
& \preceq \mnorm{\Ep U^{*}} + M(\delta)\mnorm{U^{*}} \preceq (M(\delta) + R(\delta))\mnorm{U^{*}} + \sqrt{R(\delta)p^{*}}.
\end{align*}
\end{proof}

\begin{proof}[\textbf{Proof of Lemma \ref{lem:L1L2L3_laplacian}}]
Let $\A^{(k)}$ be defined as in \eqref{eq:AkNk} in page \pageref{eq:AkNk} and $\D^{(k)} = \diag(\A^{(k)}\one_{n})$. Construct $\L^{(k)}$ as
\[\L^{(k)} = \D^{(k)} - \A^{(k)} + \sum_{i\in \N_{k}}\L_{ii}^{*}e_{i}e_{i}^{T},\]
where $e_{i}$ is the $i$-th canonical basis in $\R^{n}$. Then $\L^{(k)}$ is independent of $\L_{k}$ because $\L^{(k)}$ only depends on $(\A_{ij})_{i, j\not\in \N_{k}}$, which are independent of $\A_{k}$, and $\L_{k}$ is a function of $\A_{k}$. Let $\event(\delta)$ denote the event that $\|E\|_{\op}\le E_{2}(\delta / 2), \quad \|\Ep\|_{\op}\le \Ep_{2}(\delta / 2)$. Then $\P(\event(\delta))\ge 1 - \delta$ and it is left to prove the deterministic inequalities on the event $\event(\delta)$. First,
\begin{align*}
  &\|\L^{(k)} - \L\|_{\op}^{2} \le \|\L^{(k)} - \L\|_{F}^{2}\\
 = & \sum_{i\in \N_{k}, j\not = i}\{(\L_{ij}^{(k)} - \L_{ij})^{2} + (\L_{ji}^{(k)} - \L_{ji})^{2}\} + \sum_{i\in \N_{k}}(\L_{ii}^{(k)} - \L_{ii})^{2} + \sum_{i\in \N_{k}^{c}}(\L_{ii}^{(k)} - \L_{ii})^{2}\\
= & \sum_{i\in \N_{k}, j\not = i}\{\A_{ij}^{2} + \A_{ji}^{2}\} + \sum_{i\in \N_{k}}\lb \L_{ii} - \L_{ii}^{*}\rb^{2} + \sum_{i\in \N_{k}^{c}}\lb \sum_{j\in \N_{k}}\A_{ij}\rb^{2}\\
\stackrel{(i)}{=} & \sum_{i\in \N_{k}, j\not = i}(\A_{ij}^{2} + \A_{ji}^{2}) + \sum_{i\in \N_{k}} E_{ii}^{2} + |\N_{k}|\sum_{i\in \N_{k}^{c}, j\in \N_{k}}\A_{ij}^{2}\\
\stackrel{(ii)}{=} & 2\sum_{i\in \N_{k}, j\not = i}\A_{ij}^{2} + \sum_{i\in \N_{k}}E_{ii}^{2} + |\N_{k}|\sum_{i\in \N_{k}, j\in \N_{k}^{c}}\A_{ij}^{2}\\
\le &(m + 2)\sum_{i\in \N_{k}}\|\A_{i}\|_{2}^{2} + \sum_{i\in \N_{k}}E_{ii}^{2}\\
\stackrel{(iii)}{\le} & (m + 1)^{2}\mnorm{\A}^{2} + \sum_{i\in \N_{k}}E_{ii}^{2}\\
 \le & (m + 1)^{2}\mnorm{\A}^{2} + m\|E\|_{\op}^{2}
\end{align*}
where (i) uses the Cauchy-Schwarz inequality, (ii) uses the symmetry of $\A$ and (iii) uses the fact that $(m + 2)m\le (m + 1)^{2}$ and that $|E_{ii}|\le \|E\|_{\op}$. As a result,
\begin{align*}
  &\|\L^{(k)} - \L\|_{\op}\le (m + 1)\mnorm{\A} + \sqrt{m}\|E\|_{\op}\\
\le &(m + 1)\mnorm{\A^{*}} + (m + 1)\mnorm{\A - \A^{*}} + \sqrt{m}\|E\|_{\op}\\
= & (m + 1)\mnorm{\A^{*}} + (m + 1)\mnorm{\Ep} + \sqrt{m}\|E\|_{\op}\\
 \preceq & m\mnorm{\A^{*}} + m\Ep_{2}(\delta / 2) + \sqrt{m}E_{2}(\delta / 2)\\
\preceq & \sqrt{m}M(\delta) + m(\sqrt{n\bar{p}^{*}} + \sqrt{\log (n / \delta)}),
\end{align*}
where the last line uses Lemma \ref{lem:E2Ep2}.

~\\
\noindent On the other hand, we can derive a decomposition of $\L^{(k)} - \L$. Let $\td{\L}_{1}, \td{\L}_{2}, \td{\L}_{3}\in \R^{n}$ be three matrices with
\[\td{\L}_{1, ij} = \L_{ij}I(i\in \N_{k}, j\in [n]), \,\,\, \td{\L}_{2, ij} = \L_{ij}I(i\in \N_{k}^{c}, j\in \N_{k}), \,\,\, \td{\L}_{3} = \sum_{i\in \N_{k}}\L_{ii}^{*}e_{i}e_{i}^{T} + \sum_{i\in \N_{k}^{c}}\lb\sum_{j\in \N_{k}}\A_{ij}\rb e_{i}e_{i}^{T}.\]
By definition,
\[\L - \L^{(k)} = \td{\L}_{1} + \td{\L}_{2} - \td{\L}_{3}.\]
Then 
\begin{align*}
  &\|(\L^{(k)} - \L)U\|_{\op} \le \|\td{\L}_{1}U\|_{\op} + \|\td{\L}_{2}U\|_{\op} + \|\td{\L}_{3}U\|_{\op}\\ 
\le & \sum_{i\in \N_{k}}\lb\|\L_{i}^{T}U\|_{2} + \|\td{\L}_{2i}U_{i}^{T}\|_{\op}\rb + \left\|\sum_{i\in \N_{k}}\L_{ii}^{*}e_{i}U_{i}^{T}\right\|_{\op} + \left\|\sum_{i\in \N_{k}^{c}}\lb\sum_{j\in \N_{k}}\A_{ij}\rb e_{i}U_{i}^{T}\right\|_{\op}\\
\le & \sum_{i\in \N_{k}}\lb\|(\L U)_{i}^{T}\|_{2} + \|\td{\L}_{2i}\|_{2}\|U_{i}\|_{2}\rb + \sum_{i\in \N_{k}}\L_{ii}^{*}\|U_{i}\|_{2} + \sum_{i\in \N_{k}^{c}}\lb\sum_{j\in \N_{k}}\A_{ij}\rb \|U_{i}\|_{2}\\
  \le & \sum_{i\in \N_{k}}\lb\|(U\Lambda)_{i}\|_{2} + \|\td{\L}_{2i}\|_{2}\|U_{i}\|_{2}\rb + \left\{m\max_{i\in \N_{k}}\L_{ii}^{*} + \sum_{j\in \N_{k}}\sum_{i\in \N_{k}^{c}}\A_{ij}\right\}\mnorm{U}\\
  \le & \sum_{i\in \N_{k}}\lb\|(U\Lambda)_{i}\|_{2} + \|\td{\L}_{2i}\|_{2}\|U_{i}\|_{2}\rb + \left\{m\max_{i\in \N_{k}}\L_{ii}^{*} + \sum_{j\in \N_{k}}\sum_{i=1}^{n}\A_{ij}\right\}\mnorm{U}\\
  \le & \sum_{i\in \N_{k}}\lb\|(U\Lambda)_{i}\|_{2} + \|\td{\L}_{2i}\|_{2}\|U_{i}\|_{2}\rb + \left\{m\max_{i\in \N_{k}}\L_{ii}^{*} + m\max_{i\in \N_{k}}\sum_{i=1}^{n}\A_{ij}^{2}\right\}\mnorm{U}\\
  \le & \sum_{i\in \N_{k}}\lb\|(U\Lambda)_{i}\|_{2} + \|\td{\L}_{2i}\|_{2}\|U_{i}\|_{2}\rb + m\left\{\max_{i\in \N_{k}}\L_{ii}^{*} + \mnorm{\A}^{2}\right\}\mnorm{U}\\
= & \sum_{i\in \N_{k}}\|U_{i}^{T}\Lambda\|_{2} + \|\td{\L}_{2i}\|_{2}\|U_{i}\|_{2} + m\left\{\max_{i\in \N_{k}}\L_{ii}^{*} + \mnorm{\A}^{2}\right\}\mnorm{U}\\
\le &\sum_{i\in \N_{k}}(\lambda_{\max}(\Lambda) + \|\A_{i}\|_{2})\|U_{i}\|_{2} + m\left\{\max_{i\in \N_{k}}\L_{ii}^{*} + \mnorm{\A}^{2}\right\}\mnorm{U}\\
\le & \lb m\lambda_{\max}(\Lambda) + m\mnorm{\A} + m\max_{i}\L_{ii}^{*} + m\mnorm{\A}^{2}\rb\mnorm{U}.
\end{align*}
By triangle inequality,
\[\mnorm{\A}\le \mnorm{\A^{*}} + \mnorm{\Ep}\le \mnorm{\A^{*}} + \|\Ep\|_{\op}.\]
By Weyl's inequality,
\[|\lambda_{\max}(\Lambda) - \lambda_{\max}^{*}|\le \|E\|_{\op}.\]
As a result,
\[\|(\L^{(k)} - \L)U\|_{\op}\le m\lb\lambda_{\max}^{*} + \|E\|_{\op} + \mnorm{\A^{*}} + 2\mnorm{\A^{*}}^{2} + \|\Ep\|_{\op} + 2\|\Ep\|_{\op}^{2} + \max_{i}\L_{ii}^{*}\rb\mnorm{U}.\]
Thus on event $\event(\delta)$, by Lemma \ref{lem:E2Ep2}, we have $L_{2}(\delta) = m$ and $L_{3}(\delta) \preceq \frac{m(n\bar{p}^{*} + \log (n / \delta))}{\lambda_{\min}^{*}}$.   
\end{proof}

\begin{proof}[\textbf{Proof of Theorem \ref{thm:generic_binary_laplacian}}]
For notational convenience we will suppress the notation $(\delta)$ for all quantities that involve it. First we prove that assumption \textbf{A}'4 is satisfied. By Lemma \ref{lem:A3_laplacian} and Lemma \ref{lem:Eop_laplacian}, 
\[\td{\eta}\preceq \sqrt{n\bar{p}^{*}} + \sqrt{\log (n / \delta)} + \frac{R}{\alpha \log R}\preceq g.\]
By Lemma \ref{lem:L1L2L3_laplacian} and the fact that $\lambda_{\min}^{*}\ge \gap^{*}\succeq M \succeq \log (n / \delta)$,
\[\bar{\kappa}^{*}L_{2} + L_{3} + 1\preceq \bar{\kappa}^{*} + \frac{n\bar{p}^{*} + \log (n / \delta)}{\lambda_{\min}^{*}} \preceq \bar{\kappa}'.\]
By Lemma \ref{lem:Eop_laplacian}, Lemma \ref{lem:L1L2L3_laplacian} and the fact that $\mnorm{\Ap^{*}}\le \sqrt{n\bar{p}^{*}p^{*}}$, 
\[L_{1} + \lambda_{-} + E_{+}\preceq M.\]
Putting pieces together, we have 
\begin{equation}
  \label{eq:Thetasigmap}
  \Theta\sigmap \preceq \Theta(\bar{\kappa}' g + M), \quad \Theta \sigmap  + L_{1}  + \lambda_{-} + E_{+}\preceq \Theta\bar{\kappa}' g + (\Theta + 1)M.
\end{equation}
Thus, when $C$ is large enough, assumption \textbf{A}'4 is satisfied.

~\\
\noindent Next we prove the bound for $d_{\ttinf}(U, U^{*} + \V)$. We bound each term in Theorem \ref{thm:generic_bound_trick} separately. By Lemma \ref{lem:Eop_laplacian} and \eqref{eq:Thetasigmap},
\begin{equation}
  \label{eq:term1_binary_laplacian}
  \frac{E_{+}^{2}}{(\gap^{*})^{2}} + \frac{\Theta\sigmap}{\gap^{*}}\preceq \frac{M^{2}}{(\gap^{*})^{2}} + \frac{\Theta(\bar{\kappa}' g + M)}{\gap^{*}},
\end{equation}
By Lemma \ref{lem:mnormEU_laplacian}, 
\begin{align}
  \mnorm{U^{*}} + \frac{\Theta\mnorm{E U^{*}}}{\lambda_{\min}^{*}}&\preceq \lb 1 + \frac{\Theta(M + r)}{\lambda_{\min}^{*}}\rb\mnorm{U^{*}} + \frac{\Theta\sqrt{Rp^{*}}}{\lambda_{\min}^{*}}\nonumber\\
& \preceq \lb 1 + \frac{\Theta r}{\lambda_{\min}^{*}}\rb\mnorm{U^{*}} + \frac{\Theta\sqrt{Rp^{*}}}{\lambda_{\min}^{*}}, \label{eq:term2_binary_laplacian}
\end{align}
where the last line uses the condition that $\lambda_{\min}^{*}\ge \gap^{*}\succeq \Theta M$. By Lemma \ref{lem:A3_laplacian} and Lemma \ref{lem:Eop_laplacian},
\begin{equation}\label{eq:term3_binary_laplacian}
  \frac{\Theta(\bp_{2} + \mnorm{\Ap^{*}})E_{+}}{\gap^{*}\lambda_{\min}^{*}}\preceq \frac{\Theta M\sqrt{p^{*}}}{\gap^{*}\lambda_{\min}^{*}}\lb \sqrt{n\bar{p}^{*}} + \frac{\sqrt{R^{1 + \alpha}}}{\alpha \log R}\rb.
\end{equation}
By \eqref{eq:term1_binary_laplacian}, \eqref{eq:term2_binary_laplacian}, \eqref{eq:term3_binary_laplacian} and Theorem \ref{thm:generic_bound_trick}, we have
\begin{align*}
  \lefteqn{ d_{\ttinf}(U, U^{*} + \V)}\\
& \preceq \lb \frac{M^{2}}{(\gap^{*})^{2}} + \frac{\Theta(\bar{\kappa}'g + M)}{\gap^{*}}\rb\left\{\lb 1 + \frac{\Theta r}{\lambda_{\min}^{*}}\rb\mnorm{U^{*}} + \frac{\Theta\sqrt{Rp^{*}}}{\lambda_{\min}^{*}}\right\} + \frac{\Theta M\sqrt{p^{*}}}{\gap^{*}\lambda_{\min}^{*}}\lb \sqrt{n\bar{p}^{*}} + \frac{\sqrt{R^{1 + \alpha}}}{\alpha \log R}\rb.
\end{align*}
This completes the proof of the first bound. For the second one, we apply Theorem \ref{thm:generic_bound2_trick}. By Lemma \ref{lem:mnormEU_laplacian},
\[\frac{\Theta\mnorm{\Ep U^{*}}}{\lambda_{\min}^{*}}\preceq \frac{\Theta (M + r)}{\lambda_{\min}^{*}}\mnorm{U^{*}} + \frac{\Theta\sqrt{Rp^{*}}}{\lambda_{\min}^{*}}.\]
By the triangle inequality, 
\begin{align*}
  d_{\ttinf}(U, U^{*}) & \preceq \left\{\lb \frac{M^{2}}{(\gap^{*})^{2}} + \frac{\Theta(\bar{\kappa}'g + M)}{\gap^{*}}\rb\lb 1 + \frac{\Theta r}{\lambda_{\min}^{*}}\rb + \frac{\Theta(M + r)}{\lambda_{\min}^{*}}\right\}\mnorm{U^{*}}\\
& \quad + \left\{\lb \frac{M^{2}}{(\gap^{*})^{2}} + \frac{\Theta(\bar{\kappa}'g + M)}{\gap^{*}}\rb + 1\right\}\frac{\Theta\sqrt{Rp^{*}}}{\lambda_{\min}^{*}}\\
& \quad + \frac{\Theta M\sqrt{p^{*}}}{\gap^{*}\lambda_{\min}^{*}}\lb \sqrt{n\bar{p}^{*}} + \frac{\sqrt{R^{1 + \alpha}}}{\alpha \log R}\rb.
\end{align*}
The proof is then completed by assumption \textbf{A}'4 that
\[\frac{M^{2}}{(\gap^{*})^{2}} + \frac{\Theta(\bar{\kappa}'g + M)}{\gap^{*}}\preceq 1\Longrightarrow \lb\frac{M^{2}}{(\gap^{*})^{2}} + \frac{\Theta(\bar{\kappa}'g + M)}{\gap^{*}}\rb \frac{\Theta r}{\lambda_{\min}^{*}}\preceq \frac{\Theta(M + r)}{\lambda_{\min}^{*}}.\]
\end{proof}

\begin{proof}[\textbf{Proof of Lemma \ref{lem:Theta1}}]
By \eqref{eq:maxLkk}, with probability $1 - \delta$,
\[\max_{k}|\L_{kk} - \L^{*}_{kk}|\le 4M(\delta)\le \frac{4}{5}\min_{j\in [s+1, s+r], k\in [n]}|\Lambda^{*}_{jj} - \L^{*}_{kk}|.\]
As a result,
\[\Theta(\delta)\le \frac{\min_{j\in [s+1, s+r]}|\Lambda^{*}_{jj}|}{\min_{j\in [s+1, s+r], k\in [n]}|\Lambda^{*}_{jj} - \L^{*}_{kk}| - 4M(\delta)}\le 5\Theta^{*}.\]
\end{proof}

\section{Other Proofs}\label{app:other}
\subsection{Proofs in Section \ref{sec:spectral_norm}}\label{subapp:other_spectral_norm}
\begin{proof}[\textbf{Proof of Lemma \ref{lem:markov}}]
  By Markov inequality, for any $k \le k_{0}$,
\[\P(|Z - \E Z| \ge t)\le t^{-k}\E |Z - \E Z|^{k} = \lb \frac{\sigma\sqrt{k}}{t}\rb^{k} = \exp\left\{-k\log \lb \frac{t}{\sigma}\rb + \frac{k\log k}{2}\right\}\triangleq \exp\{q(k; t)\}.\]
Note that $q'(k; t) = (\log k + 1) / 2 - \log (t / \sigma)$ and
\[\log [(q')^{-1}(0; t)] = 2\log (t / \sigma) - 1.\]
When $\sqrt{2e}\le t / \sigma\le \sqrt{k_{0}e}$, $2\le (q')^{-1}(0; t)\le k_{0}$. Thus if we let $k = (q')^{-1}(0; t) = t^{2} / e\sigma^{2}$, then 
\[\P(|Z - \E Z| \ge t)\le \exp\{-k / 2\} = \exp\left\{-\frac{t^{2}}{2e\sigma^{2}}\right\}.\]
When $t / \sigma < \sqrt{2e}$, 
\[\P(|Z - \E Z| \ge t)\le 1 \le \exp\left\{1-\frac{t^{2}}{2e\sigma^{2}}\right\}.\]
\end{proof}

\begin{proof}[\textbf{Proof of Lemma \ref{lem:deltastar}}]
  Assume that $C_{0} > \max\{64C^{2}, 12C, e\}$  and $n\ge 16$. Let $h_{1}(y) = y - \log y, h_{2}(y) = \log y / \log\log y$. It is easy to see that $h_{1}$ is increasing on $[1, \infty)$ and $h_{1}(y)\ge 1$. Since $h_{2}(y) = \exp\{h_{1}(\log\log y)\}$, $h_{2}$ is increasing on $[e^{e}, \infty)$. Since $n \ge 16 > e^{e}$,  $g(\delta)$ is decreasing. Noting that $\bar{\kappa}^{*} = 1$ and $\gap^{*} = (n - 1)p$, it is left to prove that 
\[g(\delta^{*}) = \sqrt{np} + \frac{\log (n / \delta^{*})}{\log \log (n / \delta^{*})}\le \gap^{*} / C.\]
Since $p\ge C_{0}\log n / (n \log\log n)$, we have $np > C_{0}h_{2}(n) > C_{0}$. As a result, 
\[\sqrt{np} \le \frac{np}{\sqrt{C_{0}}} \le \frac{2(n - 1)p}{\sqrt{C_{0}}}\le \frac{2\gap^{*}}{\sqrt{C_{0}}}\le \frac{\gap^{*}}{4C}.\]
Thus it is left to show that 
\[h_{2}(n / \delta^{*}) = \frac{\log (n / \delta^{*})}{\log \log (n / \delta^{*})}\le \frac{3\gap^{*}}{4C}.\]
By definition, 
\[\log(n / \delta^{*}) = \frac{np\log (np)}{2C} + \log n.\]
By \eqref{eq:C0}, 
\begin{align*}
  np\log (np)&\ge C_{0}\frac{\log n}{\log\log n}\lb \log C_{0} + \log\log n - \log\log\log n\rb \ge C_{0}(\log n)\lb 1- \exp\{-h_{1}(\log \log \log n)\}\rb\\
& \ge C_{0}(\log n)\lb 1- \exp\{-1\}\rb \ge \frac{C_{0}}{2}\log n > 6C\log n.
\end{align*}
As a result,
\[\log(n / \delta^{*})\le \frac{2}{3C}np\log (np)\]
On the other hand, recalling that $np > C_{0} > e$, 
\[\log\log(n / \delta^{*})\ge \log\log (1 / \delta^{*}) = \log (n p) + \log\log (n p) > \log (np).\]
The proof is then completed by 
\[h_{2}(n / \delta^{*}) < \frac{2np}{3C} = \frac{2n}{3(n-1)C}\gap^{*} \le \frac{32}{45C}\gap^{*}\le \frac{3}{4C}\gap^{*}.\]
\end{proof}

\begin{proof}[\textbf{Proof of Theorem \ref{thm:inhomogeneous}}]
We follow the same pipeline as the proof for \ER graphs. First it is easy to see by modifying \eqref{eq:V+W} that 
\[V_{+} \le \|u_{1}\|_{\infty}^{4}\td{W}, \quad \mbox{where }\td{W} = 4\sum_{i < j} (p_{ij} + (1 - 2p_{ij})A_{ij}).\]
In addition, $V_{+}\le 2$ almost surely. Similarly we have $M \le 2\|u_{1}\|_{\infty}^{2}\le 2$. 

Next, we derive the minimal $\delta$ for Corollary \ref{cor:typical_binary} to work. Since $\bar{\kappa}^{*} = 1$ and $C_{0}$ is sufficiently large, it is easy to see that $\delta^{*}$, as in Lemma \ref{lem:deltastar}, can be chosen as follows
\[\delta^{*} = \exp\left\{-\frac{\gap^{*}\log \gap^{*}}{2C}\right\}.\]
By Corollary \ref{cor:typical_binary} with $\alpha = 0.5$ and $\delta \ge \delta^{*}$,
\begin{align*}
  d_{\ttinf}(u_{1}, u_{1}^{*})&\preceq \lb\frac{\sqrt{n\bar{p}^{*}} + \log (n / \delta) / \log\log (n / \delta)}{\gap^{*}} + \frac{\log (n / \delta)}{|\lambda_{1}^{*}|}\rb\frac{\zeta}{\sqrt{n}} + \frac{\sqrt{\log (n / \delta) p^{*}}}{\lambda_{1}^{*}}\\
& \stackrel{(i)}{\preceq} \frac{\sqrt{n\bar{p}^{*}} + \log (n / \delta)}{\gap^{*}}\frac{\zeta}{\sqrt{n}} + \frac{\sqrt{\log (n / \delta) p^{*}}}{\lambda_{1}^{*}}\\
& \stackrel{(ii)}{\preceq} \frac{\sqrt{n\bar{p}^{*}} + \log (n / \delta)}{\gap^{*}}\frac{\zeta}{\sqrt{n}} + \sqrt{\frac{\log (n / \delta)}{np^{*}}}\frac{\zeta}{\sqrt{n}}\\
& \stackrel{(iii)}{\preceq} \lb\frac{\sqrt{n\bar{p}^{*}} + \log (n / \delta)}{\gap^{*}} + \sqrt{\frac{\log(n / \delta)}{\gap^{*}}}\rb\frac{\zeta}{\sqrt{n}}\\
& \stackrel{(iii)}{\preceq} \lb 1 + \frac{\sqrt{n\bar{p}^{*}} + \log (n / \delta)}{\gap^{*}}\rb\frac{\zeta}{\sqrt{n}},
\end{align*}
where (i) uses the fact that $|\lambda_{1}^{*}| \ge \gap^{*}$, (ii) uses the assumption that $\lambda_{1}^{*}\succeq np^{*} / \zeta$, (iii) applies the inequality that $\gap^{*}\le |\lambda_{1}^{*}|\preceq np^{*}$ and (iv) applies the simple inequality that $2\sqrt{y}\le y + 1$. By the triangle inequality and noting that $\sqrt{n}\|u_{1}^{*}\| = \zeta$, there exists a universal constant $C_{1}$ such that for each $\delta \ge \delta^{*}$, 
\begin{equation}
  \label{eq:u1delta_general}
  \sqrt{n}\|u_{1}\|_{\infty}\le C_{1}\lb 1 + \frac{\sqrt{n\bar{p}^{*}} + \log (n / \delta)}{\gap^{*}}\rb\zeta = C_{1}\lb 1 + \frac{\sqrt{n\bar{p}^{*}} + \log n}{\gap^{*}}\rb\zeta + C_{1}\frac{\zeta}{\gap^{*}}\log\lb\frac{1}{\delta}\rb,
\end{equation}
with probability $1 - \delta$. Denote by $B_{u}$ the RHS of \eqref{eq:u1delta_general} with $\delta = \delta^{*}$ and by $\event_{1}$ the event that $\sqrt{n}\|u_{1}\|_{\infty}\le B_{u}$. Then 
\[\P(\event_{1}) \ge 1 - \delta^{*} = 1 - \exp\left\{-\frac{\gap^{*}\log \gap^{*}}{2C}\right\}.\]
Let $\event_{2}$ denote the event that $\td{W}\le 10n^{2}\bar{p}$. Using the same argument above \eqref{eq:event2} in the proof of Lemma \ref{lem:V+moment}, we can show that 
\[\P(\event_{2})\ge 1 - \exp\{-n^{2}\bar{p} / 3\}.\]
Let $\event = \event_{1}\cap \event_{2}$. Then
\[\P(\event^{c})\preceq \exp\left\{-\frac{\gap^{*}\log \gap^{*}\wedge n^{2}\bar{p}}{2C\vee 3}\right\}.\] 
Using the same argument as in the proof of Lemma \ref{lem:V+moment}, it is easy to derive the following analogue of \eqref{eq:sqrtnu1infty}:
\[\lb\E (\sqrt{n}\|u_{1}\|_{\infty})^{2k}I_{\event}\rb^{1/k}\preceq \zeta^{2}\left\{1 + \lb\frac{k\vee (\sqrt{n\bar{p}^{*}} + \log n)}{\gap^{*}}\rb^{2}\right\}.\]
Thus, 
\begin{align*}
  \lb\E V_{+}^{k / 2}\rb^{1 / k} &= \lb \E V_{+}^{k / 2}I_{\event} + \E V_{+}^{k / 2}I_{\event^{c}}\rb^{1 / k} \le \lb \E V_{+}^{k / 2}I_{\event}\rb^{1 / k} + \lb \E V_{+}^{k / 2}I_{\event^{c}}\rb^{1 / k}\\
& \preceq \sqrt{\bar{p}}\E \lb\left[(\sqrt{n}\|u_{1}\|_{\infty})^{2k}I_{\event}\right]\rb^{1/k} + \exp\left\{-\frac{\gap^{*}\log \gap^{*}\wedge n^{2}\bar{p}}{(2C\vee 3)k}\right\}.
\end{align*}
Using the same argument for \eqref{eq:sqrtnu1infty} in the proof of Lemma \ref{lem:V+moment}, we can show that 
\[\lb\E (\sqrt{n}\|u_{1}\|_{\infty})^{2k}I_{\event}\rb^{1/k} \preceq \left\{1 + \lb\frac{(k \vee (\sqrt{n\bar{p}^{*}} + \log n))}{\gap^{*}}\rb^{2}\right\}\zeta^{2}.\]
As a result, 
\begin{equation}
  \label{eq:V+moment_general}
  \lb\E [V_{+}^{k/2}]\rb^{1/k}\preceq \bar{p}\left\{1 + \lb\frac{(k \vee (\sqrt{n\bar{p}^{*}} + \log n))}{\gap^{*}}\rb^{2}\right\}\zeta^{2} + \exp\left\{-\frac{\gap^{*}\log \gap^{*}\wedge n^{2}\bar{p}}{(2C\vee 3)k}\right\}.
\end{equation}
Let $k = 2$ and by Efron-Stein inequality, we obtain that 
\[\Var(\|A\|_{\op})\le \E [V_{+}]\preceq \bar{p}\left\{1 + \lb\frac{\sqrt{n\bar{p}^{*}} + \log n}{\gap^{*}}\rb^{4}\right\}\zeta^{4} + \exp\left\{-\frac{\gap^{*}\log \gap^{*}\wedge n^{2}\bar{p}}{2C\vee 3}\right\}.\]
On the other hand, recalling that $M \le 2\|u_{1}\|_{\infty}^{2}\le $, similarly to \eqref{eq:V+moment_general} we have
\begin{equation}
  \label{eq:Mmoment_general}
  \lb \E [M^{k}]\rb^{1/k}\preceq \frac{1}{n}\left\{1 + \lb\frac{(k \vee (\sqrt{n\bar{p}^{*}} + \log n))}{\gap^{*}}\rb^{2}\right\}\zeta^{2} + \exp\left\{-\frac{\gap^{*}\log \gap^{*}\wedge n^{2}\bar{p}}{(2C\vee 3)k}\right\}.
\end{equation}
Then by Proposition \ref{prop:boucheron}, we obtain that
\begin{equation}
  \label{eq:Aopmoment_general}
  \lb \E |\|A\|_{\op} - \E \|A\|_{\op}|^{k}\rb^{1/k}\preceq \lb\sqrt{k\bar{p}} + \frac{k}{n}\rb\left\{1 + \lb\frac{\sqrt{n\bar{p}^{*}} + \log n}{\gap^{*}}\rb^{2}\right\}\zeta^{2} + k\exp\left\{-\frac{\gap^{*}\log \gap^{*}\wedge n^{2}\bar{p}}{(2C\vee 3)k}\right\}.
\end{equation}
Let 
\[k_{0} = \frac{1}{2C\vee 3}\min\left\{\gap^{*}, \frac{\gap^{*}\log \gap^{*}}{\log (1 / \bar{p}\zeta^{2})}, \frac{n^{2}\bar{p}}{\log (n^{2}\bar{p})}, \frac{n^{2}\bar{p}}{\log (1 / \bar{p}\zeta^{2})}\right\}.\]
Consider any $k\le k_{0}$. Since $n^{2}\bar{p}\succeq 1$, $k_{0}\preceq n^{2}\bar{p}$. Thus, 
\[\sqrt{k\bar{p}} + \frac{k}{n}\preceq \sqrt{k\bar{p}}.\]
For the second term,
\begin{align*}
  & k\exp\left\{-\frac{\gap^{*}\log \gap^{*}\wedge n^{2}\bar{p}}{(2C\vee 3)k}\right\} = \sqrt{k\bar{p}}\zeta \exp\left\{-\frac{\gap^{*}\log \gap^{*}\wedge n^{2}\bar{p}}{(2C\vee 3)k} + \frac{1}{2}\log k + \frac{1}{2}\log \lb\frac{1}{\bar{p}\zeta^{2}}\rb\right\}.
\end{align*}
Since $k\le \frac{\gap^{*}\log \gap^{*}\wedge n^{2}\bar{p}}{(2C\vee 3)\log (1 / \bar{p}\zeta^{2})}$, 
\[\frac{1}{2}\log \lb\frac{1}{\bar{p}\zeta^{2}}\rb\le \frac{\gap^{*}\log \gap^{*}\wedge n^{2}\bar{p}}{2(2C\vee 3)k}.\]
Since $k\le \frac{\gap^{*}\wedge n^{2}\bar{p} / \log(n^{2}\bar{p})}{2C\vee 3}$,
\[k\log k\le \frac{\gap^{*}\log \gap^{*}}{2C\vee 3},\]
and 
\[k\log k\le \frac{1}{2C\vee 3}\frac{n^{2}\bar{p}\lb \log (n^{2}\bar{p} / (2C\vee 3)) - \log\log (n^{2}\bar{p} / (2C\vee 3))\rb}{\log (n^{2}\bar{p} / (2C\vee 3))}\le \frac{n^{2}\bar{p}}{2C\vee 3}\]
where we use the condition that $n^{2}\bar{p}$ is sufficiently large. Thus, 
\[\frac{1}{2}\log k\le \frac{\gap^{*}\log \gap^{*}\wedge n^{2}\bar{p}}{2(2C\vee 3)k}.\]
\[\sqrt{k\bar{p}}\zeta \exp\left\{-\frac{\gap^{*}\log \gap^{*}\wedge n^{2}\bar{p}}{(2C\vee 3)k} + \frac{1}{2}\log k + \frac{1}{2}\log \lb\frac{1}{\bar{p}\zeta^{2}}\rb\right\}\le \sqrt{k\bar{p}}\zeta.\]
Putting pieces together into \eqref{eq:Aopmoment_general}, we conclude that 
\[\lb \E |\|A\|_{\op} - \E \|A\|_{\op}|^{k}\rb^{1/k}\preceq \sqrt{k\bar{p}}\left\{1 + \lb\frac{\sqrt{n\bar{p}^{*}} + \log n}{\gap^{*}}\rb^{2}\right\}\zeta^{2}.\]
Finally, the proof is completed by Lemma \ref{lem:markov}. 
\end{proof}

\subsection{Proofs in Section \ref{sec:exact_recovery}}\label{subapp:other_exact_recovery}

\begin{proof}[\textbf{Proof of Lemma \ref{lem:Kmedian}}]
  First we prove that $\hat{v}_{r}$'s defined in \eqref{eq:hatv} are distinct. By definition, 
  \begin{align*}
    \frac{1}{n}\sum_{i=1}^{n}\min_{r\in [K]}\|U_{i} - \hat{v}_{r}\|_{2}&\le \frac{1}{n}\sum_{i=1}^{n}\min_{r\in [K]}\|U_{i} - v_{r}^{*}\|_{2} = \frac{1}{n}\sum_{i=1}^{n}\min_{j\in [n]}\|U_{i} - \td{U}_{j}^{*}\|_{2}\\
& \le \frac{1}{n}\sum_{i=1}^{n}\|U_{i} - \td{U}_{i}^{*}\|_{2}\le \max_{i}\|U_{i} - \td{U}_{i}^{*}\|_{2}.
  \end{align*}
By the triangle inequality, 
\[\frac{1}{n}\sum_{i=1}^{n}\min_{r\in [K]}\|U_{i} - \hat{v}_{r}\|_{2}\ge \frac{1}{n}\sum_{i=1}^{n}\min_{r\in [K]}\|\td{U}_{i}^{*} - \hat{v}_{r}\|_{2} - \max_{i}\|U_{i} - \td{U}_{i}^{*}\|_{2}.\]
The above two inequalities imply that
\begin{equation}
  \label{eq:Kmedian1}
  \frac{1}{n}\sum_{i=1}^{n}\min_{r\in [K]}\|\td{U}_{i}^{*} - \hat{v}_{r}\|_{2} \le 2\max_{i}\|U_{i} - \td{U}_{i}^{*}\|_{2}.
\end{equation}
For each $i$, let $r_{i} = \argmin_{r\in [K]}\|\td{U}_{i}^{*} - \hat{v}_{r}\|_{2}$. Since $\td{U}_{i}^{*} = \td{U}_{i'}^{*}$ if $c_{i} = c_{i'}$, it must be true that $r_{i} = r_{i'}$. Write $r_{i}$ as $r_{s}$ for $i\in \C_{s}$. Then
\[\frac{1}{n}\sum_{i=1}^{n}\min_{r\in [K]}\|\td{U}_{i}^{*} - \hat{v}_{r}\|_{2} = \sum_{s = 1}^{K} \pi_{s}\|v_{s}^{*} - \hat{v}_{r_{s}}\|_{2}.\]
For any $s$, \eqref{eq:Kmedian1} implies that
\begin{equation}
  \label{eq:Kmedian2}
  \pi_{s}\|v_{s}^{*} - \hat{v}_{r_{s}}\|_{2}\le 2\max_{i}\|U_{i} - \td{U}_{i}^{*}\|_{2}\Longrightarrow \|v_{s}^{*} - \hat{v}_{r_{s}}\|_{2}\le \frac{2}{\min_{r\in [K]} \pi_{r}}\max_{i}\|U_{i} - \td{U}_{i}^{*}\|_{2}.
\end{equation}
By the triangle inequality, 
\begin{align*}
  \|\hat{v}_{r_{s}} - \hat{v}_{r_{s'}}\|_{2}&\ge \|v_{s}^{*} - v_{s'}^{*}\|_{2} - \|v_{s}^{*} - \hat{v}_{r_{s}}\|_{2} - \|v_{s'}^{*} - \hat{v}_{r_{s'}}\|_{2}\\
&\ge \|v_{s}^{*} - v_{s'}^{*}\|_{2} - \frac{4}{\min_{r\in [K]} \pi_{r}}\max_{i}\|U_{i} - \td{U}_{i}^{*}\|_{2}\\
& \ge \frac{1}{3}\|v_{s}^{*} - v_{s'}^{*}\|_{2} > 0.
\end{align*}
This proves that $\hat{v}_{r_{s}}$'s are distinct and hence $r_{s}\not = r_{s'}$ for $s \not = s'$. Since $\{r_{1}, \ldots, r_{K}\} = \{1, \ldots, K\}$, the former must be a permutation of the latter. Without loss of generality we assume that $r_{s} = s$. It is left to prove that for any $i\in \C_{s}$,
\begin{equation}
  \label{eq:Kmedian3}
  \argmin_{r\in [K]}\|U_{i} - \hat{v}_{r}\|_{2} = s.
\end{equation}
By the triangle inequality and \eqref{eq:Kmedian2},
\begin{align*}
  \|U_{i} - \hat{v}_{s}\|_{2}&\le \|U_{i} - \td{U}_{i}^{*}\|_{2} + \|v_{s}^{*} - \hat{v}_{s}\|_{2}\le \max_{i}\|U_{i} - \td{U}_{i}^{*}\|_{2} + \frac{2}{\min_{r\in [K]} \pi_{r}}\max_{i}\|U_{i} - \td{U}_{i}^{*}\|_{2}\\
&  < \frac{3}{\min_{r\in [K]} \pi_{r}}\max_{i}\|U_{i} - \td{U}_{i}^{*}\|_{2}.
\end{align*}
On the other hand, for any $s' \not = s$,
\begin{align*}
  \|U_{i} - \hat{v}_{s'}\|_{2}&\ge -\|U_{i} - \td{U}_{i}^{*}\|_{2} + \|v_{s}^{*} - \hat{v}_{s'}\|_{2}\ge -\|U_{i} - \td{U}_{i}^{*}\|_{2} + \|v_{s}^{*} - v_{s'}^{*}\|_{2} - \|v_{s'}^{*} - \hat{v}_{s'}\|_{2}\\
& \ge \|v_{s}^{*} - v_{s'}^{*}\|_{2} - \frac{3}{\min_{r\in [K]} \pi_{r}}\max_{i}\|U_{i} - \td{U}_{i}^{*}\|_{2}\\
& \ge \frac{3}{\min_{r\in [K]} \pi_{r}}\max_{i}\|U_{i} - \td{U}_{i}^{*}\|_{2}.
\end{align*}
This proves \eqref{eq:Kmedian3} and hence completes the proof.
\end{proof}

\begin{proof}[\textbf{Proof of Lemma \ref{lem:eigen_laplacian}}]
  Let $V^{*} =
  \begin{bmatrix}
    U^{*} & \td{U}^{*}
  \end{bmatrix}
$. Then 
\begin{align*}
  V^{*}(V^{*})^{T} &= U^{*}(U^{*})^{T} + \td{U}^{*}(\td{U}^{*})^{T} = QVV^{T}Q^{T} +   \begin{bmatrix}
    Q_{1}Q_{1}^{T} & 0 & \ldots & 0\\
    0 & Q_{2}Q_{2}^{T} & \ldots & 0\\
    \vdots & \vdots & \ddots & \vdots\\
    0 & 0 & \ldots & Q_{K} Q_{K}^{T}
    \end{bmatrix}\\
& = QQ^{T} + \begin{bmatrix}
    I_{n_{1}} - \one_{n_{1}}\one_{n_{1}}^{T} / n_{1} & 0 & \ldots & 0\\
    0 &    I_{n_{2}} - \one_{n_{2}}\one_{n_{2}}^{T} / n_{2} & \ldots & 0\\
    \vdots & \vdots & \ddots & \vdots\\
    0 & 0 & \ldots &    I_{n_{K}} - \one_{n_{K}}\one_{n_{K}}^{T} / n_{K}
    \end{bmatrix} = I_{n}.
\end{align*}
Thus $V^{*}$ is orthogonal. Then it is left to prove that 
\begin{equation}
  \label{eq:laplacian_eigen}
  \L^{*}U^{*} = U^{*}\Lambda^{*}, \quad \L^{*}\td{U}^{*} = \td{U}^{*}\td{\Lambda}^{*}.
\end{equation}
The first equation is equivalent to
\[\L^{*}QV = n\rho_{n}QV\Sigma  \Longleftrightarrow \L^{*}Q = n\rho_{n}Q\td{\L}\]
Note that 
\[\td{\D}^{*} =   \begin{bmatrix}
    d^{*}_{1}I_{n_{1}} & 0 & \ldots & 0\\
    0 & d^{*}_{2}I_{n_{2}} & \ldots & 0\\
    \vdots & \vdots & \ddots & \vdots\\
    0 & 0 & \ldots & d^{*}_{K}I_{n_{K}} 
  \end{bmatrix}
\]
where 
\[d_{i}^{*} = \sum_{j=1}^{n}n_{j}(\rho_{n}B_{0, ij}) = n\rho_{n} \sum_{j=1}^{n}\pi_{j}B_{0, ij} = n\rho_{n}\td{d}_{i}.\]
Since $\L^{*}$ does not depend on the diagonal elements of $\A^{*}$, we have $\L^{*} = \td{\D}^{*} - \Ap^{*}$ where $\Ap^{*}$ is defined in \eqref{eq:AstarQM} and $\td{\D}^{*} = \diag(\Ap^{*}\one_{n})$. Then 
\[\td{\D}^{*}Q = \begin{bmatrix}
    \frac{d^{*}_{1}}{\sqrt{n_{1}}}\one_{n_{1}} & 0 & \ldots & 0\\
    0 & \frac{d^{*}_{2}}{\sqrt{n_{2}}}\one_{n_{2}} & \ldots & 0\\
    \vdots & \vdots & \ddots & \vdots\\
    0 & 0 & \ldots & \frac{d^{*}_{K}}{\sqrt{n_{K}}}\one_{n_{K}} 
  \end{bmatrix} = n\rho_{n}Q\td{\D}.\]
On the other hand,
\[\Ap^{*}Q = n\rho_{n}Q(R B_{0} R)Q^{T}Q = n\rho_{n}Q(R B_{0} R).\]
As a result, 
\[\L^{*}Q = \td{\D}^{*}Q - \Ap^{*}Q = n\rho_{n}Q\td{\L}.\]
This proves the first equation of \eqref{eq:laplacian_eigen}. To prove the second one, notice that 
\[Q^{T}\td{U}^{*} = 0 \Longrightarrow \Ap^{*}\td{U}^{*} = 0.\]
Thus,
\[\L^{*} \td{U}^{*} = \td{\D}^{*}\td{U}^{*} = \begin{bmatrix}
    d^{*}_{1}Q_{1} & 0 & \ldots & 0\\
    0 & d^{*}_{2}Q_{2} & \ldots & 0\\
    \vdots & \vdots & \ddots & \vdots\\
    0 & 0 & \ldots & d^{*}_{K}Q_{K}
  \end{bmatrix} = \td{U}^{*}\td{\Lambda}^{*}.\]
This proves the second equation of \eqref{eq:laplacian_eigen} and thus completes the proof.
\end{proof}

\begin{proof}[\textbf{Proof of Theorem \ref{thm:laplacian_spectral_clustering}}]
First we note that $\L^{*}$ does not depend on the diagonal elements of $\A^{*}$. Thus we can pretend $\A^{*} = \td{\A}^{*}$ without loss of generality. Next we note that the smallest eigenvalue of $\L^{*}$ is $0$ with an eigenvector $\one_{n}$. Since it is a constant for all units, the output of $K$-medians is not affected if it is removed. Thus, we can take $\Lambda^{*}\in \R^{(K - 1)\times (K - 1)}$ as the diagonal matrix of the second to the $K$-th smallest eigenvalues of $\L^{*}$ and $U^{*}\in \R^{n\times (K - 1)}$ as the corresponding eigenvector matrix. 

Let $\lambda_{(2)}(\cdot)$ denote the second smallest eigenvalue and
\[\beta = \frac{1}{2}\min\{\lambda_{(2)}(\td{L}_{0}), \lambda_{\min}(\td{\D}_{0}) - \lambda_{\max}(\td{L}_{0})\}.\]
Define $R, \td{\D}$ and $\td{\L}$ as in Lemma \ref{lem:eigen_laplacian}. Then
\[\td{L}\rightarrow \td{L}_{0},\quad \td{\D}\rightarrow \td{\D}_{0}.\]
Thus there exists a constant $n_{0}$ that only depends on $B_{0}$ and $\pi_{r}$'s such that 
\begin{equation}
  \label{eq:laplacian1}
  \min\{\lambda_{(2)}(\td{L}), \lambda_{\min}(\td{\D}) - \lambda_{\max}(\td{L})\} > \beta.
\end{equation}
By Lemma \ref{lem:eigen_laplacian},
\begin{equation}
  \label{eq:laplacian2}
  \lambda_{\min}^{*}\ge \gap^{*}\ge n\rho_{n}\beta.
\end{equation}
Furthermore, the matrix $U^{*}$ in this proof differs from the one in Lemma \ref{lem:eigen_laplacian} by just a column of $\one_{n}$. By Lemma \ref{lem:eigen_laplacian},
\[
  \begin{bmatrix}
    \one_{n} & U^{*}
  \end{bmatrix} = QV.
\]
It is easy to see that $U_{i}^{*} = \nu_{s}^{*}$ if $i\in \C_{s}$ and thus,
\[\|\nu_{s}^{*} - \nu_{s'}^{*}\|_{2} = \left\|
  \begin{bmatrix}
    1 \\ \nu_{s}^{*}
  \end{bmatrix}
  -
  \begin{bmatrix}
    1 \\ \nu_{s'}^{*}
  \end{bmatrix}\right\|_{2} = \left\|\frac{V_{s}}{\sqrt{n_{s}}} - \frac{V_{s'}}{n_{s'}}\right\|_{2} = \sqrt{\frac{1}{n_{s}} + \frac{1}{n_{s'}}}\ge \frac{1}{\min_{s\in [K]}\sqrt{\pi_{s}}}\frac{1}{\sqrt{n}}.\]
Moreover,
\[\mnorm{U^{*}}\le \mnorm{QV} \le \frac{1}{\min_{s\in [K]}\sqrt{n_{s}}}\le \frac{1}{\min_{s\in [K]}\sqrt{\pi_{s}}}\frac{1}{\sqrt{n}}.\]
By Lemma \ref{lem:Kmedian}, it is left to prove 
\begin{equation}
  \label{eq:laplacian_goal}
  d_{\ttinf}(U, U^{*}) \le \frac{\min_{s\in [K]}\sqrt{\pi_{s}}}{6\sqrt{n}} \triangleq \frac{c_{1}}{\sqrt{n}}.
\end{equation}

Set 
\[\delta = n^{-q}, \quad \alpha = 1 / \log R(\delta)\] 
in Theorem \ref{thm:generic_binary_laplacian}. Note that this choice of $\alpha$ implies that
\[\frac{R(\delta)}{\alpha \log R(\delta)} = R(\alpha), \quad R(\delta)^{1 + \alpha} = R(\delta)\exp\{\alpha \log R(\delta)\} = eR(\delta).\]
Then $\bar{\kappa}^{*}\le 2(K - 1)\preceq 1$, $p^{*}\preceq \rho_{n}$, 
\[\bar{\kappa}' \preceq 1 + \frac{n\rho_{n}}{n\rho_{n}\beta}\preceq 1, \quad R(\delta)\preceq \log n, \quad g(\delta) \preceq \sqrt{n\rho_{n}} + \log n, \quad M(\delta)\preceq \sqrt{n\rho_{n}\log n}.\]
By \eqref{eq:laplacian2}, when $c$ in the condition \eqref{eq:laplacian_cond} and $n$ are sufficiently large, 
\[\gap^{*} \ge C(\bar{\kappa}'g(\delta) + M(\delta)),\]
where $C$ is the universal constant in \eqref{eq:A4_binary}. On the other hand, consider $\Theta^{*}$ in Lemma \ref{lem:Theta1}. By definition, 
\[\Lambda_{jj}^{*} = n\rho_{n}\Sigma_{jj}, \quad \L_{kk}^{*} = n\rho_{n}\td{\D}_{kk}\]
where $\Sigma$ and $\td{\D}$ are defined in Lemma \ref{lem:eigen_laplacian}. Then
\[\frac{|\Lambda^{*}_{jj}|}{|\Lambda^{*}_{jj} - \L^{*}_{kk}|} = \frac{\Sigma_{jj}}{|\Sigma_{jj} - \td{\D}_{kk}|}.\]
By \eqref{eq:laplacian1}, 
\[\td{\D}_{kk} \ge \lambda_{\min}(\td{\D}) > \lambda_{\max}(\td{L}) + \beta = \lambda_{\max}(\Sigma) + \beta\ge \Sigma_{jj} + \beta,\]
and 
\[\Sigma_{jj}\le \lambda_{\max}(\td{L})\rightarrow \lambda_{\max}(\td{L}_{0}).\]
As a result,
\[\Theta^{*} = \frac{\min_{j\in [s+1, s+r]}|\Lambda^{*}_{jj}|}{\min_{j\in [s+1, s+r], k\in [n]}|\Lambda^{*}_{jj} - \L^{*}_{kk}|}\le \frac{\lambda_{\max}(\td{L}_{0})}{\beta} \preceq 1,\]
and 
\[\min_{j\in [s+1, s+r], k\in [n]}|\Lambda^{*}_{jj} - \L^{*}_{kk}|\ge n\rho_{n}\beta \ge 5 M(\delta)\]
When $c$ in the condition \eqref{eq:laplacian_cond} and $n$ are sufficiently large. By Lemma \ref{lem:Theta1}, $\Theta(\delta)\preceq 1$. 

In summary, both conditions of Theorem \ref{thm:generic_binary_laplacian} are satisfied. Then by Theorem \ref{thm:generic_binary_laplacian}, we have
\begin{align*}
  d_{\ttinf}(U, U^{*})& \preceq \lb \frac{n\rho_{n}\log n}{(n\rho_{n})^{2}} + \frac{\sqrt{n\rho_{n}} + \sqrt{\log n} + \sqrt{n\rho_{n}\log n}}{n\rho_{n}} + \frac{1}{n\rho_{n}}\rb \frac{1}{\sqrt{n}} + \frac{\sqrt{(\log n) \rho_{n}}}{n\rho_{n}}\\
& \quad  + \frac{\sqrt{n\rho_{n}\log n}\sqrt{\rho_{n}}}{(n\rho_{n})^{2}}\lb\sqrt{n\rho_{n}} + \sqrt{\log n}\rb\\
& \preceq \sqrt{\frac{\log n}{n\rho_{n}}}\frac{1}{\sqrt{n}}.
\end{align*}
Equivalently, there exists a constant $c_{2}$ that only depends on $B_{0}$, $q$ and $\pi_{r}$'s such that
\[d_{\ttinf}(U, U^{*})\le \sqrt{\frac{\log n}{n\rho_{n}}}\frac{c_{2}}{\sqrt{n}}.\]
By condition \eqref{eq:laplacian_cond},
\[d_{\ttinf}(U, U^{*})\le \frac{c_{2}}{\sqrt{c}}\frac{1}{\sqrt{n}}.\]
Therefore, \eqref{eq:laplacian_goal} follows if $c > c_{2}^{2} / c_{1}^{2}$. The proof is then completed.
\end{proof}

\begin{proof}[\textbf{Proof of Theorem \ref{thm:grow_K} part (2)}]
First we note that $\L^{*}$ does not depend on the diagonal elements of $\A^{*}$. Thus we can pretend $\A^{*} = \td{\A}^{*}$.

In this case, $\D^{*} = m\rho_{n}(a + (K - 1)b)I_{n}$. Thus, $\L^{*}$ and $\A^{*}$ have the same eigen-structure except that the eigenvalues of $\L^{*}$ are equal to $m\rho_{n}(a + (K - 1)b)$ minus those of $\A^{*}$. 
Similar to the proof of Theorem \ref{thm:laplacian_spectral_clustering}, we can ignore the first eigenvector of $\L$ in the analysis and focus on the second to the $K$-th eigenvectors. Equivalently, $U^{*}$ is taken as $U_{2}^{*}$ in part (1) and 
\[\Lambda^{*} = m\rho_{n}(a + (K - 1)b)I_{n} - m\rho_{n}(a - b)I_{n} = m\rho_{n}Kb I_{n} = n\rho_{n}b I_{n}.\]
As a result, 
\begin{equation}
  \label{eq:grow_K_laplacian1}
  \mnorm{U^{*}}\le \frac{1}{\sqrt{m}}, \quad \lambda_{\min}^{*} = n\rho_{n}b, \quad \gap^{*} = m\rho_{n}\min\{Kb, a - b\}, \quad \bar{\kappa}^{*} = 1.
\end{equation}
Using the same argument as \eqref{eq:grow_K_goal}, it is left to show that
\begin{equation}
  \label{eq:grow_K_goal_laplacian}
  d_{\ttinf}(U, U^{*})\le \frac{\sqrt{2}}{6K\sqrt{m}}.
\end{equation}
Set $\delta = n^{-q}$ and $\alpha = 1 / \log R(\delta)$ in Theorem \ref{thm:generic_binary_laplacian}. Then $p^{*}\preceq \rho_{n}$,
\[R(\delta)\preceq \log n + K, \quad g(\delta)\preceq \sqrt{n\rho_{n}} + \log n + K, \quad M(\delta)\preceq \sqrt{n\rho_{n}\log n}, \quad \bar{\kappa}' \preceq 1 + \frac{n\rho_{n}}{n\rho_{n}b}\preceq 1.\]
On the other hand, 
\[\Lambda_{jj}^{*} = m\rho_{n}(a - b), \quad \L_{kk}^{*} = \D_{kk}^{*} = m\rho_{n}(a + (K - 1)b).\]
Let $\Theta^{*}$ be defined in Lemma \ref{lem:Theta1}. Then
\[\Theta^{*} = \frac{a - b}{Kb}\preceq 1,\]
and for sufficiently large $n$ and $c$,
\[|\Lambda_{jj}^{*} - \L_{kk}^{*}| = m\rho_{n}Kb = n\rho_{n}b \ge 5 M(\delta).\]
By Lemma \ref{lem:Theta1}, we have
\begin{equation}
  \label{eq:grow_K_laplacian2}
  \Theta \preceq 1 / K.
\end{equation}
Since $n\rho_{n} > cK^{3}\log n$, for sufficiently large $n$ and $c$,
\[\gap^{*} = m\rho_{n}\min\{Kb, a - b\} = n\rho_{n}\frac{\min\{Kb, a - b\}}{K}\ge C(\Theta(\delta)\bar{\kappa}'g(\delta) + (\Theta(\delta) + 1)M(\delta)),\]
where $C$ is the universal constant in \eqref{eq:A4_binary}.

Thus both conditions of Theorem \ref{thm:generic_binary_laplacian} are satisfied. By \eqref{eq:grow_K_laplacian1}, \eqref{eq:grow_K_laplacian2} and Theorem \ref{thm:generic_binary_laplacian},
\begin{align*}
  d_{\ttinf}(U, U^{*}) & \preceq \lb\frac{n\rho_{n}\log n}{(m\rho_{n})^{2}} + \frac{\sqrt{n\rho_{n}} + \log n + K + \sqrt{n\rho_{n}\log n}}{Km\rho_{n}} + \frac{1}{n\rho_{n}}\rb\frac{1}{\sqrt{m}} + \frac{\sqrt{(\log n) \rho_{n}}}{Kn\rho_{n}}\\
& \quad + \frac{\sqrt{n\rho_{n}\log n}\sqrt{\rho_{n}}}{K(m\rho_{m})(n\rho_{n})}\lb\sqrt{n\rho_{n}} + \sqrt{\log n}\rb\\
& \preceq \lb\frac{K^{2}\log n}{n\rho_{n}} + \sqrt{\frac{\log n}{n\rho_{n}}}\rb\frac{1}{\sqrt{m}}.
\end{align*}
It is straightforward to show that each term is bounded by $1 / 36K$ for sufficiently large $c$. This proves \eqref{eq:grow_K_goal_laplacian} and hence the theorem. 
\end{proof}

\subsection{Proofs in Section \ref{sec:hierarchical}}\label{subapp:other_hierarchical}
\begin{proof}[\textbf{Proof of Lemma \ref{lem:prob_Zi}}]
First we prove part (1). Let $m = n / K$. By definition, 
\[Z_{i} \stackrel{d}{=} \sum_{i=1}^{m-1}X_{i0} + \sum_{j=1}^{d-1}\sum_{i=1}^{m2^{j-1}}X_{ij} - \sum_{i=1}^{m2^{d-1}}X_{id}\]
where $X_{ij}\stackrel{i.i.d.}{\sim} \mathrm{Ber}(p_{j})$. Then for any $\nu > 0$, 
\begin{align*}
  \log \E [e^{-\nu Z_{i}}] &= (m - 1)\log \lb p_{0}e^{-\nu} + 1 - p_{0}\rb + m\sum_{j=1}^{d}2^{j-1}\log \lb p_{j}e^{-\nu} + 1 - p_{j}\rb + m2^{d - 1}\log \lb p_{d}e^{\nu} + 1 - p_{d}\rb\\
& \le (m - 1)p_{0}\lb e^{-\nu} - 1\rb + m\sum_{j=1}^{d}2^{j-1}p_{j}\lb e^{-\nu} - 1\rb + m2^{d - 1}p_{d}(e^{\nu} - 1)\\
& = \lb (m - 1)p_{0} + m\sum_{j=1}^{d}2^{j-1}p_{j}\rb(e^{-\nu} - 1) + m2^{d - 1}(e^{\nu} - 1)\\
& = \frac{\lambda_{1}^{*} + \lambda_{2}^{*}}{2} \lb e^{-\nu} - 1\rb  + \frac{\lambda_{1}^{*} - \lambda_{2}^{*}}{2}\lb e^{\nu} - 1\rb,
\end{align*}
where the last line uses Proposition \ref{prop:eigen_BTSBM}.Note that $\lambda_{2}^{*} > 0$. Let 
\[\nu = \frac{1}{2}\log \frac{\lambda_{1}^{*} + \lambda_{2}^{*}}{\lambda_{1}^{*} - \lambda_{2}^{*}}.\]
Then $\nu > 0$ and 
\[\log \E [e^{-\nu Z_{i}}] = -\frac{1}{2}\lb\sqrt{\lambda_{1}^{*} + \lambda_{2}^{*}} - \sqrt{\lambda_{1}^{*} - \lambda_{2}^{*}}\rb^{2}.\]
By Markov's inequality,
\[\log \P(Z_{i}\le t) = \log \P\lb e^{-\nu Z_{i}}\ge e^{-\nu t}\rb\le \frac{t}{2}\log \lb\frac{\lambda_{1}^{*} + \lambda_{2}^{*}}{\lambda_{1}^{*} - \lambda_{2}^{*}}\rb  -\frac{1}{2}\lb\sqrt{\lambda_{1}^{*} + \lambda_{2}^{*}} - \sqrt{\lambda_{1}^{*} - \lambda_{2}^{*}}\rb^{2}.\]
This proves the part (1). For part (2), as in part (1), for any $\nu > 0$, 
\[\log \E [e^{\nu Z_{i}}]  = \frac{\lambda_{1}^{*} + \lambda_{2}^{*}}{2} \lb e^{\nu} - 1\rb  + \frac{\lambda_{1}^{*} - \lambda_{2}^{*}}{2}\lb e^{-\nu} - 1\rb.\]
Note that $\lambda_{2}^{*} < 0$. Let 
\[\nu = \frac{1}{2}\log \frac{\lambda_{1}^{*} - \lambda_{2}^{*}}{\lambda_{1}^{*} + \lambda_{2}^{*}}.\]
Then $\nu > 0$ and 
\[\log \E [e^{\nu Z_{i}}] = -\frac{1}{2}\lb\sqrt{\lambda_{1}^{*} + \lambda_{2}^{*}} - \sqrt{\lambda_{1}^{*} - \lambda_{2}^{*}}\rb^{2}.\]
By Markov's inequality,
\[\log \P(Z_{i}\ge -t) = \log \P\lb e^{\nu Z_{i}}\ge e^{-\nu t}\rb\le \frac{t}{2}\log \lb\frac{\lambda_{1}^{*} - \lambda_{2}^{*}}{\lambda_{1}^{*} + \lambda_{2}^{*}}\rb  -\frac{1}{2}\lb\sqrt{\lambda_{1}^{*} + \lambda_{2}^{*}} - \sqrt{\lambda_{1}^{*} - \lambda_{2}^{*}}\rb^{2}.\]
This completes the proof.
\end{proof}

\begin{proof}[\textbf{Proof of Theorem \ref{thm:first_split}}]
  Throughout the proof we use the notation of Theorem \ref{thm:generic_binary}. Then by Proposition \ref{prop:eigen_BTSBM}, 
\[n\bar{p}^{*} = (m - 1)p_{0} + m\sum_{i=1}^{d}2^{i-1}p_{i} = \lambda_{1}^{*}, \quad \bar{\kappa}^{*} = 1.\]
Since $\delta \ge n^{-q}$, 
\[\log n \le R(\delta)\le (q + 1)\log n\Longrightarrow g(\delta) \le \frac{(q + 1)\log n}{\alpha \log \log n}.\]
Then \eqref{eq:first_split_cond} implies that 
\[\gap^{*} > C\bar{\kappa}^{*}g(\delta).\]
Thus the condition of Theorem \ref{thm:generic_binary} is satisfied. By Theorem \ref{thm:generic_binary}, with probability $1 - \delta$.
\begin{align*}
  \bigg\|u_{2} - \frac{Au_{2}^{*}}{\lambda_{2}^{*}}\bigg\|_{\infty} & \preceq \frac{1}{\gap^{*}}\bigg\{\lb\sqrt{\lambda_{1}^{*}} + \frac{\log n}{\alpha\log \log n}\rb\lb 1 + \frac{\log n}{|\lambda_{2}^{*}|}\rb \mnorm{u_{2}^{*}} \\
& \qquad + \frac{\sqrt{(\log n)p^{*}}}{|\lambda_{2}^{*}|}\lb \sqrt{\lambda_{1}^{*}} + \frac{\log n}{\alpha\log \log n} + \frac{\sqrt{\lambda_{1}^{*}(\log n)^{\alpha}}}{\alpha \log \log n}\rb\\
& \qquad + \lb\sqrt{|\lambda_{2}^{*}|} + \sqrt{\log n}\rb \min\left\{\frac{\sqrt{\lambda_{1}^{*}p^{*}}}{|\lambda_{2}^{*}|}, \frac{\sqrt{p^{*}}}{\sqrt{|\lambda_{2}^{*}|}I(A^{*}\mbox{ is psd})}, \sqrt{\frac{K}{n}}\right\}\bigg\}\\
& \stackrel{(i)}{\preceq} \frac{1}{\sqrt{n}\gap^{*}}\bigg\{\lb\sqrt{\lambda_{1}^{*}} + \frac{\log n}{\alpha\log \log n}\rb\lb 1 + \frac{\log n}{|\lambda_{2}^{*}|}\rb + \frac{\sqrt{(\log n)np^{*}}}{|\lambda_{2}^{*}|} \frac{\log n + \sqrt{\lambda_{1}^{*}(\log n)^{\alpha}}}{\alpha\log \log n}\\
& \qquad + \sqrt{\lambda_{2}^{*}}\min\left\{\frac{\sqrt{\lambda_{1}^{*}np^{*}}}{|\lambda_{2}^{*}|}, \frac{\sqrt{np^{*}}}{\sqrt{|\lambda_{2}^{*}|}I(A^{*}\mbox{ is psd})}, \sqrt{K}\right\}\bigg\}\\
& \preceq \frac{\xi_{n1} + \xi_{n2}}{\sqrt{n}\gap^{*}},
\end{align*}
where (i) uses the fact that
\[\sqrt{\log n}\min\left\{\frac{\sqrt{\lambda_{1}^{*}np^{*}}}{|\lambda_{2}^{*}|}, \frac{\sqrt{np^{*}}}{\sqrt{|\lambda_{2}^{*}|}I(A^{*}\mbox{ is psd})}, \sqrt{K}\right\}\preceq \sqrt{\log n}\frac{\sqrt{\lambda_{1}^{*}np^{*}}}{|\lambda_{2}^{*}|} = \frac{\sqrt{(\log n)np^{*}}}{|\lambda_{2}^{*}|}\sqrt{\lambda_{1}^{*}}\]
and
\[\sqrt{\lambda_{1}^{*}} + \frac{\sqrt{\lambda_{1}^{*}(\log n)^{\alpha}}}{\alpha \log \log n}\preceq \frac{\sqrt{\lambda_{1}^{*}(\log n)^{\alpha}}}{\alpha \log \log n}.\]
Equivalently, there exists a universal constant $C'$ such that
\begin{equation}
  \label{eq:first_split_1}
  \sqrt{n}\bigg\|u_{2} - \frac{Au_{2}^{*}}{\lambda_{2}^{*}}\bigg\|_{\infty} \le C'\frac{\xi_{n1} + \xi_{n2}}{\gap^{*}}
\end{equation}
with probability $1 - \delta$. 

On the other hand, by Lemma \ref{lem:prob_Zi}, in the assortative case we have
\[\log\P\lb Z_{i}\le C'\frac{\xi_{n1} + \xi_{n2}}{\gap^{*}}\lambda_{2}^{*}\rb \le C'\frac{\lambda_{2}^{*}}{\gap^{*}}\bigg|\log \lb\frac{\lambda_{1}^{*} + \lambda_{2}^{*}}{\lambda_{1}^{*} - \lambda_{2}^{*}}\rb\bigg|(\xi_{n1} + \xi_{n2})  -\frac{1}{2}\lb\sqrt{\lambda_{1}^{*} + \lambda_{2}^{*}} - \sqrt{\lambda_{1}^{*} - \lambda_{2}^{*}}\rb^{2}.\]
Under condition \eqref{eq:first_split_cond}, 
\[\P\lb Z_{i}\le C'\frac{\xi_{n1} + \xi_{n2}}{\gap^{*}}\lambda_{2}^{*}\rb\le \exp\left\{-\log n - \log \lb\frac{1}{\delta}\rb\right\}\le \frac{\delta}{n}.\]
A simple union bound then implies that
\[\P\lb \min_{i\in [n]}Z_{i}\le C'\frac{\xi_{n1} + \xi_{n2}}{\gap^{*}}\lambda_{2}^{*}\rb\le \delta.\]
Finally, by \eqref{eq:first_split_1}, 
\[\P\lb \min_{i\in [n]}Z_{i} \le \lb \sqrt{n}\bigg\|u_{2} - \frac{Au_{2}^{*}}{\lambda_{2}^{*}}\bigg\|_{\infty}\rb \lambda_{2}^{*}\rb \le 2\delta.\]
The proof for assortative BTSBM is then completed by \eqref{eq:minmaxZi}. Similarly we can prove it for dis-assortative BTSBM. 
\end{proof}

\begin{proof}[\textbf{Proof of Theorem \ref{thm:partial_exact_recovery}}]
It is left to show that for any node at $r$-th layer ($r \le \ell$), its first split can be exactly recovered with probability $1 - o(1)$ as $n$ tends to infinity. Assume $r = \ell$ without loss of generality. Note that this node corresponds to a BTSBM with size $n' = n / 2^{\ell - 1}$ and parameters $(a_{0}, a_{1}, \ldots, a_{d - \ell + 1})$. Throughout the rest of the proof, all symbols (e.g. $\lambda_{1}^{*}, \lambda_{2}^{*}, \gap^{*}$) are defined for this sub-model.

Let $\eps \in (0, 1)$ be any constant such that
\[\frac{1}{2^{d - \ell + 1}}\lb\sqrt{\bar{a}_{d}} - \sqrt{a_{d}}\rb^{2} > 1 + 3\eps.\]
By Proposition \ref{prop:eigen_BTSBM} and definition of $\bar{a}_{\ell}$,
\[\lambda_{1}^{*} = m\rho_{n}\lb 2^{\ell - 1}\bar{a}_{\ell} + 2^{\ell - 1}a_{\ell}\rb - \rho_{n}a_{0} = \log n\lb\frac{\bar{a}_{\ell} + a_{\ell}}{2^{d - \ell + 1}} - \frac{a_{0}}{n}\rb,\]
and 
\[\lambda_{2}^{*} = m\rho_{n}\lb 2^{\ell - 1}\bar{a}_{\ell} - 2^{\ell - 1}a_{\ell}\rb - \rho_{n}a_{0} = \log n\lb\frac{\bar{a}_{\ell} - a_{\ell}}{2^{d - \ell + 1}} - \frac{a_{0}}{n}\rb.\]
As a result,
\begin{align*}
  &\frac{1}{2}\lb\sqrt{\lambda_{1}^{*} + \lambda_{2}^{*}} - \sqrt{\lambda_{1}^{*} - \lambda_{2}^{*}}\rb^{2}  = \frac{1}{2^{d - \ell + 1}}\lb \sqrt{\bar{a}_{\ell} - \frac{2^{d - \ell + 1}a_{0}}{n}} - \sqrt{a_{\ell}}\rb^{2}\\
= & \frac{\log n}{2^{d - \ell + 1}}\lb \sqrt{\bar{a}_{\ell}} - \sqrt{a_{\ell}} - \frac{2^{d - \ell + 1}a_{0}}{n\lb \sqrt{\bar{a}_{\ell} - \frac{2^{d - \ell + 1}a_{0}}{n}} + \sqrt{\bar{a}_{\ell}}\rb}\rb^{2}.
\end{align*}
Then for sufficiently large $n$, 
\begin{equation}
  \label{eq:lambda1pmlambda2}
  \frac{1}{2}\lb\sqrt{\lambda_{1}^{*} + \lambda_{2}^{*}} - \sqrt{\lambda_{1}^{*} - \lambda_{2}^{*}}\rb^{2}\ge \frac{1 + 2\eps}{1 + 3\eps}\frac{\log n}{2^{d - \ell + 1}}\lb \sqrt{\bar{a}_{\ell}} - \sqrt{a_{\ell}}\rb^{2}\ge (1 + 2\eps)\log n.
\end{equation}
Since $(a_{0}, \ldots, a_{\ell})$ and $K$ are all constants, $n\sim n'$ and $\log n \succeq \lambda_{1}^{*} \succeq \lambda_{2}^{*} \succeq \log n$ and $\gap^{*}\succeq \log n$. Let $\alpha = 0.5$ in Theorem \ref{thm:first_split}. Then for sufficiently large $n$ 
, the condition \ref{eq:first_split_cond} is satisfied since the RHS is 
\[\sqrt{\lambda_{1}^{*}} + \frac{\log n'}{\alpha \log\log n'} \preceq \sqrt{\log n} + \frac{\log n}{\log\log n} = o(\log n).\]
In addition, it is easy to see that 
\begin{align*}
  \xi_{n1}\preceq \frac{\log n'}{\log \log n'} = o(\log n), \quad \xi_{n2}\preceq \sqrt{\log n'} = o(\log n),
\end{align*}
and
\[\frac{\lambda_{2}^{*}}{\gap^{*}} \preceq 1, \quad \bigg|\log \lb\frac{\lambda_{1}^{*} + \lambda_{2}^{*}}{\lambda_{1}^{*} - \lambda_{2}^{*}}\rb\bigg|\preceq 1.\]
Thus, for sufficiently large $n$ 
, 
\[C'\frac{\lambda_{2}^{*}}{\gap^{*}}\bigg|\log \lb\frac{\lambda_{1}^{*} + \lambda_{2}^{*}}{\lambda_{1}^{*} - \lambda_{2}^{*}}\rb\bigg|(\xi_{n1} + \xi_{n2})\le \eps\log n.\]
Combined with \eqref{eq:lambda1pmlambda2}, we have
\begin{align*}
  \frac{1}{2}\lb\sqrt{\lambda_{1}^{*} + \lambda_{2}^{*}} - \sqrt{\lambda_{1}^{*} - \lambda_{2}^{*}}\rb^{2} - \log n - C'\frac{\lambda_{2}^{*}}{\gap^{*}}\bigg|\log \lb\frac{\lambda_{1}^{*} + \lambda_{2}^{*}}{\lambda_{1}^{*} - \lambda_{2}^{*}}\rb\bigg|(\xi_{n1} + \xi_{n2})\ge \eps \log n.
\end{align*}
Let $\delta = n^{-\eps}$ in Theorem \ref{thm:first_split}. Then the first split is exactly recovered with probability $1 - 2n^{-\eps} = 1 - o(1)$. This completes the proof.
\end{proof}

\begin{proposition}[\textbf{Theorem 1 of \cite{abbe2015community}}]\label{prop:necessary_cond_exact_recovery}
For a general SBM with connection probability matrix $B = B_{0}\frac{\log n}{n}$ where $B_{0}\in \R^{K\times K}$ is a fixed matrix. Further let $\Pi = \diag(\pi_{1}, \ldots, \pi_{K})$. Then exact recovery is achievable iff
\[\min_{i, j\in [K]}D_{+}((\Pi B)_{i}, (\Pi B)_{j})\ge 1,\]
where $D_{+}: \R^{K}\times \R^{K}\rightarrow \R$ with
\[D_{+}(\theta, \psi) = \max_{t\in [0, 1]}\sum_{i=1}^{K}\lb t\theta_{i} + (1 - t)\psi_{i} - \theta_{i}^{t}\psi_{i}^{1 - t}\rb.\]
\end{proposition}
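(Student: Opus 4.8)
The plan is to prove both directions of the equivalence by reducing exact recovery to a \emph{genie-aided} single-vertex classification problem and pinning down the associated error exponent. Conditioning on the ground-truth labels $c$, the sizes $n_k\triangleq|\{i:c_i=k\}|$ concentrate at $n\pi_k(1+o(1))$, so we may treat them as deterministic. For a vertex $v$ with $c_v=i$ and a competing label $j$, let $L_{ij}(v)$ denote the log-likelihood ratio — given the labels of all other vertices — of the hypothesis ``the edges incident to $v$ look like those of community $i$'' against community $j$. This is a sum over $k$ of $n_k$ i.i.d.\ Bernoulli log-likelihood-ratio terms with parameters $B_{ik}$ (true) versus $B_{jk}$. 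A direct cumulant computation, using $\log\E[e^{\nu X}]=\log(1-p+pe^\nu)=p(e^\nu-1)(1+o(1))$ for $X\sim\mathrm{Ber}(p)$ with $p=\Theta(\log n/n)$, shows that the Chernoff rate of $\P(L_{ij}(v)\le 0)$ is $\log n$ times $D_+$ evaluated at the (normalized) expected degree profiles of communities $i$ and $j$; by positive homogeneity of $D_+$ this is exactly the quantity in the displayed criterion, so that $\P(L_{ij}(v)\le 0\mid c)=n^{-D_+(\cdot,\cdot)+o(1)}$. The entire proof consists of upgrading this pointwise estimate to a statement over all $n$ vertices simultaneously, in both directions.

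For impossibility, suppose $\min_{i\ne j}D_+<1$, attained at a pair $(i^*,j^*)$. Any estimator (in particular MAP) fails whenever some $v$ with $c_v=i^*$ has $L_{i^*j^*}(v)\le 0$, since reassigning $v$ to $j^*$ does not decrease the posterior. First I would compute $\E\big[\#\{v:c_v=i^*,\ L_{i^*j^*}(v)\le 0\}\big]=n\pi_{i^*}\cdot n^{-\min D_+ +o(1)}=n^{1-\min D_+ +o(1)}\to\infty$. Then a second-moment argument shows this count is positive with probability $1-o(1)$: two distinct candidate vertices $v,v'$ share only the single edge between them and the common (fixed) label vector, so their misclassification events are asymptotically uncorrelated and $\Var\ll(\E)^2$, whence Chebyshev applies. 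Therefore exact recovery is information-theoretically impossible.

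For achievability, assume $\min_{i\ne j}D_+>1$ (the boundary case $=1$ can be absorbed into the $o(1)$ slack, or treated by the finer analysis of \cite{abbe2015exact}). I would run a two-round scheme. Round~1 is a \emph{weak recovery} step producing labels $\hat c^{(0)}$ correct on all but $o(n)$ vertices with probability $1-o(1)$; in the $\Theta(\log n)$-degree regime this is precisely the guarantee one extracts from spectral clustering plus a coarse degree-profile refinement, using the $\ell_{\ttinf}$ bounds of Section~\ref{subsec:binary_wigner} and Theorem~\ref{thm:generic_binary}. Round~2 is a \emph{clean-up}: reassign each vertex $v$ by $\hat c_v=\argmax_k$ of the log-likelihood of $v$'s edges against the empirical structure induced by $\hat c^{(0)}$. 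Since only $o(n)$ vertices are mislabeled in Round~1, each such error perturbs a given log-likelihood-ratio sum (of magnitude $\Theta(\log n)$) by only $o(\log n)$, so the Chernoff \emph{upper} bound still gives $\P(\hat c_v\ne c_v)\le n^{-\min D_+ +o(1)}$ for each fixed $v$; a union bound over the $n$ vertices and $K^2$ label pairs yields total failure probability $\le n^{1-\min D_+ +o(1)}\to 0$.

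The main obstacle is the \emph{tightness} of the exponent at the $n^{-1}$ scale. A crude Chernoff bound is enough for the achievability union bound, but the converse needs the \emph{matching lower bound} on $\P(L_{ij}(v)\le 0)$ with its polynomial prefactor controlled — essentially a Bahadur--Rao / local-limit refinement of the large-deviation estimate for the tilted Bernoulli sum — because whether $n^{1-\min D_+}$ diverges or vanishes is exactly what separates the two regimes. Hand in hand with this is the second-moment computation, where one must verify that the misclassification events across the $\Theta(n)$ candidate vertices are decorrelated enough (the shared randomness being only pairwise edges and the common label vector) for Chebyshev to bite. The remaining ingredient — a Round~1 algorithm attaining the stated weak-recovery guarantee down to average degree $O(\log n)$ — is where the perturbation theory of this paper plugs in and is, by comparison, routine.
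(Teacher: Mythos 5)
This proposition is not proved in the paper at all: it is quoted verbatim as Theorem~1 of \cite{abbe2015community} and used as a black box in the proof of Lemma \ref{lem:necessary_cond_BTSBM}, so there is no in-paper argument to compare yours against. What you have written is, in substance, a reconstruction of the Abbe--Sandon proof itself: the genie-aided single-vertex test whose Chernoff exponent is the CH-divergence $D_{+}$ (your homogeneity remark correctly resolves the scaling in the statement, where $B=B_{0}\log n/n$ but the criterion is effectively evaluated at the constant profiles $(\Pi B_{0})_{i}$), a first-/second-moment converse showing some vertex is misclassifiable when $\min D_{+}<1$, and a two-round achievability scheme (weak recovery plus likelihood clean-up) with a union bound when $\min D_{+}>1$. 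That is the right architecture, and your identification of the genuinely hard steps is accurate: the converse needs a matching lower bound on the tail probability with polynomial prefactor control (a Bahadur--Rao-type refinement), the second moment needs the near-independence of misclassification events across vertices, the clean-up step needs robustness of the exponent to $o(n)$ label errors from round one, and the boundary case $\min D_{+}=1$ (which the ``iff $\ge 1$'' includes) requires the finer analysis you defer to \cite{abbe2015exact}. Two small points to tighten if you were to write this out: the converse should be phrased through optimality of MAP (MAP maximizes the probability of exact recovery, so it suffices that MAP fails whp; ``any estimator fails whenever $L_{i^{*}j^{*}}(v)\le 0$'' is not literally true as stated, and ties/changed block sizes under reassignment need the usual probability-$\ge 1/2$ handling), and the achievability clean-up must compare likelihoods against all $K-1$ alternative labels with the minimum over pairs, not a single pair. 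As a blind sketch it is faithful to the cited proof; as a contribution to this paper it duplicates an imported result, which the paper quite reasonably does not re-prove.
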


\begin{proof}[\textbf{Proof of Lemma \ref{lem:necessary_cond_BTSBM}}]
Using the notation of Proposition \ref{prop:necessary_cond_exact_recovery}, we have $\Pi = (1 / K)I_{K}$. Thus, 
\[D_{+}((\Pi B)_{i}, (\Pi B)_{j}) = \frac{1}{K}D_{+}(B_{i}, B_{j}).\]
We prove that
\begin{equation}
  \label{eq:D+BiBj}
  \min_{i\not = j}D_{+}(B_{i}, B_{j}) = (\sqrt{a_{0}} - \sqrt{a_{1}})^{2}.
\end{equation}
When $i = 1$ and $j = 2$, only the first two entries differ and thus,
\begin{align*}
  D_{+}(B_{1}, B_{2}) &= \max_{t\in [0, 1]} (ta_{0} + (1 - t)a_{1} - a_{0}^{t}a_{1}^{1-t}) + (ta_{1} + (1 - t)a_{0} - a_{1}^{t}a_{0}^{1-t})\\
& = a_{0} + a_{1} - \min_{t\in [0, 1]}(a_{0}^{t}a_{1}^{1-t} + a_{1}^{t}a_{0}^{1-t})\\
& = a_{0} + a_{1} - 2\sqrt{a_{0}a_{1}} = (\sqrt{a_{0}} - \sqrt{a_{1}})^{2},
\end{align*}
where the second last line uses the convexity and the symmetry of $t\mapsto a_{0}^{t}a_{1}^{1-t} + a_{1}^{t}a_{0}^{1-t}$. It is left to prove that for any $i \not = j$,
\[D_{+}(B_{i}, B_{j}) \ge (\sqrt{a_{0}} - \sqrt{a_{1}})^{2}.\]
By definition, $B_{i, i} = B_{j, j} = a_{0}$ and $B_{i, j} = B_{j, i} = a_{k}$ for some $k\not = 0$. Ignoring all other entries,
\[D_{+}(B_{i}, B_{j})\ge \max_{t\in [0, 1]} (ta_{0} + (1 - t)a_{k} - a_{0}^{t}a_{k}^{1-t}) + (ta_{k} + (1 - t)a_{0} - a_{k}^{t}a_{0}^{1-t}).\]
Using the same argument as above, we have
\[D_{+}(B_{i}, B_{j})\ge (\sqrt{a_{0}} - \sqrt{a_{k}})^{2}\ge (\sqrt{a_{0}} - \sqrt{a_{1}})^{2}.\]
Thus \eqref{eq:D+BiBj} is proved.

If $|\sqrt{a_{0}} - \sqrt{a_{1}}| < \sqrt{K}$, then 
\[\min_{i, j\in [K]}D_{+}((\Pi B)_{i}, (\Pi B)_{j}) = \frac{(\sqrt{a_{0}} - \sqrt{a}_{1})^{2}}{K} < 1.\]
By Proposition \ref{prop:necessary_cond_exact_recovery}, it is impossible to achieve exact recovery.
\end{proof}

\section{Comparison With Other Bounds on Binary Random Matrices}\label{app:comparison}

\subsection{Comparison with \cite{abbe2017entrywise}} 

The assumptions they required are the following:
\begin{enumerate}[\textbf{B}1]
\setcounter{enumi}{2}
\item Suppose $\phi(x)$ is continuous and non-decreasing on $\R_{+}$ with $\phi(0) = 0$ and $\phi(x) / x$ being non-increasing. For any $\delta \in (0, 1)$ and matrix $W\in \R^{n\times r}$, it holds with probability at least $1 - \delta$ simultaneously for all $k\in [n]$ that
  \begin{align*}
    \|E_{k}^{T}W\|_{2} &\le \gap^{*}\mnorm{W}\phi\lb\frac{\|W\|_{\mathrm{F}}}{\sqrt{n}\mnorm{W}}\rb;
  \end{align*}
\item $\gap^{*}\ge \gamma^{-1}\max\{\mnorm{A^{*}}, \|E\|_{\op}\}$ with probability $1 - \delta$ for any $\gamma > 0$ such that
  \begin{equation}
    \label{eq:kappagamma}
    \kappa^{*} \le \frac{1}{32\max\{\gamma, \phi(\gamma)\}}.
  \end{equation}
\end{enumerate}
Under their assumption \textbf{B}3, as shown in their proof of Lemma 6 (equation (59)) in Section A.2, 
\[\|E_{k}^{T}W\|_{2}\preceq \phi(\gamma)\lb \mnorm{W} + \frac{\|W\|_{\mathrm{F}}}{\sqrt{n\gamma}}\rb\]
for any $\gamma > 0$. This corresponds to our assumption \textbf{A}3 with
\[b_{\infty}(\delta) = \phi(\gamma), \quad b_{2}(\delta) = 
\frac{\phi(\gamma)}{\sqrt{n\gamma}}.\]

There are two points that are worth made here for clarity. Firstly, they treated $\delta$ as a constant and hence did not explicitly specify the dependence on $\delta$. This essentially sets $\delta = O(1 / n)$ as in our assumption \textbf{A}3. Specifying the dependence on $\delta$ yields a tighter moment bound for $d_{\ttinf}(U, U^{*})$, which is useful in our first example on concentration of spectral norm of random graphs (Section \ref{sec:spectral_norm}). Secondly, the $\phi$ function may implicitly depend on $r$. For instance, in the binary case, our Proposition \ref{prop:vector2matrix} shows that $b_{\infty}(\delta)$ scales linearly with $r$. \cite{abbe2017entrywise} did not specify the dependence on $r$ because they either consider the Gaussian case where $b_{\infty}(\delta) = 0$ or the general case with $r = O(1)$. So in our comparison we will also keep these settings. For simplicity we only consider the regime
\begin{equation}
  \label{eq:comparison_regime}
  np^{*}\ge C\log n,
\end{equation}
for some universal constant $C > 0$.

\noindent \textbf{Comparison of assumptions}

First we compare the assumptions of \cite{abbe2017entrywise} and our theory. As stated in their Section 1.2, for binary matrices with independent entries and $r = O(1)$,
\begin{equation}\label{eq:phi}
  \phi(x) \preceq \frac{np^{*}}{\gap^{*}\max\{1, \log(1 / x)\}}
\end{equation}
Then the condition \eqref{eq:kappagamma} on $\gamma$ reads as
\[\kappa^{*} \max\left\{\gamma, \frac{np^{*}}{\gap^{*}\max\{1, \log(1 / \gamma)\}}\right\}\le \frac{1}{32}.\]
The first term implies that
\[\gamma \le (32\kappa^{*})^{-1}\preceq 1 / 32\Longrightarrow \max\{1, \log(1 / \gamma)\}\ge \log(1 / \gamma).\]
The second term then implies that
\[\gap^{*}\ge \frac{32\kappa^{*}}{\log(1 / \gamma)}np^{*}.\]
Putting the pieces together, \textbf{B}4 implies that
\begin{equation}
  \label{eq:abbeB3B4_1}
  \gap^{*}\succeq \max\left\{\frac{\mnorm{A^{*}}}{\gamma}, \frac{\|E\|_{\op}}{\gamma}, \frac{\kappa^{*} np^{*}}{\log(1 / \gamma)}\right\}.
\end{equation}
This implies that
\[\gap^{*}\succeq\min_{\gamma}\max\left\{\frac{\|E\|_{\op}}{\gamma}, \frac{\kappa^{*} np^{*}}{\log(1 / \gamma)}\right\}.\]
The first part is decreasing in $\gamma$ while the second part is increasing $\gamma$. Thus, the minimum is achieved at $\gamma^{*}$ such that two terms are equal, i.e.
\[\frac{1}{\gamma^{*}}\log \lb\frac{1}{\gamma^{*}}\rb = \frac{\kappa^{*} np^{*}}{\|E\|_{\op}}.\]
Under the regime \eqref{eq:comparison_regime}, $1\preceq \|E\|_{\op}\preceq \sqrt{np^{*}}$ and thus $np^{*} / \|E\|_{\op}\succeq \sqrt{np^{*}}$. As a result,
\[\frac{1}{\gamma^{*}}\log \lb\frac{1}{\gamma^{*}}\rb \sim \frac{\kappa^{*} np^{*}}{\|E\|_{\op}} \Longleftrightarrow \frac{1}{\gamma^{*}} \sim \frac{\kappa^{*} np^{*} / \|E\|_{\op}}{\log \kappa^{*} + \log (np^{*})}.\]
Therefore, their assumptions \textbf{B}3 and \textbf{B}4 hold only if 
\begin{equation}
  \label{eq:abbeB3B4}
  \gap^{*}\succeq\max\left\{\frac{\|E\|_{\op}}{\gamma^{*}}, \frac{\kappa^{*} np^{*}}{\log(1 / \gamma^{*})}\right\}\succeq \frac{\kappa^{*} np^{*}}{\log \kappa^{*} + \log (np^{*})}.
\end{equation}
By contrast, our theory (condition \eqref{eq:A4_binary} in Theorem \ref{thm:generic_binary}) only requires 
\begin{equation}
  \label{eq:our_cond}
  \gap^{*}\succeq \bar{\kappa^{*}}^{*}g(\delta) = g(\delta) = \sqrt{np^{*}} + \frac{\log n}{\log \log n}.
\end{equation}
This is always less stringent than \eqref{eq:abbeB3B4}. The only case it is equivalent to \eqref{eq:abbeB3B4} is when $np^{*} \sim \log n$ and $\kappa^{*} \preceq 1$. When $np^{*}\ge (\log n)^{2}$, our condition is $\kappa^{*} \sqrt{np^{*}}$ better than \eqref{eq:abbeB3B4}. 

Furthermore, note that
\begin{equation}
  \label{eq:gap_upper}
  \gap^{*} \le \lambda_{\min}^{*}\le \lambda_{\max}^{*}\le np^{*}.
\end{equation}
By \eqref{eq:gap_upper}, their condition \eqref{eq:abbeB3B4} can hold only if
\[\kappa^{*} \preceq \log (np^{*}).\]
When $np^{*} \preceq (\log n)^{b}$ for some $b > 0$ as typically studied for random graphs, \eqref{eq:abbeB3B4} only permits well-conditioned case with $\kappa^{*} \preceq \log \log n$. Even in the case of dense graphs where $np^{*}$ grows polynomially, the condition number should be no larger than $\log n$. By contrast, our theory allows the condition number to be arbitrarily large.  

On the other hand, even in the well-conditioned case $\kappa^{*} \preceq 1$, \eqref{eq:abbeB3B4} can only hold if $\gap^{*}\succeq np^{*} / \log (np^{*})$. Therefore, the minimal eigen-gap for which their theory works is only $\log\log n$ smaller than the upper bound \eqref{eq:gap_upper} when $np^{*}$ grows poly-logarithmically and is $\log n$ smaller than the upper bound when $np^{*}$ grows polynomially. By contrast, our theory allows the eigen-gap to be much smaller than the upper bound.

\noindent \textbf{Comparison of bounds}

By \eqref{eq:phi} and \eqref{eq:abbeB3B4}.
\[\frac{\mnorm{A^{*}}}{\gap^{*}} \preceq \frac{\sqrt{n}p^{*}}{\gap^{*}}\preceq \frac{\phi(1)}{\sqrt{n}}\preceq \phi(1)\mnorm{U^{*}}.\]
In their Theorem 2.1, they showed 
\begin{align}
d_{\ttinf}(U, AU^{*}(\Lambda^{*})^{-1}) &\preceq \kappa^{*} (\kappa^{*} + \phi(1))\lb \gamma + \phi(\gamma)\rb\mnorm{U^{*}} + \gamma \frac{\mnorm{A^{*}}}{\gap^{*}}\nonumber\\
& \preceq \kappa^{*}(\kappa^{*} + \phi(1))\lb \gamma + \phi(\gamma)\rb\mnorm{U^{*}};\label{eq:abbe_bound1}\\
d_{\ttinf}(U, U^{*}) &\preceq \lb \phi(1) + \kappa^{*}(\kappa^{*} + \phi(1))(\gamma + \phi(\gamma))\rb\mnorm{U^{*}} + \gamma \frac{\mnorm{A^{*}}}{\gap^{*}}\nonumber\\
& \preceq \lb \frac{np^{*}}{\gap^{*}} + \kappa^{*}(\kappa^{*} + \phi(1))\lb \gamma + \phi(\gamma)\rb\rb\mnorm{U^{*}}.\label{eq:abbe_bound2}
\end{align}
By \eqref{eq:abbeB3B4_1} and \eqref{eq:gap_upper},
\[\gamma \succeq \frac{\|E\|_{\op}}{\gap^{*}}\succeq \frac{1}{np^{*}}.\]
As a result,
\[\gamma + \phi(\gamma) \succeq \phi(\gamma)\succeq \frac{np^{*}}{\gap^{*}\log (np^{*})}.\]
Thus, their bounds \eqref{eq:abbe_bound1} and \eqref{eq:abbe_bound2} are at least 
\begin{equation}
  \label{eq:abbe_lower_upper}
  \frac{np^{*}\kappa^{*2}}{\gap^{*}\log (np^{*})}\mnorm{U^{*}}\quad \mbox{and}\quad \frac{np^{*}}{\gap^{*}}\lb 1 + \frac{\kappa^{*2}}{\log (np^{*})}\rb \mnorm{U^{*}}.
\end{equation}
For simple comparison, we assume $\gap^{*}\sim np^{*}$. Then their bounds \eqref{eq:abbe_lower_upper} are at least
\begin{equation}
  \label{eq:abbe_lower_upper_simple}
  \frac{\kappa^{*2}}{\log (np^{*})}\mnorm{U^{*}}\quad \mbox{and}\quad \lb 1 + \frac{\kappa^{*2}}{\log (np^{*})}\rb \mnorm{U^{*}}.
\end{equation}
Turning to our bound. Since $\lambda_{\min}^{*}\ge \gap^{*} \sim np^{*}$, the condition of Corollary \ref{cor:typical_binary} is satisfied. By Corollary \ref{cor:typical_binary} and \eqref{eq:comparison_regime},
\begin{align}
  d_{\ttinf}(U, AU^{*}(\Lambda^{*})^{-1}) &\preceq \lb\frac{1}{\sqrt{np^{*}}} + \frac{\log n}{np^{*}\log \log n}\rb\mnorm{U^{*}}\label{eq:our_bound1};\\
  d_{\ttinf}(U, U^{*}) &\preceq \lb\frac{\sqrt{np^{*}} + \log n}{np^{*}}\rb\mnorm{U^{*}} + \sqrt{\frac{\log n}{np^{*}}}\frac{1}{\sqrt{n}}\preceq \sqrt{\frac{\log n}{np^{*}}}\mnorm{U^{*}}\label{eq:our_bound2}.
\end{align}
It is easy to see that both bounds dominate \eqref{eq:abbe_lower_upper_simple} except in the case $\kappa^{*} \preceq 1, np^{*}\sim \log n$ where two bounds are equivalent. 

More importantly, \eqref{eq:abbe_lower_upper_simple} does not improve in order when $np^{*}$ increases except when $np^{*}$ grows from $(\log n)^{b}$ to $n^{b}$ so that the bound grows from $\kappa^{*} / \log \log n\mnorm{U^{*}}$ to $\kappa^{*2} / \log n \mnorm{U^{*}}$. By contrast, our bounds improves constantly as $np^{*}$ grows in order. For instance, when $np^{*} >\!\! > (\log n)^{2}$, \eqref{eq:our_bound1} significantly improves upon \eqref{eq:abbe_lower_upper_simple}.

\subsection{Comparison with \cite{eldridge2017unperturbed}}

\noindent \textbf{Comparison of assumptions}

\cite{eldridge2017unperturbed} considered the case where  
\[r = 1, \quad np^{*} \succeq (\log n)^{2 + \eps},\] 
for some $\eps > 0$. They also implicitly assumed
\[\lambda_{\min}^{*}\ge \gap^{*}\succeq np^{*}, \quad \max_{i\in [s + 1, s + r]}\|U_{i}^{*}\|_{\infty} \preceq \frac{1}{\sqrt{n}}.\]
This is at least as strong as the condition of Corollary \ref{cor:typical_binary}, itself being the most special case of our general theory in Section \ref{sec:binarybound}. Although the bound on a single eigenvector can yield bounds for eigenspaces, it requires the multiplicity of each eigenvalue to be $1$ and sufficient eigen-gap for each eigenvalue. This cannot be applied to problems in Section \ref{subsec:grow_K}. 

\noindent \textbf{Comparison of bounds}

In this setting they proved that with high probability,
\[\|U - U^{*}\|_{\infty}\preceq \sqrt{\frac{(\log n)^{2 + \eta}}{np^{*}}}\|U^{*}\|_{\infty},\]
for any $\eta \in (0, \eps / 2)$. By contrast, as shown in \eqref{eq:our_bound2}, our bound implies that
\[\|U - U^{*}\|_{\infty}\preceq \sqrt{\frac{\log n}{np^{*}}}\|U^{*}\|_{\infty}.\]
Thus, our bound is at least $\sqrt{(\log n)^{1 + \eps}}$ better than their bound even in this special setting. In order for $\|U - U^{*}\|_{\infty} \preceq \|U^{*}\|_{\infty}$ as in most applications, our bound only requires $np^{*}\succeq \log n$ while their bound requires $np^{*}\succeq (\log n)^{2 + \eps}$. 

\subsection{Comparison with \cite{cape2019signal}}

\noindent \textbf{Comparison of assumptions}

\cite{cape2019signal} considers the full recovery for low-rank matrices, i.e. 
\[s = 0, \quad \lambda_{r+1}^{*} = \cdots = \lambda_{n}^{*} = 0.\]
They further assume that 
\[np^{*} \succeq (\log n)^{2 + \eps}, \quad r\preceq (\log n)^{2 + \eps}\]
for some $\eps > 0$ and 
\[\kappa^{*}\preceq 1,\quad \lambda_{\min}^{*} = \gap^{*}\succeq np^{*}.\]
This is a highly specialized setting and cannot be applied to problems in Section \ref{sec:spectral_norm}, Section \ref{subsec:grow_K} and Section \ref{sec:hierarchical}. 

\noindent \textbf{Comparison of bounds}

Under their assumptions, they proved that 
\[d_{\ttinf}(U, U^{*})\preceq \frac{\sqrt{r(\log n)^{2 + \eps}}}{\sqrt{np^{*}}}\mnorm{U^{*}}.\]
As shown in \eqref{eq:our_bound2}, our bound reads as 
\[d_{\ttinf}(U, U^{*})\preceq \sqrt{\frac{\log n}{np^{*}}}\mnorm{U^{*}}.\]
It is clear that our bound is $\sqrt{r(\log n)^{1 + \eps}}$ better than their bound. More importantly, our bound is not affected by the number of eigenvectors to recover and thus allow $r$ to be as large as $n$, in which case their bound is not informative. 

\subsection{Comparison with \cite{mao2017estimating}}

\noindent \textbf{Comparison of assumptions}

As in \cite{cape2019signal}, \cite{mao2017estimating} considers the full recovery problem for low-rank matrices. They assumed that 
\[np^{*}\succeq (\log n)^{2 + \eps}, \quad \lambda_{\min}^{*} = \gap^{*}\succeq \sqrt{np^{*}}(\log n)^{1 + \eps / 2}, \quad \max_{j\in [r]}\|U_{j}^{*}\|_{\infty}\preceq \sqrt{p^{*}},\]
for some $\eps > 0$. The assumption on $\|U_{j}^{*}\|_{\infty}$ forces the eigenvectors to be diffused and the matrix $A^{*}$ to have low coherence \citep{candes2009exact}. By contrast, we do not have any assumption on $U^{*}$. Moreover, our assumption on the eigen-gap is 
\[\gap^{*}\succeq \bar{\kappa}^{*}\lb \sqrt{np^{*}} + \frac{\log n}{\log \log n}\rb.\]
Under their regime $np^{*}\succeq (\log n)^{2 + \eps}$, this is weaker than their condition if $\min\{\kappa^{*}, r\}\preceq (\log n)^{1 + \eps / 2}$. However, their condition can be weaker in the ill-conditioned case with many eigenvectors to be recovered. 

\noindent \textbf{Comparison of bounds}

Under their assumptions, they proved that 
\begin{equation}
  \label{eq:mao2017}
  \mnorm{UU^{T} - U^{*}(U^{*})^{T}}\preceq \frac{\bar{\kappa}^{*}\sqrt{np^{*}}}{\gap^{*}}\lb \bar{\kappa}^{*} + (\log n)^{1 + \eps / 2}\rb\lb \sqrt{r}\max_{i\in [r]}\|U_{i}^{*}\|_{\infty}\rb.
\end{equation}
They show that the same bound holds for $d_{\ttinf}(U, U^{*})$. By contrast, our Corollary \ref{cor:full_recovery_binary} with $\alpha = 0.5$ implies that 
\begin{align*}
  d_{\ttinf}(U, U^{*})&\preceq \frac{1}{\gap^{*}}\left\{\lb \bar{\kappa}^{*}\sqrt{np^{*}} + \frac{\bar{\kappa}^{*}\log n}{\log \log n} + \log n\rb\mnorm{U^{*}} + \sqrt{(\log n) p^{*}}\lb 1 + \frac{\sqrt{np^{*}(\log n)^{1/2}}}{\gap^{*}\log n}\rb\right\}.
\end{align*}
Under their assumptions, it is not hard to see that the above bound simplifies as below:
\begin{align}\label{eq:mao2017_our}
  d_{\ttinf}(U, U^{*}) \preceq \frac{\bar{\kappa}^{*}\sqrt{np^{*}}}{\gap^{*}}\lb 1 + \frac{\sqrt{\log n}}{\sqrt{n}\mnorm{U^{*}}}\rb \mnorm{U^{*}}\preceq \frac{\bar{\kappa}^{*}\sqrt{np^{*}}}{\gap^{*}}\lb 1 + \sqrt{\frac{\log n}{r}}\rb\mnorm{U^{*}}.
\end{align}
Since $\mnorm{U^{*}}\le \sqrt{r}\max_{i\in [r]}\|U_{i}^{*}\|_{\infty}$, our bound is always better than \eqref{eq:mao2017}. If $r \preceq \log n$, our bound is $\sqrt{r (\log n)^{1 + \eps}}$ better than theirs; if $r \succeq \log n$, our bound is $\bar{\kappa}^{*} + (\log n)^{1 + \eps / 2}$ better than theirs.

\subsection{Comparison with other deterministic bounds}
In literature there are also several deterministic $\ell_{\ttinf}$ bounds that do not depend on the random structure of the matrices. Because of the generality, they are typical much weaker than those tailored for random matrices. Although it is unfair to compare two types of bounds, we discuss the comparison here for completeness. 

We are aware of three purely deterministic $\ell_{\ttinf}$ bounds by \cite{fan2018eigenvector}, \cite{cape2019two} and \cite{damle2019uniform}. The first two are derived for rectangular matrices and the last one is derived for symmetric matrices. When applied to symmetric matrices, all of the above works only consider top-$r$ recovery. 

\cite{fan2018eigenvector} assumes 
\begin{equation}\label{eq:fan2018_cond}
  \lambda_{\min}^{*} \succeq r^{3}(\sqrt{n}\mnorm{U^{*}})^{2}\|E\|_{\infty} + \|A^{*} - U^{*}\Lambda^{*}U^{T}\|_{\infty}.
\end{equation}
The second term is hard to bound in general except in the full recovery problem where the second term vanishes. In this case, \eqref{eq:fan2018_cond} can be simplified as
\begin{equation}
  \label{eq:fan2018_cond2}
  \lambda_{\min}^{*} \succeq r^{3}(\sqrt{n}\mnorm{U^{*}})^{2}\|E\|_{\infty}.
\end{equation}
Note that
\[\lambda_{\min}^{*}\le \lb\frac{\one_{n}}{\sqrt{n}}\rb^{T}A^{*}\lb\frac{\one_{n}}{\sqrt{n}}\rb = \frac{1}{n}\sum_{i,j=1}^{n}A_{ij}^{*} \le n\bar{p}^{*},\]
and Bernstein's inequality implies that 
\[\|E\|_{\infty}\succeq n\bar{p}^{*} + \log n.\]
As a result, \eqref{eq:fan2018_cond2} holds only if
\[n\bar{p}^{*}\succeq \log n, \quad r\preceq 1, \quad \sqrt{n}\mnorm{U^{*}}\preceq 1, \quad \gap^{*} = \lambda_{\min}^{*}\succeq n\bar{p}^{*}.\]
Under these conditions, they prove that 
\[d_{\ttinf}(U, U^{*})\preceq \frac{r^{5/2}(\sqrt{n}\mnorm{U^{*}})^{2}\|E\|_{\infty}}{\lambda_{\min}^{*}\sqrt{n}}\preceq \mnorm{U^{*}}.\]
This bound matches our bound \eqref{eq:our_bound2} only when $np^{*}\sim \log n$, but is $\sqrt{np^{*} / \log n}$ worse than ours when $np^{*}>\!\!> \log n$.

The condition was improved by \cite{cape2019two} into
\[\lambda_{\min}^{*}\succeq \|E\|_{\infty}.\]
This eliminates the constraint on $r$ and $\sqrt{n}\mnorm{U^{*}}$ but still requires $n\bar{p}^{*}\succeq \log n$. For full recovery problem, \cite{cape2019two} obtained essentially the same bound as below:
\begin{equation}
  \label{eq:cape2019two}
  d_{\ttinf}(U, U^{*})\preceq \frac{\|E\|_{\infty}}{\lambda_{\min}^{*}}\mnorm{U^{*}}\preceq \mnorm{U^{*}}.
\end{equation}

On the other hand, for top-$r$ problem, \cite{damle2019uniform} requires 
\[\min\lb \gap^{*}, \sep_{\ttinf, \td{U}^{*}}(\Lambda^{*}, A^{*} - U^{*}\Lambda^{*}(U^{*})^{T})\rb\succeq \sqrt{np^{*}},\]
where $(\td{U}^{*}, U^{*})$ forms an orthonormal basis in $\R^{n}$ and 
\[\sep_{\ttinf, W}(B, C) = \inf\{\|ZB - CZ\|_{\ttinf}: Z\in \mathrm{ran} W, \|Z\|_{\ttinf} = 1\}.\]
However this condition is hard to parse except in the full recovery problem for which it is shown that 
\[\min\{\gap^{*}, \sep_{\ttinf, \td{U}^{*}}(\Lambda^{*}, A^{*} - U^{*}\Lambda^{*}(U^{*})^{T}\} = \gap^{*}.\]
In this case, their condition reads as $\gap^{*}\succeq \sqrt{np^{*}}$. This is the weakest one among all aforementioned works. However, their bound is also the weakest for binary random matrices with independent entries, although it is tight for some deterministic matrices. Indeed, their bound is 
\[d_{\ttinf}(U, U^{*})\preceq \lb\frac{\|E\|_{\op}}{\gap^{*}}\rb^{2}\mnorm{U^{*}} + \frac{\mnorm{\td{U}^{*}E_{2, 1}}}{\gap^{*}} + \frac{\mnorm{\td{U}^{*}(\td{U}^{*})^{T}E}\|E\|_{\op}}{(\gap^{*})^{2}}.\]
The third term is large in general as $\td{U}^{*}$ includes all other eigenvectors include those corresponding to the zero eigenvalue. For instance consider an \ER graph with self-loop, i.e. $A^{*} = p^{*}\one_{n}\one_{n}^{T}$. In this case $U^{*} = \one_{n} / \sqrt{n}$ and thus
\[\td{U}^{*}(\td{U}^{*})^{T} = I_{n} - \frac{1}{n}\one_{n}\one_{n}^{T}.\]
As a result,
\[\mnorm{\td{U}^{*}(\td{U}^{*})^{T}E} = \mnorm{\lb I_{n} - \frac{1}{n}\one_{n}\one_{n}^{T}\rb E}\sim \mnorm{E} \sim \sqrt{np^{*}}.\]
Therefore, when $p^{*}\succeq \log n / n$, the third term alone is lower bounded by
\[\frac{\sqrt{np^{*}}\sqrt{np^{*}}}{(np^{*})^{2}} = \frac{1}{np^{*}}.\]
This is too loose compared to all aforementioned bounds.

\section{Concentration Inequalities for Binary Random Variables}\label{app:concentration}
\begin{lemma}\label{lem:bernoulli}
  Let $(X_{i})_{i=1}^{n}$ be independent Bernoulli variables with $\E X_{i} = p_{i}$. Given any vector $w\in \R^{n}$, let 
\[S_{n} = \sum_{i=1}^{n}w_{i}(X_{i} - p_{i}).\]
Then for any $\delta \in (0, 1)$, it holds with probability $1 - \delta$ that
\[S_{n}\le \frac{2\log (1 / \delta)}{F^{-1}(2\Omega\log (1 / \delta))}\|w\|_{\infty}.\]
where
\[\Omega = \frac{\|w\|_{\infty}^{2}}{\sum_{i=1}^{n}p_{i}w_{i}^{2}}, \quad F(x) = x^{2}e^{x}.\]
\end{lemma}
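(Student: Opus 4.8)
The plan is to use a Chernoff bound together with a sharp estimate on the log-moment-generating function of a centered Bernoulli variable. First I would recall the standard one-sided bound: for a single centered Bernoulli $X_i - p_i$ and any $\lambda \ge 0$,
\[
\log \E e^{\lambda(X_i - p_i)} = \log(1 - p_i + p_i e^{\lambda}) - \lambda p_i \le p_i(e^{\lambda} - 1 - \lambda),
\]
using $\log(1 + u) \le u$ with $u = p_i(e^{\lambda} - 1)$. Therefore, by independence, for any $\lambda \ge 0$,
\[
\log \E e^{\lambda S_n} = \sum_{i=1}^n \log \E e^{\lambda w_i (X_i - p_i)} \le \sum_{i=1}^n p_i\bigl(e^{\lambda w_i} - 1 - \lambda w_i\bigr).
\]
The next step is to bound $e^{x} - 1 - x$ for $x \le \lambda \|w\|_\infty$. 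Writing $x = \lambda w_i$ and noting $|w_i| \le \|w\|_\infty$, I would use the elementary inequality $e^{x} - 1 - x \le \tfrac{x^2}{2} e^{x_+} \le \tfrac{x^2}{2} e^{\lambda \|w\|_\infty}$ valid for all real $x$ (since $e^x - 1 - x = \sum_{k\ge 2} x^k/k! \le \tfrac{x^2}{2}\sum_{k \ge 0} x_+^k / k!$ after a term-by-term comparison). This yields
\[
\log \E e^{\lambda S_n} \le \frac{\lambda^2 e^{\lambda \|w\|_\infty}}{2}\sum_{i=1}^n p_i w_i^2 = \frac{\lambda^2 e^{\lambda \|w\|_\infty}}{2} \cdot \frac{\|w\|_\infty^2}{\Omega}.
\]

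Then, by Markov's inequality, $\P(S_n \ge t) \le \exp\{-\lambda t + \tfrac{\lambda^2 e^{\lambda \|w\|_\infty}}{2\Omega}\|w\|_\infty^2\}$ for every $\lambda \ge 0$. I would now choose $\lambda$ to make the exponent equal to $-\log(1/\delta)$ and extract the stated $t$. The natural parametrization is to set $\lambda \|w\|_\infty = u$ where $u$ solves a relation involving $F(u) = u^2 e^u$. Concretely, with $\lambda = u / \|w\|_\infty$ the bound on the MGF exponent becomes $\tfrac{u^2 e^u}{2\Omega} = \tfrac{F(u)}{2\Omega}$, so $\P(S_n \ge t) \le \exp\{-ut/\|w\|_\infty + F(u)/(2\Omega)\}$. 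Choosing $u = F^{-1}(2\Omega \log(1/\delta))$ makes $F(u)/(2\Omega) = \log(1/\delta)$, hence
\[
\P\Bigl(S_n \ge \frac{2\log(1/\delta)}{u}\|w\|_\infty\Bigr) \le \exp\Bigl\{-\frac{2\log(1/\delta)}{\|w\|_\infty} \cdot \frac{\|w\|_\infty}{u} \cdot u + \log(1/\delta)\Bigr\} = \exp\{-2\log(1/\delta) + \log(1/\delta)\} = \delta,
\]
which is exactly the claimed inequality once we substitute $u = F^{-1}(2\Omega \log(1/\delta))$ and recall $\Omega = \|w\|_\infty^2 / \sum_i p_i w_i^2$.

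The main obstacle is largely bookkeeping rather than conceptual: one must verify that $F(x) = x^2 e^x$ is strictly increasing on $[0,\infty)$ (so that $F^{-1}$ is well-defined), handle the trivial edge cases ($w = 0$, or $\sum_i p_i w_i^2 = 0$), and confirm that the choice of $\lambda$ is indeed nonnegative. I would also double-check that the constant $2$ in the exponent of Markov's inequality is allocated correctly — splitting $\exp\{-\lambda t + F(u)/(2\Omega)\}$ as $\exp\{-2\log(1/\delta)\}\cdot\exp\{\log(1/\delta)\}$ requires the linear term to contribute exactly $2\log(1/\delta)$, which pins down $t = 2\log(1/\delta)\|w\|_\infty / u$ as stated. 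A minor technical point worth isolating as a sub-claim is the inequality $e^x - 1 - x \le \tfrac{x^2}{2}e^{x_+}$ for all $x \in \R$, which can be proved by comparing Taylor coefficients separately for $x \ge 0$ and $x < 0$ (for $x < 0$ it is immediate since $e^x - 1 - x \le \tfrac{x^2}{2}$).
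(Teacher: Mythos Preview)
Your proposal is correct and follows essentially the same approach as the paper: Chernoff bound, the inequality $\log(1+u)\le u$ to reduce to $\sum_i p_i(e^{\lambda w_i}-1-\lambda w_i)$, then $e^x-1-x\le \tfrac{x^2}{2}e^{|x|}$ (the paper uses $e^{|x|}$ rather than your slightly sharper $e^{x_+}$, but both collapse to $e^{\lambda\|w\|_\infty}$), and finally the same choice $\lambda\|w\|_\infty = F^{-1}(2\Omega\log(1/\delta))$. The only cosmetic difference is that the paper normalizes $\|w\|_\infty=1$ at the outset rather than carrying it through via the substitution $\lambda=u/\|w\|_\infty$.
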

\begin{proof}
  Without loss of generality we assume $\|w\|_{\infty} = 1$. For any $\lambda > 0$ and $t > 0$, by Markov's inequality
\begin{equation}\label{eq:Markov}
  \log \P\lb S_{n}\ge t \rb \le -\lambda t + \log \E \left[e^{\lambda S_{n}}\right].
\end{equation}
By definition,
\begin{align*}
\log \E \left[e^{\lambda S_{n}}\right] &= \sum_{i=1}^{n}\lb \log(1 - p_{i} + p_{i}e^{\lambda w_{i}}) - \lambda w_{i}p_{i}\rb\\
& \stackrel{(i)}{\le} \sum_{i=1}^{n}p_{i}\lb e^{\lambda w_{i}} - \lambda w_{i} - 1\rb\\
& \stackrel{(ii)}{\le} \sum_{i=1}^{n}p_{i} \frac{(\lambda w_{i})^{2}}{2}e^{\lambda |w_{i}|}\\
& \stackrel{(iii)}{\le} \sum_{i=1}^{n}p_{i} \frac{(\lambda w_{i})^{2}}{2}e^{\lambda} = \frac{F(\lambda)}{2\Omega},
\end{align*}
where (i) uses the inequality that $\log (1 + x)\le x$ for all $x > -1$, (ii) uses the inequality that $e^{x} - x - 1\le \frac{x^{2}e^{|x|}}{2}$ and (iii) uses the fact that $|w_{i}|\le \|w\|_{\infty} = 1$. 

~\\
\noindent Fix any $\lambda$, let $t = \frac{F(\lambda)}{\lambda \Omega}$. Then \eqref{eq:Markov} implies that
\[\log \P\lb S_{n}\ge \frac{F(\lambda)}{\lambda\Omega} \rb \le -\frac{F(\lambda) }{2\Omega}.\]
Let $\lambda = F^{-1}\lb 2\Omega\log\lb 1 / \delta\rb\rb$. Then we obtain that with probability $1 - \delta$,
\[S_{n}\le \frac{2\log (1 / \delta)}{F^{-1}(2\Omega\log (1 / \delta))}.\]
\end{proof}

\begin{lemma}\label{lem:Finv}
$F^{-1}(x)$ is increasing and $F^{-1}(x) / \sqrt{x}$ is decreasing. For any $x_{0}\ge e$,
  \[F^{-1}(x) \ge \left\{
      \begin{array}{ll}
        \sqrt{x / x_{0}} &  \mbox{for any }x \le x_{0}\\
        \log x - 2\log \log x & \mbox{for any } x > e\\
        \log x / 2 & \mbox{for any } x > 0
      \end{array}
\right..\]
\end{lemma}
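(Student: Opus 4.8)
\textbf{Proof proposal for Lemma \ref{lem:Finv}.}

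The plan is to work throughout with the substitution $y = F^{-1}(x)$, equivalently $x = F(y) = y^{2}e^{y}$, which converts every claim into an elementary statement about a single positive variable. First I would record the basic structure of $F$ on $(0,\infty)$: since $F'(y) = ye^{y}(y+2) > 0$ for $y>0$, the map $F$ is strictly increasing with $F(0^{+})=0$ and $F(y)\to\infty$, so $F^{-1}$ is a well-defined increasing bijection from $(0,\infty)$ onto $(0,\infty)$. In particular $F(1)=e$ gives the calibration point $F^{-1}(e)=1$, hence $F^{-1}(x_{0})\ge 1$ whenever $x_{0}\ge e$. This immediately yields the monotonicity of $F^{-1}$. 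For the monotonicity of $F^{-1}(x)/\sqrt{x}$, observe that with $y=F^{-1}(x)$ we have $\sqrt{x}=\sqrt{y^{2}e^{y}}=ye^{y/2}$, so $F^{-1}(x)/\sqrt{x}=e^{-F^{-1}(x)/2}$; since $F^{-1}$ is increasing, this is a decreasing function of $x$.

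Next I would derive the first displayed inequality. Fix $x_{0}\ge e$ and $x\le x_{0}$. By the monotonicity just established, $F^{-1}(x)/\sqrt{x}\ge F^{-1}(x_{0})/\sqrt{x_{0}}\ge 1/\sqrt{x_{0}}$, where the last step uses $F^{-1}(x_{0})\ge 1$. Multiplying by $\sqrt{x}$ gives $F^{-1}(x)\ge\sqrt{x/x_{0}}$.

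For the second inequality I would set $y=\log x-2\log\log x=\log\bigl(x/(\log x)^{2}\bigr)$ for $x>e$. Writing $u=\log x>1$, one checks $u-2\log u$ has its minimum at $u=2$ with value $2-2\log 2>0$, so $0<y\le\log x$ for all $x>e$ (the upper bound because $\log\log x>0$). Then $e^{y}=x/(\log x)^{2}$ and $y^{2}\le(\log x)^{2}$, so $F(y)=y^{2}e^{y}\le(\log x)^{2}\cdot x/(\log x)^{2}=x$; applying the increasing function $F^{-1}$ gives $y=F^{-1}(F(y))\le F^{-1}(x)$, which is the claim. For the third inequality, the case $0<x\le 1$ is trivial since $\log x\le 0<F^{-1}(x)$; for $x>1$ set $y=\log x/2>0$, compute $F(y)=(\log x)^{2}\sqrt{x}/4$, and reduce the inequality $F(y)\le x$ to $(\log x)^{2}\le 4\sqrt{x}$, i.e. $(\log t)^{2}\le t$ with $t=\sqrt{x}\ge 1$; this holds because $(\log t)^{2}/t$ vanishes at $t=1$ and is maximized at $t=e^{2}$ with value $4/e^{2}<1$. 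Again $F(y)\le x$ and monotonicity of $F^{-1}$ finish it.

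There is no genuinely hard step here: the argument is a sequence of elementary one-variable estimates, and the only places requiring a little care are the sign/size analysis of $y=\log x-2\log\log x$ on $(e,\infty)$ and the scalar inequality $(\log t)^{2}\le t$ for $t\ge 1$, both of which are settled by a routine derivative computation. The one conceptual point worth highlighting is the identity $F^{-1}(x)/\sqrt{x}=e^{-F^{-1}(x)/2}$, which is what makes the second monotonicity statement transparent and drives the first displayed lower bound.
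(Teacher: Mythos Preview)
Your proof is correct. The overall strategy---verify each lower bound $y$ by checking $F(y)\le x$ and then apply the monotonicity of $F^{-1}$---matches the paper, but two of your sub-arguments are different and arguably cleaner. For the monotonicity of $F^{-1}(x)/\sqrt{x}$ you exploit the closed-form identity $F^{-1}(x)/\sqrt{x}=e^{-F^{-1}(x)/2}$, whereas the paper only writes the implicit relation $v(x)^{2}e^{\sqrt{x}\,v(x)}=1$ and asserts the conclusion; your version makes the monotonicity immediate. For the first displayed bound you then reuse that monotonicity together with $F^{-1}(x_{0})\ge F^{-1}(e)=1$, while the paper instead argues that $F^{-1}(x)\le\log x_{0}$ (via $F(\log x_{0})\ge x_{0}$) and bounds $e^{F^{-1}(x)}\le x_{0}$ directly in $x=(F^{-1}(x))^{2}e^{F^{-1}(x)}$. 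Both routes are short; yours has the advantage of tying the first bound to the second monotonicity statement rather than introducing an independent estimate. The second bound is handled identically in both, and for the third bound the paper uses the series inequality $e^{\lambda}\ge\lambda^{2}$ to get $F(\lambda)\le e^{2\lambda}$, whereas you reduce to $(\log t)^{2}\le t$ for $t\ge 1$; these are equivalent one-line calculus facts.
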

\begin{proof}
Notice that $F(\lambda)$ is increasing. This implies that $F^{-1}(x)$ is increasing. On the other hand, let $F^{-1}(x) / \sqrt{x} = v(x)$, then by definition
\[v(x)^{2}e^{\sqrt{x}v(x)} = 1.\]

This implies that $v(x)$ is decreasing. 
Since $F(\lambda)$ is increasing and $F(\log x_{0}) = x_{0}(\log x_{0})^{2}\ge x_{0}$, for any $x \le x_{0}$
\[F^{-1}(x)\le \log x_{0}.\] 
By definition, 
\[x = (F^{-1}(x))^{2} e^{F^{-1}(x)}\le (F^{-1}(x))^{2} x_{0}\Longrightarrow F^{-1}(x) \ge \sqrt{\frac{x}{x_{0}}}.\]
On the other hand, for any $x > e$,
\[F(\log x - 2 \log \log x) = (\log x - 2\log \log x)^{2}e^{\log x - 2\log \log x} = x \frac{(\log x - 2\log \log x)^{2}}{(\log x)^{2}}\le x.\]
Thus, 
\[F^{-1}(x) \ge \log x - 2\log \log x.\]
Finally, noting that for any $\lambda > 0$,
\[e^{\lambda} = \sum_{n\ge 0}\frac{\lambda^{n}}{n!} = \ge \lambda + \frac{\lambda^{2}}{2} + \frac{\lambda^{3}}{6} = \frac{\lambda^{2}}{2} + 2\sqrt{\lambda\frac{\lambda^{3}}{6}}\ge \lambda^{2},\]
we have
\[F(\lambda)\le e^{2\lambda}\Longrightarrow F^{-1}(x)\ge \frac{\log x}{2}.\]
\end{proof}

\begin{lemma}\label{lem:bernoulli2}
  Under the same setting of Lemma \ref{lem:bernoulli}, for any $\gamma > 0$, it holds with probability $1 - \delta$ that
  \begin{align*}
     S_{n}&\le \frac{2\log (1 / \delta)}{F^{-1}(2\gamma\log (1 / \delta))}\lb \|w\|_{\infty} + \sqrt{\gamma \sum_{i=1}^{n}p_{i}w_{i}^{2}}\rb\\
& \le \frac{2\log (1 / \delta)}{F^{-1}(2\gamma\log (1 / \delta))}\lb \|w\|_{\infty} + \min\{\sqrt{\gamma p^{*}}\|w\|_{2}, \sqrt{\gamma n\bar{p}}\|w\|_{\infty}\}\rb,
  \end{align*}
where
\begin{equation}
  \label{eq:pbarpstar}
  \bar{p} = \frac{1}{n}\sum_{i=1}^{n}p_{i}, \quad p^{*} = \max_{i}p_{i}.
\end{equation}
\end{lemma}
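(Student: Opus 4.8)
\textbf{Proof proposal for Lemma \ref{lem:bernoulli2}.}

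The plan is to reduce Lemma \ref{lem:bernoulli2} to Lemma \ref{lem:bernoulli} by a rescaling argument, and then to simplify the resulting variance-type quantity. First I would observe that Lemma \ref{lem:bernoulli} gives a bound governed by the quantity $\Omega = \|w\|_\infty^2 / \sum_i p_i w_i^2$, and that the statement of Lemma \ref{lem:bernoulli2} replaces the role of $\Omega$ by a free parameter $\gamma > 0$ at the cost of an extra additive term $\sqrt{\gamma \sum_i p_i w_i^2}$ inside the parentheses. The natural way to produce a free parameter is to split into the regime where $\gamma$ is smaller than $\Omega$ and the regime where it is larger, or alternatively to reinspect the Chernoff bound in the proof of Lemma \ref{lem:bernoulli} with a slightly different choice of $\lambda$ and $t$. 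Concretely, I would revisit the inequality $\log \E[e^{\lambda S_n}] \le F(\lambda) / 2\Omega$ and instead of optimizing exactly, set $t = \|w\|_\infty F(\lambda)/\lambda + \lambda \sum_i p_i w_i^2 \cdot (\text{something})$; but the cleanest route is the two-case split, so let me describe that.

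The key steps, in order: (i) If $\gamma \ge \Omega = \|w\|_\infty^2/\sum_i p_i w_i^2$, then $F^{-1}(2\gamma \log(1/\delta)) \ge F^{-1}(2\Omega\log(1/\delta))$ since $F^{-1}$ is increasing (Lemma \ref{lem:Finv}), so the bound from Lemma \ref{lem:bernoulli}, namely $S_n \le \frac{2\log(1/\delta)}{F^{-1}(2\Omega\log(1/\delta))}\|w\|_\infty$, immediately implies $S_n \le \frac{2\log(1/\delta)}{F^{-1}(2\gamma\log(1/\delta))}\|w\|_\infty$ is \emph{not} quite what we want — here I need to be careful about the direction, since a larger denominator makes the RHS smaller, so this case does not follow trivially and I would instead re-run the Markov argument. (ii) The honest approach: in the proof of Lemma \ref{lem:bernoulli} we had, for any $\lambda > 0$, $\log\P(S_n \ge t) \le -\lambda t + \frac{F(\lambda)}{2}\cdot\frac{\sum_i p_i w_i^2}{\|w\|_\infty^2}$. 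Writing $\sigma^2 = \sum_i p_i w_i^2$ and choosing $\lambda = F^{-1}(2\gamma \log(1/\delta))/\|w\|_\infty$ — wait, $F^{-1}$ takes a scalar — choosing $\lambda$ so that $F(\lambda \|w\|_\infty) = 2\gamma \log(1/\delta)$, and then choosing $t$ appropriately, one gets after bounding $F(\lambda)/\lambda$ in terms of $F^{-1}$ and $\lambda$, that with probability $1-\delta$, $S_n \le \frac{2\log(1/\delta)}{F^{-1}(2\gamma\log(1/\delta))}(\|w\|_\infty + \sqrt{\gamma}\,\sigma)$. The arithmetic here is: $-\lambda t + \frac{F(\lambda)\sigma^2}{2\|w\|_\infty^2} \le -\log(1/\delta)$ should hold for the chosen $t$; with $\lambda$ fixed by $F(\lambda \|w\|_\infty) = 2\gamma \log(1/\delta)$ one solves for $t$ and uses $F(\lambda\|w\|_\infty) = (\lambda\|w\|_\infty)^2 e^{\lambda\|w\|_\infty}$ to express everything via $\lambda\|w\|_\infty = F^{-1}(2\gamma\log(1/\delta))$. (iii) Finally, bound $\sqrt{\gamma}\,\sigma = \sqrt{\gamma \sum_i p_i w_i^2} \le \min\{\sqrt{\gamma p^*}\|w\|_2, \sqrt{\gamma n \bar p}\,\|w\|_\infty\}$, using $\sum_i p_i w_i^2 \le p^* \sum_i w_i^2 = p^*\|w\|_2^2$ for the first bound and $\sum_i p_i w_i^2 \le \|w\|_\infty^2 \sum_i p_i = n\bar p \|w\|_\infty^2$ for the second. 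This last step is elementary.

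The main obstacle I anticipate is step (ii): getting the Chernoff optimization to come out \emph{exactly} with the constant $2$ and the argument $2\gamma\log(1/\delta)$ inside $F^{-1}$, rather than some messier expression. The trick is that the term $\frac{F(\lambda)\sigma^2}{2\|w\|_\infty^2}$ needs to be absorbed into the $\lambda t$ term cleanly; choosing $t = \frac{F(\lambda)}{\lambda}\cdot\frac{\|w\|_\infty^2 + \gamma\|w\|_\infty^2 \cdot (\sigma^2/\gamma\|w\|_\infty^2)}{\|w\|_\infty^2}$ type expression and using $2ab \le a^2 + b^2$ to split a cross term should do it, but I would need to track the constants carefully. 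An alternative that sidesteps the optimization entirely: apply Lemma \ref{lem:bernoulli} to the weight vector $w$ but with the observation that one may always \emph{inflate} the effective $\Omega$ by replacing the true $\sum p_i w_i^2$ with any larger quantity in the exponential bound — i.e., run the Chernoff bound with $\frac{F(\lambda)}{2}\cdot\frac{1}{\gamma^{-1}}$ in place of $\frac{F(\lambda)}{2\Omega}$ whenever $\gamma^{-1} \le \Omega^{-1}$, handling the complementary regime $\gamma^{-1} > \Omega^{-1}$ by a direct application of Lemma \ref{lem:bernoulli} plus the crude bound $\|w\|_\infty \le \|w\|_\infty + \sqrt{\gamma}\sigma$. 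I would try the direct re-run of the Chernoff bound first, as it is likely to give the cleanest constants.
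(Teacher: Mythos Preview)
Your overall structure (split on $\gamma$ versus $\Omega$, then simplify $\sum_i p_i w_i^2$) matches the paper, and your step (iii) is exactly right. The gap is in how you handle the case $\gamma > \Omega$. You correctly observe that plain monotonicity of $F^{-1}$ goes the wrong way there, but you then over-engineer by re-running the Chernoff bound. In fact, with your proposed choice $\lambda = F^{-1}(2\gamma\log(1/\delta))$ and $t = \tfrac{2\log(1/\delta)}{\lambda}(1+\sqrt{\gamma}\sigma)$ (taking $\|w\|_\infty=1$, $\sigma^2=\sum_i p_i w_i^2$), the Markov exponent is $\log(1/\delta)\bigl[\gamma\sigma^2 - 2(1+\sqrt{\gamma}\sigma)\bigr]$, and this is $\le -\log(1/\delta)$ only when $(\sqrt{\gamma}\sigma)^2 - 2\sqrt{\gamma}\sigma - 1 \le 0$, i.e.\ $\sqrt{\gamma}\sigma \le 1+\sqrt{2}$. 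So the direct re-optimization does not give the stated constant for all $\gamma$.

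The missing ingredient is the \emph{second} monotonicity property in Lemma~\ref{lem:Finv}: $F^{-1}(x)/\sqrt{x}$ is decreasing. When $\Omega < \gamma$ this gives
\[
F^{-1}\!\bigl(2\Omega\log(1/\delta)\bigr) \;\ge\; \sqrt{\Omega/\gamma}\, F^{-1}\!\bigl(2\gamma\log(1/\delta)\bigr),
\]
so Lemma~\ref{lem:bernoulli} immediately yields $S_n \le \tfrac{2\log(1/\delta)}{F^{-1}(2\gamma\log(1/\delta))}\sqrt{\gamma/\Omega}\,\|w\|_\infty = \tfrac{2\log(1/\delta)}{F^{-1}(2\gamma\log(1/\delta))}\sqrt{\gamma}\,\sigma$. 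The complementary case $\Omega \ge \gamma$ is the easy one: monotonicity of $F^{-1}$ gives $S_n \le \tfrac{2\log(1/\delta)}{F^{-1}(2\gamma\log(1/\delta))}\|w\|_\infty$ directly. Both cases are then bounded by $\tfrac{2\log(1/\delta)}{F^{-1}(2\gamma\log(1/\delta))}\|w\|_\infty(1+\sqrt{\gamma/\Omega})$, which is the first display. No Chernoff re-run is needed.
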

\begin{proof}
If $\Omega \ge \gamma$, since $F^{-1}(x)$ is increasing,
\[F^{-1}(2\Omega\log (1 / \delta)) \ge F^{-1}(2\gamma\log (1 / \delta));\]
If $\Omega < \gamma$, since $F^{-1}(x) / \sqrt{x}$ is decreasing,
\begin{align*}
  &\frac{F^{-1}(2\Omega\log (1 / \delta))}{\sqrt{2\Omega\log (1 / \delta)}}\ge \frac{F^{-1}(2\gamma\log (1 / \delta))}{\sqrt{2\gamma\log (1 / \delta)}}\\
\Longrightarrow& F^{-1}(2\Omega\log (1 / \delta))\ge F^{-1}(2\gamma\log (1 / \delta)) \sqrt{\frac{\Omega}{\gamma}}.
\end{align*}
By Lemma \ref{lem:bernoulli}, with probability $1 - \delta$,
\begin{align}
  S_{n}&\le \frac{2\log (1 / \delta)}{F^{-1}(2\gamma\log (1 / \delta))}\|w\|_{\infty}\lb 1 + \sqrt{\frac{\gamma}{\Omega}}\rb \nonumber\\
& \le \frac{2\log (1 / \delta)}{F^{-1}(2\gamma\log (1 / \delta))}\lb \|w\|_{\infty} + \sqrt{\gamma \sum_{i=1}^{n}p_{i}w_{i}^{2}}\rb. \nonumber
\end{align}
\end{proof}

\end{document}